\documentclass[10pt,a4paper,openany]{book}
\oddsidemargin0cm \evensidemargin0cm
%%%%%% Packages
% \usepackage{latexsym}
\usepackage{graphicx}
%%%%%AMSLatex
 \usepackage[reqno, namelimits, sumlimits]{amsmath}
  %%namelimits to put subscript below lim; see also command substack
  %% other AMSLatex packages may be necessary
   \usepackage{amsfonts}
   \usepackage{amssymb}
  \usepackage{amsthm}
  \usepackage{xcolor}
\usepackage{makeidx}% pour l'index
%%%%%AMSLatex
   %\usepackage[reqno]{amsmath}
   %\usepackage{amssymb}
   %\usepackage{amsfonts}
\newtheorem{thm}{Theorem}[section]
\newtheorem{cor}[thm]{Corollary}
\newtheorem{lem}[thm]{Lemma}
\newtheorem{prop}[thm]{Proposition}

\newtheorem{example}[thm]{Example}

\usepackage{times}
%\renewcommand{\baselinestretch} {2}
%@@@@@@@@@@@@@@@@@@@@@@@@@@@@@@@@@@@@@@@@@@@@@
% arXiv admin note: these commands have been commented out to prevent
%   information loss during display on arXiv. [2015-02-26]

%\textwidth 16.0cm \oddsidemargin 0.5cm \evensidemargin 0.5cm
%\vsize29.7cm \hsize 19cm
%\pagestyle{headings}
% \topmargin=1cm
%\headsep=0.5cm \textwidth=15truecm \textheight=20truecm
%\footskip=1.5cm \paperwidth=15cm
%@@@@@@@@@@@@@@@@@@@@@@@@@@@@@@@@@@@@@@@@@@@@@@@@@@@@@@@@@@@@@@@@@@@@@@@@@@@@@@@@@@@

%\theoremstyle{definition}
\newtheorem{defn}[thm]{Definition}
\newtheorem{rem}[thm]{Remark}
\def\R{{\mathbb R}}

\def\N{{\mathbb N}}

\def\C{{\mathbb C}}

\def\cE{{\mathcal E}}
\def\cF{{\mathcal F}}

\def\cG{{\mathcal G}}

\def\cL{{\mathcal L}}

\makeindex% pour l'index
%\def\conv{\mathop{\rm conv}\nolimits}
%\def\epi{\mathop{\rm epi}\nolimits}
%\def\dom{\mathop{\rm dom}\nolimits}
%\def\sh{\mathop{\rm sh}\nolimits}
%\def\ch{\mathop{\rm ch}\nolimits}
%\def\th{\mathop{\rm th}\nolimits}
%\def\argsh{\mathop{\rm argsh}\nolimits}
%\def\argch{\mathop{\rm argch}\nolimits}
%\date{}

\begin{document}
\title{The convergence problem for dissipative autonomous systems: classical methods and recent advances}
\author{Alain Haraux \& Mohamed Ali Jendoubi}
%\title{Global dynamics of some dissipative evolution equations}
\maketitle

\begin{center}
{\bf{PREFACE}}
\end{center}\bigskip\bigskip

Our  initial motivation was to provide an up to date translation of the monograph \cite {MR1084372} written in french by the first author, taking account of more recent developments of infinite dimensional dynamics based on the {\L}ojasiewicz gradient inequality. \\

While preparing the project it appeared that it would not be easy to cover the entire scope of the french version in a reasonable amount of time, due to the fact that the non-autonomous systems require sophisticated tools which underwent major improvement during the last decade. \\

In order to keep the present work within modest size bounds and to make it available to the readers without too much delay, we decided to make a first volume entirely dedicated to the so-called convergence problem for autonomous systems of dissipative type. We hope that this volume will help the interested reader to make the connection between the rather simple background developed in the french monograph and the rather technical specialized literature on the convergence problem which grew up rather fast in the recent years. \\
\bigskip

\begin{center}
Alain HARAUX\\
{\em Laboratoire Jacques-Louis Lions\\
  Universit\'e Pierre et Marie Curie et CNRS\\
  Bo\^{\i}te courrier 187\\
  75252 Paris Cedex 05,  France}\\
haraux@ann.jussieu.fr \\
\end{center}

\medskip
\begin{center}
Mohamed Ali JENDOUBI \\
{\em Universit\'e de Carthage,\\
Institut Pr\'eparatoire aux Etudes Scientifiques et Techniques,\\
B.P. 51 2070 La Marsa, Tunisia} \\
ma.jendoubi@fsb.rnu.tn \\
\end{center}

%@@@@@@@@@@@@@@@@@@@@@@@@@@@@@@@@@@@@@@@@@@@@@@@@@@@@@@@@@@@@@@@@@@

 \tableofcontents
%@@@@@@@@@@@@@@@@@@@@@@@@@@@@@@@@@@@@@@@@@@@@@@@@@@@@@@@@@@@@@@@@@@
%%%%%%%%%%%%%%%%%%%%%
%%%%%%%%%%%%%%%%%%%%%%%
\chapter [Introduction and basic tools]{Introduction and basic tools}

\section{Introduction}The present text is devoted to a rather specific subject: convergence to \index{equilibrium point}equilibrium, as $t$ tends to infinity,  of the solutions to differential equations on the positive halfline $\{t \ge 0\}$ of the general form  $$ U'(t)+ {\cal A} U(t)= 0$$ where ${\cal A}$ is a nonlinear, time independent, possibly unbounded operator on some Banach space $X$. By \index{equilibrium point} equilibrium we mean a solution of the so-called stationary problem 
$${\cal A} U= 0.$$ By the equation, taken at a formal level for the moment, it is clear that if a solution tends to an equilibrium \index{equilibrium point} and if ${\cal A}$ is continuous : $X\rightarrow Y$ for some Banach space Y having $X$ as a topologically imbedded subspace, the "velocity'' $U'(t) $ tends to $0$ in $Y$. If the trajectory $U$ is precompact in $X$, it will follow that this means some strong asymptotic flatness of $U(t)$ for $t$ large. Conversely, systems having this property do not necessarily enjoy the convergence property since trajectories might oscillate (slower and slower at infinity) between several stationary solutions. \\

A  well known convenient way to study the asymptotic behavior of solutions is to associate to the differential equation a semi-group $S(t)$ of (nonlinear) operators on some closed subset $Z$ of the Banach space $X$ , defined as follows: for each $t\ge 0$ and each $z\in X$ for which the initial value problem is well-posed, $S(t)z$ is the value at $t$ of the solution with initial value $z$. Since the initial value problem does not need to be well-posed for every $z\in X$, in general $Z$ will just be some closed set containing the trajectory 
$$ \Gamma(z) = \overline{\bigcup_{t\ge0} S(t)z}^X $$ For some results the consideration of $ \Gamma(z) $ will be enough, for some others (for instance stability properties) it will be preferable to take $Z$ as large as possible. The standard terminology used in the Literature for such semi-groups is ``Dynamical systems" and we shall adopt it. Since the operator ${\cal A}$ does not depend on time, both equation and dynamical system are called \index{autonomous} autonomous. According to the context, the word "trajectory" will mean either a solution of the equation $ u(t+s) = S(t) u(s) $ on the halfline, or the closure of its range. \\

The present work concerns dissipative \index{autonomous} autonomous systems. In the Literature the term ``dissipative" has been used in many different contexts. Here, dissipative refers to the existence of a scalar function $\Phi$ of the solution $U$ which is dissipated by the system, in the sense that it is nonincreasing:

$$ \forall s\ge 0, \,\,\forall t\ge s, \quad \Phi(U(t)) \le \Phi(U(s)).$$
If in addition $\Phi$ is coercive , this implies that $U(t)$ is bounded in $X$. The problem of asymptotic behavior becomes therefore natural. Such non-increasing functions of the solution play an important role in the theory of stability  initiated by \index{Liapunov} Liapunov. For this reason, in this text, they will be called Liapunov functions \index{Liapunov function} (resp. Liapunov  functionals if $X$ is a function space).\\

Let us now define more precisely the main theme of the present text.  The structure of trajectories to dynamical systems tends to become more and more complicated as the dimension of the ambient space $X$ increases. When $X= \R$, ${\cal A}$ is just a scalar function of the scalar variable $U$ and if ${\cal A}$ is locally Lipschitz, as a consequence of local uniqueness, no trajectory other than a stationary solution can cross the set of equilibria. As a consequence all bounded solutions are monotonic, hence convergent.  In higher dimensions, what remains true is that convergent trajectory have to converge to a stationary solution. But the equation $u"+ u = 0$ , which can be represented as a first order differential equation in $X= \R^2$ exhibits oscillatory solutions, and even when a strictly decreasing Liapunov \index{Liapunov} functions exists, two-dimensional systems can have some non-convergent trajectories. Our main purpose is to find sufficient conditions for convergence and exhibit some counterexamples showing the optimality of the convergence theorems. Finding sufficient conditions for convergence is a program which was initiated by S.  ${\L}$ojasiewicz when $X= \R^N$ and ${\cal A}= \nabla F $ with $F$ a real valued  function. By relying on the so-called ${\L}$ojasiewicz gradient inequality, \index{{\L}ojasiewicz gradient inequality} he showed that convergence of bounded solutions is insured whenever $F$ is analytic. From the point of view of a sufficient condition expressed in terms of regularity, this result is optimal: there are $C^\infty$ functions on  $X= \R^2$  for which the equation $ U'(t)+ \nabla F U(t)= 0$  has bounded non-convergent solutions. An explicit example was given by Palis \& De Melo in \cite{MR0669541}, and in this text we extend their example in such a way that any Gevrey regularity condition weaker than analytic appears unsufficient for convergence.\\

This text is divided in 11 chapters: the first 3 chapters contain some basic material   useful either to set properly the convergence question, or as a technical background for the proofs of the main results. In Chapter 4 we fix the main general concepts or notation concerning dynamical systems. In chapter 5 a general asymptotic stability criterion is given, generalizing the well known Liapunov \index{Liapunov} stability theorem (Liapunov's \index{Liapunov} first method) in a framework applicable to infinite dimensional dynamical systems and in the same vein, a finite-dimensional method used by R. Bellman to derive instability from linearized instability is applied to some infinite dimensional dynamical systems. Chapter 6 is devoted to the definition and main properties of a class of ``gradient-like  systems" \index{gradient-like} in which the question of convergence appears fairly natural. Chapter 7 concerns the general invariance principle and its connection with Liapunov's second method.  After  Chapter 8,  in which simple particular cases are treated by specific methods,  Chapter 9 and 10 are devoted to convergence theorems based on the \index{{\L}ojasiewicz gradient inequality} ${\L}$ojasiewicz gradient inequality, respectively in finite dimensions and infinite dimensional setting with applications to semilinear parabolic and hyperbolic problems in bounded domains. Chapter 11 is devoted to a somewhat informal description of more recent or technically more elaborate results which are too difficult to fall within the scope of a brief monograph. \\

We hope that this text may help the reader to build a bridge between the now classical ${\L}$ojasiewicz  convergence theorem and the more recent results on second order equations and infinite dimensional systems. 

\section{Some important lemmas}

The first lemma is classical and is recalled only for easy reference in the main text. 
\begin{lem}\label{Gronwall Lemma}  (Gronwall Lemma)\index{Gronwall Lemma} Let $T>0,\ \lambda\in L^1(0,T),\ \lambda \geq0
$ a.e. on $(0, T)$ and $C\geq0.$ Let $\varphi \in L^{\infty}(0,T),\   \varphi  \geq 0 $ a.e. on $(0, T)$, such
that $$ \varphi(t) \leq C + \int_0^t \lambda (s)\varphi(s)ds, \quad  \hbox{\rm a.e. on}\   (0, T)
$$ Then we have $$ \varphi(t) \leq C exp \Bigl(\int_0^t \lambda (s)ds\Bigr),
\quad \hbox{\rm a.e. on}\ (0, T)$$ \end{lem}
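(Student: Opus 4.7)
The plan is to convert the integral inequality into a differential inequality for an auxiliary function and then integrate it using the standard exponential integrating factor. First I would introduce
$$
\psi(t) := C + \int_0^t \lambda(s)\varphi(s)\,ds, \qquad t \in [0,T].
$$
Because $\lambda\varphi \in L^1(0,T)$ (since $\lambda\in L^1$ and $\varphi\in L^\infty$), $\psi$ is absolutely continuous on $[0,T]$ with $\psi(0)=C$ and $\psi'(t)=\lambda(t)\varphi(t)$ for a.e.\ $t$. The hypothesis reads exactly $\varphi(t)\le \psi(t)$ for a.e.\ $t\in(0,T)$, which, combined with $\lambda\ge 0$, gives the pointwise a.e.\ differential inequality
$$
\psi'(t) \le \lambda(t)\,\psi(t).
$$

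Next I would use the integrating factor $e^{-\Lambda(t)}$, where $\Lambda(t):=\int_0^t\lambda(s)\,ds$ is absolutely continuous and nondecreasing. The product $h(t):=\psi(t)\,e^{-\Lambda(t)}$ is then absolutely continuous, and its a.e.\ derivative satisfies
$$
h'(t) = \bigl(\psi'(t)-\lambda(t)\psi(t)\bigr)e^{-\Lambda(t)} \le 0.
$$
Integrating from $0$ to $t$ yields $h(t)\le h(0)=C$, i.e.\ $\psi(t)\le C\,e^{\Lambda(t)}$ for every $t\in[0,T]$. Combining this with $\varphi\le \psi$ a.e.\ gives the desired estimate.

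The only mildly delicate point is to verify that plugging the a.e.\ inequality $\varphi\le\psi$ into the computation of $\psi'$ is legitimate; this is harmless since the identity $\psi'=\lambda\varphi$ itself only holds a.e., and the subsequent integration of $h'\le 0$ only requires the inequality a.e.\ together with the absolute continuity of $h$. The case $C=0$ needs no separate treatment in this approach: it forces $h\le 0$ and hence $\psi\equiv 0$, so $\varphi=0$ a.e. If one preferred to avoid even this minor point, one could instead apply the argument to $C+\varepsilon$ and let $\varepsilon\to 0^+$, which also circumvents any attempt to divide by $\psi$.
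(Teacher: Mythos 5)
Your proof is correct and follows exactly the same route as the paper: introduce $\psi(t)=C+\int_0^t\lambda(s)\varphi(s)\,ds$, derive $\psi'\le\lambda\psi$ a.e.\ from $\varphi\le\psi$, integrate with the factor $\exp\bigl(-\int_0^t\lambda(s)\,ds\bigr)$, and conclude from $\varphi\le\psi$. The extra remarks on absolute continuity and the case $C=0$ are sound but not needed beyond what the paper already does.
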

\begin{proof}[{\bf Proof.}] We set $$ \psi(t) = C + \int_0^t \lambda (s)\varphi(s)ds, \quad\forall t\in[0, T]$$ Then $ \psi $ is absolutely continuous, hence differentiable a.e. on (0,T), and we have
$$  \psi'(t) = \lambda(t)\varphi(t) \leq \lambda(t)\psi(t)   \quad \hbox{a.e.  on}\ (0, T).$$
 Consequently, a.e. on $(0,T)$ we find :
$$\frac{d}{dt}[\psi(t)exp\Bigl(-\int_0^t \lambda(s)ds\Bigr)]\leq 0.$$ Hence by
integrating
$$ \psi(t) \leq C exp \Bigl(\int_0^t \lambda (s)ds\Bigr),
\quad \forall t\in[0, T].$$
The result follows, since  $\varphi\leq
\psi$  a.e. on $(0, T)$
\end{proof}
 
 The next lemmas will be useful in the study of convergence and decay rates
 \begin{lem} \label{L1-conv} (cf. e.g.  \cite{MR0069338}. )
Let $X$ be a Banach space, $t_0\in\R$   and  $z\in C((t_0,\infty) ;X )$.   
Assume that the following conditions are satisfied
\begin{equation} \label{L1}
  z\in L^1((t_0,\infty) ;X )
\end{equation}
\begin{equation} \label{unifc}
\text{ z is uniformly continuous on } [t_0,\infty) \text{with values in X}.
\end{equation}
Then $$\lim_{t\rightarrow\infty} \| z(t) \|_X= 0  $$ 
\end{lem}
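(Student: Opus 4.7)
My plan is to argue by contradiction. Suppose $\|z(t)\|_X \not\to 0$ as $t \to \infty$. Then there exists $\varepsilon > 0$ and a sequence $t_n \to \infty$ such that $\|z(t_n)\|_X \ge \varepsilon$ for all $n$. The goal is to use the uniform continuity hypothesis to spread this lower bound over intervals of fixed length around each $t_n$, and then contradict the $L^1$ integrability of $z$.

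More precisely, by hypothesis \eqref{unifc}, there exists $\delta > 0$ such that $|t-s| < \delta$ implies $\|z(t) - z(s)\|_X < \varepsilon/2$. Hence for every $n$ and every $s \in [t_n, t_n + \delta]$, the reverse triangle inequality gives
$$\|z(s)\|_X \ge \|z(t_n)\|_X - \|z(s) - z(t_n)\|_X \ge \varepsilon - \varepsilon/2 = \varepsilon/2.$$
It follows that $\int_{t_n}^{t_n + \delta} \|z(s)\|_X \, ds \ge \delta\varepsilon/2$ for every $n$.

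To conclude, I would extract from $(t_n)$ a subsequence (still denoted $t_n$) satisfying $t_{n+1} \ge t_n + \delta$, so that the intervals $[t_n, t_n + \delta]$ are pairwise disjoint subsets of $(t_0, \infty)$. Summing the above inequalities then yields
$$\int_{t_0}^{\infty} \|z(s)\|_X \, ds \ge \sum_{n} \int_{t_n}^{t_n+\delta} \|z(s)\|_X \, ds = +\infty,$$
contradicting \eqref{L1}. Hence $\|z(t)\|_X \to 0$.

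The argument is essentially routine; the only minor subtlety is making sure to extract a subsequence with well-separated $t_n$ so that the intervals of length $\delta$ do not overlap, which is harmless since $t_n \to \infty$. No Banach space structure beyond the norm is used — strictly speaking, uniform continuity into any metric space would suffice — but the $L^1$ integrability is the key quantitative ingredient that makes the disjoint-interval counting argument give a contradiction.
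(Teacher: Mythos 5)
Your argument is correct. It is essentially the same proof as the paper's, read contrapositively: the paper uses the uniform-continuity modulus $\delta$ to bound $\|z(t)\|_X \le \varepsilon + \frac{1}{\delta}\int_t^{t+\delta}\|z(s)\|_X\,ds$ directly and lets the $L^1$ tail vanish, whereas you use the same $\delta$-interval to turn a hypothetical lower bound along a sequence into a divergent sum over disjoint intervals; both hinge on exactly the same two ingredients and your disjoint-subsequence extraction is the only extra (and harmless) step.
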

 \begin{proof}[{\bf Proof.}]  Let $\varepsilon>0$ be arbirary and let $\delta >0$ be such that  $$\sup_{ t\in[t_0,\infty), h\in[0, \delta]} \| z(t+h -z(t)) \|_X \le \varepsilon $$ Then we find easily   $$ \forall t\in[t_0,\infty),\quad \| z(t) \|_X \le \varepsilon + \frac{1}{\delta} \int_t^{t+\delta} \| z(s) \|_X ds . $$ implying $$ \limsup_{t\rightarrow\infty} \| z(t) \|_X \le \varepsilon $$ The conclusion follows immediately
 \end{proof}
\begin{lem} \label{strong}
Let $X$ be a Banach space, $t_0\in\R$   and  $u\in C^1((t_0,\infty) ;X )$ .   
Assume that there exists  $H\in C^1 ((t_0,\infty), \R )$, $\eta\in(0,1)$ and $c>0$ such that 
\begin{equation} \label{energiepositive2}
  H(t)>0 \ \text{ for all } \ t\geq t_0 .
\end{equation}
\begin{equation} \label{strongcond2}
- H'(t) \geq c \,  H(t)^{1-\eta} \, \| u' (t) \|_X \ \text{ for all } \ t\geq t_0 .
\end{equation}
Then  there exists $\varphi\in X$ such that $\displaystyle\lim_{t\to\infty} u(t)=\varphi$ in $X$.
\end{lem}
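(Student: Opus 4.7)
The plan is to recognize the right-hand side of \eqref{strongcond2} as (essentially) a total derivative so that, upon integration, the full $L^1$-norm of $u'$ gets controlled by a telescoping quantity in $H$, from which convergence follows by a Cauchy argument in $X$.

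First, I would observe that \eqref{strongcond2} together with $\|u'(t)\|_X\ge 0$ and $H(t)>0$ forces $H'(t)\le 0$, so $H$ is nonincreasing on $[t_0,\infty)$. Next, I would rewrite \eqref{strongcond2} as
\[
\|u'(t)\|_X \;\le\; -\frac{H'(t)}{c\,H(t)^{1-\eta}} \;=\; -\frac{1}{c\eta}\,\frac{d}{dt}\bigl(H(t)^{\eta}\bigr),
\]
using that $\frac{d}{dt}H^{\eta}=\eta H^{\eta-1}H'$, which is valid since $H\in C^1$ and $H>0$.

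Integrating this inequality from $t_0$ to any $T>t_0$, and dropping the nonnegative term $H(T)^{\eta}$, gives
\[
\int_{t_0}^{T} \|u'(t)\|_X\,dt \;\le\; \frac{1}{c\eta}\bigl(H(t_0)^{\eta}-H(T)^{\eta}\bigr) \;\le\; \frac{H(t_0)^{\eta}}{c\eta}.
\]
Hence $u'\in L^1((t_0,\infty);X)$. Then for $t>s\ge t_0$, since $u\in C^1$, we have $u(t)-u(s)=\int_s^t u'(\tau)\,d\tau$ and therefore
\[
\|u(t)-u(s)\|_X \;\le\; \int_s^t \|u'(\tau)\|_X\,d\tau \;\longrightarrow\; 0\quad\text{as } s\to\infty,
\]
so $\{u(t)\}$ is Cauchy in $X$ as $t\to\infty$. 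Completeness of $X$ yields the existence of $\varphi\in X$ with $u(t)\to\varphi$.

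There is no real obstacle here; the only subtle point is noticing that the exponent $1-\eta$ on $H$ is exactly what is needed to make the right-hand side of \eqref{strongcond2} a perfect derivative of $H^{\eta}/(c\eta)$ (this is why the hypothesis $\eta\in(0,1)$ is imposed: it keeps $H^\eta$ well-defined and its derivative integrable without singularity). Everything else is routine: monotonicity of $H$, the fundamental theorem of calculus, and the Cauchy criterion in a Banach space.
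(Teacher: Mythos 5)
Your proof is correct and follows essentially the same route as the paper: recognizing $-H'/(cH^{1-\eta})$ as $-\tfrac{1}{c\eta}\tfrac{d}{dt}(H^\eta)$, integrating to get $u'\in L^1((t_0,\infty);X)$, and concluding by the Cauchy criterion. No issues.
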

 \begin{proof}[{\bf Proof.}] 
By using  \eqref{strongcond2},   we get for all $t\geq t_0$
\begin{eqnarray}
\label{e1} - \frac{d}{dt} H(t)^\eta & = & -\eta H'(t) H(t)^{\eta -1}  \\
\nonumber & \geq & {c \eta } \, \|  u' (t)\|_X .
\end{eqnarray}
By integrating this last inequality over $(t_0,T)$, we obtain
\begin{equation}\label{ineqNouvel1}\int_{t_0}^{T} \|  u' (t)\|_X \,dt\leq \frac{H(t_0)^\eta}{c \eta }.
\end{equation}
This implies $ u' \in L^1 ((t_0,\infty) ;X )$. By Cauchy's criterion, $\displaystyle\lim_{t\to\infty} u(t)$ exists in $X$. 
 \end{proof}
\begin{lem}\label{LemmaZelenyakExp} Let $T>0$, let $p$ be a nonnegative square integrable function on $[0, T) $. Assume that there  exists two constants $\gamma>0$ and $a>0$ such that
 $$\forall t\in [0,T], \quad \int_t^{T}p^2(s) ds \leq a e^{-\gamma t}. $$
Then setting $b := {\rm e}^{\gamma /2}/({\rm e}^{\gamma / 2} - 1)$,  for all  $0\leq t\leq\tau\leq T$ we have:
$$J(t,\tau):=\int_t^{\tau}p(s) ds \leq \sqrt{a} be^{-{{\gamma
t}\over2}}.$$
\end{lem}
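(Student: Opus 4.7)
The plan is to slice $[t,\tau]$ into successive intervals of unit length $[t+k,t+k+1]$ for $k=0,1,2,\dots$, apply Cauchy--Schwarz on each slice to pass from $p$ to $p^2$, and then sum a geometric series.

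More precisely, I would first extend $p$ by $0$ outside $[0,T)$ (a harmless convention that makes the slicing argument uniform, since $p\ge 0$). On each unit slice Cauchy--Schwarz gives
\begin{equation*}
\int_{t+k}^{t+k+1} p(s)\,ds \;\le\; \Bigl(\int_{t+k}^{t+k+1} p^2(s)\,ds\Bigr)^{1/2} \;\le\; \Bigl(\int_{t+k}^{T} p^2(s)\,ds\Bigr)^{1/2},
\end{equation*}
so the hypothesis yields the bound $\sqrt{a}\,e^{-\gamma(t+k)/2}$ whenever $t+k<T$, and $0$ otherwise.

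Summing over $k\ge 0$, and using that $[t,\tau]\subset\bigcup_{k\ge 0}[t+k,t+k+1]$, I obtain
\begin{equation*}
J(t,\tau)\;\le\;\sum_{k=0}^{\infty} \sqrt{a}\,e^{-\gamma(t+k)/2}\;=\;\sqrt{a}\,e^{-\gamma t/2}\sum_{k=0}^{\infty} e^{-k\gamma/2}\;=\;\sqrt{a}\,e^{-\gamma t/2}\cdot\frac{1}{1-e^{-\gamma/2}}.
\end{equation*}
Finally, rewriting $1/(1-e^{-\gamma/2})=e^{\gamma/2}/(e^{\gamma/2}-1)=b$ gives exactly the announced estimate.

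There is really no serious obstacle: the only point requiring a little care is the bookkeeping at the right endpoint $\tau$ (which need not be of the form $t+k$) and at $T$ (beyond which $p^2$ need not be integrable). Extending $p$ by zero past $T$ and enlarging each interval up to $[t+k,t+k+1]$ is legitimate because $p\ge 0$ and because the hypothesis controls $\int_{t+k}^{T}p^2$, not $\int_{t+k}^{t+k+1}p^2$. The choice of unit-length slices is forced by the constant $b$: any other length $h$ would produce the geometric series with ratio $e^{-\gamma h/2}$, and the stated $b$ corresponds to $h=1$.
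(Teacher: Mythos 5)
Your proof is correct and follows essentially the same strategy as the paper's: unit-length slicing, Cauchy--Schwarz on each slice, and summation of the geometric series with ratio $e^{-\gamma/2}$, yielding the constant $b$. The only cosmetic difference is that you extend $p$ by zero and sum the full infinite series uniformly, whereas the paper treats the case $\tau-t\leq 1$ separately and otherwise uses a finite sum with a leftover interval $[t+N,\tau]$; both give the same bound.
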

\begin{proof}[{\bf Proof.}]
Assume first that $\tau -t \leq1.$ Then we have
$$
J(t,\tau) \leq \sqrt{\tau-t} \sqrt{\int_t^{\tau}p^2(s\,ds}\\\leq\sqrt{a}e^{-{{\gamma t}\over2}}.
$$
If $\tau - t\geq1$ we reason as follows. Let $N$ be the integer part of
$\tau-t$, we get
\begin{eqnarray*}J(t,\tau) &\leq& \sum_{i=0}^{N-1} \int_{t+i}^{t+i+1} p(s)\,ds +
      \int_{t+N}^{\tau} p(s)\,ds\\
&\leq&\sum_{i=0}^{N-1}
\sqrt{a}e^{-{{\gamma(t+i)}\over2}}+\sqrt{a}e^{-{{\gamma(t+N)}\over2}}\\
&\leq& \sqrt{a}{{e^{\gamma\over2}}\over{e^{\gamma\over2}-1}}e^{-{{\gamma
t}\over2}}.
\end{eqnarray*}
\end{proof}

\begin{lem}\label{LemmaZelenyakPol} Let $p$ be a nonnegative square integrable function on $[1, \infty) $. Assume that for some  $\alpha>0$ and a
constant $K>0$, we have
$$\forall t\geq 1\quad  \int_t^{2t}p^2(s)ds \leq K t^{-2\alpha -1}$$
Then for all $\tau \geq t\geq 1$ we have:
$$\int_t^{\tau}p(s) \,ds
\leq {\sqrt{K}\over{1-2^{-\alpha}}}\,\, t^{-\alpha } .$$
\end{lem}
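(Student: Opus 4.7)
The plan is to mimic the strategy used in the preceding Lemma \ref{LemmaZelenyakExp}, but with a dyadic decomposition of $[t,\tau]$ adapted to the polynomial decay hypothesis. The key idea is that the hypothesis controls $\int p^2$ on each dyadic interval $[2^i t, 2^{i+1}t]$, which is precisely the natural scale on which a polynomial bound behaves like a geometric series.

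First I would dispose of the short-range case $\tau \le 2t$ by a single application of Cauchy--Schwarz:
\begin{equation*}
\int_t^\tau p(s)\,ds \le \sqrt{\tau-t}\,\sqrt{\int_t^\tau p^2(s)\,ds} \le \sqrt{t}\cdot\sqrt{K\, t^{-2\alpha-1}} = \sqrt{K}\, t^{-\alpha},
\end{equation*}
which is already smaller than the desired bound $\sqrt{K}/(1-2^{-\alpha})\cdot t^{-\alpha}$.

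For $\tau > 2t$, I would let $N\ge 1$ be the integer such that $2^N t \le \tau < 2^{N+1} t$ and split
\begin{equation*}
\int_t^\tau p(s)\,ds \le \sum_{i=0}^{N-1}\int_{2^i t}^{2^{i+1} t} p(s)\,ds + \int_{2^N t}^\tau p(s)\,ds.
\end{equation*}
On each dyadic piece, Cauchy--Schwarz combined with the hypothesis at scale $2^i t$ gives
\begin{equation*}
\int_{2^i t}^{2^{i+1} t} p(s)\,ds \le \sqrt{2^i t}\,\sqrt{K(2^i t)^{-2\alpha-1}} = \sqrt{K}\,(2^i t)^{-\alpha},
\end{equation*}
and the same estimate (using $\tau - 2^N t \le 2^N t$) bounds the tail piece by $\sqrt{K}\,(2^N t)^{-\alpha}$.

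Summing the geometric series then yields
\begin{equation*}
\int_t^\tau p(s)\,ds \le \sqrt{K}\, t^{-\alpha} \sum_{i=0}^{N} 2^{-i\alpha} \le \sqrt{K}\, t^{-\alpha}\cdot\frac{1}{1-2^{-\alpha}},
\end{equation*}
which is exactly the claimed bound. No serious obstacle is expected: the only point that needs a little care is ensuring that the hypothesis $\int_s^{2s}p^2 \le K s^{-2\alpha-1}$ is applied at the scale $s = 2^i t \ge t \ge 1$, so that it is legitimate, and that the final tail interval $[2^N t,\tau]$ is handled by the same Cauchy--Schwarz bound rather than by the hypothesis directly.
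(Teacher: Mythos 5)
Your proof is correct and follows essentially the same approach as the paper: Cauchy--Schwarz on each dyadic interval $[2^i t, 2^{i+1}t]$ followed by summing the resulting geometric series. The paper simply bounds $\int_t^{\tau}p\,ds$ by $\int_t^{\infty}p\,ds$ and sums the full infinite series, which avoids your (harmless) case distinction and separate treatment of the tail interval.
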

\begin{proof}[{\bf Proof.}]  By Cauchy-Schwarz inequality, for all $t\geq 1$ we may write:
$$\int_t^{2t}p(s) ds \leq \sqrt{t}\, (K t^{-2\alpha -1})^{1/2}
= \sqrt{K}\, t^{-\alpha },$$
hence
$$\int_t^{\tau}p(s) \,ds\leq\int_t^{\infty}p(s)\,ds = \sum_{k=0}^{\infty } \int_{2^k
t}^{2^{k+1}t}p(s)\, ds \leq  \sqrt{K}\,
\sum _{k = 0}^{\infty}( 2^kt) ^{-\alpha } =
{\sqrt{K}\over{1-2^{-\alpha}}}\,\, t^{-\alpha }$$
\end{proof}

Finally, in the application of the {\L}ojasiewicz gradient inequality \index{{\L}ojasiewicz gradient inequality} to convergence results, the following topological reduction principle will play an important role. 
\begin{lem}\label{InegLojsABSRAITLemme} Let $W$ and $X$ be two Banach spaces. Let  $U\subset W$ be open and $E:U\longrightarrow \R$ and ${\cal G}: U\longrightarrow X$ be two continuous functions. We assume that for all $a\in U$ such that ${\cG}(a)=0$, there exist 
$\sigma_a>0,$ $\theta(a)\in(0,1)$ and $c(a)>0$ 
\begin{equation}\label{InegLojsABSRAIT}
\Vert {\cG}(u)\Vert_X\geq c(a)\vert E(u)-E(a)\vert^{1-\theta(a)},\ \forall u:\ \Vert u-a\Vert_W<\sigma_a.\end{equation}
Let $\Gamma$ be a compact and connected subset of ${\cG}^{-1}\{0\}$. Then we have
\begin{enumerate}
\item[(1)]   $E$ assumes a constant value on $\Gamma$. We denote by ${\bar E}$ the common value of $E(a)$, $a\in \Gamma$. 
\item[(2)]  There exist $\sigma>0$, $\theta\in(0,1)$ and $c>0$ such that 
$$dist(u,\Gamma)<\sigma\Longrightarrow \Vert {\cG}(u)\Vert_X\geq c\vert E(u)-{\bar E}\vert^{1-\theta} $$  
\end{enumerate}
\end{lem}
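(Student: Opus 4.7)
The plan is to dispose of part (1) by a \emph{clopen} argument driven by the hypothesis itself, and then to derive part (2) from a standard finite-cover / Lebesgue-number compactness argument, with a short extra step to merge the local exponents $\theta(a_i)$ into a single exponent.

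\medskip

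\textbf{Part (1).} Fix $a\in\Gamma$. For every $u\in\Gamma$ with $\|u-a\|_W<\sigma_a$ we have $\cG(u)=0$, so inequality \eqref{InegLojsABSRAIT} reads $0\ge c(a)|E(u)-E(a)|^{1-\theta(a)}$, forcing $E(u)=E(a)$. Hence for each value $v\in\R$ the set $\{a\in\Gamma:E(a)=v\}$ is open in $\Gamma$; since it is also closed (as $E$ is continuous), the connectedness of $\Gamma$ yields that $E$ is constant on $\Gamma$. Denote this common value by $\bar E$.

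\medskip

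\textbf{Part (2).} Cover $\Gamma$ by the family $\{B_W(a,\sigma_a/2):a\in\Gamma\}$ and extract, by compactness, a finite subcover $B_W(a_1,\sigma_{a_1}/2),\dots,B_W(a_N,\sigma_{a_N}/2)$. Set
\[
\sigma_0:=\tfrac12\min_{1\le i\le N}\sigma_{a_i}>0,\qquad c_0:=\min_{1\le i\le N}c(a_i)>0,\qquad \theta:=\min_{1\le i\le N}\theta(a_i)\in(0,1).
\]
If $u\in U$ satisfies $\mathrm{dist}(u,\Gamma)<\sigma_0$, pick $a\in\Gamma$ with $\|u-a\|_W<\sigma_0$ and an index $i$ with $\|a-a_i\|_W<\sigma_{a_i}/2$. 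The triangle inequality gives $\|u-a_i\|_W<\sigma_{a_i}$, so the local hypothesis, together with $E(a_i)=\bar E$, produces
\[
\|\cG(u)\|_X\ \ge\ c(a_i)\,|E(u)-\bar E|^{1-\theta(a_i)}\ \ge\ c_0\,|E(u)-\bar E|^{1-\theta(a_i)}.
\]

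\medskip

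\textbf{Uniformizing the exponent.} The remaining issue is that different $a_i$ give different exponents. Since $E$ is continuous and $\Gamma$ is compact with $E\equiv\bar E$ on $\Gamma$, there exists $\sigma_1\in(0,\sigma_0]$ such that $|E(u)-\bar E|\le 1$ whenever $\mathrm{dist}(u,\Gamma)<\sigma_1$. For such $u$, the inequality $\theta\le\theta(a_i)$ implies $|E(u)-\bar E|^{1-\theta(a_i)}\ge|E(u)-\bar E|^{1-\theta}$, so with $\sigma:=\sigma_1$ and $c:=c_0$ we obtain the conclusion
\[
\|\cG(u)\|_X\ge c\,|E(u)-\bar E|^{1-\theta}\qquad\text{whenever }\mathrm{dist}(u,\Gamma)<\sigma.
\]

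\medskip

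The only mildly delicate point is this last step: one must restrict to the regime $|E(u)-\bar E|\le 1$ to ensure that lowering the exponent weakens the right-hand side rather than strengthening it. Everything else is a routine compactness-plus-triangle-inequality argument; the connectedness hypothesis is used only in Part (1) to promote the local constancy of $E$ on $\Gamma$ to global constancy.
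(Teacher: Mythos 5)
Your proof is correct and follows essentially the same route as the paper: a clopen/connectedness argument for the constancy of $E$ on $\Gamma$, then a finite cover by half-radius balls plus the triangle inequality, taking minima of the constants. The paper handles your ``uniformizing the exponent'' step by shrinking each $\sigma_a$ at the outset so that $\vert E(u)-E(a)\vert\leq 1$ on $B(a,\sigma_a)$, which is exactly the normalization you perform at the end.
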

\begin{proof}[{\bf Proof.}] By continuity of $E$ we can always assume that $\sigma_a$ is replaced by a possibly smaller number so  that $\vert E(u)-E(a)\vert \leq 1$ for all $u$ such that $\Vert u-a\Vert_W<\sigma_a.$
Let $a\in \Gamma$ and
$$K=\{b\in \Gamma/ \ E(b)=E(a)\}.$$
It follows from \eqref{InegLojsABSRAIT} that $K$ is an open subset of $\Gamma$  which  is obviously closed by continuity and since $\Gamma$ is connected by hypothesis we have $K=\Gamma$.\\
On the other hand, since $\Gamma$ is compact, there exist  $a_1,\cdots,a_p\in \Gamma$ such that
 $$\Gamma\subset\displaystyle\bigcup_{i=1}^{p} B(a_i,\frac{\sigma_{a_i}}{2}).$$
The result  follows with $\sigma=\frac12\inf\sigma_{a_i}$, $c=\inf c(a_i)$ and $\theta=\inf \theta(a_i)$.

\end{proof}

\section{Semi-Fredholm operators \index{semi-Fredholm}} Let $E$, $F$ be two Banach spaces and  $A:E\longrightarrow F$ be a linear operator. We denote by $N(A)$ and $R(A)$ the null space and the range of $A$, repectively.
\begin{defn}  A  bounded linear operator $A\in L(E,F)$   is said to be semi-Fredholm\index{semi-Fredholm} if 
\begin{enumerate}
\item[(1)] $N(A)$ is finite dimensional,
\item[(2)]  $R(A)$ is closed.
\end{enumerate}
We denote by $SF(E,F)$ the set of all semi-Fredholm\index{semi-Fredholm} operators from $E$ to $F$.\end{defn}
\begin{rem}\label{OperatorSemFredholm}{\rm The fact that $N(A)$ is finite dimensional implies that there exists a closed subspace $X$ of $E$ such that $E=N(A)\bigoplus X$ (cf \cite{MR2759829} p. 38). Moreover $R(A)=A(X)$ is a Banach space when equipped with the norm $\Vert\cdot\Vert_F$.  }\end{rem}

\begin{thm}\label{PropInegFredholm} Let $A\in L(E,F)$ and assume that $N(A)$ is finite dimensional. Then  we have $A\in SF(E,F)$ if and only if  
\begin{equation}\label{InegOperFredholm}\exists\rho>0,\ 
\forall u\in X   \quad \Vert A u  \Vert_{F }\ge \rho \Vert u\Vert_E.
\end{equation} 
\end{thm}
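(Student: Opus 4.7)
The plan is to treat the two implications separately, using the decomposition $E = N(A) \oplus X$ guaranteed by Remark \ref{OperatorSemFredholm} together with the fact that $R(A) = A(X)$.

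For the sufficiency direction, I will assume \eqref{InegOperFredholm} and show that $R(A)$ is closed. Take a sequence $y_n = A u_n \in R(A)$ with $y_n \to y$ in $F$; after replacing $u_n$ by its component in $X$ (which does not change $Au_n$ since the $N(A)$-component is killed), I may assume $u_n \in X$. The sequence $(Au_n)$ is Cauchy in $F$, so by \eqref{InegOperFredholm} applied to $u_n - u_m$, the sequence $(u_n)$ is Cauchy in $E$, and since $X$ is closed it converges to some $u \in X$. Continuity of $A$ then yields $y = Au \in R(A)$, establishing that $R(A)$ is closed.

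For the necessity direction, I will assume $A \in SF(E,F)$. Consider the restriction $\widetilde A := A_{\mid X} : X \longrightarrow R(A)$. Since $N(A) \cap X = \{0\}$, the map $\widetilde A$ is injective; since $R(A) = A(X) = \widetilde A(X)$, it is also surjective. Both $X$ (as a closed subspace of the Banach space $E$) and $R(A)$ (as noted in Remark \ref{OperatorSemFredholm}) are Banach spaces, and $\widetilde A$ is bounded. The Banach open mapping theorem (or equivalently the bounded inverse theorem) provides a constant $\rho > 0$ such that $\|\widetilde A^{-1} v\|_E \le \rho^{-1} \|v\|_F$ for every $v \in R(A)$. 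Substituting $v = Au$ with $u \in X$ yields precisely \eqref{InegOperFredholm}.

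The one point that really uses the hypothesis of the theorem is the existence of the closed complement $X$: without $\dim N(A) < \infty$ one cannot guarantee such a splitting. Granting this, the main obstacle is essentially bookkeeping, namely checking that $R(A)$ is genuinely a Banach space under the ambient norm of $F$ so that the open mapping theorem applies — this is exactly the content of Remark \ref{OperatorSemFredholm} once $R(A)$ is known to be closed, which is the semi-Fredholm hypothesis. Everything else reduces to routine use of Cauchy sequences and of the open mapping theorem.
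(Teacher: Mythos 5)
Your proof is correct and follows essentially the same route as the paper: the sufficiency direction is the same Cauchy-sequence argument using the decomposition $E=N(A)\oplus X$, and the necessity direction applies Banach's bounded inverse theorem to the bijection $A_{\mid X}:X\to R(A)$. No gaps.
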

\begin{proof}[{\bf Proof.}] \eqref{InegOperFredholm} implies that $R(A)$ is closed. In fact, let $(f_n)=(Au_n)$ be such that $f_n\longrightarrow f$ in $F$. Let $(x_n)$ and $(y_n)$ be such that $u_n=x_n+y_n$ with $(x_n)\subset X$ and $(y_n)\subset N(A)$. So $f_n=Ax_n$. Then the inequality $\Vert x_n-x_m\Vert_E\leq \frac{1}{\rho}\Vert f_n-f_m\Vert_F$ implies that $(x_n)$ is a Cauchy sequence, hence converges. Let $x$ be the limit. We have $Ax_n\longrightarrow Ax$ so $f=Ax$. \\
Conversely, $R(A)$ is a Banach space and $C:=A_{/X}:X\longrightarrow R(A)$ is bijective and continuous, by Banach's theorem we get that $C^{-1}$ is continuous and \eqref{InegOperFredholm} follows.
\end{proof}
\begin{rem}\label{IsomorphismFredholm}{\rm If $A:E\longrightarrow F$ is a topological isomorphism, then $A\in SF(E,F)$ with $N(A)=\{0\}$. Conversely, as a consequence of Banach's theorem, if $A\in SF(E,F)$ with $N(A)=\{0\}$ , then $A:E\longrightarrow R(A)$ is a topological isomorphism. }\end{rem}
\begin{thm}\label{FredholmCptFred} Let $A\in  SF(E,F)$ and $G\in L(E,F)$. Assume that $G$ is \index{compact operator}compact, then $A+G\in SF(E,F)$.\end{thm}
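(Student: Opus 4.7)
The plan is to verify the two defining conditions of $SF(E,F)$ for $A+G$: a finite dimensional kernel, and (via Theorem \ref{PropInegFredholm}) a lower bound on a closed complement of that kernel. The proof relies on combining the inequality \eqref{InegOperFredholm} available for $A$ with the compactness of $G$, using the splitting $E=N(A)\oplus X$ of Remark \ref{OperatorSemFredholm}.

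First I would show $\dim N(A+G)<\infty$ by proving that its closed unit ball is compact (and invoking Riesz). Take a bounded sequence $(u_n)$ in $N(A+G)$. Since $G$ is compact a subsequence makes $Gu_n$ converge, and along that subsequence $Au_n=-Gu_n$ also converges. Decompose $u_n=x_n+y_n$ with $x_n\in X$ and $y_n\in N(A)$; the continuous projections keep both components bounded. By \eqref{InegOperFredholm} applied to $A$, $\|x_n-x_m\|_E\leq\rho^{-1}\|Ax_n-Ax_m\|_F$, so $(x_n)$ is Cauchy. The components $(y_n)$ lie in the finite dimensional space $N(A)$, so Bolzano–Weierstrass extracts a further convergent subsequence, and $(u_n)$ converges.

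Now that $N(A+G)$ is finite dimensional, I pick a closed topological complement $Y$ of $N(A+G)$ in $E$ (the same reference \cite{MR2759829} p.~38 used in Remark \ref{OperatorSemFredholm}). By Theorem \ref{PropInegFredholm}, it suffices to establish
$$\exists\,\rho'>0,\ \forall u\in Y,\quad \|(A+G)u\|_F\geq \rho'\|u\|_E.$$
I would argue by contradiction: otherwise there exist $u_n\in Y$ with $\|u_n\|_E=1$ and $(A+G)u_n\to 0$ in $F$. Compactness of $G$ yields a subsequence with $Gu_n\to g$, whence $Au_n\to -g$. Writing $u_n=x_n+y_n$ with $x_n\in X$, $y_n\in N(A)$, the inequality \eqref{InegOperFredholm} again makes $(x_n)$ Cauchy, hence $x_n\to x$, and boundedness of $(y_n)$ in the finite dimensional space $N(A)$ gives a further subsequence $y_n\to y$. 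Setting $u:=x+y$, we have $u_n\to u$, so $\|u\|_E=1$, $u\in Y$ (closed), and by continuity $(A+G)u=0$, i.e.\ $u\in Y\cap N(A+G)=\{0\}$, contradicting $\|u\|_E=1$.

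The main obstacle is the contradiction step: one must combine the lower bound available only on $X$ (the complement of $N(A)$, not of $N(A+G)$) with the compactness of $G$ on $Y$ (the complement of $N(A+G)$). The key observation making this work is that the two decompositions need not be compared directly: the $A$-estimate is used only to extract strong convergence of the $X$-components, while the finite dimensionality of $N(A)$ handles the remaining components, and the limit $u$ automatically lives in $Y$ because $Y$ is closed. Everything else is routine bookkeeping of subsequences.
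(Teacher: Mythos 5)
Your proposal is correct and follows essentially the same route as the paper: the same decomposition $E=N(A)\oplus X$, the same use of compactness of $G$ plus the lower bound \eqref{InegOperFredholm} for $A$ to extract convergent subsequences, first to show $\dim N(A+G)<\infty$ and then, via Theorem \ref{PropInegFredholm} and a contradiction on a closed complement $Y$ of $N(A+G)$, to show $R(A+G)$ is closed. The only difference is organizational: the paper isolates the subsequence-extraction argument as a single preliminary step and applies it twice, whereas you repeat it inline; the mathematics is identical.
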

\begin{proof}[{\bf Proof.}] We divide the proof into 3 steps :\\
{\it Step 1 : } If $(u_n)\subset E$ with $\Vert u_n\Vert\leq 1$ and $(A+G)(u_n)\longrightarrow 0$, then $(u_n)$ has a strongly convergent subsequence in $E$. Indeed we can assume $Gu_n\longrightarrow g\in F$. Let $u_n=x_n+y_n$, $x_n\in X$, $y_n\in N(A)$ where $X$ is as in the remark \ref{OperatorSemFredholm}. Since $A u_n=Ax_n\longrightarrow - g$, $(x_n)$ is convergent in $E$. Then $(y_n)$ is bounded in $N(A)$, since $\dim N(A)<\infty$ we can assume that $y_n\longrightarrow y$ in $E$ with $y\in N(A)$. In particular $u_n=x_n+y_n$ is convergent  in $E$.\\
{\it Step 2 : } Let $(u_n)\subset N(A+G)$ with $\Vert u_n\Vert\leq 1$. By step 1, $(u_n)$ is precompact in $E$, hence the unit ball of $N(A+G)$ is precompact and consequently $\dim N(A+G)<\infty$.\\
{\it Step 3 : } Let $Y$ be a Banach space such that  $E=N(A+G)\bigoplus Y$. Assuming $R(A+G)$ not closed, then by Theorem\ref{PropInegFredholm} we can find $y_n\in Y$ with $\Vert y_n\Vert=1$ and $(A+G)y_n\longrightarrow 0$. By step 1, up to a subsequence we can deduce $y_n\longrightarrow y$ in $E$. We immediately find $\Vert y\Vert_E$ and $y\in Y$. Hence since $(A+G)y_n\longrightarrow 0$ we have $y\in N(A+G)$. Since $N(A+G)\cap Y=\{0\}$, we end up with a contradiction since $y\in N(A+G)\cap Y$ and $\Vert y\Vert_E=1$.
\end{proof}
For the next corollary, we consider two real Hilbert spaces $V, H$  where  $V\subset H$ with continuous and dense imbedding and $H'$, the topological dual of $H$ is identified with $H$, therefore $$ V\subset H = H' \subset  V'$$ with continuous and dense imbeddings. 
\begin{cor}\label{SFredplusProj} Let $A\in SF(V,V')$ and assume that $A$ is symmetric. Then $A+P:V\longrightarrow V'$ is an isomorphism where $P:V\longrightarrow N(A)$ is the projection in the sense of $H$.
\end{cor}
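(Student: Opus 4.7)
The strategy combines the compact perturbation theorem (Theorem~\ref{FredholmCptFred}) with symmetry arguments, splitting the proof into injectivity, closed range, and surjectivity of $A+P$.

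First I would note that $P$, being the $H$-orthogonal projection onto the finite-dimensional subspace $N(A)\subset V$, is a bounded operator from $V$ into itself (hence into $V'$) of finite rank, thus compact; moreover it is symmetric with respect to the $V',V$ pairing, since $\langle Pu,v\rangle_{V',V}=(Pu,v)_H=(Pu,Pv)_H$ is symmetric in $u,v$. Theorem~\ref{FredholmCptFred} then yields $A+P\in SF(V,V')$, so $R(A+P)$ is closed and $N(A+P)$ is finite dimensional.

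For injectivity, if $(A+P)u=0$, testing against an arbitrary $\phi\in N(A)$ gives
\[
0=\langle Au,\phi\rangle_{V',V}+(Pu,\phi)_H=\langle A\phi,u\rangle_{V',V}+(u,\phi)_H=(u,\phi)_H,
\]
where I used symmetry of $A$, the identity $A\phi=0$, and the fact that $(Pu,\phi)_H=(u,\phi)_H$ whenever $\phi\in N(A)$. Therefore $Pu=0$; then $Au=0$, which places $u\in N(A)$, whence $u=Pu=0$. By Remark~\ref{IsomorphismFredholm} this makes $A+P:V\to R(A+P)$ a topological isomorphism.

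For surjectivity I would establish the decomposition $V'=N(A)\oplus R(A)$, where $N(A)$ is viewed as a subspace of $V'$ through $V\subset H\subset V'$. The inclusion $R(A)\subset N(A)^{\perp}$ (annihilator in the duality) is immediate from symmetry of $A$, and the reverse inclusion follows from a Hahn-Banach argument using closedness of $R(A)$ and reflexivity of $V$: any $v\in V$ annihilating $R(A)$ must satisfy $Av=0$ by symmetry. Directness of the sum is clear since $\phi\in N(A)\cap R(A)$ with $\phi=Au$ yields $\|\phi\|_H^2=\langle Au,\phi\rangle_{V',V}=\langle A\phi,u\rangle_{V',V}=0$. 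Given $f\in V'$, I would then define $\phi\in N(A)$ by $(\phi,\psi)_H=\langle f,\psi\rangle_{V',V}$ for all $\psi\in N(A)$ (possible since $N(A)$ is finite dimensional), write $f-\phi=Au_1$ with $u_1$ adjusted so that $Pu_1=0$, and check that $u:=\phi+u_1$ satisfies $(A+P)u=A\phi+Au_1+P\phi+Pu_1=f$.

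The main obstacle is the splitting $V'=N(A)\oplus R(A)$: one has to keep careful track of the three embeddings $V\subset H=H'\subset V'$, distinguish between $N(A)$ viewed inside $V$ and viewed inside $V'$, and deploy symmetry at just the right moment. Once that is in place the rest reduces to straightforward bookkeeping.
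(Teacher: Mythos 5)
Your proof is correct, and it follows the paper's own argument for two of the three ingredients: injectivity of $A+P$ via symmetry (the paper phrases it as $Au=-Pu\in N(A)\cap R(A)=\{0\}$, you test against elements of $N(A)$, but both computations are the same use of $\langle Au,\phi\rangle_{V',V}=\langle A\phi,u\rangle_{V',V}=0$), and closedness of $R(A+P)$ via compactness of the finite-rank projection $P$ together with Theorem~\ref{FredholmCptFred}. Where you genuinely diverge is surjectivity. The paper's route is shorter and softer: since $A+P$ is symmetric with trivial kernel, any $v\in V$ annihilating $R(A+P)$ satisfies $(A+P)v=0$, hence $v=0$, so $R(A+P)$ is dense; dense plus closed gives $R(A+P)=V'$, and Banach's theorem finishes. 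Your route instead establishes the decomposition $V'=N(A)\oplus R(A)$ (via the bipolar identity $R(A)={}^{\perp}N(A)$, which uses closedness of $R(A)$, reflexivity of $V$ and symmetry) and then writes down an explicit preimage $u=\phi+u_1$ with $\phi\in N(A)$ matching $f$ on $N(A)$ and $Au_1=f-\phi$, $Pu_1=0$. Your version costs a little more bookkeeping with the three embeddings $V\subset H\subset V'$, but it buys more information: it exhibits the splitting $V'=N(A)\oplus R(A)$ and shows concretely how $(A+P)^{-1}$ acts on each summand, which the paper's density argument leaves implicit. Both are valid; the paper's is the more economical, yours the more constructive.
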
 
\begin{proof}[{\bf Proof.}] First we have $N(A+P)=\{0\}$. Indeed if $Au+Pu=0$, we have $Au=-Pu\in N(A)$, then $Au\in N(A)\cap R(A)=\{0\}$, so $Au=0$, hence $u=Pu=-Au=0$.\\ On the other hand, since $A\in SF(V,V')$, $\dim N(A)<\infty$ and then $P$ is \index{compact operator} compact. By Theorem \ref{FredholmCptFred} $A+P\in SF(V,V')$, then $R(A+P)$ is closed. Now since $A+P$ is symmetric and $N(A+P)=\{0\}$ then $R(A+P)$ is dense in $V'$, hence $R(A+P)=V'$. By Banach's theorem we get that $(A+P)^{-1}\in L(V',V).$
\end{proof} 
\begin{example}\label{ExempleFredholm}{\rm Let $\Omega$ be a bounded and regular domain of $\R^N$,  $V=H^1_0(\Omega)$
$$A=-\Delta +p(x) I, \quad p\in L^\infty(\Omega)$$
$G:=p(x) I:V\longrightarrow V'$ is \index{compact operator} compact.
$-\Delta\in \hbox{Isom}\, (V,V')$ then by Theorem \ref{FredholmCptFred} $A\in SF(V,V')$. Corollary \ref{SFredplusProj} implies that $A+P\in \hbox{Isom}\,(V,V').$
}
\end{example}

 \section{Analytic \index{analytic} maps} In this section, we introduce a general notion of real analyticity valid in the Banach space framework which will be essential for the proper formulation of  many convergence results applicable to P.D.E. One of the difficulties we encounter here is that the good properties of complex analyticity cannot be used and all the proofs have to be done in the real analytic framework. For example, in this framework the result on composition of analytic maps is not so trivial as in the complex framework and its proof is generally skipped even in the best reference books. Here we shall give a complete argument relying on the majorant series technique of Weirstrass.
 
 \subsection {Definitions and general properties\label{Def-Ana}}
\begin{defn}\label{defFtAnalytique}  Let $X$, $Y$ be two real Banach space and $a\in X$. Let $U$ be an open neighborhood of $a$ in $X$. A map $f:U\longrightarrow Y$ is called analytic \index{analytic}  at $a$ if there exists $r>0$ and a sequence of $n-$linear, continuous, symmetric maps $(M_n)_{n\in \N}$ fulfilling the following conditions
\begin{itemize}
\item[(1)] $\displaystyle \sum_{n\in \N}\Vert M_n\Vert_{{\cal L}_n(X,Y)} r^n<\infty$ where $$\Vert M_n\Vert_{{\cal L}_n(X,Y)}=\sup\{\Vert M_n(x_1,x_2,\cdots, x_n)\Vert_Y,\ \sup_i\Vert x_i\Vert_X\leq 1\}.$$
\item[(2)] ${\bar B}(a,r)\subset U$.
\item[(3)] $\forall h\in {\bar B}(0,r),\ f(a+h)=f(a)+\displaystyle \sum_{n\geq1}  M_n(h^{(n)})$ where $h^{(n)}=\underbrace{(h,\cdots,h)}_{n \hbox{ times}}$.
\end{itemize}
\end{defn}
\begin{rem} {\rm Under the previous definition, it is not difficult to check that
\begin{itemize}
\item[-] $\forall b\in B(a,r)$, $f$ is analytic \index{analytic} at $b$.
\item[-] $f\in C^{\infty}(B(a,r),Y)$ with $D^nf(a)=n!M_n$.
\item[-] A finite linear combination of analytic \index{analytic} maps at $a$ is again analytic \index{analytic} at $a$.
\end{itemize}}
\end{rem}
\begin{defn} $f$ is analytic \index{analytic} on the open set $U$ if $f$ is analytic \index{analytic} at every point of $U$.
\end{defn}
\begin{example} \label{Poly=ana}  {\rm It is clear from the definitions that any bounded linear operator, any continuous quadratic form and more generally any finite linear combination of restrictions to the diagonal of continuous $k$-multilinear maps: $X^{k}\rightarrow Y$ (usually called a  polynomial map) is analytic \index{analytic} on the whole space $X$. } \end{example}
\begin{prop}\label{Df} Let $f\in C^{1}(U,Y)$. The following properties are equivalent
\begin{itemize}
\item[(1)] $f:U\longrightarrow Y$ is analytic \index{analytic};
\item[(2)] $Df:U\longrightarrow {\cal L}(X,Y)$ is analytic \index{analytic}.
\end{itemize}
Moreover if
$$f(a+h)=f(a)+\displaystyle \sum_{n\geq1}  M_n(h^{(n)})$$ is the expansion of $f(a+h)$ for all $h$ in the closed ball ${\bar B}(0,r)\subset U - a $, 
then
$$Df(a+h)=M_1+\displaystyle \sum_{n\geq 2}  n M_n(h^{(n-1)},\cdot) $$ is the expansion of $Df(a+h)$ for all $h$ in the open ball $B(0,r).$
\end{prop}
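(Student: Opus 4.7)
My plan is to prove the two implications separately. For $(1)\Rightarrow(2)$, starting from the expansion $f(a+h)=f(a)+\sum_{n\geq 1}M_n(h^{(n)})$ on $\bar B(0,r)$, I would guess the expansion of $Df$ around $a$ and verify both its convergence and that it genuinely computes $Df$. The natural candidates are $N_0:=M_1\in\mathcal{L}(X,Y)$ and, for $k\geq 1$, the symmetric $k$-linear map $N_k\in\mathcal{L}_k(X,\mathcal{L}(X,Y))$ defined by $N_k(v_1,\ldots,v_k)(v):=(k+1)M_{k+1}(v_1,\ldots,v_k,v)$, which inherits symmetry from $M_{k+1}$ and satisfies $\|N_k\|\leq (k+1)\|M_{k+1}\|$. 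For any $r'<r$, the bound $\sum_{k\geq 0}\|N_k\|(r')^k\leq \sum_{n\geq 1}n\|M_n\|(r')^{n-1}$ is finite, since $n(r'/r)^{n-1}$ is bounded, so $\sum n\|M_n\|(r')^{n-1}\leq C(r',r)\sum\|M_n\|r^n<\infty$. Hence the candidate series has the summability required by Definition \ref{defFtAnalytique} on $\bar B(0,r')$.

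To see that the series actually represents $Df(a+h)$, I would truncate: set $g_N(h):=\sum_{n=1}^N M_n(h^{(n)})$. Each $g_N$ is polynomial, and by the symmetry of $M_n$ and the binomial-type expansion of $M_n((h+sv)^{(n)})$ one gets $Dg_N(h)\cdot v=\sum_{n=1}^N nM_n(h^{(n-1)},v)$. On $\bar B(0,r')$ both $(g_N)$ and $(Dg_N)$ converge uniformly to their natural limits by the Weierstrass $M$-test (using the two summability bounds above), so the classical theorem on term-by-term differentiation of $C^1$ sequences yields the formula $Df(a+h)=M_1+\sum_{n\geq 2}nM_n(h^{(n-1)},\cdot)$ on $B(0,r')$, and hence on $B(0,r)=\bigcup_{r'<r}B(0,r')$.

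For $(2)\Rightarrow(1)$, suppose $Df(a+h)=N_0+\sum_{k\geq 1}N_k(h^{(k)})$ on $\bar B(0,r)$ with $\sum\|N_k\|r^k<\infty$. Since $f\in C^1$, I would write $f(a+h)-f(a)=\int_0^1 Df(a+th)(h)\,dt$ and insert the expansion. For $\|h\|\leq r$ the series $Df(a+th)(h)=N_0(h)+\sum_{k\geq 1}t^k N_k(h^{(k)})(h)$ is dominated on $[0,1]$ by $\sum_k\|N_k\|r^{k+1}<\infty$, so termwise integration gives $f(a+h)=f(a)+N_0(h)+\sum_{k\geq 1}\frac{1}{k+1}N_k(h^{(k)})(h)$. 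I would then define the required $M_n$ as the symmetrization of the $n$-linear form $\tilde M_n(v_1,\ldots,v_n):=\tfrac{1}{n}N_{n-1}(v_1,\ldots,v_{n-1})(v_n)$ (and $M_1:=N_0$). Symmetrization preserves both the value on the diagonal and the norm estimate $\|M_n\|\leq \tfrac{1}{n}\|N_{n-1}\|$, which immediately gives $\sum_{n\geq 1}\|M_n\|r^n\leq \|N_0\|r+r\sum_{k\geq 1}\frac{1}{k+1}\|N_k\|r^k<\infty$, establishing analyticity of $f$ at $a$.

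The main obstacle is concentrated in the forward direction: justifying term-by-term differentiation of a series of symmetric multilinear maps between Banach spaces. The subtlety is that one cannot work on the closed ball of radius $r$ itself, but must pass to strictly smaller balls $\bar B(0,r')$ in order to convert $\sum\|M_n\|r^n<\infty$ into $\sum n\|M_n\|(r')^{n-1}<\infty$; this elementary arithmetic step is what makes everything fit together, and once it is in place the remainder is a careful but standard application of uniform convergence.
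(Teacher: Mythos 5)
Your proof is correct and follows essentially the same route as the paper: the key shrink-the-radius estimate $\sum n\Vert M_n\Vert (r')^{n-1}<\infty$ for $r'<r$, and the integral recovery formula $f(a+h)=f(a)+\int_0^1 Df(a+sh)(h)\,ds$ for passing between $f$ and $Df$. The only difference is that you supply the details the paper explicitly skips, justifying the forward direction by uniform convergence of the truncated sums and their derivatives rather than leaving it as ``classical''.
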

\begin{proof}[{\bf Proof.}]  First let us explain the meaning of the formula for the derivative. It involves an infinite sum of expressions of the form $$  n M_n(h^{(n-1)},\cdot).$$ Indeed, since $Df(a+h)$ is for all vectors $h$ an element of ${\cal L}(X,Y)$, the formula really means 
$$\forall \xi\in X, \quad Df(a+h)(\xi)=M_1(\xi) +\displaystyle \sum_{n\geq 2}  n M_n(h^{(n-1)},\xi)$$ and for any $n\ge2$ fixed we must identify $  n M_n(h^{(n-1)},\cdot) $ as the trace on the diagonal of $X^{n-1}$  of an $n-1$-linear symmetric continuous map with values in ${\cal L}(X,Y)$. The corresponding map is just 
$$ K_{n-1} (x_1,..., x_{n-1})(\xi) = n M_n(x_1,..., x_{n-1},\xi).$$
Assuming 1), Let us consider $a$ and $r>0$ with ${\bar B}(0,r)\subset U - a $. The expression of the norms of 
$ K_{n-1}$ in the space of $n-1$- linear symmetric continuous map with values in ${\cal L}(X,Y)$ shows that the formal series given by $$\forall \xi\in X, \quad Df(a+h)(\xi)=M_1(\xi) +\displaystyle \sum_{n\geq 2}  K_{n-1}(h^{(n-1)},\xi)$$ satisfies $\displaystyle \sum_{n\in \N}\Vert K_n\Vert_{{\cal L}_n(X,{\cal L}(X,Y))} r'^n<\infty$ for any $r'\in (0, r)$. The summation formula for the derivative is now obvious when the expansion is finite. The general case is more delicate and is in fact related to the formula permitting to recover $f$ from the knowledge of $Df$. This formula:  $$ f(a+h) = f(a) + \int _0^1Df(a+s h)(h) ds $$ is classical and valid for any $C^1$ function $f$. When we substitute the expansion of $Df$ in this formula, the summability of its terms transfers easily to yield the desired expansion for $f$. We skip the details which are classical for this part of the argument.
\end{proof}
\subsection{Composition of analytic \index{analytic} maps}
Let $Z$ be a Banach space, $V$ be an open neighborhood of $f(a)$ and $g:V\longrightarrow Z$   be analytic \index{analytic} at $f(a)$. This means that for some $\rho>0$, we have
$$g(f(a)+k)=g(f(a))+\sum_{m\geq1}  P_m(k^{(m)})$$
whenever $\Vert k\Vert_F\leq \rho$ and $\displaystyle \sum_{m\in \N}\Vert P_m\Vert_{{\cal L}_m(X,Z)} \rho^m<\infty$.
\begin{thm}\label{ComposAnal}  The map $g\circ f$ is analytic \index{analytic} at $a$ with values in $Z$. More precisely, setting
$$R_d(h^{(d)})=\sum_{m\leq d}\displaystyle \sum_{ \sum_{j=1}^m n_j=d} P_m\left(M_{n_1}(h^{(n_1)}),\cdots,M_{n_m}(h^{(n_m)})\right)$$
(the sum is finite for any $d$) we have
\begin{equation}\label{EqCompoAnal}
\sum_{d\geq1}\Vert R_d\Vert_{{\cal L}_d(X,Z)} \sigma^d<\infty
\end{equation}
as soon as  $$\sum\Vert M_n\Vert_{{\cal L}_n(X,Y)}\sigma^n\leq \rho  $$
and 
$$g\circ f(a+h)=g\circ f(a)+\displaystyle \sum_{d\geq1}  R_d(h^{(d)}),\ \forall h, \ \Vert h\Vert_X\leq\sigma.$$
\end{thm}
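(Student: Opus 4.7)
The plan is to follow Weierstrass's majorant method and reduce everything to a convergent scalar power series. I would first introduce the nonnegative scalar series
$$\tilde f(t):=\sum_{n\geq 1}\Vert M_n\Vert_{{\cal L}_n(X,Y)}\, t^n,\qquad \tilde g(s):=\sum_{m\geq 1}\Vert P_m\Vert_{{\cal L}_m(Y,Z)}\, s^m,$$
which are convergent for $t\leq\sigma$ and $s\leq\rho$ respectively, and satisfy $\tilde f(\sigma)\leq\rho$ by hypothesis. The strategy is that every norm bound appearing when one formally substitutes the series for $f(a+h)-f(a)$ into the series for $g$ will be dominated by the corresponding term of $\tilde g\circ\tilde f$, so that absolute convergence in the Banach space setting legitimates the desired rearrangements.

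For $h\in X$ with $\Vert h\Vert_X\leq\sigma$ I would set $k:=f(a+h)-f(a)=\sum_{n\geq 1}M_n(h^{(n)})$, observe that $\Vert k\Vert_Y\leq\tilde f(\Vert h\Vert_X)\leq\rho$, so that $g(f(a)+k)=g(f(a))+\sum_{m\geq 1}P_m(k^{(m)})$ is given by its convergent series. For each fixed $m$, the joint continuity of the bounded $m$-linear map $P_m$ together with the absolute convergence in $Y$ of $\sum_n M_n(h^{(n)})$ allows one to pass to the limit in the partial sums and to expand
$$P_m(k^{(m)})=\sum_{n_1,\ldots,n_m\geq 1}P_m\bigl(M_{n_1}(h^{(n_1)}),\ldots,M_{n_m}(h^{(n_m)})\bigr),$$
with absolute convergence of the multiple series. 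Summing the trivial norm estimates over all $(m,n_1,\ldots,n_m)$ then yields
$$\sum_{m\geq 1}\sum_{n_1,\ldots,n_m\geq 1}\Vert P_m\Vert\,\Vert M_{n_1}\Vert\cdots\Vert M_{n_m}\Vert\,\Vert h\Vert_X^{n_1+\cdots+n_m}=\tilde g\bigl(\tilde f(\Vert h\Vert_X)\bigr)\leq\tilde g(\rho)<\infty.$$
Absolute convergence permits regrouping by total degree $d=n_1+\cdots+n_m$; because each $n_j\geq 1$, only values $m\leq d$ contribute to the $d$th group, which is exactly $R_d(h^{(d)})$ as defined in the statement. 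This gives simultaneously the identity $g\circ f(a+h)=g\circ f(a)+\sum_{d\geq 1}R_d(h^{(d)})$ and the announced summability \eqref{EqCompoAnal}.

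The main obstacle I anticipate is to realise each $R_d$ as a genuine continuous symmetric $d$-linear map on $X$, since the statement of the theorem only prescribes its value on the diagonal. For a tuple $(m,n_1,\ldots,n_m)$ with $\sum n_j=d$ I would set
$$Q_{m,n_1,\ldots,n_m}(x_1,\ldots,x_d):=P_m\bigl(M_{n_1}(x_1,\ldots,x_{n_1}),\,M_{n_2}(x_{n_1+1},\ldots,x_{n_1+n_2}),\,\ldots\bigr),$$
which is a continuous $d$-linear map with $\Vert Q_{m,n_1,\ldots,n_m}\Vert\leq\Vert P_m\Vert\Vert M_{n_1}\Vert\cdots\Vert M_{n_m}\Vert$, and then symmetrise via $\widetilde Q:=\tfrac{1}{d!}\sum_{\pi\in S_d}Q\circ\pi$; the symmetrisation has the same norm bound and the same value on the diagonal. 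Setting $R_d:=\sum_{m\leq d}\sum_{\sum n_j=d}\widetilde Q_{m,n_1,\ldots,n_m}$ produces a continuous symmetric $d$-linear map, and the triangle inequality together with the previous calculation yields $\Vert R_d\Vert\leq\sum_{m\leq d}\sum_{\sum n_j=d}\Vert P_m\Vert\Vert M_{n_1}\Vert\cdots\Vert M_{n_m}\Vert$, whence $\sum_d\Vert R_d\Vert\sigma^d\leq\tilde g(\tilde f(\sigma))\leq\tilde g(\rho)<\infty$. Definition \ref{defFtAnalytique} is then verified for $g\circ f$ at $a$, which completes the argument.
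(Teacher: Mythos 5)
Your proof is correct and follows essentially the same Weierstrass majorant-series argument as the paper: the key estimate $\Vert R_d\Vert_{{\cal L}_d(X,Z)}\leq\sum_{m\leq d}\Vert P_m\Vert\sum_{\vert\mu\vert=d}\Vert M_{n_1}\Vert\cdots\Vert M_{n_m}\Vert$ and the resummation $\sum_d\Vert R_d\Vert\sigma^d\leq\sum_m\Vert P_m\Vert\bigl(\sum_n\Vert M_n\Vert\sigma^n\bigr)^m$ are exactly those of the text, and your appeal to absolute convergence of the full multiple series indexed by $(m,n_1,\dots,n_m)$ to justify the regrouping by total degree is just a cleaner packaging of the paper's explicit double-truncation argument. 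Your final paragraph, realising each $R_d$ as a symmetrised continuous $d$-linear map with the same norm bound, addresses a point the paper leaves implicit and is a welcome addition.
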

\begin{proof}[{\bf Proof.}] We have the obvious estimate :
$$\Vert R_d\Vert_{{\cal L}_d(X,Z)}\leq \sum_{m\leq d}\Vert P_m \Vert_{{\cal L}_m(Y,Z)}\sum_{\vert \mu\vert=d}\Vert M_{n_1}\Vert\cdots \Vert M_{n_m}\Vert$$
where $\mu=(n_1,\cdots,n_m)$, $\vert \mu\vert=n_1+\cdots+n_m$ and $\Vert M_{n_i}\Vert=\Vert M_{n_i}\Vert_{{\cal L}_{n_i}(X,Y)}$. Indeed
$$R_d(h_1,\cdots,h_d)=\sum_{m\leq d}\sum_{\vert \mu\vert=d}   P_m ( M_{n_1}(h_1\cdots,h_{n_1}),\cdots,  M_{n_m}(h_{n_1+\cdots n_{m-1}+1},\cdots,h_{d}) ). $$
Therefore
\begin{eqnarray*}
\sum_{d\geq1}\Vert R_d\Vert_{{\cal L}_d(X,Z)} \sigma^d &\leq&\sum_{1\leq m}\sum_{\leq d} \Vert P_m \Vert\sum \Vert M_{n_1}\Vert\cdots\Vert M_{n_m}\Vert\sigma^d   \\
&=&\sum\sum\sum \Vert P_m \Vert \Vert M_{n_1}\Vert \sigma^{n_1}\cdots\Vert M_{n_m}\Vert \sigma^{n_m} \\ 
&=&\sum_m \Vert P_m \Vert  \sum_{d\geq m, \, \vert \mu\vert=d}  \Vert M_{n_1}\Vert \sigma^{n_1}\cdots\Vert M_{n_m}\Vert \sigma^{n_m} \\
&\leq&\sum_m \Vert P_m \Vert \left( \sum  \Vert M_{n}\Vert \sigma^{n} \right)^m  .
\end{eqnarray*}
Then  \eqref{EqCompoAnal} follows. Concerning the convergence of the series to $g\circ f$, we notice that
$$(g\circ f)(a+h)-(g\circ f)(a)=\sum_{m\geq1}  P_m((f(a+h)-f(a))^{(m)})$$
Hence
\begin{eqnarray*}& &\Vert (g\circ f)(a+h)-(g\circ f)(a)-\sum_{m=1}^{M}  P_m((f(a+h)-f(a))^{(m)}) \Vert_Z\\
&\leq&\sum_{m\geq M+1}\Vert P_m\Vert\left( \sum  \Vert M_{n}\Vert \sigma^{n} \right)^m\\
&<&\varepsilon \ \hbox{ for }M\geq M(\varepsilon).
\end{eqnarray*}
Then for $M\geq 1$ fixed
$$\sum_{m=1}^{M}  P_m((f(a+h)-f(a))^{(m)}=\sum_{d\geq 1}\sum_{m=1}^{M} Q_\mu ((h)^{(d)})$$
with $Q_\mu ((h)^d)=P_m(M_{\mu_1}((h)^{(\mu_1)}),\cdots,M_{\mu_m}((h)^{(\mu_m)})$.
\begin{eqnarray*}& &\Vert\sum_{m=1}^{M}  P_m((f(a+h)-f(a))^{(m)}-\sum_{d=1}^{M}\sum_{m=1}^{M} Q_\mu ((h)^{(d)})\Vert\\
&\leq& \sum_{m=1}^{M}\sum_{\vert \mu\vert=d\geq M+1}\Vert Q_\mu ((h)^{(d)})\Vert\to0 \hbox{ as }M\to\infty.
\end{eqnarray*}
Finally
$$\Vert (g\circ f)(a+h)-(g\circ f)(a)-\sum_{d=1}^{M}\sum_{m=1}^{M}\sum_{\mu\vert=d} Q_\mu ((h)^{(d)})\Vert\leq2\varepsilon$$
for $M$ large. But 
$$\sum_{d=1}^{M}\sum_{m=1}^{M}\sum_{\mu\vert=d} Q_\mu ((h)^{(d)})=\sum_{d=1}^{M} R_d ((h)^{(d)})$$ since
$\displaystyle \sum_{m=1}^{M}\sum_{\mu\vert=d} Q_\mu=R_d$ for all $d\leq M$.
\end{proof} 

\subsection{Nemytskii type operators \index{Nemytskii operator}on a Banach algebra } Let $\cal A$ be a real Banach algebra and $f$ be a real analytic \index{analytic} function in a neighborhood of $0$, which means that for some open subset $U$ of $\R$ containing $0$ we have $f \in C^\infty (U, \R)$ and for some positive constants $M, K$  $$ \forall n\in \N, \quad |f^{(n)}(0)| \le M K^n n!$$ It is clear that for any $n\in \N$ the map $u\rightarrow u^n$ is the restriction to the diagonal of ${\cal A}^n$ of the continuous n-linear map $$ U = (u_1, ..u_n) \rightarrow \prod_{i}^n u_j $$ It follow that the map 
$$ {\cal F} (u) = \sum _{n=0} ^\infty \frac{f^{(n)}(0)}{n!} u^n$$ is analytic \index{analytic} in the open ball $B_0 = B(0, \frac{1}{K})$ in the sense of Subsection \ref{Def-Ana}. 
This map will be called the Nemytskii \index{Nemytskii operator} type operator associated to $f$ on the Banach algebra $\cal A$. 
\begin{example}\label{Nemytskiianalytic}{\rm   Let us consider the special case ${\cal A} = L^\infty(S)$ where $S$ is any positively measured space. Then for any $f$ as above the operator defined by  $$ {\N}_{f} (u)(s) = f(u(s))= \sum _{n=0}^\infty \frac{f^{(n)}(0)}{n!} u(s)^n$$ for all $u\in B(0, \frac{1}{K}) \subset L^\infty(S)$ and almost everywhere in $S$ is usually called the Nemytskii \index{Nemytskii operator} operator on $L^\infty(S)$ associated to $f$ and is an analytic map in a ball centered at $0$. The same holds true if we replace $L^\infty(S)$ by the set of continous bounded functions on a topological space $Z$ or more generally any Banach sub-algebra of it. }\end{example}
\begin{rem} {\rm (i) The Nemytskii \index{Nemytskii operator} operator $ {\N}_{f} (u)(s) = f(u(s))$ makes sense in other contexts, for instance from a Lebesgue space into another assuming some growth restrictions of the generating function $f$. 
\par\noindent (ii) We shall use this operator exclusively in the case where $f$ is in fact an entire function, i.e. $ K$ can be taken arbitrarily small.
\par\noindent (iii) Moreover, in the applications we shall usually need some growth restrictions on $f$ or even its first derivative.
\par\noindent (iv) In our applications to convergence, $ {\N}_{f} (u)(s) = f(u(s))$ will usually appear as the derivative of a potential function $ G(u) = \int _S F(u(s)ds $ where $F$ is a primitive of $f$.
}
\end{rem}

\subsection{Inverting  analytic \index{analytic}  maps}  Let  $X$, $Y$ be two real Banach space and $a\in X$. Let $U$ be an open neighborhood of $a$ in $X$ and  $f\in C^1(U, Y).$  The well known inverse map theorem says that if $Df(a)\in \hbox{Isom}\,(X, Y)$, there exists a possibly smaller neigborhood $W$ of $a$ in $X$ such that $f(W)$ is open in $Y$ and $f: W\rightarrow f(W)$ is a $C^1$-diffeomorphism. Moreover we have the formula $$\forall y\in f(W), \quad D (f^{-1} )(y) = [Df(f^{-1}(y))]^{-1}$$ We note that in order for $f$ to be a diffeomorphism, we need the existence of a linear topological isomorphism between $X$ and $Y$, namely $L= Df(a)$, so that diffeomorphisms can be reduced to the case $X= Y$ by replacing the general function $f$ by the "operator`` $g= L^{-1}\circ f $. By combining  \eqref{Df} with the fact that the map $ T\rightarrow T^{-1}$  is analytic \index{analytic} on the open set $ \hbox{Isom}\,(X, X)\subset {\cal L}(X, X)$, it is easy to prove the following 
\begin{thm}\label{InvertAnal} \index{Inverting  analytic maps} Giving a function $f\in C^1(U, Y)$ which is analytic \index{analytic} at $a\in U$, if  $Df(a)\in \hbox{Isom}\,(X, Y)$, the inverse map $f^{-1}$ is analytic \index{analytic} at $f(a)$. 
\end{thm}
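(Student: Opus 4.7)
The plan is to use the strategy announced just before the statement: combine Proposition \ref{Df} (equivalence of analyticity of $f$ and $Df$) with the fact that $T\mapsto T^{-1}$ is analytic on $\mathrm{Isom}(X,X)$. First I would normalize the problem by pre- and post-composition with affine analytic maps. Setting $L=Df(a)\in\mathrm{Isom}(X,Y)$, the map $\tilde f(h)=L^{-1}(f(a+h)-f(a))$ is the composition of $h\mapsto a+h$, then $f$, then $y\mapsto L^{-1}(y-f(a))$; all three are analytic (the outer two as affine polynomials, by Example \ref{Poly=ana}), so by Theorem \ref{ComposAnal} the map $\tilde f$ is analytic at $0$, with $\tilde f(0)=0$ and $D\tilde f(0)=I_X$. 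Analyticity of $f^{-1}$ at $f(a)$ is equivalent to analyticity of $\tilde f^{-1}$ at $0$, so from now on I may assume $X=Y$, $a=f(a)=0$ and $Df(0)=I_X$.

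With this normalization in force, the classical $C^{1}$ inverse function theorem in Banach spaces supplies neighborhoods $W,V$ of $0$ such that $g:=f^{-1}:V\to W$ is well-defined and of class $C^{1}$, with $Dg(y)=[Df(g(y))]^{-1}$ for every $y\in V$. By Proposition \ref{Df}, to establish analyticity of $g$ at $0$ it is enough to show that $Dg:V\to\mathcal{L}(X,X)$ is analytic at $0$. Writing $Dg=\mathrm{Inv}\circ Df\circ g$, the factor $\mathrm{Inv}\circ Df$ is analytic at $0$: indeed $Df$ is analytic at $0$ by Proposition \ref{Df} and takes value $I_X$ there, while $\mathrm{Inv}:T\mapsto T^{-1}$ is analytic in a neighborhood of $I_X$ thanks to the geometric expansion $(I-S)^{-1}=\sum_{n\geq 0}S^{n}$ (convergent for $\|S\|<1$), so Theorem \ref{ComposAnal} applies.

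The remaining, and main, obstacle is that a further application of Theorem \ref{ComposAnal} to the outer composition $Dg=(\mathrm{Inv}\circ Df)\circ g$ would require $g$ itself to already be analytic, which is exactly what one is trying to prove. I would break this circularity by constructing the formal Taylor series of $g$ directly from the identity $f\circ g=\mathrm{id}$: writing $f(x)=x+\sum_{n\geq 2}M_n(x^{(n)})$ and looking for $g(y)=y+\sum_{n\geq 2}N_n(y^{(n)})$, the symmetric multilinear coefficients $N_n$ are determined recursively by a polynomial expression in $M_2,\dots,M_n$ and $N_2,\dots,N_{n-1}$. Convergence of this formal series in a small ball is established by the Weierstrass majorant technique already used in the proof of Theorem \ref{ComposAnal}: comparing term by term with the scalar inverse of $t\mapsto t-\sum_{n\geq 2}\|M_n\|_{\mathcal{L}_n(X,X)}t^{n}$, whose analytic invertibility at $0$ follows from the classical one-dimensional real analytic inverse theorem, yields nonnegative scalars $\tilde N_n\geq\|N_n\|_{\mathcal{L}_n(X,X)}$ with $\sum \tilde N_n r^{n}<\infty$ for some $r>0$. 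Uniqueness of the $C^{1}$ inverse then identifies the sum of this convergent series with $g$, so $f^{-1}$ is analytic at $f(a)$, and Proposition \ref{Df} closes the argument by also giving the analyticity of $D(f^{-1})$ via the now-justified formula $D(f^{-1})=\mathrm{Inv}\circ Df\circ f^{-1}$.
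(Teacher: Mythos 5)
Your proposal is correct, and it is in fact more complete than the proof printed in the text. Both arguments start identically: reduce to $X=Y$, $a=f(a)=0$, $Df(0)=I_X$ by composing with the affine analytic maps built from $L=Df(a)$ (Example \ref{Poly=ana} plus Theorem \ref{ComposAnal}), invoke the $C^1$ inverse function theorem, and aim to exploit $D(f^{-1})=\mathrm{Inv}\circ Df\circ f^{-1}$ together with Proposition \ref{Df} and the Neumann series $(I-\tau)^{-1}=\sum\tau^{n}$ for $\mathrm{Inv}$ near the identity. The printed proof then declares $D(f^{-1})$ to be a composition of three analytic maps and lifts back to $f^{-1}$; but, as you rightly observe, one of those three maps is $f^{-1}$ itself, so Theorem \ref{ComposAnal} cannot be applied without already knowing the conclusion --- the text glosses over this by leaving the ``essentially classical'' details to the reader. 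Your resolution --- determining the symmetric multilinear coefficients $N_n$ of the candidate inverse recursively from $f\circ g=\mathrm{id}$, dominating $\Vert N_n\Vert$ by the Taylor coefficients of the scalar inverse of $t\mapsto t-\sum_{n\geq 2}\Vert M_n\Vert_{{\cal L}_n(X,X)}t^{n}$ (nonnegative, summable on a small interval), and then identifying the sum of the resulting convergent series with the $C^1$ inverse by local uniqueness --- is precisely the classical majorant argument that closes this gap; it is what the paper's phrase ``the majorant series technique of Weierstrass'' promises but does not deliver here. What the paper's route buys is brevity; what yours buys is an argument with no circularity. The only point to flag is that the one-dimensional real analytic inverse theorem you invoke for the scalar majorant must itself be justified (formal inversion plus the same kind of coefficient estimate, or passage through the complex case), but that is a purely scalar, classical fact independent of the Banach-space setting.
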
\begin{proof}[{\bf Proof.}]  By construction, $g:V \rightarrow X$ is analytic with $V$ an open ball of $X$ contained in  $U$ and centered at $a$, so that we may assume $V= U$. As a consequence of 
Proposition  \ref{Df}, $Dg$ is analytic \index{analytic} :  $V \rightarrow {\cal L}(X)$  and we have $Dg(a) = Id_{\cal L}(X).$ Then 
$D g^{-1}(x)  = (D g)^{-1} \circ g^{-1}(x)$ throughout $g(V)$, so that $D g^{-1}$ appears as a composition of 3 analytic \index{analytic} maps by reducing if necessary $V$ to a small ball around $a$ in which $Dg$ is sufficiently close to $ Id_{\cal L}(X)$ in the norm of $ {\cal L}(X)$ to use the formula $ (I- \tau)^{-1}  = \sum \tau^{n} $  where $ \tau(y) =  Id_{\cal L}(X) - Dg(y) $ . Finally by using once more Proposition  \ref{Df}, the gradient $D g^{-1}$ is lifted to $g^{-1}$ which is therefore also analytic \index{analytic}. The details are essentially classical and left to the reader.
\end{proof}
 %%%%%%%%%%%%%%%%%%%%%%%%%%%%%%%%%%
 %%%%%%%%%%%%%%%%%%%%%%%%%%%%%%%%%%%
\chapter[Background results on Evolution Equations]{Background results on Evolution Equations}\label{chapter2}

\section[unbounded linear operators]{ Elements of functional analysis. Examples of unbounded
operators }

    Throughout this paragraph, $X$ denotes a real Banach space. The norm of $X$ is denoted by $\Vert\, \Vert$. The results will generally be stated without proof. For the proofs we refer to the classical literature on functional analysis, cf. e.g. \cite{{MR2759829},{MR0239384}}

\subsection{Unbounded Operators on  $X$}

\begin{defn} A linear
operator on  $X$ is a pair  $(D,A)$, where $D$ is a linear subspace of $X,$ and
$A:D\rightarrow X$ is a linear mapping. We say that $A $ is bounded  if $\Vert Au
\Vert$ remains bounded for $u\in \{x\in D, \Vert x\Vert\leq1 \}.$ Otherwise, $A$ is called
unbounded.
\end{defn}

\begin{rem} {\rm  If $A$ is bounded, then $A$ is the restriction to $D$ of some
operator $ \tilde A\in L(Y,X),$ where $Y$ is a closed linear subspace of $X$ containing
$D$. On the other hand if $A$ is unbounded, then there exists no operator $ \tilde A\in
L(Y,X)$ with $Y$ a closed linear subspace of $X$ and $D\subset Y$ such that $\tilde
A\vert D=A.$}\end{rem}

\begin{defn} If $(D, A)$ is a linear operator on  $X$, the graph of $A$ and
the range of $A$ are the linear subspaces $G(A)$ and $R(A)$ of $X$ defined by
    $$G(A) = \{(u, f)\in X\times X, u\in D, f=Au \}  \quad \hbox   { and}\quad  R(A) =
A(D).$$
\end{defn}

As it is usual, we shall frequently call the pair $(D,A)$ as "$A$ with $D(A)=D$ ".
However one must always keep in mind that when we define a linear operator, it is
absolutely crucial to specify the domain.

\begin{defn}  A linear operator $A$ on  $X$ is called dissipative \index{dissipative} if we have
$$\forall u\in D(A), \forall \lambda >0, \Vert u - \lambda Au\Vert \geq \Vert u\Vert.$$
$A$ is called m-dissipative \index{m-dissipative} if $A$ is dissipative \index{dissipative} and for all $\lambda >0$, the operator $I-\lambda A$ is onto, i.e
$$\forall f\in X, \exists u\in D(A), u - \lambda Au = f.$$
\end{defn}

\begin{prop}   Let $A$ be a linear dissipative \index{dissipative} operator on $X$. Then
the following properties are  equivalent.
\begin{itemize}
\item[(i)] $A$ is  m-dissipative \index{m-dissipative} on  $X$.
\item[(ii)] There exists  $\lambda_0>0$ such that for each $f\in X$, there exists $u\in
D(A)$ with : $u-\lambda_0 Au = f.$
\end{itemize}
\end{prop}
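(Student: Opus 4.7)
The direction (i)$\Rightarrow$(ii) is immediate since one may take $\lambda_0$ to be any positive number, so the entire content lies in (ii)$\Rightarrow$(i). My plan is to introduce the set
$$\Lambda = \{\lambda>0 : I-\lambda A : D(A) \to X \text{ is surjective}\}$$
and prove that if $\lambda_0 \in \Lambda$ then $(\lambda_0/2, +\infty) \subset \Lambda$; iterating this observation finitely many times yields $\Lambda \supset (\lambda_0/2^n, +\infty)$ for every $n$, hence $\Lambda = (0, +\infty)$, which is exactly (i).

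The dissipativity inequality $\|u-\lambda A u\|\ge\|u\|$ plays two roles. First, for any $\lambda>0$ the map $I-\lambda A$ is injective on $D(A)$. Hence if $\lambda\in\Lambda$, $I-\lambda A$ is a bijection and its inverse $J_\lambda := (I-\lambda A)^{-1}:X\to D(A)$ is well defined and satisfies $\|J_\lambda g\|\le\|g\|$ for every $g\in X$ (this is just the dissipativity inequality read from right to left). This is the key contraction estimate for the next step.

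Now fix $\lambda_0 \in \Lambda$ and $\lambda > 0$. A direct algebraic manipulation shows that solving $u - \lambda A u = f$ with $u \in D(A)$ is equivalent to finding a fixed point of the map
$$T_\lambda : X \longrightarrow X, \qquad T_\lambda(u) := J_{\lambda_0}\!\left(\frac{\lambda-\lambda_0}{\lambda}\,u + \frac{\lambda_0}{\lambda}\,f\right).$$
Indeed, a fixed point automatically lies in $\mathrm{Range}(J_{\lambda_0})=D(A)$, and one verifies by unwinding the definition of $J_{\lambda_0}$ that $T_\lambda(u)=u$ is equivalent to $u-\lambda A u = f$. Thanks to $\|J_{\lambda_0}\|\le 1$, the map $T_\lambda$ is a strict contraction as soon as $\bigl|\frac{\lambda-\lambda_0}{\lambda}\bigr|<1$, i.e. exactly when $\lambda>\lambda_0/2$. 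The Banach fixed point theorem then provides a (unique) solution $u\in D(A)$, showing $(\lambda_0/2,+\infty)\subset\Lambda$.

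The main technical point — and the only mildly delicate one — is the algebraic reformulation leading to the contraction $T_\lambda$: picking exactly the right convex combination of $u$ and $f$ inside $J_{\lambda_0}$ so that the dissipative factor $\lambda_0/\lambda$ produces the range $\lambda>\lambda_0/2$. Once this is in hand, an easy induction: starting from $\lambda_0$, after $n$ steps one reaches every $\lambda>\lambda_0/2^n$, and letting $n\to\infty$ one concludes $\Lambda=(0,+\infty)$, establishing that $A$ is m-dissipative.
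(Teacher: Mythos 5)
Your proof is correct. The paper states this proposition without proof, deferring to the classical functional-analysis literature, and your argument --- recasting $u-\lambda Au=f$ as the fixed-point equation $u=J_{\lambda_0}\bigl(\tfrac{\lambda-\lambda_0}{\lambda}u+\tfrac{\lambda_0}{\lambda}f\bigr)$ for the nonexpansive resolvent $J_{\lambda_0}=(I-\lambda_0 A)^{-1}$, obtaining a strict contraction precisely for $\lambda>\lambda_0/2$, and iterating --- is exactly the standard proof found in the references the paper cites (e.g.\ Br\'ezis, Pazy).
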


\subsection{Case where $X$ is a  Hilbert space}

Let us denote by  $\langle\cdot,\cdot\rangle$ the inner product of $X$. If $A$ is a linear densely defined
operator on $X$, the formula
$$  G(A^*) = \{(v,g)\in X\times X, \, \forall(u,f)\in G(A), \langle g,u\rangle=\langle v,f\rangle \}$$
defines a linear operator $ A^* $ (the adjoint of $A$), with domain
        $$  D(A^*) = \{v\in X,\,  \exists C<\infty, \vert \langle Au,v\rangle \vert \leq C \Vert u\Vert, \forall
u\in D(A)\}$$ and such that:    $\langle A^*v,u\rangle  = \langle v,Au\rangle , \forall u\in D(A), \forall v\in D(A^*).$
Indeed the linear form $u\rightarrow \langle v,Au\rangle $ defined on $D(A$) for each $v\in D(A^*),$
has a unique extension $\varphi \in X'\equiv X,$ and we set: $\varphi=A^*v $.

Obviously, G(A*) is always closed. Moreover, it is immediate to check that if
$B\in L(X),$ then $(A+B)^*=A^*+B^*.$\\
In the Hilbert space setting , m-dissipative \index{m-dissipative} operators can be characterised rather easily . First the following proposition follows from elementary duality properties

\begin{prop} A linear  operator  $A$  on $X$ is dissipative \index{dissipative} in  $X$
if and only if $$\forall u\in D(A),  \langle Au,u\rangle \leq 0. $$
In addition if A is  m-dissipative \index{m-dissipative} on  $X$, then $D(A)$ is everywhere dense in $X$.
\end{prop}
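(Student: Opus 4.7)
The plan is to prove the two assertions in sequence, using the Hilbert structure in the first and then bootstrapping from the first to get the density.

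For the characterization, the key identity is the expansion
\[
\|u - \lambda A u\|^{2} = \|u\|^{2} - 2\lambda \langle Au,u\rangle + \lambda^{2}\|Au\|^{2},
\]
valid for every $u\in D(A)$ and every $\lambda>0$. The dissipativity condition $\|u-\lambda Au\|\ge \|u\|$ is therefore equivalent to
\[
-2\lambda \langle Au,u\rangle + \lambda^{2}\|Au\|^{2}\ge 0 \qquad \forall \lambda>0.
\]
If $\langle Au,u\rangle \le 0$ this is obvious. Conversely, dividing by $\lambda>0$ and letting $\lambda\to 0^{+}$ (assuming $Au\neq 0$; otherwise the inequality $\langle Au,u\rangle\le 0$ is trivial) forces $\langle Au,u\rangle\le 0$. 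This gives the equivalence and is the easy part.

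For the density statement, I would argue by contradiction. Suppose $D(A)$ is not dense; then there exists $v\in X$ with $v\neq 0$ and $v\perp D(A)$. Since $A$ is m-dissipative, for the parameter $\lambda=1$ (the same argument works for any $\lambda>0$), there exists $u\in D(A)$ with $u - A u = v$. Because $u\in D(A)$, orthogonality gives $\langle u,v\rangle = 0$. Now use the first part of the proposition: $\langle Au,u\rangle\le 0$. Substituting $Au = u - v$ yields
\[
\langle Au,u\rangle = \|u\|^{2} - \langle v,u\rangle = \|u\|^{2},
\]
so $\|u\|^{2}\le 0$, whence $u=0$ and therefore $v = u - Au = 0$, contradicting $v\neq 0$.

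The first part is essentially a direct computation; the only subtlety is the limit $\lambda\to 0^{+}$ (which one handles trivially by either allowing $Au=0$ or dividing by $\lambda$ first). The main (but still modest) obstacle is the density argument: one must realize that m-dissipativity delivers a preimage $u$ of $v$ under $I-A$ which simultaneously lies in $D(A)$ (so is orthogonal to $v$) and satisfies a sign condition on $\langle Au,u\rangle$. Combining these two pieces of information via the equation $Au=u-v$ collapses $u$, and with it $v$, to zero.
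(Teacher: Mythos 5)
Your proof is correct, and both halves use the standard arguments: the quadratic expansion of $\|u-\lambda Au\|^{2}$ with $\lambda\to 0^{+}$ for the equivalence, and the orthogonal-complement-plus-surjectivity-of-$I-A$ argument for density. The paper states this proposition without proof (it is listed among the background facts quoted from the functional analysis literature), so there is nothing to compare against, but your argument is exactly the expected one and is complete.
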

The following result is often useful, especially the two corollaries:
\begin{prop}  Let $A$ be a linear dissipative \index{dissipative} operator on $X$, with dense domain. Then $A$ is m-dissipative \index{m-dissipative} if, and only if  $A^*$ is dissipative \index{dissipative} and $G(A)$ is closed.
\end{prop}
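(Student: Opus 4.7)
The plan is to prove the two implications separately, making essential use of the Hilbert space characterization of dissipativity via $\langle Au,u\rangle\le 0$ (equivalent to $\|u-\lambda Au\|\ge\|u\|$ for all $\lambda>0$), together with standard adjoint duality.

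For the direct implication, assume $A$ is m-dissipative. First I would show $G(A)$ is closed: the m-dissipativity means $I-A:D(A)\to X$ is a bijection, and dissipativity gives $\|(I-A)^{-1}f\|\le\|f\|$, so $(I-A)^{-1}\in L(X)$. Hence $I-A$ is closed (its inverse is bounded and everywhere defined), and therefore $A$ itself is closed. To prove $A^*$ is dissipative, I would note that $(I-\lambda A)^*=I-\lambda A^*$ (since $\lambda A\in\mathcal{L}(X)$ plus unbounded $A$; more carefully, this identity holds because $A$ is densely defined and $I-\lambda A$ has bounded everywhere-defined inverse). Standard duality then gives that $I-\lambda A^*$ is also bijective with $\|(I-\lambda A^*)^{-1}\|\le\|(I-\lambda A)^{-1}\|\le 1$. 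Writing any $v\in D(A^*)$ as $v=(I-\lambda A^*)^{-1}y$ for $y=(I-\lambda A^*)v$, this gives $\|v\|\le\|(I-\lambda A^*)v\|$, i.e., $A^*$ is dissipative.

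For the converse, assume $A^*$ is dissipative and $G(A)$ is closed. I must show that $I-\lambda A:D(A)\to X$ is surjective for some $\lambda>0$ (by the previous proposition, this is equivalent to m-dissipativity). Fix $\lambda>0$ and show $R(I-\lambda A)$ is (i) closed and (ii) dense. For closedness, take a sequence $(I-\lambda A)u_n\to y$; the dissipative estimate $\|u_n-u_m\|\le\|(I-\lambda A)(u_n-u_m)\|$ forces $u_n\to u$ in $X$, and then $Au_n=\lambda^{-1}(u_n-(I-\lambda A)u_n)$ converges, so closedness of $G(A)$ yields $u\in D(A)$ with $(I-\lambda A)u=y$. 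For density, suppose $v\in X$ is orthogonal to $R(I-\lambda A)$; then $\langle v,u\rangle=\lambda\langle v,Au\rangle$ for all $u\in D(A)$, so by the definition of the adjoint $v\in D(A^*)$ and $A^*v=\lambda^{-1}v$. Applying dissipativity of $A^*$, $\|v\|^2=\lambda\langle A^*v,v\rangle\le 0$, so $v=0$.

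The only delicate point is the identity $(I-\lambda A)^*=I-\lambda A^*$ and the transfer of surjectivity/norm estimates from $I-\lambda A$ to its adjoint in the unbounded setting; this is where density of $D(A)$ (guaranteed by hypothesis) is indispensable, and it is the main technical hurdle. Once that is in place, both directions reduce to short arguments combining the dissipativity estimate with elementary Hilbert space duality.
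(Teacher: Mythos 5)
Your proof is correct and is exactly the standard argument (the one in the references the paper cites, e.g.\ Brezis); the paper itself states this proposition without proof. Both directions are sound: the identity $(I-\lambda A)^*=I-\lambda A^*$ follows from the paper's own remark that $(A+B)^*=A^*+B^*$ for $B\in L(X)$, the passage from a bounded everywhere-defined inverse of $I-\lambda A$ to the same for its adjoint is the standard duality $\bigl((I-\lambda A)^{-1}\bigr)^*=\bigl((I-\lambda A)^*\bigr)^{-1}$, and in the converse the combination of the closed-range argument (via the dissipativity estimate and closedness of $G(A)$) with the density argument (an orthogonal $v$ satisfies $A^*v=\lambda^{-1}v$, contradicting $\langle A^*v,v\rangle\le 0$ unless $v=0$) correctly yields surjectivity of $I-\lambda A$.
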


\begin{cor}  If $A$ is self-adjoint \index{self-adjoint} in $X$, in the sense that $D(A)=D(A^*)$ and $A^*u=Au,$ for all $u\in D(A),$ and if  $A\leq0$ (which means $\langle Au,u\rangle \leq0$ for all $u\in D(A),$ Then $A$ is m-dissipative.\index{m-dissipative}
\end{cor}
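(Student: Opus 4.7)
The plan is to deduce the corollary directly from the proposition preceding it, which states that a densely defined dissipative operator $A$ is m-dissipative if and only if $A^*$ is dissipative and $G(A)$ is closed. So the three things I need to verify under the hypotheses $D(A) = D(A^*)$, $A^* u = Au$ on $D(A)$, and $\langle Au, u\rangle \leq 0$ for all $u \in D(A)$, are: (i) $A$ itself is dissipative with dense domain, (ii) $A^*$ is dissipative, and (iii) $G(A)$ is closed.

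For (i), the dissipativity of $A$ is immediate from the earlier characterization: since $\langle Au, u\rangle \leq 0$ on $D(A)$, the proposition relating the sign of $\langle Au, u\rangle$ and dissipativity gives that $A$ is dissipative. Density of $D(A)$ is built into the hypothesis of self-adjointness, because the adjoint $A^*$ is only defined (as stated in the text) when $A$ is densely defined; writing down the condition $D(A) = D(A^*)$ therefore implicitly requires $D(A)$ to be dense in $X$. For (ii), since $A^* = A$ as operators (same domain, same action), $A^*$ inherits dissipativity from $A$.

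The remaining point (iii) is that $G(A)$ is closed. This is where I would appeal to the general fact noted in the text: the graph $G(A^*)$ of any adjoint is closed in $X \times X$, regardless of whether $A$ itself is closed. Self-adjointness gives $G(A) = G(A^*)$, so $G(A)$ is closed automatically. Combining the three points, the cited proposition applies and yields that $A$ is m-dissipative.

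The main ``obstacle,'' which is really just a point to be careful about, is not hidden analysis but rather making sure one invokes the hypotheses at the right place: specifically that self-adjointness, as defined here, carries with it both the density of $D(A)$ (needed to apply the preceding proposition) and the equality $G(A) = G(A^*)$ (needed for closedness). No quantitative estimate or construction of a solution to $u - \lambda A u = f$ is required, since the whole point of the preceding proposition is to trade the surjectivity condition for closedness plus dissipativity of the adjoint.
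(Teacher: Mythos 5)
Your proof is correct and follows exactly the route the text intends: the corollary is presented as a consequence of the preceding proposition, and you verify its three hypotheses (dissipativity of $A$ from $\langle Au,u\rangle\le 0$, dissipativity of $A^*=A$, and closedness of $G(A)=G(A^*)$ from the general fact that adjoint graphs are closed), with the density of $D(A)$ correctly read off from the fact that $A^*$ is only defined for densely defined operators. Nothing is missing.
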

\begin{cor} If $A $ is skew-adjoint \index{skew-adjoint} in $X$, in the sense that
$D(A)=D(A^*)$ and $A^*u=-Au,$ for all $u\in D(A),$ then $A$ and $-A$ are both
m-dissipative.\index{m-dissipative}
\end{cor}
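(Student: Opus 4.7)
The plan is to apply the preceding proposition characterizing m-dissipativity in Hilbert space: $A$ (densely defined) is m-dissipative iff $A^*$ is dissipative and $G(A)$ is closed. So I need to verify three things for $A$: density of domain, dissipativity of $A^*$, and closedness of $G(A)$.

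First I would check dissipativity of $A$ itself, via the earlier proposition stating that a linear operator is dissipative iff $\langle Au,u\rangle \le 0$ on $D(A)$. For $u \in D(A) = D(A^*)$, the definition of the adjoint gives $\langle Au,u\rangle = \langle u, A^*u\rangle = -\langle u, Au\rangle$, so $\langle Au,u\rangle$ is both real and equal to its opposite, hence zero. The same computation with $A^*$ in place of $A$ yields $\langle A^*u,u\rangle = 0$ for every $u\in D(A^*)$, so $A^*$ is dissipative as well. Note that density of $D(A)$ in $X$ is needed merely to define $A^*$ in the first place, but it is also implicit in the statement ``$A^*$ exists and satisfies $A^*u=-Au$''; in any case, once $A$ is known to be dissipative, if $D(A)$ were not dense one cannot speak of $A^*$ at all, so the hypothesis forces density.

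Next, closedness of $G(A)$: the graph of any adjoint is closed, so $G(A^*)$ is closed in $X\times X$. The map $(u,f)\mapsto(u,-f)$ is a topological isomorphism of $X\times X$, and it carries $G(A^*)$ onto $G(A)$ since $A = -A^*$ on the common domain. Hence $G(A)$ is closed. Combining the three facts with the characterization cited above, $A$ is m-dissipative.

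Finally, for $-A$: observe that $(-A)^* = -A^* = A = -(-A)$, so $-A$ is itself skew-adjoint (skew-adjointness is manifestly symmetric under sign change). Applying the argument just given to $-A$ in place of $A$ yields that $-A$ is also m-dissipative. I do not anticipate a serious obstacle; the only point that requires a moment of care is the bookkeeping identification $A = -A^*$ vs. $A^* = -A$ when transferring closedness of $G(A^*)$ to closedness of $G(A)$, but this is a one-line isomorphism argument.
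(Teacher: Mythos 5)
Your proof is correct and follows exactly the route the paper intends: the text states this corollary without proof, as an immediate consequence of the preceding proposition ($A$ m-dissipative iff $A^*$ dissipative and $G(A)$ closed), and your verification of the three ingredients — $\langle Au,u\rangle=\langle u,A^*u\rangle=-\langle Au,u\rangle=0$ giving dissipativity of both $A$ and $A^*$, closedness of $G(A)$ transferred from the always-closed $G(A^*)$ via $(u,f)\mapsto(u,-f)$, and the observation that $-A$ is again skew-adjoint — is exactly the intended argument. No gaps.
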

\subsection{Examples in the theory of PDE}

In this paragraph, we recall some basic facts from the linear theory of partial
differential equations which shall be used throughout the text. The definitions of
Sobolev spaces and the associated norms are the standard ones as can be found in \cite{MR0450957}. In particular, $\Omega$ being an open set in $ {\mathbb{R}}^N$, we shall use the
spaces

$$  H^m(\Omega) = \{ u\in L^2(\Omega), D_j u\in L^2(\Omega), \, \forall j :   \vert
j\vert\leq m
\},
$$endowed with the obvious inner product

    $H^m_0(\Omega)$ = completion of $C^\infty $ functions with compact support in $\Omega$ for
the $   H^m $ norm.\\
We recall the Poincar\'e inequality in $H^1_0(\Omega)$ when $\Omega$ is bounded :
$$  \forall w\in H^1_0(\Omega),   \int_\Omega \vert\nabla w \vert^2 dx \geq
\lambda_ 1
\int_\Omega \vert w \vert^2dx,$$ where $\lambda_ 1= \lambda_ 1(\Omega)$ is the first
eigenvalue of $(-\Delta)$ in
$H^1_0(\Omega)$ . We are now in a position to describe our basic examples. \medskip

\begin{example} \label{HeatHilbert} {\bf: The Laplacian in an open set of ${\mathbb{R}}^N \, :\,\, L^2 $
theory.}\end{example}

\noindent Let $\Omega$ be any open set in  ${\mathbb{R}}^N,$ and $H=L^2(\Omega)$. We define
the linear operator $B$ on $H$ by
$$  D(B) = \{u\in H^1_0(\Omega),\, \Delta u\in L^2(\Omega)\},$$
$$  Bu = \Delta u,\, \forall u\in D(B).$$
Then $B$ is m-dissipative \index{m-dissipative} and densely defined. More precisely $B$ is self-adjoint \index{self-adjoint} and
$B\leq0.$ In addition if the boundary of $\Omega$ is bounded and $C^2$, then
$$D(B)=H^2(\Omega)\cap H^1_0(\Omega),$$ algebraically and topologically.
\begin{example}\label{exempC0}    {\bf : The Laplacian in an open set of ${\mathbb{R}}^N\, :\,\, C^0
$ theory.}\end{example}

\noindent
Let now $\Omega$ be any open set in  ${\mathbb{R}}^N.$ We consider the Banach space
$$X = C^0(\Omega) = \{u\in C(\overline{\Omega}), u\equiv 0 \, \hbox{on} \, \partial\Omega \}$$
endowed with the supremum norm and we define the linear operator A by
$$  D(A) = \{u\in X\cap H^1_0(\Omega), \Delta u\in X\};       Au = \Delta u,  \forall u\in
D(A).$$
Then if the boundary of $\Omega$ is Lipschitz continuous, $A$  is m-dissipative \index{m-dissipative} and densely defined on $X.$\medskip

\begin{example}\label{WaveOperat}{\bf: The wave operator on  $H^1_0(\Omega)\times
L^2(\Omega)$.}
\end{example}
\noindent Let $\Omega$ be any open set in  ${\mathbb{R}}^N$ and $ X = H^1_0(\Omega)\times
L^2(\Omega).$ The space $X$ is a real Hilbert space when equipped with the inner product
$$  \langle (u,v),(w,z)\rangle  = \int_\Omega (\nabla u \nabla w + vz)\,   dx,$$ 
inducing on  $X$ a norm equivalent to the standard product norm on $H^1_0(\Omega)\times L^2(\Omega)$. We define the linear operator $A$  on $X$ by
$$  D(A) = \{(u,v)\in X, \, \Delta u\in L^2(\Omega), \,  v\in H^1_0(\Omega)\}$$
  $$A(u,v) = (v, \Delta u), \forall (u,v)\in D(A).$$
Then $A$ is skew-adjoint \index{skew-adjoint} in $X$, and in particular $A$ and $-A$ are both m-dissipative \index{m-dissipative} with dense domains.
\section[The Hille-Yosida-Phillips theorem]{The semi-group generated by m-dissipative \index{m-dissipative} operators. The Hille-Yosida-Phillips theorem \index{Hille-Yosida-Phillips theorem}}

\subsection {The general case}Let $X$ be a real Banach space and let $A$ be a linear, densely defined, m-dissipative \index{m-dissipative}  operator on $X$. The following fundamental Theorem is proved for instance in \cite{{MR0710486},{MR0239384}}. 
\begin{thm}\label{thm1} There  exists a unique one-parameter family $T(t)\in L(X)$
defined for  $t\geq0$  and such that
\begin{itemize}
\item[(1)]    $T(t)\in L(X)$   and  $ \Vert T(t)\Vert_{L(X)} \leq1, \forall t\geq0.$
\item[(2)]    $ T(0) = I,$
\item[(3)]    $T(t+s) = T(t)T(s), \forall s,t\geq0.$
\item[(4)]  For each $x\in D(A), u(t)=T(t)x$ is the unique solution of the problem
\end{itemize}
\begin{eqnarray*}
\left\{
\begin{array}{rcl}
&&u\in C([0, +\infty);D(A))  \cap C^1([0, +\infty); X)\\
\\
&&u'(t) = Au(t), \, \forall t\geq 0\\
\\
&&u(0) = x
\end{array}
\right.
\end{eqnarray*}
Finally, for each
$x\in D(A)$ and $ t\geq0$, we have:  $ T(t)Ax=AT(t)x.$
\end{thm}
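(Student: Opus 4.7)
My plan is to follow the classical Yosida approximation approach: replace the unbounded $A$ with a family of bounded dissipative operators $A_\lambda$, form the exponential contraction semigroups $T_\lambda(t) := e^{tA_\lambda}$, and obtain $T(t)$ as their strong limit as $\lambda \to 0^+$. The conclusions of the theorem are then recovered by passing to the limit in their analogues for $T_\lambda$.

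\textbf{Resolvent and Yosida approximation.} First I would exploit m-dissipativity: for each $\lambda > 0$, the map $I - \lambda A : D(A) \to X$ is bijective, and its inverse $J_\lambda := (I - \lambda A)^{-1}$ satisfies $\|J_\lambda\|_{L(X)} \leq 1$ as a direct consequence of the dissipativity inequality. I then set $A_\lambda := A J_\lambda = \frac{1}{\lambda}(J_\lambda - I) \in L(X)$, check that $A_\lambda$ is dissipative, and define $T_\lambda(t) := e^{tA_\lambda} = e^{-t/\lambda}\exp(\tfrac{t}{\lambda} J_\lambda)$; the second form makes contractivity obvious since $\|J_\lambda\| \leq 1$. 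The basic convergence on $D(A)$ is then immediate: for $x \in D(A)$, the identity $\|J_\lambda x - x\| = \lambda \|J_\lambda A x\| \leq \lambda \|Ax\|$ forces $J_\lambda x \to x$, hence $A_\lambda x = J_\lambda A x \to A x$. Density of $D(A)$ together with $\|J_\lambda\| \leq 1$ extends $J_\lambda x \to x$ to every $x \in X$.

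\textbf{Passage to the limit.} The heart of the argument is showing that $\{T_\lambda(t) x\}_{\lambda > 0}$ is Cauchy as $\lambda \to 0^+$, uniformly on compact $t$-intervals, for each $x \in D(A)$. The resolvent identity shows that $J_\lambda$ and $J_\mu$ commute, hence so do $A_\lambda, A_\mu, T_\lambda(t), T_\mu(s)$ for all parameters. This allows the telescoping computation
$$T_\mu(t)x - T_\lambda(t)x = \int_0^t \frac{d}{ds}\bigl[T_\mu(t-s) T_\lambda(s) x\bigr] ds = \int_0^t T_\mu(t-s)T_\lambda(s)(A_\lambda - A_\mu) x\, ds,$$
and contractivity yields $\|T_\mu(t)x - T_\lambda(t)x\| \leq t\,\|(A_\lambda - A_\mu)x\| \to 0$. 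I define $T(t)x := \lim_{\lambda \to 0^+} T_\lambda(t) x$ on $D(A)$ and extend by density to $X$ using $\|T_\lambda(t)\| \leq 1$. Properties (1), (2), (3) pass trivially to the limit. For (4), I pass to the limit in the identity $T_\lambda(t)x = x + \int_0^t T_\lambda(s) A_\lambda x\, ds$ (valid for $x \in D(A)$) to obtain $T(t)x = x + \int_0^t T(s) Ax\, ds$, which gives $u \in C^1([0,\infty);X)$ with $u'(t) = T(t)Ax$; the same commutation argument in the limit gives $T(t)Ax = A T(t)x$, so $T(t)x \in D(A)$ and $u'(t) = Au(t)$, and continuity of $t \mapsto T(t)x$ into $D(A)$ equipped with the graph norm follows.

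\textbf{Uniqueness and the main obstacle.} For uniqueness, if $u$ and $v$ are two solutions with the same initial datum, let $w := u - v$, which is $C^1$ with $w' = Aw$ and $w(0) = 0$. Applying the dissipativity inequality with parameter $h > 0$ gives $\|w(t) - h A w(t)\| \geq \|w(t)\|$; expanding $w(t) - h A w(t) = w(t-h) + o(h)$ as $h \to 0^+$ shows that the left upper Dini derivative of $t \mapsto \|w(t)\|$ is nonpositive, so this function is nonincreasing and vanishes identically. The main obstacle I anticipate is precisely this uniqueness step: unlike the Hilbert case, where $\tfrac{d}{dt}\|w\|^2 = 2\langle Aw, w\rangle \leq 0$ is instant, in a general Banach space the norm is not smooth and one must carry out the Dini derivative argument carefully, tied directly to the defining dissipativity inequality. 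A secondary but more routine obstacle is rigorously justifying the commutativity and differentiation under the integral sign in the telescoping formula, which rests on the fact that $T_\lambda$, $T_\mu$, $A_\lambda$, $A_\mu$ are all strong limits of polynomials in a common resolvent $J_\nu$.
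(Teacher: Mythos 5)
The paper does not give a proof of this theorem; it states it as a known result and cites Pazy and Yosida, where exactly the Yosida-approximation argument you describe is carried out, so your proposal is correct and follows essentially the paper's (referenced) approach. The only step worth making fully explicit is the deduction $T(t)x\in D(A)$ with $AT(t)x=T(t)Ax$, which rests on the closedness of $A$ (a consequence of m-dissipativity) applied to $J_\lambda T_\lambda(t)x\to T(t)x$ together with $A J_\lambda T_\lambda(t)x=T_\lambda(t)A_\lambda x\to T(t)Ax$; your Dini-derivative uniqueness argument is the right one for a general Banach space.
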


\subsection {Two important special cases}

    In this paragraph, we assume that  $X$ is a (real) Hilbert space. The following two
results can be considered as refinements of Theorem \ref{thm1}.

    \medskip

\begin{thm}\label{thm2} Let $A $ be self-adjoint \index{self-adjoint} and $\leq 0$.  Let $ x\in X,$ and $u(t)=T(t)x.$ Then u is the unique solution of
\begin{eqnarray*}
\left\{\begin{array}{rcl}
&&u\in
C([0, +\infty);X)\cap C((0, +\infty);D(A))\cap C^1((0, +\infty); X)\\
\\
&&u'(t) = Au(t), \, \forall t> 0\\
\\
&&u(0) = x
\end{array}
\right.
\end{eqnarray*}
\end{thm}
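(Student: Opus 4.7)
My plan is to exploit self-adjointness through a smoothing (regularization) estimate of the form $\|AT(t)x\| \le C\|x\|/t$, which will let me upgrade the regularity from $x\in D(A)$ (given by Theorem \ref{thm1}) to arbitrary $x\in X$ for positive times, then handle continuity at $t=0$ and uniqueness by standard energy arguments.

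First I would derive the smoothing estimate. Take $x\in D(A^2)$ so that $u(t)=T(t)x$ satisfies $u\in C^1([0,\infty);D(A))$ with $u'=Au$ and $Au'=A^2u$. Introduce the auxiliary functional
\begin{equation*}
\Phi(t) := \|u(t)\|^2 - 2t\langle Au(t),u(t)\rangle,
\end{equation*}
which is nonnegative since $-A\ge 0$. Using self-adjointness, $\frac{d}{dt}\langle Au,u\rangle = 2\langle Au,u'\rangle = 2\|Au\|^2$, so a direct computation yields $\Phi'(t) = 2\langle Au,u\rangle - 2\langle Au,u\rangle - 4t\|Au(t)\|^2 = -4t\|Au(t)\|^2$. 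Integrating gives $\int_0^T 4t\|Au(t)\|^2\,dt \le \Phi(0)=\|x\|^2$. Next, self-adjointness gives $\langle A^2u,Au\rangle = \langle A(Au),Au\rangle\le 0$, hence $\frac{d}{dt}[t^2\|Au(t)\|^2] = 2t\|Au\|^2+2t^2\langle A^2u,Au\rangle \le 2t\|Au(t)\|^2$. Integrating and combining with the previous bound yields the key estimate
\begin{equation*}
\|AT(t)x\|\le \frac{\|x\|}{t\sqrt{2}},\qquad t>0,\ x\in D(A^2).
\end{equation*}
Because $D(A^2)$ is dense in $D(A)$ for the graph norm (via the resolvent $(I-A)^{-1}$, which maps $D(A)$ into $D(A^2)$), the closedness of $A$ lets me extend the estimate to all $x\in D(A)$. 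A second density argument—this time using $D(A)$ dense in $X$—shows that for any $x\in X$ and $t>0$, the sequence $AT(t)x_n$ is Cauchy whenever $x_n\to x$ in $X$ with $x_n\in D(A)$, so by closedness $T(t)x\in D(A)$ with $\|AT(t)x\|\le \|x\|/(t\sqrt{2})$.

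With this in hand I would establish the regularity. For any $t_0>0$, $T(t_0)x\in D(A)$, so applying Theorem \ref{thm1} with initial datum $T(t_0)x$ gives $s\mapsto T(s+t_0)x\in C([0,\infty);D(A))\cap C^1([0,\infty);X)$ solving $u'=Au$. Since $t_0>0$ is arbitrary, $u\in C((0,\infty);D(A))\cap C^1((0,\infty);X)$ with $u'(t)=AT(t)x$ for every $t>0$. Continuity at $t=0$ follows from the classical density argument: $\|T(t)\|\le 1$, and $T(t)y\to y$ for $y\in D(A)$ by Theorem \ref{thm1}, so a three-$\varepsilon$ argument gives $T(t)x\to x$ for every $x\in X$.

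Uniqueness is the easiest step. If $u_1,u_2$ are two such solutions with the same initial datum $x$, set $w=u_1-u_2\in C([0,\infty);X)\cap C((0,\infty);D(A))\cap C^1((0,\infty);X)$ with $w'=Aw$ on $(0,\infty)$ and $w(0)=0$. For $0<\varepsilon<t$, dissipativity gives $\frac{d}{ds}\|w(s)\|^2 = 2\langle Aw,w\rangle\le 0$, so $\|w(t)\|^2\le \|w(\varepsilon)\|^2$; letting $\varepsilon\to 0^+$ using continuity at $0$ yields $w\equiv 0$. The main obstacle is the smoothing estimate itself: the choice of the auxiliary functional $\Phi$ and the justification of all differentiations at the $D(A^2)$ level, followed by the two-stage density extension that crucially relies on the self-adjointness (for the spectral-type inequality $\langle A^2u,Au\rangle\le 0$) and on $A$ being closed.
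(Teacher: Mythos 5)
Your argument is correct and complete. Note, however, that the paper itself gives no proof of this theorem: it is stated as a refinement of Theorem \ref{thm1} and, like the other background results of Chapter 2, is deferred to the classical literature on semigroup theory. So there is no in-text argument to compare yours against; what you have written is the standard self-contained energy-method proof of the smoothing effect for contraction semigroups generated by nonpositive self-adjoint operators. Your computations check out: for $x\in D(A^2)$ the identity $\frac{d}{dt}\langle Au,u\rangle=2\Vert Au\Vert^2$ and the sign condition $\langle A(Au),Au\rangle\le 0$ do give $\Phi'=-4t\Vert Au\Vert^2$ and $\frac{d}{dt}\bigl[t^2\Vert Au\Vert^2\bigr]\le 2t\Vert Au\Vert^2$, hence $\Vert AT(t)x\Vert\le\Vert x\Vert/(t\sqrt2)$; the required differentiability at the $D(A^2)$ level follows from Theorem \ref{thm1} applied to $Ax$, since $Au(t)=T(t)Ax$ (one small imprecision: what you actually get and need is $Au\in C^1([0,\infty);X)$, not $u\in C^1([0,\infty);D(A))$). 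The two-stage density extension via closedness of $A$ (which holds because $A$ is m-dissipative), the bootstrapping of interior regularity from Theorem \ref{thm1} applied at time $t_0>0$, the strong continuity at $t=0$ by a contraction-plus-density argument, and the uniqueness via monotonicity of $\Vert w(s)\Vert^2$ on $(0,\infty)$ followed by $\varepsilon\to 0^+$ are all sound. An alternative, shorter route available in this self-adjoint setting is the spectral theorem ($T(t)=\int e^{\lambda t}\,dE_\lambda$ and $\sup_{\lambda\le 0}|\lambda|e^{\lambda t}=1/(et)$), but your quadratic-functional proof has the advantage of being elementary and of generalizing to the nonlinear subdifferential setting.
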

\begin{rem}  {\rm Theorem \ref{thm2} means that $T(t)$ has a "smoothing effect" on
initial data. Indeed,  even if $x\in D(A)$, we have $T(t)x\in D(A),$ for all $t>0$. As
a basic example, let us consider the case
 $X=L^2(\Omega), A$ defined  by  $  D(A) = \{u\in H^1_0(\Omega),\, \Delta u\in
L^2(\Omega)\},
Au = \Delta u,\, \forall u\in D(A)$ where $\Omega$ is a bounded open set in  $
{\mathbb{R}}^N$ and the boundary of $\Omega$ is smooth. Theorem \ref{thm2} here says that for
each
$u_0\in L^2(\Omega)\}$, there exists a unique solution
    $$ u\in C([0,+\infty), L^2(\Omega))\cap C(0,+\infty, H^2(\Omega)\cap H^1_0(\Omega) )\cap
C^1(0,+\infty, L^2(\Omega))$$
of : $$ u_t = \Delta u\, ;  \quad   u(0) = u_0.$$
Actually a much stronger smoothing property holds true since by iterating the
procedure we prove easily that $u(t)\in D(A^n)$ for all $n\in {\mathbb{N}}$ and $t>0.$ In
particular $u(t, .)$ is smooth up to the boundary.}\end{rem}

A somewhat  opposite situation is that of isometry groups generated by skew-adjoint \index{skew-adjoint}
operators.
\begin{thm}\label{skewAdj}
Let $A $ be skew-adjoint.\index{skew-adjoint} Then $T(t)$ extends to one-parameter
group of operators $T(t):{\mathbb{R}}\rightarrow L(X)$ such that

\begin{itemize}
\item[(1)]     $\forall x\in X,  \,  T(t)x \in C({\mathbb{R}}, X). $
\item[(2)]    $\forall x\in X, \forall t\in {\mathbb{R}}, \quad \Vert T(t)x\Vert = \Vert x\Vert$.
\item[(3)]  $ \forall s\in {\mathbb{R}}, \forall t\in {\mathbb{R}},T(t+s) = T(t)T(s).$
\item[(4)] For each $ x\in D(A),\,  u(t)=T(t)x$ is a solution of $ u'(t) =
Au(t), \, \forall t\in {\mathbb{R}} $.
\end{itemize}
\end{thm}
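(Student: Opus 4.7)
The plan is to assemble the two-sided group from the two one-sided semigroups furnished by Theorem \ref{thm1}. Since $A$ is skew-adjoint, the corollary preceding the subsection on PDE examples tells us that both $A$ and $-A$ are m-dissipative with dense domain $D(A)=D(A^*)$. Theorem \ref{thm1} then supplies contraction semigroups $T_{+}(t)$ and $T_{-}(t)$, $t\ge 0$, generated by $A$ and $-A$ respectively. I would set
\[
T(t)=\begin{cases} T_{+}(t), & t\ge 0,\\ T_{-}(-t), & t\le 0,\end{cases}
\]
and then verify (1)--(4) in turn.

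The first real step is to show that $T_{+}(t)$ and $T_{-}(t)$ are mutual inverses. For $x\in D(A)$ the map $t\mapsto T_{+}(t)T_{-}(t)x$ lies in $C^{1}([0,\infty);X)$ because Theorem \ref{thm1} guarantees $C^{1}$ regularity of each factor on $D(A)$ and commutation of $T_{\pm}(t)$ with $A$ keeps the product inside $D(A)$. Its derivative equals $AT_{+}(t)T_{-}(t)x+T_{+}(t)(-A)T_{-}(t)x=0$, so it is identically $x$; similarly $T_{-}(t)T_{+}(t)x=x$. Since $D(A)$ is dense and the $T_{\pm}(t)$ are bounded, these identities extend to all $x\in X$. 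This immediately gives (3): for $s,t$ of arbitrary signs one decomposes into pieces of $T_{+}$ and $T_{-}$ and cancels overlapping factors using $T_{+}(r)T_{-}(r)=I$.

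Property (2), the isometry, now follows by a two-sided contraction argument: by Theorem \ref{thm1}(1) we have $\|T(t)x\|\le\|x\|$ for every $t$, while applying this same bound to $T(-t)T(t)x=x$ yields $\|x\|\le\|T(t)x\|$; equality must hold. Property (1), strong continuity on $\R$, is inherited from the strong continuity of $T_{\pm}$ on $[0,\infty)$ given by Theorem \ref{thm1} (the only matching issue is at $t=0$, where both branches equal $I$). For (4), on $[0,\infty)$ we already have $u'(t)=Au(t)$ from Theorem \ref{thm1} applied to $A$; for $t\le 0$ we write $u(t)=T_{-}(-t)x$ and compute $u'(t)=-(-A)T_{-}(-t)x=Au(t)$, as required.

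The argument is almost purely assembly, and the only step that requires any genuine thought is establishing $T_{+}(t)T_{-}(t)=T_{-}(t)T_{+}(t)=I$ on a dense invariant set before extending by continuity; once that is in hand, everything else is forced. No PDE machinery or sharper Hilbert-space tool beyond the self/skew-adjoint corollaries to Theorem \ref{thm1} is needed.
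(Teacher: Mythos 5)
Your proof is correct. Note, however, that the paper itself offers no proof of Theorem \ref{skewAdj}: it is presented, together with Theorem \ref{thm2}, as a ``refinement'' of Theorem \ref{thm1} whose proof is deferred to the classical literature on semigroups, so there is no argument in the text to compare yours against. What you supply is the standard construction (essentially the semigroup half of Stone's theorem): both $A$ and $-A$ are m-dissipative by the skew-adjoint corollary, the two contraction semigroups $T_{+}$ and $T_{-}$ are glued along $t=0$, and the identity $T_{+}(t)T_{-}(t)=T_{-}(t)T_{+}(t)=I$ is obtained by differentiating on the invariant dense set $D(A)$ and extending by boundedness. Every step checks out; the only place requiring a little care, which you correctly flag, is the product differentiation of $t\mapsto T_{+}(t)T_{-}(t)x$, where one must use the commutation $T_{+}(t)Ay=AT_{+}(t)y$ from Theorem \ref{thm1} and the uniform bound $\Vert T_{+}(t+h)\Vert\le 1$ to pass to the limit in the difference quotient. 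The two-sided contraction trick giving the isometry property, and the case analysis for the group law via cancellation of $T_{+}(r)T_{-}(r)$, are both sound.
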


\begin{example} {\rm Let  $ X = H^1_0(\Omega)\times L^2(\Omega)$, and let $A$ be
as in Example \ref{WaveOperat}. We obtain that for  any $(u_0,v_0)\in X$ , there is a
solution
$   u\in C({\mathbb{R}}, H^1_0(\Omega) )\cap C^1({\mathbb{R}}, L^2(\Omega))\cap
C^2({\mathbb{R}}, H^{-1}(\Omega) )$
of:     $$ u_{tt} = \Delta u;   \quad  u(0) = u_ 0, u_t(0) = v_0 .$$
It  can be shown that $u$ is unique.}
\end{example}
\section{ Semilinear problems}
     Let $X$ be a real Banach space, let $A$ be a linear, densely defined,  m-dissipative \index{m-dissipative} operator on $X$, and let  $T(t)$ be given by Theorem \ref{thm1}. The following Theorem is quite similar to the construction of the flow associated to an ordinary differential system and is the starting point of the theory of semilinear evolution equations.
\begin{thm}\label{theo3}  Let $ F: X \rightarrow X $ be Lipschitz continuous on each
bounded subset of X. Then for each $x\in X $,
There is $\tau(x)\in(0, +\infty]$  and a
unique maximal  solution $u\in C([0,\tau(x)),X)$
of the equation $$u(t) = T(t)x +\int
_0^t T(t-s)F(u(s)) \,ds$$
The number $\tau(x)$ is the existence time of the solution , and satisfies the following alternative: either $\tau(x)= \infty $  and the solution $u$ with initial datum $x\in X $ is global \index{global solution} (in $X$); or  $\tau(x)<\infty $ and the solution $u$ with initial datum $x\in X$ blows up in finite time (in $X$). In the latter case we have 
 $$\Vert u(t)\Vert
\longrightarrow +\infty \hbox{ as } t\longrightarrow \tau(x). $$
\end{thm}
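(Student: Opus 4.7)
The plan is to apply the Banach fixed point theorem to the integral equation on a small time interval, then extend to a maximal interval by a standard continuation argument, and finally prove the blow-up alternative by showing that a bounded trajectory can always be continued.

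First I would fix $x\in X$ and look for a local solution in $C([0,\delta],X)$. For $R>0$ let $L_R$ be the Lipschitz constant of $F$ on the closed ball of radius $R$. Choose $R>\|x\|$ and define the map $\Phi$ on the complete metric space
$$E_\delta=\{u\in C([0,\delta],X):\ \sup_{t\in[0,\delta]}\|u(t)\|\le R\}$$
by $\Phi(u)(t)=T(t)x+\int_0^tT(t-s)F(u(s))\,ds$. Since $\|T(t)\|_{L(X)}\le 1$ by Theorem \ref{thm1}, one checks $\|\Phi(u)(t)-T(t)x\|\le\delta(L_R R+\|F(0)\|)$ and $\|\Phi(u)(t)-\Phi(v)(t)\|\le\delta L_R\sup\|u-v\|$. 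Choosing $\delta$ small enough (depending only on $R$, $L_R$ and $\|F(0)\|$) makes $\Phi$ a strict contraction sending $E_\delta$ into itself, giving existence and uniqueness of a mild solution on $[0,\delta]$.

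Next I would extend this solution to a maximal interval $[0,\tau(x))$. Define $\tau(x)$ as the supremum of all $T>0$ for which a (necessarily unique, by the same contraction argument and Gronwall applied to the difference of two solutions) solution exists on $[0,T]$. Standard concatenation shows that solutions on overlapping intervals agree, so there is a well-defined maximal solution $u\in C([0,\tau(x)),X)$. This covers the first part of the statement and sets the stage for the alternative: either $\tau(x)=+\infty$ (global solution) or $\tau(x)<+\infty$.

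The hard part, as usual, is the blow-up alternative. Assume $\tau(x)<+\infty$ and, for contradiction, that $\|u(t)\|$ does \emph{not} tend to $+\infty$ as $t\to\tau(x)^-$. Then there is a sequence $t_n\uparrow\tau(x)$ with $\|u(t_n)\|\le M$ for some $M$. I would first upgrade this to a uniform bound on the whole interval: using the integral equation together with the local Lipschitz bound of $F$ on balls and $\|T(t)\|\le 1$, a Gronwall argument (Lemma \ref{Gronwall Lemma}) on $[t_n,t]$ gives $\|u(t)\|\le(M+\tau(x)\|F(0)\|)\exp(L_{R'}\tau(x))$ for $t\in[t_n,\tau(x))$, hence a uniform bound $\|u(t)\|\le R'$ for some $R'$ independent of $t$. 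Then for $t_n\le s<t<\tau(x)$,
$$u(t)-u(s)=(T(t-s)-I)u(s)+\int_s^tT(t-\sigma)F(u(\sigma))\,d\sigma,$$
and strong continuity of $T(\cdot)$ on $D(A)$ plus density (or a direct $\varepsilon/3$ argument approximating $u(s)$) shows $u$ is Cauchy as $t\to\tau(x)^-$, hence has a limit $\bar u\in X$. Applying the local existence theorem with initial datum $\bar u$ on a new interval of length $\delta'=\delta(R'+1,L_{R'+1})$ and concatenating with $u$ produces a solution on $[0,\tau(x)+\delta')$, contradicting the maximality of $\tau(x)$. The main obstacle here is precisely this Cauchy-in-$X$ argument, since the generator $A$ need not be bounded and the strong continuity of $T$ on $X$ has to be exploited carefully via the integral form; everything else is routine.
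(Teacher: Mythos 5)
The paper states Theorem \ref{theo3} without proof, so your argument has to stand on its own. The local existence via the contraction mapping and the definition of $\tau(x)$ as a supremum are fine. The genuine gap is in the blow-up alternative, at the step where you ``upgrade'' the bound $\Vert u(t_n)\Vert\le M$ along a sequence to a uniform bound $\Vert u(t)\Vert\le R'$ on $[t_n,\tau(x))$. Your Gronwall inequality rests on $\Vert F(u(s))\Vert\le\Vert F(0)\Vert+L_{R'}\Vert u(s)\Vert$, which is only valid while $u(s)$ stays in the ball of radius $R'$ --- precisely what you are trying to prove. The circularity cannot be removed by a bootstrap or continuity argument: you would need an $R'$ satisfying $R'>(M+\tau(x)\Vert F(0)\Vert)\exp(L_{R'}\tau(x))$, and since $F$ is only locally Lipschitz, $L_{R'}$ may grow arbitrarily fast in $R'$ (already $F(u)=\Vert u\Vert^2 v_0$ for a fixed unit vector $v_0$ defeats it). So the contradiction is never validly reached. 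A secondary weakness: even granting the uniform bound, the Cauchy property of $u(t)$ as $t\to\tau(x)^-$ does not follow from strong continuity of $T$ plus density alone, because $(T(h)-I)y\to0$ as $h\to 0$ is not uniform over bounded sets of $y$; one has to run a Gronwall estimate on the modulus of continuity $h\mapsto\Vert u(\cdot+h)-u(\cdot)\Vert$, anchored at the single fixed vector $u(t_n)$, to get uniform continuity on $[t_n,\tau(x))$.

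The efficient repair uses an ingredient you already established and then abandoned: the local existence time $\delta$ in your fixed-point step depends only on a bound $R$ for the norm of the initial datum (through $L_R$ and $\Vert F(0)\Vert$), uniformly over all initial data in the ball of radius $R$. Given $t_n\uparrow\tau(x)$ with $\Vert u(t_n)\Vert\le M$, choose $n$ so large that $\tau(x)-t_n<\delta(M+1)$, solve the integral equation starting from $u(t_n)$ on an interval of that length, and concatenate using the semi-group property of $T$; this extends the solution strictly beyond $\tau(x)$ and contradicts maximality directly. No uniform bound, no Cauchy argument and no limit $\bar u$ are needed, and the argument yields the full statement $\Vert u(t)\Vert\to+\infty$ rather than merely $\limsup_{t\to\tau(x)}\Vert u(t)\Vert=+\infty$.
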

In the theory of semilinear evolution equations, a basic tool to establish \index{global solution} global existence, uniqueness, boundedness or stability properties of the solution will be the Gronwall Lemma \index{Gronwall Lemma} (cf. Lemma \ref{Gronwall Lemma}).
\section{A  semilinear heat equation}\index{heat equation}\label{SectionHeatEquation}
Let  $\Omega$ be any open set in  ${\mathbb{R}}^N$ with  Lipschitz continuous boundary $\partial\Omega$ , and let us  consider the equation
\begin{equation}\label{heat}
 u_t - \Delta u + f(u) = 0 \quad\hbox{  in} \, \,{\mathbb{R}}^+ \times
\Omega,
\quad     u = 0  \,\,   \hbox{on} \, \, {\mathbb{R}}^+ \times \partial\Omega
\end{equation}
 where  $f$  is a locally Lipschitz continuous function:
${\mathbb{R}}\rightarrow{\mathbb{R}} $ with  $f(0) = 0.$ It is natural to set
$$X = C^0(\Omega) = \{u\in C(\overline{\Omega}), u\equiv 0 \, \hbox{on} \, \partial\Omega
\}$$
and to introduce the semi-group $T(t)$ on $X$ associated to the homogeneous linear
problem

            $$ u_t - \Delta u  = 0 \quad\hbox{  in} \, \,{\mathbb{R}}^+ \times \Omega,
\quad     u = 0  \,\,   \hbox{on} \, \, {\mathbb{R}}^+ \times \partial\Omega
$$ In fact here $T(t)$ is the semi-group generated by the operator $A$ of  Example
\ref{exempC0}. Let $\varphi\in X$: by  Theorem \ref{theo3} we can define
$\tau(\varphi)\leq\infty$ and a unique maximal solution solution $u\in
C([0,\tau(\varphi)),X)$ of the equation$$u(t) = T(t)x +\int _0^t T(t-s)F(u(s)) ds
$$
with  $F : X \rightarrow X$ given by  $(F(u))(x) : = - f(u(x))$ for all $x$ in the closure
of W. Then u can be considered as the local solution of (\ref{heat}) \index{heat equation} with initial condition
$u(0) =\varphi$ in $X$. The following simple result will be useful later on.

    \medskip

\begin{prop} Let  f satisfy the condition
\begin{equation}\label{ineg1}
\forall s\in {\mathbb{R}} \hbox{ with } \vert s\vert \geq C,     f(s) s \geq 0
\end{equation}
Then we have for any $\varphi\in X $
\begin{equation}\label{ineg2} \tau(\varphi) = \infty  \quad\hbox{and} \quad  \sup_{t \geq
0}\Vert u(t)\Vert_{L^\infty} \leq Max \{C, \Vert \varphi\Vert_{L^\infty} \} < \infty
\end{equation}
 where u is the solution of  \eqref{heat} \index{heat equation} with initial condition $u(0) =\varphi$ .
\end{prop}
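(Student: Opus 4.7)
The plan is to prove the bound \eqref{ineg2} first; once it is established, the blow-up alternative contained in Theorem \ref{theo3} immediately forces $\tau(\varphi)=\infty$. So the real work is to show that, with $M:=\max\{C,\Vert \varphi\Vert_{L^\infty}\}$, one has $-M\le u(t,x)\le M$ for every $t\in[0,\tau(\varphi))$ and $x\in\overline\Omega$. Note $M\ge 0$, and by replacing $C$ by $\max(C,0)$ if necessary (the hypothesis \eqref{ineg1} is only strengthened), one may assume $C\ge 0$, so that the constants $\pm M$ vanish on $\partial\Omega$ in the sense needed for integration by parts in $H^1_0(\Omega)$.

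The formal heart of the argument is an energy estimate based on the truncation $(u-M)^+$. Assuming for the moment that $u$ is regular enough, set
\begin{equation*}
E(t)=\frac12\int_\Omega \bigl((u(t,x)-M)^+\bigr)^2\,dx.
\end{equation*}
Using the equation $u_t=\Delta u-f(u)$ and the chain rule for $s\mapsto ((s-M)^+)^2/2$, one finds
\begin{equation*}
E'(t)=\int_\Omega (u-M)^+\,\Delta u\,dx-\int_\Omega (u-M)^+ f(u)\,dx.
\end{equation*}
Because $u=0$ on $\partial\Omega$ and $M\ge0$, one has $(u-M)^+\in H^1_0(\Omega)$, so integration by parts yields
\begin{equation*}
\int_\Omega (u-M)^+\Delta u\,dx=-\int_\Omega |\nabla (u-M)^+|^2\,dx\le 0.
\end{equation*}
On the set $\{u>M\}$ we have $u>M\ge C\ge 0$, so by \eqref{ineg1}, $f(u)u\ge 0$, and since $u>0$ this forces $f(u)\ge 0$; therefore $(u-M)^+ f(u)\ge 0$ pointwise. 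Consequently $E'(t)\le 0$, and since $\varphi(x)\le \Vert\varphi\Vert_\infty\le M$ gives $E(0)=0$, it follows that $E(t)\equiv 0$, i.e. $u(t,x)\le M$. The same argument applied to $(-u-M)^+$ (using that $|u|\ge M$ implies $|u|\ge C$, so $f(u)u\ge 0$ and hence $f(u)\le 0$ when $u<-M$) gives $u(t,x)\ge -M$.

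The main obstacle is that the solution produced by Theorem \ref{theo3} is only a mild solution in $X=C_0(\overline\Omega)$, and the computation above demands enough differentiability to apply the chain rule and Green's formula. To overcome this I would use an approximation argument: approximate $\varphi$ in $X$ by a sequence $\varphi_n\in D(A)$ (dense by m-dissipativity), and regularize $f$ by a sequence of $C^1$ nonlinearities $f_n$ coinciding with $f$ on $[-M,M]$ and still satisfying \eqref{ineg1} with the same constant $C$. For the corresponding classical solutions $u_n$ one has $u_n\in C([0,T_n); H^1_0\cap H^2)\cap C^1([0,T_n); L^2)$, the energy computation is rigorous, and one obtains $\Vert u_n(t)\Vert_\infty\le M_n$ where $M_n=\max(C,\Vert\varphi_n\Vert_\infty)\to M$. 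A continuous-dependence argument (Gronwall, Lemma \ref{Gronwall Lemma}, applied to the integral formulation) transfers the bound to $u$ on every compact subinterval of $[0,\tau(\varphi))$; combined with the blow-up alternative, this yields $\tau(\varphi)=\infty$ and the bound \eqref{ineg2}.
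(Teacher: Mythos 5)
Your proof is correct and rests on the same core device as the paper's: the Stampacchia-type truncation $(u-M)^{+}$ with $M=\max\{C,\Vert\varphi\Vert_{L^\infty}\}$, an $L^2$ energy identity, integration by parts killing the diffusion term, and the blow-up alternative of Theorem \ref{theo3} to pass from the a priori bound to global existence. The one place where you genuinely diverge is the treatment of the nonlinear term: you observe that on $\{u>M\}$ one has $u>M\ge C\ge0$, so \eqref{ineg1} forces $f(u)\ge0$ pointwise and the term $-\int_\Omega (u-M)^{+}f(u)\,dx$ is nonpositive outright, giving $E'\le0$ with no further input. The paper instead writes $z=u-M$, uses only $f(M)\ge0$ together with the local Lipschitz continuity of $f$ to get $\vert f(M)-f(u)\vert\le K(t)\vert z\vert$, and closes the estimate with the Gronwall Lemma \ref{Gronwall Lemma}. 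Your variant is slightly more economical (no Gronwall, no Lipschitz constant in the comparison step) but uses the sign hypothesis on the whole ray $[M,\infty)$; the paper's variant uses it only at the single point $M$ and is the form that generalizes to comparison with non-constant supersolutions. On the regularity issue both arguments are at the same level of rigor: the paper simply asserts that $u\in C(0,\tau(\varphi);H^2\cap H^1_0)\cap C^1(0,\tau(\varphi);L^2)$ by parabolic smoothing, whereas you sketch an approximation of the data and of $f$ followed by continuous dependence; either route is acceptable, and yours is self-contained at the cost of a longer (and here only sketched) limiting argument. One small wording point: enlarging $C$ to $\max(C,0)$ \emph{weakens} hypothesis \eqref{ineg1} rather than strengthening it, which is exactly why the reduction is harmless.
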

\begin{proof}[{\bf Proof.}]  Let $M = Max \{C, \Vert \varphi\Vert_{L^\infty} \}$ and let us
show for instance that
$ u(t, x) \leq M$ on $(0, \tau(\varphi))\times \Omega $. Introducing $z = u - M,$ we have
    $$ z_t - \Delta z = f(M) - f(u) - f(M) \leq f(M) - f(u) $$
since $f(M) \geq 0.$ In addition it can be shown that $$u\in C(0,
\tau(\varphi);H^2(\Omega)\cap H^1_0(\Omega))\cap C^1(0, \tau(\varphi);L^2(\Omega)) $$
and then
    $$(d/dt) \int_{\Omega} \vert z^+\vert^2 dx = 2 \int_{\Omega} z^+z_{t}\,  dx =2
\int_{\Omega} z^+( \Delta z + f(M) - f(u) - f(M)) dx$$
 $$     \leq - 2 \int_{\Omega}
\nabla z^+. \nabla z \, \, dx + 2 \int_{\Omega}
z^+\vert f(M) - f(u) \vert dx $$
Because $f$ is locally Lipschitz and $u$ is bounded on $(0, t)\times \Omega $ for each
$t <\tau(\varphi)$, we have     $$ \vert f(M) - f(u) \vert (t, x) \leq K(t) \vert z(t, x)\vert
\quad {on}
\quad (0, t)\times \Omega$$
Then by using the identities $z = z^+ - z^-$  and  $z^+. z^- = 0, \nabla z^+.\nabla z^-
= 0$ almost everywhere, we obtain:

$$(d/dt)\int_{\Omega} \vert z^+\vert^2 dx \leq - 2 \int_{\Omega}
\Vert\nabla z^+\Vert^2 \, \, dx  + 2  K(t) \int_{\Omega}
\vert z^+\vert^2 \, \, dx$$
The inequality $u(t, x) \leq M$ on $(0, \tau(\varphi))\times \Omega$  now  follows
easily  by an application of Lemma \ref{Gronwall Lemma} \index{Gronwall Lemma} since $z^+(0, x)\equiv 0 $. Similarly
we show $u(t, x) \geq - M $ on $(0, \tau(\varphi))\times \Omega$.
\end{proof}
\section[A semilinear wave equation with  linear dissipation]{A semilinear wave equation with a linear dissipative term\index{wave equation}}\label{sectionWaveEqua}
Let  $\Omega$ be any open set in  ${\mathbb{R}}^N$ with  Lipschitz
continuous boundary
$\partial\Omega$ , and
let us  consider the equation \index{wave equation}
\begin{equation}\label{wave} u_{tt} - \Delta u +\gamma u_t + f(u) = 0 \quad\hbox{  in} \,
\,{\mathbb{R}}^+
\times
\Omega,
\quad     u = 0  \,\,   \hbox{on} \, \, {\mathbb{R}}^+ \times \partial\Omega
\end{equation}
 where  $f$  is a locally Lipschitz continuous function:
${\mathbb{R}}\rightarrow{\mathbb{R}} $ with  $f(0) = 0 $ satisfying the growth condition \index{growth condition}

\begin{equation}\label{GrowthCondi}\vert f'(u)\vert \leq C(1+\vert u\vert^r),    \,\,
\hbox{   a.e. on }\,\,{\mathbb{R}}
\end{equation}
 with  $r \geq 0$ arbitrary if $N = 1$ or  $2$  and  $\displaystyle 0 \leq r \leq
\frac{2}{N-2}$ if
$N\geq 3.$  It is natural to set $$ X = H^1_0(\Omega)\times L^2(\Omega)$$
Let us denote by  $f^*$ the mapping  defined by
$$f^*((u,v)) = (0, - f(u)),    \forall(u,v)\in X. $$
The growth condition \index{growth condition} (\ref{GrowthCondi}) together with Sobolev embedding theorems imply that
$$f^*(X)\subset X   ;\    f^*: X\longrightarrow X  \    \hbox{is Lipschitz continuous
on bounded subsets}.$$
We also define the operator $\Gamma \in L (X)$ given by
$$\Gamma((u,v))=(0,\gamma v),\quad    \forall(u,v)\in X. $$
Finally let $T(t)$ (cf. Theorem \ref{skewAdj} with $A$ as in
example \ref{WaveOperat}  in $X = H^1_0(\Omega)\times L^2(\Omega)$)  be the isometry group on X generated by the linear  wave equation\index{wave equation}
$$ u_{tt} - \Delta u  = 0 \quad\hbox{  in} \, \,{\mathbb{R}}^+ \times
\Omega,
\quad     u = 0  \,\,   \hbox{on} \, \, {\mathbb{R}}^+ \times \partial\Omega
$$For each
$(\varphi,\psi)\in X$, by  Theorem \ref{theo3} we can define a unique maximal solution solution $ U = (u,u_t) \in
C([0,\tau(\varphi,\psi));X)$ of the equation
$$ U(t) = T(t)(\varphi,\psi) + \int_0^t T(t-s)\{f^*((U(s)-\Gamma(U(s))\} ds$$ The following simple result will be useful later on.

\begin{prop}\label{propEqOnde}  Assume $\gamma \geq 0$ , and let $f$ satisfy the
condition
\begin{equation}\label{ineg4}
\forall s\in {\mathbb{R}}, \quad  F(s) \geq ( - \frac{\lambda_1}{2} +
\varepsilon) s^2 - C   \quad with \, \, \varepsilon> 0, C \geq 0
\end{equation}
 where $F$ is the primitive of $f$ such that $F(0) = 0$ and $\lambda_1$ is
the first eigenvalue of  $- \Delta $ in $H^1_0(\Omega)$. Then we have for any
$(\varphi,\psi)\in X:  \tau(\varphi,\psi) = \infty $ and the solution  $U = (u,u_t)$  of
\eqref{wave} \index{wave equation} such that $ U(0) = (\varphi,\psi)$ satisfies :
$$\sup_{t \geq 0}\Vert (u(t),u_t(t))\Vert_X  < \infty.$$
\end{prop}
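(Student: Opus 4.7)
The strategy is the classical energy method for a damped semilinear wave equation. I would introduce the total energy
$$E(t) := \frac{1}{2}\|u_t(t)\|_{L^2}^2 + \frac{1}{2}\|\nabla u(t)\|_{L^2}^2 + \int_\Omega F(u(t,x))\,dx,$$
show that $E$ is nonincreasing along trajectories thanks to the dissipative term $\gamma u_t$, and then use hypothesis \eqref{ineg4} together with Poincar\'e's inequality to bound the $X$-norm of $U(t)=(u(t),u_t(t))$ in terms of $E(t)$. Note that the growth condition \eqref{GrowthCondi} combined with the Sobolev embedding $H^1_0(\Omega)\hookrightarrow L^{r+2}(\Omega)$ ensures that $u\mapsto\int_\Omega F(u)\,dx$ is well defined and continuous on $H^1_0(\Omega)$, so $E:X\to\R$ is continuous.

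The first step is the energy identity. For a smooth solution---which is obtained by Theorem~\ref{thm1} when $(\varphi,\psi)\in D(A)$, with $A$ the skew-adjoint operator of Example~\ref{WaveOperat}, so that $U\in C([0,\tau);D(A))\cap C^1([0,\tau);X)$---one multiplies \eqref{wave} by $u_t$ and integrates over $\Omega$. The Dirichlet boundary condition kills the boundary term arising from the integration by parts on $-\Delta u$ and one obtains
$$\frac{d}{dt}E(t) = -\gamma\|u_t(t)\|_{L^2}^2 \leq 0.$$
For a general mild solution delivered by Theorem~\ref{theo3}, this identity is then recovered by approximating $(\varphi,\psi)\in X$ by a sequence in $D(A)$ and passing to the limit, using the continuous dependence of the mild solution on the initial data and the continuity of $E$ on $X$.

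The second step is the coercivity estimate. Integrating \eqref{ineg4} pointwise and combining with the kinetic/elastic part of the energy gives
$$\frac{1}{2}\|\nabla u\|_{L^2}^2 + \int_\Omega F(u)\,dx \;\geq\; \frac{1}{2}\|\nabla u\|_{L^2}^2 + \Big(-\frac{\lambda_1}{2}+\varepsilon\Big)\|u\|_{L^2}^2 - C|\Omega|.$$
Writing $\frac{1}{2}\|\nabla u\|_{L^2}^2 = \frac{\varepsilon}{\lambda_1}\|\nabla u\|_{L^2}^2 + \big(\frac{1}{2}-\frac{\varepsilon}{\lambda_1}\big)\|\nabla u\|_{L^2}^2$ (assuming $\varepsilon\leq\lambda_1/2$, the opposite case being trivial) and applying the Poincar\'e inequality $\|\nabla u\|_{L^2}^2\geq\lambda_1\|u\|_{L^2}^2$ on the second piece, the $\|u\|_{L^2}^2$-coefficient becomes nonnegative and one is left with
$$E(t) \;\geq\; \frac{1}{2}\|u_t\|_{L^2}^2 + \frac{\varepsilon}{\lambda_1}\|\nabla u\|_{L^2}^2 - C|\Omega| \;\geq\; c\,\|U(t)\|_X^2 - C|\Omega|,$$
for some $c=c(\varepsilon,\lambda_1)>0$.

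Combining the two steps yields $c\,\|U(t)\|_X^2 \leq E(0)+C|\Omega|$ on $[0,\tau(\varphi,\psi))$, so $\|U(t)\|_X$ remains bounded up to the maximal existence time. The blow-up alternative in Theorem~\ref{theo3} then forces $\tau(\varphi,\psi)=\infty$, and the same bound gives $\sup_{t\geq 0}\|U(t)\|_X<\infty$. The main obstacle is not algebraic but technical: rigorously justifying the energy identity at the level of mild solutions requires a density/continuity argument, which is standard but must be carried out carefully; the rest is elementary manipulation of the dissipation structure.
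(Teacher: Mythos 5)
Your proposal is correct and follows essentially the same route as the paper: the monotone energy $E(t)$, the dissipation identity $\frac{d}{dt}E=-\gamma\|u_t\|_{L^2}^2$, and the Poincar\'e splitting of $\frac12\|\nabla u\|^2$ to absorb the negative quadratic term from \eqref{ineg4} (the paper's $\eta\le 2\varepsilon/\lambda_1$ is exactly your $\frac{\varepsilon}{\lambda_1}$ split), followed by the blow-up alternative. The only difference is cosmetic: the paper simply asserts the energy equality, whereas you spell out the density argument justifying it for mild solutions.
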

\begin{proof}[{\bf Proof.}] The solutions  of  \eqref{wave} \index{wave equation} satisfy the energy equality
$$\gamma \int_0^t\int_{\Omega}u_t^2(t,x) dx dt  + E(u(t),u_t(t)) = E(\varphi,\psi) $$
with $$ E(\varphi,\psi): = \frac{1}{2}\int_{\Omega}\Vert \nabla \varphi(x)\Vert^2
dx + \frac{1}{2}\int_{\Omega}\vert  \psi(x)\vert^2
dx + \int_{\Omega}F(\varphi(x))
dx $$ In particular since $\gamma \geq 0$, we find $E(u(t),u_t(t))\leq E(\varphi,\psi) $
and the result follows quite easily from (\ref{ineg4}). Indeed, from Poincar\'e
inequality we deduce    $$\forall w\in H^1_0(\Omega), (1-\eta) \int_{\Omega} \vert\nabla w
\vert^2dx
\geq (
\lambda_1-2\varepsilon)\int_{\Omega} w^2 dx,$$
whenever $ \eta \leq 2\varepsilon/\lambda_1 . $ Then
$$ E(\varphi,\psi) \geq (\eta/2) \int_{\Omega} \vert\nabla\varphi \vert^2dx +
\frac{1}{2}\int_{\Omega}\vert  \psi(x)\vert^2 dx- C\vert\Omega\vert  , \quad
\forall(\varphi,\psi)\in X,$$  and a bound on $E$ implies a bound in $X$.
\end{proof}
%%%%%%%%%%%%%%%%%%%%%%%%%
%%%%%%%%%%%%%%%%%%%%%%%%%%%
\chapter[Uniformly damped linear semi-groups]{Uniformly damped linear semi-groups}
\section{A general property of linear contraction semi-groups}
Let $X$ be a real Banach space and $L$ any m-dissipative \index{m-dissipative} operator on $X$ with dense
domain. We consider the evolution equation
\begin{equation}\label{FirstOrderEqu}
u' =  Lu(t) ,  \quad  t \geq 0
\end{equation} For any $u_0\in X$, the formula $u(t) = S(t) u_0$
where $S(t)$ is the contraction semi-group generated by $L$ defines the unique
generalized solution of \eqref{FirstOrderEqu} such that $u(0) = u_0 .$
 We recall  the
following simple property :\medskip

\begin{prop}\label{prop2.1.1}
For all $t \geq 0,$ let us denote by $\Vert S(t)\Vert$ the norm of the contractive operator S(t)
in $L(X).$ Then  $\Vert S(t)\Vert$  satisfies  either of the two following properties
\begin{itemize}
\item[(1)] For all $t \geq 0,  \Vert S(t)\Vert = 1.$
\item[(2)]  $\exists \varepsilon > 0, \exists M > 0,$  for all $t \geq 0 ,
 \Vert S(t)\Vert  \leq M e^{-\varepsilon t}.$
 \end{itemize}
\end{prop}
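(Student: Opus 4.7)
The plan is to exploit the semigroup identity $S(t+s)=S(t)S(s)$, which makes $t\mapsto \log\Vert S(t)\Vert$ subadditive, together with the contraction bound $\Vert S(t)\Vert\le 1$ already given. Concretely, I would set up the following dichotomy. First, for every $t\ge 0$ we have $\Vert S(t)\Vert\le 1$; so the alternative is whether the equality $\Vert S(t)\Vert=1$ holds for \emph{every} $t\ge 0$, or whether there is some $t_0>0$ with $\Vert S(t_0)\Vert<1$. In the first situation we are done with conclusion (1), and I would focus all the work on the second situation, aiming to reach the exponential bound (2).

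Assume therefore that there exists $t_0>0$ such that $a:=\Vert S(t_0)\Vert<1$. Iterating the semigroup identity yields $S(nt_0)=S(t_0)^n$ for every integer $n\ge 1$, hence
\begin{equation*}
\Vert S(nt_0)\Vert \le \Vert S(t_0)\Vert^n = a^n.
\end{equation*}
For an arbitrary $t\ge 0$, write $t=nt_0+r$ with $n=\lfloor t/t_0\rfloor\in\N$ and $r\in[0,t_0)$. Using $S(t)=S(nt_0)S(r)$ together with $\Vert S(r)\Vert\le 1$ gives
\begin{equation*}
\Vert S(t)\Vert \le a^n \le a^{\,t/t_0-1} = \frac{1}{a}\exp\!\left(\frac{\log a}{t_0}\,t\right).
\end{equation*}
Setting $\varepsilon:=-(\log a)/t_0>0$ and $M:=1/a>0$, we conclude that $\Vert S(t)\Vert\le M\,e^{-\varepsilon t}$ for every $t\ge 0$, which is exactly conclusion (2).

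The argument is essentially a one-line application of subadditivity, so I do not expect a serious obstacle. The only point requiring a little care is the Euclidean division step $t=nt_0+r$: one must keep $r$ in a bounded interval so that the harmless factor $\Vert S(r)\Vert\le 1$ absorbs the fractional part, producing the multiplicative constant $M$ independent of $t$. Notice also that the proof does not use m-dissipativity of $L$ nor density of its domain in any essential way beyond the fact, already recalled in the statement, that $S(t)$ is a contraction semigroup; only the semigroup law and the contraction bound are invoked. This also explains why the dichotomy is so rigid: if the spectral radius-type quantity $\Vert S(t_0)\Vert$ is strictly less than $1$ even at a single time $t_0>0$, the semigroup law forces uniform exponential decay on the whole halfline.
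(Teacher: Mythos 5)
Your argument is essentially the same as the paper's: in the non-trivial case you pick a time $t_0$ where the norm has dropped below $1$, use the semigroup law and Euclidean division $t=nt_0+r$ to get $\Vert S(t)\Vert\le a^n$, and convert the geometric decay into the exponential bound with $M=1/a$ and $\varepsilon=-(\log a)/t_0$. That is exactly the paper's computation in its second case.

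There is, however, one degenerate situation you have not covered, and it makes a step of your proof literally fail: nothing prevents $a=\Vert S(t_0)\Vert$ from being equal to $0$. Your dichotomy only guarantees some $t_0$ with $\Vert S(t_0)\Vert<1$, not one with $0<\Vert S(t_0)\Vert<1$; since $t\mapsto\Vert S(t)\Vert$ is nonincreasing but need not be continuous, it can jump from the value $1$ directly to the value $0$ (this really happens, e.g.\ for the nilpotent translation semigroup on $L^2(0,1)$). In that case $\log a$ and $M=1/a$ are undefined and the identity $a^{t/t_0-1}=\frac{1}{a}\exp\bigl(\frac{\log a}{t_0}t\bigr)$ is meaningless. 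The paper disposes of this case separately at the outset: if $\Vert S(T)\Vert=0$ for some $T$, then $\Vert S(t)\Vert\le e^{\varepsilon T}e^{-\varepsilon t}$ for every $\varepsilon>0$, using only $\Vert S(t)\Vert\le 1$ on $[0,T)$ and $\Vert S(t)\Vert=0$ for $t\ge T$. Adding that one sentence closes the gap; the rest of your proof is fine.
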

\begin{proof}[{\bf Proof.}]  The function $\Vert S(t)\Vert$ is nonincreasing.  If
for some $T >0$ we have
$\Vert S(t)\Vert=1$ for $t\in[0, T)$ and $\Vert S(T)\Vert=0,$ then $\forall \varepsilon> 0, \forall t \geq 0 ,  \Vert S(t)\Vert  \leq M (\varepsilon) e^{ - \varepsilon t}$  with $M (\varepsilon) = e^{
\varepsilon T}$.  Assuming, on the contrary, that
for some $\tau > 0$  we have $0 < \Vert S(\tau)\Vert < 1,$ for each $ t \geq 0 $ we can write $t
= n\tau + s$,  with $n\in {\mathbb{N}}, \,  0\leq s \leq \tau.$ Then  $\Vert S(t)\Vert \leq \Vert S(\tau)\Vert^n$
and we obtain (2) with  $\displaystyle\varepsilon = -  {Log \Vert S(\tau)\Vert\over \tau}$
and
$M =e^{ \varepsilon \tau } = 1/ \Vert S(\tau)\Vert$.
\end{proof}
\section{The case of the heat equation\index{heat equation}}
 The linear heat equation \index{heat equation} can be studied in many  interesting spaces. Its
treatment  is especially simple in the Hilbert space setting of example \ref{HeatHilbert}.
However, in view of the applications to semilinear perturbations the $C_0 $- theory
is more flexible. Let us start with the Hilbert space setting : following the notation
of example \ref{HeatHilbert}, we denote by $S(t)$ the semi-group generated by $B$ in $H =
L^2(\Omega).$ We have the following simple result.\medskip
\begin{prop}\label{prop2.2.1}   Let $ \lambda_1 = \lambda_1(\Omega)$ be the first
eigenvalue of $(-\Delta)$ in $H^1_0(\Omega).$
Then \begin{equation}\label{equ2.10} \Vert S(t) \Vert _ {{\cal L}(H)}
\leq e^{-\lambda_1 t}, \quad \forall t\geq 0.\end{equation}
\end{prop}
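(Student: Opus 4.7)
The plan is to carry out a straightforward energy estimate on smooth data and then extend by density to all of $H$.

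First I would take $u_0\in D(B)$ and set $u(t)=S(t)u_0$. By Theorem \ref{thm1} applied to the m-dissipative self-adjoint operator $B$, the function $u$ lies in $C([0,\infty);D(B))\cap C^1([0,\infty);H)$ and satisfies $u'(t)=\Delta u(t)$. Then I would differentiate the squared $L^2$-norm:
$$\frac{d}{dt}\|u(t)\|_H^2 = 2\langle u'(t),u(t)\rangle = 2\int_\Omega (\Delta u(t))\,u(t)\,dx = -2\int_\Omega |\nabla u(t)|^2\,dx,$$
where the integration by parts is justified because $u(t)\in D(B)\subset H^1_0(\Omega)$ with $\Delta u(t)\in L^2(\Omega)$.

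Next I would invoke the Poincar\'e inequality recalled in the text, which gives
$$\int_\Omega |\nabla u(t)|^2\,dx \ge \lambda_1\int_\Omega |u(t)|^2\,dx = \lambda_1\|u(t)\|_H^2.$$
Combining the two, one obtains the scalar differential inequality $\frac{d}{dt}\|u(t)\|_H^2 \le -2\lambda_1\|u(t)\|_H^2$, whose integration (or an application of Lemma \ref{Gronwall Lemma} to $\varphi(t)=\|u(t)\|_H^2 e^{2\lambda_1 t}$) yields
$$\|u(t)\|_H \le e^{-\lambda_1 t}\|u_0\|_H, \qquad \forall t\ge 0.$$

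Finally, since $D(B)$ is dense in $H$ (recall that $B$ is densely defined as an m-dissipative operator) and since each $S(t)$ is continuous on $H$, the inequality $\|S(t)u_0\|_H\le e^{-\lambda_1 t}\|u_0\|_H$ extends to every $u_0\in H$, which gives precisely the operator-norm bound \eqref{equ2.10}. No step here is a genuine obstacle; the only subtlety is to make sure the integration-by-parts formula $\langle \Delta u, u\rangle = -\|\nabla u\|_{L^2}^2$ holds for $u\in D(B)$, which is a consequence of the definition of $D(B)$ as a subspace of $H^1_0(\Omega)$ together with a standard density/approximation argument by $C^\infty_c(\Omega)$-functions.
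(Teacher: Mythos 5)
Your proof is correct and follows essentially the same route as the paper: an energy identity for data in $D(B)$, integration by parts combined with the Poincar\'e inequality to get the differential inequality, and a density argument to pass to all of $H$. The paper merely packages the same computation as the statement that $t\mapsto e^{2\lambda_1 t}\Vert S(t)\varphi\Vert_H^2$ is nonincreasing, which is equivalent to your Gronwall step.
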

\begin{proof}[{\bf Proof.}] Let
$\varphi
\in D(B)$, and consider
$$f(t)=(e^{\lambda_1 t}\ \Vert S(t)\varphi\Vert_H)2 ,\quad  \forall
t\geq0.$$ We have
\begin{eqnarray*}  e^{- 2\lambda_1 t}f '(t) &=& 2\lambda_1 \int_\Omega u(t, x)^2 dx+ 2\int_\Omega u(t, x)u'(t,x)dx \\
&=&2\lambda_1 \int_\Omega u(t, x)^2 dx+ 2\int_\Omega u(t, x)\Delta u(t,x)dx \\
&=&2\Bigl(\lambda_1 \int_\Omega u(t, x)^2 dx-\int_\Omega \vert\nabla  u(t, x)\vert ^2dx\Bigr) \leq 0.
\end{eqnarray*}
Hence  $$  \Vert S(t)\varphi\Vert_H\leq e^{-\lambda_1 t} \Vert\varphi\Vert_H,\quad  \forall t\geq0,
\forall
\varphi\in D(B).$$  The result follows by  density.
\end{proof}

    We now assume that  $\Omega $ is bounded  with  a Lipschitz continous boundary and we
use the  notation of Example \ref{exempC0}. Let $T(t)$ denote the semi-group generated by $A$
in $X.$ Since $X\subset H$ with continous imbedding and $G(A)\subset G(B),$ it is
classical, using the Hille-Yosida theory, to prove
\begin{equation}\label{equ2.11}\forall\varphi\in X ,
 \forall t \geq0, \quad  T(t)\varphi=S(t)\varphi
 \end{equation}
In particular we have:  $ \Vert S(t)\varphi\Vert_H \leq e^{-\lambda_1 t}\Vert \varphi\Vert_H,$  for
each $t\geq0$  and
$\varphi \in X.$  The following property of uniform damping in X will be more
interesting for semilinear perturbations\medskip
\begin{thm}\label{thm2.2.2}     Let $ \lambda_1 = \lambda_1(\Omega)$ be the first
eigenvalue of $(-\Delta)$ in $H^1_0(\Omega).$ Then
\begin{equation}\label{equ2.12}\Vert S(t)\Vert_{{\cal L}(X)} \leq M e^{-\lambda_1 t} ,
\quad \forall t\geq0, \end{equation}
with
\begin{equation}\label{equ2.13} M = \exp\bigl({\lambda_1\vert\Omega\vert^{2/N} \over{4\pi}}\bigr).
\end{equation}
\end{thm}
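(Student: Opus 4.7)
The plan is to combine the exponential decay in $L^2$ from Proposition \ref{prop2.2.1} with an $L^2 \to L^\infty$ smoothing estimate, optimally splitting $S(t)$ as $S(t_0)\circ S(t-t_0)$ for a well-chosen $t_0$. The special form of $M = \exp(\lambda_1 |\Omega|^{2/N}/(4\pi))$ strongly suggests setting $t_0 := |\Omega|^{2/N}/(4\pi)$, so that $M = e^{\lambda_1 t_0}$, and the proof will essentially amount to showing $\|S(t)\|_{\mathcal{L}(X)} \le e^{\lambda_1 t_0}\,e^{-\lambda_1 t}$ with this particular $t_0$.

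First I would establish the pointwise comparison between the Dirichlet heat kernel $K(t,x,y)$ on $\Omega$ and the free Gaussian kernel on $\R^N$. By the maximum principle, $w(t,x) = (4\pi t)^{-N/2}\exp(-|x-y|^2/(4t)) - K(t,x,y)$ (for fixed $y\in\Omega$) is nonnegative, so
\[
0 \le K(t,x,y) \le (4\pi t)^{-N/2}\exp\!\left(-\tfrac{|x-y|^2}{4t}\right) \le (4\pi t)^{-N/2}.
\]
From this, for any $\psi \in L^2(\Omega)$ and $t>0$, Cauchy--Schwarz yields
\[
|(S(t)\psi)(x)| \le \left(\int_\Omega K(t,x,y)^2\,dy\right)^{1/2}\|\psi\|_{L^2} \le (4\pi t)^{-N/4}\|\psi\|_{L^2},
\]
where I used $\int_\Omega K(t,x,y)^2 dy \le (4\pi t)^{-N/2}\int_\Omega K(t,x,y)\,dy \le (4\pi t)^{-N/2}$ (the last integral being bounded by $1$, since $S(t)\mathbf{1}\le 1$ by maximum principle, or equivalently by mass conservation of the free kernel).

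Next, for $t \ge t_0$, decompose $S(t)\varphi = S(t_0)\bigl[S(t-t_0)\varphi\bigr]$. Combining the smoothing estimate applied to $\psi = S(t-t_0)\varphi$, the $L^2$ decay from Proposition \ref{prop2.2.1}, and the trivial embedding $\|\varphi\|_{L^2} \le |\Omega|^{1/2}\|\varphi\|_{L^\infty}$, gives
\[
\|S(t)\varphi\|_{L^\infty} \le (4\pi t_0)^{-N/4}\,e^{-\lambda_1(t-t_0)}\,|\Omega|^{1/2}\,\|\varphi\|_{L^\infty}.
\]
With $t_0 = |\Omega|^{2/N}/(4\pi)$, the prefactor $(4\pi t_0)^{-N/4}|\Omega|^{1/2} = |\Omega|^{-1/2}|\Omega|^{1/2} = 1$, so $\|S(t)\varphi\|_{L^\infty} \le e^{-\lambda_1(t-t_0)}\|\varphi\|_{L^\infty} = M e^{-\lambda_1 t}\|\varphi\|_{L^\infty}$, which is exactly \eqref{equ2.12}. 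For $0 \le t \le t_0$, the contractivity $\|S(t)\|_{\mathcal{L}(X)} \le 1$ from Theorem \ref{thm1} suffices, since $M e^{-\lambda_1 t} \ge M e^{-\lambda_1 t_0} = 1$ by our choice of $t_0$. The two regimes combine to yield the claim.

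The main obstacle is justifying the Gaussian upper bound $K(t,x,y) \le (4\pi t)^{-N/2}\exp(-|x-y|^2/(4t))$ rigorously on a merely Lipschitz domain; the maximum principle argument is standard but needs some care (one might invoke the $C^0$-semigroup framework of Example \ref{exempC0}, or regularize and pass to the limit). Once that Gaussian bound is in hand, the remainder of the argument is a tight interpolation whose constants are forced by the scaling $4\pi t_0 = |\Omega|^{2/N}$.
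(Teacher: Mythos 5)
Your proof is correct and follows essentially the same route as the paper: split $S(t)=S(t_0)\circ S(t-t_0)$, apply the $L^2\to L^\infty$ smoothing bound $(4\pi t_0)^{-N/4}$ (Proposition \ref{Prop2.2.3} with $p=2$, $q=\infty$) together with the $L^2$ decay of Proposition \ref{prop2.2.1} and the embedding $\Vert\varphi\Vert_{L^2}\le\vert\Omega\vert^{1/2}\Vert\varphi\Vert_{L^\infty}$, then optimize $t_0=\vert\Omega\vert^{2/N}/(4\pi)$ and use contractivity for $t\le t_0$. The only addition is that you sketch the Gaussian domination of the Dirichlet heat kernel, which is exactly the comparison-principle argument the paper alludes to but omits in justifying Proposition \ref{Prop2.2.3}.
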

In the proof of Theorem \ref{thm2.2.2} we shall use a rather well-known smoothing property of
$S(t)$ in $L^p$ spaces. Denoting by $\Vert\cdot \Vert_p $ the norm in $L_p(\Omega)$ ,
we recall
\begin{prop}\label{Prop2.2.3} Let $ 1\leq p\leq q\leq\infty.$  Then
$$  \Vert S(t)\varphi\Vert_q \leq ({1\over{4\pi t}})^{{N\over 2}({1\over p}-{1\over q})}
\Vert\varphi\Vert_p , \forall t>0, \forall \varphi\in X.
$$
\end{prop}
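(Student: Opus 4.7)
The plan is to reduce the estimate on $\Omega$ to the analogous estimate on the whole space $\mathbb{R}^N$, where the semigroup is explicitly given by Gaussian convolution, and then transfer it to $\Omega$ by a parabolic maximum principle.

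\emph{First step: the whole space.} On $\mathbb{R}^N$ the heat semigroup acts by convolution with the Gaussian kernel $K_t(x)=(4\pi t)^{-N/2}\exp(-|x|^2/(4t))$. A direct computation yields $\Vert K_t\Vert_{L^1(\mathbb{R}^N)}=1$ and $\Vert K_t\Vert_{L^\infty(\mathbb{R}^N)}=(4\pi t)^{-N/2}$. Young's convolution inequality immediately gives the two endpoint bounds: $L^r\to L^r$ contractivity for every $r\in[1,\infty]$ (using $\Vert K_t\Vert_1=1$), and $L^1\to L^\infty$ with constant $(4\pi t)^{-N/2}$ (using $\Vert K_t\Vert_\infty$). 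The Riesz-Thorin interpolation theorem, applied first between $L^1\to L^\infty$ and $L^\infty\to L^\infty$ to obtain the $L^p\to L^\infty$ estimate with constant $(4\pi t)^{-N/(2p)}$, and then between $L^p\to L^p$ and $L^p\to L^\infty$ to produce the general $L^p\to L^q$ bound, yields the stated inequality on $\mathbb{R}^N$ with the exact constant $(4\pi t)^{-\frac{N}{2}(1/p-1/q)}$.

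\emph{Second step: the bounded domain via comparison.} Let $\varphi\in X$ and $u(t)=S(t)\varphi$. Denote by $\tilde\varphi\in L^\infty(\mathbb{R}^N)$ the extension by zero of $|\varphi|$, and let $v$ be the solution of the whole-space heat equation with initial datum $\tilde\varphi$. Since $K_t\geq 0$, we have $v\geq 0$ on $(0,\infty)\times\mathbb{R}^N$; the functions $v\pm u$ then solve the heat equation on $(0,\infty)\times\Omega$, are nonnegative at $t=0$ on $\Omega$, and are nonnegative on $(0,\infty)\times\partial\Omega$ (where $u=0$ and $v\geq 0$). The weak parabolic maximum principle therefore gives $|u(t,x)|\leq v(t,x)$ pointwise on $(0,\infty)\times\Omega$. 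Taking $L^q(\Omega)$ norms and invoking the whole-space estimate produces
$$\Vert u(t)\Vert_{L^q(\Omega)}\leq \Vert v(t)\Vert_{L^q(\mathbb{R}^N)}\leq (4\pi t)^{-\frac{N}{2}(1/p-1/q)}\Vert\tilde\varphi\Vert_{L^p(\mathbb{R}^N)}=(4\pi t)^{-\frac{N}{2}(1/p-1/q)}\Vert\varphi\Vert_{L^p(\Omega)},$$
which is the desired bound.

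\emph{Main obstacle.} The principal technical subtlety is to justify the pointwise comparison $|u|\leq v$ for a general $\varphi\in X$, since $u$ is only a mild/$C_0$-solution and $\partial\Omega$ is merely Lipschitz, so the classical maximum principle does not apply directly. I would handle this by an approximation argument: first establish the inequality for $\varphi$ smooth and compactly supported in $\Omega$, where the solution is classical and the parabolic maximum principle applies on $\overline{\Omega}$ without difficulty; then extend to arbitrary $\varphi\in X$ using the density of test functions in $X$ together with the $L^\infty$-contractivity of both $S(t)$ on $\Omega$ and the whole-space semigroup on $\mathbb{R}^N$, which ensures stability of the pointwise bound under the limit.
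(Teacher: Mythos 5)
Your proof is correct and follows precisely the route the paper indicates for this (omitted) proof, namely the explicit Gaussian kernel on $\mathbb{R}^N$ combined with a comparison principle on $\Omega$. The interpolation computation yielding the constant $(4\pi t)^{-\frac{N}{2}(1/p-1/q)}$ and the density argument handling the merely Lipschitz boundary are both sound.
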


A possible proof, omitted here, relies on the explicit form of the heat kernel in ${\mathbb{R}}^N $ together with a comparison principle.\\
\begin{proof}[{\bf Proof of Theorem
\ref{thm2.2.2}}]
 Let $\varphi\in X$ and $T>0.$ First for  $0\leq t\leq T,$ we have trivially
$$  \Vert S(t)\varphi\Vert_\infty \leq \Vert\varphi\Vert_\infty \leq e^{-\lambda_1 t} e^{\lambda_1
T}\Vert\varphi\Vert_\infty .$$
Then if $t \geq T,$ we find successively, applying first Proposition \ref{Prop2.2.3} with $p = 2$ and
$q =\infty$
\begin{eqnarray*}\Vert S(t)\varphi\Vert_\infty  &\leq& \bigl({1\over4\pi T}\bigr)^{N\over4} \Vert S(t-T)\varphi\Vert_2\\
&\leq &
\bigl({1\over4\pi T}\bigr)^{N\over4}e^{-\lambda_1 t} e^{\lambda_1 T}\Vert\varphi\Vert_2\quad\hbox{ (by Proposition \ref{prop2.1.1})}\\
&\leq& \vert\Omega\vert^{1\over2 }\bigl({1\over4\pi T}\bigr)^{N\over4} e^{\lambda_1
T}e^{-\lambda_1 t} \Vert\varphi\Vert_\infty .
\end{eqnarray*}
Then the estimate follows by letting $\displaystyle T = {{\vert\Omega\vert^{2\over N
}}\over{4\pi}}$.
\end{proof}
\begin{rem}\label{Rem2.2.4}{\rm  Actually  \eqref{equ2.12} is not valid with $M=1.$
More precisely, if  $\Vert S(t)\Vert_{{\cal L}(X)} \leq M'e^{-mt} $ with $m>0,$ we must have $M'>1$. Indeed, let $\varphi\in {\cal D}(\Omega)$ be such that $\varphi \equiv 1$ near $x_0\in \Omega$ and $\Vert\varphi\Vert_X=1$, and let $u(t) =S(t)\varphi.$ It is then
easily verified that $u\in C^\infty([0,\infty)\times \Omega).$ Consequently
$u_t(0, x)\equiv 0$ near $x_0.$ Hence, for any $ \varepsilon >0 $
and any $x$ close enough to $x_0$, we find $$ u(t,x) \geq 1-
\varepsilon t,$$ for all t sufficiently small : in particular
$$\Vert u(t)\Vert_X \geq1- \varepsilon t $$ for $t$ small. This estimate
with $ \varepsilon >0 $ arbitrary small is not compatible with
\par\noindent $\Vert S(t)\Vert_{{\cal L}(X)} \leq e^{-\mu t} $,  for whatever
value  $\mu >0$.}
\end{rem}
\section{The case of linearly damped wave equations\index{wave equation}}
We have the following result
\begin{prop}\label{Proposition2.3.3.}      Let $\Omega$  be a bounded domain in ${\mathbb{R}}^N $. Consider the equation \index{wave equation}
\begin{equation}\label{Equ(2.18)}   u_{tt} -  \Delta u + \lambda u_t = 0 \ \hbox {in}\
{\mathbb{R}}^+\times\Omega,\quad     u = 0    \ \hbox {on}\
{\mathbb{R}}^+\times \partial\Omega \end{equation}
Then, denoting by  $\Vert\cdot\Vert$ the norm in  $H^1_0(\Omega)$ and by $\vert \cdot\vert$ the norm in $L^2(\Omega)$,  for any solution u of \eqref{Equ(2.18)} we have
\begin{equation}\label{Equ(2.19)}\Vert u(t)\Vert + \vert u_t(t) \vert \leq C( \Vert u(0)\Vert + \vert u_t(0)\vert ) e^{ -\delta  t}
\end{equation} for some $C, \delta>0$ .

\end{prop}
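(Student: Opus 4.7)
The plan is to prove exponential decay of the natural energy
$$E(t)=\tfrac12\|u(t)\|^2+\tfrac12|u_t(t)|^2$$
by means of a slightly perturbed Liapunov functional, which is the standard way to handle the fact that the dissipation only acts on the velocity $u_t$.

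First I would record the basic energy identity. Multiplying \eqref{Equ(2.18)} by $u_t$ and integrating over $\Omega$ (with zero boundary values) yields $E'(t)=-\lambda|u_t(t)|^2$. This is already enough to see that $E$ is nonincreasing, but not enough for exponential decay, because the dissipation controls only $u_t$ and gives no direct feedback on $\|u\|$. To create such a feedback I would introduce, for a small parameter $\varepsilon>0$ to be chosen,
$$\Phi(t):=E(t)+\varepsilon\int_\Omega u(t,x)\,u_t(t,x)\,dx.$$
Using the Cauchy--Schwarz inequality together with the Poincar\'e inequality $|u|\le\lambda_1^{-1/2}\|u\|$, one checks that for $\varepsilon$ small enough (depending only on $\lambda_1$) the functional $\Phi$ is equivalent to $E$, namely
$$\tfrac12 E(t)\le \Phi(t)\le 2E(t).$$

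Next I would compute $\Phi'(t)$. Using the equation,
$$\frac{d}{dt}\int_\Omega u\,u_t\,dx=|u_t|^2+\int_\Omega u\,u_{tt}\,dx=|u_t|^2-\|u\|^2-\lambda\int_\Omega u\,u_t\,dx,$$
so that
$$\Phi'(t)=-\lambda|u_t|^2+\varepsilon\bigl(|u_t|^2-\|u\|^2-\lambda\!\int_\Omega u\,u_t\,dx\bigr).$$
Bounding the cross term by Young's inequality in the form
$$\lambda\Bigl|\int_\Omega u\,u_t\,dx\Bigr|\le \frac{\alpha\lambda}{2}|u_t|^2+\frac{\lambda}{2\alpha\lambda_1}\|u\|^2,$$
with $\alpha$ chosen so that $\lambda/(2\alpha\lambda_1)<1$, one gets
$$\Phi'(t)\le-\bigl(\lambda-\varepsilon-\tfrac{\varepsilon\alpha\lambda}{2}\bigr)|u_t|^2-\varepsilon\bigl(1-\tfrac{\lambda}{2\alpha\lambda_1}\bigr)\|u\|^2.$$
Choosing $\varepsilon$ small enough that the coefficient of $|u_t|^2$ is positive and then combining both terms, we obtain a constant $\kappa>0$ (depending on $\lambda,\lambda_1$) with
$$\Phi'(t)\le -\kappa\bigl(\|u\|^2+|u_t|^2\bigr)\le -2\delta\,\Phi(t)$$
for a suitable $\delta>0$, thanks to the equivalence $\Phi\asymp E$.

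A direct Gronwall argument (Lemma \ref{Gronwall Lemma} applied to $\Phi e^{2\delta t}$) then gives $\Phi(t)\le\Phi(0)e^{-2\delta t}$, whence $E(t)\le CE(0)e^{-2\delta t}$, which is exactly \eqref{Equ(2.19)} after taking square roots. The main delicate point is the simultaneous choice of $\varepsilon$ and $\alpha$: they must be small enough that $\Phi$ remains equivalent to $E$ and that the differential inequality $\Phi'\le-2\delta\Phi$ holds; both conditions can be met simultaneously, and this is really a routine but crucial bookkeeping step. One could alternatively invoke Proposition \ref{prop2.1.1} applied to the contraction semigroup generated by the damped wave operator in a norm equivalent to the $X$-norm, but the direct Liapunov computation is more transparent and yields explicit constants.
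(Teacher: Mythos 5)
Your proof is correct and follows essentially the same route as the paper: the text establishes Proposition \ref{Proposition2.3.3.} as a special case of the abstract Proposition \ref{Prop2.4.3}, whose proof uses exactly the perturbed energy $H_\varepsilon(t)=\Vert u'(t)\Vert^2+\Vert A^{1/2}u(t)\Vert^2+\varepsilon\,(u(t),u'(t))$ and the same smallness bookkeeping for $\varepsilon$; you simply carry out that computation directly for $A=-\Delta$, $B=\lambda I$. The only point worth keeping in mind (noted in Remark \ref{wave diss}) is that the differentiation is justified first for data in $D(L)$ and then extended by density.
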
 This result is a special case of the following more general statement. Let $A$ be a positive self-adjoint \index{self-adjoint} operator with dense domain on a real Hilbert space $H$  with norm denoted by $\vert.\vert$ and inner product denoted by $(.,.) $. A is assumed  coercive on $H$ in the sense that 
$$ \exists\alpha >0, \forall u \in D(A), \quad (Au, u) \ge \alpha \vert u\vert^2 .$$
We introduce $ V : = D(A^{1/2})$, the closure in $H$  of $D(A)$ under the norm 
$$ p(u) : = (Au, u) ^{1\over 2}.$$
 The norm $p$ extends  on $V$  and we equip $V$ with the extension of $p$, denoted by $\Vert \Vert$ so that 
$$\forall  u\in V, \quad  \Vert u\Vert = \vert A^{1/2}u\vert$$ where $A^{1/2}\in L (V; H)\cap L (D(A); V) $ is the unique nonnegative square root of $A$. The duality product between $V$ and its topological dual $V'$ extends the inner product on $H$ in the following way: 
$$ \forall (f, v) \in H\times V, \quad  \langle  f , v\rangle_{V', V} = (f, v).$$
  In particular we have $$ \forall (u, v) \in D(A)\times V, \quad  \langle Au, v\rangle_{V', V} = (Au, v) = (A^{1/2}u, A^{1/2}v).$$ 
In particular by the definition of the standard norm on $V'$ we have $$ \forall u \in D(A), \quad  \Vert Au\Vert_{V'} \le \vert A^{1/2}u\vert  = \Vert u\Vert .$$ By selecting $v= u$ we even obtain $$ \forall u \in D(A), \quad  \Vert Au\Vert_{V'}= \Vert u\Vert .$$ By Lax-Milgram's theorem the extension $\Lambda $ of $A$ by continuity on $V$ is bijective from $V$ to $V'$ and in addition,  $\Lambda $ satisfies $$ \forall (u, v )\in V\times V,\quad  \langle \Lambda u , v\rangle_{V', V} = (A^{1/2}u, A^{1/2}v)$$ so that $\Lambda $ becomes by definition the duality map from $V$ to $ V'$. Finally, denoting by $\Vert\cdot \Vert_{*}$ the standard norm on of $V'$  we remark that
$$ \forall f\in V',\quad \Vert f\Vert_* = \Vert \Lambda^{-1} f\Vert.$$ 
Let now  $B\in L (V; V')$ be such that  $$\forall  v\in V, \quad (Bv, v) \geq 0. $$
We consider the second order equation 
 \begin{equation*}
u''+\Lambda u+Bu'=0.
\end{equation*}
and the energy space $E =  V
\times H$ is equipped with the Hilbert product space norm.  
\begin{prop}\label{Prop2.4.1.}     The unbounded operator on $E$ defined by
\begin{equation}
 D(L) = \{ (u, v) \in V\times V;  \quad \Lambda u+ Bv \in H\}\end{equation}
 \begin{equation} L(u, v) =(v, - \Lambda u - Bv)\quad \forall (u, v)\in D(L)
\end{equation}
is m - dissipative \index{m-dissipative} on $E$.
\end{prop}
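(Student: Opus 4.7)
The plan is to apply the Hilbert-space characterization of m-dissipativity stated earlier: it suffices to verify that $\langle L(u,v),(u,v)\rangle_E\le 0$ for every $(u,v)\in D(L)$, and that for some (hence any) $\lambda_0>0$ the operator $I-\lambda_0 L$ maps $D(L)$ onto $E$.

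\textbf{Dissipativity.} Take $(u,v)\in D(L)$. Both components of $L(u,v)$ belong to the appropriate spaces ($v\in V$, and $-\Lambda u-Bv\in H$ by the very definition of $D(L)$), so the inner product in $E=V\times H$ makes sense, and
\begin{equation*}
\langle L(u,v),(u,v)\rangle_{E}=(A^{1/2}v,A^{1/2}u)+(-\Lambda u-Bv,v)_{H}.
\end{equation*}
Since $-\Lambda u-Bv\in H$, the term $(-\Lambda u-Bv,v)_H$ coincides with the duality pairing $\langle -\Lambda u-Bv,v\rangle_{V',V}$, and the identity $\langle\Lambda u,v\rangle_{V',V}=(A^{1/2}u,A^{1/2}v)$ recalled in the text cancels exactly the first term, leaving $-(Bv,v)\le 0$ by the assumption on $B$.

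\textbf{M-dissipativity.} Fix any $\lambda>0$ (I will take $\lambda=1$ to lighten the writing) and, given $(f,g)\in V\times H$, try to solve $(I-L)(u,v)=(f,g)$, i.e.
\begin{equation*}
u-v=f,\qquad v+\Lambda u+Bv=g.
\end{equation*}
The first equation forces $v=u-f$; substituting into the second and noting that $f\in V$ so $Bf\in V'$, the problem reduces to finding $u\in V$ satisfying
\begin{equation*}
u+\Lambda u+Bu=g+f+Bf\qquad\text{in }V'.
\end{equation*}
I will address this variationally. Consider the bilinear form on $V\times V$
\begin{equation*}
a(u,w):=(u,w)_{H}+(A^{1/2}u,A^{1/2}w)+\langle Bu,w\rangle_{V',V}.
\end{equation*}
Continuity of $a$ follows from continuity of the embedding $V\hookrightarrow H$ and from $B\in L(V,V')$; coercivity on $V$ follows from $(Bu,u)\ge 0$ since $a(u,u)\ge\|u\|_V^2$. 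The right-hand side $w\mapsto\langle g+f+Bf,w\rangle_{V',V}$ is a continuous linear form on $V$. Lax-Milgram therefore produces a unique $u\in V$ with $a(u,w)=\langle g+f+Bf,w\rangle_{V',V}$ for all $w\in V$, i.e.\ the displayed equation in $V'$.

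\textbf{Recovering an element of $D(L)$.} Set $v:=u-f\in V$. The $V'$-equation rewrites as $\Lambda u+Bv=g-v$, whose right-hand side lies in $H$, so $\Lambda u+Bv\in H$ and thus $(u,v)\in D(L)$. By construction $(I-L)(u,v)=(f,g)$, proving surjectivity of $I-L$. Combined with dissipativity, this gives the m-dissipativity of $L$ on $E$.

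\textbf{Expected main obstacle.} The only delicate step is that the reduced equation must be solved in $V'$, not in $H$: the natural bilinear form $a$ lives on $V\times V$ and uses the $V'$-extension $\Lambda$ of $A$ rather than $A$ itself. Once the Lax-Milgram solution $u$ is in hand, the key observation making $(u,v)\in D(L)$ work is that the equation automatically forces $\Lambda u+Bv$ back into $H$. This is the a posteriori regularity step and it is where one must be careful to distinguish the $H$-inner product from the $V',V$ duality pairing, exactly as the text prepared in the paragraphs preceding the statement.
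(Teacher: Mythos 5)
Your proof is correct and follows essentially the same route as the paper's: the dissipativity computation is identical, and your Lax--Milgram step for the reduced equation is exactly the content of the paper's Lemma~\ref{Lemma2.4.2.} applied to the coercive operator $C=\Lambda+B+I$ (the paper eliminates $u$ and solves for $v$, you eliminate $v$ and solve for $u$, which is an immaterial relabeling). Your explicit remark that $\Lambda u+Bv=g-v\in H$ forces $(u,v)\in D(L)$ makes precise a step the paper leaves implicit.
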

\begin{proof}[{\bf Proof.}] We denote by $\langle , \rangle$ the inner product in $E.$ First $L$ is dissipative \index{dissipative} on $E$ . Indeed for any $U = (u, v) \in D(L)$ we have 
\begin{eqnarray*} \langle LU, U \rangle &=& (v, u)_V + ( - \Lambda u - Bv, v)_H\\
& =& (A^{1/2}v, A^{1/2}u) + \langle - \Lambda u - Bv, v\rangle_{V', V} \\
&=& \langle  - Bv, v\rangle_{V', V} \le 0.
\end{eqnarray*}
In order to prove that $L$ is m-dissipative \index{m-dissipative} on $E$ we consider, for any $ (f, g) \in E $ the equation 
$$ (u, v) \in D(L);\quad - L(u, v) + (u, v) = (f, g) $$ which is equivalent to 
$$(u, v) \in V\times V;  \quad -v+u = f;\quad   \Lambda u + Bv +v = g $$ or in other terms 
$$(u, v) \in V\times V;  \quad u = f +v;\quad   \Lambda v + Bv +v = g - \Lambda  f $$ Assuming we know that the operator $ C =  \Lambda  + B +I $ is such that $C(V) = V'$ we conclude immediately that $$( I-L)D(L) = E $$ and therefore $L$ is m-dissipative \index{m-dissipative} as claimed. The property $C(V) = V'$ is an immediate consequence of the following elementary lemma \end{proof}

\begin{lem}\label{Lemma2.4.2.}   Let $V$ be a real Hilbert space and $C\in L(V, V')$. Assume that for some $\eta>0$ we have 
$$ \forall v \in V, \quad \langle Cv, v\rangle_{V', V}\ge \eta \Vert v\Vert^2. $$ Then $C(V) = V.'$ \end{lem}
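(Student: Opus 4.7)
The plan is to recognize this statement as the Lax--Milgram theorem in operator form and give a short direct argument. Consider the bilinear form $a:V\times V\to \R$ defined by $a(u,v):=\langle Cu,v\rangle_{V',V}$. It is continuous because $|a(u,v)|\le \|C\|_{L(V,V')}\|u\|\,\|v\|$, and its coercivity $a(v,v)\ge \eta\|v\|^2$ is exactly the hypothesis.

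First I would derive the a priori estimate
\begin{equation*}
\eta\|v\|^{2}\le \langle Cv,v\rangle_{V',V}\le \|Cv\|_{V'}\|v\|,
\end{equation*}
which yields $\|Cv\|_{V'}\ge \eta\|v\|$ for every $v\in V$. This shows that $C$ is injective and, by a standard Cauchy sequence argument applied to preimages, that the range $R(C)$ is closed in $V'$.

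Next I would show $R(C)=V'$ by orthogonality in the Hilbert space $V$. Let $R:V\to V'$ denote the Riesz isomorphism, so that $\langle f,v\rangle_{V',V}=(R^{-1}f,v)_V$ for all $f\in V'$ and $v\in V$. Then $R^{-1}C\in L(V,V)$ and $R^{-1}(R(C))$ is a closed subspace of $V$. If this subspace were proper, there would exist $w\in V$ with $w\neq 0$ orthogonal to every $R^{-1}Cv$, which means $\langle Cv,w\rangle_{V',V}=0$ for all $v\in V$. Taking $v=w$ would give $\langle Cw,w\rangle_{V',V}=0$, contradicting coercivity since $w\neq 0$. Hence $R^{-1}(R(C))=V$, i.e.\ $R(C)=V'$, which is the desired conclusion.

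There is no real obstacle: the only delicate point is the careful bookkeeping between the duality pairing $\langle\cdot,\cdot\rangle_{V',V}$ and the Hilbert inner product on $V$ via the Riesz map, but once this identification is made, closed range plus a one-line orthogonality argument finish the proof.
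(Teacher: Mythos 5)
Your proof is correct and follows essentially the same route as the paper's: establish that the range is closed via the coercivity estimate (equivalently $\Vert Cv\Vert_{V'}\ge \eta\Vert v\Vert$ and a Cauchy sequence argument on preimages), then show a proper closed range would yield a nonzero $w$ with $\langle Cv,w\rangle_{V',V}=0$ for all $v$, which the choice $v=w$ contradicts. Your explicit use of the Riesz map is just a more detailed bookkeeping of the duality step the paper performs implicitly.
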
 
\begin{proof}[{\bf Proof.}] First $C(V)$ is a closed linear subspace of $V'$. Indeed if  $ f_n= Cv_n\in C(V)$ and $f_n$ converges to $f \in V'$ we have for each $(m, n)$ the inequality $$\Vert v_n-v_m\Vert^2 \le \frac{1}{\eta}\langle f_n-f_m, v_n-v_m\rangle_{V', V} \Longrightarrow \Vert v_n-v_m\Vert \le \frac{1}{\eta}\Vert f_n-f_m\Vert_{*}$$ Hence $ v_n$ is a Cauchy sequence in $V$ and its limit $v$ satisfies $ Cv = f$. Now if $C(V) \not = V'$ there exists a non-zero vector $w\in V$ such that  $$ \forall v \in V, \quad \langle Cv, w\rangle_{V', V} = 0$$ By letting $v = w$ we conclude that $w = 0$, a contradiction.
\end{proof}

\begin{prop}\label{Prop2.4.3}  Let
$A, V$ and $H$ be as above.  Let $ B\in {\cal L}(V, V')$  satisfy the
following conditions
\begin{eqnarray*}& &\exists\alpha >0, \quad \forall v\in V, \quad \langle B v, v\rangle_{V', V}\,\,\geq\, \alpha\vert v\vert^2 \\
& &\exists C >0,\quad \forall v\in V,\quad \Vert
B(v)\Vert_{V'}^2\leq C(\langle B v, v\rangle_{V', V}+ \vert v\vert^2).
\end{eqnarray*}
 Let $u\in
C^1(0,+\infty, V) \cap C^2(0,+\infty, V')$ be a solution of $$ u'' + Au + Bu' = 0 .$$
There exists some constants $C\ge1$  and  $ \gamma > 0 $ independent of $u$ such that $$
\forall \ge0, \,\, \Vert u(t), 
u'(t)\Vert_E \leq C e^{-\gamma t}\Vert
u(0), u'(0)\Vert_E. $$ \end{prop}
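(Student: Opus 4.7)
My plan is the classical perturbed-energy (modified Lyapunov functional) technique adapted to the abstract framework set up in Proposition \ref{Prop2.4.1.}. Throughout, I work with a smooth solution $u$ as given, and write $\langle \cdot,\cdot\rangle$ for the $V'$--$V$ duality pairing, which extends the inner product of $H$.

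First I would introduce the natural energy
\begin{equation*}
E(t) := \tfrac{1}{2}|u'(t)|^2 + \tfrac{1}{2}\|u(t)\|^2
\end{equation*}
and differentiate along the equation. Using $u'' = -\Lambda u - Bu'$ and the identification $\|u\|^2 = \langle \Lambda u, u\rangle$, one obtains the dissipation identity
\begin{equation*}
E'(t) = -\langle Bu'(t), u'(t)\rangle \le -\alpha |u'(t)|^2 \le 0.
\end{equation*}
This gives boundedness of $E$ and $L^2$-integrability of $u'$ in the dissipation norm, but not exponential decay, because the energy of a purely oscillating component (with $u'=0$) is not controlled by $E'$.

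Next I would introduce the perturbed functional
\begin{equation*}
\mathcal E(t) := E(t) + \varepsilon \, (u'(t), u(t))_H
\end{equation*}
with $\varepsilon>0$ to be chosen small. Using the coercivity of $A$ on $H$ (which gives $|u|^2 \le \alpha_0^{-1}\|u\|^2$) and Cauchy--Schwarz, one has $|(u',u)|\le \tfrac{1}{2}|u'|^2 + \tfrac{1}{2\alpha_0}\|u\|^2$, so for $\varepsilon$ small enough $\mathcal E$ is equivalent to $E$:
\begin{equation*}
\tfrac{1}{2} E(t) \le \mathcal E(t) \le \tfrac{3}{2} E(t).
\end{equation*}
Differentiating the cross term gives $\frac{d}{dt}(u',u) = |u'|^2 + \langle u'', u\rangle = |u'|^2 - \|u\|^2 - \langle Bu', u\rangle$, whence
\begin{equation*}
\mathcal E'(t) = -\langle Bu',u'\rangle + \varepsilon |u'|^2 - \varepsilon \|u\|^2 - \varepsilon \langle Bu',u\rangle.
\end{equation*}

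The main step, and the one I expect to be the real technical core, is estimating the mixed term $\varepsilon\langle Bu',u\rangle$. Using the hypothesis $\|Bv\|_{V'}^2 \le C(\langle Bv,v\rangle + |v|^2)$ together with $|u'|^2 \le \alpha^{-1}\langle Bu',u'\rangle$ (the first hypothesis on $B$) gives
\begin{equation*}
|\langle Bu',u\rangle| \le \|Bu'\|_{V'}\|u\| \le \sqrt{C(1+\alpha^{-1})}\,\sqrt{\langle Bu',u'\rangle}\,\|u\|,
\end{equation*}
and by Young's inequality with a parameter $\delta>0$,
\begin{equation*}
\varepsilon|\langle Bu',u\rangle| \le \tfrac{1}{2}\langle Bu',u'\rangle + \tfrac{\varepsilon^2}{2}C(1+\alpha^{-1})\|u\|^2.
\end{equation*}
Similarly $\varepsilon|u'|^2 \le \varepsilon\alpha^{-1}\langle Bu',u'\rangle$. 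Plugging in, we get
\begin{equation*}
\mathcal E'(t) \le -\bigl(\tfrac{1}{2} - \varepsilon\alpha^{-1}\bigr)\langle Bu',u'\rangle - \varepsilon\bigl(1 - \tfrac{\varepsilon}{2}C(1+\alpha^{-1})\bigr)\|u\|^2.
\end{equation*}
Choosing $\varepsilon$ small enough so that both parentheses are positive, and bounding $\langle Bu',u'\rangle \ge \alpha|u'|^2$, we arrive at $\mathcal E'(t) \le -c_0 E(t) \le -c_1 \mathcal E(t)$ for some $c_0, c_1>0$ depending only on $\alpha,\alpha_0,C$.

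Finally I would integrate this differential inequality (Gronwall) to conclude $\mathcal E(t) \le \mathcal E(0) e^{-c_1 t}$, and then translate back to $E$ via the equivalence to obtain the claimed bound $\|(u(t),u'(t))\|_E \le C e^{-\gamma t}\|(u(0), u'(0))\|_E$ with $\gamma = c_1/2$. A density argument (smooth data are dense in $D(L)$, which is dense in $E$) then extends the estimate from regular solutions given by the semigroup to arbitrary generalized solutions. The delicate point is really the balancing of the three small parameters in the estimate of the mixed term, since the hypothesis on $B$ only gives control of $\|Bu'\|_{V'}$ by $\langle Bu',u'\rangle$ up to the lower-order $|u'|^2$, and it is the coercivity $\langle Bv,v\rangle \ge \alpha|v|^2$ that lets one absorb this nuisance.
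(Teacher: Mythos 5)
Your proposal is correct and follows essentially the same route as the paper: the perturbed energy $E(t)+\varepsilon(u(t),u'(t))_H$, the dissipation identity, absorption of the cross term $\varepsilon\langle Bu',u\rangle$ via the two hypotheses on $B$ together with Young's inequality, and the equivalence of the perturbed functional with the energy for small $\varepsilon$. The only cosmetic difference is bookkeeping: you first combine the two hypotheses into $\Vert Bu'\Vert_{V'}^2\le C(1+\alpha^{-1})\langle Bu',u'\rangle$ and then apply Young with fixed constants, whereas the paper keeps an auxiliary parameter $\eta$ and sets $\eta=\sqrt{\varepsilon}$; both lead to the same conclusion.
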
\begin{proof}[{\bf Proof.}] 
We consider for all $t>0$ and $ \varepsilon > 0$ small enough
$$H_\varepsilon(t)= \Vert
u'(t)\Vert^2\,+ \Vert
A^{1\over2}u(t)\Vert^2 +\varepsilon\ (u(t),\, u'(t))$$ 
and we compute
\begin{eqnarray*}H_\varepsilon'(t)&=&-\langle B(u'(t)),u'(t)\rangle\,+\varepsilon \Vert u'(t)\Vert^2  +\varepsilon \langle u''(t),\, u(t)\rangle \\
&=&-\langle B(u'(t)),u'(t)\rangle\,+\varepsilon\Vert u'(t)\Vert^2 -\varepsilon\Vert A^{1\over2}u(t)\Vert^2 
-\varepsilon \langle Bu'(t),\, u(t)\rangle \\
&\leq & -\langle B(u'(t)),u'(t)\rangle\,+\varepsilon \Vert u'(t)\Vert^2 -\varepsilon \Vert A^{1\over2}u(t)\Vert^2 + \eta \varepsilon\Vert u(t)\Vert_V^2 +{\varepsilon\over \eta} \Vert Bu'(t)\Vert_{V'}^2 \\
& \leq& (-1 + {C \varepsilon\over \eta})\langle B(u'(t)),u'(t)\rangle\,+\varepsilon(1 + {C\over \eta})
\Vert u'(t)\Vert^2 -\varepsilon (1-\eta)
\Vert A^{1\over2}u(t)\Vert^2.
\end{eqnarray*}
 Choosing for instance $\eta = \sqrt \varepsilon$
and letting $ \varepsilon $ small enough we obtain first 
$$  H_\varepsilon'(t) \leq  -{\varepsilon\over 2}[\Vert
u'(t)\Vert^2\,+ \Vert
A^{1\over2}u(t)\Vert^2 ]. $$ On the other hand it is not difficult to check for $ \varepsilon $ small enough the inequality: $$
(1-M\varepsilon) \Vert
u'(t)\Vert^2\,+ \Vert
A^{1\over2}u(t)\Vert^2\le  H_\varepsilon(t)\le (1+M\varepsilon) \Vert
u'(t)\Vert^2\,+ \Vert
A^{1\over2}u(t)\Vert^2. $$ where $M$ is independent of the solution $u$ as well as $t$ and $\varepsilon. $ This concludes the proof. \end{proof}
\begin{rem}\label{wave diss}{\rm  If $(u(0), u'(0)) \in D(L)$, then clearly $u\in
C^1(0,+\infty, V) \cap C^2(0,+\infty, V')$. By density, Proposition \ref{Prop2.4.3} means that the semi-group generated by $L$ is exponentially  damped in $E$. In particular Proposition \ref{Proposition2.3.3.} follows as a special case.}
\end{rem}

\chapter[Generalities on dynamical systems]{Generalities on dynamical systems\index{dynamical system}}

\section{General framework}

Throughout this paragraph, $(Z, d)$ denotes a complete metric space.

\begin{defn} A dynamical system \index{dynamical system}  on $(Z, d)$ is a one parameter family
$\displaystyle \{S(t)\}_{t\geq0}$ of maps   $Z\rightarrow Z$ such
that
\begin{itemize}
\item[(i)] $\forall t \geq 0, S(t)\in C(Z,Z)$;
\item[(ii)] $S(0) = $ Identity;
\item[(iii)] $\forall s, t \geq 0,S(t+s) = S(t)\circ S(s)$;
\item[(iv)] $\forall z\in Z,  S(t)z\in C([0,+\infty), Z).$
\end{itemize}
\end{defn}

\begin{rem}\label{Remark4.1.2.}{\rm In the sequel we shall often denote $S(t)S(s)$ instead of
$S(t)\circ S(s)$.}
\end{rem}

\begin{rem}\label{Remark4.1.3.} {\rm If  $F$ is a closed subset of $Z$ such that $S(t)F\subset
F $ for all $t\geq 0,$ then $\{ S(t)_{/F} \}_{t\geq0}$ is a dynamical
system on $(F,d)$.}
\end{rem}

 \begin{defn}\label{Definition4.1.4.} For each $z\in Z,$ the continuous curve
$t\rightarrow S(t)z$ is called the trajectory of  $z$  (under $S(t)$).
\end{defn}

\begin{defn}\label{Definition4.1.5.} Let  $z\in Z$. The set $$ \omega(z)= \{y\in Z,
\exists t_n \rightarrow +\infty, S(t_n)z\rightarrow y \, \hbox{ as }\, n \rightarrow
+\infty \}$$is called the  $\omega$-limit  set \index{$\omega$-limit set}  of  $z$ (under $S(t)$).
\end{defn}

\begin{prop}\label{prop4.1.6} We also have
$$ \omega(z) = \bigcap_{s>0}\overline{\bigcup_{t\geq s}\{S(t)z\}}. $$
\end{prop}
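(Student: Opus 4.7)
The plan is to prove the equality by showing the two inclusions separately, using the definition of $\omega(z)$ stated in Definition 4.1.5 together with the sequential characterization of closure in metric spaces (which is available since $Z$ is a metric space).

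First I would show $\omega(z) \subseteq \bigcap_{s>0}\overline{\bigcup_{t\geq s}\{S(t)z\}}$. Take $y \in \omega(z)$, so there exists a sequence $t_n \to +\infty$ with $S(t_n)z \to y$. Fix any $s > 0$. Since $t_n \to +\infty$, there is $N$ such that $t_n \geq s$ for all $n \geq N$, hence the tail $\{S(t_n)z\}_{n \geq N}$ lies in $\bigcup_{t\geq s}\{S(t)z\}$ and still converges to $y$. Therefore $y$ belongs to the closure of this union. As $s > 0$ was arbitrary, $y$ lies in the intersection over all $s > 0$.

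For the reverse inclusion, suppose $y \in \bigcap_{s>0}\overline{\bigcup_{t\geq s}\{S(t)z\}}$. Applying this with $s = n$ for each positive integer $n$, the point $y$ is in $\overline{\bigcup_{t\geq n}\{S(t)z\}}$, so by the metric characterization of closure there exists $t_n \geq n$ with $d(S(t_n)z, y) < 1/n$. Then by construction $t_n \to +\infty$ and $S(t_n)z \to y$, so $y \in \omega(z)$ by Definition 4.1.5.

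No real obstacle is expected here: the only subtlety is making sure one uses the metric (and hence sequential) structure of $Z$ when passing between a point lying in a closure and the existence of an approximating sequence, but this is available since $(Z,d)$ is assumed a complete metric space. The argument is essentially a bookkeeping exercise on quantifiers, and no use of the semigroup property (iii) or of continuity (iv) is actually required for this particular identity.
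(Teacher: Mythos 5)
Your proof is correct and fills in precisely the quantifier bookkeeping that the paper dismisses as ``Immediate according to Definition \ref{Definition4.1.5.}''; both inclusions are handled by the standard sequential characterization of closure in a metric space, which is the intended argument. Your closing observation that neither the semigroup property nor continuity is needed for this identity is also accurate.
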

 \begin{proof}[{\bf Proof.}] Immediate according to Definition \ref{Definition4.1.5.}.
 \end{proof}
\begin{prop}\label{prop4.1.7} For each $z\in Z$ and any $t \geq0 $, we have
 \begin{equation}\label{Equ4.1}  \omega(S(t)z) = \omega(z);  \end{equation}
\begin{equation}\label{Equ4.2}  S(t)(\omega(z))\subset \omega(z).  \end{equation}
In addition, if $\ {\displaystyle \bigcup_{t\geq0}\{ S(t)z\}}$ is relatively compact in  $Z$, then
\begin{equation}\label{Equ4.3}  S(t) (\omega(z)) = \omega(z)\not= ÃÂÃÂÃÂÃÂ­ \emptyset .   \end{equation}
\end{prop}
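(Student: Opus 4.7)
The plan is to treat the three assertions in turn, using the characterization of $\omega(z)$ from Definition 4.1.5 (sequences $t_n\to\infty$ with $S(t_n)z\to y$) together with the continuity of each $S(t)$ and the semigroup law.

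For \eqref{Equ4.1}, I would argue by double inclusion. If $y\in\omega(S(t)z)$, pick $t_n\to\infty$ with $S(t_n)S(t)z\to y$; the semigroup law gives $S(t_n+t)z\to y$ with $t_n+t\to\infty$, hence $y\in\omega(z)$. Conversely, if $y\in\omega(z)$ and $S(s_n)z\to y$ with $s_n\to\infty$, then eventually $s_n\ge t$, so $S(s_n)z=S(s_n-t)S(t)z$ converges to $y$ and $s_n-t\to\infty$, giving $y\in\omega(S(t)z)$. For \eqref{Equ4.2}, fix $y\in\omega(z)$ and $t_n\to\infty$ with $S(t_n)z\to y$; by continuity of $S(t)$, $S(t+t_n)z=S(t)S(t_n)z\to S(t)y$, and since $t+t_n\to\infty$ we conclude $S(t)y\in\omega(z)$.

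For \eqref{Equ4.3}, non-emptiness is immediate: choose any sequence $t_n\to\infty$, then $\{S(t_n)z\}$ lies in a relatively compact set, so it has a convergent subsequence whose limit lies in $\omega(z)$. The inclusion $S(t)(\omega(z))\subset\omega(z)$ is already \eqref{Equ4.2}, so only $\omega(z)\subset S(t)(\omega(z))$ remains. Given $y\in\omega(z)$, pick $s_n\to\infty$ with $S(s_n)z\to y$. For $n$ large enough, $s_n\ge t$, and by relative compactness the sequence $S(s_n-t)z$ admits a subsequence $S(s_{n_k}-t)z\to w$ for some $w\in Z$; since $s_{n_k}-t\to\infty$, we have $w\in\omega(z)$. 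Applying continuity of $S(t)$ then yields $S(t)w=\lim_k S(t)S(s_{n_k}-t)z=\lim_k S(s_{n_k})z=y$, so $y\in S(t)(\omega(z))$.

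The main obstacle is the reverse inclusion in \eqref{Equ4.3}: without the precompactness hypothesis, there is no reason the preimage times $s_n-t$ should produce an accumulation point, and the argument genuinely uses both the semigroup property (to factor $S(s_n)=S(t)\circ S(s_n-t)$) and the continuity of the single operator $S(t)$ to pass the limit inside. The other assertions are essentially direct manipulations of the defining sequence together with the fact that adding or subtracting a fixed $t$ from an unbounded sequence leaves it unbounded.
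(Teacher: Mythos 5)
Your proof is correct and follows essentially the same route as the paper: the same sequential manipulations for \eqref{Equ4.2}, and for \eqref{Equ4.3} the same device of passing to $\tau_n=s_n-t$, extracting a convergent subsequence by precompactness, and using continuity of $S(t)$ to get $S(t)w=y$. The only cosmetic difference is that the paper dispatches \eqref{Equ4.1} as an immediate consequence of the tail-intersection formula of Proposition \ref{prop4.1.6}, whereas you verify the double inclusion directly from the sequence definition; both arguments are equally short and valid.
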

 \begin{proof}[{\bf Proof.}]  a) \eqref{Equ4.1} is an immediate consequence of Proposition \ref{prop4.1.6}.\\
b) Let $y \in \omega(z)$. There is an infinite sequence $t_n
\rightarrow +\infty$ such that as $ n \rightarrow +\infty$,
$S(t_n)z\rightarrow y $. For each $t \geq0$, setting $
\tau_n=t_n+t,$ we find $S(\tau_n)z\rightarrow S(t)y $, therefore
$S(t)y\in\omega(z)$; hence \eqref{Equ4.2}.\\
c) Finally, assume ${\displaystyle \bigcup_{t\geq0}\{ S(t)z\}}$ to be precompact in $Z$. There is an infinite sequence $t_n \rightarrow +\infty$ and $y \in Z$ such that as $ n \rightarrow +\infty$,  $S(t_n)z\rightarrow y $. Thus $y\in\omega(z)$ and $\omega(z) ÃÂÃÂÃÂÃÂ­\not=\emptyset$. To establish the inclusion $\omega(z)\subset S(t) (\omega(z))$, let us consider $y\in\omega(z)$ and $t_n \rightarrow +\infty$ such that $S(t_n)z\rightarrow y $. let  $ \tau_n=t_n-t.$
By possibly replacing $ \tau_n $ by a subsequence, we may assume
$S(\tau_n)z\rightarrow w\in\omega(z) $. Hence by continuity of
$S(t)$
$$  S(t)w = S(t) \lim_{n \rightarrow +\infty}{S(\tau_n) z} = \lim_{n \rightarrow
+\infty}S(t_n)z = y,  $$
and  \eqref{Equ4.3} is completely proved.
\end{proof}
    
 In the sequel, a subset $B$ of $Z$ being given , we shall denote by 
 
 $$ d(z, B): = \inf_{y\in B} d(z, y) $$ the usual distance in the sense of $(Z, d)$ from a point $z\in Z$ to the set $B$. Using this notation we can state

\begin{thm}\label{thm 4.1.8.} Assume that ${\displaystyle \bigcup_{t\geq0}\{ S(t)z\}}$ is
relatively compact in $Z$. Then
\begin{itemize}
\item[(i)] $ S(t) (\omega(z)) = \omega(z)\not=\emptyset $, for each $t\geq0$;
\item[(ii)]$\omega(z)$ is a compact connected subset of $Z$;
\item[(iii)] $ d(S(t)z,\omega(z))\rightarrow 0 $ as $t\rightarrow +\infty$.
\end{itemize}
\end{thm}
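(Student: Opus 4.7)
The plan is to establish the three items in turn, exploiting precompactness at every stage. Item (i) is essentially already in hand: the second half of Proposition \ref{prop4.1.7} (equation \eqref{Equ4.3}) proves exactly this assertion under the precompactness hypothesis, so I would simply cite it. For (ii), the compactness of $\omega(z)$ follows immediately from Proposition \ref{prop4.1.6}: it exhibits $\omega(z)$ as an intersection of closed sets, each of which is contained in the compact set $K:=\overline{\bigcup_{t\geq 0}S(t)z}$, so $\omega(z)$ is a closed subset of a compact metric space.

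The connectedness is the main obstacle. I would argue by contradiction: if $\omega(z)=A\sqcup B$ with $A,B$ nonempty, closed, and disjoint, then by compactness $3\delta:=d(A,B)>0$. The real-valued function $\varphi(t):=d(S(t)z,A)-d(S(t)z,B)$ is continuous on $[0,\infty)$. Using (iii) (which I would prove first, independently), we know $d(S(t)z,\omega(z))\to 0$, so for large $t$ the point $S(t)z$ is close either to $A$ or to $B$; in both cases $\varphi$ takes values of opposite signs on arbitrarily large time intervals. By the intermediate value theorem there is a sequence $t_n\to+\infty$ with $\varphi(t_n)=0$, which forces $d(S(t_n)z,A)=d(S(t_n)z,B)\geq \tfrac{3\delta}{2}$. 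By precompactness a subsequence $S(t_{n_k})z$ converges to some $y$, which by definition belongs to $\omega(z)$; however $d(y,A)\geq \tfrac{3\delta}{2}$ and $d(y,B)\geq \tfrac{3\delta}{2}$ contradict $y\in A\cup B$.

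For (iii), which I would actually establish before the connectedness step so as to make it available, I would reason by contradiction: if there existed $\varepsilon>0$ and $t_n\to+\infty$ with $d(S(t_n)z,\omega(z))\geq \varepsilon$, then by precompactness of the trajectory a subsequence $S(t_{n_k})z$ would converge to some $y\in Z$. By Definition \ref{Definition4.1.5.} we would have $y\in\omega(z)$, hence $d(S(t_{n_k})z,\omega(z))\to 0$, a contradiction.

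The only delicate point is the connectedness argument: one must be careful that the intermediate value theorem really applies, which requires (iii) together with the triangle-inequality estimates that make $\varphi(t)$ change sign. Once (iii) is in place the remaining steps are routine topological manipulations in the compact metric space $K$.
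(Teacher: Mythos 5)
Your proposal is correct. Items (i) and (iii) coincide with the paper's proof: (i) is a citation of \eqref{Equ4.3}, and (iii) is the same contradiction argument extracting a convergent subsequence whose limit lies in $\omega(z)$ by definition. The compactness half of (ii) is also the same (closed subset of the compact closure of the trajectory, via Proposition \ref{prop4.1.6}). Where you genuinely diverge is the connectedness of $\omega(z)$. The paper gets it almost for free from Proposition \ref{prop4.1.6}: each set $\overline{\bigcup_{t\geq s}\{S(t)z\}}$ is the closure of a continuous image of the connected interval $[s,\infty)$, hence a nonempty compact connected set, and $\omega(z)$ is their nonincreasing intersection; one then invokes the topological fact that a decreasing intersection of nonempty continua in a (compact) metric space is a continuum. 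Your route instead supposes a separation $\omega(z)=A\sqcup B$ with $d(A,B)=3\delta>0$, uses the intermediate value theorem on $\varphi(t)=d(S(t)z,A)-d(S(t)z,B)$ to produce times $t_n\to\infty$ with $d(S(t_n)z,A)=d(S(t_n)z,B)\geq\tfrac{3\delta}{2}$, and derives a contradiction from a subsequential limit. This is sound (the estimate follows from $d(A,B)\leq d(x,A)+d(x,B)$), and it buys you a more elementary, self-contained argument that avoids the nested-continua lemma, at the cost of having to establish (iii) first and of a little more bookkeeping. One point you should make explicit: the reason $\varphi$ takes both signs at arbitrarily large times is not really (iii) but the fact that $A$ and $B$ are both nonempty subsets of $\omega(z)$, so the trajectory accumulates on a point of $A$ (where $\varphi\leq -3\delta$ in the limit) and on a point of $B$ (where $\varphi\geq 3\delta$); (iii) by itself only says $S(t)z$ eventually stays near $A\cup B$.
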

 \begin{proof}[{\bf Proof.}] (i) is just \eqref{Equ4.3}. Moreover, for all $\displaystyle  s>0,
\overline{\bigcup_{t\geq s}\{S(t)z\}}$ is a nonempty compact
connected subset of $Z$ . Proposition \ref{prop4.1.6} therefore implies that $\omega(z)$
is a compact connected subset of $Z$ as a nonincreasing
intersection of  such sets: this is (ii). To check (iii), let us
asssume that there exist $t_n \rightarrow +\infty$ and
$\varepsilon >0$ such that for all $n, \,
d(S(t_n)z,\omega(z))\geq\varepsilon.$ By compactness and by the
definition of $ \omega(z)$, there is a point $y\in\omega(z)$ and a
subsequence $t_{n'}\rightarrow +\infty$ for which
$S(t_{n'})z\rightarrow y.$ Hence
$d(S(t_{n'})z,\omega(z))\rightarrow 0$, a contradiction which
proves the claim.
 \end{proof}
We now introduce the basic example of dynamical systems \index{dynamical system} to be studied in this book. Let $X$ be a real Banach space, let $A $ be a linear, densely defined,  m-dissipative \index{m-dissipative} operator on $X$, and let  $F: X\longrightarrow X$ be Lipschitz continuous on each bounded subset of $X$. As recalled in Theorem \ref{theo3}, for each $x\in X$, there is $\tau(x)\in (0, +\infty]$  and a unique maximal  solution $u\in C([0,\tau(x)),X)$ of the equation
\begin{equation}\label{Equ4.4}  
u(t)=T(t)x+\int_0^{t} T(t-s)F(u(s))\,ds \quad \forall t\in [0,\tau(x))
\end{equation}
where $T(t)$ is the semigroup generated by $A$ (cf. Theorem \ref{thm1}) and the number
$\tau(x)$ is the existence time of the solution. For $x\in X$ and $t\in
[0,\tau(x))$,we set
$$  S(t)x = u(t).$$

Let  $Y\subset X$ be such that for some $ M<+\infty$ we have
\begin{equation}\label{Equ4.5} \tau(y)=+\infty, \forall y\in Y; \end{equation}
\begin{equation}\label{Equ4.6} \Vert S(t)y \Vert \leq M, \forall y\in Y, \forall t\geq0.   \end{equation}
    We set $\displaystyle
Z =\overline{\bigcup_{y\in Y}\bigcup_{t\geq s}\{S(t)z\}}$ and we denote by $d$ the
distance induced on $Z$ by the norm of $X$.

\begin{lem}\label{Lemma 4.1.9.} We have the following properties
\begin{itemize}
\item[(i)] $ \quad\tau(z)=+\infty,\ \forall z\in Z;$
\item[(ii)] $  \Vert S(t)z\Vert \leq M,\ \forall z\in Z,\ \forall t\geq0;$
\item[(iii)]   $ S(t)z\in Z,\ \forall z\in Z,\ \forall t\geq0$.
\end{itemize}
\end{lem}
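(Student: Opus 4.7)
The plan is to verify the three claims first on the dense subset $W := \bigcup_{y\in Y}\bigcup_{s\ge 0}\{S(s)y\}$ of $Z$, where they follow at once from the semigroup identity, and then to extend them to $Z=\overline{W}$ by a continuous-dependence argument on initial data powered by Gronwall's lemma.

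First I would handle $W$. If $z = S(s)y$ with $y\in Y$ and $s\ge 0$, the semigroup property gives $S(t)z = S(t+s)y$ for all $t\ge 0$, which is defined by \eqref{Equ4.5}, bounded by $M$ by \eqref{Equ4.6}, and lies in $W\subset Z$. Passing the bound \eqref{Equ4.6} at $t=0$ to the limit also gives $\Vert z\Vert\le M$ for every $z\in Z$, so the $t=0$ case of (ii) is already known on all of $Z$.

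Next, given a general $z\in Z$ and a sequence $z_n\in W$ with $z_n\to z$ in $X$, I would work on an arbitrary $T\in [0,\tau(z))$ and set $R:=\max(M,\sup_{t\in[0,T]}\Vert S(t)z\Vert)$, which is finite by continuity of $S(\cdot)z$ on $[0,\tau(z))$. Let $L$ be a Lipschitz constant for $F$ on $\overline{B(0,R+1)}$. Since both $S(t)z_n$ (bounded by $M\le R$) and $S(t)z$ (bounded by $R$) remain in $\overline{B(0,R+1)}$ on $[0,T]$, subtracting two copies of the integral equation \eqref{Equ4.4} yields
\begin{equation*}
\Vert S(t)z_n - S(t)z\Vert \le \Vert z_n-z\Vert + L\int_0^t \Vert S(\sigma)z_n - S(\sigma)z\Vert\,d\sigma
\end{equation*}
on $[0,T]$, and Lemma \ref{Gronwall Lemma} produces $\Vert S(t)z_n - S(t)z\Vert \le \Vert z_n-z\Vert e^{LT}$. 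Letting $n\to\infty$ in $\Vert S(t)z_n\Vert\le M$ yields $\Vert S(t)z\Vert\le M$ on $[0,T]$. Since $T<\tau(z)$ was arbitrary, $S(\cdot)z$ is bounded throughout $[0,\tau(z))$, and the blow-up alternative in Theorem \ref{theo3} forces $\tau(z)=+\infty$, establishing (i) and (ii). For (iii), fix $t\ge 0$ and write $z_n=S(s_n)y_n$; the uniform convergence $S(t)z_n\to S(t)z$ just proved exhibits $S(t)z$ as a limit of the elements $S(t+s_n)y_n\in W$, hence $S(t)z\in\overline{W}=Z$.

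The sole delicate point is the continuous-dependence step: Gronwall's comparison requires both trajectories to live in a common bounded set on $[0,T]$ so that a single Lipschitz constant for $F$ may be used. This is supplied for free by the hypothesis on $Y$: the approximants $z_n$ belong to $W$ and therefore inherit the \emph{global} uniform bound $M$ on $[0,\infty)$, not merely a local one, so no time-slicing bootstrap is needed. Every other step is a direct unpacking of the semigroup axioms or of the definition of $Z$ as a closure.
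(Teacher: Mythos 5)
Your proof is correct and follows essentially the same route as the paper: establish the three properties on the dense union of $Y$-trajectories via the semigroup identity, then transfer them to the closure $Z$ by the Gronwall continuous-dependence estimate on $[0,T]$ for arbitrary $T<\tau(z)$, and invoke the blow-up alternative of Theorem \ref{theo3} to get (i). You merely make explicit two points the paper leaves implicit, namely the choice of a common Lipschitz constant for $F$ on a ball containing both trajectories and the role of the blow-up alternative, which is a welcome clarification rather than a deviation.
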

 \begin{proof}[{\bf Proof.}]  Let $y\in Y.$ Then if $u(t)= S(t)y$ is the solution of \eqref{Equ4.4}
with $x = y$  a straightforward calculation shows that for any
$s\geq0, v(t)=u(t+s)$ is the solution of \eqref{Equ4.4} with $x = u(s).$
Therefore,
$$S(t)S(s)y =S(t)(u(s))=u(t+s),\quad \forall s, t \geq0.$$
Consequently $\tau(S(s)y)=+\infty $ for all $y\in Y$ and each $s ,t\geq0 $ and $\Vert S(t)S(s)y \Vert \leq M $ for all $y\in Y$ and each $s ,t\geq0 $.  Now let $z\in Z.$ There exists a sequence $(t_n)$ in $[0,+\infty)$ and  a sequence $(y_n)$ in $Y$ such that $S(t_n)y_n\rightarrow z$ as $n\rightarrow +\infty.$ Pick $T< \tau(z).$ Of course we have by Gronwall's Lemma (lemma \ref{Gronwall Lemma}\index{Gronwall Lemma}) :
\begin{equation}\label{Equ4.7} S(t)S(t_n)y_n\rightarrow S(t)z \ \hbox{ as } \ n\rightarrow +\infty, \hbox{uniformly on
}[0,T]. \end{equation}

In particular  $\Vert S(t) z \Vert\leq M, \forall t\in[0,T].$ Since
$T<\tau(z)$ is arbitrary, we deduce first (i), then (ii). Finally
(iii) follows as a consequence of \eqref{Equ4.7}.
 \end{proof}
\begin{thm}\label{Theorem 4.1.10.} $\{S(t)\}_{t\geq0} $ is a dynamical system \index{dynamical system} on $(Z,
d)$.
\end{thm}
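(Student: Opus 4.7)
By Lemma \ref{Lemma 4.1.9.}, the map $S(t):Z\to Z$ is well-defined and $\|S(t)z\|\le M$ for every $z\in Z$ and $t\ge 0$. It remains to verify the four axioms of a dynamical system. Property (ii) is immediate, since $u(0)=x$ in \eqref{Equ4.4} gives $S(0)z=z$ for every $z\in Z$. Property (iv) is the conclusion of Theorem \ref{theo3} applied to the initial datum $z\in Z$: the maximal solution $u=S(\cdot)z$ lies in $C([0,\tau(z)),X)$, and $\tau(z)=+\infty$ by Lemma \ref{Lemma 4.1.9.}.

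For the semigroup property (iii), the plan is to mimic the computation already carried out in the proof of Lemma \ref{Lemma 4.1.9.}. Fix $z\in Z$ and $s\ge 0$, and set $u(t)=S(t)z$ and $v(t)=u(t+s)$. Using the integral identity \eqref{Equ4.4} and the semigroup property of the underlying linear group $T(t)$, a direct substitution shows that $v$ satisfies
\begin{equation*}
v(t)=T(t)u(s)+\int_0^{t}T(t-\sigma)F(v(\sigma))\,d\sigma,\qquad t\ge 0,
\end{equation*}
so by uniqueness in Theorem \ref{theo3}, $v(t)=S(t)u(s)$, i.e.\ $S(t+s)z=S(t)S(s)z$. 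This argument does not require $z\in Y$; it uses only that $z$ has a global trajectory in $X$, which is guaranteed by Lemma \ref{Lemma 4.1.9.}(i).

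The main point to address carefully is property (i), continuity of $S(t):Z\to Z$. The plan is to show that continuous dependence on initial data, which on $Y$ is established in \eqref{Equ4.7} via Gronwall's lemma \ref{Gronwall Lemma}, actually holds for any two points of $Z$ with the same constant. Indeed, if $z_1,z_2\in Z$ and $u_i(t)=S(t)z_i$, then both trajectories stay in the ball of radius $M$, on which $F$ has some Lipschitz constant $L=L(M)$. Subtracting the two versions of \eqref{Equ4.4} and using $\|T(t)\|_{L(X)}\le 1$ gives
\begin{equation*}
\|u_1(t)-u_2(t)\|\le \|z_1-z_2\|+L\int_0^t\|u_1(\sigma)-u_2(\sigma)\|\,d\sigma,
\end{equation*}
so Gronwall yields $\|S(t)z_1-S(t)z_2\|\le \|z_1-z_2\|e^{Lt}$, which is both the continuity (i) on $Z$ and the uniform continuity of $S(t)$ on bounded subsets of $Z$.

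The only step that required some thought is the compatibility between the definition $S(t)z=u(t)$ derived from \eqref{Equ4.4} and the fact that $z$ is obtained as a limit $S(t_n)y_n\to z$ with $y_n\in Y$; but this is already settled by the a priori bound in Lemma \ref{Lemma 4.1.9.}(ii) together with the Gronwall estimate above, so no further obstacle appears.
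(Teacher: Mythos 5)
Your proposal is correct and follows essentially the same route as the paper: the identity and trajectory-continuity axioms are immediate, the semigroup property comes from the substitution/uniqueness computation already performed in Lemma \ref{Lemma 4.1.9.}, and continuity of $S(t)$ on $Z$ follows from the Gronwall estimate (which you simply make explicit, using the uniform bound $M$ to get a single Lipschitz constant for $F$ along all trajectories in $Z$ and $\Vert T(t)\Vert_{L(X)}\le 1$). No gaps.
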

 \begin{proof}[{\bf Proof.}] First $S(0) = $ Identity. Moreover for each  $z\in Z$, if $z_n\in Z$ and $z_n\rightarrow z$ as $n\rightarrow+\infty$, as a consequence of the Gronwall \index{Gronwall Lemma} Lemma (lemma \ref{Gronwall Lemma}) we obtain classically :
$$S(t)z_n\rightarrow S(t)z \ \hbox{ as } \ n\rightarrow +\infty, \hbox{uniformly on
}[0,T] $$ for each finite T . In particular $S(t)\in C(Z,Z)$ for
all $t \geq0.$ Moreover for each $y\in Z$, the calculation
performed in the proof of Lemma \ref{Lemma 4.1.9.} shows that
$$S(t)S(s)y =S(t+s)y $$ for all  $s, t  \geq 0$. Finally by construction we have
$S(t)z\in C([0,+\infty),Z)$ for each $z\in Z.$ Hence the
result.
 \end{proof}

    As a particular case of Theorem \ref{Theorem 4.1.10.}, we can choose $X = {\mathbb R}^N, N \geq 1.$ For each
vector field
 $F \in W^{1, +\infty}_{loc}({\mathbb R}^N, {\mathbb R}^N)$ we consider the \index{autonomous} (autonomous)
differential system

\begin{equation}\label{Equ4.8} u'(t) = F(u(t))\end{equation}
and its integral curves $u(t) = : S(t)x$  defined for $t\in
[0,\tau(x))$. Theorem \ref{Theorem 4.1.10.} says that if  $\tau(y)  = +\infty $
and the corresponding local solution $u(t)$ remains bounded for $t
\geq0,$ then  $\tau(z) = +\infty$ for each $z\in Z:=
\overline{u({\mathbb R}^+)}$ and the restriction of $S(t)$ to $Z$
(endowed with the distance associated to the norm) is a dynamical
system. To see this we apply Theorem \ref{Theorem 4.1.10.} with $A = 0$ and $Y =
\{y\}$.\\

Other important examples of dynamical systems \index{dynamical system} will be associated to the partial differential equations studied in Chapter \ref{chapter2}. Their properties will be studied precisely in the next chapter. 
 \section{Some easy examples}
    In the first section (Theorem \ref{thm 4.1.8.}), we showed that the  $\omega$-limit \index{$\omega$-limit set}  set  of
a  precompact  trajectory $u(t) = S(t)z $ is a continuum invariant
under $S(t)$ and which (by construction!) attracts the  trajectory
as $t\rightarrow +\infty.$ In some cases this gives directly a convergence result. As a first easy case we have 
\begin{prop}\label{prop4.2.1} If $ \omega(z)$ is discrete, there exists $a\in Z$ such that 
$ d(S(t)z,a)\rightarrow 0 $ as $t\rightarrow +\infty$
\end{prop}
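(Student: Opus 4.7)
The plan is to combine the three properties of $\omega(z)$ established in Theorem \ref{thm 4.1.8.}, namely that under precompactness of the trajectory the $\omega$-limit set is nonempty, compact and connected, and that $d(S(t)z,\omega(z))\to 0$, together with the hypothesis that $\omega(z)$ is discrete. The proposition is implicitly placed in the context of the preceding discussion (precompact trajectories), and Theorem \ref{thm 4.1.8.} requires precompactness of $\bigcup_{t\geq 0}\{S(t)z\}$ in $Z$ in order to apply; so I would begin the proof by making this hypothesis explicit (or by noting that if the trajectory is not precompact then $\omega(z)$ may be empty and the statement becomes vacuous unless additional assumptions are in place).

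Next I would reduce the whole statement to the identification $\omega(z)=\{a\}$ for some single point $a\in Z$. This rests on an elementary topological remark: a discrete subset of a compact metric space is finite (each point is isolated, so an open cover by singleton-neighborhoods admits a finite subcover), and a finite subset of a Hausdorff space which is connected must reduce to a single point (any decomposition into singletons is a decomposition into open-and-closed pieces, so a finite connected Hausdorff space has cardinality at most one). Applying both observations to $\omega(z)$, which is compact and connected by Theorem \ref{thm 4.1.8.}(ii) and discrete by hypothesis, yields $\omega(z)=\{a\}$ with $a\in Z$.

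Finally, once this reduction is made, the conclusion is immediate: Theorem \ref{thm 4.1.8.}(iii) gives $d(S(t)z,\omega(z))\to 0$ as $t\to+\infty$, and since $\omega(z)=\{a\}$ this says exactly $d(S(t)z,a)\to 0$. There is essentially no obstacle here beyond noting the precompactness assumption and invoking the elementary fact that a compact, connected, discrete metric space is a single point; the real content of the result lies entirely in Theorem \ref{thm 4.1.8.}, and Proposition \ref{prop4.2.1} is just a clean corollary describing the simplest situation in which the $\omega$-limit description forces convergence of the trajectory.
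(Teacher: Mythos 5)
Your proof is correct and follows exactly the paper's argument: $\omega(z)$ is compact and discrete, hence finite, and a finite connected set is a singleton, after which Theorem \ref{thm 4.1.8.}(iii) gives the convergence. Your remark that the precompactness of the trajectory must be assumed (it is implicit from the surrounding discussion) is a fair and accurate observation, but otherwise the two arguments coincide.
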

 \begin{proof}[{\bf Proof.}] This is an immediate consequence of Theorem \ref{thm 4.1.8.}. Indeed, $ \omega(z)$ , being compact and discrete is finite. But a connected finite set is reduced to a point.
 \end{proof}
 As an example let us  consider  the second order ODE  $$ u'' + u' + u^3 - u  =  0.  $$ 

All  solutions are  global\index{global solution}  and an immediate calculation gives:
        $$  (d/dt) [(1/2)u'^2+ (1/4)u^4 - (1/2)u^2] = - u'^2 \leq 0 .$$
    Hence we can define the dynamical system \index{dynamical system} generated on the whole of $ \R ^2 $ by setting $U(t) = (u(t), u'(t)) $  and writing the equation as a first order system. The function $t \mapsto [(1/2)u'^2+ (1/4)u^4 - (1/2)u^2](t)$  is nonincreasing along trajectories. Consequently it has a limit as $t$ tends to infinity and, as a consequence,  each trajectory (v, v') contained in the $\omega$-limit \index{$\omega$-limit set} set  of  a  given
trajectory satisfies automatically
        $$  0 = (d/dt) [(1/2)v'^2+ (1/4)v^4 - (1/2)v^2] = - v'^2 .$$
It follows, since this implies  $ v' \equiv 0$, that the $\omega$-limit \index{$\omega$-limit set} set of  any trajectory consists of  stationary points and is therefore contained in $\{ 0, 1,
-1\}\times\{0\}$. By connectedness, the $\omega$-limit \index{$\omega$-limit set} set
reduces to a singleton $\{(z, 0)\}$ with  $z  = 0, 1$ or (-1). Therefore every
solution has a limit at  $+\infty$. \\

Actually the argument which we gave above in this special case is general for systems having what will be called a  "strict Liapunov \index{strict Liapunov function} function". On the other hand already in $ \R ^2 $ there are many examples of systems with non-convergent bounded trajectories. For instance the basic second order equation $$ u'' + \omega^2 u  = 0 $$ has no convergent trajectory except $u = 0$. Here instead of a Liapunov \index{Liapunov function} function we have an invariant energy, and the $\omega$-limit \index{$\omega$-limit set}  set  of  any solution other than the single \index{equilibrium point} equilibrium point $(0, 0)$ does not intersect the set of equilibria.\\

\section{Convergence and equilibrium points}
\index{equilibrium point}
In this section we introduce some general concepts which will be used throughout the text. \begin{defn}\label{Definition4.3.1.}
 Let  $z\in Z$.  The trajectory $t\rightarrow S(t)z$ is called convergent if there is $a\in Z$ such that  $$ \lim_{t\rightarrow +\infty} d(S(t)z, a) = 0.$$
\end{defn}

\begin{defn}\label{Definition4.3.2.}
 A point  $z\in Z$ is called an \index{equilibrium point} equilibrium point (or equivalently a stationary point) of the dynamical system \index{dynamical system} $S(t)$ if $\{z\}$ is invariant under $S(t)$, i.e. $$\forall t\ge 0,\ S(t)z = z .$$\end{defn}
 
 The following property is now obvious \begin{prop}\label{prop4.3.3}If a trajectory of the dynamical system \index{dynamical system} $S(t)$ is convergent, the limit is always a stationary point.
\end{prop}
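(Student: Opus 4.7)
The plan is to combine continuity of $S(t)$ for each fixed $t$ (property (i) in the definition of a dynamical system) with the semi-group property (property (iii)) to pass the limit through $S(t)$. Fix $a \in Z$ such that $S(s)z \to a$ as $s \to \infty$, and fix an arbitrary $t \geq 0$; the goal is to show $S(t)a = a$.

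First I would observe that by the semi-group property we have, for every $s \geq 0$,
\begin{equation*}
S(t)\bigl(S(s)z\bigr) = S(t+s)z.
\end{equation*}
As $s \to \infty$ the right-hand side tends to $a$, since $t+s \to \infty$ and the whole trajectory converges to $a$ by hypothesis. On the left-hand side, since $S(t) \in C(Z,Z)$ by property (i) of Definition of a dynamical system, and $S(s)z \to a$ in $(Z,d)$, continuity gives $S(t)(S(s)z) \to S(t)a$. Uniqueness of limits in the metric space $(Z,d)$ forces $S(t)a = a$. Since $t \geq 0$ was arbitrary, $a$ is an equilibrium point in the sense of Definition \ref{Definition4.3.2.}.

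There is essentially no obstacle: the result is a direct consequence of the two axioms of a dynamical system, namely continuity of each $S(t)$ and the semi-group identity. The only care needed is to note that the convergence $S(s)z \to a$ is taken in the topology of $(Z,d)$, which is precisely the setting in which $S(t)$ is known to be continuous, so passing the limit inside $S(t)$ is legitimate without any extra compactness or precompactness assumption on the trajectory.
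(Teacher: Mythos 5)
Your proof is correct. The underlying mechanism is exactly the one the paper uses, namely writing $S(t+s)z=S(t)(S(s)z)$ and passing to the limit $s\to\infty$ using the continuity of $S(t)$; the only difference is one of packaging. The paper disposes of the statement by citing Proposition \ref{prop4.1.7}: a convergent trajectory is precompact, its $\omega$-limit set is the singleton $\{a\}$, and the invariance $S(t)(\omega(z))=\omega(z)$ then forces $S(t)a=a$. You instead redo the continuity-plus-semigroup computation from scratch, which makes the argument self-contained and slightly leaner: you never need precompactness or the reverse inclusion $\omega(z)\subset S(t)(\omega(z))$, since the forward invariance applied to a singleton already gives the conclusion. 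Both routes are equally valid; yours trades a citation for three lines of elementary limit-passing.
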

\begin{proof}[{\bf Proof.}] This is an immediate consequence of Proposition \ref{prop4.1.7}. Indeed if a trajectory converges, it is precompact and the omega-limit set is an invariant singleton.
 \end{proof}
 
 \begin{rem}\label{Remark4.3.4.} {\rm As a trivial consequence of Proposition \ref{prop4.3.3}, a necessary condition for a precompact  trajectory to be convergent is that its  $\omega$-limit \index{$\omega$-limit set}  set  be made of equilibria. In  chapter 6 we shall study an important class of systems for which the $\omega$-limit \index{$\omega$-limit set}  set  of all precompact  trajectories is reduced  to equilibria. Then if the set of equilibria is finite,  convergence follows from Proposition \ref{prop4.2.1}. On the other hand an important part of the book will be devoted to the harder case of a continuously infinite set of equilibria.}\end{rem}
 
\section{Stability  of equilibrium points}\index{equilibrium point} Another important concept concerning equilibria (and more generally trajectories) of a dynamical system \index{dynamical system} is the concept of stability as defined by \index{Liapunov} Liapunov.
 
 \begin{defn}\label{Definition 4.3.5.}
 An equilibrium \index{equilibrium point} point  $a$  of the dynamical system \index{dynamical system} $S(t)$ is called stable \index{stable} (under $S(t)$) if 
 $$\forall \varepsilon >0, \quad \exists \delta>0, \quad  \forall z\in Z, d(z, a) <\delta  \Longrightarrow  \forall t >0, 
 \quad   d(S(t) z, a) <\varepsilon.$$
Otherwise we say that $a$ is \index{unstable}unstable.
 \end{defn}
 
 The following result, relted to the concept of Liapunov \index{Liapunov function} function,  provides a general stability criterion applicable even to infinite dimensional systems.
 
  \begin{thm}\label{stab-gen} Let  $a\in Z$ be an equilibrium \index{equilibrium point} point of the dynamical system \index{dynamical system} $S(t)$ and $U$ be an open subset of $Z$ with  $a\in U$ such that  for some $V \in C(Z)$ we have
 
   \begin{equation} \label{>} \forall r\in (0, r_0) , \min _{d(u, a) = r} V(u) > V(a)  \end{equation}
   
   $$ \forall u \in U, \quad  \forall t\ge 0, \quad V(S(t)u \le V(u)$$  
 
Then $a$ is a stable \index{stable} equilibrium \index{equilibrium point} point of the dynamical system \index{dynamical system} $S(t)$.
\end{thm}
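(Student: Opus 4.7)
The plan is to adapt the classical Liapunov stability argument to this metric-space framework, combining a first-exit-time argument with a comparison of values of the Liapunov function.

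First, I would shrink $\varepsilon > 0$ if necessary so that $\varepsilon < r_0$ and so that the open ball $B(a,\varepsilon) = \{u\in Z : d(u,a)<\varepsilon\}$ is contained in $U$; this is possible because $U$ is open and contains $a$. Set
$$m := \inf\{V(u) : u\in Z,\ d(u,a)=\varepsilon\}.$$
By hypothesis \eqref{>}, $m > V(a)$ (if the sphere happens to be empty, then by continuity of $t\mapsto d(S(t)z,a)$ and the intermediate value theorem no trajectory starting inside $B(a,\varepsilon)$ can ever leave it, and stability is immediate for this $\varepsilon$).

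Next, using the continuity of $V$ at $a$, I would choose $\delta\in(0,\varepsilon)$ small enough that
$$d(z,a)<\delta \Longrightarrow V(z) < m,$$
while simultaneously ensuring $B(a,\delta)\subset U$, which is again possible because $U$ is open.

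The core of the argument is then a first-exit argument. Take any $z\in Z$ with $d(z,a)<\delta$ and assume for contradiction that there exists $t_1>0$ with $d(S(t_1)z,a)\ge \varepsilon$. Since $t\mapsto S(t)z$ is continuous by axiom (iv) of a dynamical system, and since the distance $d(\cdot,a)$ is continuous, the real-valued map $\varphi(t) := d(S(t)z,a)$ is continuous with $\varphi(0)<\delta<\varepsilon$ and $\varphi(t_1)\ge \varepsilon$. The intermediate value theorem then produces a smallest time $t^\star\in (0,t_1]$ such that $\varphi(t^\star)=\varepsilon$. Applying the dissipation hypothesis with $u=z\in U$ yields $V(S(t^\star)z)\le V(z)<m$, whereas the definition of $m$ together with $d(S(t^\star)z,a)=\varepsilon$ forces $V(S(t^\star)z)\ge m$, a contradiction. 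Hence $d(S(t)z,a)<\varepsilon$ for every $t\ge 0$, which proves stability.

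There is no genuinely hard step here; the only point requiring care is the joint selection of $\delta$ to guarantee simultaneously that $z\in U$ (so that the monotonicity hypothesis applies to $u=z$) and that $V(z)<m$ (so that the Liapunov comparison at the first-exit time yields a contradiction). The strict inequality $\delta<\varepsilon$ is what gives a nontrivial first-exit time $t^\star>0$.
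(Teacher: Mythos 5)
Your proof is correct and is essentially the paper's own argument: both rest on the observation that the sublevel set $\{V < m\}$ intersected with the ball $B(a,\varepsilon)$ is a neighborhood of $a$ (you make this explicit by choosing $\delta$ via continuity of $V$ at $a$, the paper by noting the set $W$ is open), followed by a first-exit/intermediate-value contradiction using the monotonicity of $V$ along trajectories. The handling of the degenerate empty-sphere case and the care about $z\in U$ are fine; no gaps.
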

\begin{proof}[{\bf Proof.}]  Let $r>0$ be such that $\overline{B}(a, r)\subset U$ and let
 $$ c: = \min _{d(u, a) = r } V(u) > V(a) $$
 Let $$W = \{ u\in B(a, r), V(u) < c\} $$ 
 It is clear that $W$ is open with $a\in W$. In addition if $u_0\in W$, $u(t) = S(t)u_0$ satisfies  
 $$\forall t\ge 0, \quad u(t) \in W  $$ Indeed if this property fails for some $u_0\in W$, we can consider $$ t_0 = \inf \{ t \ge 0, \quad u(t) \not\in W \}.$$ We have $V(u(t_0)) \le V(u_0)<c$ and since $W$ is open the only possibility is $ d(u(t_0) , a) = r $, a contradiction with the definition of $c$. The result is now  immediate since $r>0$ can be chosen arbitrarily small. \end{proof}
Under the hypothesis that balls with finite radius are compact subsets, we obtain the following result applicable in finite dimensions. 
\begin{cor}\label{stab-comp} Assuming that closed balls with finite radius are compact subsets of Z ,  let  $a\in Z$ be an equilibrium \index{equilibrium point} point of the dynamical system \index{dynamical system} $S(t)$and $U$ be an open subset of $Z$ with  $a\in U$ such that  for some $V \in C(Z)$ we have
$$ \forall u \in U, u\not = a \Rightarrow V(u)> V(a)   $$  
$$ \forall u \in U, \quad  \forall t\ge 0, \quad V(S(t)u \le V(u)$$  
Then $a$ is a stable \index{stable} equilibrium \index{equilibrium point} point of the dynamical system \index{dynamical system} $S(t)$ .
\end{cor}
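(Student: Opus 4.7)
The plan is to reduce Corollary \ref{stab-comp} directly to Theorem \ref{stab-gen} by upgrading the pointwise strict inequality $V(u)>V(a)$ (for $u\in U\setminus\{a\}$) into the uniform-on-spheres inequality \eqref{>}. The dissipation hypothesis $V(S(t)u)\le V(u)$ on $U$ is identical in both statements, so only the strict-positivity condition on spheres around $a$ requires proof.

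First I would use openness of $U$ and $a\in U$ to pick $r_0>0$ with $\overline{B}(a,r_0)\subset U$. Then, for any $r\in(0,r_0)$, I would consider the sphere
\[
S_r := \{u\in Z : d(u,a)=r\},
\]
which is closed (as the preimage of $\{r\}$ under the continuous map $u\mapsto d(u,a)$) and contained in $\overline{B}(a,r)$. By the standing compactness assumption on closed balls, $\overline{B}(a,r)$ is compact, hence so is $S_r$. The continuous function $V$ therefore attains its minimum on $S_r$ at some point $u_r\in S_r\subset U$ with $u_r\neq a$ (since $d(u_r,a)=r>0$). The pointwise hypothesis then yields
\[
\min_{d(u,a)=r} V(u) \;=\; V(u_r) \;>\; V(a),
\]
which is exactly condition \eqref{>} of Theorem \ref{stab-gen}.

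Having verified both hypotheses of Theorem \ref{stab-gen} on the neighborhood $U$ of $a$, I would simply invoke that theorem to conclude that $a$ is a stable equilibrium point. The only real point requiring care is ensuring that the minimum on $S_r$ is attained at a point of $U\setminus\{a\}$ rather than merely approached; this is guaranteed once we shrink $r_0$ so that $\overline{B}(a,r_0)\subset U$, and the compactness hypothesis does the rest. There is no serious obstacle — the compactness assumption is precisely the device needed to upgrade the pointwise strict inequality to the uniform-on-spheres strict inequality required by the general stability criterion.
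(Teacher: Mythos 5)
Your proof is correct and is essentially the paper's own argument: the paper likewise picks $r_0$ with $\overline{B}(a,r_0)\subset U$ and invokes compactness of closed balls to deduce condition \eqref{>}, then applies Theorem \ref{stab-gen}. You have merely spelled out the (routine) extreme-value step that the paper leaves implicit.
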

\begin{proof}[{\bf Proof.}]  Let $r>0$ be such that $\overline{B}(a, r)\subset U$ : as a consequence of the compactness of closed balls we have \eqref{>}. The result is now  an immediate consequence of Theorem \ref{stab-gen}.\end{proof}
 
\begin{defn}\label{Definition4.3.6.}
 An equilibrium \index{equilibrium point} point  $a$  of the dynamical system \index{dynamical system} $S(t)$ is called asymptotically stable \index{asymptotically stable} (under $S(t)$) if it is stable \index{stable} and in addition 
  $$\exists \delta_0>0, \quad  \forall z\in Z, d(z, a) <\delta_0  \Longrightarrow  
 \quad   \lim _{t\rightarrow +\infty}d(S(t) z, a) = 0 $$ \end{defn}

  \begin{rem}\label{Remark4.3.7.}{\rm The first order ODE $$  u' + u^3 - u  =  0  $$ generates a dynamical system \index{dynamical system} on $Z= \R$  which has a set of 3 equilibria $\{ -1, 0, +1\}$. It is easy to verify that all trajectories of this system are convergent, positive initial data lead to a trajectory converging exponentially fast to +1, negative initial data to a trajectory converging exponentially fast to -1. Therefore +1 and -1 are asymptotically stable, \index{asymptotically stable} whereas 0 is  \index{unstable} unstable.  It is not too difficult to check that the equilibria $(1, 0)$ and $(-1, 0)$ are also asymptotically stable \index{asymptotically stable} for the system generated in $Z= \R^2$ by the second order ODE   $$ u'' + u' + u^3 - u  =  0  $$ considered in the previous section, whereas in this case the set of initial data leading to a trajectory tending to $(0, 0)$ is a 1D curve separating the attraction basins of the 2 stable \index{stable} equilibria. Hence $(0, 0)$ is also unstable \index{unstable} in this case.\\
In the case of the basic oscillator governed by $$ u'' + \omega^2 u  = 0 $$
the only equilibrium \index{equilibrium point} is $ 0 $ which is stable \index{stable} (with   $\delta = \varepsilon $  since we have an isometry group on $Z= \R^2$) but not \index{asymptotically stable} asymptotically stable. This result can also be viewed as a special case of theorem \ref{stab-gen} with $V(u, u') = \frac{1}{2} (u'^2+ \omega u^2).$ The same argument holds true for the wave \index{wave equation} equation with $V$ the usual energy functional. We remark that except for the initial data (0, 0), the  omega-limit set does not cross the set of equilibria. In fact if the omega-limit set of a trajectory contains a stable \index{stable} equilibrium \index{equilibrium point} point, the trajectory must converge to this point. This makes the study of convergence somewhat easier when the dynamics is \index{unconditionally stable} unconditionally stable, a typical case being contraction (or more generally uniformly equicontinous) semi-groups which will be studied in Chapter \ref{Somebasicexamples}.}\end{rem}
%%%%%%%%%%%%%%%%%%%%%%%%%%%%%%%%%%
%%%%%%%%%%%%%%%%%%%%%%%%%%%%%%%%%%
\chapter[The linearization method ]{The linearization method in stability analysis }

 When looking for stability of an equilibrium \index{equilibrium point} point $a$ for an evolution equation $U'+ {\cal A}U = 0$, a natural idea is to examine the nature (convergent or divergent)  of the linear semi-group generated by the linearized operator $D {\cal A}(a)$.  It is intuitively clear that this will work only when the spectrum of $D {\cal A}(a)$ does not intersction the imaginary axis. In this chapter, we first describe an extension of the Liapunov \index{Liapunov} linearization method to establish the asymptotic stability of equilibria. The perturbation argument developed here is
applicable, in conjonction with the linear results of Chapter 2, to various semi - linear evolution problems on infinite dimensional Banach spaces. At  the opposite, an argument essentially coming back to R. Bellman \cite{MR0061235}  allows to deduce instability from the existence of an eigenvalue with the "wrong" sign. We shall also provide an infinite dimensional version of the linearized instability principle. 

\section{A simple general result}

    Let $X$ be a real Banach space, $T(t)$ a strongly continuous linear semi-group on  $X$,
and $F: X\longrightarrow X$ locally Lipschitz continuous on bounded subsets. For any
$x\in X,$ we consider  the unique maximal solution $u\in C([0,\tau(x)),X)$ of the
equation
\begin{equation}\label{Equ3.1}
u(t) = T(t)x +\int_0^t T(t-s)F(u(s))ds, \quad\forall t\in[0,\tau(x))\end{equation}
By a stationary solution of \eqref{Equ3.1} we mean a constant vector $a\in X$ such that
\begin{equation*}a=T(t)a +\int_0^t T(t-s)F(a) ds,\quad\forall t\geq 0\end{equation*}
The following result is an easy consequence of the general theory of strongly continuous linear semi-groups. Let $ L $ denote the generator of $T(t)$. Then we have
\begin{lem}
A vector $a\in X$ is a stationary solution of \eqref{Equ3.1} if and only if we have
\begin{equation*}
a\in D(L) \quad \hbox{and}\quad    La + F(a) = 0.
\end{equation*}
\end{lem}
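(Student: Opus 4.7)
The plan is to establish both implications by exploiting the relationship between the semigroup $T(t)$, its generator $L$, and the Duhamel-type integral formula.

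For the direct implication, I would start from the identity
$$a - T(t)a = \int_0^t T(t-s) F(a)\,ds$$
and perform the change of variables $\sigma = t-s$ to rewrite the right-hand side as $\int_0^t T(\sigma) F(a)\,d\sigma$. Dividing both sides by $t>0$ yields
$$\frac{T(t)a - a}{t} = -\frac{1}{t}\int_0^t T(\sigma) F(a)\,d\sigma.$$
By strong continuity of $T$ at $0$, the right-hand side converges to $-F(a)$ as $t\to 0^+$. Consequently the limit of $\frac{T(t)a-a}{t}$ exists in $X$, which by definition of the generator means $a\in D(L)$ and $La = -F(a)$.

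For the converse, assuming $a\in D(L)$ with $La+F(a)=0$, the classical semigroup theory tells us that $t\mapsto T(t)a$ belongs to $C^1([0,\infty);X)$ with $\frac{d}{dt}T(t)a = T(t)La = -T(t)F(a)$. Integrating this identity on $[0,t]$ and applying the change of variables $\sigma = t-s$ gives
$$T(t)a - a = -\int_0^t T(s) F(a)\,ds = -\int_0^t T(t-s) F(a)\,ds,$$
which is exactly the stationary identity after rearrangement.

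The only mildly delicate point is justifying the passage to the limit in the forward direction: it requires only that $\sigma\mapsto T(\sigma)F(a)$ be continuous on $[0,\infty)$ with value $F(a)$ at $\sigma=0$, which is immediate from the strong continuity of $T(t)$. Apart from that, both directions are essentially a one-line computation, so no significant obstacle is expected.
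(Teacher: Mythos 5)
Your proof is correct and is exactly the standard semigroup-theoretic argument the paper has in mind (the paper states the lemma without proof, calling it an easy consequence of the theory of strongly continuous semigroups). Both directions — the difference-quotient limit identifying $a\in D(L)$ with $La=-F(a)$, and the integration of $\frac{d}{dt}T(t)a=T(t)La$ for the converse — are carried out properly.
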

We are now in a position to state the main result of this section
\begin{thm}\label{thm3.1.2.} Assume that for some constants $\delta> 0, M \geq 1$ we have
\begin{equation}\label{equ3.4} \forall t \geq 0,\  \Vert T(t)\Vert \leq M e^{- \delta t}. \end{equation}
Let $a\in X$ be a stationary solution of \eqref{Equ3.1} such that
\begin{equation}\label{equ3.5}\exists R_0 > 0 ,  \exists \eta> 0 :    \Vert F(u) - F(a)\Vert \leq \eta \Vert u - a\Vert\
\hbox{ for }\,  \Vert u - a \Vert \leq R_0 \end{equation}
with
$$  \eta <{\delta\over M}. $$
Then for all $x\in X$ such that
$$ \Vert x - a\Vert \leq R_1 =  {R_0 \over M } $$
the solution u of \eqref{Equ3.1} is global \index{global solution} and satisfies
\begin{equation}\label{equ3.8}\forall t \geq 0,  \quad \Vert u(t) - a \Vert \leq M \Vert x - a\Vert  e^{- \gamma t},
\end{equation}
with :  $   \gamma = \delta - \eta M > 0.$
\end{thm}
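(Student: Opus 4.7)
The plan is to reduce the problem to an integral inequality for $v(t):= u(t)-a$ and then apply Gronwall's Lemma \ref{Gronwall Lemma}, with a bootstrap argument to ensure that the solution remains inside the ball $\bar B(a,R_0)$ where the Lipschitz estimate \eqref{equ3.5} is available.

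First I would use the fact that $a$ is stationary, which by the preceding lemma (and directly from the integral form) means $a = T(t)a + \int_0^t T(t-s)F(a)\,ds$ for every $t\ge 0$. Subtracting this identity from \eqref{Equ3.1} gives, as long as the solution exists,
\begin{equation*}
v(t) = T(t)(x-a) + \int_0^t T(t-s)\bigl(F(u(s))-F(a)\bigr)\,ds.
\end{equation*}
Define $T^\star := \sup\{\,t\in[0,\tau(x)) : \|v(s)\|\le R_0 \ \forall s\in[0,t]\,\}$. Since $\|x-a\|\le R_1=R_0/M\le R_0$, continuity of $u$ ensures $T^\star>0$. For $t\in[0,T^\star)$ we may apply \eqref{equ3.5} and \eqref{equ3.4} to obtain
\begin{equation*}
\|v(t)\| \le M e^{-\delta t}\|x-a\| + \int_0^t M e^{-\delta(t-s)}\eta\,\|v(s)\|\,ds.
\end{equation*}

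Next I would multiply by $e^{\delta t}$ and set $\varphi(t):=e^{\delta t}\|v(t)\|$, which yields
\begin{equation*}
\varphi(t) \le M\|x-a\| + M\eta\int_0^t \varphi(s)\,ds.
\end{equation*}
By Lemma \ref{Gronwall Lemma} (Gronwall), $\varphi(t)\le M\|x-a\|\,e^{M\eta t}$ on $[0,T^\star)$, i.e.
\begin{equation*}
\|v(t)\| \le M\|x-a\|\,e^{-\gamma t}, \qquad \gamma := \delta - M\eta > 0.
\end{equation*}

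The main (though mild) obstacle is to close the bootstrap and rule out finite-time blow-up. Using $\|x-a\|\le R_0/M$, the above bound gives $\|v(t)\|\le R_0 e^{-\gamma t}\le R_0$ for all $t\in[0,T^\star)$. If $T^\star<\tau(x)$, by continuity one would have $\|v(T^\star)\|\le R_0 e^{-\gamma T^\star}<R_0$, contradicting the maximality of $T^\star$; hence $T^\star=\tau(x)$. Since $\|u(t)\|$ stays bounded by $\|a\|+R_0$ on $[0,\tau(x))$, the blow-up alternative of Theorem \ref{theo3} forces $\tau(x)=+\infty$, and the estimate \eqref{equ3.8} then holds for all $t\ge 0$.
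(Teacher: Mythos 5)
Your proof is correct and follows essentially the same route as the paper: subtract the stationary identity (the paper instead normalizes $a=0$, $F(a)=0$, which amounts to the same thing), estimate on the maximal interval where $\|u-a\|\le R_0$, apply Gronwall to $\varphi(t)=e^{\delta t}\|u(t)-a\|$, and use $\gamma=\delta-\eta M>0$ to close the bootstrap. Your treatment of the continuation step via the blow-up alternative of Theorem \ref{theo3} is a bit more explicit than the paper's, but the argument is the same.
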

 \begin{proof}[{\bf Proof.}] On replacing $u$ by $u - a$  and $F$ by $F - F(a)$, we may assume $a = 0$
and $F(a) = 0$ with  $\Vert F(u)\Vert \leq\eta\Vert u \Vert$  whenever  $\Vert u \Vert  \leq R_0 .$ In particular,
setting
$$T = \hbox {Sup} \{t \geq 0, \Vert u(t)\Vert \leq R_0 \} \leq +\infty, $$
we find$$\forall t \in[0, T),  \quad \Vert u(t)\Vert \leq M \Vert x \Vert e^{- \delta t}+\eta M
\int_0^t    e^{- \delta (t-s)}\Vert u(s)\Vert\, ds    .$$
Letting  $\varphi(t) = e^{ \delta t} \Vert u(t)\Vert$, we obtain $$\varphi(t)\leq C_1 +
C_2\int_0^t\varphi(s)ds\quad \forall t\in [0, T)$$
with:   $C_1 =M\Vert x\Vert,       C_2 =\eta M.$ By applying \index{Gronwall Lemma} Lemma \ref{Gronwall Lemma} with
$\lambda(t)\equiv C_2$  we deduce
\begin{equation}\label{Equ3.9}
\forall t\in [0, T), \quad e^{\delta t} \Vert u(t)\Vert \leq M \Vert x \Vert e^{\eta
M t}.
 \end{equation}
Since $\delta > \eta M$ , we conclude that if $M\Vert x\Vert \leq  R_0 $, then $T = +\infty$ and \eqref{Equ3.9}
holds true on $[0, +\infty )$. This completes the proof of \eqref{equ3.8}.
 \end{proof}

\section{The classical Liapunov \index{Liapunov stability} stability theorem}

\subsection{A simple proof of the classical Liapunov stability \index{Liapunov stability} theorem}

The object of this
paragraph is to give a simple proof of the following well known result:
\begin{thm}\label{Liapunov}   (Liapunov)\index{Liapunov}  Let X be a finite dimensional
normed space, and $f\in C^1(X, X)$ a vector field on X. Let $a\in X$ be such that f(a)
= 0  and assume
$$  \hbox{All eigenvalues $\{s_j,\ 1\leq j\leq k\}$ of $Df(a)$ have negative real parts.}    $$
Then a
is an asymptotically Liapunov \index{asymptotically stable} stable equilibrium \index{equilibrium point} solution of the equation
\begin{equation}\label{ODEFirstOrder} u'= f(u(t)),\quad   t \geq 0.     \end{equation}
More precisely : for each  $ \displaystyle \delta < \eta = \min_{1\leq j \leq k}\{ -Re(
s_j )\}$, there exists $\rho =\rho(\delta) > 0$ and
$ M(\delta) \geq 1 $ such that if  $\Vert x - a\Vert \leq \rho(\delta)),$  the solution u of \eqref{ODEFirstOrder}
such that $u(0)=x$ is \index{global solution} global with
 $$\forall t \geq 0,  \quad \Vert u(t) - a\Vert \leq M(\delta) \Vert x
- a\Vert e^{- \delta t} .$$
\end{thm}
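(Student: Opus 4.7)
The plan is to reduce the nonlinear stability result to the abstract Theorem \ref{thm3.1.2.} applied with $T(t)=e^{tL}$ where $L=Df(a)$. First I would translate the equilibrium to the origin by setting $v=u-a$ and writing
$$v'(t)=Lv(t)+g(v(t)),\qquad g(v):=f(a+v)-f(a)-Lv,$$
using $f(a)=0$. Since $f\in C^1$, $g$ is $C^1$ with $g(0)=0$ and $Dg(0)=0$, so by continuity of $Dg$, for every $\eta'>0$ there exists $R_0=R_0(\eta')>0$ such that $\|Dg(v)\|\leq\eta'$ on $\|v\|\leq R_0$, whence $g$ is globally Lipschitz with constant $\eta'$ on that ball and in particular $\|g(v)\|\leq\eta'\|v\|$. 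This yields hypothesis \eqref{equ3.5} of Theorem \ref{thm3.1.2.} with arbitrarily small $\eta'$.

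The core of the proof is to establish, for any $\delta<\eta=\min_j(-\mathrm{Re}\,s_j)$, the existence of $M(\delta)\geq 1$ such that
$$\|e^{tL}\|\leq M(\delta)\,e^{-\delta t},\qquad \forall t\geq 0.$$
I would obtain this via the Jordan canonical decomposition $L=PJP^{-1}$ in $\mathbb{C}^n$ (or the real Jordan form). On each Jordan block of size $k$ associated to an eigenvalue $\lambda$ with $\mathrm{Re}\,\lambda\leq-\eta$, one has explicitly
$$e^{tJ_{\lambda,k}}=e^{\lambda t}\sum_{j=0}^{k-1}\frac{t^j}{j!}N^j,$$
where $N$ is the nilpotent part. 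Each entry is bounded by $C\,t^{k-1}e^{-\eta t}$, and since $t^{k-1}e^{(\delta-\eta)t}$ is bounded on $[0,\infty)$ by some constant depending on $\delta<\eta$, the required bound follows after multiplying by $\|P\|\|P^{-1}\|$.

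Once the linear estimate is in hand, I choose $\eta'<\delta/M(\delta)$ and invoke Theorem \ref{thm3.1.2.}: for every $x$ with $\|x-a\|\leq R_1:=R_0/M(\delta)$, the integral equation associated to \eqref{ODEFirstOrder} has a global solution satisfying
$$\|u(t)-a\|\leq M(\delta)\,\|x-a\|\,e^{-\gamma t},\qquad \gamma=\delta-\eta' M(\delta)>0.$$
By choosing $\eta'$ as small as desired (shrinking $R_0$, hence $R_1=\rho(\delta)$), $\gamma$ can be taken arbitrarily close to $\delta$, which yields the stated conclusion and simultaneously establishes both Liapunov stability and asymptotic stability of $a$.

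The main obstacle is the linear estimate in the second paragraph; everything else is essentially a change of variables and a direct application of Theorem \ref{thm3.1.2.}. The delicacy with the Jordan form is that in the presence of nontrivial nilpotent parts one cannot take $M=1$, which is precisely why the statement requires losing a bit on the exponential rate ($\delta<\eta$ rather than $\delta=\eta$) and paying a multiplicative constant $M(\delta)$ that typically blows up as $\delta\uparrow\eta$.
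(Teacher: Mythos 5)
Your proof is correct, and its overall architecture coincides with the paper's: translate the equilibrium to the origin, isolate the remainder $F(u)=f(u)-Df(a)(u-a)$, which satisfies \eqref{equ3.5} with constant as small as desired because $Dg(0)=0$ and $Dg$ is continuous, and then invoke Theorem \ref{thm3.1.2.} with $T(t)=e^{tL}$. The only genuine divergence is in the proof of the linear estimate $\Vert e^{tL}\Vert\leq M(\delta)e^{-\delta t}$. You obtain it from the Jordan canonical form, bounding each block $e^{\lambda t}\sum_{j<k}\frac{t^j}{j!}N^j$ by $C\,t^{k-1}e^{-\eta t}$ and absorbing the polynomial factor into the margin $\eta-\delta$; this is standard and correct, but it leans on the Jordan reduction theorem. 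The paper instead proves, by a self-contained induction on $\dim_{\C}X$ (setting $v=e^{-s_1t}u$, differentiating, and restricting to the proper subspace $R(A-s_1I)$), that every solution of $u'=Au$ is of the form $\sum_j P_j(t)e^{s_jt}$ with $P_j$ polynomial, from which the same bound follows; this is more elementary and avoids the Jordan form entirely. Your closing observation about why $M(\delta)>1$ is unavoidable and why one must take $\delta<\eta$ is exactly right. One small cosmetic point, which the paper glosses over as well: Theorem \ref{thm3.1.2.} delivers the rate $\gamma=\delta-\eta' M(\delta)<\delta$, so to achieve the rate exactly $\delta$ claimed in the statement one should start the argument from some $\delta'\in(\delta,\eta)$ and choose $\eta'$ small enough that $\delta'-\eta' M(\delta')\geq\delta$.
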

 \noindent {\bf Proof. } We consider first the case where $a = 0$ and $f$ coincides with a  linear
operator $A$. In this case, the question reduces to the following:

\begin{lem} Let $X$ be a finite dimensional complex vector space, $A\in
L(X)$ and $u\in C^1({\mathbb R}, X)$ a solution of $u'(t) = Au(t)$. Then we have
\begin{equation}\label{Equ3.12}
u(t) =\sum_{j= 1}^kP_j(t) e^{s_j t}
\end{equation}
where $\{sj\}_{1\leq j\leq k} $ is the sequence of eigenvalues of $A$ and $P_j$  a polynomial with coefficients in  $X$ for all $j$.
\end{lem}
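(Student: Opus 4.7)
My plan is to identify $u(t)$ with $e^{tA}u(0)$ and then exploit the Jordan/primary decomposition of $A$ to split the exponential into the announced form.

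First I would recall that since $A\in L(X)$ is bounded on a finite-dimensional space, the initial value problem $u'=Au$, $u(0)=x$ has the unique solution $u(t)=e^{tA}x$, where $e^{tA}:=\sum_{n\ge0}\frac{t^n}{n!}A^n$ converges in operator norm for every $t\in\mathbb R$. Consequently it suffices to decompose the family $e^{tA}$ acting on a decomposition of $X$ into $A$-invariant subspaces.

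Next I would invoke the primary decomposition theorem: if $s_1,\dots,s_k$ are the distinct eigenvalues of $A$ with algebraic multiplicities $m_1,\dots,m_k$, then
\[
X=\bigoplus_{j=1}^k E_j,\qquad E_j:=\ker\bigl((A-s_jI)^{m_j}\bigr),
\]
and each $E_j$ is $A$-invariant. On $E_j$ one writes $A=s_jI+N_j$ where $N_j:=(A-s_jI)_{|E_j}$ is nilpotent with $N_j^{m_j}=0$. Since $s_jI$ and $N_j$ commute on $E_j$, the series defining the exponential factorises:
\[
e^{tA}_{|E_j}=e^{ts_jI}e^{tN_j}=e^{s_jt}\sum_{n=0}^{m_j-1}\frac{t^n}{n!}N_j^n,
\]
the last sum being a polynomial in $t$ with coefficients in $L(E_j)$ because $N_j^{m_j}=0$.

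Finally I would decompose the initial datum $u(0)=\sum_{j=1}^k x_j$ with $x_j\in E_j$ and set
\[
P_j(t):=\Bigl(\sum_{n=0}^{m_j-1}\frac{t^n}{n!}N_j^n\Bigr)x_j\in E_j\subset X,
\]
so that $P_j$ is a polynomial in $t$ with coefficients in $X$ of degree at most $m_j-1$. By linearity and the invariance of each $E_j$, we obtain
\[
u(t)=e^{tA}u(0)=\sum_{j=1}^k e^{tA}_{|E_j}x_j=\sum_{j=1}^k P_j(t)\,e^{s_jt},
\]
which is the announced formula. The only step that might look delicate is the primary decomposition, which however is a purely algebraic consequence of B\'ezout's identity applied to the pairwise coprime polynomials $(z-s_j)^{m_j}$ dividing the characteristic polynomial of $A$; I would either appeal to it directly or sketch it in one line.
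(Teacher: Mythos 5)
Your proof is correct, but it takes a genuinely different route from the one in the text. You identify $u(t)=e^{tA}u(0)$ and invoke the primary decomposition $X=\bigoplus_j \ker\bigl((A-s_jI)^{m_j}\bigr)$, factoring the exponential on each generalized eigenspace as $e^{s_jt}$ times a polynomial in the nilpotent part; this yields the formula at once, together with the extra information that $\deg P_j\le m_j-1$. The paper instead argues by induction on $\dim_{\mathbb C}X$: it multiplies $u$ by $e^{-s_1t}$ to kill one eigenvalue, differentiates to land in $Y=R(A-s_1I)$, which has strictly smaller dimension because $\ker(A-s_1I)\neq\{0\}$, applies the induction hypothesis there, and integrates back. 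The inductive argument deliberately avoids any appeal to the Jordan or primary decomposition --- it needs only rank--nullity and the fundamental theorem of algebra --- which keeps the prerequisites minimal for a lemma whose only role is to give the crude bound $\Vert e^{tA}\Vert\le C(\delta)e^{-\delta t}$ used in Theorem \ref{thm3.1.2.}. Your version buys sharper structural information (explicit degrees, $P_j$ valued in $E_j$) at the cost of citing a heavier algebraic theorem; either is perfectly acceptable here.
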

 \begin{proof}[{\bf Proof.}] By induction on $\dim _{{\mathbb C}} (X) = p.$\\
- If  $\dim _{{\mathbb C}} (X) = 1,$ then  $j = 1$ and $A = s_1I,$ hence  $u(t) = u_0 e^{s_1 t}.$\\
- If  $\dim _{{\mathbb C}} (X) = p>1,$ assuming that the result is true for all complex vector
spaces with complex  dimensions $\leq p - 1$, we set
$$v(t) = u(t) e^{-s_1 t}, $$
therefore $v$ is a solution of
$$v' = (A - s_1I)v.$$ 
Then setting  $Y = R(A - s_1I),  B = (A - s_1I) \vert  _{Y}$   and $w = v',$ it is clear that $w$
is a solution of
    $$  w\in C^1({\mathbb R}, Y) ; \quad w'(t) = Bw(t). $$
Since by construction $\ker (A - s_1I) \not= \{0\}$, we have  $R(A - s_1I) \not=X $ and in
particular
$$ \dim _{{\mathbb C}} (Y) \leq \dim _{{\mathbb C}} (X) - 1 = p - 1.$$
By the induction hypothesis we have$$ w(t) =\sum_{j= 1}^kQ_j(t) e^{(s_j-s_1) t}$$
because the eigenvalues of  $B$ are of the form  $s_j-s_1.$ By  integrating we obtain
$$ w(t) =a_1 + \sum_{j= 1}^kR_j(t) e^{(s_j-s_1) t}$$then on multiplying by $e^{s_1
t}$, we obtain \eqref{Equ3.12}, completing the proof by induction.\medskip
 \end{proof}
  \begin{proof}[{\bf Completion of the proof of Theorem \ref{Liapunov}}]
Since all eigenvalues of $ Df(a)=:A$ have negative real parts, it follows obviously
from \eqref{Equ3.12} that $\Vert  e^{tA }\Vert \leq C(\delta) e^{- \delta t} $ for all
$\delta < \eta = \min_{1\leq j \leq k}\{ -Re(
sj )\}$. Then we apply Theorem \ref{thm3.1.2.} with $T(t) = e^{tA }$, and  $F$ defined by the
formula
$$  F(u) = f(u) - Df(a)(u-a).$$
The result follows at once.
\end{proof}
\subsection{Implementing Liapunov's first method \index{Liapunov}}

Theorem \ref{Liapunov} gives an apparently simple and almost optimal way of checking the asymptotic stability of a given equilibrium \index{equilibrium point} point of a differential system : check whether all (complex) eigenvalues of the linearization at this point have negative real parts. However in practice we have to check this property on the characteristic polynomial, but as soon as $N\ge 3$ in general the roots cannot be computed .\medskip

\begin{defn} We say that a polynomial $P$ with real coefficients $$P(X) = \sum _{j = 0}^{N} p_jX^j$$
is a Hurwitz polynomial if all its zeroes have negative real parts. \end{defn}

\begin{prop}\label{prop7.1.2} If $P$ is a Hurwitz polynomial, then $p_0\not = 0$ and for each $j \in \{0, ...N\}$, we have $p_jp_0>0$.
\end{prop}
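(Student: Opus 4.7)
The plan is to factor $P$ over $\R$ using the fact that its complex roots come in conjugate pairs (since its coefficients are real), and then to observe that each real factor of the decomposition has strictly positive coefficients.

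More precisely, I would first write $P(X)= p_N \prod_{i} (X - r_i) \prod_{k} (X - z_k)(X - \bar z_k)$ where the $r_i$ are the real roots of $P$ (all negative by the Hurwitz hypothesis) and the $z_k$ are the non-real roots (with $\mathrm{Re}(z_k)<0$). Grouping each pair of conjugate roots, this becomes
\begin{equation*}
P(X) = p_N \prod_i (X+|r_i|)\prod_k \bigl(X^2 - 2\mathrm{Re}(z_k)X + |z_k|^2\bigr).
\end{equation*}
Each linear factor has coefficients $1$ and $|r_i|>0$; each quadratic factor has coefficients $1$, $-2\mathrm{Re}(z_k)>0$, and $|z_k|^2>0$. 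Thus every factor has strictly positive coefficients.

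The next step is the elementary observation that if two polynomials $A,B\in\R[X]$ have all coefficients $\ge 0$ (resp. $>0$ for the constant and leading term), then so does their product, by direct inspection of the convolution formula for coefficients of $AB$. Iterating this, the product $\prod_i (X+|r_i|)\prod_k (X^2 - 2\mathrm{Re}(z_k)X + |z_k|^2)$ is a polynomial all of whose coefficients are strictly positive (in particular the constant term, equal to $\prod_i |r_i|\prod_k |z_k|^2$, is nonzero).

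Therefore $P(X)/p_N$ has all coefficients $>0$, which means every $p_j$ has the same sign as $p_N$; in particular $p_0$ and $p_N$ are nonzero and of the same sign, and $p_j p_0 > 0$ for all $j \in \{0,\dots,N\}$. The main (minor) obstacle is just to justify carefully the stability of ``all coefficients positive'' under multiplication, which follows immediately from $(AB)_n = \sum_{i+j=n} a_i b_j$ with nonnegative summands and at least one strictly positive term when $n=0$ or $n=\deg A+\deg B$; no further difficulty arises.
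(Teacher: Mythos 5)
Your proof is correct and follows essentially the same route as the paper: factor $P/p_N$ into linear factors $X+|r_i|$ and quadratic factors $X^2-2\mathrm{Re}(z_k)X+|z_k|^2$, all with positive coefficients, and expand. The only difference is that you make explicit the (elementary) stability of positivity of coefficients under multiplication, which the paper leaves implicit.
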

 \begin{proof}[{\bf Proof.}] We have $$ P(X) = p_N \prod _{k}(X+ \lambda_k)\prod _{j }(X+ \mu_j+ i\nu_j)(X+ \mu_j- i\nu_j)$$
  where  all numbers $ \lambda_k,  \mu_j$ are positive . But  $$(X+ \mu_j+ i\nu_j)(X+ \mu_j- i\nu_j) = 
  X^2 + 2 \mu_jX + \mu_j^2 + \nu_j ^2 $$ The result follows immediately by expanding $P$. \end{proof}
  \begin{rem}\label{Remark 7.1.3.} {\rm The converse of Proposition \ref{prop7.1.2} is false if $N>2$ . If all coefficients of $P$ have the same sign, of course $P$ cannot have a positive real root but on the other hand the polynomial 
  $$ P{\varepsilon} (X) = (X+1) (X^2 -\varepsilon X + 1) = X^3 + (1-\varepsilon ) X^2 + (1-\varepsilon ) X + 1$$ has all its coefficients positive for $ 0<\varepsilon< 1$ , although the two conjugate imaginary roots have imaginary parts equal to $ {\varepsilon\over 2}$.} \end{rem}
It is sometimes useful to remember the following criterion which we give without proof :

\begin{prop}\label{prop7.1.4}  For $N\le 4$ a polynomial  $P$ of degree $N$ with $p_0>0$ is a Hurwith polynomial if and only if the following inequalities hold true

\medskip \noindent
- If  $N=2$: $p_1>0, p_2>0. $\\
- If   $N=3$: $p_1>0, p_3>0, p_2p_1> p_3p_0 $\\
- If   $N=4$: $p_1>0, p_3>0, p_4>0, p_3(p_2p_1-p_3p_0) >p_4 p_1^2  $
\end{prop}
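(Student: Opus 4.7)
The plan is to treat each case by factoring $P$ over $\R$ and reducing the Hurwitz inequality to a manifestly positive expression in the real parameters of the factorization. In every case, Proposition~\ref{prop7.1.2} already gives the necessity of $p_j>0$ for $1\le j\le N$ (since $p_0>0$), so only the last ``determinantal'' inequality requires a genuine argument. For $N=2$, once $p_1,p_2>0$ Vieta closes the matter: either the two real roots have positive product and negative sum (so both negative), or a conjugate pair has real part $-p_1/(2p_2)<0$.

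For $N=3$, I would exploit the factorization $P(X)=p_3(X+\alpha)(X^2+2\mu X+\mu^2+\nu^2)$, available over $\R$ with $\alpha,\mu,\nu$ real, where $P$ is Hurwitz precisely when $\alpha,\mu>0$. Expanding and identifying coefficients gives the short identity
$$p_2p_1-p_3p_0 \;=\; 2\mu\, p_3\bigl[(\alpha+\mu)^2+\nu^2\bigr],$$
from which both directions are transparent: necessity is read off, while for sufficiency the positivity of all $p_j$ rules out any nonnegative real root and the identity then forces $\mu>0$.

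For $N=4$, the plan rests on the factorization $P=p_4(X^2+aX+b)(X^2+cX+d)$ with real $a,b,c,d$, which always exists for a real quartic (group the roots in conjugate pairs). A patient expansion, using $p_3/p_4=a+c$, $p_2/p_4=b+d+ac$, $p_1/p_4=ad+bc$, $p_0/p_4=bd$, yields the central identity
$$p_3(p_2p_1-p_3p_0)-p_4p_1^2 \;=\; p_4^{\,3}\,ac\bigl[(b-d)^2+(a+c)(ad+bc)\bigr].$$
Necessity follows by keeping each conjugate pair together, so $a,b,c,d>0$ and the right-hand side is manifestly positive. For sufficiency, assuming the listed inequalities, I would first deduce $p_2>0$ (the strict inequality together with $p_1,p_3,p_4>0$ forces $p_2p_1>p_3p_0>0$) and $b,d>0$ (from $bd=p_0/p_4>0$ together with a short sign check ruling out $b,d<0$ using $p_1,p_2,p_3>0$), then argue by contradiction: any root with nonnegative real part belongs to a conjugate pair $\mu\pm i\nu$, $\mu\ge 0$, $\nu\neq 0$, which grouped as one factor forces $a=-2\mu\le 0$; a short case analysis on the sign of $c$ (using $a+c>0$ and $ad+bc>0$, which make the bracket strictly positive) contradicts $ac\cdot[\cdots]>0$ in every configuration.

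The main obstacle is the $N=4$ identity above: it is a short but somewhat subtle symmetric-polynomial computation and is the real crux of the proof, since it exhibits $\Delta_3:=p_3(p_2p_1-p_3p_0)-p_4p_1^2$ as a product whose each factor has a transparent sign interpretation. Once the identity is secured, the remaining sign-analysis in the sufficiency step is finite and routine, and the whole proposition follows.
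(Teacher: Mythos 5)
The paper states this proposition explicitly \emph{without proof} (it defers to the Routh--Hurwitz literature, cf.\ Remark \ref{Remark 7.1.5.}), so there is no in-text argument to compare against; your proposal supplies a genuinely elementary proof via real factorizations and product identities, which is a sensible alternative to the usual determinantal/Routh-array derivation and is well adapted to these low degrees. I checked your key quartic identity by direct expansion: with $p_3/p_4=a+c$, $p_2/p_4=b+d+ac$, $p_1/p_4=ad+bc$, $p_0/p_4=bd$ one indeed gets $p_2p_1-p_3p_0=p_4^2\bigl(b^2c+ad^2+ac(ad+bc)\bigr)$ and then
$$p_3(p_2p_1-p_3p_0)-p_4p_1^2=p_4^3\,ac\bigl[(b-d)^2+(a+c)(ad+bc)\bigr],$$
so the crux of your argument is correct, and the sufficiency step is even simpler than you suggest: once all $p_j>0$ a bad root must be non-real, its conjugate pair gives $a=-2\mu\le 0$, and $a+c>0$ forces $c>0$, hence $ac\le 0$ against a strictly positive bracket --- there is only one case, not several. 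Two small repairs are needed. First, in the cubic identity the right-hand side should carry $p_3^2$, not $p_3$ (both sides must be homogeneous of degree $2$ in an overall rescaling of $P$); the correct statement is $p_2p_1-p_3p_0=2\mu\,p_3^2\bigl[(\alpha+\mu)^2+\nu^2\bigr]$. Second, the factorization $p_3(X+\alpha)(X^2+2\mu X+\mu^2+\nu^2)$ with $\alpha,\mu,\nu$ \emph{all real} fails when the cubic has three distinct real roots (the quadratic factor would need negative discriminant); this is harmless --- either note that the symmetric identity $p_2p_1-p_3p_0=p_3^2(s_1+s_2)(s_2+s_3)(s_3+s_1)$, where $-s_j$ are the roots, holds unconditionally and each factor is positive in the Hurwitz case, or observe that the all-real-roots case is trivial once all $p_j>0$ --- but as written the claim ``available over $\R$'' is not accurate and should be qualified.
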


 \begin{rem}\label{Remark 7.1.5.} {\rm The general conditions for $N\ge 5$ become complicated and  are known as the Routh-Hurwitz criterion. The criterion  consists in N inequalities which can be computed  either using the diagonal (N-1) dimensional minors of some $N\times N $ matrix (cf. \cite{MR0140742} ) or through a step by step inductive procedure involving only some determinants of order 2.}
\end{rem}
 \subsection{Remarks on  Liapunov's \index{Liapunov stability} original proof of the stability theorem}\label{RemarkLiapunov}
 The original method of Liapunov \index{Liapunov} consisted in introducing the quadratic form
$$ \Phi(u) = \int_0^{+\infty}\Vert T(t)u\Vert^2 dt$$
where $T(t) = \exp (t A).$ For a solution of the equation
$$  u' = Au + F(u)$$
we have 
\begin{eqnarray*} 
{d\over{dt}}\Phi(u(t))&=& 2\int_0^{+\infty}(T(s)u(t), T(s)u'(t))\,ds \\
&=&2\int_0^{+\infty}(T(s)u(t), T(s)Au(t)+T(s)F(u(t)))\,ds.
\end{eqnarray*} 
But $$\int_0^{+\infty}(T(s)u(t), T(s)Au(t))ds = \int_0^{+\infty}(T(s)u(t),
{d\over{ds}}T(s)u(t))ds = -{1\over2}\Vert u(t)\Vert^2$$ and
$$\Bigl\vert2\int_0^{+\infty}(T(s)u(t),T(s)F(u(t)))ds\Bigr\vert\leq2C\Vert
u(t)\Vert\Vert F(u(t))\Vert.$$
The result then follows for  $\Vert F\Vert_{Lip} $  small enough. On this proof we want to
make two observations that will justify our choice of a perturbation argument in integral form :

 1) Even when $F = 0$, the decay rate obtained by Liapunov's \index{Liapunov} method is not optimal. For
instance if $X = {\mathbb R}^N$ and we apply the above estimates to the equation $$u'' + u +
2u' = 0,$$  we  obtain      $$  \Vert T(t) \Vert \leq C e^{- (1- \sqrt 2/2) t }$$

which is not optimal since in fact   $$ \Vert T(t) \Vert \leq C (1+t) \exp(- t ).$$

2) When $F=0$, the quadratic form $\Phi$ does not provide the decay in the correct space if $X$ is an infinite-dimensional Hilbert space. If, for instance, we consider the heat equation \index{heat equation}
$$u_t -  \Delta u  = 0  \quad\hbox{in}\quad   {\mathbb R}^+ \times \Omega, \quad   u =
0  \quad\hbox{ on} \quad {\mathbb R}^+ \times \partial\Omega$$ in a bounded open domain of $\R^N$ 
which generates a contraction semigroup $T(t)$ on $X = L^2(\Omega)$, the quadratic form $\Phi$ does not control the norm in $X$. Indeed , if $\varphi_n$ is an eigenfunction of the operator $-\Delta$ , i.e
$$-  \Delta \varphi_n  = \lambda_n  \varphi_n \quad\hbox{in}\quad   \Omega, \quad   u =
0  \quad\hbox{ on} \quad  \partial\Omega$$ it is immediate that
$$ \Phi(\varphi_n) = \int_0^{+\infty}\Vert T(t)\varphi_n\Vert^2 dt = ||\varphi_n ||^2 \int_0^{+\infty}e^{-2\lambda_n} dt  = \frac{1}{2\lambda_n}||\varphi_n ||^2 . $$\\

  3) The introduction of $\Phi$  is only possible when $X$ is a Hilbert space. If, for
instance, we work with the semilinear equation

         $$u_t -  \Delta u + f(u) = 0  \quad\hbox{in}\quad   {\mathbb R}^+ \times \Omega, \quad   u =
0  \quad\hbox{ on} \quad {\mathbb R}^+ \times \partial\Omega$$
and we try to apply Liapunov's \index{Liapunov} result with $X = L^2(\Omega)$ , we shall be very
limited in our range of application. Indeed in order for the operator $F$ defined by
    $$  (F(u))(x) = f(u(x)), \quad  \hbox{a.e.  in } \quad\Omega$$
to satisfy the condition $$\Vert F(u)\Vert_X \leq \varepsilon \Vert u\Vert_X  \quad \hbox{for}
\quad\Vert u\Vert_X\quad  \hbox{small}$$ it is necessary (and sufficient , of course) that $f$ satisfy the global condition $$\vert f(s)\vert  \leq \varepsilon\vert s\vert , \quad  \forall s\in{\mathbb R}.$$
As a consequence, $F$ cannot be tangent to $0$ at the origin, except if $F = 0.$ The situation is very different if $X = C_0(\Omega)$: in this case, in order for the operator $F$ to satisfy the condition $$\Vert F(u)\Vert_X \leq \varepsilon\Vert u\Vert_X  \quad \hbox{for}
\quad\Vert u\Vert_X\quad\hbox{small}$$ it is sufficient that f satisfy the local condition
$$\vert f(s)\vert  \leq \varepsilon\vert s\vert , \quad  \hbox{for all } s \hbox{ small enough.}$$
In particular, if $f$  is a function of class $C^1$ and $f'(0) = 0$, $ F$ is tangent to $0$ at the origin. Considering for instance the equation
$$u_t -  \Delta u  = \vert u\vert ^{p-1}u
\quad{in}\quad   {\mathbb R}^+ \times \Omega, \quad   u = 0  \quad \hbox{on} \quad {\mathbb R}^+
\times \partial\Omega.$$
The original Liapunov \index{Liapunov} technique does not give the stability of the 0 solution when
working in $L^2(\Omega)$.
The method will work if we replace  $L^2(\Omega)$ by some Sobolev space of type $H^m(\Omega)$, but
then we need some growth conditions \index{growth condition} on the nonlinearity, imposing extraneous
limitations on p. If $X = C_0(\Omega)$, we obtain easily the stability of the 0
solution for any $p>1$, cf. Proposition \ref{Prop3.3.1.}.\bigskip

\section{Exponentially  damped systems governed by
PDE}
\subsection{Simple applications}

    In this paragraph, we show how the stability theorem \ref{thm3.1.2.} can be applied to partial differential equations.

   a) We first consider the semilinear heat \index{heat equation} equation  \eqref{heat} :
$$ u_t - \Delta u + f(u) = 0 \quad\hbox{  in} \, \,{\mathbb R}^+ \times \Omega,
\quad     u = 0  \,\,   \hbox{on} \, \, {\mathbb R}^+ \times \partial\Omega$$ where  $\Omega$ be any open set in  ${\mathbb R}^N$ with  Lipschitz continuous boundary
$\partial\Omega$ , and $f:{\mathbb R}\longrightarrow {\mathbb R} $  is a function of class $C^1$   with
$$f(0) = 0\quad \hbox{and}\quad f'(0)>-\lambda_1(\Omega).$$
 We
have the following simple result :
\begin{prop}\label{Prop3.3.1.} Under the above hypotheses, the stationary solution
$u\equiv 0 $ of \eqref{heat} \index{heat equation} is exponentially  stable in $X = C^0(\Omega)$. More precisely  : for each  $\gamma \in (0, \lambda_1(\Omega) + f'(0)),$ there exists  $R = R(\gamma)$ such that for all $x\in X$ with  $\Vert x\Vert \leq R,$ the solution $u$ of \eqref{heat} such that $u(0) = x$  is global \index{global solution} and satisfies
$$\forall t \geq 0,  \Vert u(t)\Vert \leq M \Vert x\Vert e^{-\gamma t},$$
with M independent of $\gamma$  and x.
\end{prop}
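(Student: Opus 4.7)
The plan is to reduce the proposition to the abstract Theorem \ref{thm3.1.2.} by splitting off the linear part of $f$ at $0$. Write the equation as
\begin{equation*}
u_t = \Delta u - f'(0) u - \bigl( f(u) - f'(0) u\bigr),
\end{equation*}
let $A$ denote the Dirichlet Laplacian in $X = C^0(\Omega)$ (Example \ref{exempC0}), and consider the linear generator $L := A - f'(0) I$ with domain $D(L) = D(A)$. Since $-f'(0) I$ is bounded on $X$, $L$ is densely defined, $m$-dissipative up to a shift, and generates the strongly continuous semigroup $T(t) = e^{-f'(0) t} S(t)$, where $S(t)$ is the heat semigroup of Theorem \ref{thm2.2.2}. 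From $\Vert S(t)\Vert_{{\cal L}(X)} \le M_0 e^{-\lambda_1 t}$ we obtain
\begin{equation*}
\Vert T(t)\Vert_{{\cal L}(X)} \le M_0\, e^{-\delta_0 t}, \qquad \delta_0 := \lambda_1(\Omega) + f'(0) > 0.
\end{equation*}

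Next, define the nonlinear remainder $F: X \to X$ by $(F(u))(x) := -f(u(x)) + f'(0) u(x)$. Because $f\in C^1$ with $f(0)=0$, for every $\eta>0$ there exists $r(\eta)>0$ such that $|f(s) - f'(0) s| \le \eta|s|$ whenever $|s|\le r(\eta)$; this gives $\Vert F(u)\Vert_X \le \eta\Vert u\Vert_X$ as soon as $\Vert u\Vert_X \le r(\eta)$, so $F$ satisfies \eqref{equ3.5} with $a=0$. Moreover $F$ is Lipschitz on bounded subsets of $X$ by the local Lipschitz character of $f$ and the fact that $X$ is a Banach algebra of bounded continuous functions, and $u\equiv 0$ is a stationary solution of the integral equation
\begin{equation*}
u(t) = T(t)x + \int_0^t T(t-s) F(u(s))\, ds,
\end{equation*}
which is the Duhamel form of \eqref{heat} rewritten around the linearization.

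Finally, given $\gamma\in(0,\delta_0)$, pick $\eta := (\delta_0 - \gamma)/M_0$, so that $\eta < \delta_0/M_0$ and $\delta_0 - \eta M_0 = \gamma$. Theorem \ref{thm3.1.2.} applied with $\delta = \delta_0$, $M = M_0$, $\eta$ as above, and $R_0 = r(\eta)$ yields a radius $R(\gamma) := r(\eta)/M_0$ such that for every $x\in X$ with $\Vert x\Vert_X \le R(\gamma)$, the corresponding mild solution is global and satisfies $\Vert u(t)\Vert_X \le M_0 \Vert x\Vert_X e^{-\gamma t}$ for all $t\ge 0$, with $M_0$ independent of $\gamma$ and $x$. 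This is precisely the claimed exponential stability.

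The only delicate point in the plan is verifying that the mild solution of the shifted integral equation coincides with the mild solution of \eqref{heat} associated with the semigroup of Example \ref{exempC0}; this follows from the standard bounded perturbation argument (the semigroup of $A - f'(0)I$ equals $e^{-f'(0)t}S(t)$, and Duhamel's formula rearranges accordingly), but it is the place where one has to make the abstract framework of Theorem \ref{thm3.1.2.} match the original PDE. Once this identification is made, the rest is a direct application of Theorems \ref{thm2.2.2} and \ref{thm3.1.2.}.
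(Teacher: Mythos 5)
Your proposal is correct and follows essentially the same route as the paper: both rely on Theorem \ref{thm2.2.2} to get the exponentially damped semigroup $T(t)=e^{-f'(0)t}S(t)$ with rate $\delta_0=\lambda_1(\Omega)+f'(0)$, split off the remainder $F(u)=-(f(u)-f'(0)u)$ which satisfies \eqref{equ3.5} with $\eta$ arbitrarily small since $f\in C^1$, and then invoke Theorem \ref{thm3.1.2.}. Your version merely spells out the choice $\eta=(\delta_0-\gamma)/M_0$ and the Duhamel identification explicitly, which the paper leaves implicit.
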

 \begin{proof}[{\bf Proof.}] We have shown in  Theorem \ref{thm2.2.2}  that the contraction semi-group $T_0(t)$
generated in $ C^0(\Omega)$ by the equation
     $$ u_t - \Delta u  = 0 \quad\hbox{  in} \, \,{\mathbb R}^+ \times \Omega,
\quad     u = 0  \,\,   \hbox{on} \, \, {\mathbb R}^+ \times \partial\Omega       $$
satisfies \eqref{equ3.4} with  $\delta =\lambda_1(\Omega)$  and some $M > 1.$ It is therefore
sufficient to apply Theorem \ref{thm3.1.2.}  with $T(t) = e^{- f'(0)t}\, T_0(t),$ since for
$f\in C^1({\mathbb R}),  F(u) = f(u) - f'(0)u $ satisfies  \eqref{equ3.5} with $a = 0$ and $\eta$
arbitrarily small.
 \end{proof}

    b) Similarly we can consider the semilinear wave \index{wave equation} equation \eqref{wave}
\begin{equation*}\label{Equ3.16} u_{tt} - \Delta u +\gamma u_t + f(u) = 0 \quad\hbox{  in} \, \,{\mathbb R}^+ \times
\Omega,
\quad     u = 0  \,\,   \hbox{on} \, \, {\mathbb R}^+ \times\partial\Omega       \end{equation*}
where $\Omega$ is a bounded  open set in  ${\mathbb R}^N$ with  Lipschitz continuous
boundary
$\partial\Omega$, and $f$  is a function of class $C^1$: ${\mathbb R}\rightarrow {\mathbb R} $ with  $$f(0) = 0\quad \hbox{and}\quad
f'(0)>-\lambda_1(\Omega).$$ satisfying the growth condition \index{growth condition} \eqref{GrowthCondi}. We obtain the following  result :
\begin{prop}  Under the above hypotheses, the stationary solution $(u ,
v) \equiv (0, 0)$ of \eqref{wave} \index{wave equation} is exponentially  stable in $ X
= H^1_0(\Omega)\times L^2(\Omega)$ in the following
sense: for each $\delta>0$ small enough, there exists $R = R(\delta)$ such that for all $x\in X
$ with  $\Vert x\Vert \leq R,$ the solution $u$ of \eqref{wave} \index{wave equation} such that $(u(0), u_t(0)) = x$  is
global \index{global solution} and satisfies
\begin{equation}\label{Equ3.19}\forall t \geq 0,  \Vert u(t)\Vert \leq M(\delta) \Vert x\Vert e^{-\delta t}.
  \end{equation}
  \end{prop}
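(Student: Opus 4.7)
The plan is to reduce the statement to an application of Theorem \ref{thm3.1.2.} by rewriting the equation with the linear part shifted so as to absorb the linear contribution $f'(0)u$ into the unperturbed operator, and then relying on Proposition \ref{Prop2.4.3} for the exponential decay of the corresponding linear semi-group on $X$.

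First I would write $f(u) = f'(0)u + g(u)$ with $g \in C^1(\R)$, $g(0)=g'(0)=0$. Equation \eqref{wave} then reads
$$u_{tt} + \widetilde{L}u + \gamma u_t + g(u) = 0,\qquad \widetilde{L} := -\Delta + f'(0) I,$$
and I would introduce $\widetilde{L}$ as an unbounded self-adjoint operator on $H = L^2(\Omega)$ with domain $H^2(\Omega)\cap H^1_0(\Omega)$. The hypothesis $f'(0) > -\lambda_1(\Omega)$ combined with the Poincar\'e inequality gives
$$\langle \widetilde{L}u,u\rangle_H = \int_\Omega \bigl(|\nabla u|^2 + f'(0) u^2\bigr)\,dx \ge (\lambda_1+f'(0))\,\|u\|_{L^2}^2,$$
so $\widetilde{L}$ is positive and coercive on $H$. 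Consequently $V := D(\widetilde{L}^{1/2}) = H^1_0(\Omega)$ algebraically, and the graph norm $\|u\|_V = \langle\widetilde{L}u,u\rangle^{1/2}$ is equivalent to $\|\nabla u\|_{L^2}$.

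Next I would apply Proposition \ref{Prop2.4.3} to the linear part with $A = \widetilde{L}$ and $B = \gamma I \in L(V,V')$. The two required conditions are immediate: $\langle Bv,v\rangle = \gamma\|v\|_{L^2}^2 \ge \gamma|v|^2$, and $\|Bv\|_{V'}^2 = \gamma^2\|v\|_{V'}^2 \le C\|v\|_{L^2}^2 \le (C/\gamma)\langle Bv,v\rangle$ since $H \hookrightarrow V'$ continuously. Proposition \ref{Prop2.4.3} then yields constants $M_0 \ge 1$ and $\delta_0 > 0$ such that the strongly continuous semi-group $T(t)$ generated on $X = V\times H$ by the linear problem satisfies $\|T(t)\|_{L(X)} \le M_0 e^{-\delta_0 t}$ for all $t\ge 0$, the energy-norm equivalence giving the decay in the standard $X$-norm.

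The technical core is then to check that the Nemytskii-type map $F:X\to X$, $F(u,v) = (0, -g(u))$, satisfies hypothesis \eqref{equ3.5} of Theorem \ref{thm3.1.2.} at the origin with constant $\eta$ strictly less than $\delta_0/M_0$. Since $g'(0)=0$ and $g' \in C(\R)$, for any prescribed $\varepsilon>0$ there exists $\sigma>0$ such that $|g'(s)|\le \varepsilon$ for $|s|\le\sigma$. Combined with \eqref{GrowthCondi} this gives the uniform bound
$$|g(u)|\le \varepsilon\,|u| + C_\sigma\,|u|^{r+1},\qquad \forall u\in\R.$$
The growth restriction on $r$ means $2(r+1) \le 2^* = 2N/(N-2)$ when $N\ge 3$ (and is unconstrained for $N=1,2$), so the Sobolev embedding $V \hookrightarrow L^{2(r+1)}(\Omega)$ yields
$$\|g(u)\|_{L^2}\le \varepsilon C_1 \|u\|_V + C_\sigma C_2 \|u\|_V^{r+1}\le \bigl(\varepsilon C_1 + C_\sigma C_2 R_0^{r}\bigr)\|u\|_V$$
whenever $\|u\|_V\le R_0$. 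Choosing first $\varepsilon$ and then $R_0$ small enough, the bracket can be made smaller than $\delta_0/M_0$. Theorem \ref{thm3.1.2.} applied with this $T(t)$ and $F$, and with $a=0$, then provides global existence and the exponential estimate \eqref{Equ3.19} for every initial datum in a sufficiently small ball of $X$.

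The main obstacle is the Lipschitz estimate of the third paragraph: the fact that $g'(0)=0$ only gives a non-quantitative vanishing of $g'$ at the origin, so one must split the bound on $g$ into a small-$u$ part and a polynomial-growth part and invoke the critical Sobolev embedding to convert an $H^1_0$-smallness of $u$ into an $L^{2(r+1)}$ smallness controlling $g(u)$ in $L^2$. Everything else is essentially bookkeeping: verifying the equivalence of norms so that the abstract decay delivered by Proposition \ref{Prop2.4.3} transfers to the stated energy norm on $X$, and checking that the perturbed semilinear formulation fits the integral-equation framework of Theorem \ref{thm3.1.2.}.
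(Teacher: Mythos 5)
Your proof is correct and follows essentially the same route as the paper's: write $f(u)=f'(0)u+g(u)$, get exponential decay of the linear semi-group for $u_{tt}-\Delta u+f'(0)u+\gamma u_t=0$ (the paper quotes Proposition \ref{Proposition2.3.3.}, while you invoke the general Proposition \ref{Prop2.4.3}, which is the statement that actually covers the shifted operator $-\Delta+f'(0)I$), and then apply Theorem \ref{thm3.1.2.} after checking the smallness condition \eqref{equ3.5} at $a=0$ via the growth condition \eqref{GrowthCondi} and Sobolev embedding. Your verification of \eqref{equ3.5} is in fact more explicit than the paper's one-line appeal to Sobolev imbedding theorems.
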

 \begin{proof}[{\bf Proof.}] It follows from  Proposition \ref{Proposition2.3.3.}  that the contraction semi-group $T_0(t)$
generated in  $ X = H^1_0(\Omega)\times L^2(\Omega)$  by the equation
\begin{equation}\label{Equ3.20} u_{tt} - \Delta u + f'(0)u +\gamma u_t = 0 \quad\hbox{  in} \, \,{\mathbb R}^+ \times
\Omega,
\quad     u = 0  \,\,   \hbox{on} \, \, {\mathbb R}^+ \times \partial\Omega
\end{equation}

  satisfies
\eqref{equ3.4} with   some $M > 1$ for any $\delta>0$ small enough.  In order to apply Theorem \ref{thm3.1.2.} with $T(t)$ the semi-group generated by \eqref{Equ3.20}, all we need to check is that the function
$F(u, v)) = - (0, f(u) - f'(0)u )$ satisfies \eqref{equ3.5} with  $ a = 0$ and $\eta$
arbitrarily small. But this is immediate since the function $\varphi(s) = f(s) -
f'(0)s$  is $o(\vert s\vert )$ near the origin and, by \eqref{GrowthCondi} we have $\vert \varphi(s)\vert  \leq C(\vert s\vert ^r)$
for s large. Therefore for each $d> 0$ arbitrarily small, we have $\vert \varphi(s)\vert  \leq
d\vert s\vert  + C(d)\vert s\vert ^r,$ globally on ${\mathbb R}$. The result then follows immediately from
Sobolev imbedding theorems.
 \end{proof}

\subsection[Positive solutions of a heat equation\index{heat equation}]{Exponentially  stable positive solutions of a heat
equation}
In this paragraph, we  consider the semilinear heat equation \index{heat equation}
\begin{equation*} 
 u_t - \Delta u + f(u) = 0 \quad\hbox{  in} \, \,{\mathbb{R}}^+ \times
\Omega,
\quad     u = 0  \,\,   \hbox{on} \, \, {\mathbb{R}}^+ \times \partial\Omega
\end{equation*}
 where  $\Omega$ be any open set in  $\R^N$ with
 Lipschitz continuous boundary
$\partial\Omega$ , and $f$  is a function of class $C^1$: $\R\rightarrow\R $ with $f$ convex on  $\R^+, f(0) = 0 $ and  $$
f'(0)<-\lambda_1(\Omega).$$
 We have the following simple result :
\begin{prop}   Under the above conditions, assuming that $f(s)>0$ for
some $s>0$, there exists a unique solution $\varphi > 0$ of
\begin{equation}\label{Equ5.7}
 - \Delta \varphi +
f(\varphi) = 0 \quad\hbox{  in} \, \, \Omega,
\quad    \varphi = 0  \,\,   \hbox{on} \, \,  \partial\Omega.
\end{equation}
 In addition, $\varphi $ is asymptotically (even exponentially) \index{asymptotically stable} stable in
$ C({\overline\Omega})\cap H_0^1(\Omega). $
\end{prop}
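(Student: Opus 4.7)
My plan is the classical three-step scheme: construct $\varphi$ via sub- and supersolutions, obtain uniqueness from convexity, and then derive exponential stability from the strict positivity of the principal eigenvalue of the linearization at $\varphi$ via Theorem \ref{thm3.1.2.}.

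\emph{Existence and uniqueness.} For the subsolution I take $\underline u=\varepsilon\phi_1$, where $\phi_1>0$ denotes the first Dirichlet eigenfunction of $-\Delta$: since $f'(0)<-\lambda_1$ and $f\in C^1$, for $\varepsilon>0$ small the pointwise quantity $-\Delta\underline u+f(\underline u)=\varepsilon\phi_1\bigl[\lambda_1+f(\varepsilon\phi_1)/(\varepsilon\phi_1)\bigr]$ is strictly negative. For the supersolution I take a constant $\overline u\equiv M$ with $f(M)\ge 0$; such $M$ exists because convexity of $f$, together with $f(0)=0$ and the hypothesis $f(s_0)>0$ for some $s_0>0$, forces $f$ to have a unique further zero $\alpha>0$ and to remain nonnegative on $[\alpha,+\infty)$. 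Monotone iteration between $\underline u$ and $\overline u$ produces a solution $\varphi\in C(\overline\Omega)\cap H^1_0(\Omega)$ with $0<\underline u\le\varphi\le\overline u$, smoothed further to $C^2(\Omega)$ by elliptic regularity. Uniqueness of positive solutions follows by a Brezis--Oswald type argument: convexity of $f$ with $f(0)=0$ makes $s\mapsto f(s)/s$ nondecreasing, and the Picone-like test with $\varphi_2^2/\varphi_1$ and $\varphi_1^2/\varphi_2$ forces $\varphi_1\equiv\varphi_2$.

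\emph{Strict positivity of the principal eigenvalue.} Let $\mu_1$ denote the first eigenvalue of the operator $v\mapsto -\Delta v+f'(\varphi)v$ on $H^1_0(\Omega)$, with positive eigenfunction $\psi_1$ given by Krein--Rutman. Multiplying $-\Delta\varphi+f(\varphi)=0$ by $\psi_1$, multiplying the eigenvalue equation by $\varphi$, and subtracting, I obtain $\mu_1\int_\Omega\varphi\psi_1\,dx=\int_\Omega h(\varphi)\psi_1\,dx$, where $h(s):=sf'(s)-f(s)$. Convexity of $f$ with $f(0)=0$ implies $h\ge 0$ on $\R^+$ (the tangent to $f$ at $s$ lies below $f(0)=0$), so $\mu_1\ge 0$. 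If $\mu_1=0$, then $h(\varphi(x))\equiv 0$; since $h(s)/s^2=(f(s)/s)'$, this forces $f(s)/s$ to be constant on the range $[0,\max\varphi]$, necessarily equal to $\lim_{s\to 0^+}f(s)/s=f'(0)$. Then $\varphi$ would satisfy $-\Delta\varphi=-f'(0)\varphi$ in $\Omega$ and be a positive eigenfunction of $-\Delta$ with eigenvalue $-f'(0)$, so $-f'(0)=\lambda_1$, contradicting $f'(0)<-\lambda_1$. Hence $\mu_1>0$ strictly.

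\emph{Exponential stability via Theorem \ref{thm3.1.2.}.} With $\mu_1>0$, the linear semigroup $T_\varphi(t)$ generated by $\Delta-f'(\varphi)I$ with Dirichlet boundary data satisfies $\|T_\varphi(t)\|_{\mathcal L(L^2)}\le e^{-\mu_1 t}$ by the spectral argument of Proposition \ref{prop2.2.1}. This is promoted to an estimate $\|T_\varphi(t)\|_{\mathcal L(X)}\le Me^{-\mu_1 t}$ on $X=C_0(\overline\Omega)$ by following the heat-kernel smoothing scheme of Theorem \ref{thm2.2.2} and absorbing the bounded zeroth-order term $f'(\varphi)\in L^\infty$ via Gronwall. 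Setting $w=u-\varphi$, the equation becomes $w_t=(\Delta-f'(\varphi)I)w-G(w)$ with $G(w):=f(\varphi+w)-f(\varphi)-f'(\varphi)w$; since $f\in C^1$, $G$ satisfies $\|G(w)\|_\infty\le\eta(R)\|w\|_\infty$ for $\|w\|_\infty\le R$ with $\eta(R)\to 0$ as $R\to 0$. Theorem \ref{thm3.1.2.} then delivers, for every $\gamma<\mu_1$, a radius $R_1$ and a constant $M$ such that $\|u(t)-\varphi\|_\infty\le M\|u(0)-\varphi\|_\infty e^{-\gamma t}$ whenever $\|u(0)-\varphi\|_\infty\le R_1$. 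The corresponding $H^1_0$ exponential closeness is obtained by applying the parabolic smoothing effect one unit of time later, using the already established $L^\infty$ decay.

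\emph{Main obstacle.} The crux is establishing $\mu_1>0$ strictly rather than merely $\mu_1\ge 0$; the argument exploits the strict gap $f'(0)<-\lambda_1$ through the rigidity statement ``$f(s)/s$ constant on $[0,\max\varphi]$ would make $\varphi$ a first eigenfunction of $-\Delta$'', and thereby avoids requiring any pointwise strict convexity of $f$.
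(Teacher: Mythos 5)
Your proof is correct, but it follows a genuinely different route from the paper's in two of its three steps. For existence, the paper does not use barriers at all: it extends $f$ oddly to $\R^-$, minimizes the energy $\int_\Omega\bigl[\frac12|\nabla z|^2+F(z)\bigr]dx$ over $H^1_0(\Omega)$, uses $f'(0)<-\lambda_1$ with the test function $\varepsilon\varphi_1$ to show the infimum $m$ is negative, and takes $|\varphi|$ of a minimizer; your sub/supersolution scheme $\varepsilon\phi_1\le\varphi\le M$ exploits the same inequality $f'(0)<-\lambda_1$ to make the subsolution strict, and both constructions are sound. For uniqueness and for the strict positivity of $\lambda_1(-\Delta+f'(\varphi)I)$, the paper argues pointwise: it sets $p=f(\varphi)/\varphi$, notes $\lambda_1(-\Delta+pI)=0$ with eigenfunction $\varphi$, and invokes \emph{strict} convexity to get $q>p$ and $f'(\varphi)>p$ everywhere, whence the first eigenvalues jump strictly. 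Your integral identity $\mu_1\int\varphi\psi_1=\int(sf'(s)-f(s))\big|_{s=\varphi}\psi_1$ plus the rigidity argument (equality forces $f(s)/s$ constant on the range of $\varphi$, making $\varphi$ a first Dirichlet eigenfunction and contradicting $f'(0)<-\lambda_1$) buys something real: it establishes $\mu_1>0$ under the hypothesis actually stated (mere convexity of $f$ on $\R^+$), whereas the paper's proof as written silently upgrades to strict convexity. Your treatment of the $C_0(\overline\Omega)$ semigroup bound (spectral $L^2$ decay promoted by ultracontractivity, then Theorem \ref{thm3.1.2.}) is also more explicit than the paper's, which stops at the eigenvalue statement. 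One small point to tighten: in the Picone/Brezis--Oswald uniqueness step, nonincreasingness of $-f(s)/s$ only yields $\varphi_1=c\varphi_2$ for some constant $c>0$; to force $c=1$ you need the same rigidity you deploy for $\mu_1>0$ (otherwise $f(s)/s$ would be constant on the range and $\varphi_2$ would be a first eigenfunction), so you should say this explicitly rather than asserting that the Picone test alone "forces $\varphi_1\equiv\varphi_2$".
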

 \begin{proof}[{\bf Proof.}] If $a\in l^{\infty}(\Omega)$ we
denote by $\lambda_1(  -
\Delta + aI ) $  the first eigenvalue of $ - \Delta + aI$
in the sense of $H^1_0 (\Omega)$. First of all if \eqref{Equ5.7} has a positive solution
$\varphi$ and we set
$$ p(x) = {f(\varphi(x))\over{\varphi(x)}}$$ we have obviously $$\lambda_1(  -
\Delta + pI ) = 0 $$ with eigenfunction equal to $\varphi$. Now if
$\psi$ is another positive solution, we introduce $$ q(x) =
{{f(\varphi(x))-f(\psi(x)) }\over{\varphi(x)-\psi(x)}} \quad\hbox
{if}\quad \varphi(x)\not=\psi(x)$$
$$q(x) =
f'(\varphi(x))\quad\hbox
{if}\quad \varphi(x)=\psi(x). $$  By strict convexity we have $$ q(x)>p(x)
 $$ everywhere in $\Omega$. In particular  $$\lambda_1(  -
\Delta + qI ) > 0 $$ On the other hand if $\varphi\not
\equiv\psi$, then $\varphi-\psi$ is an eigenfunction of $ ( -
\Delta + qI ) $ with eigenvalue 0. This contradiction means that
$\varphi\equiv\psi$ and therefore $\varphi$ is unique. In addition
since by strict convexity we have $$ f'(\varphi(x))>p(x)
 $$ everywhere in $\Omega$ we have in particular  $$\lambda_1(  -
\Delta + f'(\varphi(x))I ) > 0 $$ as soon as a positive solution
$\varphi$ exists. Therefore we have uniqueness and exponential stability of $\varphi$
as soon as it exists.\\
To prove the existence of $\varphi$, first we deduce from the hypotheses on $f$
that $$ \exists s_0>0, \quad f'(s)\geq f'(s_0)>0,\quad \forall s\geq s_0.   $$ In
particular
\begin{equation}\label{Equ5.8} \lim_{s\rightarrow +\infty} f(s) =\lim_{s\rightarrow +\infty} F(s) =
+\infty \end{equation}
 where $$ F(s)= \int_{0}^s f(\sigma) d\sigma.$$ Therefore $$
\inf_{s\geq 0 }F(s) = C > -\infty.$$ For the proof of existence, first we modify (if necessary) $f$ on ${\R^{-} }$ by setting $$ \forall s<0, \quad
f(-s) = -f(s)$$ And then $F$ is extended as the primitive of $f$. This means $$ \forall
s<0,
\quad F(-s) = F(s)$$ We introduce $$ m = \inf \{ \int_{\Omega}[{1\over 2}\vert\nabla
z\vert^2 + F(z)] dx, \quad z\in H_0^1(\Omega)\,\}\geq C
\vert\Omega\vert
\geq -\infty.$$ Since as $s\rightarrow 0$ we have $$ F(s) \sim -
f'(0){s^2\over 2}
$$ and $f'(0)<-\lambda_1(\Omega)$, by taking $ z = \varepsilon \varphi_1$ and letting
$\varepsilon \rightarrow 0$ we find $$ m<0$$ Since any minimizing sequence is bounded
in $ H_0^1(\Omega)$ and
$ F $ is convex up to a quadratic term, there exists, as a consequence of compactness
in
$ L^2(\Omega)$ and Fatou's Lemma, a function $\varphi \in  H_0^1(\Omega)$ such that $$
 \int_{\Omega}[{1\over 2}\vert\nabla\varphi\vert^2 + F(\varphi)] dx = m $$ Setting $
\psi = \vert\varphi\vert $ we also have, since $F$ is even:$$
 \int_{\Omega}[{1\over 2}\vert\nabla\psi\vert^2 + F(\psi)] dx = m $$ Because $m<0$, of
course $\psi\not = 0$. It is then classical to conclude that $\psi$ is a positive
solution of \eqref{Equ5.7}.
 \end{proof}

 \section{Linear instability and Bellman's approach} \label{Linear instability and Bellman's approach} In any finite dimensional real Hilbert space X , the hypothesis of Theorem \ref{Liapunov} is sharp. Actually if $ f = L$ is linear and has an eigenvalue $ s: = s_1 + i s_2$  with $s_1, s_2 $ real and $s_1\ge 0 $, let 
 $$ L(\varphi_1 + i \varphi_2) = s (\varphi_1 + i \varphi_2)$$ with 
 $\varphi_1 , \varphi_2$ real vectors and $(\varphi_1 , \varphi_2) \not= (0, 0)$.  Then the real vector-valued function $$ u(t) = e^{s_1 t} [ cos (s_2t) \varphi_1 - sint (s_2t) \varphi_2] $$ is a solution of \eqref{ODEFirstOrder} because the function $$z(t) = e^{s t}\varphi $$ is a solution of the extended equation of \eqref{ODEFirstOrder} on the complexification of $X$ and 
$L$ being a real endomorphism on $X$, $\overline {z(t)} =e^{\overline{s }t}\overline{\varphi } $ and $ u = {1\over
2}(\overline{z(t)}- z(t)) $ are solutions of the same equation. But now we observe that $$ u({k\pi\over {s_2}}) = (-1) ^k exp({ {{k\pi}  {s_1}}\over {s_2}})u(0) $$ and therefore $u$ cannot converge to anything at all as $t$ goes to infinity.  \medskip 

In the next paragraph we collect some instability results proved in \cite{MR1842097} in the Hilbert space framework. 

\subsection{The finite dimensional
case}
 Let $X$ be a finite dimensional normed
space, and $f\in C^1(X, X)$ a vector field on $X$. Let $a\in X$ be
such that $f(a) = 0$. By Liapunov's \index{Liapunov} theorem (Theorem \ref{Liapunov}), if all
eigenvalues of $Df(a)$ have negative real parts,  $a$  is an
asymptotically Liapunov stable \index{asymptotically stable} equilibrium \index{equilibrium point} solution of the
equation \begin{equation}\label{FirstOrderODE}  u'= f(u(t)), \quad     t \geq 0.\end{equation}
This result is sharp since in the opposite direction we
have
\begin{thm}\label{Theorem 5.1.1:} (R. Bellman, \cite{MR0061235}) Let $a\in X$ be such that $f(a) =
0$ and assume that at least one eigenvalue of $Df(a)$ has a
positive real part. Then  a is an unstable \index{unstable} equilibrium \index{equilibrium point} solution of
\eqref{FirstOrderODE}.
\end{thm}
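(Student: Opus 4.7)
The plan is to run a Chetaev-type indefinite-Lyapunov argument, in the spirit of Bellman \cite{MR0061235}. After translation, assume $a = 0$; write $f(u) = Au + g(u)$ with $A := Df(0)$ and $\Vert g(u)\Vert/\Vert u\Vert \to 0$ as $u \to 0$. Split $X = X_+ \oplus X_-$ into the real, $A$-invariant subspaces corresponding respectively to eigenvalues with $\operatorname{Re}\lambda > 0$ and $\operatorname{Re}\lambda \le 0$; the hypothesis gives $X_+ \neq \{0\}$. Denote by $P_\pm$ the associated projectors and by $A_\pm$ the restriction of $A$ to $X_\pm$.

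Using the Lyapunov matrix equation applied to $\alpha_1 I - A_+$ (which is stable for any $0 < \alpha_1$ smaller than the least positive real part of an eigenvalue of $A_+$) and to $A_- - \alpha_2 I$ (stable for any small $\alpha_2 > 0$), I would construct inner products $\langle\cdot,\cdot\rangle_\pm$ on $X_\pm$ together with constants $\beta_1 > \beta_2 > 0$ such that
$$\langle A_+ x, x\rangle_+ \;\ge\; \beta_1 \Vert x\Vert_+^2 \quad (x\in X_+),\qquad \langle A_- y, y\rangle_- \;\le\; \beta_2 \Vert y\Vert_-^2 \quad (y\in X_-).$$
I expect this to be the main technical obstacle: it is a quantitative refinement of the spectral splitting, which must extract from the Jordan decomposition a norm in which the positive-real-part eigenvalues give genuine coercivity while the other part is at most weakly expansive.

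Once those inner products are in place, introduce the Chetaev functional $V(u) := \Vert u_+\Vert_+^2 - \Vert u_-\Vert_-^2$ (with $u_\pm := P_\pm u$) and the open cone $\Omega := \{u : V(u) > 0\}$. Because $V(\rho v_+) = \rho^2 > 0$ for any $v_+ \in X_+$ of unit $\Vert\cdot\Vert_+$-norm and any $\rho > 0$, one has $0 \in \overline{\Omega}\setminus\Omega$. Differentiating $V$ along a trajectory of $u' = Au + g(u)$ yields
$$\tfrac{1}{2}\tfrac{d}{dt}V(u) = \langle A_+ u_+, u_+\rangle_+ - \langle A_- u_-, u_-\rangle_- + \langle P_+ g(u), u_+\rangle_+ - \langle P_- g(u), u_-\rangle_-,$$
and estimating the cross terms via Cauchy--Schwarz, the equivalence of norms on the finite-dimensional space $X$, and the bound $\Vert g(u)\Vert \le \varepsilon(r)\Vert u\Vert$ on $B(0,r)$ (with $\varepsilon(r)\to 0$), one obtains
$$\tfrac{d}{dt} V(u) \;\ge\; 2\beta\, V(u) \qquad \text{on } B(0,r)\cap\Omega,$$
for some fixed $\beta > 0$, provided $r$ is chosen small enough.

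For the conclusion, fix such an $r > 0$ and suppose, for contradiction, that $0$ is stable: then there is $\delta \in (0, r)$ with $\Vert u_0\Vert < \delta \Rightarrow \Vert u(t)\Vert < r$ for every $t \ge 0$. Choose $u_0 = \tfrac{\delta}{2} v_+$ with $v_+ \in X_+$ of unit norm; then $V(u_0) > 0$. As long as $u(t) \in \Omega$, the differential inequality gives $V(u(t)) \ge V(u_0) e^{2\beta t} > 0$, so $u(t)$ cannot leave $\Omega$ across the face $\{V=0\}$. Hence $u(t) \in \Omega \cap B(0,r)$ for all $t \ge 0$ and $V(u(t)) \to +\infty$, contradicting the boundedness of the continuous function $V$ on the compact set $\overline{B(0,r)}$. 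This contradiction shows that $0$ is unstable, completing the proof.
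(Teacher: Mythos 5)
Your proposal is correct and follows essentially the same route as the paper: both arguments rest on the indefinite Chetaev functional $V(u)=\Vert P_+u\Vert^2-\Vert P_-u\Vert^2$ computed in a norm adapted to the spectral splitting, the differential inequality $\frac{d}{dt}V\ge cV$ in a small ball, and the resulting contradiction with the assumed stability. The only difference is in how the coercivity estimates for $A$ on the two spectral subspaces are obtained: you invoke the Lyapunov matrix equation blockwise, while the paper rescales the Jordan basis so that the nilpotent part has norm of order $\eta/K$ and then works with the standard Hermitian inner product on $\C^N$; these are interchangeable implementations of the same step.
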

 \begin{proof}[{\bf Proof.}] Let $ \eta>0$ be the minimum of real
parts of the eigenvalues of $Df(a)$ having a positive real part
and choose an integer $K$ to be fixed later. The Jordan reduction
theorem implies in particular the existence of an upper triangular
matrix $T$ with zero diagonal terms and coefficients all equal to
0 or 1 such that
$$\frac{K}{\eta}M = D + T
$$ where $M$ is the matrix of $Df(a)$ in a certain basis of $X$ and $ D$ is a complex
diagonal matrix. Then $$  M = L  + R $$ where $L = \frac{\eta}{K}D $ is a diagonal
matrix and $ R = \frac{\eta}{K}T $ is a matrix with all coefficients having
moduli smaller than $\frac{\eta}{K}$. Let us identify $X$ with $H ={\C}^N $ with
the usual Hilbert norm and the associated real inner product. It is clear that the
coefficients of $L$, in other terms the diagonal coefficients of $M$, are in fact the
eigenvalues of in $Df(a)$. In addition under this identification we have $\displaystyle
\Vert R\Vert\leq \frac{\eta\dim X}{K}$. Let $P: H \longrightarrow H$ denote the projection
operator on
\begin{equation}\label{Equ5.1} Y: =\bigoplus_{ Re(\lambda)> 0} Ker(L-\lambda I). \end{equation}
 If $u$ is
any bounded solution of \eqref{FirstOrderODE}, for $t>0$ setting $u= a +v$ we have
$$\frac{d}{dt} (\vert Pv \vert^2-\vert (I-P)v \vert^2) = 2[( Pv, v') -
((I-P)v , v')]$$ $$= 2[( Pv, Lv + Rv +g(v)) -
((I-P)v , Lv + Rv +g(v))]$$  where $ g(v) = f(a+v)-f(a)- Df(a)v $ satisfies $$ g(v) =
o(v)$$Since
$L\leq 0$ on
$[Y]^\perp = (I-P)H$ we have:
$$ -((I-P)v ,Lv) = -(L(I-P)v
,(I-P)v)\geq 0 .$$
 On the other hand by definition of $\eta$ we have $$
\forall w\in Y, (Lw, w)\geq \eta\vert w \vert^2$$ In particular we find $$ 2(Pv
,Lv) = 2(LPv
,Pv)\geq 2\eta\vert Pv\vert^2.$$
 And therefore $$2[( Pv, Lv ) -
((I-P)v , Lv )]\geq 2\eta\vert Pv\vert^2.$$ On the other hand we have the easy
inequality $$ 2[( Pv,  Rv ) -
((I-P)v ,  Rv )]\geq -4\Vert R\Vert\vert v\vert^2\geq -4\frac{\eta\dim X}{ K}\vert
v\vert^2$$ and since $ g(v) =
o(v)$, there exists
$\varepsilon>0$ such that if $\vert v\vert \leq \varepsilon $, we have $$2( Pv,g(v)
) - 2 ((I-P)v, g(v))\geq -{\eta\over2}\vert v\vert^2 = -{\eta\over2} (\vert Pv\vert^2
+\vert (I-P)v\vert^2)$$ Choosing $K = 8\dim X$ and combining the above inequalities we
find
$${d\over{dt}} (\vert Pv \vert^2-\vert (I-P)v \vert^2) \geq \eta (\vert Pv\vert^2
-\vert (I-P)v\vert^2) $$ whenever $\vert v\vert \leq \varepsilon $. Now
assuming that a is \index{Liapunov stability} Liapunov-stable in $X$, let us select $ v(0) = v_0\in X $ such that
\begin{equation}\label{Equ5.3} \vert Pv_0\vert
>\vert (I-P)v_0\vert\end{equation} and $\vert v_0\vert $ small enough so that
\begin{equation}\label{Equ5.4}\forall
t\geq 0,
\vert v(t)\vert \leq \varepsilon. \end{equation} For instance, $ v_0 $ might be
any "small" vector of $Y$. As a consequence of the above computation  it follows that
\begin{equation}\label{Equ5.5}\forall t\geq 0,\quad
(\vert Pv(t) \vert^2-\vert (I-P)v \vert^2) \geq e^{\eta t}(\vert Pv_0\vert^2
-\vert (I-P)v_0\vert^2)). \end{equation}  This is clearly absurd since \eqref{Equ5.3}, \eqref{Equ5.4} and
\eqref{Equ5.5} are incompatible. The proof of Theorem \ref{Theorem 5.1.1:} is complete.
 \end{proof}
\subsection{An abstract instability result}\label{SectionInstability}
 The main
result of this Section is a natural infinite-dimensional extension
of Theorem \ref{Theorem 5.1.1:} to the special case of sel-adjoint linearized operator. 
\begin{thm}\label{Theorem 5.3.1:} Let $H $ be a real Hilbert space with
inner product and norm respectively denoted by $(\cdot,\cdot )$ and $\vert\cdot\vert $, $L$ a (possibly unbounded) self-adjoint \index{self-adjoint} operator such
that $$\exists c>0, \quad L+cI\geq 0.$$
\begin{equation}\label{Equ5.12}
(L+(c+1)I)^{-1}\quad\hbox {is compact}\end{equation}
$$\lambda_1(L): = \inf_{u\in H, u\not=0} {(Lu, u)\over\vert u\vert} < 0.$$
Assume that there exists a Banach space $X\subset H$ with continuous imbedding with
norm denoted by $\Vert . \Vert $ for which $ f: X\longrightarrow H$ is a locally
Lipschitz map with $f(0) = 0$ and such that
\begin{equation}\label{Equ5.14}\lim_{u\in X\setminus \{0\}, \,\Vert u \Vert \rightarrow 0} {\vert f( u)
\vert\over\vert u\vert} = 0.\end{equation}
 Then if $X$ contains all eigenvectors of
L, the stationary solution 0 of
\begin{equation}\label{Equ5.15} u'+ L(u)= f(u)\end{equation} is unstable \index{unstable} in
$X.$\end{thm}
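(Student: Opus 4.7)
The plan is to adapt Bellman's finite-dimensional argument (Theorem \ref{Theorem 5.1.1:}) to this infinite-dimensional setting, replacing the Jordan normal form decomposition with the spectral theorem for a self-adjoint operator with compact resolvent, and then controlling the nonlinear perturbation via hypothesis \eqref{Equ5.14}.

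First, since $L$ is self-adjoint on $H$, bounded below by $-c$, and $(L+(c+1)I)^{-1}$ is compact, classical spectral theory yields a discrete sequence of eigenvalues $\mu_1\le\mu_2\le\ldots$ tending to $+\infty$ with an orthonormal basis of eigenvectors. The hypothesis $\lambda_1(L)<0$ implies at least one $\mu_j$ is negative, and since the eigenvalues accumulate only at $+\infty$ there are only finitely many negative ones. Let
\[
Y := \bigoplus_{\mu_j<0} \ker(L-\mu_j I),
\]
a finite-dimensional subspace of $H$. By the hypothesis that $X$ contains every eigenvector of $L$, we have $Y\subset X$, and on $Y$ the norms of $X$ and $H$ are equivalent. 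Set $\eta := -\max\{\mu_j:\mu_j<0\}>0$, so that $(Lv,v)\le -\eta|v|^2$ for all $v\in Y$, while $(Lw,w)\ge 0$ for all $w\in Y^\perp$. Let $P$ be the orthogonal projection of $H$ onto $Y$; then $P$ commutes with $L$ and both $Y$ and $Y^\perp$ are $L$-invariant.

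Next, following Bellman, I would introduce the indefinite quadratic form
\[
V(u) := |Pu|^2 - |(I-P)u|^2
\]
and differentiate it along a solution of $u' = -Lu + f(u)$. Using $PL=LP$ together with the orthogonality of $Y$ and $Y^\perp$, one obtains
\[
\frac{d}{dt}V(u(t)) = -2(LPu,Pu) + 2(L(I-P)u,(I-P)u) + 2(Pu,f(u)) - 2((I-P)u,f(u)).
\]
The first term is bounded below by $2\eta|Pu|^2$, the second is nonnegative, and the nonlinear contribution is controlled in absolute value by $4|u|\,|f(u)|$. By hypothesis \eqref{Equ5.14}, one may choose $\varepsilon\in(0,\eta/4)$ and then $\delta>0$ such that $|f(u)|\le\varepsilon|u|$ whenever $\|u\|_X\le\delta$. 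With the choice $\gamma:=4\varepsilon$, an elementary rearrangement gives, as long as the trajectory remains in the $X$-ball of radius $\delta$,
\[
\frac{d}{dt}V(u(t)) \ \ge \ (2\eta-4\varepsilon)|Pu|^2 - 4\varepsilon|(I-P)u|^2 \ \ge \ \gamma\, V(u(t)).
\]

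To conclude, I would take initial data of the form $u_0=\alpha e$ where $e$ is a unit eigenvector associated to a negative eigenvalue, so that $u_0\in Y\subset X$, $\|u_0\|_X$ can be chosen arbitrarily small, and $V(u_0)=|u_0|^2>0$. If the corresponding trajectory remained in $\{\|u\|_X\le\delta\}$ for all $t\ge 0$, the differential inequality combined with the continuous embedding $X\hookrightarrow H$ (with constant $C$) would force
\[
V(u_0)\,e^{\gamma t} \ \le \ V(u(t)) \ \le \ |u(t)|^2 \ \le \ C^2\|u(t)\|_X^2 \ \le \ C^2\delta^2,
\]
a contradiction for $t$ large. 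Hence the trajectory must exit the $X$-ball of radius $\delta$ in finite time (or blow up beforehand), which is exactly instability of the equilibrium $0$ in $X$.

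The main technical obstacle I anticipate is the rigorous justification of differentiating $V$ along solutions of \eqref{Equ5.15}: mild solutions produced by the variation-of-constants formula need not lie in $D(L)$, so the formal computation above is not a priori licit. The standard remedy is to first establish the inequality on smooth initial data (e.g.\ $u_0\in D(L)\cap X$), or to replace $L$ by its Yosida regularizations $L_\lambda\in L(H)$ and pass to the limit, using continuity of the flow in $X$ near the equilibrium. A secondary book-keeping issue is verifying the uniform bound $|f(u)|\le\varepsilon|u|$ on the $X$-ball, but this is immediate from the very definition of the limit in \eqref{Equ5.14}.
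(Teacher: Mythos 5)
Your proposal is correct and follows essentially the same route as the paper: the same finite-dimensional negative spectral subspace, the same indefinite functional $|Pu|^2-|(I-P)u|^2$, the same differential inequality yielding exponential growth, and the same choice of small eigenvector initial data to contradict stability. The technical caveat you raise about justifying the differentiation of $V$ along mild solutions is real but is also passed over in the paper's own argument, so nothing essential is missing.
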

 \begin{proof}[{\bf Proof.}] Let $P: H \longrightarrow H$ denote the projection operator
on $$ H^-: =\bigoplus_{\lambda< 0} Ker(L-\lambda I).
$$ As a consequence of \eqref{Equ5.12} we know that $dim(H^-)<\infty. $ If $u$ is any bounded
solution of \eqref{Equ5.15}, for $t>0$ $u$ is differentiable with values in H and we have
\begin{eqnarray}
\nonumber & &{d\over{dt}} (\vert Pu \vert^2-\vert (I-P)u \vert^2)\\
\nonumber &= & 2[( Pu, u') - ((I-P)u , u')] \\
 \label{Equ5.16} & = &  2[( Pu,f(u) -Lu) + 2((I-P)u ,Lu-f(u))].
\end{eqnarray}
Since $L\geq 0$ on $[H^-]^\perp = (I-P)H$ we have:
\begin{equation}\label{Equ5.17}((I-P)u ,Lu) = (L(I-P)u
,(I-P)u)\geq 0.\end{equation}
 On the other hand by \eqref{Equ5.12} we know that $$\exists \eta>0,
\forall w\in H^-, (-Lw, w)\geq \eta\vert w \vert^2$$ In particular we find
\begin{equation}\label{Equ5.18} 2(Pu,-Lu) = 2(-LPu,Pu)\geq 2\eta\vert Pu\vert^2.\end{equation}
 As a consequence of \eqref{Equ5.14}, there exists
$\varepsilon>0$ such that if $\Vert u\Vert \leq \varepsilon $, we have
\begin{equation}\label{Equ5.19}
2(Pu,f(u)) + 2 ((I-P)u, f(u))\geq -\eta\vert u\vert^2 = -\eta (\vert Pu\vert^2 +\vert
(I-P)u\vert^2).\end{equation}
Combining \eqref{Equ5.16}, \eqref{Equ5.17}, \eqref{Equ5.18} and \eqref{Equ5.19} we find
\begin{equation}\label{Equ5.20}{d\over{dt}} (\vert Pu \vert^2-\vert (I-P)u \vert^2) \geq \eta (\vert Pu\vert^2
-\vert (I-P)u\vert^2)\end{equation} whenever $\Vert u\Vert \leq \varepsilon $. Now
assuming that 0 is \index{Liapunov stability} Liapunov-stable in $X$, let us select $ u(0) = u_0\in X $ such that
$$ \vert Pu_0\vert
>\vert (I-P)u_0\vert$$ and $\Vert u_0\Vert $ small enough so that
\begin{equation}\label{Equ5.21}\forall t\geq 0,
\Vert u(t)\Vert \leq \varepsilon.\end{equation}
As a consequence of \eqref{Equ5.20} we obtain as previously \eqref {Equ5.5}, clearly incompatible with \eqref{Equ5.21}. Consequently if $X$ contains all eigenvectors of $L$, the choice $$u_0
= \eta \varphi;\quad  L \varphi= \lambda \varphi, \quad  \lambda < 0\quad
\vert \eta \vert \Vert \varphi \Vert \rightarrow  0
$$ shows by contradiction that 0 is not Liapunov-stable in $V$. The proof of Theorem \ref{Theorem 5.3.1:} is complete.
 \end{proof}
\begin{rem} {\rm  One might wander why the condition $\displaystyle {\vert f( u)
\vert\over\vert u\vert} \rightarrow 0 $ is required as  $u \rightarrow 0 $ in the
sense of $X$ instead of $H.$ Let us consider the example $H = L^2(\Omega)$ where
$\Omega$ is a bounded open subset of $\R^N $ and $$ \exists g\in C^1\cap W^{1,
\infty}(\R): \forall u\in L^2(\Omega),\,   (f(u))(x) = g(u(x))  \hbox{ a.e. in }
\Omega.$$ In this case, the condition $${\vert f( u)
\vert\over\vert u\vert} \rightarrow 0 \hbox{ as } \vert u\vert\rightarrow 0 $$implies
$f\equiv 0 .$ Indeed if $ f(0)  = 0 $ and $ f(c) \not = 0 $ we can consider $
u_{\omega} = c
\chi_\omega $ with $\omega$ an arbitrary open subset of $\Omega$, so that $$
\vert u_{\omega}\vert = \vert c\vert\vert {\omega}\vert^{1\over 2};\quad \vert
f(u_{\omega})\vert = \vert f(c)\vert\vert {\omega}\vert^{1\over 2}$$If $\vert
\omega\vert\rightarrow 0 $ we have by construction $\vert u_{\omega}\vert\rightarrow
0 $ and therefore $$ \liminf_{\vert u\vert\rightarrow 0}{\vert f( u)
\vert\over\vert u\vert} \geq {\vert f(c)
\vert\over\vert c\vert} > 0.$$ On the other hand if $X\subset L^{\infty} $ with
continuous imbedding, the condition $$ \liminf_{\Vert u\Vert\rightarrow 0}{\vert f( u)
\vert\over\vert u\vert} =  0 $$ is equivalent to the natural assumption
$\displaystyle\lim_{s\rightarrow 0}{\vert g(s)\vert\over\vert s\vert} =  0 $.
}\end{rem}
\begin{rem} {\rm The instability result in $X$ is only of interest when the
existence of at least local (and preferably global)\index{global solution} solutions for small initial data
in $X$ is fulfilled. Otherwise Theorem \ref{Theorem 5.3.1:} might just mean failure of existence in
$X$.
}\end{rem}
\begin{rem} {\rm The proof of Theorem \ref{Theorem 5.3.1:} actually implies a
stronger instability property, namely $$ \exists \varphi \hbox { eigenvector of } L
,\quad
\exists
\varepsilon _n \rightarrow 0: \sup_{t\geq 0}\Vert u_n(t) \Vert \geq \alpha >0$$ where
$u_n$ is the sequence of solutions of \eqref{Equ5.15} such that $u_n(0) = \varepsilon_n
\varphi .$ This appears much stronger since $\varepsilon_n\varphi $ tends to zero in
any reasonable norm while the norm of $X$ just needs to fulfill \eqref{Equ5.14}.
}\end{rem}
\subsection{Application to the one-dimensional heat equation}
Consider the  one - dimensional semilinear heat equation \index{heat equation}
\begin{equation}\label{Equ5.23}  u_t -
u_{xx} + f(u ) = 0 \ \hbox{ in }\  {\R^+}\times (0, L) ;  \  u(t, 0) = u(t, L) = 0 \hbox{ on }
\  {\R^+}\end{equation}
 where $f$ is  a $C^1$ function: $ \R\rightarrow\R .$ Any solution u of this problem which is global \index{global solution} and uniformly bounded on
${\R^+}\times (0, L)$ converges as $ t\rightarrow +\infty$ to a  solution $\varphi$ of
the elliptic problem
\begin{equation}\label{Equ5.24}\varphi \in H^1_0 (0, L), \ \ - \varphi_{xx} + f(\varphi) = 0. \end{equation}
\begin{prop}\label{Proposition 5.4.1} If $\varphi$ is a  solution of \eqref{Equ5.24}
which is stable \index{stable} as a
solution of \eqref{Equ5.23}, then
$\varphi$ has a constant sign on $(0, L)=: \Omega.$\end{prop}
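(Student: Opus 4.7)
The plan is to prove the contrapositive via Theorem \ref{Theorem 5.3.1:}: assuming $\varphi$ changes sign on $(0,L)$, I will show that the linearized operator $L_\varphi := -\partial_{xx} + f'(\varphi)I$ (with Dirichlet boundary conditions on $H=L^2(0,L)$) has a strictly negative first eigenvalue, which forces $\varphi$ to be unstable. Writing $u=\varphi+v$ puts \eqref{Equ5.23} in the form $v_t + L_\varphi v = -g(v)$ with $g(v):=f(\varphi+v)-f(\varphi)-f'(\varphi)v$. Taking $X=C_0([0,L])$ and $H=L^2(0,L)$, the operator $L_\varphi$ is self-adjoint, bounded below, has compact resolvent, and its eigenfunctions are smooth and therefore lie in $X$; moreover $g:X\to H$ is locally Lipschitz and satisfies \eqref{Equ5.14} thanks to the $C^1$ regularity of $f$ together with the embedding $X\hookrightarrow L^\infty$. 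Thus the whole statement reduces to proving $\lambda_1(L_\varphi)<0$.

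The key idea is to exploit $\psi:=\varphi'$, which formally solves $L_\varphi\psi=0$ (obtained by differentiating \eqref{Equ5.24}) but fails to lie in $H^1_0(0,L)$. If $\varphi$ vanishes at some interior point $x_0\in(0,L)$, then $\varphi$ has at least two nodal intervals, and in each of them $\varphi$ admits an interior critical point. I will pick such critical points $y_1<x_0<y_2$ in two adjacent nodal intervals and use the cutoff
\begin{equation*}
w := \varphi'\,\mathbf{1}_{[y_1,y_2]}.
\end{equation*}
Since $\varphi'(y_1)=\varphi'(y_2)=0$, this $w$ belongs to $H^1_0(0,L)$. A double integration by parts using the identity $\varphi'''=f'(\varphi)\varphi'$ collapses the numerator of the Rayleigh quotient to the boundary term $[\varphi'\varphi'']_{y_1}^{y_2}=0$, giving
\begin{equation*}
\int_0^L (w_x^2 + f'(\varphi) w^2)\,dx = 0,
\end{equation*}
hence $\lambda_1(L_\varphi)\leq 0$ by the variational characterization.

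To upgrade to strict inequality I would argue that equality would force $w$ to be a first eigenfunction of $L_\varphi$, which is impossible because the first Dirichlet eigenfunction is strictly positive on $(0,L)$ whereas $w$ vanishes identically on $(0,y_1)$. Therefore $\lambda_1(L_\varphi)<0$ and Theorem \ref{Theorem 5.3.1:} finishes the proof. The main obstacle is locating the right test function: the clever choice is to truncate $\varphi'$ between two of its consecutive zeros lying in adjacent nodal intervals of $\varphi$, which makes the boundary terms vanish and lets the integration by parts close cleanly; everything else is standard linear spectral theory and routine perturbation analysis.
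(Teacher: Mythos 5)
Your proof is correct and follows essentially the same route as the paper's: both reduce the claim to showing $\lambda_1\bigl(-\partial_{xx}+f'(\varphi)I\bigr)<0$ and then invoke the abstract linearized instability theorem, with $\lambda_1\le 0$ obtained by testing the quadratic form against $\varphi'$ truncated between two of its zeros so that the boundary terms vanish. The only (immaterial) differences are that the paper truncates between two consecutive zeros of $\varphi'$ and excludes the case $\lambda_1=0$ by noting the minimizer would be a classical solution of the linearized ODE vanishing on an open interval, whereas you truncate between critical points chosen in adjacent nodal intervals of $\varphi$ and rule out equality via positivity of the first Dirichlet eigenfunction.
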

 \begin{proof}[{\bf Proof.}] Indeed, if $\varphi$ is not identically  0  and vanishes somewhere in
$(0, L)$, the function  $w := \varphi'$ has two zeroes in $(0, L)$ and satisfies
$$w \in C^2 \cap H^1_0 (0, L), \ \ - w_{xx} + f'(\varphi)w = 0  \ \hbox{ in }
\ (0, L).$$
In particular if $ 0 < \alpha <\beta < L$  are such that $w(\alpha) = w(\beta) = 0 ,
\ w
\not= 0$ on $(\alpha,\beta )$ and if we set $\omega = (\alpha,\beta)$, we clearly
have $\lambda_1(\omega;  - \Delta + f'(\varphi)I ) = 0$ where $\lambda_1(\omega;  -
\Delta + f'(\varphi)I ) $ denotes the first eigenvalue of $ - \Delta + f'(\varphi)I$
in the sense of $H^1_0 (\omega)$. We introduce the quadratic form J and the real
number $\eta$ defined by        $$\forall z\in H_0^1(\Omega),  J(z) : =  \int_{\Omega}
\{\vert  z_x \vert ^2 + f'(\varphi)z^2 \} dx $$
$$ \eta  =  Inf\  \{ J(z), z\in
H_0^1(\Omega), \int_\Omega z^2 dx = 1\}$$ Let us also denote by  $\zeta$ the extension
of w by 0 outside $\omega$. Because $$J(\zeta) = \int_{\Omega}
\{\vert  \zeta_x \vert ^2 + f'(\varphi)\zeta^2 \} dx = \int_{\omega}
\{\vert  \zeta_x \vert ^2 + f'(\varphi)\zeta^2 \} dx = \int_{\omega}
\{\vert  z_x \vert ^2 + f'(\varphi)z^2 \} dx = 0,$$ we clearly have
$$ \eta =\lambda_1(\Omega;  -\Delta + f'(\varphi)I )  \leq 0.$$
Assuming $\eta  =  0$ means that a multiple $\lambda\zeta = \psi $ of $\zeta$ realizes
the  minimum of $J$ and therefore is a solution of $$\psi\in C^2([0, L])\cap H_0^1 (0,
L), -
\psi_{xx} + f'(\varphi)\psi = 0.$$ This is impossible since $\psi $ is not identically
0 and however vanishes throughout $(0, \alpha)$ for instance. Therefore $ \eta <  0$, and $\varphi$ is \index{unstable} unstable.
 \end{proof}

\section{Other infinite-dimensional systems} 
The following generalization of  theorems \ref{Theorem 5.1.1:} and \ref{Theorem 5.3.1:} is not difficult.

\begin{thm}\label{Theorem5.5.1:} Let
$H
$ be a real Hilbert space with inner product and norm respectively denoted by $(.,. )$
and
$\vert .\vert
$,
$L$ a (possibly unbounded)linear operator such that
$$\exists c>0, \quad L+cI\geq
0$$
 $$ R(L+(c+1)I)= H.$$ Assume in addition that we have a
decomposition $H = X
\oplus Y $   with  $dim (X) < \infty$ and $$   X\subset D(L),\, LX\subset X; \quad Y =
X^{\perp}, \, L(Y\cap D(L))\subset Y,\,   L\geq 0
\hbox { on } Y. $$
Let $ f: H\longrightarrow H$ be a locally Lipschitz map such that $f(0) = 0$ and such
that there exists a Banach space $V\subset H$ with continuous imbedding with norm
denoted by $\Vert . \Vert $ for which
$$\lim_{u\in V\setminus \{0\}, \,\Vert u \Vert \rightarrow 0} {\vert f( u)
\vert\over\vert u\vert} = 0.$$
 Then if $V$ contains all eigenvectors of
$L$, the stationary solution 0 of
$$u'+ Lu = f(u)$$
 is \index{unstable} unstable in $V$ as soon as $ L $ has at least one eigenvalue with negative real part and eigenvector in  $ X$.\end{thm}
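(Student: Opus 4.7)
The plan is to combine the Bellman-style Jordan reduction of Theorem \ref{Theorem 5.1.1:} with the orthogonal splitting argument of Theorem \ref{Theorem 5.3.1:}. First I would decompose the finite-dimensional $L$-invariant subspace $X$ into $X = X^- \oplus X^+$, where $X^-$ is the sum of real generalized eigenspaces of $L|_X$ associated to complex eigenvalues with strictly negative real part (nontrivial by hypothesis) and $X^+$ is the complementary $L$-invariant subspace. Together with $Y = X^{\perp}$ this produces an $L$-invariant splitting $H = X^- \oplus X^+ \oplus Y$.

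The key technical step is to equip $H$ with an equivalent Hilbert norm $|\cdot|_1$ for which the three pieces $X^-, X^+, Y$ are pairwise orthogonal, the restriction of $(\cdot,\cdot)_1$ to $Y$ coincides with the original inner product (so that the hypothesis $L \geq 0$ on $Y$ is preserved), and the Jordan-reduction-with-rescaling construction from the proof of Theorem \ref{Theorem 5.1.1:} — applied to $L|_{X^-}$ and $L|_{X^+}$ using real $2 \times 2$ blocks for complex-conjugate eigenvalue pairs — yields, for any prescribed $\varepsilon > 0$, a constant $\eta > 0$ with
\begin{equation*}
(-Lu, u)_1 \geq \eta |u|_1^2 \quad \forall u \in X^-, \qquad (Lu, u)_1 \geq -\varepsilon |u|_1^2 \quad \forall u \in X^+.
\end{equation*}
Here $\eta$ may be chosen strictly less than $\min\{-\mathrm{Re}\,\lambda\}$ over the "bad" eigenvalues of $L|_X$, and $\varepsilon$ as small as desired; since $X$ is finite-dimensional, $|\cdot|_1$ is automatically equivalent to the original norm on $X$.

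With this setup the argument proceeds along the lines of Theorem \ref{Theorem 5.3.1:}. Let $P$ denote the $(\cdot,\cdot)_1$-orthogonal projection onto $X^-$. For any solution $u$ of $u' = -Lu + f(u)$, the $L$-invariance of each summand combined with orthogonality yields
\begin{equation*}
\tfrac{d}{dt}\bigl(|Pu|_1^2 - |(I-P)u|_1^2\bigr) = -2(LPu, Pu)_1 + 2(L(I-P)u, (I-P)u)_1 + 2\bigl(Pu - (I-P)u,\, f(u)\bigr)_1.
\end{equation*}
The last inner product is controlled, for $\|u\|_V$ small enough, by $\delta |u|_1^2$ with $\delta$ arbitrarily small, using the continuous imbedding $V \hookrightarrow H$ and the hypothesis $|f(u)|/|u| \to 0$ as $\|u\|_V \to 0$. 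Combining this with the coercivity estimates above, the nonnegativity of $L$ on $Y$, and choosing $\varepsilon$ and $\delta$ small enough, one obtains
\begin{equation*}
\tfrac{d}{dt}\bigl(|Pu|_1^2 - |(I-P)u|_1^2\bigr) \geq \gamma \bigl(|Pu|_1^2 - |(I-P)u|_1^2\bigr)
\end{equation*}
for some $\gamma > 0$, valid as long as $\|u(t)\|_V$ stays below a fixed threshold.

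To contradict the stability of $0$ in $V$, I would pick $u_n(0) = \delta_n \varphi$ with $\delta_n \to 0^+$ and $\varphi \in X^-$ a (real component of a) genuine eigenvector of $L$ associated to an eigenvalue of negative real part; by hypothesis $\varphi \in V$. Then $(I-P)u_n(0) = 0$, so $|Pu_n(0)|_1^2 - |(I-P)u_n(0)|_1^2 > 0$, and if $\|u_n(t)\|_V$ remained small on $[0,\infty)$ the differential inequality above would force exponential growth of $|Pu_n(t)|_1^2 - |(I-P)u_n(t)|_1^2$, incompatible with any $H$-bound on $u_n$. The main obstacle is the second step above: producing an equivalent inner product under which $L$ is simultaneously coercively negative on $X^-$ and nearly nonnegative on $X^+$. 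This requires a careful real Jordan form analysis (in particular for complex-conjugate eigenvalue pairs), but it is the same rescaling mechanism already used in the proof of Theorem \ref{Theorem 5.1.1:}, simply applied block by block.
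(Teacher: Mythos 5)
Your proof is correct and is exactly the argument the authors intend: the paper states Theorem \ref{Theorem5.5.1:} without proof, presenting it as a routine generalization of Theorems \ref{Theorem 5.1.1:} and \ref{Theorem 5.3.1:}, and your construction — a rescaled real-Jordan inner product on the finite-dimensional $L$-invariant part $X$ making $L$ coercively negative on $X^-$ and $\varepsilon$-almost nonnegative on $X^+$, glued orthogonally to the original inner product on $Y$ where $L\ge 0$, followed by the Liapunov functional $|Pu|_1^2-|(I-P)u|_1^2$ — supplies precisely the missing details. The few points you leave implicit (regularity of the trajectory so that the differential inequality holds, and reading ``eigenvector in $V$'' as real and imaginary parts when the eigenvalue is complex) are treated with the same level of informality in the paper's own proofs of the two cited theorems.
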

 As a typical application of Theorem \ref{Theorem5.5.1:} we can consider the abstract second order evolution equation
\begin{equation}\label{Equ5.30} u''+ u'+ Au =f(u)\end{equation}
 where $A$ is a  self-adjoint \index{self-adjoint} operator with compact resolvant on a
real Hilbert space $H$ such that
$A+mI\geq 0
$ for some $m\geq 0$ . Introducing
$ V= D((A+(m+1)I)^{1\over 2})$ we can set $${\cal H}= V\times H,\quad D(L) = D(A)\times
V
$$ and $$\forall U = (u, v)\in D(L),\quad L(u, v) = (-v, Au+v)$$
Then \eqref{Equ5.30} takes the
form
$$ U'+ LU = F(u) = (0, f(u))$$ By considering $\{\lambda_n\}_{n\in {\bf N}}$  the
nondecreasing sequence of eigenvalues of $A$ eventually repeated according to their
multiplicity and  observing that $${\cal H}= V\times H = {\overline {\bigoplus_{n\in {\bf N}}
[(A-
\lambda_n)^{-1}(0)]^2 }} $$ it is not difficult to check the hypotheses of Theorem \ref{Theorem5.5.1:}  Hence we can state
\begin{cor}\label{Corollary 5.5.2} Under the above conditions, if $A$ has a negative
eigenvalue, and if $f, W $ are such that $$\lim_{u\in W\setminus \{0\}, \,\Vert u
\Vert_W \rightarrow 0} {\vert f( u)
\vert\over\vert u\vert} = 0$$ the solution (0, 0) is unstable \index{unstable} in the sense of ${\cal
V}:= W\times H$ as a solution of \eqref{Equ5.30}.
\end{cor}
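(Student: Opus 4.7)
The plan is to apply Theorem \ref{Theorem5.5.1:} to the first-order reformulation $U' + LU = F(U)$ with $U = (u, u')$ on $\mathcal{H} = V \times H$, where $L(u,v) = (-v, Au + v)$ and $F(u,v) = (0, f(u))$. Since $A$ has compact resolvent and satisfies $A + mI \geq 0$, its spectrum is an eigenvalue sequence $\lambda_1 \leq \lambda_2 \leq \cdots \to +\infty$ with orthonormal eigenbasis $\{\varphi_n\}$, and the number $N$ of non-positive eigenvalues is finite; by hypothesis $N \geq 1$. Each 2-dimensional block $\mathcal{H}_n := [\ker(A - \lambda_n I)]^2$ is $L$-invariant, so I take
\[\mathcal{X} = \bigoplus_{n \leq N} \mathcal{H}_n, \qquad \mathcal{Y} = \overline{\bigoplus_{n > N} \mathcal{H}_n},\]
with $\dim \mathcal{X} < \infty$ and $L\mathcal{X} \subset \mathcal{X}$, $L(\mathcal{Y} \cap D(L)) \subset \mathcal{Y}$.

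The crucial step is to equip $\mathcal{Y}$ with the ``energy'' inner product $((u_1, v_1), (u_2, v_2))_\mathcal{Y} := (A^{1/2} u_1, A^{1/2} u_2)_H + (v_1, v_2)_H$, which is positive definite on $\mathcal{Y}$ (since $A \geq \lambda_{N+1} I > 0$ there) and equivalent to the restriction of the standard $V \times H$ inner product. Self-adjointness of $A$ yields, for $(u, v) \in D(L) \cap \mathcal{Y}$,
\[(L(u,v), (u, v))_\mathcal{Y} = -(A^{1/2} v, A^{1/2} u)_H + (Au + v, v)_H = -(Au, v)_H + (Au, v)_H + |v|_H^2 = |v|_H^2 \geq 0,\]
so $L \geq 0$ on $\mathcal{Y} \cap D(L)$. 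I equip $\mathcal{X}$ with any positive definite inner product and declare $\mathcal{X} \perp \mathcal{Y}$, producing a Hilbert structure on $\mathcal{H}$ equivalent to the original. Since $L$ is bounded on the finite-dimensional $\mathcal{X}$, $L + cI \geq 0$ follows for $c$ sufficiently large. The range condition $R(L + (c+1)I) = \mathcal{H}$ reduces, for each $(f_1, f_2) \in \mathcal{H}$, to the system $v = (c+1)u - f_1$, $[A + (c+1)(c+2)I] u = f_2 + (c+2)f_1$; the latter is solvable by the spectral theorem as soon as $(c+1)(c+2) > m$.

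For the tangency hypothesis on $F$, one has $|F(u,v)|_\mathcal{H} = |f(u)|_H$ and $|(u,v)|_\mathcal{H} \geq \kappa |u|_H$ by continuity of $V \hookrightarrow H$, whence $|F(U)|_\mathcal{H}/|U|_\mathcal{H} \leq \kappa^{-1} |f(u)|_H/|u|_H \to 0$ as $\|U\|_{W \times H} \to 0$. Finally, a negative eigenvalue $\lambda_n$ of $A$ produces, via the block restriction of $L$ to $\mathcal{H}_n \subset \mathcal{X}$, the characteristic equation $\mu^2 - \mu + \lambda_n = 0$ and a negative root $\mu_- = \tfrac{1}{2}\bigl(1 - \sqrt{1 - 4\lambda_n}\bigr)$ with eigenvector $(\varphi_n, -\mu_- \varphi_n)$; this lies in $\mathcal{V} = W \times H \cap \mathcal{X}$ under the (implicit) standing assumption that $W$ contains all eigenvectors of $A$, which is in any case necessary for the tangency condition on $W$ to apply to the relevant directions. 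All hypotheses of Theorem \ref{Theorem5.5.1:} being met, the zero solution of \eqref{Equ5.30} is unstable in $\mathcal{V}$.

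The main obstacle is the choice of inner product on $\mathcal{Y}$: the naive standard $V \times H$ inner product, with its term $((A + c_0 I)^{1/2} u_1, (A + c_0 I)^{1/2} u_2)_H$, produces a cross term $-c_0 (u, v)_H$ in $(L(u,v),(u,v))$ that is indefinite on each block $\mathcal{H}_n$, so one cannot conclude $L \geq 0$ on $\mathcal{Y}$ that way. The cancellation $-(Av, u)_H + (Au, v)_H = 0$ afforded by self-adjointness of $A$ is what selects the energy inner product $(A^{1/2} \cdot, A^{1/2} \cdot) + (\cdot, \cdot)$ as the correct one, and this is the conceptual heart of the reduction to Theorem \ref{Theorem5.5.1:}.
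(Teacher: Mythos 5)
Your proof is correct and follows exactly the route the paper sketches: decompose $\mathcal{H}=V\times H$ into the $L$-invariant blocks $[\ker(A-\lambda_n I)]^2$, put the finitely many blocks with $\lambda_n\le 0$ into the finite-dimensional part, and verify the hypotheses of Theorem \ref{Theorem5.5.1:} (the paper leaves this verification as ``not difficult to check''). Your choice of the energy inner product on $\mathcal{Y}$ to obtain $L\ge 0$ there, the solvability of $R(L+(c+1)I)=\mathcal{H}$ via the spectral theorem, and your flagging of the implicit requirement that $W$ contain the unstable eigenvector of $A$ supply precisely the details the paper omits, and they are all accurate.
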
 \noindent By the same method as in section \ref{SectionInstability}, we deduce easily the following
consequences of Corollary \ref{Corollary 5.5.2} :
\begin{cor}\label{Corollary 5.5.3} Let $\Omega $ be as in the introduction, $f\in C^1(\R)$ and $\varphi \in C(\overline \Omega)\cap
H^1_0(\Omega)$  a solution of the elliptic problem   $$ -\Delta
\varphi +f(\varphi) = 0  \ in \
\Omega; \
\ \varphi = 0\
\ \ on \ \ \partial \Omega$$  such that $$ \lambda_1(-\Delta +
f'(\varphi)I)< 0 $$ then $(\varphi, 0)$ is unstable \index{unstable} in $ [C(\overline
\Omega)\cap H^1_0(\Omega)]\times L^2(\Omega)$ as a  solution of the hyperbolic problem
$$ u_{tt}+u_t -
\Delta u  + f(u) = 0 \ \hbox{  in}\ \ \R^+\times\Omega; \ \ \ u = 0\ \ \hbox{ on } \ \R^+\times
\partial \Omega.$$
\end{cor}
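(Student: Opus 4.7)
The plan is to reduce the statement to Corollary \ref{Corollary 5.5.2} by linearizing the hyperbolic equation around the stationary solution $\varphi$. Setting $v = u-\varphi$ and using $-\Delta\varphi + f(\varphi)=0$, the equation transforms into
\begin{equation*}
v_{tt} + v_t - \Delta v + f'(\varphi)\,v = g(v), \qquad g(v) := -\bigl[f(\varphi+v) - f(\varphi) - f'(\varphi)\,v\bigr],
\end{equation*}
so instability of $(\varphi,0)$ for the original problem is equivalent to instability of $(0,0)$ for this one. I would then take $H = L^2(\Omega)$ and $A = -\Delta + f'(\varphi)I$, viewed as a self-adjoint operator on $H$ with form domain $H^1_0(\Omega)$. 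Since $\varphi\in C(\overline\Omega)$ and $f\in C^1$, the potential $p(x):=f'(\varphi(x))$ lies in $L^\infty(\Omega)$, so $A$ is a bounded symmetric perturbation of $-\Delta$ in the form sense; it is bounded below (say $A+cI\ge 0$ with $c=\|p\|_\infty$), and its resolvent is compact because the embedding $H^1_0(\Omega)\hookrightarrow L^2(\Omega)$ is compact. The hypothesis $\lambda_1(-\Delta + f'(\varphi)I)<0$ gives exactly the negative eigenvalue of $A$ required by Corollary \ref{Corollary 5.5.2}.

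Next I would check the nonlinear smallness condition in the space $W := C(\overline\Omega)\cap H^1_0(\Omega)$ (with its natural norm $\|w\|_W = \|w\|_\infty + \|\nabla w\|_{L^2}$). For $\|v\|_W \le 1$ one has $\varphi+v$ bounded in $L^\infty$ by some constant $R$ depending only on $\|\varphi\|_\infty$. On the compact set $[-R,R]$ the derivative $f'$ is uniformly continuous, so Taylor's formula yields pointwise
\begin{equation*}
|g(v)(x)| \le \omega(\|v\|_\infty)\,|v(x)|,
\end{equation*}
where $\omega(r)\to 0$ as $r\to 0^+$. Integrating gives $\|g(v)\|_{L^2} \le \omega(\|v\|_W)\,\|v\|_{L^2}$, hence $|g(v)|/|v|\to 0$ as $\|v\|_W\to 0$, which is the hypothesis needed in Corollary \ref{Corollary 5.5.2}. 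Local Lipschitz continuity of $g: W \to L^2$ follows from the same reasoning since $f$ is $C^1$ and thus Lipschitz on bounded intervals.

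Finally, I would verify that $W$ contains all eigenvectors of $A$. Each eigenfunction $\psi$ solves $-\Delta\psi + p(x)\psi = \mu\psi$ in $\Omega$ with $\psi\in H^1_0(\Omega)$ and $p\in L^\infty$; by the classical Stampacchia/De~Giorgi--Moser bootstrap (valid under the Lipschitz regularity assumed on $\partial\Omega$) we obtain $\psi\in L^\infty(\Omega)\cap C(\overline\Omega)$, hence $\psi\in W$. All assumptions of Corollary \ref{Corollary 5.5.2} are therefore met, and it yields instability of $(0,0)$ in $W\times L^2(\Omega)$, i.e.\ of $(\varphi,0)$ in $[C(\overline\Omega)\cap H^1_0(\Omega)]\times L^2(\Omega)$ for the original hyperbolic problem.

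The main obstacle is the last step: promoting $H^1_0$ eigenfunctions of a Schrödinger-type operator with merely $L^\infty$ potential up to $C(\overline\Omega)$-regularity in a Lipschitz domain. This is not automatic and relies on a nontrivial elliptic regularity result; one could alternatively weaken $W$ to any space with continuous embedding into $L^\infty$ that still contains these eigenfunctions (for example $L^\infty(\Omega)\cap H^1_0(\Omega)$), which is the formulation that actually matters for the application.
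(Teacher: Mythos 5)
Your proof is correct and follows exactly the route the paper intends: the paper offers no written argument for this corollary beyond deducing it ``by the same method as in Section \ref{SectionInstability}'' from Corollary \ref{Corollary 5.5.2}, and your reduction (translation $v=u-\varphi$, the operator $A=-\Delta+f'(\varphi)I$ with $L^\infty$ potential and compact resolvent, tangency of the remainder $g(v)$ measured in the $L^\infty$-based norm of $W=C(\overline\Omega)\cap H^1_0(\Omega)$, and membership of the eigenfunctions in $W$) is precisely that deduction with the details filled in. Your closing caveat about promoting eigenfunctions to $C(\overline\Omega)$ in a Lipschitz domain concerns a point the paper leaves entirely implicit, and your fallback choice $L^\infty(\Omega)\cap H^1_0(\Omega)$ is consistent with the paper's own discussion in Section \ref{RemarkLiapunov} of why an $L^\infty$-type norm is the right one for the smallness condition.
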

\begin{cor}\label{Corollary 5.5.4} Consider the  one - dimensional semilinear wave \index{wave equation} equation
\begin{equation}\label{Equ5.32} u_{tt}+  u_t - u_{xx} + f(u ) = 0 \ \   in \  {\R^+}\times (0, L) ;  \  u(t, 0) = u(t, L)
= 0 \ on\  {\R^+}\end{equation}
where $f$ is  a $C^1$ function: $ \R\rightarrow\R .$ If $\varphi$ is a  solution of the elliptic problem
$$\varphi \in H^1_0 (0, L), \ \ - \varphi_{xx} + f(\varphi) = 0 $$
such that $(\varphi, 0)$ is stable \index{stable} in $ H^1_0(0, L)\times L^2(0, L)$ as a solution of \eqref{Equ5.32}, then
$\varphi$ has a constant sign on $(0, L).$
\end{cor}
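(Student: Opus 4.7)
The plan is to argue by contrapositive: assume $\varphi$ changes sign in $(0,L)$ and derive instability of $(\varphi, 0)$ in the energy space $H^1_0(0,L) \times L^2(0,L)$ by invoking Corollary \ref{Corollary 5.5.3}. Since that corollary is tailored to the hyperbolic equation and only requires the spectral condition $\lambda_1(-\Delta + f'(\varphi)I) < 0$, the entire work reduces to producing a negative first eigenvalue of the linearized elliptic operator at $\varphi$, exactly as was done for the parabolic case in Proposition \ref{Proposition 5.4.1}.

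First I would discard the trivial possibility $\varphi \equiv 0$ (which has constant sign vacuously), and otherwise set $w := \varphi'$. Differentiating the elliptic equation gives $-w_{xx} + f'(\varphi) w = 0$ on $(0,L)$. If $\varphi$ vanishes at some interior point $x_0 \in (0,L)$, then combined with the boundary conditions $\varphi(0) = \varphi(L) = 0$, Rolle's theorem applied to $\varphi$ on $[0, x_0]$ and on $[x_0, L]$ yields two distinct interior zeros $\alpha < \beta$ of $w$, which we can choose so that $w \not\equiv 0$ on $\omega := (\alpha, \beta)$. Extending $w|_\omega$ by $0$ outside $\omega$ produces $\zeta \in H^1_0(0,L)$ with $J(\zeta) := \int_0^L (|\zeta_x|^2 + f'(\varphi)\zeta^2)\,dx = 0$, where I integrate by parts over $\omega$ only and use the equation satisfied by $w$.

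Next I would exploit this test function to show $\lambda_1 := \inf\{J(z) : z \in H^1_0(0,L),\ \|z\|_{L^2} = 1\} < 0$ strictly. Since $\zeta$ furnishes a nonpositive value of the Rayleigh quotient, we have $\lambda_1 \leq 0$. If equality held, then a scalar multiple $\psi$ of $\zeta$ would minimize $J$ and hence satisfy $-\psi_{xx} + f'(\varphi)\psi = 0$ on $(0,L)$ with $\psi \equiv 0$ on $(0,\alpha)$; standard uniqueness for linear second-order ODE's then forces $\psi \equiv 0$ on all of $(0,L)$, contradicting $\psi \not\equiv 0$. Thus $\lambda_1 < 0$.

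Finally, since $\lambda_1(-\Delta + f'(\varphi)I) < 0$, Corollary \ref{Corollary 5.5.3} (applied with $\Omega = (0,L)$) asserts that $(\varphi, 0)$ is unstable in $[C(\overline{\Omega}) \cap H^1_0(\Omega)] \times L^2(\Omega)$, hence a fortiori unstable in the larger topology of $H^1_0(0,L) \times L^2(0,L)$ once one checks that instability in the stronger norm descends to the weaker one through Sobolev embedding in dimension one (here $H^1_0 \hookrightarrow C(\overline{\Omega})$, so the two topologies coincide on the relevant space anyway). This contradicts the stability assumption, and the contrapositive is complete. The main obstacle I anticipate is the clean transfer of the instability conclusion from the norm used in Corollary \ref{Corollary 5.5.3} to that specified in the statement; in one space dimension this is painless because of the embedding $H^1_0(0,L) \hookrightarrow C([0,L])$, but in higher dimensions the analogous statement would require more care.
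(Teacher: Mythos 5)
Your proof is correct and follows essentially the same route the paper intends: Corollary 5.5.4 is stated there without a written proof, as a consequence of Corollary 5.5.3 ``by the same method as in Section 5.4'', and that method is precisely the eigenvalue argument of Proposition 5.4.1 which you reproduce (Rolle's theorem applied to $\varphi$, the test function $\zeta$ built from $w=\varphi'$ extended by zero, and the unique-continuation step ruling out $\lambda_1=0$), followed by the application of the hyperbolic instability result. The only point needing explicit mention is the one you already address: in one space dimension $H^1_0(0,L)\hookrightarrow C([0,L])$, so instability in $[C(\overline\Omega)\cap H^1_0(\Omega)]\times L^2(\Omega)$ as furnished by Corollary 5.5.3 coincides with instability in $H^1_0(0,L)\times L^2(0,L)$.
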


%%%%%%%%%%%%%%%%%%%%%%%%%%%%%%%
%%%%%%%%%%%%%%%%%%%%%%%%%%%%%%%
\chapter[Gradient-like systems]{Gradient-like systems}\index{gradient-like}
\section{A simple general property}
Let $S(t)$ be a dynamical system \index{dynamical system} on $(Z, d)$. We denote by $\cF$ the set of equilibrium \index{equilibrium point} point of $S(t)$ i.e. 
\begin{equation}\label{equilibriumset} \cF =  \{x\in Z,\quad \forall t\ge 0, \quad S(t) x =x\}.
\end{equation}
\begin{thm}\label{Thmgradientlike}
Let $u_0\in Z$ be such that the trajectory $S(t)u_0$ has precompact range in $Z$.
The following properties are equivalent
\begin{eqnarray}
\label{gradientlike1} & &{\omega(u_0) \subset \cF },\\
\label{gradientlike2} & &{\forall h>0, \quad d(S(t+h)u_0, S(t)u_0)\longrightarrow
0 \,\,\hbox{ as }\,\,t\rightarrow +\infty },\\
\label{gradientlike3}& &{\exists \alpha>0,  \forall h\in [0,
\alpha], \quad d(S(t+h)u_0, S(t)u_0)\longrightarrow
0 \,\,\hbox{ as }\,\,t\rightarrow +\infty }.
\end{eqnarray}
\end{thm}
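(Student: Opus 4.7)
The plan is to establish the cyclic chain of implications $(\ref{gradientlike1})\Rightarrow(\ref{gradientlike2})\Rightarrow(\ref{gradientlike3})\Rightarrow(\ref{gradientlike1})$. The implication $(\ref{gradientlike2})\Rightarrow(\ref{gradientlike3})$ is immediate from the definitions (take any $\alpha>0$). The remaining two steps will use in an essential way the precompactness of the trajectory, together with Theorem \ref{thm 4.1.8.}, which already provides the basic facts about $\omega(u_0)$: it is a nonempty compact invariant set and $d(S(t)u_0,\omega(u_0))\to 0$ as $t\to\infty$.

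For $(\ref{gradientlike1})\Rightarrow(\ref{gradientlike2})$, I would fix $h>0$ and $\varepsilon>0$, and set $\Gamma:=\overline{\bigcup_{t\ge 0}\{S(t)u_0\}}$, which is compact by hypothesis and contains $\omega(u_0)$. Since $S(h)\in C(Z,Z)$, its restriction to $\Gamma$ is uniformly continuous, so there exists $\delta\in(0,\varepsilon/2)$ such that $d(x,y)<\delta$ with $x,y\in\Gamma$ implies $d(S(h)x,S(h)y)<\varepsilon/2$. By Theorem \ref{thm 4.1.8.}, for $t$ large enough one can pick $y_t\in\omega(u_0)$ with $d(S(t)u_0,y_t)<\delta$. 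Since $y_t\in\omega(u_0)\subset\cF$, one has $S(h)y_t=y_t$, and therefore
$$d(S(t+h)u_0,S(t)u_0)\le d(S(h)S(t)u_0,S(h)y_t)+d(y_t,S(t)u_0)<\varepsilon/2+\delta<\varepsilon,$$
which yields $(\ref{gradientlike2})$.

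For the crucial step $(\ref{gradientlike3})\Rightarrow(\ref{gradientlike1})$, I would pick $y\in\omega(u_0)$ and choose $t_n\to\infty$ with $S(t_n)u_0\to y$. For any fixed $h\in[0,\alpha]$, continuity of $S(h)$ gives $S(t_n+h)u_0=S(h)S(t_n)u_0\to S(h)y$, while the hypothesis $(\ref{gradientlike3})$ forces $d(S(t_n+h)u_0,S(t_n)u_0)\to 0$, so $S(t_n+h)u_0\to y$ as well. By uniqueness of limits, $S(h)y=y$ for every $h\in[0,\alpha]$. To promote this to all $h\ge 0$, I would use the semigroup property: writing an arbitrary $h\ge 0$ as $h=n\alpha+r$ with $n\in\N$ and $r\in[0,\alpha)$, induction on the relation $S(k\alpha)=S(\alpha)\circ S((k-1)\alpha)$ gives $S(n\alpha)y=y$, and then $S(h)y=S(r)S(n\alpha)y=S(r)y=y$. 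Hence $y\in\cF$, establishing $(\ref{gradientlike1})$.

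The main obstacle I anticipate is the first implication: one must avoid the naive (and incorrect) attempt to replace $S(t)u_0$ by its ``approximate'' projection on $\omega(u_0)$ without controlling how $S(h)$ deforms a small neighborhood of $\omega(u_0)$. The right way around this is to exploit that the entire relevant trajectory and its $\omega$-limit set lie in the \emph{same} compact set $\Gamma$, on which $S(h)$ is automatically uniformly continuous; without precompactness this argument would collapse. The other two implications are essentially formal consequences of continuity of $S(h)$ and of the semigroup law.
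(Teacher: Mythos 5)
Your proof is correct. The cyclic structure and the key implication \eqref{gradientlike3}$\Rightarrow$\eqref{gradientlike1} coincide with the paper's argument: same extraction of a sequence $t_n$ with $S(t_n)u_0\to y$, same identification of the two limits of $S(t_n+h)u_0$ to get $S(h)y=y$ on $[0,\alpha]$, and the same induction via the semigroup law to extend to all $h\ge 0$. Where you genuinely diverge is in \eqref{gradientlike1}$\Rightarrow$\eqref{gradientlike2}. The paper argues by contradiction: assuming the conclusion fails for some $h$, it extracts from a sequence with $d(S(t_n+h)u_0,S(t_n)u_0)\ge\varepsilon$ a convergent subsequence $S(t_n)u_0\to x\in\cF$ and passes to the limit to reach the absurd inequality $d(S(h)x,x)\ge\varepsilon$. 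You instead give a direct estimate, exploiting uniform continuity of $S(h)$ on the compact closure $\Gamma$ of the trajectory, the attraction property $d(S(t)u_0,\omega(u_0))\to0$ from Theorem \ref{thm 4.1.8.}(iii), and a triangle inequality through a nearby fixed point $y_t\in\omega(u_0)\subset\cF$. Both routes rest essentially on precompactness; yours is quantitative and avoids subsequence extraction (and would generalize readily to give a modulus of convergence), while the paper's is shorter to write and uses only sequential compactness. Either argument is complete and correct.
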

 \begin{proof}[{\bf Proof.}]
  i) \eqref{gradientlike3} implies
\eqref{gradientlike1}. Indeed assume
\eqref{gradientlike3} and let $x\in
\omega(u_0)$. There exists $t_n$ tending to $+ \infty$ for which
$$ \lim_{n\rightarrow \infty}S(t_n) u_0 = x.$$
 Therefore by continuity of $S(h)$ $$
\forall h>0, \quad\lim_{n\rightarrow \infty}S(t_n + h ) u_0 = \lim_{n\rightarrow \infty}S(h+ t_n ) u_0 = S(h) x .$$
As a consequence of
\eqref{gradientlike3} we have on the other hand $$\forall h\in [0,
\alpha], \quad \lim_{n\rightarrow \infty}S(t_n+h)u_0 = x.$$
By comparing the two previous formulas we find
$$\forall h\in [0,\alpha], \quad S(h)x = x.$$
Then a trivial induction argument gives
$$\forall h\in [0,\alpha], \quad \forall n\in \N, \quad S(n\alpha + h)x = x.$$
This obviously implies \eqref{gradientlike1}.\\
 ii) \eqref{gradientlike1} implies \eqref{gradientlike2}. Indeed assume that \eqref{gradientlike2} is {\it false}. Then for
some $h>0$ there is an $\varepsilon >0$ and a sequence $t_n$ tending to $+ \infty $ for
which $$\forall n\in \N, \quad d(S(t_n+h)u_0, S(t_n)u_0)\ge \varepsilon. $$ We can replace
the sequence $t_n$ by a subsequence, still denoted $t_n$, for which  $S(t_n)u_0$ converges
to a limit $x\in X$. As a consequence of \eqref{gradientlike1} we have $ x\in \cF.$ By
letting $n$ tend to infinity in the above inequality, since $S(t_n+h) = S(h) S( t_n)$ and
$S(h)$ is continuous we obtain
$$d(S(h)x, x)\ge \varepsilon. $$ This contradicts \eqref{gradientlike1}. Hence \eqref{gradientlike1} implies
\eqref{gradientlike2} and this concludes the proof.
\end{proof}
\section{A minimal differential criterion}
In this section we assume that $Z$ is a closed subset of some Banach space $X$.
\begin{cor}\label{corgradientlike}
Let $u_0\in Z$ be such that the trajectory $S(t)u_0$ has precompact range in $Z$. Assume in
addition that $$ S(t)u_0 =: u(t)  \in W^{1, 1} _{loc} (\R^+, X).$$ Then if
\begin{equation}\label{null_stepanov} {\exists \alpha>0,   \quad \int_t^{t+\alpha} \Vert
u'(s)  \Vert ds\rightarrow 0 \,\,\hbox{ as }\,\,t\rightarrow +\infty
}\end{equation} we have \eqref{gradientlike1}.
\end{cor}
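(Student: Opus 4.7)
My plan is to deduce this corollary directly from Theorem \ref{Thmgradientlike} by verifying that hypothesis \eqref{null_stepanov} implies condition \eqref{gradientlike3} for the same value of $\alpha$.

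The key observation is that since $u \in W^{1,1}_{loc}(\R^+, X)$, the fundamental theorem of calculus in the Bochner sense gives
\begin{equation*}
u(t+h) - u(t) = \int_t^{t+h} u'(s)\,ds
\end{equation*}
for every $t \geq 0$ and every $h \geq 0$. Taking norms and using the standard estimate for Bochner integrals, I obtain
\begin{equation*}
\|u(t+h) - u(t)\|_X \leq \int_t^{t+h} \|u'(s)\|_X\,ds.
\end{equation*}

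Now, for any $h \in [0, \alpha]$, I can bound the right-hand side by the full integral over $[t, t+\alpha]$:
\begin{equation*}
\|u(t+h) - u(t)\|_X \leq \int_t^{t+\alpha} \|u'(s)\|_X\,ds,
\end{equation*}
and by hypothesis \eqref{null_stepanov} the right-hand side tends to $0$ as $t \to \infty$, uniformly in $h \in [0,\alpha]$. Recalling that $d$ is the distance on $Z$ induced by the norm of $X$, this exactly says that
\begin{equation*}
\forall h \in [0,\alpha], \quad d(S(t+h)u_0, S(t)u_0) \longrightarrow 0 \text{ as } t \to \infty,
\end{equation*}
which is precisely condition \eqref{gradientlike3}.

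Since the trajectory $S(t)u_0$ is assumed to have precompact range in $Z$, Theorem \ref{Thmgradientlike} applies and the implication \eqref{gradientlike3} $\Rightarrow$ \eqref{gradientlike1} yields $\omega(u_0) \subset \cF$, as desired. There is no substantive obstacle here: the only thing to be careful about is that $W^{1,1}_{loc}$ regularity is precisely what is needed to write $u(t+h) - u(t)$ as the integral of $u'$ (so that the pointwise difference is controlled by the Stepanov-type integral norm of $u'$), which is standard.
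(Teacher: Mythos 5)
Your proof is correct and follows exactly the same route as the paper: write $u(t+h)-u(t)$ as the Bochner integral of $u'$, bound its norm by $\int_t^{t+\alpha}\Vert u'(s)\Vert\,ds$, deduce condition \eqref{gradientlike3}, and invoke Theorem \ref{Thmgradientlike}. No issues.
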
\begin{proof}[{\bf Proof.}] It is sufficient to observe that
$$\forall h\in [0,
\alpha], \quad d(S(t+h)u_0, S(t)u_0) =\Vert  \int_t^{t+h}
u'(s) ds \Vert \le \int_t^{t+\alpha} \Vert
u'(s)  \Vert ds\rightarrow
0 \,\,\hbox{ as }\,\,t\rightarrow +\infty.$$
 Hence \eqref{gradientlike3} is fulfilled, and by Theorem \ref{Thmgradientlike} this
implies \eqref{gradientlike1}. \end{proof}
\begin{cor}\label{corgradientLp}
Let $u_0\in X$ be such that the trajectory $S(t)u_0$ has precompact range in $Z$. Assume in
addition that $$ S(t)u_0 =: u(t)  \in W^{1, 1} _{loc} (\R^+, X).$$ Then if for some $p\ge 1$
\begin{equation}\label{Lpderivative} u' \in L^p(\R^+,X)
\end{equation} we have
\eqref{gradientlike1}.
\end{cor}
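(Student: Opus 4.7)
The plan is to reduce Corollary \ref{corgradientLp} directly to Corollary \ref{corgradientlike} by verifying that the Stepanov-type condition \eqref{null_stepanov} holds with, say, $\alpha=1$. In other words, the only thing to check is that the $L^p$ integrability of $u'$ on $\R^+$ forces
$$\int_t^{t+1}\|u'(s)\|\,ds\longrightarrow 0\quad\text{as }t\to+\infty.$$

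First I would dispose of the case $p=1$, which is completely immediate: the tail of an integrable function satisfies $\int_t^{+\infty}\|u'(s)\|\,ds\to 0$, and this upper bound controls $\int_t^{t+1}\|u'(s)\|\,ds$.

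For $p>1$, I would apply Hölder's inequality on the interval $[t,t+1]$, with conjugate exponent $q=p/(p-1)$:
$$\int_t^{t+1}\|u'(s)\|\,ds\leq\left(\int_t^{t+1}\|u'(s)\|^p\,ds\right)^{1/p}\cdot 1^{1/q}=\left(\int_t^{t+1}\|u'(s)\|^p\,ds\right)^{1/p}.$$
Since $\|u'(\cdot)\|^p\in L^1(\R^+)$, the tail estimate again gives $\int_t^{t+1}\|u'(s)\|^p\,ds\to 0$ as $t\to+\infty$, whence the $p$-th root tends to zero as well.

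Thus in all cases \eqref{null_stepanov} holds with $\alpha=1$, and Corollary \ref{corgradientlike} yields $\omega(u_0)\subset\cF$. There is no real obstacle here: the argument is a one-line consequence of Hölder's inequality combined with the previous corollary, the only (very mild) care being the separate treatment of $p=1$, since Hölder with $q=\infty$ is degenerate but gives the same conclusion trivially.
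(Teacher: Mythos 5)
Your proof is correct and is essentially the same as the paper's: the paper also bounds $\int_t^{t+1}\|u'(s)\|\,ds$ by $\bigl(\int_t^{t+1}\|u'(s)\|^p\,ds\bigr)^{1/p}$ via H\"older on the unit interval, notes that this tends to zero since $\|u'\|^p$ is integrable, and invokes Corollary \ref{corgradientlike}. Your separate treatment of $p=1$ is a harmless refinement of the same argument.
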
\begin{proof}[{\bf Proof.}] Indeed in this case we have $$
\int_t^{t+1} \Vert
u'(s) \Vert ds \le \left(\int_t^{t+1} \Vert
u'(s) \Vert^p  ds \right)^{\frac{1}{p}}\rightarrow
0 \,\,\hbox{ as }\,\,t\rightarrow +\infty. $$
\end{proof}
\section {The case of gradient systems}\label{SystemGradient}\index{gradient system}
 Let $ N \geq 1$ and  $F \in C^{2}({\R}^N)$. We
consider the equation
\begin{equation}\label{gradientsystem} u'(t) +  \nabla F(u(t)) = 0\end{equation} and we define
$$ \cE  = \{ z\in {\mathbb R}^N, \nabla F(z) = 0\}.$$
\begin{cor}\label{gradientsyst1} Any solution $u(t)$ of \eqref{gradientsystem} defined and
bounded on ${\R}^+$ satisfies
$$\lim_{ t \rightarrow +\infty} \,\text{ dist} \{u(t), \cE  \} = 0. $$ In other terms we have $\omega
(u(0))\subset \cE. $ In addition, if for each $c$, the set $\cE_c =\{u\in \cE, F(u) = c\}$ is discrete, then there exists $u^*\in \cE$   such that
$$\lim_{ t \rightarrow +\infty} u(t) = u^*. $$
\end{cor}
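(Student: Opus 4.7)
The plan is to exploit $F$ as a strict Liapunov function for \eqref{gradientsystem} and then feed the resulting integrability of $u'$ into Corollary \ref{corgradientLp}.

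First, I would set up the dynamical system framework. Since $\nabla F \in C^1(\R^N, \R^N)$ is locally Lipschitz and $u$ is bounded on $\R^+$, its range has compact closure $Z := \overline{u(\R^+)}$; Theorem \ref{Theorem 4.1.10.} (applied with $A = 0$, $F$ replaced by $-\nabla F$, and $Y = \{u(0)\}$) then guarantees that the restriction of the flow to $Z$ is a dynamical system in the sense of Chapter 4, whose equilibrium set $\cF$ consists precisely of the zeros of $\nabla F$, i.e.\ $\cF = \cE \cap Z$. In particular the trajectory $S(t)u(0)$ has precompact range in $Z$.

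Next, the key computation: along the solution,
\begin{equation*}
\frac{d}{dt} F(u(t)) = \langle \nabla F(u(t)), u'(t) \rangle = -|u'(t)|^{2} \le 0,
\end{equation*}
so $t \mapsto F(u(t))$ is nonincreasing. Because $u$ stays in a compact set and $F$ is continuous, $F(u(t))$ is bounded below; hence it admits a finite limit $c_\infty$ as $t \to \infty$. Integrating the identity above yields
\begin{equation*}
\int_0^{\infty} |u'(s)|^{2}\, ds = F(u(0)) - c_\infty < \infty,
\end{equation*}
so $u' \in L^2(\R^+, \R^N)$. Since $u \in C^1(\R^+, \R^N) \subset W^{1,1}_{loc}(\R^+, \R^N)$, Corollary \ref{corgradientLp} (with $p = 2$) applies and gives $\omega(u(0)) \subset \cF \subset \cE$, which is the first assertion.

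For the second part, I would first argue that $F \equiv c_\infty$ on $\omega(u(0))$: given $y \in \omega(u(0))$ and $t_n \to \infty$ with $u(t_n) \to y$, continuity of $F$ yields $F(y) = \lim F(u(t_n)) = c_\infty$. Thus $\omega(u(0)) \subset \cE_{c_\infty}$. By Theorem \ref{thm 4.1.8.}, $\omega(u(0))$ is a nonempty compact connected subset of $Z$; as a compact subset of the discrete set $\cE_{c_\infty}$ it is finite, and being connected it must be a singleton $\{u^*\}$. Proposition \ref{prop4.2.1} (or directly Theorem \ref{thm 4.1.8.}(iii)) then gives $u(t) \to u^*$. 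No real obstacle is anticipated — the only point that requires the slightest care is recognizing that ``$\cE_c$ discrete'' combined with compactness and connectedness of the $\omega$-limit set forces a single equilibrium limit; the $L^2$ estimate on $u'$ does all the analytic work.
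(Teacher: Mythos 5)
Your proof is correct and follows essentially the same route as the paper: the energy identity $\frac{d}{dt}F(u(t))=-\Vert u'(t)\Vert^2$ gives $u'\in L^2(\R^+,\R^N)$, Corollary \ref{corgradientLp} yields $\omega(u(0))\subset\cE$, and the monotone limit of $F(u(t))$ places $\omega(u(0))$ in $\cE_c$, after which Proposition \ref{prop4.2.1} (compact, connected, discrete $\Rightarrow$ singleton) finishes the argument. Nothing to add.
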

 \begin{proof}[{\bf Proof.}] We consider the dynamical system \index{dynamical system} generated by \eqref{gradientsystem} on
the closure of the range of $u$. It is obvious here that the set $\cF$ of fixed points of $S(t)$ is precisely
equal to $\cE$ defined above.  Multiplying by
$u'$ in the sense of the inner product of
${\R}^N$ and integrating we find
$$\int_0^T \Vert u'(t)\Vert^2 dt = F(u(0) - F(u(t). $$ Hence since $u$ is bounded we obtain
$ u' \in L^2(\R^+, X)$ with $X = {\R}^N$. By Corollary \ref{corgradientLp}, we have $\omega
(u(0))\subset \cE. $ Moreover $F(u(t))$ is non-increasing along the trajectory since
$$ \frac {d}{dt} F(u(t))= -\Vert u'(t)\Vert^2 $$
Hence $F(u(t))$ tends to a limit $c$ as $t$ becomes infinite and therefore $\omega(u(0))\subset \cE_c. $ The rest is a consequence of Proposition \ref{prop4.2.1}.
\end{proof}

\begin{rem}\label{Curry-PDM}{\rm By using Lemma \ref{L1-conv} applied to the function $\Vert u'(t)\Vert^2 $ it is easy to prove that $u'(t)$ tends to $0$ at infinity. One might wonder whether $u(t)$ is always convergent. In 2 dimensions, it was conjectured by H.B. Curry \cite {MR0010667} and proven by  J. Palis and W. De Melo \cite {MR0669541} that convergence may fail even for a $C^{\infty} $ potential $F$. }
\end{rem}

\section {A class of second order systems}
 Let $F, \cE$ be as in Section \ref{SystemGradient}. We consider the equation
\begin{equation}\label{2gradientsystem} u''(t) + u'(t) +  \nabla F(u(t)) = 0.\end{equation} 
\begin{cor}\label{gradientsyst2} Any solution $u(t)$ of \eqref{2gradientsystem} defined and
bounded on ${\R}^+$ together with $ u'$ satisfies
$$\lim_{ t \rightarrow +\infty}\Vert u'(t)\Vert=\lim_{ t \rightarrow +\infty} \,\text{ dist} \{u(t), \cE  \} = 0 $$ 
In other terms we have $\omega
(u(0), u'(0))\subset \cE\times \{0\}. $
In addition, if for each $c$, the set $\cE_c =\{u\in \cE, F(u) = c\} $ is
discrete, then there exists $u^*\in \cE$   such that
$$\lim_{ t \rightarrow +\infty} u(t) = u^*.$$
\end{cor}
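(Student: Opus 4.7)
The natural move is to write \eqref{2gradientsystem} as a first order system in $X = \R^N\times\R^N$ by setting $U(t) = (u(t), u'(t))$, which solves
$$ U'(t) = \Phi(U(t)), \qquad \Phi(u,v) := (v,\, -v-\nabla F(u)).$$
Since $F\in C^2$, $\Phi$ is locally Lipschitz, so this defines a local flow; by hypothesis the particular trajectory under consideration is global and bounded, so I can invoke Theorem~\ref{Theorem 4.1.10.} to produce a dynamical system $S(t)$ on $Z = \overline{\{U(t) : t\ge 0\}}\subset X$, which is compact. I would then observe that the set $\cF$ of equilibria of $S(t)$ is exactly $\cE\times\{0\}$: indeed $\Phi(u,v)=0$ forces $v=0$ and $\nabla F(u)=0$.

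The key Liapunov function is the energy
$$ E(u,v) := \tfrac12 \|v\|^2 + F(u), $$
which is $C^1$ on $X$ and satisfies $\frac{d}{dt} E(U(t)) = -\|u'(t)\|^2 \le 0$ along any solution. Since $E$ is continuous and $Z$ is compact, $E(U(t))$ is bounded and nonincreasing, hence converges to some limit $L\in\R$. By continuity of $E$, every point of $\omega(U_0)$ has $E$-value $L$. Then I would use the main argument: for any $(v_0,w_0)\in\omega(U_0)$, Theorem~\ref{thm 4.1.8.}(i) gives $S(t)(v_0,w_0)\in\omega(U_0)$ for all $t\ge 0$; writing $S(t)(v_0,w_0) = (v(t),w(t))$, the constancy of $E$ on $\omega(U_0)$ forces
$$ 0 = \frac{d}{dt} E(v(t),w(t)) = -\|w(t)\|^2 \qquad \forall t\ge 0, $$
so $w(t)\equiv 0$, $v(t)\equiv v_0$, and plugging back into the equation yields $\nabla F(v_0)=0$. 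Hence $\omega(U_0)\subset\cE\times\{0\}$, and Theorem~\ref{thm 4.1.8.}(iii) gives $d(U(t),\cE\times\{0\})\to 0$, which is exactly $\|u'(t)\|\to 0$ and $\mathrm{dist}(u(t),\cE)\to 0$.

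For the last assertion, I note that on $\omega(U_0)$ one has $E\equiv L$ and $w\equiv 0$, so $F(v_0)=L$ for every $(v_0,0)\in\omega(U_0)$; that is, $\omega(U_0)\subset \cE_L\times\{0\}$. Under the discreteness assumption this set is discrete, and Theorem~\ref{thm 4.1.8.}(ii) says $\omega(U_0)$ is connected and compact, hence reduces to a single point $(u^*,0)$. Invoking Proposition~\ref{prop4.2.1} (or just part (iii) of Theorem~\ref{thm 4.1.8.}) gives $U(t)\to(u^*,0)$ and in particular $u(t)\to u^*$. The only mildly subtle step is the invariance-principle argument identifying $\omega(U_0)$ with equilibria; the rest is a bookkeeping application of the general facts from Chapter~4.
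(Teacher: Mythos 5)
Your proof is correct, but it follows a genuinely different route from the one the paper uses at this point in the text. The paper stays within the toolkit of Chapter 6: from the energy identity it gets $u'\in L^2(\R^+,\R^N)$, then it differentiates the equation to obtain $u'''+u''+\nabla^2F(u)u'=0$, multiplies by $u''$ and uses the boundedness of $\nabla^2F$ on the (bounded) range of $u$ to conclude $u''\in L^2$ as well; with $U'=(u',u'')\in L^2(\R^+,\R^N\times\R^N)$, Corollary~\ref{corgradientLp} (the $L^p$-derivative criterion for gradient-like behaviour) gives $\omega(u(0),u'(0))\subset\cF=\cE\times\{0\}$ directly, and the rest follows from the monotonicity of the energy and Proposition~\ref{prop4.2.1} exactly as you say. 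You instead run the LaSalle invariance argument: $E(u,v)=\tfrac12\|v\|^2+F(u)$ is a Liapunov function, it is constant on the (invariant) $\omega$-limit set, and differentiating along trajectories inside $\omega(U_0)$ forces $w\equiv0$ and then $\nabla F(v_0)=0$. This is precisely the mechanism the paper formalizes only in Chapter~7 (Theorems~\ref{Theorem 4.2.3.} and~\ref{Theorem 4.2.6.}, and Corollary~\ref{gradientsyst2Bis} for the more general damping $g(u')$), and the authors explicitly note there that it recovers the Chapter~6 results. Your approach buys robustness — it does not need the second derivative of $F$ beyond local Lipschitz continuity of $\nabla F$, and it extends verbatim to nonlinear damping — while the paper's approach is deliberately more elementary at that stage, avoiding any appeal to invariance and giving the slightly stronger quantitative information $u',u''\in L^2$. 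Both arguments are complete; the only point worth making explicit in yours is that $S(t)(v_0,w_0)$ for $(v_0,w_0)\in\omega(U_0)$ is a genuine solution of the ODE (so the energy identity applies along it), which is guaranteed by the construction of the dynamical system in Theorem~\ref{Theorem 4.1.10.}.
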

 \begin{proof}[{\bf Proof.}] We consider the dynamical system \index{dynamical system} generated by \eqref{2gradientsystem} on
the closure of the range of $U = (u, u') $. Here the set $\cF$ of fixed points of $S(t)$ is made of
points $ (y, z)\in{\mathbb R}^N\times {\mathbb R}^N$ for which the solution $u$ of
\eqref{2gradientsystem} of initial data $ (y, z)$ is independent of $t$. Consequently
$\cF=\cE\times \{0\}$.  Multiplying by
$u'$ in the sense of the inner product of
${\R}^N$ and integrating we find $$ \frac {d}{dt} (\frac{1}{2}\Vert u'(t)\Vert^2 + F(u(t)))= -\Vert
u'(t)\Vert^2 $$ hence in particular
$$\int_0^T \Vert u'(t)\Vert^2 dt = F(u(0) - F(u(t) +\frac{1}{2}(\Vert u'(0)\Vert^2 -\Vert
u'(t)\Vert^2).$$ Hence since $u$ is bounded we obtain
$ u' \in L^2(\R^+, X)$ with $X= {\R}^N$. Moreover differentiating the equation we have
$$ u'''+ u'' + \nabla^2 F(u(t))u' = 0.$$
 By multiplying by
$u''$ in the sense of the inner product of ${\R}^N$ and integrating we find
$$\int_0^T \Vert u''(t)\Vert^2 dt = \int_0^T (\nabla^2 F(u(t))u', u''(t))dt
+\frac{1}{2}(\Vert u''(0)\Vert^2 -\Vert u''(t)\Vert^2).$$
Since $u''$ is bounded by the equation, it follows immediately that $ u'' \in L^2(\R^+, X)$,
therefore $U' = (u', u'') \in L^2(\R^+, X\times X).$
By Corollary \ref{corgradientLp}, we have
$\omega (u(0), u'(0) )\subset \cE\times \{0\}.$ In particular $u'(t)$ tends to $0$ as $t$ becomes
infinite. Moreover
$\frac{1}{2}\Vert u'(t)\Vert^2+ F(u(t))$ is non-increasing along the trajectory and therefore tends
to a limit $c$ as $t$ becomes infinite. Finally
$\omega (u(0), u'(0) )\subset \cE_c\times \{0\}. $ The rest is a consequence of Proposition \ref{prop4.2.1}.
\end{proof}
\section{Application to the semi-linear heat equation\index{heat equation}}
 Let $\Omega$ and $f$ be as in Section \ref{SectionHeatEquation} and let $X=C_0(\Omega)$. Throughout this section we assume that $\Omega$ is bounded and we define
$$ \cE  =\{u\in X\cap H^1_0(\Omega), -\Delta u+f(u)=0 \},$$
$$\forall\varphi\in  X\cap H^1_0,\  E(\varphi) = {1\over 2}\int_{\Omega}\vert\nabla\varphi\vert^2 dx +\int_{\Omega}F(\varphi)\, dx$$
 with
$$ F(u) = \int_0^{u}f(s)\,ds , \quad \forall u\in {\mathbb R}. $$
Moreover let $\cE_c =\{u\in \cE,  E(u)=c\}$, for $c\in {\mathbb R}.$ We shall prove

\begin{thm}\label{Theorem 4.4.1.} let $u$ be a global \index{global solution} solution of \eqref{heat}\index{heat equation} which is bounded in $X$ for $t\geq0$.   Then  we have the following properties
 \begin{itemize}
\item[{\rm(i)}]    $E(u(t))$ tends to a limit $c$ as $t\rightarrow +\infty$;
\item[{\rm(ii)}]    $\cE_{c}\not = \emptyset$;
\item[{\rm(iii)}]  $ \text{  dist}(u(t),\cE_{c})\rightarrow 0$  as $t\rightarrow +\infty$, where   dist
denotes the distance in $X\cap H^1_0(\Omega)$.
\end{itemize}
\end{thm}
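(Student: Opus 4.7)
The strategy is to apply the gradient-like system framework from Section 6.1 using $E$ as a Liapunov function, in the spirit of Corollaries \ref{gradientsyst1} and \ref{gradientsyst2}. The energy $E$ plays the role of the strict Liapunov function, and the task is to implement LaSalle's invariance principle in the infinite-dimensional setting, using smoothing of the heat semi-group to secure precompactness.

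\textbf{Step 1 (Energy identity and monotonicity).} By the regularizing effect of the underlying linear heat semi-group (cf.\ Theorem \ref{thm2}) together with the Duhamel formula, for every $t>0$ one has $u(t)\in H^{2}(\Omega)\cap H^{1}_{0}(\Omega)$ and $u_t\in L^{2}_{\mathrm{loc}}((0,\infty);L^{2}(\Omega))$. Testing \eqref{heat} against $u_t$ and integrating by parts yields
\begin{equation*}
\frac{d}{dt}E(u(t)) = -\int_{\Omega} u_t(t,x)^{2}\,dx \le 0,\qquad t>0.
\end{equation*}
Since $u$ is bounded in $L^{\infty}(\R^{+}\times\Omega)$ and $F$ is continuous, $\int_{\Omega}F(u(t))\,dx$ is uniformly bounded, so $E(u(t))$ is bounded from below. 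Monotonicity plus lower bound give $E(u(t))\searrow c$ for some $c\in\R$, which is (i), and additionally $\int_{0}^{\infty}\|u_t(s)\|_{L^{2}}^{2}\,ds <\infty$.

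\textbf{Step 2 (Precompactness).} Applying the $L^{p}$-smoothing of Proposition \ref{Prop2.2.3} to the Duhamel formula, together with the local Lipschitz character of $f$ and the uniform $L^{\infty}$ bound on the orbit, one bootstraps to uniform bounds of $\{u(t):t\ge 1\}$ in $H^{1}_{0}(\Omega)\cap C^{\alpha}(\overline{\Omega})$ for some $\alpha>0$. By Rellich--Kondrachov and Ascoli--Arzel\`a, the orbit is relatively compact in $X\cap H^{1}_{0}(\Omega)$, a space on which $E$ is continuous (via Sobolev embedding and continuity of $F$) and on which $S(t)$ still acts continuously.

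\textbf{Step 3 (Identifying $\omega(u_{0})$ and conclusion).} For any $\varphi\in\omega(u_{0})$, pick $t_n\to\infty$ with $u(t_n)\to\varphi$ in $X\cap H^{1}_{0}$. Continuity of $E$ gives $E(\varphi)=c$, and continuity of $S(t)$ yields
\begin{equation*}
S(t)\varphi = \lim_{n} u(t_n+t) \in \omega(u_{0}),\qquad E(S(t)\varphi)=c\quad\forall t\ge 0.
\end{equation*}
So $E$ is constant along the trajectory issuing from $\varphi$; the dissipation identity of Step 1 then forces $\partial_t S(t)\varphi\equiv 0$, i.e.\ $\varphi\in\cE_{c}$. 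Precompactness ensures $\omega(u_{0})\ne\emptyset$, which is (ii). Finally Theorem \ref{thm 4.1.8.}(iii), applied to the dynamical system on the precompact set of Step 2, gives $\operatorname{dist}_{X\cap H^{1}_{0}}(u(t),\omega(u_{0}))\to 0$, and since $\omega(u_{0})\subset\cE_{c}$ this implies (iii).

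\textbf{Main obstacle.} The delicate point is Step 2: one must secure compactness of the orbit in a topology simultaneously compatible with the semi-group (naturally defined on $X$) and with the energy (naturally defined on $H^{1}_{0}$). Once this compatibility is achieved, the remainder of the argument is a standard infinite-dimensional LaSalle invariance principle, entirely parallel to the finite-dimensional Corollary \ref{gradientsyst1}.
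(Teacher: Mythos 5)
Your overall strategy is sound and ends in the right place, but it takes a slightly different route from the paper in Step 3: you identify $\omega(u_0)\subset\cE_c$ by the LaSalle invariance argument ($E$ constant on the trajectory through $\varphi$, hence $\partial_t S(t)\varphi\equiv 0$), whereas the paper deduces $\omega(u_0)\subset\cE$ directly from Corollary \ref{corgradientLp}, using the energy identity to get $u_t\in L^2(\R^+,L^2(\Omega))$ and the fact that the $L^2$ and $X\cap H^1_0$ topologies coincide on the precompact orbit closure. Both are legitimate; your variant additionally requires knowing that the solution issued from a point of $\omega(u_0)$ itself satisfies the energy identity (which holds here by the same smoothing effect), while the paper's variant avoids discussing the flow on the $\omega$-limit set altogether.

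There is, however, a genuine gap in your Step 2. You bootstrap only to uniform bounds in $H^1_0(\Omega)\cap C^{\alpha}(\overline\Omega)$ and then invoke Rellich--Kondrachov and Ascoli--Arzel\`a to claim relative compactness of the orbit in $X\cap H^1_0(\Omega)$. This does not follow: a bounded set of $H^1_0$ is relatively compact in $L^2$ (Rellich) and a bounded set of $C^{\alpha}$ is relatively compact in $C^0=X$ (Ascoli), but neither gives \emph{strong} precompactness in $H^1_0(\Omega)$, which is exactly the topology in which (iii) measures the distance and in which $E$ must be continuous on the orbit closure. You need one more derivative: the smoothing effect actually yields that $\bigcup_{t\geq\varepsilon}\{u(t)\}$ is bounded in $C^{1+\alpha}(\overline\Omega)$ for every $\alpha\in[0,1)$ (this is the statement the paper cites from \cite{MR0747194}), and for bounded $\Omega$ the embedding $C^{1+\alpha}(\overline\Omega)\hookrightarrow C^1(\overline\Omega)\hookrightarrow H^1(\Omega)$ is compact, which is what delivers precompactness of $\{u(t):t\geq 1\}$ in $H^1_0(\Omega)$. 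With that correction the rest of your argument goes through.
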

 \begin{proof}[{\bf Proof.}] The smoothing effect of the heat \index{heat equation} equation implies (cf. e.g. \cite{MR0747194}  for a proof based on the theory of holomorphic semi-groups) that for
each $\varepsilon > 0$ and $\alpha\in[0, 1)$,$$\bigcup_{t\geq
\varepsilon}\{u(t)\}\quad\hbox{is bounded in }\quad C^{1+\alpha}({\overline\Omega}).$$
In particular,  $\bigcup_{t\geq 0}\{u(t)\}$ is precompact in $X$ and
$\bigcup_{t\geq 1}\{u(t)\}$ is precompact in $H^1_0(\Omega)$. Let us denote by $Z$  the
closure in  $X\cap H^1_0(\Omega)$ of $u({\mathbb R}^+)$.  $E$ is continuous on  $X\cap H^1_0(\Omega)$,
hence on $(Z, d)$ where $d$ is the distance in $X\cap H^1_0$. In addition by precompactness the topologies of $X\cap H^1_0(\Omega)$ and $L^2(\Omega)$ coincide on $Z$. An easy
calculation shows that for $t \ge 1$, we have
$$\int_1^t\int_{\Omega}u_t^2(\tau, x)dxd\tau +E(u(t))=
E(u(1))$$ Hence   By Corollary \ref{corgradientLp}, we have
\index{$\omega$-limit set} $\omega (u(0) )\subset \cE$.   Since $E(u(t)$ is nonincreasing the result follows as in the previous examples.
 \end{proof}
 \section[Application  to a semilinear wave equation]{Application  to a semilinear wave equation with a linear damping\index{wave equation}}

\medskip\noindent Let $\Omega$ and $f$ be as in Section \ref{sectionWaveEqua} and consider the wave equation \index{wave equation} \eqref{wave}. Throughout this section we assume that $\Omega$ is bounded.
%\begin{equation}\label{wave}u_{tt} - \Delta u +\gamma u_t + f(u) = 0 \quad\hbox{  in} \, \,{\mathbb R}^+ \times\Omega,
%\quad     u = 0  \,\,   \hbox{on} \, \, {\mathbb R}^+ \times \partial\Omega.
%\end{equation}
 Keeping the notation and the hypotheses of Section \ref{sectionWaveEqua}. Introducing
 $$\cE  =\{u\in H^1_0,
-\Delta u+f(u)=0 \},$$ and  $\cE_c =\{u\in \cE,  E(u, 0)=c\}$, for
$c\in {\mathbb R}$. We can state
\begin{thm}\label{Theorem 4.5.1.} Assume $\gamma>0$ and  that the growth condition \index{growth condition} \eqref{GrowthCondi} is
satisfied with the strict  inequality: $r <2/(N -2)$ if $N\geq 3$.
Let $(\varphi,\psi)\in X:=H^1_0 \times  L^2,$ and let $u$ be the corresponding
maximal solution of \eqref{wave} \index{wave equation} with $u(0) = \varphi$ and $u_t(0) =
\psi$. Assume that $T(\varphi,\psi)=+\infty$ and
$$\sup\{\Vert(u(t),u_t(t))\Vert_X , t\geq0\}<+\infty.$$
Then we have the
following properties :
\begin{itemize}
\item[{\rm(i)}]   $\, E(u(t),u_t(t))$ tends to a limit $c$ as $t\rightarrow +\infty$;
\item[{\rm(ii)}]  $\, \cE_{c}\not=\emptyset$;
\item[{\rm(iii)}]  $\, \Vert u_t(t)\Vert_{L^2} \rightarrow 0$, as $t\rightarrow +\infty$;
\item[{\rm(iv)}]   $\, dist(u(t),\cE_{c})\rightarrow 0$  as
$t\rightarrow +\infty$, where $dist$ denotes the distance in $H^1_0$.
\end{itemize}
\end{thm}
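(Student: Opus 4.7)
The plan is to view Theorem \ref{Theorem 4.5.1.} through the gradient-like framework of this chapter: establish precompactness of the trajectory in $X$, identify $E$ as a strict Liapunov function, and show that the $\omega$-limit set lies in $\cE \times \{0\}$. Assertions (i)--(iv) will then follow from Theorem \ref{thm 4.1.8.} and the corollaries of Section~6.2.

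First I would derive the energy identity. Multiplying \eqref{wave} by $u_t$ and integrating over $\Omega$ yields
$$\frac{d}{dt}E(u(t),u_t(t)) = -\gamma\int_\Omega u_t^2\,dx \le 0.$$
Since $(u,u_t)$ is assumed bounded in $X$, Sobolev embedding together with the growth condition \eqref{GrowthCondi} ensures that $\int_\Omega F(u(t))\,dx$ remains bounded, so $E(u(t),u_t(t))$ is bounded below and nonincreasing, hence converges to a limit $c$; this is (i). Integrating over $\R^+$ also gives $u_t\in L^2(\R^+;L^2(\Omega))$.

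The main obstacle is precompactness of $\{(u(t),u_t(t)):t\ge 0\}$ in $X=H^1_0\times L^2$, and this is precisely where the \emph{strict} inequality $r<2/(N-2)$ is used. I would split $u=v+w$: let $v$ solve the linear damped wave equation $v_{tt}-\Delta v+\gamma v_t=0$ with initial data $(\varphi,\psi)$, so by Proposition \ref{Proposition2.3.3.} the pair $(v,v_t)$ decays exponentially in $X$; the remainder $w$ satisfies $w_{tt}-\Delta w+\gamma w_t=-f(u)$ with zero initial data. Strict subcriticality makes the Nemytskii map $u\mapsto f(u)$ send bounded sets of $H^1_0$ into a relatively compact subset of $H^{-1+\eta}(\Omega)$ for some $\eta>0$, and standard smoothing for the damped wave semigroup (applied as in Proposition \ref{Prop2.4.3}) then gives $(w,w_t)$ bounded in a subspace compactly embedded in $X$. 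Thus the orbit is precompact in $X$.

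For (iii), once precompactness in $X$ is in hand, the equation controls $u_{tt}$ in $H^{-1}(\Omega)$ uniformly, so $t\mapsto u_t(t)$ is uniformly continuous into $L^2$; combining with $u_t\in L^2(\R^+;L^2(\Omega))$ and applying Lemma \ref{L1-conv} to $t\mapsto \|u_t(t)\|_{L^2}^2$ (which lies in $L^1(\R^+)$ and is uniformly continuous) yields $\|u_t(t)\|_{L^2}\to 0$.

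Finally, to identify $\omega(\varphi,\psi)$, I would fix $(\bar\varphi,\bar\psi)\in\omega(\varphi,\psi)$ and let $U(t)=(\tilde u(t),\tilde u_t(t))$ be the solution of \eqref{wave} starting at $(\bar\varphi,\bar\psi)$. Invariance of the $\omega$-limit set (Theorem \ref{thm 4.1.8.}(i)) together with continuity of $E$ on the precompact closure of the orbit gives $E(U(t))\equiv c$; the energy identity applied to $U$ then forces $\tilde u_t\equiv 0$, whence $\bar\psi=0$ and $-\Delta\bar\varphi+f(\bar\varphi)=0$, i.e.\ $\bar\varphi\in\cE_c$. This shows $\omega(\varphi,\psi)\subset\cE_c\times\{0\}$, which yields (ii) and, via Theorem \ref{thm 4.1.8.}(iii), also (iv).
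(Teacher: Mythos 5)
Your overall architecture (energy identity, precompactness of the orbit, then analysis of the $\omega$-limit set) is the right one, and your treatment of (i), (iii) and of the $\omega$-limit set via the invariance of $\omega(\varphi,\psi)$ and constancy of $E$ on it is a legitimate variant of the paper's argument (the paper instead shows $U'=(u_t,u_{tt})\in L^2(\R^+, L^2\times H^{-1})$ and invokes Corollary \ref{corgradientLp}; your LaSalle-type argument, as in Theorem \ref{Theorem 4.2.6.}, reaches the same conclusion and avoids the $u_{tt}$ estimate). However, there is a genuine gap in the precompactness step. You write that ``standard smoothing for the damped wave semigroup \ldots gives $(w,w_t)$ bounded in a subspace compactly embedded in $X$.'' The damped wave semigroup is a group of bounded operators on $X=H^1_0\times L^2$ with \emph{no} smoothing effect: it does not gain derivatives, so the Duhamel term driven by a forcing that is merely bounded (or even precompact) in $L^2$ is not thereby bounded in $(H^2\cap H^1_0)\times H^1_0$ or any space compactly embedded in $X$. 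That would require $f(u(\cdot))$ bounded in $H^1_0$, which the growth condition \eqref{GrowthCondi} does not give.

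The correct mechanism, and the one the paper uses, is Webb's compactness lemma (Lemma \ref{Lemma Webb4.5.2.}): if $T(t)$ decays exponentially in $\cL(X)$ (Proposition \ref{Proposition2.3.3.}, valid since $\gamma>0$) and $H(t)=(0,-f(u(t)))$ takes values in a fixed \emph{compact} subset $K$ of $X$, then $t\mapsto \int_0^t T(t-s)H(s)\,ds$ has precompact range. The compactness of $K$ is exactly where the strict inequality $r<2/(N-2)$ enters: it makes the Nemytskii operator $v\mapsto f(v)$ compact from $H^1_0(\Omega)$ into $L^2(\Omega)$ (via Rellich and the subcritical Sobolev exponent), so the bounded orbit $\{u(t)\}$ in $H^1_0$ is sent into a compact subset of $L^2$. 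The proof of Lemma \ref{Lemma Webb4.5.2.} uses only the exponential decay to truncate the integral and the convexity/compactness of $\tau\cdot\mathrm{conv}\bigl(\bigcup_{0\le s\le\tau}T(s)K\bigr)$; no regularity gain is involved. With this substitution your decomposition $u=v+w$ (which is just the Duhamel formula) and the rest of your argument go through.
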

The proof of Theorem \ref{Theorem 4.5.1.} relies on a general compactness lemma due to G.F. Webb \cite{MR0549869} :
\begin{lem}\label{Lemma Webb4.5.2.}  Let $X$ be a real Banach space
and $T(t)$ a contraction semi-group on  $X$ satisfying
\begin{equation}\label{Equ4.14}
\Vert T(t)\Vert_{L(X)} \leq M e^{-\sigma t} , \quad     \forall t\geq0.
\end{equation}
 where $M, \sigma$ are some positive constants. Let
$H\in L^{+\infty}({\mathbb R}^+, X)$ and let $K$ be a compact set in $X$
such that  $H(t)\in K$ , a.e. on $ {\mathbb R}^+$. Then the function
$V:{\mathbb R}^+\rightarrow X $ defined by $$ V(t) =
T(t)(\varphi,\psi) + \int_0^t T(s)H(t-s) ds$$ satisfies: $V({\mathbb R}^+)$ is precompact in $X$.
\end{lem}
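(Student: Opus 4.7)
The plan is to split $V(t)=T(t)(\varphi,\psi)+W(t)$ where $W(t):=\int_0^t T(s)H(t-s)\,ds$, and show that each piece has relatively compact range. The first piece is easy: since $t\mapsto T(t)(\varphi,\psi)$ is continuous on $[0,+\infty)$ with $\|T(t)(\varphi,\psi)\|\le Me^{-\sigma t}\|(\varphi,\psi)\|\to 0$, the set $\{T(t)(\varphi,\psi):t\ge 0\}\cup\{0\}$ is the continuous image of the one-point compactification of $[0,+\infty)$ and hence compact. So only $W$ requires real work.

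For $W$, fix $\varepsilon>0$ and choose $\tau>0$ with $\frac{M}{\sigma}e^{-\sigma\tau}\|H\|_{L^\infty}<\varepsilon$. For $t\ge\tau$ I would decompose $W(t)=W_1(t)+W_2(t)$ with
$$W_1(t):=\int_0^\tau T(s)H(t-s)\,ds,\qquad W_2(t):=\int_\tau^{t} T(s)H(t-s)\,ds.$$
The exponential bound on $T$ and the a priori bound on $H$ give $\|W_2(t)\|<\varepsilon$ for every $t\ge\tau$, which is already what one wants from a tail term.

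The core of the argument is to show $\{W_1(t):t\ge\tau\}$ is precompact. Introduce
$$\mathcal{K}_\tau:=\{T(s)k:\ 0\le s\le\tau,\ k\in K\}.$$
Thanks to the estimate $\|T(s)k-T(s_0)k_0\|\le M\|k-k_0\|+\|(T(s)-T(s_0))k_0\|$ and the fact that $s\mapsto T(s)k_0$ is uniformly continuous on the compact set $K$, the map $(s,k)\mapsto T(s)k$ is jointly continuous on the compact set $[0,\tau]\times K$, so $\mathcal{K}_\tau$ is compact in $X$. By Mazur's theorem its closed convex hull $C:=\overline{\mathrm{conv}}(\mathcal{K}_\tau)$ is also compact. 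Since $s\mapsto H(t-s)$ takes values a.e.\ in $K$, the function $s\mapsto T(s)H(t-s)$ takes values a.e.\ in $\mathcal{K}_\tau$, and the standard Riemann-sum approximation of the Bochner integral places $\frac{1}{\tau}W_1(t)$ into $C$; hence $W_1(t)\in\tau C$ for all $t\ge\tau$.

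Combining the two parts, $\{W(t):t\ge\tau\}$ lies within distance $\varepsilon$ of the compact set $\tau C$, so it is totally bounded; and $\{W(t):t\in[0,\tau]\}$ is compact by continuity of $W$. Thus $\{W(t):t\ge 0\}$ is totally bounded in the Banach space $X$, hence precompact, and $V(\mathbb{R}^+)$ is the sum of two precompact sets, hence itself precompact. The main technical obstacle is the joint-continuity step together with the Mazur-based localisation of Bochner integrals in $\tau\cdot\overline{\mathrm{conv}}(\mathcal{K}_\tau)$; everything else is bookkeeping with the semigroup decay estimate.
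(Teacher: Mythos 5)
Your proof is correct and follows essentially the same route as the paper's: the same splitting $V=T(t)(\varphi,\psi)+W$, the same tail cutoff at $\tau$ using the exponential decay, and the same localisation of $\int_0^\tau T(s)H(t-s)\,ds$ in (a multiple of) the closed convex hull of the compact set $\{T(s)k:\ 0\le s\le\tau,\ k\in K\}$. Your version is merely a bit more explicit about the joint continuity of $(s,k)\mapsto T(s)k$ and the Mazur/Bochner-integral step, which the paper states more briefly.
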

\begin{proof}[{\bf Proof.}] We have $V(t) = T(t)(\varphi,\psi) + W(t),$  where
$$ W(t) = \int_0^t T(s)H(t-s) ds.$$
Since $T(t)(\varphi,\psi)\longrightarrow 0$ in $X$ as $t\rightarrow +\infty$, there is a compact subset $K_1$ of $X$ such that $\bigcup_{t\geq 0}\{T(t)(\varphi,\psi)\}\subset K_1$. It is  therefore sufficient to prove that there is a compact subset $K_2$ of $X$ for which
$$\bigcup_{t\geq 0}\{W(t)\}\subset K_2.$$
Let $\varepsilon > 0$, and according to \eqref{Equ4.14}, let $\tau$ be such that 
$$\Vert H \Vert_{L^{+\infty}({0,\infty, X)}}\int_\tau^\infty\Vert T(s)\Vert_{L(X)}ds< \varepsilon.$$
For $t \geq \tau $, we have $$\Vert W(t)- \int_0^\tau T(s)H(t-s) ds \Vert_X < \varepsilon$$ consequently,
\begin{equation}\label{Equ4.15}\bigcup_{t\geq \tau}\{W(t)\}\subset K_3+B(0,\varepsilon)\end{equation}
with 
$$K_3=\bigcup_{t\geq \tau}\{\int_0^\tau T(s)H(t-s) ds\}. $$ Observe that the map $(s,x)\mapsto T(s)x$ is continuous: $[0,+\infty)\times X\rightarrow X.$ As a consequence, $\displaystyle U=\bigcup_{0\leq t\leq \tau}{T(t)K}$ is compact in $X$. Hence, $F=\tau\cdot {\rm conv}(U)$ is precompact in $X$. Since $K_3\subset F$, $K_3$ is precompact in $X$. By \eqref{Equ4.15}, we can cover $\displaystyle \bigcup_{t\geq \tau}\{W(t)\}$ by a finite union of balls of radius $2\varepsilon$. On the other hand, $W\in C([0,+\infty),X)$, hence $\displaystyle \bigcup_{0\leq t\leq \tau}\{W(t)\}$ is compact and can also be covered by a finite union of balls of radius $2\varepsilon$. Finally we can cover $\displaystyle \bigcup_{t\geq 0}\{W(t)\}$ by a finite union of balls of radius $2\varepsilon$. Since $\varepsilon>0$ is arbitrary, $\displaystyle \bigcup_{t\geq 0}\{W(t)\}$ is precompact, and the conclusion follows.
\end{proof}

 \begin{proof}[{\bf Proof of Theorem \ref{Theorem 4.5.1.}}] We define an unbounded operator $A^{\gamma}$ on
$X$ by $$   D(A^{\gamma}) = \{(u,v)\in X, \Delta u\in L^2 \hbox{ and } v\in H^1_0\};    $$
$$A^{\gamma}(u,v) = (v, \Delta u - \gamma v), \forall (u,v)\in D(A^{\gamma}).$$
It is easily seen that $A^{\gamma}$ is m-dissipative \index{m-dissipative} on $X$. As a consequence of Proposition \ref{Proposition2.3.3.}, the contraction semi-group $T(t)$ generated by  $A^{\gamma}$  on $X$ satisfies \eqref{Equ4.14}.\\
Now set $ U(t)=(u(t),u_t(t))$ and $H(t)=(0, -f(u(t))$, for $t \geq 0$. Clearly $u$ is a solution of \eqref{wave} \index{wave equation} on $[0,\tau]$ if, and only if $ U \in C([0,\tau];X)$ and $U$ is a solution of the equation
$$ U(t) = T(t)(\varphi,\psi) + \int_0^t T(t-s)H(s) ds,\quad \forall t\in [0, \tau].$$
Now we recall the energy identity
$$\gamma \int_0^t\int_{\Omega}u_t^2(t,x) dx dt  + E(u(t),u_t(t)) = E(\varphi,\psi) $$ with $$ E(\varphi,\psi): = {1\over 2}\int_{\Omega}\Vert \nabla \varphi(x)\Vert^2 dx + {1\over 2}\int_{\Omega}\vert  \psi(x)\vert^2 dx + \int_{\Omega}F(\varphi(x)) dx.$$ 
$E$ is continuous on $X$, hence on $(Z, d)$ where $Z$ is the closure of $u({\mathbb R}^+)$ in $X$. The energy identity shows that $E(u(t), u_t(t))$ is non-increasing. The set of stationary points of the dynamical system \index{dynamical system} is easily identified as $ \cE\times \{0\}$.
 On the other hand the function $H: {\mathbb R}^+ \rightarrow X$ defined by $H(t)=(0, -f(u(t))$ for $t \geq 0$ is such that $H({\mathbb R}^+)$ is precompact in $X$. (This comes from the strict  condition: $r <2/(N -2)$ if $ N\geq 3.$) Applying Lemma \ref{Lemma Webb4.5.2.}, we obtain compactness of bounded trajectories in $X$. Then the topologies of $X = H^1_0 \times  L^2 $ and $Y = L^2 \times H^{-1} $ coincide on $Z$ and an easy calculation using the equation now shows that
$$U' = (u_t, u_{tt})\in L^2(\R^+, Y).$$
 Indeed the energy identity gives $ u_t \in L^2(\R^+, L^2) .$ On the other hand the growth assumption on $f$ is
easily seen to imply
$$ \forall (u, v)\in X,\quad f'(u) v \in H^{-1} $$  with
$$ \Vert f'(u) v \Vert _{ H^{-1}} \leq K(1+ \Vert u\Vert_{H^1_0}^r)\Vert
v\Vert_{L^2}  .$$  By multiplying
the equation by $u_{tt}$ in the sense of $ H^{-1}$ and integrating in $t$ we find
$$ \int_0^t \Vert u_{tt}\Vert_{ H^{-1}} ^2 ds + \frac {\gamma}{2}[\Vert u_t\Vert_{ H^{-1}} ^2(0) -
\Vert u_t\Vert_{ H^{-1}} ^2(t)] + \left[\int _\Omega u u_t dx \right]_0^t$$
$$ + [\langle f(u),  u_t \rangle_{ H^{-1}} ]_0^t + \int_0^t \langle-\Delta u, u_{tt}\rangle_{ H^{-1}} ds =   \int_0^t \langle u_t, f'(u)u_t\rangle_{ H^{-1}} ds. $$ Hence, using the identity 
$$ \int_0^t \langle\Delta u, u_{tt}\rangle_{ H^{-1}} ds =  \int_0^t \Vert \nabla u_{t}\Vert ^2_{ H^{-1}} ds +  [\langle\Delta u, u_{t}\rangle_{ H^{-1}} ]_0^t  $$ $$= \int_0^t \Vert u_t\Vert_2^2 ds +  [\langle\Delta u, u_{t}\rangle_{ H^{-1}} ]_0^t $$ we derive easily
$$\int_0^t \Vert u_{tt}\Vert_{ H^{-1}} ^2 ds \le C_1 + C_2 \int_0^t \Vert u_t\Vert_2^2 ds .$$
Then the conclusion follows as in the previous example.
 \end{proof}
\begin{rem}\label{Remark 4.5.3.}{\rm Under the conditions of Proposition \ref{propEqOnde}, the
conclusions of Theorem \ref{Theorem 4.5.1.} are valid for any solution $u$ of \eqref{wave}\index{wave equation}.}
\end{rem}

%%%%%%%%%%%%%%%%%%%%%%%%%%%%%%%%%%%%%%%
%%%%%%%%%%%%%%%%%%%%%%%%%%%%%%%%%%%%%%%%

\chapter[Liapunov's second method \index{Liapunov} - invariance principle]{Liapunov's second method and the  invariance principle}

\section{Liapunov's \index{Liapunov} second method}
    As explained in Section \ref{RemarkLiapunov}, the Liapunov \index{Liapunov stability} stability theorem for equation \eqref{ODEFirstOrder} can be proved by exhibiting a positive definite quadratic form which decreases exponentially  along the trajectory if the initial data are close enough to the equilibrium \index{equilibrium point} under consideration: such a function is called a {\it Liapunov function}. Sometimes it is possible to find directly such a function without calculating the fundamental matrix of the linearized equation, and this is the principle of the so-called `direct" or second Liapunov \index{Liapunov} method. This method can often be reduced to the following simple criterion 

\begin{prop}\label{prop7.2.1} Let $V \in C^1(\R^N)$ be such that  $V(u)$ tends to $+ \infty $ as $\Vert u\Vert \rightarrow + \infty $ and let $a\in\R^N$ be such that
\begin{equation}\label{VftLiapunov} \forall u \not = a, \quad \langle V'(u), f(u)\rangle < 0 .
\end{equation} Then we have  $f(a) = 0$ and in addition
\begin{itemize}
\item[-] $ \forall u \in \R^N,\ V(u)\ge V(a) $ 
\item[-] $a$ is an asymptotically stable \index{asymptotically stable} equilibrium \index{equilibrium point} point of the equation $ u' = f(u)$.
\end{itemize}
\end{prop}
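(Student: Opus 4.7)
The plan is to exploit $V$ as a strict Liapunov function: first identify $a$ as the unique global minimum of $V$, next deduce $f(a)=0$, then obtain stability from Corollary \ref{stab-comp} and attraction from the $\omega$-limit set analysis of Theorem \ref{thm 4.1.8.}. For the minimum step, since $V\in C^1(\R^N)$ is coercive it attains its infimum at some $b\in \R^N$, where $V'(b)=0$ and hence $\langle V'(b), f(b)\rangle=0$; the hypothesis \eqref{VftLiapunov} then forces $b=a$. Applied to any other global minimizer the same computation produces the same contradiction, so $a$ is the \emph{unique} global minimizer of $V$; in particular $V'(a)=0$ and $V(u)>V(a)$ whenever $u\neq a$, which already secures the second conclusion.

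To obtain $f(a)=0$ I would argue by contradiction using any local solution of $u'=f(u)$ with $u(0)=a$: if $f(a)\neq 0$, then by continuity $u(t)\neq a$ on some interval $(0,\varepsilon)$, and on that interval $\frac{d}{dt}V(u(t))=\langle V'(u(t)), f(u(t))\rangle<0$, yielding $V(u(t))<V(a)$ in contradiction with the minimum property just established. Stability of $a$ is then a direct application of Corollary \ref{stab-comp}: closed balls of $\R^N$ are compact, $V$ strictly exceeds $V(a)$ off $a$, and by \eqref{VftLiapunov} the function $V$ is nonincreasing along trajectories, so all hypotheses are met.

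For the attraction, stability allows me to choose $\delta_0>0$ so that every trajectory starting in $\|u_0-a\|<\delta_0$ remains in a fixed compact set; Theorem \ref{thm 4.1.8.} then gives that $\omega(u_0)$ is a nonempty compact invariant set. Since $V(u(t))$ is nonincreasing and bounded below by $V(a)$, it has a limit $c$, and continuity of $V$ forces $V\equiv c$ on $\omega(u_0)$. For any $b\in\omega(u_0)$, invariance yields $V(S(t)b)=c$ for all $t\ge 0$, so differentiating at $t=0$ gives $\langle V'(b), f(b)\rangle=0$; by \eqref{VftLiapunov} this forces $b=a$, whence $\omega(u_0)=\{a\}$ and $u(t)\to a$. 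The only mild technical point I anticipate is that the statement does not explicitly specify the regularity of $f$ required for a well-defined flow; I would tacitly assume $f$ locally Lipschitz so that Theorem \ref{Theorem 4.1.10.} produces a genuine dynamical system on the bounded sublevel sets of $V$, after which the argument above runs without further difficulty.
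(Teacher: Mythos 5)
Your proof is correct and follows essentially the same route as the paper: coercivity plus the critical-point argument identifies $a$ as the global minimizer, Corollary \ref{stab-comp} gives stability, and the $\omega$-limit/invariance analysis forces $\omega(u_0)=\{a\}$ and hence attraction. The only (harmless) local differences are that you obtain the strict minimum $V(u)>V(a)$ for $u\neq a$ directly from uniqueness of the coercive minimizer (every minimizer is a critical point of $V$, hence equals $a$ by \eqref{VftLiapunov}) and prove $f(a)=0$ by a direct contradiction along the trajectory issued from $a$, whereas the paper extracts both facts from the $\omega$-limit argument itself; your versions are, if anything, slightly cleaner.
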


 \begin{proof}[{\bf Proof.}] Since $V$ is continuous and $V(u)$ tends to $+ \infty $ as $\Vert u\Vert \rightarrow + \infty $, then there exists $b\in \R^N$ such that $V(u)\geq V(b)$ for all $u\in \R^N$. Clearly we have $V'(b)=0$ and now \eqref{VftLiapunov} imply that $b=a$.

 Once again by \eqref{VftLiapunov} $V$ is non-increasing along the trajectories, therefore all trajectories are bounded. Given such a trajectory $u$ , let $\varphi\in \omega (u(0))$ \index{$\omega$-limit set}  and let $z$ be the solution of
$$ z' = f(z)\quad z(0) = \varphi $$  Since $V(u(t)) $ tends to a limit $l$ as $t \rightarrow + \infty $, we have by \eqref{Equ4.2}
$$ \forall t, \quad V(z(t)) = l$$ 
  and then 
  $$ \forall t, \quad \langle V'(z(t)), f(z(t))\rangle= {d\over {dt}} V(z(t)) = 0. $$ In particular $ \forall t, \quad z(t) = a$, hence $ \varphi= a$ and since $z$ is constant we have $f(a) = f(z(0)) = z'(0) = 0$.  The stability of $a$ follows easily from Corollary \ref{stab-comp}. Indeed for any trajectory $u$ we have $$\frac{d}{dt} V(u(t) =  \langle V'(u(t)), f(u(t))$$  therefore either $u(t) = a$ or $\frac{d}{dt} V(u(t) < 0. $ Whenever $u(0) \not = a$ we deduce that $V(a) = \lim V(u(t)) < V(u(0))$ . \end{proof}
  
\begin{example}{\rm Let us consider the system 
 $$ u' = -u + \frac{cv}{1 + \alpha\vert v\vert}; \quad v' = -v + \frac{du}{1 + \beta\vert u\vert} $$ where $\alpha\ge 0, \, \beta\ge 0$ and $\sup\{\vert c \vert, \vert d \vert\}< 1$ which has the form \eqref{ODEFirstOrder} with $f$ Lipschitz but not differentiable at the origin except when $\alpha=\beta=0$ . Setting $$ V(u,v) = u^2 + v^2 $$ we find easily that 
 $$ \forall (u, v)  \not = (0, 0), \quad \langle V'(u, v)), f(u, v)\rangle  = -2 (u^2 + v^2) + uv ( \frac{c}{1 + \alpha\vert v\vert} +\frac{d }{1 + \beta\vert u\vert})$$ $$ \le -2 (1 - \sup\{\vert c \vert, \vert d \vert\} ) (u^2 + v^2) < 0 $$ Therefore $(0,0)$ is the unique equilibrium \index{equilibrium point} point and is globally asymptotically \index{asymptotically stable} (here exponentially) stable. In the special case $c= d>0$ and $\alpha=\beta=1$ , assuming $u_0\ge 0, v_0\ge 0$ the solutions of the above system with initial data $(u_0 , v_0)$ remain non-negative for all times and coincide with the solutions of 
 $$ u' = -u + \frac{cv}{1 + v}; \quad v' = -v + \frac{cu}{1 +  u} $$ which is known as the Naka-Rushton model for neuron dynamics in the short term memory framework. 
}\end{example}

\section{Asymptotic stability obtained by Liapunov \index{Liapunov} functions}

 Consider the nonlinear
 wave \index{wave equation} equation
\begin{equation}\label{Equ3.21} u_{tt} - \Delta u +g( u_t) = 0 \quad\hbox{  in} \, \,{\mathbb R}^+ \times
\Omega,
\quad     u = 0  \   \hbox{on} \, \, {\mathbb R}^+ \times \partial\Omega \end{equation}
where $\Omega$ is a bounded domain of $\R^N$ and $ g$ satisfies the conditions
\begin{equation}\label{Equ3.22} \exists \alpha >0, \    g(v)v \geq  \alpha\vert v\vert ^2, \ \forall v \in{\mathbb R}
 \end{equation}
  \begin{equation}\label{Equ3.23} \exists C \geq 0, \  \vert g(v)\vert  \leq C(\vert v\vert  + \vert v\vert ^{\gamma}),\  \forall v \in{\mathbb R}
 \end{equation}
with :
\begin{equation}\label{Equ3.24}
\gamma > 1 \quad\hbox{and if}\quad   N\geq3, \quad\gamma  \leq (N+2)/(N-2).
\end{equation}

For the sake of simplicity we consider classical solutions of \eqref{Equ3.21} for which
differentiations are plainly justified. We obtain the following result of global
asymptotic stability :
\begin{thm}\label{thm3.3.3} Let $$u\in L^\infty_{loc}({\mathbb R}^+, H^2\cap H^1_0(\Omega
))\cap W^{1,\infty}_{loc} ({\mathbb R}^+,  H^1_0(\Omega
))\cap W^{2,\infty}_{loc} ({\mathbb R}^+,  L^2(\Omega
))$$  be a solution of \eqref{Equ3.21}. Then
we have
\begin{equation}\label{Equ3.25} \int_\Omega\{\vert \nabla u \vert ^2+ u_t^2\}(t, x) dx \leq M \Bigl(\int_\Omega\vert \nabla u(0,
x) \vert ^2 dx, \int_\Omega\vert  u_t(0,
x) \vert ^2 dx \Big) e^{-\delta t}
\end{equation}
 where  $\delta>0$ depends only on $\alpha,
C$ and $\gamma$ and  $M$ is bounded on bounded sets.
\end{thm}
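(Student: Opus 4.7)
The plan is to adapt the classical perturbed–energy (Liapunov) method for the linearly damped wave equation to the nonlinear damping $g(u_t)$. First, multiplying \eqref{Equ3.21} by $u_t$ and integrating by parts yields the dissipation identity $E'(t) = -\int_\Omega g(u_t) u_t \, dx$, where $E(t) := \frac{1}{2}\int_\Omega(|\nabla u|^2 + u_t^2)\, dx$. By hypothesis \eqref{Equ3.22}, $E'(t) \le -\alpha\|u_t\|_{L^2}^2 \le 0$, so $E$ is non-increasing and in particular $\|u(t)\|_{H^1_0}$ and $\|u_t(t)\|_{L^2}$ are a priori bounded, uniformly in $t$, by $\sqrt{2E(0)}$.

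Next, for a small parameter $\varepsilon>0$ to be chosen, I would introduce the modified energy
\begin{equation*}
F(t) := E(t) + \varepsilon \int_\Omega u(t) u_t(t) \, dx.
\end{equation*}
Cauchy--Schwarz and Poincar\'e's inequality (with constant $\lambda_1$) give $|F(t)-E(t)| \le (\varepsilon/\sqrt{\lambda_1})\,E(t)$, so for $\varepsilon$ small (depending only on $\lambda_1$), $\tfrac{1}{2}E(t) \le F(t)\le \tfrac{3}{2} E(t)$. Using the PDE and integration by parts,
\begin{equation*}
F'(t) = -\!\int_\Omega g(u_t)u_t\, dx + \varepsilon\|u_t\|_{L^2}^2 - \varepsilon\|\nabla u\|_{L^2}^2 - \varepsilon\!\int_\Omega u\, g(u_t)\, dx.
\end{equation*}
The first three terms combine via \eqref{Equ3.22} into $-(\alpha-\varepsilon)\|u_t\|_{L^2}^2 - \varepsilon\|\nabla u\|_{L^2}^2$, which is strictly dissipative in both components once $\varepsilon < \alpha$.

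The core obstacle is controlling the cross term $\int_\Omega u\, g(u_t)\, dx$. Using \eqref{Equ3.23} I would write $|u g(u_t)| \le C(|u||u_t| + |u||u_t|^\gamma)$. The linear piece is handled by Cauchy--Schwarz, Young and Poincar\'e. For the superlinear piece, H\"older with the exponent pair $(2^*, 2^{*\prime})$, where $2^* = 2N/(N-2)$, gives
\begin{equation*}
\int_\Omega |u||u_t|^\gamma\, dx \le \|u\|_{L^{2^*}}\,\|u_t\|_{L^{q}}^{\gamma},\qquad q = \tfrac{2N\gamma}{N+2}.
\end{equation*}
The growth restriction \eqref{Equ3.24} is precisely what guarantees $q \le 2^*$, so the Sobolev embedding $H^1_0(\Omega)\hookrightarrow L^{2^*}(\Omega)$ applies to both factors. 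Interpolating $\|u_t\|_{L^q}$ between $\|u_t\|_{L^2}$ (controlled by the dissipation) and $\|u_t\|_{L^{2^*}}$ (controlled via Sobolev), and using the uniform $H^1_0$-bound on $u$ established in the first step to absorb the $L^{2^*}$-norm of $u$ into a constant depending only on $E(0)$, I would arrive at
\begin{equation*}
\Big|\int_\Omega u g(u_t)\, dx\Big|\le \eta\, E(t) + C(\eta,\alpha,C,\gamma, E(0))\|u_t\|_{L^2}^2
\end{equation*}
for any $\eta>0$.

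Finally, plugging this into the expression for $F'(t)$ and choosing $\eta$ and then $\varepsilon$ sufficiently small, I would obtain $F'(t) \le -\delta\, F(t)$ for some $\delta>0$. Gronwall's lemma (Lemma \ref{Gronwall Lemma}) then yields $F(t)\le F(0)e^{-\delta t}$, and the equivalence $F\sim E$ gives \eqref{Equ3.25} with $M$ bounded on bounded sets of initial data. The main difficulty is the critical case $\gamma = (N+2)/(N-2)$, where the Sobolev embedding is saturated and the absorption of the nonlinear cross term demands careful exploitation of the a priori $H^1_0$-bound on $u$ together with the interpolation estimate on $u_t$; treating first the strictly subcritical regime and then approaching criticality by density arguments is a natural way to organise the proof.
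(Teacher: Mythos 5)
Your overall framework is the same as the paper's: the perturbed energy $\Phi_\varepsilon = E + \varepsilon\int_\Omega u\,u_t\,dx$, its equivalence with $E$ for small $\varepsilon$, and the computation of $\Phi'_\varepsilon$ all match. The gap is in your treatment of the cross term $\int_\Omega u\,g(u_t)\,dx$, which is indeed the core of the proof. Your plan interpolates $\Vert u_t\Vert_{L^q}$, $q=2N\gamma/(N+2)$, between $\Vert u_t\Vert_{L^2}$ and $\Vert u_t\Vert_{L^{2^*}}$, with the latter ``controlled via Sobolev''. But Sobolev controls $\Vert u_t\Vert_{L^{2^*}}$ by $\Vert u_t\Vert_{H^1_0}$, and no uniform-in-time bound on $\Vert u_t\Vert_{H^1_0}$ is available: the energy only controls $\Vert u_t\Vert_{L^2}$, and the regularity hypothesis gives $u_t\in L^\infty_{loc}(\R^+,H^1_0)$ only locally in time. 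Whenever $\gamma>(N+2)/N$ one has $q>2$, and the quantity $\int_\Omega |u|\,|u_t|^\gamma dx$ simply cannot be bounded by $\eta E + C(E(0))\Vert u_t\Vert_{L^2}^2$ using only the energy bounds (let $u_t$ concentrate on a small ball at fixed $L^2$ norm and the left side blows up), so your key absorption inequality is unjustified and, as stated, false. Your closing suggestion to reach the critical exponent ``by density'' also does not make sense here, since $\gamma$ is fixed by the equation, not a parameter you can approximate in.

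The paper's way out is to estimate $g(u_t)$ not through Lebesgue norms of $u_t$ but through the dissipation integral itself: from \eqref{Equ3.22}--\eqref{Equ3.23} one gets $|g(v)|\le 2C|v|$ for $|v|\le 1$ and $|g(v)|^{\gamma+1}\le 2C\,(vg(v))^{\gamma}$ for $|v|>1$, whence with $\beta=(\gamma+1)/\gamma$
$$\Vert g(u_t)\Vert_{L^\beta}\le C_1\Vert u_t\Vert_{L^\beta}+C_2\Bigl(\int_\Omega u_t\,g(u_t)\,dx\Bigr)^{1/\beta}.$$
Pairing this with $\Vert u\Vert_{L^{\gamma+1}}\le C\Vert u\Vert_{H^1_0}$ (this is where \eqref{Equ3.24} enters) and applying Young's inequality with exponents $\gamma+1$ and $\beta$, the dissipation integral reappears with a coefficient strictly less than $1$ and is reabsorbed by the good term $-\int_\Omega g(u_t)u_t\,dx$ in $\Phi'_\varepsilon$; what is left over is $(C_2\varepsilon)^{\gamma+1}\Vert u\Vert^{\gamma+1}$. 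Because this remainder is superquadratic, the admissible $\varepsilon$ (hence the decay rate) initially depends on $E(0)$; the paper then runs a two-step argument --- first a decay rate depending on $E(0)$, then, once $E(t)\le 1$ for $t\ge T_0(E(0))$, a rate depending only on $\alpha$, $C$, $\gamma$ --- to obtain the stated dependence of $\delta$ and $M$. Your proposal does not address this last point either.
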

 \begin{proof}[{\bf Proof.}]  We denote by $(\cdot,\cdot)$ the inner product in $L^2(\Omega)$, by $\vert\cdot\vert$ the corresponding norm  and by  $\Vert\cdot\Vert$ the norm in $H^1_0(\Omega)$.
In addition the duality pairing on $H^{-1}(\Omega)\times H^1_0(\Omega)$ will be denoted by $\langle\cdot,\cdot\rangle$.
Now we define
$$ \Phi_{\varepsilon}(t) = \Vert u(t)\Vert^2 + \vert u_t(t)\vert ^2 + \varepsilon(u(t), u_t(t))    $$
where $\varepsilon> 0$ is at our disposal. For $\varepsilon$ small enough, $ \Phi_{\varepsilon}$ is comparable to the usual energy \index{energy} and we obtain :
$$\frac{d}{dt}\{ \Vert u(t)\Vert^2 + \vert u_t(t)\vert ^2\} = \langle u_{tt} + Lu , u_t \rangle = - 2\int_\Omega g(u_t)u_t dx$$
$$\frac{d}{dt}(u(t), u_t(t)) = \vert u_t(t)\vert ^2 + \langle u_{tt}(t) , u(t)\rangle = \vert u_{t}(t)\vert ^2 - \Vert u(t)\Vert^2  -
 \int_\Omega g(u')u\,dx.$$
Therefore :
\begin{equation}\label{Equ3.26}
\frac{d\Phi_{\varepsilon}}{ dt} = - 2 \int_\Omega g(u_t)u_t dx
+\varepsilon\vert u_t(t)\vert ^2 -\varepsilon\Vert u(t)\Vert^2  - \varepsilon
\int_\Omega g(u_t)u dx.
\end{equation}
It follows from \eqref{Equ3.22} and  \eqref{Equ3.23} that
$$ \vert g(v)\vert  \leq 2 C \vert  v \vert        \quad\hbox{for }\quad \vert v\vert  \leq 1$$
$$ \vert g(v)\vert ^{\gamma+1} \leq 2 C (v g(v))^{\gamma}   \quad\hbox{for }\quad \vert v\vert  > 1.$$
In particular for each $v\in L^{\gamma+1}(\Omega)$ we have by setting  $\beta
=\frac{\gamma+1}{\gamma}$ and denoting as  $\Vert \cdot \Vert_\beta $ the norm in $L^{\beta}(\Omega)$
$$\Vert g(v)\Vert_\beta \leq 2C \Vert v\Vert_\beta + (2 C)^{\frac{1}{\gamma+1}} \left(\int_\Omega v g(v)
dx\right)^{\frac1\beta} \leq C_1 \Vert v\Vert_\beta + C_2\Bigl( \int_\Omega v g(v) dx\Bigr)^\frac{1}{\beta}.$$
Since the condition $\gamma  \leq (N+2)/(N-2) $  yields $ \beta=\frac{\gamma+1}{\gamma} \geq\frac{
2N}{N+2} = (2^*)'$, \eqref{Equ3.26} implies
$$\frac{d\Phi_{\varepsilon}}{ dt} = (- \alpha +\varepsilon) \vert u_t(t)\vert ^2 -
\varepsilon\Vert u(t)\Vert^2 + K_1\varepsilon \Vert u(t)\Vert \vert u_t(t)\vert $$
\begin{equation}\label{Equ3.27} - \int_\Omega g(u_t)u_t dx   + C_2 \varepsilon \Bigl(\int_\Omega u_t g(u_t)
dx)\Bigr)^{\frac1\beta} \Vert u(t)\Vert.
\end{equation}
By reordering the terms and using Young's
inequality with exponents $\gamma+1$ and $\beta$ we deduce from \eqref{Equ3.27} :
$$\frac{d\Phi_{\varepsilon}}{ dt}\leq (- \frac{\alpha}{2} +\varepsilon) \vert u_t(t)\vert ^2  +(K\varepsilon ^2-
\varepsilon)\Vert u(t)\Vert^2 + (C_2\varepsilon)^{\gamma+1} \Vert u(t)\Vert^{\gamma+1}.$$ Since
$2 E(t) = \Vert u(t)\Vert^2 + \vert u_t(t)\vert ^2$  is a nonincreasing function ot $t \geq 0,$ we can first
choose $\varepsilon>0$ small, depending on $E(0)$, such that
\begin{equation}\label{Equ3.28} \forall t \geq 0, \quad  \frac{d\Phi_{\varepsilon}}{ dt} \leq  - \frac{\varepsilon}{2} \{\Vert u(t)\Vert^2 + \vert u_t(t)\vert ^2\}.
\end{equation}
This shows that $E(t)\rightarrow 0$ exponentially, uniformly on
bounded subsets of $  H^1_0(\Omega)\times L^2(\Omega)$. Then for each initial
condition , we can  find $T_0>0,$ depending on $E(0)$, such that $E(t) \leq 1$ whenever
$t \geq T_0.$ Now for $t \geq T_0$, we have \eqref{Equ3.28} with $\varepsilon$ independent of $E(0)$.
Hence \eqref{Equ3.25} follows with $\delta$ independent of $E(0)$.\bigskip
 \end{proof}
 In section \ref {Linear instability and Bellman's approach}, we saw that even in the nonlinear case , the existence of an eigenvalue $s$ of $ Df(a)$ with $Re(s)> 0$ implies the instability of $a$. On the other hand, in the marginal case $Re(s)=  0$ (for instance when $s= 0$) , a can still be \index{asymptotically stable} asymptotically stable, as shown by the next examples.

1) A typical example of such a situation is the first order ODE
\begin{equation}\label{Equ3.29}
 u' = - \vert u\vert ^{p-1} u  ,  \quad      t \geq 0
\end{equation}
with $p>1.$ The solutions $u ÃÂÃÂÃÂÃÂ­ 0$ of \eqref{Equ3.29} are given by the formula
\begin{equation}\label{Equ3.30}
u(t) ={sgn(u_0)\over{\{(p-1)t + \vert u_0 \vert ^{1-p}}\}^{1\over{p-1}}}.
\end{equation}

It is clear from \eqref{Equ3.30} that $$ \vert u(t) \vert \sim
\Bigr\{{1\over{(p-1)t}}\Bigl\}^{1\over{(p-1)}}$$ as $t\rightarrow +\infty $ for every
$u_0 \not= 0.$ Analogous, but somewhat artificial parabolic example can be given. Let us
consider now some second order examples.\bigskip

2) First we consider the equation ( with  $c>0, p>1.$)
\begin{equation}\label{Equ3.31}
u'' + u + c \vert u'\vert ^{p-1} u' = 0  ,  \quad       t \geq 0.
\end{equation}
We set
$\varphi(t) = (u^2+ u'^2)(t)$: then
$$\varphi'(t) = -2c \vert u'\vert ^{p+1} \geq -2c (u^2+ u'^2)^{ (p+1)/2}= -2c\varphi(t)^{
(p+1)/2}$$and as in the previous example we deduce
$$ \varphi(t)\geq
\Biggr\{{1\over{[\varphi(0)]^{{1-p}\over2}+c(p-1)t}}\Biggl\}^{2\over{p-1}}.$$
Hence the energy \index{energy} tends to  $0$ at most like $t^{ -2/(p-1)}$  as  $t\rightarrow +\infty.$ In fact we have
\begin{prop}\label{prop3.4.1} For each solution u of \eqref{Equ3.31} we have
\begin{equation}\label{Equ3.33}\forall
t>0,\quad \{u^2(t) +u'^2(t)\}^{1\over 2}\leq C(u(0), u'(0))
t^{-{1\over{p-1}}}.\end{equation}
\end{prop}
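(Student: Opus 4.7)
The plan is to derive a differential inequality of the form $F'(t) \leq -K F(t)^{(p+1)/2}$ for a modified energy $F(t)$ equivalent to $E(t) := \tfrac{1}{2}(u^2+u'^2)(t)$, and then to conclude by ODE integration. The identity $E'(t) = -c|u'|^{p+1}$ already shows that $E$ is nonincreasing and bounded by $E_0 := E(0)$.

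I would set $F(t) := E(t) + \varepsilon E(t)^{(p-1)/2}u(t)u'(t)$ for a small $\varepsilon > 0$. Since $|uu'| \leq E$, one has $|\varepsilon E^{(p-1)/2}uu'| \leq \varepsilon E_0^{(p-1)/2} E$, so $F$ and $E$ are equivalent for $\varepsilon$ small. A direct differentiation using the equation and $(uu')' = u'^2 - u^2 - c|u'|^{p-1}u'u$ yields
$$
F' = -c|u'|^{p+1}\Bigl[1+\tfrac{p-1}{2}\varepsilon E^{(p-3)/2}uu'\Bigr] + \varepsilon E^{(p-1)/2}\bigl(u'^2 - u^2 - c|u'|^{p-1}u'u\bigr).
$$
The bracket is $\geq 1/2$ for $\varepsilon$ small because $|E^{(p-3)/2}uu'| \leq E^{(p-1)/2} \leq E_0^{(p-1)/2}$. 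A weighted Young inequality $|u'|^p|u| \leq \tfrac{\lambda p}{p+1}|u'|^{p+1} + \tfrac{1}{(p+1)\lambda^p}|u|^{p+1}$, combined with $|u|^{p+1} \leq (2E_0)^{(p-1)/2}u^2$, lets us split the cross term $-\varepsilon c E^{(p-1)/2}|u'|^{p-1}u'u$ into one piece absorbed by the dissipation and one piece dominated by $\tfrac{\varepsilon}{2}E^{(p-1)/2}u^2$. Calibrating $\lambda$ as a function of $c,p,E_0$ only, and then taking $\varepsilon$ small enough, one reaches
$$
F'(t) \leq -\tfrac{c}{4}|u'(t)|^{p+1} + \varepsilon E(t)^{(p-1)/2}u'(t)^2 - \tfrac{\varepsilon}{2}E(t)^{(p-1)/2}u(t)^2.
$$

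The argument then concludes by a dichotomy. If $u'(t)^2 \geq E(t)/2$, then $|u'|^{p+1} \geq (E/2)^{(p+1)/2}$ dominates the positive contribution $\varepsilon E^{(p-1)/2}u'^2 \leq 2\varepsilon E^{(p+1)/2}$ provided $\varepsilon$ is small enough, so $F' \leq -K_1 E^{(p+1)/2}$. If $u'(t)^2 < E(t)/2$, then $u(t)^2 > 3E(t)/2$, so $\tfrac{\varepsilon}{2}E^{(p-1)/2}u^2 > \tfrac{3\varepsilon}{4}E^{(p+1)/2}$ dominates $\varepsilon E^{(p-1)/2}u'^2 < \tfrac{\varepsilon}{2}E^{(p+1)/2}$, yielding $F' \leq -\tfrac{\varepsilon}{4}E^{(p+1)/2}$. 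In either case $F'(t) \leq -K E(t)^{(p+1)/2} \leq -K' F(t)^{(p+1)/2}$, and integrating the resulting inequality $\bigl(F^{-(p-1)/2}\bigr)' \geq \tfrac{(p-1)K'}{2}$ gives $F(t) \leq \bigl[F(0)^{-(p-1)/2} + \tfrac{(p-1)K'}{2}t\bigr]^{-2/(p-1)} \leq C(u(0),u'(0))\,t^{-2/(p-1)}$. The desired estimate \eqref{Equ3.33} then follows from $(u^2+u'^2)^{1/2} = \sqrt{2E} \leq 2\sqrt{F}$.

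The main delicate point is the calibration of the Young weight $\lambda$: one must ensure that the $|u|^{p+1}$ piece of the cross term is majorized by a fixed fraction of the good term $\tfrac{\varepsilon}{2}E^{(p-1)/2}u^2$ coming from the multiplier identity, while simultaneously keeping the $|u'|^{p+1}$ piece small compared to the dissipation. This tuning is made once, depending only on $c$, $p$ and $E_0$, and then $\varepsilon$ is chosen small. The dichotomy encodes the intuition that either the dissipation is active (when $|u'|$ is large compared to $\sqrt{E}$) or the conservative restoring term forces a nontrivial contribution of $u^2$ (when $|u'|$ is small), so that the combination always controls $E^{(p+1)/2}$.
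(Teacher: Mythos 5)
Your proof is correct and follows essentially the same route as the paper: a perturbed energy $\Phi_\varepsilon = u^2+u'^2 + \varepsilon\,(\text{degree-}(p+1)\text{ multiplier in } uu')$ is shown to satisfy $\Phi_\varepsilon' \le -\delta\,\Phi_\varepsilon^{(p+1)/2}$, which integrates to the stated decay. The only cosmetic differences are that the paper takes the multiplier $\vert u\vert^{p-1}uu'$, so that $-\vert u\vert^{p+1}$ appears directly and the inequality $\vert u\vert^{p+1}+\vert u'\vert^{p+1}\ge c_p\,(u^2+u'^2)^{(p+1)/2}$ closes the argument without your dichotomy, whereas your choice $E^{(p-1)/2}uu'$ is precisely the multiplier the authors themselves use for the PDE analogue in Theorem \ref{thm3.4.2}.
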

 \begin{proof}[{\bf Proof.}]   We set:
    $$  \Phi_{\varepsilon}(t) = u^2(t) + u'^2(t) + \varepsilon\vert u(t)\vert ^{p-1} u(t)
u'(t)$$ Then:   $$\Phi'_{\varepsilon} = -2c \vert u'\vert ^{p+1} + \varepsilon\vert u\vert ^{p-1}
(pu'^2+uu'') = -2c \vert u'\vert ^{p+1} + \varepsilon[p\vert u\vert ^{p-1}u'^2- \vert u\vert ^{p+1}$$
    $$ -c \vert u'\vert ^{p-1}u'\vert u\vert ^{p-1}u)    \leq     -2c \vert u'\vert ^{p+1} + \varepsilon\{-(1/2)\vert u\vert ^{p+1}
+ C \vert u'\vert ^{p+1}\},$$
where $ C $ depends only on $u(0), u'(0).$ For $\varepsilon >0$ small enough, we
therefore obtain
\begin{equation}\label{Equ3.34}  \Phi'_{\varepsilon} \leq  - (\varepsilon /2) \{ \vert u\vert ^{p+1} + \vert u'\vert ^{p+1}\} \leq - \delta
(\Phi_{\varepsilon})^{(p+1)/2}.
\end{equation}
Clearly, \eqref{Equ3.34} implies \eqref{Equ3.33} for $\varepsilon$ small enough.
 \end{proof}

3) Finally , by using the method of proof of Theorem \ref{thm3.3.3}, one can
prove\medskip

\begin{thm}\label{thm3.4.2} Assume that $g\in C^1(R)$ satisfies the conditions
$$ \exists\alpha >0,\     g(v)v \geq
\alpha\vert v\vert ^{p+1},\  \forall v \in{\mathbb R},$$
$$ \exists C \geq 0,  \vert g(v)\vert  \leq C(\vert v\vert  + \vert v\vert ^{\gamma}),  \forall v \in{\mathbb R},$$
with : $1<p\leq \gamma, \gamma$ satisfying \eqref{Equ3.24}. Then for each solution
$$u\in L^\infty_{loc}({\mathbb R}^+, H^2\cap H^1_0(\Omega ))\cap W^{1,\infty}_{loc} ({\mathbb R}^+,
H^1_0(\Omega ))\cap W^{2,\infty}_{loc} ({\mathbb R}^+,  L^2(\Omega))$$
 of \eqref{Equ3.21} we have
\begin{equation}\label{Equ3.37}  \int_\Omega\{\vert \nabla u \vert ^2+ u_t^2\}(t, x) dx \leq M \Bigl(\int_\Omega\vert \nabla u(0,
x) \vert ^2 dx, \int_\Omega\vert  u_t(0,
x) \vert ^2 dx \Big) t^{-1\over{p-1}}
\end{equation}
\end{thm}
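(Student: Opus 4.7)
The strategy mirrors the proof of Theorem \ref{thm3.3.3}, using the same perturbed energy
\[\Phi_\varepsilon(t)=\Vert u(t)\Vert^2+\vert u_t(t)\vert^2+\varepsilon(u(t),u_t(t)),\]
which is equivalent to the natural energy $E(t)=\tfrac12(\Vert u\Vert^2+\vert u_t\vert^2)$ for $\varepsilon>0$ small. The key difference from the linearly damped case is that the lower bound $g(v)v\ge\alpha\vert v\vert^{p+1}$ with $p>1$ controls the dissipation only in $L^{p+1}$ (which is weaker than $L^2$ near $u_t=0$), so one should not expect exponential decay; instead, I expect to produce a differential inequality of the nonlinear form $\Phi_\varepsilon'\le -\delta\Phi_\varepsilon^{(p+1)/2}$, which integrates to give a polynomial rate of the type asserted in \eqref{Equ3.37}.

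Concretely, I would first compute $\Phi_\varepsilon'(t)$ by the same calculation that led to \eqref{Equ3.26}, obtaining
\[\Phi_\varepsilon'(t)=-2\int_\Omega g(u_t)u_t\,dx+\varepsilon\vert u_t\vert^2-\varepsilon\Vert u\Vert^2-\varepsilon\int_\Omega g(u_t)u\,dx.\]
The dissipation hypothesis gives $\int_\Omega g(u_t)u_t\,dx\ge\alpha\vert u_t\vert_{p+1}^{p+1}$. Next I would estimate the cross term by splitting $\{\vert u_t\vert\le 1\}$ and $\{\vert u_t\vert>1\}$: the growth condition \eqref{Equ3.23} combined with the Sobolev embedding $H^1_0\hookrightarrow L^{\gamma+1}$ (licit since $\gamma\le(N+2)/(N-2)$) and the conjugate exponent $\beta=(p+1)/p$ yields an estimate of the form
\[\Bigl\vert\int_\Omega g(u_t)u\,dx\Bigr\vert\le C_1\Vert u\Vert\,\vert u_t\vert_2+C_2\Bigl(\int_\Omega g(u_t)u_t\,dx\Bigr)^{1/\beta}\Vert u\Vert,\]
just as in Theorem \ref{thm3.3.3}, and each of these terms can then be absorbed partly by $-\varepsilon\Vert u\Vert^2$ (via Young) and partly by the dissipation. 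After collecting everything and choosing $\varepsilon$ small enough depending on $E(0)$, the goal is to reach an inequality
\[\Phi_\varepsilon'(t)\le -\delta\bigl(\Vert u(t)\Vert^2+\vert u_t(t)\vert^2\bigr)^{(p+1)/2}\le -\delta'\Phi_\varepsilon(t)^{(p+1)/2},\]
which by elementary integration (as in Proposition \ref{prop3.4.1}) yields the polynomial decay of $E(t)$ at the rate stated in \eqref{Equ3.37}.

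The main obstacle is the handling of the offending positive contribution $\varepsilon\vert u_t\vert^2$, because the dissipation only controls $\vert u_t\vert_{p+1}^{p+1}$ and on a bounded domain H\"older gives only $\vert u_t\vert_2^2\le\vert\Omega\vert^{(p-1)/(p+1)}\vert u_t\vert_{p+1}^2$, which is \emph{not} the $L^{p+1}$ dissipation itself when $\vert u_t\vert$ is small. The trick I plan to use is to raise the relevant inequalities to the power $(p+1)/2$ so that one compares $E^{(p+1)/2}$ (rather than $E$) with $\vert u_t\vert_{p+1}^{p+1}$, invoking the a priori bound $E(t)\le E(0)$ to compensate for the mismatch between the natural $L^2$ scale of the energy and the $L^{p+1}$ scale of the dissipation. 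This is precisely what forces the differential inequality to be nonlinear of order $(p+1)/2$ and hence produces polynomial, not exponential, decay.
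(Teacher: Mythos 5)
Your overall strategy (perturbed energy, nonlinear differential inequality $\Phi_\varepsilon'\le-\delta\Phi_\varepsilon^{(p+1)/2}$, integration as in Proposition \ref{prop3.4.1}) is the right one, and you have correctly located the difficulty; but your proposed resolution of that difficulty does not work, and the gap is precisely in the choice of the cross term. With the unweighted perturbation $\varepsilon(u,u_t)$ the computation \eqref{Equ3.26} produces the positive contribution $\varepsilon\vert u_t\vert^2$, which is homogeneous of degree $2$ in the solution (order $E$), while the only negative quantity available in the $u_t$-direction is the dissipation $-2\int_\Omega g(u_t)u_t\,dx$, which the hypothesis controls only from below by $\alpha\Vert u_t\Vert_{p+1}^{p+1}$, i.e.\ a quantity of order $E^{(p+1)/2}$. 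Since $p>1$, one has $E^{(p+1)/2}\ll E$ as $E\to0$, so no fixed $\varepsilon>0$ allows the absorption: at a time where $\Vert u(t)\Vert$ is small compared to $\vert u_t(t)\vert$ (say $u(t_0)=0$, with $g(v)=\vert v\vert^{p-1}v$, which satisfies both hypotheses), one gets $\Phi_\varepsilon'(t_0)=-2\Vert u_t\Vert_{p+1}^{p+1}+\varepsilon\vert u_t\vert^2>0$ once the energy is small, so the target inequality $\Phi_\varepsilon'\le-\delta\Phi_\varepsilon^{(p+1)/2}$ is simply false for your $\Phi_\varepsilon$. Your proposed trick of ``raising to the power $(p+1)/2$ and invoking $E(t)\le E(0)$'' goes in the wrong direction: the a priori bound gives $E^{(p+1)/2}\le E(0)^{(p-1)/2}E$, i.e.\ it bounds the $(p+1)/2$-power \emph{above} by the first power, whereas absorbing $+\varepsilon\vert u_t\vert^2$ into $-c\,E^{(p+1)/2}$ would require the reverse inequality, which fails as $E\to0$.

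The repair is the one already modeled by Proposition \ref{prop3.4.1} in the ODE case and stated in the paper's sketch: take
$$\Phi_\varepsilon(t)=\Vert u(t)\Vert^2+\vert u_t(t)\vert^2+\varepsilon\bigl\{\Vert u(t)\Vert^2+\vert u_t(t)\vert^2\bigr\}^{\frac{p-1}{2}}(u(t),u_t(t)),$$
so that the cross term is itself homogeneous of degree $p+1$. Then the good negative term becomes $-\varepsilon E^{(p-1)/2}\Vert u\Vert^2$ (of order $E^{(p+1)/2}$, matching the target), the term produced by differentiating the weight is controlled by $C\varepsilon E^{(p-1)/2}\int_\Omega g(u_t)u_t\,dx$ and hence absorbed by the dissipation for $\varepsilon$ small depending on $E(0)$, and the offending term becomes $\varepsilon E^{(p-1)/2}\vert u_t\vert^2\le C\varepsilon E^{(p-1)/2}\Vert u_t\Vert_{p+1}^{2}$, which Young's inequality with exponents $\frac{p+1}{p-1}$ and $\frac{p+1}{2}$ splits into $\eta E^{(p+1)/2}+C_\eta\Vert u_t\Vert_{p+1}^{p+1}$; the first piece is absorbed by $-\varepsilon E^{(p-1)/2}\Vert u\Vert^2$ together with a small fraction of the dissipation, and the second by the dissipation, upon choosing first $\eta$ and then $\varepsilon$ small. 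Your treatment of the remaining cross term $\varepsilon\int_\Omega g(u_t)u\,dx$ via the splitting of $\{\vert u_t\vert\le1\}$ and $\{\vert u_t\vert>1\}$, the Sobolev embedding and the estimate $\Vert g(v)\Vert_\beta\le C_1\Vert v\Vert_\beta+C_2(\int_\Omega vg(v)\,dx)^{1/\beta}$ with $\beta=\frac{\gamma+1}{\gamma}$ carries over verbatim from Theorem \ref{thm3.3.3} once the extra weight $E^{(p-1)/2}$ is carried along.
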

 \noindent{\bf
Idea of the proof.} Let $$   \Phi_{\varepsilon}(t) = \Vert u(t)\Vert^2 + \vert u'(t)\vert ^2  +
\varepsilon \{ \Vert u(t)\Vert^2 + \vert u'(t)\vert ^2 \}^{{p-1}\over2} (u(t), u'(t))$$   By adapting
the proof of Theorem \ref{thm3.3.3} and Proposition \ref{prop3.4.1}, we establish$$\Phi'_{\varepsilon} \leq
 - \delta(\Phi_{\varepsilon})^{(p+1)/2},$$ valid for $\varepsilon>0$ small enough and
some $\delta>0 $ depending on the initial \index{energy} energy.

\begin{rem}{\rm It is not known whether \eqref{Equ3.37} is optimal when for instance
  $$g(v) = c\vert v\vert ^{p-1} v   ,\ c>0,\ p>1.$$ A very partial result in this direction (lower estimate comparable to the  upper decay estimate raised to the power $\frac{3}{2}$) can be found in  \cite{Ha94-09} in the case $N= 1$, relying on an argument specific to dimension 1.  }
\end{rem}

 \section[The Barbashin-Krasovski-LaSalle criterion]{The Barbashin-Krasovski-LaSalle criterion for asymptotic stability} 
 After Liapunov, the stability theory has been pursued mainly by the russian school which was also involved in control theory of ODE under the impulsion of major russian experts such as L. S. Pontryaguin. In this context, interesting contacts have been established between the russian school and american mathematicians such as J.K. Hale and J.P. LaSalle. The exchanges between J.P. LaSalle, E.A. Barbashin and N.N. Krasovskii led to the now well-known invariance principle, and LaSalle in his papers is quite clear about the influence of the russian school on his own research. To illustrate the progression of ideas, we start with a simple and convenient result about asymptotic stability.
 
 \begin{thm}\label{thm7.3.1} Let $f\in C^1(\R^N)$ and consider the differential system \eqref{ODEFirstOrder}.  Let $a\in \R^N$ be such that $f(a) = 0$ and $U$ be a bounded open set with $a\in U$ such that
\begin{itemize}
\item[(i)] For any $x$ close enough to $a$, the solution $u$ of \eqref{ODEFirstOrder} with $u(0) = x$ is global \index{global solution} and remains in $U$.
\item[(ii)] $\exists V \in C^1(\R^N)$  such that   $$ \forall u \in U, \quad \langle V'(u), f(u)\rangle \le  0 .$$  
\item[(iii)]  The set ${u \in \overline{U}, \quad \langle V'(u), f(u)\rangle =  0}$ contains the range of no trajectory of \eqref{ODEFirstOrder} except the constant trajectory $a$.
 \end{itemize}
Then $a$ is a strict local minimum of $V$, it is the only equilibrium \index{equilibrium point} point in $ \overline{U}$  and $a$ is an  asymptotically stable \index{asymptotically stable} equilibrium \index{equilibrium point} point of \eqref{ODEFirstOrder}.
\end{thm}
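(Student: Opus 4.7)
The plan is to prove, in order, the following: (a) for every $x$ sufficiently close to $a$, the trajectory $S(t)x$ converges to $a$; (b) $V$ attains a strict local minimum at $a$; (c) $a$ is Liapunov stable, hence asymptotically stable; (d) $a$ is the only equilibrium in $\overline{U}$.

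First I would fix $x$ close enough to $a$ that, by (i), $u(t)=S(t)x$ is global and remains in $U$. Since $U$ is bounded, the trajectory is precompact in $\R^N$, and Theorem \ref{thm 4.1.8.} applies: the $\omega$-limit set $\omega(x)\subset\overline U$ is non-empty, compact, connected and satisfies $S(t)\omega(x)=\omega(x)$ for every $t\geq 0$. By (ii), the function $t\mapsto V(u(t))$ is non-increasing and bounded below by $\min_{\overline U}V$, hence converges to some $c\in\R$. By invariance of $\omega(x)$ and continuity of $V$, $V\equiv c$ on $\omega(x)$. Consequently, for any $y\in\omega(x)$ the forward orbit $S(t)y$ stays in $\omega(x)$ for all $t\geq 0$, so $V(S(t)y)=c$ throughout, and differentiation yields $\langle V'(S(t)y),f(S(t)y)\rangle=0$ for every $t\geq 0$. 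Hypothesis (iii) then forces $S(\cdot)y\equiv a$, so $y=a$. Thus $\omega(x)=\{a\}$, and Theorem \ref{thm 4.1.8.}(iii) gives $u(t)\to a$.

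Second, I would upgrade this to the strict minimum property. For $x$ close to $a$ as above, the non-increase of $V$ along $u$ combined with $V(u(t))\to V(a)$ gives $V(x)\geq V(a)$; if equality held then $V(u(t))\equiv V(a)$ along the whole trajectory, hence $\langle V'(u),f(u)\rangle\equiv 0$ along it, and (iii) forces $u\equiv a$, i.e.\ $x=a$. Hence there is a neighborhood $U_0\subset U$ of $a$ in which $V(u)>V(a)$ for every $u\neq a$. Shrinking $U_0$ further if necessary so that (i) is in force on $U_0$, condition (ii) ensures $V(S(t)u)\leq V(u)$ for all $u\in U_0$ and $t\geq 0$, so Corollary~\ref{stab-comp} applies and $a$ is Liapunov stable; combined with the convergence of step one, $a$ is asymptotically stable. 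Uniqueness of the equilibrium in $\overline U$ is now immediate: if $f(b)=0$ with $b\in\overline U$, the constant trajectory $b$ satisfies $\langle V'(b),f(b)\rangle=0$ identically, and (iii) gives $b=a$.

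The main subtlety will be the proper use of hypothesis (iii): it is phrased as a dynamical statement (about trajectories whose range lies in the zero set of $\langle V',f\rangle$), not as a geometric statement about that set itself, so it must be accessed through the positive invariance of the $\omega$-limit set and the resulting constancy of $V$ on invariant sub-orbits of $\omega(x)$. All other steps are classical Liapunov-function arguments once that dynamical bridge has been crossed.
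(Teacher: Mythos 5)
Your proposal is correct and follows essentially the same route as the paper: identify $\omega(x)$ as $\{a\}$ via the invariance of the $\omega$-limit set, the constancy of $V$ on it, and hypothesis (iii); then deduce the strict local minimum and invoke Corollary \ref{stab-comp} for stability. Your explicit treatment of the uniqueness of the equilibrium in $\overline{U}$ is a small addition the paper leaves implicit, but the argument is the same.
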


 \begin{proof}[{\bf Proof.}]  Given  a trajectory $u$ of \eqref{ODEFirstOrder} with $u(0)$ close enough to $a$ so that $u$ remains in $U$, let $\varphi\in \omega (u)$ \index{$\omega$-limit set}  and let $z$ be the solution of
  $$ z' = f(z)\quad z(0) = \varphi $$  Since $V(u(t)) $ tends to a limit $l$ as $t \rightarrow + \infty $, we have 
  $$ \forall t\ge 0, \quad V(z(t)) = l$$ In addition $  \forall t\ge 0, \quad z(t) \in \overline{U}$ and  $\quad \langle V'(z(t)), f(z(t))\rangle= {d\over {dt}} V(z(t)) = 0. $ \\
  
  \noindent In particular as a consequence of (iii) we have $\forall t\ge 0, \quad z(t) = a, $ hence $ \varphi= a$.  So $u(t)$ converges to $a$ as $t\rightarrow \infty .$ Moreover if $u(0)\not = a, $ by (iii) there is some $T\in \R^+$ for which $ \langle V'(u(T)), f(u(T)) <0$ and then $ V(u(0)> V(a).$ Therefore  $a$ is a strict local minimum of $V$ and the conclusion now follows from Corollary \ref{stab-comp}.  \end{proof}
  
\begin{example}{\rm Let us consider the system 
 $$ u' = v; \quad v' = -u - g(v) + c $$ where $c\in \R$ and $g$ is increasing with $g(0) = 0$. Setting $$ V(u,v) = (u-c)^2 + v^2 $$ we find easily that 
 $ \forall (u, v) \in \R^2, \quad \langle V'(u, v)), f(u, v)\rangle  = -2 g(v)v \le  0 .$  Taking for $U$ any ball centered at $(c, 0)$ , conditions i) and ii) are obviously fulfilled. Then if a trajectory $(u,v)$ satisfies  $\langle V'(u, v)), f(u, v)\rangle  =0$ , from $  -2 g(v)v\equiv 0 $ we deduce $v\equiv 0$, hence $ v'\equiv 0$ and by the second equation $u\equiv c$. Finally $(c, 0)$ is the only equilibrium \index{equilibrium point} and is globally asymptotically stable \index{asymptotically stable} as a consequence of Theorem \ref{thm7.3.1}.}
\end{example}

 \begin{example}{\rm  Let us consider the system 
 $$ u' = v; \quad v' = J^{-1}( -p\sin u - kv + c) $$ where $c>0$ and $J, p, k$ are positive with $c<p$.  This represents the motion of a robot arm with one degree of freedom submitted to a constant torque. Setting $$ V(u,v) = {J\over 2} v^2 + p(1- \cos u)-cu$$ we find easily that 
 $$ \forall (u, v) \in \R^2, \quad \langle V'(u, v)), f(u, v)\rangle  = -k v^2 \le  0 $$ We claim that  the hypotheses of Theorem \ref {thm7.3.1} are satisfied when 
 $\alpha = \arg\sin {c\over p} $ and $a = ( \alpha, 0). $  Indeed from the equation above it follows that the function $V(u(t),v(t))$ is constant if and only if $(u(t), v(t))= (\beta, 0 )$ and $p\sin\beta =  c $.  Moreover,  setting $F(u) = p(1- \cos u)-cu $ we see immediately that 
 $$ F'(\alpha) = p\sin \alpha -c = 0, \quad   F''(\alpha) = p\cos \alpha> 0$$
 Therefore $a = ( \alpha, 0)$ , is a strict minimum of $V$, and is consequently a stable \index{stable} equilibrium \index{equilibrium point} by Corollary \ref{stab-comp}. Since $a$ is an isolated solution of this equation, the only possibility is $\beta = \alpha$.  By Theorem \ref{thm7.3.1} we conclude that $a$ is \index{asymptotically stable} asymptotically stable. The same property holds true for the other equilibria of the form $(\alpha+ 2k\pi, 0)$.}\end{example}

 \section{The general Lasalle's invariance principle}Let $(Z, d)$ be a complete metric space and $\{S(t)\}_{t\geq0} $ a
dynamical system \index{dynamical system} on $Z$.
\begin{defn}\label{Definition4.2.1.} A function $\Phi\in C(Z,{\mathbb R})$ is called a
{\it Liapunov  function} \index{Liapunov function}  for $\{S(t)\}_{t\geq0} $ if we have
\begin{equation}\label{Equ4.9} \Phi(S(t)z) \leq \Phi(z),\quad \forall z\in Z,\, \forall t\geq0. \end{equation}
\end{defn}
    \begin{rem}\label{Remark 4.2.2.}{\rm By using the semi-group property of $S(t)$, it is immediate to see that   $\Phi$ is a Liapunov \index{Liapunov function} function for
$\{S(t)\}_{t\geq0} $ if, and only if for each $z\in Z$ the function
$t\mapsto\Phi(S(t)z)$ is nonincreasing.}
\end{rem}
The following result is known as {\it LaSalle's invariance principle}.
\begin{thm}\label{Theorem 4.2.3.} (cf. \cite{{MR0127473}}) Let  $\Phi$ be a Liapunov \index{Liapunov function} function for
$\{S(t)\}_{t\geq0} $, and let $z\in Z$ be such that
${\displaystyle \bigcup_{t\geq0}\{ S(t)z}\}$ is precompact in  Z.
Then
\begin{itemize}
\item[(i)]$\displaystyle c= \lim_{t\rightarrow +\infty} \Phi(S(t)z) $ exists.
\item[(ii)]$ \Phi(y)=  c, \ \forall y\in \omega(z). $
\end{itemize}
In particular :   $$   \forall y\in \omega(z),\forall t \geq0,\quad  \Phi(S(t)y) = \Phi(y).$$
\end{thm}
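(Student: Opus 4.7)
The plan is to combine three ingredients already in place: the monotonicity of $t\mapsto\Phi(S(t)z)$ (Remark 4.2.2), the continuity of $\Phi$, and the invariance property $S(t)\omega(z)=\omega(z)$ (equation \eqref{Equ4.3} of Proposition \ref{prop4.1.7}), which applies because $\bigcup_{t\ge0}\{S(t)z\}$ is assumed precompact.

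First I would establish (i). Since $\Phi$ is a Liapunov function, Remark \ref{Remark 4.2.2.} tells us that $t\mapsto\Phi(S(t)z)$ is nonincreasing on $[0,+\infty)$. On the other hand, the precompact set $K:=\overline{\bigcup_{t\ge0}\{S(t)z\}}$ is a compact subset of $Z$, and $\Phi$ being continuous is bounded below on $K$. A nonincreasing function bounded below has a finite limit, so $c:=\lim_{t\to+\infty}\Phi(S(t)z)$ exists in $\mathbb{R}$.

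Next I would prove (ii). Let $y\in\omega(z)$. By Definition \ref{Definition4.1.5.} there is a sequence $t_n\to+\infty$ with $S(t_n)z\to y$ in $Z$. Continuity of $\Phi$ yields
$$\Phi(y)=\lim_{n\to\infty}\Phi(S(t_n)z)=c,$$
where the second equality uses part (i). Since $y$ was arbitrary in $\omega(z)$, we conclude $\Phi\equiv c$ on $\omega(z)$.

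Finally, for the last assertion, fix $y\in\omega(z)$ and $t\ge0$. Thanks to precompactness of the trajectory, \eqref{Equ4.3} gives $S(t)\omega(z)\subset\omega(z)$, so $S(t)y\in\omega(z)$. Applying (ii) to the point $S(t)y$, we obtain $\Phi(S(t)y)=c=\Phi(y)$, which is the claimed equality. No step here is a real obstacle; the only subtle point is ensuring the boundedness from below of $\Phi\circ S(\cdot)z$, which is precisely why the precompactness hypothesis on the trajectory is needed (and also why the invariance of $\omega(z)$ is available for the last line).
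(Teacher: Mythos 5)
Your proof is correct and follows essentially the same route as the paper's: monotonicity plus boundedness (from precompactness and continuity of $\Phi$) for (i), continuity of $\Phi$ along a sequence $S(t_n)z\to y$ for (ii), and invariance of $\omega(z)$ for the final assertion. You merely spell out the boundedness-below step a bit more explicitly than the paper does.
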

 \begin{proof}[{\bf Proof.}] (i) $\Phi(S(t)z)$ is
nonincreasing and bounded since ${\displaystyle \bigcup_{t\geq0}\{
S(t)z}\}$ is precompact. This implies the existence of the limit
c.\\
    (ii) If $y\in \omega(z)$, there exists a sequence $t_n\rightarrow +\infty$ such that
$S(t_n)z\rightarrow y$. Hence $\Phi(S(t_n)z)\rightarrow \Phi(y)$  and this implies $
\Phi(y) = c.$

The last property is now an immediate consequence of the invariance of $\omega(z)$
(theorem \ref{thm 4.1.8.}, i)).
 \end{proof}

\begin{rem}\label{Remark 4.2.4.} {\rm Practically, Theorem \ref{Theorem 4.2.3.} is used to
show the convergence of some trajectories of $\{S(t)\}_{t\geq0} $
to an \index{equilibrium point} equilibrium. Therefore the following definition and theorem
are basic.}
\end{rem}
\begin{defn}\label{Definition 4.2.5.} A Liapunov  function \index{Liapunov function} $\Phi$ for $\{S(t)\}_{t\geq0} $
is called a  {\it strict Liapunov function } \index{strict Liapunov function} if the following
condition is fulfilled : Any $z\in Z$ such that
$\Phi(S(t)z)=\Phi(z)$ for all $t \geq0$ is an \index{equilibrium point} equilibrium of
$\{S(t)\}_{t\geq0} $.
\end{defn}
\begin{thm}\label{Theorem 4.2.6.} Let $\Phi$ be a strict Liapunov  function
for $\{S(t)\}_{t\geq0} $, and let $z\in Z$ be such that
${\displaystyle \bigcup_{t\geq0}\{ S(t)z}\}$ is precompact in  Z.
Let ${\cal E}$   be the set of equilibria of $\{S(t)\}_{t\geq0} $.
Then\begin{itemize}
\item[(i)] ${\cal E}$   is a closed, nonempty subset of $Z$;
\item[(ii)] $d( S(t)z,{\cal E} )\rightarrow 0$  as  $t\rightarrow +\infty$, i.e. $\omega(z)\subset \cal E$.
\end{itemize}
\end{thm}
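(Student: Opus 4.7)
The plan is to deduce the result by combining Theorem \ref{Theorem 4.2.3.} (the LaSalle invariance principle) with the basic structure of $\omega$-limit sets established in Theorem \ref{thm 4.1.8.}. The key observation is that on $\omega(z)$ the Liapunov function $\Phi$ is constant along trajectories, and the strict Liapunov hypothesis then forces every point of $\omega(z)$ to be an equilibrium.

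First I would establish that $\mathcal{E}$ is closed. This is a short argument relying only on the continuity of the operators $S(t)\in C(Z,Z)$: if $z_n\in\mathcal{E}$ and $z_n\to z$ in $Z$, then for every $t\ge 0$ we have $S(t)z_n=z_n\to z$, while continuity gives $S(t)z_n\to S(t)z$, so $S(t)z=z$ and $z\in\mathcal{E}$.

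Next I would prove simultaneously that $\mathcal{E}\neq\emptyset$ and that $\omega(z)\subset\mathcal{E}$. By hypothesis the positive orbit of $z$ is precompact, so Theorem \ref{thm 4.1.8.} guarantees that $\omega(z)$ is nonempty and that $S(t)(\omega(z))=\omega(z)$ for every $t\ge 0$. Pick any $y\in\omega(z)$. Applying the last assertion of Theorem \ref{Theorem 4.2.3.} we obtain
\[
\Phi(S(t)y)=\Phi(y),\qquad \forall t\ge 0.
\]
Since $\Phi$ is a \emph{strict} Liapunov function, Definition \ref{Definition 4.2.5.} forces $y\in\mathcal{E}$. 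This shows $\omega(z)\subset\mathcal{E}$, and in particular $\mathcal{E}\neq\emptyset$.

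Finally, statement (ii) follows at once: by Theorem \ref{thm 4.1.8.} (iii) we have $d(S(t)z,\omega(z))\to 0$ as $t\to+\infty$, and since $\omega(z)\subset\mathcal{E}$ we deduce $d(S(t)z,\mathcal{E})\to 0$. There is no real obstacle in the argument; the only subtlety is to remember that one must invoke the invariance $S(t)(\omega(z))=\omega(z)$ (not merely $S(t)(\omega(z))\subset\omega(z)$) so that the equality $\Phi(S(t)y)=\Phi(y)$ is valid on the whole half-line, which is precisely what the strict Liapunov hypothesis requires to conclude that $y$ is an equilibrium.
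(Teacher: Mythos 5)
Your proof is correct and follows essentially the same route as the paper: closedness of $\mathcal{E}$ by continuity of $S(t)$, then the constancy of $\Phi$ along trajectories in $\omega(z)$ from Theorem \ref{Theorem 4.2.3.} combined with the strict Liapunov hypothesis to get $\omega(z)\subset\mathcal{E}$, and finally Theorem \ref{thm 4.1.8.} (iii) for the attraction statement. The closing remark about invariance is a fine point but not an extra step beyond what the paper already uses.
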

 \begin{proof}[{\bf Proof.}] By continuity of  $S(t), {\cal E}$   is closed. By Theorem \ref{thm 4.1.8.} (i),
 $\omega(z)\not=\emptyset.$ Now let $y \in\omega(z).$ The last  assertion of
Theorem \ref{Theorem 4.2.3.}  gives
$$  \Phi(S(t)y) = \Phi(y) , \quad \forall t \geq0$$ and therefore, since $\Phi$ is a strict Liapunov \index{strict Liapunov function} function,  $y$ is an \index{equilibrium point} equilibrium : in particular we have (i) and  $\omega(z)\subset \cal E$. Then Theorem \ref{thm 4.1.8.}  (iii) implies (ii).
 \end{proof}
\begin{rem}\label{Remark 4.2.8.} {\rm Theorem \ref{Theorem 4.2.6.} means that the set of equilibria
attracts all trajectories of $\{S(t)\}_{t\geq0} $.}
\end{rem}
\begin{cor}\label{Corollary  4.2.9.} Under the hypotheses of Theorem \ref{Theorem 4.2.6.},
let $$  c = \lim _{t\rightarrow +\infty} \Phi(S(t)z)\quad\hbox{
and } \quad  {\cal E}_c =\{x\in{\cal E}, \Phi(x) = c\}.$$
Then $ {\cal E}_c$  is a closed, nonempty subset of $ Z $ and
$d(S(t)z,{\cal E}_c )\rightarrow 0$ as $t\rightarrow+\infty$. If
in addition $ {\cal E}_c$ est discrete, there exists $y\in{\cal
E}_c$  such that $S(t)z \rightarrow y$  as $t\rightarrow+\infty$.
\end{cor}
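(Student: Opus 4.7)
The plan is to assemble the result directly from the two main tools already developed: Theorem \ref{Theorem 4.2.3.} (convergence of $\Phi$ along the trajectory and constancy of $\Phi$ on $\omega(z)$) and Theorem \ref{Theorem 4.2.6.} (the $\omega$-limit set is contained in the equilibria, together with the attraction property), combined with the structural properties of $\omega(z)$ from Theorem \ref{thm 4.1.8.}.

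First I would address closedness and non-emptiness of $\mathcal{E}_c$. Write $\mathcal{E}_c = \mathcal{E}\cap \Phi^{-1}(\{c\})$; the first factor is closed by Theorem \ref{Theorem 4.2.6.}(i), and the second is closed by continuity of $\Phi$, which gives that $\mathcal{E}_c$ is closed. To see it is nonempty, combine the inclusion $\omega(z)\subset\mathcal{E}$ from Theorem \ref{Theorem 4.2.6.}(ii) with the constancy $\Phi(y)=c$ for all $y\in\omega(z)$ given by Theorem \ref{Theorem 4.2.3.}(ii); this yields $\omega(z)\subset\mathcal{E}_c$, and $\omega(z)\neq\emptyset$ by Theorem \ref{thm 4.1.8.}(i).

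Next, for the attraction statement $d(S(t)z,\mathcal{E}_c)\to 0$, I would simply observe that for every $t$, the inequality $d(S(t)z,\mathcal{E}_c)\le d(S(t)z,\omega(z))$ holds since $\omega(z)\subset\mathcal{E}_c$, and then invoke Theorem \ref{thm 4.1.8.}(iii) to let the right-hand side tend to $0$.

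Finally, for the discrete case, the key observation is that $\omega(z)$ is both compact and connected by Theorem \ref{thm 4.1.8.}(ii). A compact subset of a discrete set $\mathcal{E}_c$ is finite, and a connected finite subset of a metric space (here assumed Hausdorff) must be a singleton. So $\omega(z)=\{y\}$ for some $y\in\mathcal{E}_c$, and then $d(S(t)z,y)=d(S(t)z,\omega(z))\to 0$ by the same Theorem \ref{thm 4.1.8.}(iii). I do not anticipate any real obstacle here: the statement is essentially a packaging of the previous two theorems together with the topological properties of $\omega$-limit sets, and the only subtlety to flag clearly is that connectedness of $\omega(z)$ is what converts ``$\omega(z)$ is a finite subset of equilibria'' into ``$\omega(z)$ is a single equilibrium.''
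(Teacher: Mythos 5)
Your proposal is correct and follows essentially the same route as the paper, which proves closedness of ${\cal E}_c$ from the closedness of ${\cal E}$ and the continuity of $\Phi$, and then derives the remaining assertions from Theorems \ref{Theorem 4.2.3.}, \ref{Theorem 4.2.6.} and \ref{thm 4.1.8.}; your write-up merely makes explicit the inclusion $\omega(z)\subset{\cal E}_c$ and the compact-connected-discrete argument that the paper leaves implicit (the latter being the same reasoning as in Proposition \ref{prop4.2.1}).
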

 \begin{proof}[{\bf Proof.}] Since $ {\cal E}$ is closed and $\Phi$ is continuous, $ {\cal
E}_c$ is closed. The rest of the corollary is a consequence of Theorems \ref{Theorem 4.2.3.}, \ref{Theorem 4.2.6.} and \ref{thm 4.1.8.} (ii).
 \end{proof}

\section{Application to some differential systems in
$\R^N$}

Theorem \ref{Theorem 4.2.3.}, Theorem \ref{Theorem 4.2.6.} and Corollary \ref{Corollary  4.2.9.} allow to recover easily the results of chapter 6 on gradient systems \index{gradient system} and second-order gradient-like \index{gradient-like} systems with linear dissipation. But they show  their full power in more complicated situations in which calculations implying convergence to $0$ of the time-derivative become less natural. As a typical example we can consider the  equation
\begin{equation}\label{2Bisgradientsystem} u''(t) + g(u'(t)) +  \nabla F(u(t)) = 0.\end{equation}
where $F\in C^1(\R^N,\R)$  and $g:\R^N\longrightarrow\R^N$ is a  continuous function such that $$\forall v\in \R^N\setminus\{0\},\quad\langle g(v), v\rangle>0.$$
\begin{cor}\label{gradientsyst2Bis} Any solution $u(t)$ of \eqref{2Bisgradientsystem} defined and
bounded on ${\R}^+$ together with $ u'$ satisfies
$$\lim_{ t \rightarrow +\infty}\Vert u'(t)\Vert=\lim_{ t \rightarrow +\infty} \,\text{ dist} \{u(t), \cE  \} = 0 $$
with
$$ \cE  = \{ z\in {\mathbb R}^N, \nabla F(z) = 0\}.$$
If in addition for each $c$, the set $\cE_c =\{u\in \cE, F(u) = c\} $ is
discrete, then there exists $u^*\in \cE$   such that
$$\lim_{ t \rightarrow +\infty} u(t) = u^*.$$
\end{cor}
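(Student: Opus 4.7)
\medskip

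\noindent\textbf{Proof proposal.} The plan is to reduce this to LaSalle's invariance principle (Theorem \ref{Theorem 4.2.6.} and Corollary \ref{Corollary  4.2.9.}) by exhibiting a \emph{strict} Liapunov function on phase space, exactly as was done in Section \ref{SystemGradient} for the linearly damped case, but exploiting the sign condition on $g$ rather than an explicit computation of $u''\in L^2$.

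First I would rewrite \eqref{2Bisgradientsystem} as a first order system on $\R^N\times \R^N$ by setting $U(t)=(u(t),u'(t))$, so that the hypothesis (boundedness of $u$ and $u'$ on $\R^+$) means that the trajectory $\{U(t):t\ge 0\}$ has precompact range $Z$ in $\R^N\times \R^N$. Because $F\in C^1$ and $g$ is continuous, the corresponding flow $S(t)$ is a dynamical system on $Z$ in the sense of Chapter 4 (this is the ODE analogue of Theorem \ref{Theorem 4.1.10.}, applied with $A=0$). On this phase space I introduce the natural energy
\begin{equation*}
V(u,v):=\tfrac{1}{2}\Vert v\Vert^2+F(u),
\end{equation*}
which is continuous on $Z$. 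Along any solution of \eqref{2Bisgradientsystem} one has
\begin{equation*}
\tfrac{d}{dt}V(U(t))=\langle u'(t),u''(t)\rangle+\langle \nabla F(u(t)),u'(t)\rangle=-\langle g(u'(t)),u'(t)\rangle\le 0,
\end{equation*}
so $V$ is a Liapunov function for $S(t)$ in the sense of Definition \ref{Definition4.2.1.}.

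Next I would verify that $V$ is in fact \emph{strict} (Definition \ref{Definition 4.2.5.}). Suppose $V(S(t)U_0)=V(U_0)$ for all $t\ge 0$, and write $S(t)U_0=(v(t),v'(t))$. Then $\langle g(v'(t)),v'(t)\rangle\equiv 0$, and the hypothesis $\langle g(w),w\rangle>0$ for $w\ne 0$ forces $v'(t)\equiv 0$, so $v$ is constant; hence $v''\equiv 0$. A small point I want to clarify at this stage is that, by continuity of $g$ together with $\langle g(\varepsilon e),\varepsilon e\rangle>0$ for every unit vector $e$ and $\varepsilon>0$, one gets $\langle g(0),e\rangle\ge 0$ for every such $e$ and therefore $g(0)=0$. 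Substituting into \eqref{2Bisgradientsystem} then yields $\nabla F(v)=0$, so $U_0=(v,0)\in \cE\times\{0\}$; conversely any element of $\cE\times\{0\}$ is clearly an equilibrium, so the equilibrium set of $S(t)$ is exactly $\cE\times\{0\}$.

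Finally, Theorem \ref{Theorem 4.2.6.} applied to the precompact trajectory of $U_0=(u(0),u'(0))$ gives $\omega(U_0)\subset \cE\times\{0\}$, which by Theorem \ref{thm 4.1.8.}(iii) is exactly the two limiting statements $\Vert u'(t)\Vert\to 0$ and $\mathrm{dist}(u(t),\cE)\to 0$. For the second assertion, $V(U(t))$ is nonincreasing and bounded below on the precompact trajectory, hence converges to some $c\in \R$; since $V=F$ on $\cE\times\{0\}$, Theorem \ref{Theorem 4.2.3.}(ii) forces $\omega(U_0)\subset \cE_c\times\{0\}$, and Corollary \ref{Corollary  4.2.9.} then provides a unique limit point $(u^\ast,0)$ with $u^\ast\in \cE_c$ whenever $\cE_c$ is discrete. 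The only nontrivial point in the whole argument is the strictness of $V$, and in particular the observation $g(0)=0$ that reconciles the system-equilibria with $\cE\times\{0\}$; everything else is a mechanical application of the invariance principle.
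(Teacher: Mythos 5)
Your argument is correct and is essentially the paper's own proof: rewrite the equation as a first order system, take $\Phi(u,v)=\tfrac12\Vert v\Vert^2+F(u)$ as a Liapunov function, observe that constancy of $\Phi$ along a trajectory forces $u'\equiv 0$ by the sign condition on $g$, and invoke LaSalle's invariance principle together with Corollary \ref{Corollary  4.2.9.}. Your explicit verification that $g(0)=0$ (so that $\cF=\cE\times\{0\}$ exactly) is a small detail the paper leaves implicit, and it is a welcome addition.
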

 \begin{proof}[{\bf Proof.}] We consider the dynamical system \index{dynamical system} generated by (3) on
the closure of the range of $U = (u, u') $. Here the set $\cF$ of fixed points of $S(t)$ is made of
points $ (y, z)\in{\mathbb R}^N\times {\mathbb R}^N$ for which the solution $u$ of
\eqref{2Bisgradientsystem} of initial data $ (y, z)$ is independent of $t$. Consequently
$\cF=\cE\times \{0\}$.  Multiplying by
$u'$ in the sense of the inner product of
${\R}^N$ and integrating we find $$ \frac {d}{dt} (\frac{1}{2}\Vert u'(t)\Vert^2 + F(u(t)))= -\langle g(u'), u'\rangle \le 0 $$ hence $$\Phi (u, v) : =  \frac{1}{2}\Vert v\Vert^2 + F(u) $$ is a \index{Liapunov function} Liapunov function. On the other hand if $\Phi $ is constant on a trajectory $(u(t), u'(t)) $ we have $u' \equiv 0$. Hence $\Phi $ is a strict Liapunov function \index{strict Liapunov function} and the result follows. 
\end{proof} 

As an example of application of Corollary \ref {gradientsyst2Bis}, the equation $$ u'' + u' + u^3 - u  =  0 $$ already considered in Section 4.2 provides a good illustration. Here the set of equilibria has only  points solutions : (-1, 0), (0, 0) and (1, 0). Note that here and more generally under the hypotheses of Corollary \ref {gradientsyst2Bis}, the t-derivative of the Liapunov function vanishes at some point $t_0$ only if $u'(t_0)= 0.$ Then it follows easily that energy \index{energy} conserving trajectories are made of equilibria.  In the next example the condition $u'= 0$ does not follow immediately, but as a consequence of the connnectedness of trajectories:
\begin{example}{\rm
Let us consider the scalar equation \begin{equation} \label{ODEuu'} 
u''+au^2 u'+ u^3-u=0 
\end{equation}
where $a>0$.
Let
$$E(t)= \frac12u'^2+\frac14 u^4 -\frac12  u^2.$$
$$\frac{d}{dt} E(t)=-au^2 u'^2$$
Since  $E$ is non-increasing,  $(u,u')$ are bounded and we are in a good position to apply the invariance principle. Indeed let $u$ be a solution of \eqref{ODEuu'} for which $E$ is constant, then $uu' = 0$  hence $u^2$ is constant and then, by connectedness, $u$ is constant. So $u' = u''= 0$. As in the previous example,the stationary equation $u^3-u = 0 $ has only three solutions : -1, 0, 1. So that we have convergence of all solutions, although the t-derivative of the Liapunov function \index{Liapunov function} vanishes also at points $t_0$ for which $u(t_0)= 0$ and the equation is not a special case of Corollary \ref {gradientsyst2Bis} }\end{example}

The next example shows that sometimes, the invariance principle provides some information which is not so easy to recover by more elementary methods. 
 
\begin{example}{\rm Let us consider the coupled system of second order scalar ODE:
\begin{equation} \label{ODEcouple}
\left\{ \begin{array}{ll}
u''+u'+\lambda u+cv=0, & \\[2mm]
v''+\qquad \lambda v + cu=0, &  
\end{array} \right.
\end{equation}
$\lambda>0$ and $c\not=0$ with $c^2 <\lambda^2$.
Let
$$E(t) = \cE(u,u',v,v') = \frac12\left[   u'^2+   v'^2+\lambda(u^2+v^2)\right]+ c uv .$$
$$\frac{d}{dt} E(t)=- u'^2$$
Since  $E$ is non-increasing,  $(u,v,u',v')$ are bounded and we are in a good position to apply the invariance principle. Indeed let $(u,v)$ be a solution of \eqref{ODEcouple} for which $E$ is constant. Then $2u'^2 = 0$ implies $u'= 0$, hence $u$ is constant and $u''= 0$. Then by the first equation $v = -\frac{\lambda}{c}u$ is also constant. Finally since by the hypothesis  $c^2 <\lambda ^2 $ , the stationary system $ \lambda u  + cv =   cu+\lambda v = 0 $ has no non-trivial solution, we conclude that  $u= v= 0$  and therefore $(0,0,0,0)$  is \index{asymptotically stable} asymptotically stable. Because the system is linear and finite-dimensional, by taking a basis in $\R^4$ it follows immediately that the norm of the fundamental matrix tends to $0$ as $t$ tends to infinity, and using the semi-group property it follows that convergence is exponential. The general theory designed by Liapunov \index{Liapunov} in his seminal paper (1892) shows the existence of a quadratic form $\Phi$ on $\R^4$  satisfying the identity
$$ \frac{d}{dt} \Phi(Y(t)) = - |Y(t)| ^2 \le -\eta \Phi(Y(t)) $$ (with $\eta>0$ )for any solution $Y = (u, v, u', v')$ of \eqref{ODEcouple} which means that the older method  of quadratic energies must allow to recover directly that $(0,0,0,0)$  is \index{asymptotically stable} asymptotically stable, with quantitative information about the decay rate. Since the form can be computed on a basis of $\frac{4\times (4+1)}{2} = 10 $ monomials in $(u,v,u',v')$, the challenge is now to find one of the strict quadratic Liapunov functions \index{strict Liapunov function} (they form a non-empty open set in the space of coefficients) by a direct method. It turns out that for any $p>1$ and for all $\varepsilon>0$ small enough the quadratic form 
  \begin{equation}\label{FtLiapunovSystDiff}
H=\cE-\varepsilon vv'+ p\varepsilon uu'+\frac{(p+1)\lambda\varepsilon}{2c}(u'v-uv')
\end{equation} is a strict Liapunov \index{strict Liapunov function} function for  our system. The calculations are not immediate, especially if we do not know in advance the formula! Here, LaSalle's invariance principle was very useful since,without the information of asymptotic stability obtained by a very simple sequence of calculations, it would have been very difficult to imagine that such a function can be devised.}\end{example}

\section{Two infinite dimensional examples }

\begin{example}{\rm Let us consider the coupled system of second order scalar ODE:
\begin{equation} \label{DEcouple}
\left\{ \begin{array}{ll}
u''+u'+Au +cv=0, & \\[2mm]
v''+\qquad Av + cu =0, &  
\end{array} \right.
\end{equation} where  $A$ is a possibly unbounded linear operator on a Hilbert space $H$ with norm denoted by $|.|$ such that for some $\lambda>0$,  
$$A= A^*\ge \lambda I $$ and $c\not=0$ with $c^2 <\lambda^2$. In addition we assume that the unit ball of $ D(A^{1/2})$ is compact in $H$. 
Let
$$E(t) = \cE(u,u',v,v') = \frac12\left[   u'^2+   v'^2+ |A^{1/2} u|^2+|A^{1/2} v|^2 \right]+ c (u, v) .$$

We have the formal energy identity \index{energy} :   $\frac{d}{dt} E(t)=- |u'|^2. $ Since  $E$ is non-increasing,  the vector $(u,v,u',v')$ is bounded in $ D(A^{1/2})\times D(A^{1/2})\times H \times H.$ Actually it is not difficult to check that if $(u(0),v(0),u'(0),v'(0)) \in D(A)\times D(A)\times D(A^{1/2})\times D(A^{1/2}) = W $, then the vector $(u,v,u',v')$ remains and is  bounded in $W$  for $t\ge0$ and the energy\index{energy} identity is rigorously satisfied. Then the trajectory is precompact and if  $(u,v)$ be a solution of \eqref{DEcouple} for which $E$ is constant. Then  $u'= 0$, hence $u$ is constant and $u''= 0$. Then by the first equation $v = -\frac{Au}{c}$ is also constant. Finally since by the hypothesis  $c^2 <\lambda ^2 $ , the stationary system $ A u  + cv =   cu+ A v = 0 $ has no non-trivial solution, we conclude that  $u= v= 0$  and therefore the solution tends to $(0,0,0,0)$ . In fact, the system generates a uniformly bounded semi-group in  $ D(A^{1/2})\times D(A^{1/2})\times H \times H$  and it is then easy to conclude that $(0,0,0,0)$ is \index{asymptotically stable} asymptotically stable. For the exact nature of the convergence we refer to \cite{MR1914654} }\end{example}

 \begin{example}{\rm Consider the nonlinear
 wave \index{string} equation
\begin{equation}\label{stringeq} u_{tt} - u_{xx} +g( u_t) = 0 \quad\hbox{  in} \, \,{\mathbb R}^+ \times
\Omega,
\quad     u = 0  \   \hbox{on} \, \, {\mathbb R}^+ \times \partial\Omega \end{equation}
where $\Omega = (0, L) $ is an bounded interval of $\R$ and $ g$ in a non-decreasing locally Lipschitz continous function on 
$ {\mathbb R}$  which satisfies $g(0) = 0$ and does not vanish identically in any neighborhood of $0$. Then for any solution $u$ of  \eqref{stringeq} such that $(u(0), u_t(0)) \in H^2\cap H^1_0(0, L)\times H^1_0(0, L)= W $,  the vector $(u,u_t)$ remains and is  bounded in $W$  for $t\ge0$ and we have $$\frac{d}{dt} \int_\Omega (u_t^2(t, x) +  u_x^2(t, x))dx=- 2\int_\Omega g(u_t) u_t(t, x)dx $$  By using the fact that for every regular solution $v$ of the usual string equation, $v_t(t, x)$ is $2L$-periodic with mean-value $0$, the invariance principle now shows that $(u,u_t)$   tends to $(0, 0)$ in $H^1_0(0, L)\times L^2(0, L)$ as $t$ tends to infinity .}\end{example}
\begin{rem}\label{monotonewave.} {\rm  The analog of the last example is valid in higher dimension in a much more general context, relying on the theory of monotonicity in Hilbert spaces and the concept of almost periodic functions. Since these methods fall outside the scope of this text, we refer to \cite{ MR0559667, MR0610796} for the statements and proofs of the general results.}
\end{rem}

%%%%%%%%%%%%%%%%%%%%%%%%%%%%%%%%%%
%%%%%%%%%%%%%%%%%%%%%%%%%%%%%%%%%
%%%%%%%%%%%%%%%%%%%%%%%%%%%%%%%%%

%%%%%%%%%%%%%%%%%%%%%%%%%%%%%%%%%%
%%%%%%%%%%%%%%%%%%%%%%%%%%%%%%%%%%
\chapter[Some basic examples]{Some basic examples}\label{Somebasicexamples}
In this chapter, we consider a few special cases in which asymptotic behavior can be studied completely by simple direct methods. These examples will serve later as models to undersand more complicated systems.
\section{Scalar first order autonomous \index{autonomous} ODE}

In this section we consider the simplest possible differential equation
\begin{equation}\label{scalarFirstOrderODE}
u'+f(u)=0,\quad t\geq0
\end{equation} The asymptotic behavior of bounded trajectories is obvious as shown by the following result
\begin{thm}\label{ThmCVFirstOrderODE}
Let $f\in W^{1,\infty}_{loc}(\R,\R)$.
 Each   global \index{global solution} and bounded solution $u(t)$ of \eqref{scalarFirstOrderODE} on ${\mathbb R}^+$ tends to a  limit \index{convergence result}
$c$ with  $f(c) = 0$.
\end{thm}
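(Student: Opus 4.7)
The plan is to exploit the very special feature of scalar first order autonomous ODEs: any trajectory is automatically monotonic, so boundedness immediately gives convergence, and the limit must be an equilibrium simply because $u'$ must tend to zero.

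First, I would use the local Lipschitz continuity of $f$ to get uniqueness for the Cauchy problem associated with \eqref{scalarFirstOrderODE}. Suppose $u$ is a global and bounded solution and that $u'(t_0) = 0$ for some $t_0 \ge 0$. Then $f(u(t_0)) = 0$, which means the constant function $t \mapsto u(t_0)$ is also a solution of \eqref{scalarFirstOrderODE} taking the value $u(t_0)$ at $t_0$. By uniqueness, $u(t) \equiv u(t_0)$ for all $t \ge 0$, and the conclusion holds trivially with $c = u(t_0)$.

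Otherwise $u'$ never vanishes on $\R^+$, and by continuity it has constant sign, so $u$ is strictly monotonic on $\R^+$. A bounded monotonic real-valued function has a finite limit, so $u(t) \to c$ as $t \to +\infty$ for some $c \in \R$.

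Finally, to check that $f(c) = 0$, I would pass to the limit in the equation $u'(t) = -f(u(t))$: continuity of $f$ and $u(t) \to c$ yield $u'(t) \to -f(c)$. If $f(c) \neq 0$, then for $t$ large enough $u'(t)$ has constant sign with $|u'(t)| \ge |f(c)|/2 > 0$, which forces $|u(t)| \to +\infty$, contradicting boundedness. Hence $f(c) = 0$. I do not expect any real obstacle; the only point to be careful about is invoking uniqueness of the Cauchy problem to rule out the situation where $u'$ vanishes at an isolated point without $u$ being constant.
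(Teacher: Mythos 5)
Your proof is correct and follows essentially the same route as the paper: split into the case where the trajectory hits an equilibrium (handled by uniqueness from the locally Lipschitz hypothesis) and the case where $u'=-f(u)$ never vanishes, so that $u$ is monotone and bounded, hence convergent, with $f(c)=0$ forced by passing to the limit in the equation. Your closing argument spelling out why $f(c)\neq 0$ would contradict boundedness is a slightly more explicit version of the paper's final sentence, but the proof is the same.
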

 \begin{proof}[{\bf Proof.}] If for some $\tau >0$ we have $f(u(\tau)) = 0$, then $u(t) =
u(\tau)$ for all $t$  and the result is trivial. If $f(u(t))$
never vanishes on ${\mathbb R}^+$, it keeps a constant sign and $u(t)$
is monotone on ${\mathbb R}^+$. Since by hypothesis $u(t)$ is bounded
on ${\mathbb R}^+$, it follows immediately that $u(t)$ tends to a
limit $c$ as  $t\rightarrow +\infty.$ The equation shows that
$u'(t)$ tends to $-f(c)$, and we conclude that  $f(c) = 0.$
 \end{proof}
\section{Scalar second order autonomous \index{autonomous} ODE}

We now consider the slightly more complicated case of the equation
\begin{equation}\label{scalarSOrderODE}
u''+g(u')+f(u)=0,\quad t\geq0
\end{equation}
where $f,g:\R\longrightarrow \R$ are  locally Lipschitz continuous  such that
$$\forall v\in \R\setminus\{0\},\quad g(v)v>0.$$
The term $ - g(u')$ can be viewed as a  dissipation while $- f(u)$ represents a restoring force. We will show that convergence or divergence of the general solution of equation \eqref{scalarSOrderODE} depends on the strength of the dissipative term $\vert g(v)\vert $ for small values of the velocity $v.$  As a consequence of Corollary 7.5.1, \eqref{scalarSOrderODE} generates a gradient-like \index{gradient-like} system. 
\subsection{A convergence result}
\begin{thm}\label{Th1H86ODE}
Assume that $f,g$ are as above and   in addition, for some $\varepsilon \in(0,1]$ and $\delta > 0$, we have 
\begin{equation}\label{Eq1.1H86ODE}
 \forall v\in\R,\quad g(v)v\geq\delta\inf\{1,\vert v\vert^{3-\varepsilon}\}.
\end{equation}
Then if $u\in W^{1,\infty}(\R^+)$ is a solution of \eqref{scalarSOrderODE}, we have \index{convergence result}
\begin{equation*}
\lim_{t\to+\infty}^{}\{\vert u'(t)\vert+\vert u(t)-c\vert=0,
\end{equation*}
for some $c\in f^{-1}\{0\}$.
\end{thm}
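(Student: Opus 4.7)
The strategy follows the classical gradient-like/Liapunov scheme of Corollary \ref{gradientsyst2} and Corollary \ref{Corollary  4.2.9.}, adapted to the scalar second-order setting by exploiting the quantitative dissipation lower bound \eqref{Eq1.1H86ODE}. I would proceed in three steps: (i) energy decay, (ii) $u'(t)\to 0$, (iii) convergence of $u(t)$ to a zero of $f$.

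\emph{Step 1 (Energy and dissipation).} Define $F(u):=\int_0^u f(s)\,ds$ and
\[
E(t):=\tfrac12 u'(t)^2+F(u(t)).
\]
Then $E'(t)=-g(u'(t))u'(t)\le 0$, so $E$ is nonincreasing. Since $u\in W^{1,\infty}(\R^+)$, $E$ is also bounded below, whence $E(t)\searrow E_\infty$ and $\int_0^\infty g(u')u'\,dt<\infty$. Splitting according to $|u'|\gtrless 1$ and using \eqref{Eq1.1H86ODE}, we get simultaneously that $\{|u'|\ge 1\}$ has finite measure and $\int_{\{|u'|\le 1\}}|u'|^{3-\varepsilon}\,dt<\infty$.

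\emph{Step 2 ($u'(t)\to 0$ and $F(u(t))\to E_\infty$).} From \eqref{scalarSOrderODE}, $u''=-g(u')-f(u)$ is bounded (since $u,u'$ are bounded and $f,g$ continuous), so $u'$ is uniformly continuous on $[0,\infty)$. A Barbalat-type argument — if $|u'(t_n)|\ge\eta$ along $t_n\to\infty$, uniform continuity forces $|u'|\ge\eta/2$ on intervals of fixed length, contradicting the integrability in Step 1 — yields $u'(t)\to 0$. Consequently $F(u(t))=E(t)-\tfrac12 u'(t)^2\to E_\infty$.

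\emph{Step 3 (Convergence of $u$).} View \eqref{scalarSOrderODE} as a planar dynamical system on the closure of $\{(u(t),u'(t))\}$. Since $u'\to 0$, the $\omega$-limit $\Omega$ lies in $\R\times\{0\}$; and since $E$ is a strict Liapunov function (its derivative vanishes only at equilibria), Theorem \ref{Theorem 4.2.6.} gives $\Omega\subset\{(c,0):f(c)=0\}$. As $\Omega$ is compact and connected, it is a segment $[a,b]\times\{0\}$; moreover $F\equiv E_\infty$ on $[a,b]$, hence $f\equiv 0$ on $[a,b]$. If $u'$ has only finitely many zeros, $u$ is eventually monotone and bounded, so $u(t)\to c$; the equation together with $u'\to 0$ and boundedness of $u''$ forces $u''(t)\to 0$ and thus $f(c)=0$. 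It remains to rule out the oscillatory case $a<b$.

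In this case let $\tau_n\nearrow\infty$ denote the successive zeros of $u'$ and $\alpha_n:=u(\tau_n)$. On each monotone arc $[\tau_n,\tau_{n+1}]$ one has the two elementary identities
\[
|\alpha_n-\alpha_{n+1}|=\int_{\tau_n}^{\tau_{n+1}}|u'|\,dr,\qquad |F(\alpha_n)-F(\alpha_{n+1})|=\int_{\tau_n}^{\tau_{n+1}}g(u')u'\,dr=:e_n,
\]
with $\sum_n e_n<\infty$ by telescoping. For $n$ large, $|u'|\le 1$ throughout and \eqref{Eq1.1H86ODE} gives $e_n\ge\delta\int_{\tau_n}^{\tau_{n+1}}|u'|^{3-\varepsilon}\,dr$, from which H\"older yields
\[
(\alpha_n-\alpha_{n+1})^{3-\varepsilon}\le\delta^{-1}\,e_n\,(\tau_{n+1}-\tau_n)^{2-\varepsilon}.
\]
To upgrade this into $\sum_n|\alpha_n-\alpha_{n+1}|<\infty$, I would control $\tau_{n+1}-\tau_n$ from the equation at the turning point of $v=u'$: there $v'=0$ gives $|g(v_{\min})|=|f(u_\ast)|$, so combined with the companion bound $|g(v)|\ge\delta|v|^{2-\varepsilon}$ (for $|v|\le 1$), one gets $|v_{\min}|\le(|f(u_\ast)|/\delta)^{1/(2-\varepsilon)}$, which in turn bounds $\tau_{n+1}-\tau_n$ in terms of the amplitude and the (small) forcing $|f|$ near $[a,b]$. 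Once summability of the amplitudes is obtained, $(\alpha_n)$ is Cauchy; since on $[\tau_n,\tau_{n+1}]$ the value $u(t)$ is squeezed between $\alpha_n$ and $\alpha_{n+1}$, $u(t)$ itself is Cauchy at $+\infty$, and the limit $c\in\omega(u)\subset f^{-1}\{0\}$.

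The hard part will be precisely this oscillatory case: matching the H\"older estimate with a sharp enough control on the spacing $\tau_{n+1}-\tau_n$ so that the exponent $\varepsilon>0$ in \eqref{Eq1.1H86ODE} is effectively used. This is where the assumption $\varepsilon>0$ (rather than the weaker $g(v)v\ge\delta|v|^3$) is essential — reflecting the well-known fact that at $\varepsilon=0$ the analogous convergence statement is delicate and the argument cannot rely solely on planar invariance.
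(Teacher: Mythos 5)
Your Steps 1 and 2 are fine and your reduction to the case $\omega(u_0,u_1)=[a,b]\times\{0\}$ with $f\equiv 0$ on $[a,b]$ matches the paper. But the heart of the theorem is precisely the oscillatory case, and there your argument has a genuine gap that you yourself flag: the H\"older estimate $(\alpha_n-\alpha_{n+1})^{3-\varepsilon}\le\delta^{-1}e_n(\tau_{n+1}-\tau_n)^{2-\varepsilon}$ is useless without an upper bound on the gaps $\tau_{n+1}-\tau_n$, and no such bound is proved. Worse, none can be expected to hold uniformly: since $e_n\to 0$ while (under the assumption $a<b$) the amplitudes $|\alpha_n-\alpha_{n+1}|$ stay bounded below along a subsequence, your own inequality forces $\tau_{n+1}-\tau_n\to\infty$ along that subsequence. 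The turning-point relation $|g(v_{\min})|=|f(u_\ast)|$ only controls the velocity at one instant and does not translate into a bound on the length of the arc. So the proposed route to summability of the amplitudes does not close.

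The paper's proof avoids this entirely by a pointwise integration of the damping along the arc rather than a H\"older estimate over it. Pick $c=\frac{a+b}{2}$ and times $t_n\to\infty$ with $u(t_n)=c$. As long as $u(t)\in[a,b]$ the equation reduces to $u''+g(u')=0$ (since $f\equiv 0$ there), and the companion bound $g(v)\,\mathrm{sign}(v)\ge\delta|v|^{2-\varepsilon}$ for $|v|\le 1$ gives
$\frac{d}{dt}|u'|^{\varepsilon-1}=(1-\varepsilon)g(u')|u'|^{\varepsilon-2}\mathrm{sign}(u')\ge(1-\varepsilon)\delta$,
hence $|u'(t)|\le\bigl\{(1-\varepsilon)\delta(t-t_n)+|u'(t_n)|^{\varepsilon-1}\bigr\}^{-1/(1-\varepsilon)}$ and, integrating once more,
$\int_{t_n}^{t}|u'(s)|\,ds\le\frac{1}{\varepsilon\delta}|u'(t_n)|^{\varepsilon}$ for all $t$ up to the exit time of $[a,b]$. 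Since $u'(t_n)\to 0$, for $n$ large this total displacement is $<\frac{b-a}{2}$, so starting from the midpoint the solution can never reach $a$ or $b$: it is trapped in $[a,b]$ for all $t\ge t_n$ and satisfies $|u(t)-c|\le\frac{1}{\varepsilon\delta}|u'(t_n)|^{\varepsilon}\to 0$. Thus $u(t)\to c$, contradicting $a<b$. This is the step your sketch is missing; if you want to salvage your arc-by-arc scheme, the same integrated inequality applied to the portion of each arc inside $[a,b]$ is exactly what shows that eventually no arc can traverse $[a,b]$, and it is also where the hypothesis $\varepsilon>0$ enters (the exponent $\varepsilon$ in $|u'(t_n)|^{\varepsilon}$ is what makes the displacement go to zero).
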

\begin{rem}{\rm A typical example of function $g$ that satisfied hypothesis \eqref{Eq1.1H86ODE} is $g(s)=\vert s\vert^\alpha s$ with $\alpha\in[0,1)$.}
\end{rem}
 \begin{proof}[{\bf Proof.}]
 First , since the system is gradient-like, \index{gradient-like} we have
 $$\omega(u_0,u_1)\subset f^{-1}\{0\}\times\{0\}.$$
 By connectedness we have either $\omega(u_0,u_1)= \{a\}\times\{0\}$ for some $a\in f^{-1}\{0\}$ and the result is established, or$$\omega(u_0,u_1)=[a,b]\times\{0\}, \quad (a<b).$$
In this case, we set $c:=\frac{a+b}{2}$.
As a consequence of the definition of $\omega(u_0,u_1)$, there exists a sequence $(t_n)$ of positive numbers such that
$$\lim_{n\to+\infty}^{}t_n=+\infty,\quad u(t_n)=c.$$
For any $n\in\N$, there exists $\delta_n > 0$ such that
\begin{equation}\label{Eq1.9haraux86ODE}
u(t) \in[a,b],\ \forall t\in [t_n, t_n + \delta_n].
\end{equation}
We claim that for all $n\in\N$ large enough, we can take $\delta_n=+\infty$ in \eqref{Eq1.9haraux86ODE}. Indeed. let
$$\theta_n= Inf\{t > t_n,, u(t)\not\in[a,b]\}$$
and assume $\theta_n < +\infty$. Then we have
\begin{equation}\label{Eq1.11haraux86ODE}
\forall t\in[t_n,\theta_n],\ u''(t)+g(u'(t))=0.
\end{equation}
We may assume that $n$ is large enough to imply $\vert u'(t)\vert\leq 1$ on $[t_n, \infty[$, so that from
 \eqref{Eq1.11haraux86ODE} we
deduce as a consequence of \eqref{Eq1.1H86ODE}
\begin{equation}\label{Eq1.12haraux86ODE}
\forall t\in[t_n,\theta_n],\quad
\vert u'(t)\vert\leq\left\{(1-\varepsilon)\delta(t-t_n)+\vert u'(t_n)\vert^{\varepsilon-1}\right\}^{\frac{-1}{1-\varepsilon}}.
\end{equation}
In fact, if there is $s\in[t_n,\theta_n]$ such that $u'(s)=0$, then $u'(t)=0$ for all $t\in[t_n,\theta_n]$ and
\eqref{Eq1.12haraux86ODE} is obviously satisfied. Otherwise
 $u'$  has a constant sign, then
\begin{eqnarray}
\nonumber\frac{d}{dt}\vert u'\vert^{\varepsilon-1}&=&(\varepsilon-1)u''\vert u'\vert^{\varepsilon-2}\text{ sign }(u')\\
\nonumber &=&(1-\varepsilon)g(u')\vert u'\vert^{\varepsilon-2}\\
\label{Eq12.5Haraux86ODE}&\geq& (1-\varepsilon)\delta.
\end{eqnarray}
By integrating \eqref{Eq12.5Haraux86ODE} over $(t_n,t)$ $(t\in[t_n,\theta_n])$, we get \eqref{Eq1.12haraux86ODE}.
Now from \eqref{Eq1.12haraux86ODE} we deduce by integration
$$\int_{t_n}^{t}\vert u'(s)\vert\,ds\leq\frac{1}{\varepsilon\delta}\vert u'(t_n)\vert^\varepsilon,
\quad \forall t\in[t_n,\theta_n]. $$
For $n$ large enough, the right-hand side is less than $\frac{b-a}{2}=\vert b-c\vert=\vert a-c\vert.$\\
Therefore there exists $n_0\in\N$ such that we have
$$\forall n\geq n_0,\ u(t)\in J,\ \forall t\in[t_n,+\infty[ \hbox{ and }$$
$$\vert u(t_n)-u(t)\vert\leq \frac{1}{\varepsilon\delta}\vert u'(t_n)\vert^\varepsilon,\quad \forall t\in[t_n,+\infty[.$$
Since $u(t_n) = c$ for all $n\in \N$, we deduce
\begin{equation}\label{Eq1.15haraux86ODE}
\forall n\geq n_0,\ \forall t\in[t_n,+\infty[,\
\vert u(t)-c\vert\leq \frac{1}{\varepsilon\delta}\vert u'(t_n)\vert^\varepsilon.
\end{equation}
Since $u'(t_n)\longrightarrow0$ as $n\to +\infty$, it is clear that \eqref{Eq1.15haraux86ODE} implies
$$\lim_{t\to+\infty}^{}\vert u(t)-c\vert=0.$$
Therefore $J = \{c\}$ and this contradicts the hypothesis $J = [a, b]$ with $a < b$.
The proof of theorem \ref{Th1H86ODE} is completed.
 \end{proof}
\subsection{A non convergence result}
\begin{thm}\label{ThmNonCVSOODE} Assume that there exists $a,b\in\R$ with $a<b$ and a positive constant $C$ such that
\begin{eqnarray}
\nonumber & &f(s)<0,\quad \forall s< a\\
\nonumber& &f(s)=0,\quad \forall s\in[ a,b]\\
\nonumber& &f(s)>0,\quad \forall s>b\\
\label{HypothesisOngNonCVSOODE}& & \vert g(v)\vert\leq C v^2,\quad \forall \vert v\vert\leq1.
\end{eqnarray}
Then for every bounded non constant solution of \eqref{scalarSOrderODE}, there exist a sequence
$t_n\longrightarrow+\infty$ such that $u(t_n)<a$ for all $n$ and a sequence $\theta_n\longrightarrow+\infty$ \index{non convergence result}
such that $u(\theta_n)>b$ for all $n$.
\end{thm}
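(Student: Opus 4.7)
The plan is to combine a gradient-like argument (to pin down the $\omega$-limit set) with a careful phase-plane analysis that exploits the \emph{weakness} of the damping $|g(v)|\le Cv^2$ near $v=0$. First I will introduce $F(s)=\int_a^s f(\sigma)\,d\sigma$ --- so that $F\equiv 0$ on $[a,b]$ and $F>0$ elsewhere --- and the energy $E(t)=\tfrac12 u'(t)^2+F(u(t))$, which satisfies $E'(t)=-u'(t)\,g(u'(t))\le 0$. Corollary~\ref{gradientsyst2Bis} then yields $u'(t)\to 0$, $\mathrm{dist}(u(t),[a,b])\to 0$, and hence $E(t)\to 0$. Because $u$ is not constant, uniqueness forbids $u'(t_0)=0$ at any time with $u(t_0)\in[a,b]$ (otherwise $(u(t_0),0)$ is an equilibrium and $u\equiv u(t_0)$); so $u'$ has locally constant sign wherever $u$ lies in the dead zone.

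For the existence of $\theta_n\to\infty$ with $u(\theta_n)>b$, I will argue by contradiction: assume $u(t)\le b$ for all $t\ge T_0$. The first ingredient is a \emph{dead-zone estimate}. On any interval where $u\in[a,b]$ the equation reduces to $u''=-g(u')$; once $|u'|\le 1$ (which holds for $t$ large), the bound $|g(v)|\le Cv^2$ gives $\bigl|\tfrac{d}{dt}(1/u')\bigr|=|g(u')|/u'^2\le C$. Taking WLOG $u'(T)>0$, this integrates to $u'(t)\ge u'(T)/(1+Cu'(T)(t-T))$ and then $u(t)-u(T)\ge C^{-1}\log(1+Cu'(T)(t-T))\to\infty$. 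Since $u$ is bounded, it must leave $[a,b]$ in finite time; because $u$ is strictly increasing there, the only possible exit is at $b$ with positive velocity, putting $u>b$ immediately after and contradicting the assumption.

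So the assumption forces $u$ to enter $(-\infty,a)$ at arbitrarily large times. Pick such an entry time $T_2$ with $u(T_2)=a$, $u'(T_2)<0$. The solution reaches a strict local minimum at some $t_m>T_2$ with $u_m:=u(t_m)<a$ and $u''(t_m)=-f(u_m)>0$; thereafter $u'>0$ strictly until the first return $T_3\in(t_m,+\infty]$ to $u=a$ (any interior zero of $u'$ in $\{u<a\}$ would again be a strict local minimum, impossible between two minima). I will then prove an \emph{excursion estimate}: the change of variable $\xi=u(s)$ on $(t_m,T_3)$, combined with $u'g(u')\le Cu'^3$ and $u'(s)^2\le 2(F(u_m)-F(u(s)))$, gives
\[
E(t_m)-E(T_3)=\int_{t_m}^{T_3}u'g(u')\,ds\;\le\;C\int_{u_m}^{a}u'^2\,d\xi\;\le\;2C\,(a-u_m)\,F(u_m).
\]
Since $E(t_m)=F(u_m)$ and $E(T_3)=\tfrac12 u'(T_3)^2$ (or $E(T_3)=0$ if $T_3=+\infty$), this yields $u'(T_3)^2\ge 2F(u_m)\bigl(1-2C(a-u_m)\bigr)$, or, in the asymptotic case, $1\le 2C(a-u_m)$.

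To close the argument, I observe that $E(T_2)=\tfrac12 u'(T_2)^2\to 0$ as $T_2\to\infty$, so $F(u_m)\le E(T_2)\to 0$, and continuity of $F$ together with $F^{-1}\{0\}\cap(-\infty,a]=\{a\}$ forces $u_m\to a^-$. For $T_2$ large enough, $a-u_m<1/(2C)$, which both excludes $T_3=+\infty$ and guarantees $u'(T_3)>0$; applying the dead-zone estimate on $[T_3,T_4]$ then forces $u$ to exit $[a,b]$ at $b$, contradicting $u\le b$. This establishes the sequence $\theta_n$; the symmetric argument (swapping the roles of $a$ and $b$, with excursions into $(b,+\infty)$ under the assumption $u\ge a$) provides the sequence $t_n$. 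The main obstacle is the excursion estimate: making the dissipation strictly smaller than the potential energy at the turning point requires the quadratic bound $|g(v)|\le Cv^2$ together with the late-time proximity $a-u_m<1/(2C)$, and without this precise balance the trajectory could hypothetically asymptote to $(a,0)$ from below and violate the conclusion.
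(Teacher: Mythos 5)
Your energy machinery (the excursion estimate, the bound $u'(T_3)^2\ge 2F(u_m)(1-2C(a-u_m))$, the re-entry via the dead-zone estimate) is correct as far as it goes, but there is a genuine gap at the step ``the assumption forces $u$ to enter $(-\infty,a)$ at arbitrarily large times.'' What your dead-zone argument actually shows is that $u$ cannot remain in $[a,b]$ forever and must therefore take values $<a$ at arbitrarily large times; it does \emph{not} show that $u$ crosses the level $a$ downward at arbitrarily large times. The case you fail to exclude is the one where $\{t:u(t)<a\}$ has an unbounded component, i.e.\ $u(t)<a$ for all $t\ge T_1$, with $u$ diving once to some minimum $u_m$ and then creeping up to $a$ monotonically ($u\nearrow a$, $u'\to 0^+$). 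In that scenario there is only one ``entry time'' $T_2$, which is a fixed finite number, so your limit argument ``$E(T_2)=\tfrac12 u'(T_2)^2\to 0$, hence $u_m\to a^-$, hence $a-u_m<1/(2C)$'' has nothing to bite on: your $T_3=+\infty$ branch of the excursion estimate only yields $1\le 2C(a-u_m)$, which is perfectly consistent if the single dive is deep ($a-u_m\ge 1/(2C)$). Since convergence to the equilibrium $(a,0)$ from below is exactly the behaviour the theorem must rule out, this case cannot be waved away, and your last sentence shows you sensed the danger without actually closing it.

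The missing ingredient --- and the heart of the paper's much shorter proof --- is that the differential inequality you use in the dead zone in fact holds on the whole half-region. Whenever $u(t)\le b$ one has $f(u(t))\le 0$, hence $u''=-g(u')-f(u)\ge -g(u')\ge -C\,u'^2$ as soon as $|u'|\le 1$; the restoring force only helps. Lemma \ref{lemLivreSDPage61} then says that if $u'(t_1)>0$ at any late time while $u$ stays $\le b$, then $u(t)\to+\infty$, which is absurd. This kills the asymptote-from-below scenario instantly (there $u'>0$ eventually), and in fact collapses your whole case analysis: under the assumption $u\le b$ eventually one gets $u'\le 0$ for all large $t$, so $u$ decreases to some $c\in[a,b]$, forcing $u\in[a,b]$ eventually, where $u''=-g(u')$ and the same lemma applied to $-u$ forces $u$ to be constant. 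That is precisely the paper's argument (stated there for the symmetric assumption $u\ge a$, using $u''+g(u')=-f(u)\le 0$). So your proof is repairable, but only by importing exactly this sign observation; as written, the quantitative excursion estimate does not suffice.
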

\begin{rem}{\rm A typical example of function $g$ that satisfied hypothesis \eqref{HypothesisOngNonCVSOODE}
 is $g(s)=\vert s\vert s$.}
\end{rem}
In the proof, we have to use the following lemma.
\begin{lem}\label{lemLivreSDPage61} Let $v\in C^2(\R^+,\R)$  satisfying
$$v'(0)>0,\quad v''(t)\geq -C v'(t)^2,\quad \forall t\in\R^+,$$
where $C>0$ is a constant. Then $v$ is nondecreasing and $\displaystyle\lim_{t\to+\infty}^{}v(t)=+\infty$.
\end{lem}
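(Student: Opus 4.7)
The plan is to convert the differential inequality $v''\ge -C(v')^2$ into an integral lower bound for $v'$ by the standard trick of studying $1/v'$, and then to integrate. The only non-trivial issue is that this manipulation requires $v'>0$ throughout $\R^+$, so we must first verify that $v'$ cannot hit $0$.

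\textbf{Step 1 (positivity of $v'$).} Define
$$t_1 := \inf\{t\ge 0 : v'(t)\le 0\}\in (0,+\infty]$$
(this infimum is strictly positive because $v'(0)>0$ and $v'$ is continuous). On the open interval $[0,t_1)$, $v'$ is strictly positive, so we may divide the inequality $v''\ge -C(v')^2$ by $(v')^2$, which yields
$$\frac{d}{dt}\Bigl(-\frac{1}{v'(t)}\Bigr)\ge -C,\qquad t\in[0,t_1),$$
i.e. $\frac{d}{dt}(1/v'(t))\le C$. Integrating from $0$ to $t<t_1$ gives $\frac{1}{v'(t)}\le\frac{1}{v'(0)}+Ct$, so
$$v'(t)\ge \frac{v'(0)}{1+Ctv'(0)}>0,\qquad \forall t\in[0,t_1).$$
If $t_1<+\infty$, letting $t\to t_1^-$ in this bound gives $v'(t_1)\ge v'(0)/(1+Ct_1v'(0))>0$, contradicting the definition of $t_1$. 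Hence $t_1=+\infty$, so $v'>0$ throughout $\R^+$ and in particular $v$ is (strictly) nondecreasing.

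\textbf{Step 2 (divergence of $v$).} The lower bound from Step 1 is now valid for all $t\ge 0$:
$$v'(t)\ge \frac{v'(0)}{1+Ctv'(0)}.$$
Integrating from $0$ to $t$ yields
$$v(t)-v(0)\ge \int_0^t\frac{v'(0)}{1+Csv'(0)}\,ds=\frac{1}{C}\log\bigl(1+Ctv'(0)\bigr),$$
and the right-hand side tends to $+\infty$ as $t\to+\infty$. Hence $\displaystyle\lim_{t\to+\infty}v(t)=+\infty$, which completes the proof.

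The only genuine subtlety is Step 1, and the argument just given is self-contained: we first exploit positivity on a maximal interval, derive an explicit lower bound valid there, and then use continuity at the endpoint to rule out any zero of $v'$. The rest is a one-line integration.
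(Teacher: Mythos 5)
Your proof is correct and follows essentially the same route as the paper: both study $1/v'$ on the maximal interval where $v'>0$, derive the explicit lower bound $v'(t)\ge \bigl(Ct+1/v'(0)\bigr)^{-1}$, use it together with continuity to rule out a first zero of $v'$, and then integrate to get the logarithmic divergence of $v$. Your handling of the endpoint (passing to the limit $t\to t_1^-$) is, if anything, slightly more explicit than the paper's.
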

 \begin{proof}[{\bf Proof.}] It is clear that $v'(t)>0$ for $t$ small enough. Let
$$T=\sup\{\tau\geq0,\ v'(t)>0, \ \forall t\in[0,\tau)\}.$$
For all $t\in[0,T)$, we have
$$\frac{d}{dt}(\frac{1}{v'(t)})=\frac{-v''(t)}{(v'(t))^2}\leq C.$$
By integrating over $(0,t)$, we get
\begin{equation}\label{EqLemmaharaux86ODE}
\forall t\in[0,T),\ v'(t)\geq\frac{1}{Ct+\frac{1}{v'(0)}}.
\end{equation}
If $T<+\infty$, we obtain that $v'(t)>0$  in a right neighborhood of $T$ which contradicts the definition of $T$.
 Then $T=+\infty$ and \eqref{EqLemmaharaux86ODE} becomes
$$\forall t\in[0,+\infty),\ v'(t)\geq\frac{1}{Ct+\frac{1}{v'(0)}}.$$
By integrating this inequality, we get the last part of the lemma.
 \end{proof}
 \begin{proof}[{\bf Proof of theorem \ref{ThmNonCVSOODE}}]Since the system in $(u, v)$ is gradient-like \index{gradient-like} we have
$$\lim_{t\to+\infty}^{}\vert u'\vert+\hbox{dist}(u(t),[a,b])=0.$$
Assume that $u(t)\geq a$ for $t\geq t_0$. We must prove that $u$ is constant. We distinguish two cases :

- If $u'(t)\geq0$ on $[t_0,+\infty)$, then $u$ is nondecreasing and tend to $c\in f^{-1}(\{0\}).$
So $f(u(t))=0$ on $[t_0,+\infty)$ and we have
$$u''+g(u')=0, \hbox{ on }[t_0,+\infty).$$
If $u'=0$ on $[t_0,+\infty)$, then $u$ is constant. Otherwise, there exists $t_1\geq t_0$ such that $u'(t_1)>0$.
Applying lemma \ref{lemLivreSDPage61} to $v(t):=u(t+t_1)$, we get a contradiction.

- If there exists $t_1\geq t_0$ such that $u'(t_1)<0$, therefore (since $u(t)\geq a$ when $t\geq t_0$)
$$u''+g(u')=-f(u)\leq0 \hbox{ on } [t_0,+\infty).$$
In particular, $u''\leq -g(u')\leq Cu^2$ on  $[t_0,+\infty)$ and then $v(t):=-u(t+t_1)$ verify
$$v'(0)>0,\quad v''(t)\geq -C v'(t)^2,\quad \forall t\in\R^+.$$
Applying lemma \ref{lemLivreSDPage61} to $v$, we get a new contradiction.
\end{proof}
\section{Contractive and unconditionally stable \index{unconditionally stable} systems}

In this section, $(Z, d)$ denotes a complete metric space and we consider a dynamical system \index{dynamical system} $\displaystyle \{S(t)\}_{t\geq0}$ on $(Z, d).$ The main result is as follows. 
\begin{thm}\label{Uncond-stab}
Assume that the system  $\displaystyle \{S(t)\}_{t\geq0}$ is unconditionally stable \index{unconditionally stable} in the following sense 
\begin{equation}\label{UNCOND}
 \forall\varepsilon>0, \exists \delta > 0,\quad \forall (x, y) \in X\times X, \,\, d(x, y) < \delta\Longrightarrow \sup _{t\ge 0} d(S(t)x, S(t) y) < \varepsilon.
\end{equation}
Let $\cal{F}$ be given by \eqref{equilibriumset}.
Then if $u_0\in X $ generates a precompact trajectory under $S(t)$ and if \index{$\omega$-limit set} $\omega (u_0)\cap {\cal{F}} \not = \emptyset $, the trajectory $S(t)u_0$ converges to some limit \index{convergence result} $a\in \cal{F}$ as $t\rightarrow\infty$.
\end{thm}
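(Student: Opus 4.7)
The plan is to exploit the unconditional stability around an equilibrium belonging to the $\omega$-limit set, and then pass from approach along a subsequence to convergence along the whole trajectory.

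First I would pick a point $a \in \omega(u_0) \cap \mathcal{F}$, whose existence is guaranteed by hypothesis. By definition of the $\omega$-limit set, there is a sequence $t_n \to +\infty$ with $S(t_n)u_0 \to a$ in $(Z,d)$. The idea is that once the trajectory gets close to $a$, it will stay close to $a$ forever, because $a$ itself is fixed by the semi-group.

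More precisely, given $\varepsilon > 0$, let $\delta = \delta(\varepsilon) > 0$ be furnished by the unconditional stability condition \eqref{UNCOND}. Choose $n$ large enough that $d(S(t_n)u_0, a) < \delta$. Applying \eqref{UNCOND} with $x = S(t_n)u_0$ and $y = a$, and using the semi-group property together with the fact that $S(s)a = a$ for every $s \geq 0$ (since $a \in \mathcal{F}$), we obtain
\begin{equation*}
\forall s \geq 0, \quad d(S(s+t_n)u_0, a) = d(S(s)S(t_n)u_0, S(s)a) < \varepsilon.
\end{equation*}
This yields $d(S(t)u_0, a) < \varepsilon$ for every $t \geq t_n$, hence $\lim_{t \to +\infty} d(S(t)u_0, a) = 0$ since $\varepsilon$ was arbitrary.

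There is no real obstacle here: the proof is essentially a one-line observation once the equilibrium $a$ is fixed, because unconditional stability propagates the closeness along the subsequence $(t_n)$ to the whole half-line $[t_n, +\infty)$. The only subtlety worth underlining is that one does \emph{not} need to know a priori that $\omega(u_0) = \{a\}$; the convergence of the trajectory to $a$ is obtained directly, and this then forces $\omega(u_0) = \{a\}$ a posteriori.
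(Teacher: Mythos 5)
Your proof is correct and is essentially the same as the paper's: both select $a\in\omega(u_0)\cap\mathcal{F}$, find a time $\tau$ (your $t_n$) at which the trajectory enters the $\delta$-ball around $a$, and then use \eqref{UNCOND} together with $S(s)a=a$ and the semi-group property to keep the trajectory within $\varepsilon$ of $a$ for all later times. No differences worth noting.
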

 \begin{proof}[{\bf Proof.}] Let \index{$\omega$-limit set} $a\in \omega (u_0)\cap {\cal{F}}.$ Given any $\varepsilon>0$ and choosing 
 $\delta>0$ so that \eqref{UNCOND} is fulfilled, by definition there is $\tau>0$ for which $$ d(S(\tau)u_0, a)<\delta $$ Then we have 
 $$ \forall t\ge \tau, \quad d( S(t)u_0, a) = d(S(t-\tau)S(\tau)u_0, S(t-\tau)a)<\varepsilon. $$
 \end{proof}
 
 \begin{rem}{\rm Actually the above proof shows the following more general result: if $u_0\in X $ generates a precompact trajectory under $S(t)$ and if \index{$\omega$-limit set}$\omega (u_0)\cap {\cal{F}} $ contains a {\it {stable}} \index{stable} equilibrium \index{equilibrium point} point $a$, the trajectory $S(t)u_0$ converges to $a\in \cal{F}$ as $t\rightarrow\infty.$}
\end{rem}
A classical class of unconditionally stable \index{unconditionally stable} systems is the class of contractive systems:

\begin{defn} A dynamical system \index{dynamical system} $\displaystyle \{S(t)\}_{t\geq0}$ on $(Z, d)$ is said to be {\it contractive} if 
\begin{equation}\label{contract} \forall (x, y) \in X\times X, \forall t\ge 0,  d(S(t)x, S(t) y) \le d(x, y)
\end{equation} \end{defn}

An obvious consequence of Theorem \ref{Uncond-stab} is the following

\begin{cor}\label{Contract}
Assume that the system  $\displaystyle \{S(t)\}_{t\geq0}$ is contractive. Then if $u_0\in X $ generates a precompact trajectory under $S(t)$ and if \index{$\omega$-limit set} $\omega (u_0)\cap {\cal{F}} \not = \emptyset $, the trajectory $S(t)u_0$ converges \index{convergence result} to some limit $a\in \cal{F}$ as $t\rightarrow\infty$.
\end{cor}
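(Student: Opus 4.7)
The plan is to derive Corollary~\ref{Contract} as a direct specialization of Theorem~\ref{Uncond-stab}. The only thing to check is that a contractive dynamical system is unconditionally stable in the sense of \eqref{UNCOND}; once this is verified, the conclusion is immediate.

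First I would observe that the defining inequality \eqref{contract} yields, for any $\varepsilon > 0$, the implication
\[
d(x,y) < \varepsilon \Longrightarrow \sup_{t \ge 0} d(S(t)x, S(t)y) \le d(x,y) < \varepsilon,
\]
so \eqref{UNCOND} holds trivially with the choice $\delta = \varepsilon$. Hence every contractive system satisfies the unconditional stability hypothesis of Theorem~\ref{Uncond-stab}.

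Next, since the hypotheses of Corollary~\ref{Contract} assume that $u_0 \in X$ generates a precompact trajectory and that $\omega(u_0) \cap \mathcal{F} \neq \emptyset$, we are in exactly the situation covered by Theorem~\ref{Uncond-stab}. Applying that theorem, we conclude that $S(t)u_0$ converges to some $a \in \mathcal{F}$ as $t \to \infty$, which is the desired statement.

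There is no real obstacle here: the entire content of the corollary is that contractivity is a (strong) special case of unconditional stability, and this is immediate from the definitions. The only minor care to take is to make the bookkeeping explicit, namely to point out that in the contractive case one may even take $\delta = \varepsilon$ uniformly, so no appeal to continuity or compactness is needed to produce the modulus of stability required by \eqref{UNCOND}.
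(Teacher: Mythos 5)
Your proposal is correct and coincides with the paper's intended argument: the corollary is stated there as "an obvious consequence" of Theorem~\ref{Uncond-stab}, precisely because contractivity gives \eqref{UNCOND} with $\delta = \varepsilon$. Nothing further is needed.
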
 

 More generally, we have 
 
 \begin{cor}\label{almost-contract}
Assume that the system  $\displaystyle \{S(t)\}_{t\geq0}$ is such that for some $M\ge 1$ 
\begin{equation}\label{contract} \forall (x, y) \in X\times X, \forall t\ge 0,  d(S(t)x, S(t) y) \le M d(x, y)
\end{equation} Then if $u_0\in X $ generates a precompact trajectory under $S(t)$ and if \index{$\omega$-limit set} $\omega (u_0)\cap {\cal{F}} \not = \emptyset $, the trajectory $S(t)u_0$ \index{convergence result} converges to some limit $a\in \cal{F}$ as $t\rightarrow\infty$.
\end{cor}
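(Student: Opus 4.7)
The plan is to deduce this corollary immediately from Theorem~\ref{Uncond-stab} by verifying that the hypothesis here implies unconditional stability \eqref{UNCOND}. The Lipschitz-type estimate $d(S(t)x,S(t)y) \le M\, d(x,y)$ holds uniformly in $t\ge 0$, so given any $\varepsilon>0$ it suffices to set $\delta := \varepsilon/M$: whenever $d(x,y)<\delta$ one gets $\sup_{t\ge 0} d(S(t)x,S(t)y)\le M d(x,y) < \varepsilon$, which is exactly \eqref{UNCOND}.

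Once \eqref{UNCOND} is established, Theorem~\ref{Uncond-stab} applies verbatim and yields convergence of $S(t)u_0$ to some $a\in \omega(u_0)\cap\mathcal{F}$. For clarity one may also write the argument directly, avoiding the explicit invocation of unconditional stability: fix $a\in \omega(u_0)\cap\mathcal{F}$, given $\varepsilon>0$ pick $\tau>0$ with $d(S(\tau)u_0,a)<\varepsilon/M$, and for every $t\ge\tau$ use the semigroup property together with $S(t-\tau)a=a$ (since $a\in\mathcal{F}$) to write
\[
d(S(t)u_0,a)=d\bigl(S(t-\tau)S(\tau)u_0,\,S(t-\tau)a\bigr)\le M\, d(S(\tau)u_0,a)<\varepsilon.
\]
This gives $\lim_{t\to\infty} d(S(t)u_0,a)=0$.

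There is no genuine obstacle: the whole point of introducing the constant $M$ is to absorb it into the choice of $\delta$, which is harmless because the Lipschitz bound is uniform in $t$. Note that uniformity in $t$ is essential here — a merely locally-in-time Lipschitz dependence with a constant that could blow up with $t$ would not suffice, and this is precisely why the hypothesis is phrased as a single constant $M$ valid for all $t\ge 0$.
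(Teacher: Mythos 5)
Your proof is correct and follows exactly the route the paper intends: reduce to Theorem~\ref{Uncond-stab} by observing that the uniform Lipschitz bound gives unconditional stability with $\delta=\varepsilon/M$, or equivalently repeat the two-line argument of that theorem's proof with the factor $M$ absorbed into the choice of $\tau$. Nothing further is needed.
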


Theorem \ref{Uncond-stab} especially applies to gradient-like \index{gradient-like} systems.

\begin{defn} A dynamical system \index{dynamical system} $\displaystyle \{S(t)\}_{t\geq0}$ on $(Z, d)$ is said to be {\it gradient-like } \index{gradient-like} if whenever $u_0\in X $ generates a precompact trajectory under $S(t)$, we have $\omega(u_0) \subset \cal F.$
 \end{defn}

 \begin{cor}\label{grad-stab}
Assume that the system  $\displaystyle \{S(t)\}_{t\geq0}$ is gradient-like and \index{unconditionally stable} unconditionally stable. Then if $u_0\in X $ generates a precompact trajectory under $S(t)$ , the trajectory $S(t)u_0$ converges to some limit \index{convergence result} $a\in \cal{F}$ as $t\rightarrow\infty$.
\end{cor}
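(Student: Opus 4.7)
The plan is to assemble this corollary from three ingredients already available: the nonemptiness of $\omega(u_0)$ for precompact trajectories, the gradient-like hypothesis that forces $\omega(u_0) \subset \mathcal{F}$, and Theorem \ref{Uncond-stab} which converts the existence of an equilibrium in the omega-limit set into genuine convergence.

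First I would invoke Theorem \ref{thm 4.1.8.}: since the trajectory $\bigcup_{t\geq 0} S(t)u_0$ is precompact, its $\omega$-limit set $\omega(u_0)$ is nonempty (and in fact compact and connected, though only nonemptiness is needed here). Next, using the gradient-like assumption by definition, every point of $\omega(u_0)$ is an equilibrium, i.e.\ $\omega(u_0) \subset \mathcal{F}$. In particular,
\[
\omega(u_0) \cap \mathcal{F} = \omega(u_0) \neq \emptyset.
\]

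At this stage, the hypotheses of Theorem \ref{Uncond-stab} are all verified: the system is unconditionally stable by assumption, the trajectory is precompact by assumption, and $\omega(u_0) \cap \mathcal{F}$ is nonempty by the previous step. Applying that theorem directly yields the existence of some $a \in \mathcal{F}$ with $S(t)u_0 \to a$ as $t \to \infty$, completing the proof.

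There is essentially no obstacle here; the result is a two-line corollary that packages together the gradient-like condition (which supplies candidate limits) with the unconditional stability condition (which promotes a single candidate to the actual limit). The only thing worth pointing out is the role of precompactness, which is used twice — once to guarantee $\omega(u_0) \neq \emptyset$ via Theorem \ref{thm 4.1.8.}, and once as a hypothesis of Theorem \ref{Uncond-stab} itself.
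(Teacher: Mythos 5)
Your proof is correct and is precisely the argument the paper intends (the corollary is stated without a written proof, being an immediate consequence of Theorem \ref{Uncond-stab}): precompactness gives $\omega(u_0)\neq\emptyset$ by Theorem \ref{thm 4.1.8.}, the gradient-like hypothesis gives $\omega(u_0)\subset\mathcal{F}$, so $\omega(u_0)\cap\mathcal{F}\neq\emptyset$ and Theorem \ref{Uncond-stab} applies. Nothing is missing.
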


 \begin{rem}{\rm If we consider the ODE $$ u'' + u = 0 $$ written on $\R^2$ as a system $$ u'= v;\quad v'= -u$$ it is easy to check that any trajectory starting from $U_0 = (u_0, v_0)\not= (0, 0)$ is non-convergent. Here $S(t)$ is an isometry group on $\R^2$ , hence trivially contracting. What happens here is that the system is not \index{gradient-like} gradient-like. More precisely, whenever $U_0 = (u_0, v_0)\not= (0, 0)$, we have $\omega(U_0)\cap {\cal{F}} = \emptyset $ since ${\cal{F}} = \{0\}$ and the norm of $S(t)U_0$ is constant.}
\end{rem}

As a basic application of theorem \ref{Contract}, Let $ N \geq 1$ and  $F \in C^{2}({\R}^N)$ be {\bf{convex}}. We
consider the equation \eqref{gradientsystem}
$$ u'(t) +  \nabla F(u(t)) = 0$$ We obtain
\begin{cor}\label{gradientsystconvex} Assume that $ \cE  = \{ z\in {\mathbb R}^N, \nabla F(z) = 0\}\not= \emptyset.$ Then any solution $u(t)$ of \eqref{gradientsystem} is
bounded on ${\R}^+$ and converges, as $t\rightarrow\infty$ to some limit \index{convergence result} $a\in  \cE  = \{ z\in {\mathbb R}^N, \nabla F(z) = 0\}.$
\end{cor}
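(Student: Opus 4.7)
The plan is to combine three ingredients already available in the text: the fact that gradient flows with convex potentials are contractive, the gradient-system structure giving $\omega(u_0)\subset\cE$, and Corollary \ref{Contract} to upgrade the convergence of a subsequence to convergence of the whole trajectory.

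First I would fix any equilibrium $a\in\cE$ (nonempty by hypothesis) and compute, along a solution $u$,
\[
\tfrac{1}{2}\tfrac{d}{dt}|u(t)-a|^{2} = (u'(t),u(t)-a) = -(\nabla F(u(t))-\nabla F(a),\, u(t)-a)\le 0,
\]
where the last inequality comes from the monotonicity of $\nabla F$, itself a consequence of the convexity of $F$. This immediately yields that $u$ is bounded on $\R^{+}$ (it stays in the closed ball of radius $|u(0)-a|$ around $a$), hence, since we are in $\R^{N}$, its trajectory is precompact.

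Next I would observe that the \emph{same} monotonicity argument, applied to two solutions $u,v$ of \eqref{gradientsystem}, gives
\[
\tfrac{1}{2}\tfrac{d}{dt}|u(t)-v(t)|^{2}=-(\nabla F(u(t))-\nabla F(v(t)),u(t)-v(t))\le 0,
\]
so the associated semi-flow $S(t)$ on $\R^{N}$ is contractive for the Euclidean distance. In particular it is unconditionally stable in the sense of \eqref{UNCOND}.

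Finally, Corollary \ref{gradientsyst1} (applied on the closure of the bounded trajectory) gives $\omega(u(0))\subset\cE$, which is nonempty by Theorem \ref{thm 4.1.8.} since the trajectory is precompact. Therefore $\omega(u(0))\cap\cE\neq\emptyset$, and Corollary \ref{Contract} applies and yields the existence of $a^{*}\in\cE$ such that $u(t)\to a^{*}$ as $t\to+\infty$. The main (very modest) obstacle is simply recognizing the two uses of convexity: once on a single trajectory with respect to an equilibrium, to obtain boundedness, and once between two trajectories, to obtain the contractivity that feeds Corollary \ref{Contract}; everything else is cited from the chapter.
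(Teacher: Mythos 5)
Your proof is correct and follows essentially the same route as the paper: both establish boundedness via the monotonicity of $\nabla F$ applied to $u$ and a fixed equilibrium, establish contractivity of the flow by the same monotonicity applied to two solutions, and then invoke the gradient-like property together with the unconditional-stability corollaries of Chapter 8 (you cite Corollary \ref{Contract} where the paper cites Corollary \ref{grad-stab}, but these rest on the same Theorem \ref{Uncond-stab}).
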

 \begin{proof}[{\bf Proof.}] We already showed that  the dynamical system \index{dynamical system} $S(t)$ generated by \eqref{gradientsystem} on the closure of the range of $u $ is gradient-like \index{gradient-like} with set of equilibria $ \cE. $ Under the hypothesis that $F$ is convex, it is easy to check that the operator $\nabla F \in C^{1}({\R}^N, {\R}^N)$ is monotone, which means 
 $$ \forall (u, v) \in {\R}^N\times{\R}^N, \quad \langle \nabla F(u)- \nabla F(v), u-v\rangle\ge 0$$ Then if $(u, v)$ are 2 solutions of \eqref{gradientsystem}, we have 
 
 $$ \forall t\ge 0,\quad  \frac{d}{dt} \Vert u(t)-v(t)\Vert^2 = - 2 \langle \nabla F(u(t))- \nabla F(v(t)), u(t)-v(t)\rangle\le 0$$ Hence the system $S(t)$ is contractive in the usual norm. In particular, since any $a\in  \cE  = \{ z\in {\mathbb R}^N, \nabla F(z) = 0\}$ is a solution of \eqref{gradientsystem} independent of t, the function 
 $$t\mapsto   \Vert u(t)-a\Vert $$ is non-increasing and all trajectories are bounded. Finally Corollary \ref{grad-stab} gives the result.
\end{proof}
 \begin{rem}{\rm A much more general convergence result holds true for the equation $$ 0\in u'+ \partial \Phi (u)  $$ where $ \partial \Phi (u) $ is the (possibly multivalued) subdifferential of any proper convex lsc function with arbitrary domain on a Hilbert space $H$, cf. Bruck \cite{MR0377609}. In general only weak convergence is obtained, cf \cite{MR0496964}. Besides, the asymptotic behavior of precompact trajectories of nonlinear contraction semi-groups has been the object of intensive study in the seventies, cf. e.g. \cite {MR0513814, MR0346611, MR0610796}.}
\end{rem}

\section{The finite dimensional case of a result due to Alvarez}

In this section, we consider  the equation \eqref{2gradientsystem}
$$u''(t) + u'(t) +  \nabla F(u(t)) = 0$$
where $ N \geq 1$ and  $F \in C^{2}({\R}^N)$ is {\bf{convex}}. In contrast with the \index{gradient system} gradient system \eqref{gradientsystem}, the system generated by \eqref{2gradientsystem} is  gradient-like \index{gradient-like} but  generally non-contractive. However we have a convergence result similar to Corollary \ref{gradientsystconvex} which is a special case of a more general weak convergence theorem due to Alvarez, cf. \cite{MR1760062}.

\begin{cor}\label{gradientsystConvex2} Assume that $ \cE  = \{ z\in {\mathbb R}^N, \nabla F(z) = 0\}\not= \emptyset.$ Then any solution $u(t)$ of \eqref{2gradientsystem} is \index{global solution} global, bounded on ${\R}^+$ and converges, as $t\rightarrow\infty$ to some limit \index{convergence result} $a\in  \cE  = \{ z\in {\mathbb R}^N, \nabla F(z) = 0\}.$
\end{cor}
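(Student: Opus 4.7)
The plan is to combine the gradient-like structure of Corollary~\ref{gradientsyst2} with an auxiliary Lyapunov-type functional adapted to convex potentials, of the form $\varphi_a(t):=\tfrac12\|u(t)-a\|^2$ for any fixed $a\in\cE$. The crucial convexity input is the monotonicity inequality
\[
\langle u-a,\nabla F(u)\rangle\;\ge\; F(u)-F(a)\;\ge\;0\qquad(a\in\cE),
\]
since $a$ is a global minimizer of $F$ (because $F$ is convex and $\nabla F(a)=0$).

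First I would check global existence and boundedness of any maximal solution $u$ on $[0,T^{\ast})$. The energy $E(t):=\tfrac12\|u'(t)\|^2+F(u(t))$ satisfies $E'(t)=-\|u'(t)\|^2\le 0$, so $E$ is non-increasing and bounded below by $F(a)$, which yields at once $\|u'(t)\|$ bounded on $[0,T^{\ast})$ and $u'\in L^2([0,T^{\ast}); \R^N)$. A direct computation gives
\[
\varphi_a''(t)+\varphi_a'(t)=\|u'(t)\|^2-\langle u(t)-a,\nabla F(u(t))\rangle,
\]
and the convexity inequality above forces $\varphi_a''+\varphi_a'\le\|u'\|^2$. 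Multiplying by $e^t$ and integrating, together with the already established bound $\int_0^{T^{\ast}}\|u'\|^2\,dt\le E(0)-F(a)$, gives $\varphi_a(t)+\varphi_a'(t)\le C$ on $[0,T^{\ast})$, and then $(e^t\varphi_a)'\le Ce^t$ yields $\varphi_a(t)\le\varphi_a(0)+C$. Thus $u$ remains in a fixed bounded subset of $\R^N$, which together with $\|u'\|$ bounded rules out finite-time blow-up and proves $T^{\ast}=+\infty$ and $u,u'$ bounded on $\R^+$.

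Since $u$ and $u'$ are bounded, Corollary~\ref{gradientsyst2} applies and gives $\omega(u(0),u'(0))\subset\cE\times\{0\}$, and in particular $u'(t)\to 0$ as $t\to\infty$. The main step is now to upgrade this into actual convergence of $u(t)$, by showing that $\lim_{t\to\infty}\varphi_a(t)$ exists for every $a\in\cE$. Setting $\theta(t):=\langle u(t)-a,\nabla F(u(t))\rangle\ge 0$, integration of the identity $\varphi_a''+\varphi_a'+\theta=\|u'\|^2$ from $0$ to $t$ gives
\[
\varphi_a'(t)+\varphi_a(t)+\int_0^t\theta(s)\,ds=\varphi_a'(0)+\varphi_a(0)+\int_0^t\|u'(s)\|^2\,ds.
\]
Since $\varphi_a$ is bounded and $\varphi_a'(t)=\langle u(t)-a,u'(t)\rangle$ is bounded (as both $u$ and $u'$ are), and since $\int_0^\infty\|u'\|^2\,dt<\infty$, passing to the limit gives $\theta\in L^1(\R^+)$. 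Consequently $g:=\|u'\|^2-\theta\in L^1(\R^+)$, and the linear scalar ODE $\varphi_a''+\varphi_a'=g$ is solved explicitly by
\[
\varphi_a'(t)=e^{-t}\varphi_a'(0)+\int_0^t e^{-(t-s)}g(s)\,ds,
\]
which is the sum of an $L^1$ function of $t$ and the convolution of the $L^1$ function $g$ with the $L^1$ kernel $e^{-t}\chi_{\R^+}$; by Young's inequality $\varphi_a'\in L^1(\R^+)$, so $\varphi_a(t)=\varphi_a(0)+\int_0^t\varphi_a'(s)\,ds$ converges in $\R$.

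It remains to conclude by an Opial-type argument. Let $a^{\ast},b^{\ast}\in\omega(u(0),u'(0))\cap(\cE\times\{0\})$; writing $a^{\ast}=(\alpha,0)$, $b^{\ast}=(\beta,0)$ with $\alpha,\beta\in\cE$, the limits $\lim_t\|u(t)-\alpha\|$ and $\lim_t\|u(t)-\beta\|$ both exist. Evaluating along a sequence $t_n\to\infty$ with $u(t_n)\to\alpha$ gives $\lim_t\|u(t)-\alpha\|=0$, while the same limit evaluated along a sequence $s_n\to\infty$ with $u(s_n)\to\beta$ equals $\|\beta-\alpha\|$; hence $\alpha=\beta$, the $\omega$-limit is a singleton $\{(u^{\ast},0)\}$ with $u^{\ast}\in\cE$, and $u(t)\to u^{\ast}$. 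The main obstacle is really the fourth step: without the convexity of $F$ one cannot ensure that $\theta\ge 0$, and then neither the boundedness of $\varphi_a$ nor the key integrability $\varphi_a'\in L^1$ would follow from this kind of one-line computation.
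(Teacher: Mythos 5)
Your proof is correct and follows essentially the same route as the paper's: the energy identity gives global existence and $u'\in L^2(\R^+)$, the convexity/monotonicity of $\nabla F$ yields the differential inequality $\varphi_a''+\varphi_a'\le\Vert u'\Vert^2\in L^1$ for $\varphi_a=\tfrac12\Vert u-a\Vert^2$, integration against $e^t$ gives boundedness and convergence of $\varphi_a$, and the gradient-like property of Corollary \ref{gradientsyst2} identifies the limit. The only (harmless) variations are that you establish $\varphi_a'\in L^1$ via the explicit solution formula and Young's inequality where the paper bounds $\varphi_a'$ above by an $L^1$ function and invokes $\varphi_a\ge 0$, and you close with an Opial-type two-point argument where the paper evaluates $\lim\varphi_a$ directly at an $\omega$-limit point.
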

 \begin{proof}[{\bf Proof.}]  From our Hypothesis it follows that $F$ is bounded from below. First we consider a local solution $u$ of \eqref{2gradientsystem} on some interval $[0, L)$ Given any positive $T< L$, the identity 
 $$\int_0^T \Vert u'(t)\Vert^2 dt + \frac{1}{2}\Vert
u'(t)\Vert^2 = F(u(0)) - F(u(t)) +\frac{1}{2}\Vert u'(0)\Vert^2 $$  shows that  $ u'\in L^\infty (0, T; {\mathbb R}^N)$, therefore the solution is \index{global solution} global and uniformly Lipschitz. In addition $ u' \in L^2(\R^+, X)$ with $X= {\R}^N$. We already showed that if all solutions  $U = (u, u') $ are bounded, the system $S(t)$ generated by \eqref{2gradientsystem} is gradient-like \index{gradient-like} and the set  of fixed points of $S(t)$ is $\cF=\cE\times \{0\}$.  We now show that in fact $u$ is bounded and the numerical function $\varphi(t)= \Vert u(t)-a\Vert^2 $ has a limit at infinity whenever $a\in \cE$. 
Indeed a straightforward calculation shows that $\varphi\in C^{2} $ with 
$$\varphi ''+ \varphi ' = -2 \langle \nabla F(u(t)),  u(t)-a)\rangle +2  \Vert u'(t)\Vert^2 \le 2  \Vert u'\Vert^2  = h \in L^1(\R^+) $$ Writing this inequality as
$$ (e^t  \varphi ')' \le e^t  h(t) $$ provides 
$$ \varphi ' (t)  \le e^{-t} \varphi ' (0) +  \int_0^t e^{s-t}  h(s) ds : = H(t) + e^{-t} \varphi ' (0) = K(t) $$  Now we have 
$$ \int_0^T  H(t) dt = \int_0^T \int_0^t e^{s-t}  h(s) ds dt = \int_0^T e^{s}  h(s) \int_s^T e^{-t} dt  ds $$
$$ =\int_0^T e^{s}  h(s) (e^{-s}- e^{-T} ) ds  \le \int_0^T  h(s) ds $$ Thus $H, K \in L^1(\R^+) $ and since $\varphi\ge 0$, the function  $\psi(t): = \varphi (t) - \int_0^t  K(s) ds $ is bounded with non-positive derivative. It tends to a limit at infinity and so does  $\varphi$. In particular $u$ is bounded, and since $S(t)$ is \index{gradient-like} gradient-like, the omega-limit set is contained in $\cF$. Picking $(a, 0) \in \omega (U_0)$ \index{$\omega$-limit set}, the limit of  $\varphi$ at infinity  is $0$ and we end up with convergence of $u$ to $a$ and $u'$ to $0$

\end{proof}

%%%%%%%%%%%%%%%%%%%%%%%%%%%%
%%%%%%%%%%%%%%%%%%%%%%%%%%%%%%
\chapter[The convergence problem in finite dimensions]{The convergence problem in finite dimensions}

\section{A first order system}

In this section we consider the first order gradient system \index{gradient system}
\begin{equation}\label{SDPremierOrdre}
u'+\nabla\varphi(u)=0
\end{equation}
where $\varphi:\R^N\longrightarrow\R$ is assumed to be $C^1$, and we set $${\cal S}=\{a\in\R^N,\ \nabla \varphi(a)=0\}.$$
As we saw in Section \ref{SystemGradient}, any bounded solution of \eqref{SDPremierOrdre} approaches the set ${\cal S}$ as $t$ goes to infinity. The question is then to determine whether or not it actually converges to a point in ${\cal S}$. The next result shows that this is not always true. 
\subsection{A non convergence result}

\begin{thm}\label{PalisDeMeloAbsil} \index{non convergence result} Let $k$ be a positive integer and let us consider
\begin{equation} \label{CoordCartPolaire}\varphi(x,y)= f(r,\theta)=  \left\{ \begin{array}{ll}
e^{-\frac{1}{(1-r^2)^{k}}} \left[ 1-\frac{4k^2r^{4}}{4k^2r^{4}+{(1-r^{2})}^{2k+2}}\sin(\theta-\frac{1}{{(1-r^2)}^{k}})\right] & \hbox{ if } r<1 ,\\[2mm]
0  & \hbox{ if } r\geq1.
\end{array} \right.
\end{equation} where we use the polar coordinates $(x,y)=(r\cos\theta,r\sin\theta)$. Then there exists  a bounded  solution $u$
of \eqref{SDPremierOrdre} whose
$\omega$-limit \index{$\omega$-limit set} set is homeomorphic to ${ S}^1$.
\end{thm}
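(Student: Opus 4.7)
\medskip\noindent\textbf{Proof proposal.} The plan is to exhibit an explicit invariant spiral for the gradient flow along which the trajectory winds infinitely many times while its radius increases monotonically to $1$.

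First I would verify that $\varphi \in C^1(\R^2)$ (in fact $C^\infty$): on the open disk $r<1$ the function is smooth by construction, and on $r\ge 1$ it is zero; the matching at $r=1$ follows from the fact that $g(r):=e^{-1/(1-r^2)^{k}}$ together with all its derivatives tends to $0$ as $r\to 1^-$, which dominates the bounded angular factor together with all its derivatives. Hence the gradient $\nabla\varphi$ is globally defined and Lipschitz (even with compact support), so solutions to \eqref{SDPremierOrdre} are global.

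Next I would rewrite \eqref{SDPremierOrdre} in polar coordinates. Writing $f(r,\theta)=g(r)\bigl[1-h(r)\sin(\theta-\psi(r))\bigr]$ with
$$h(r)=\frac{4k^{2}r^{4}}{4k^{2}r^{4}+(1-r^{2})^{2k+2}},\qquad \psi(r)=\frac{1}{(1-r^{2})^{k}},$$
and noting the crucial relation $g'(r)=-\psi'(r)g(r)$, the gradient system becomes
$$r'=-f_{r},\qquad \theta'=-\frac{1}{r^{2}}f_{\theta}.$$
I would then test the spiral $\Gamma=\{\theta=\psi(r),\,0<r<1\}$. Along $\Gamma$, $\sin(\theta-\psi)=0$ and $\cos(\theta-\psi)=1$, which yields after simplification
$$r'=g(r)\,\psi'(r)\bigl[1-h(r)\bigr]>0,\qquad \theta'=\frac{g(r)h(r)}{r^{2}}.$$
The invariance of $\Gamma$ reduces to the algebraic identity $\theta'=\psi'(r)\,r'$, i.e.\ $h(r)/(1-h(r))=r^{2}[\psi'(r)]^{2}$. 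Both sides equal $4k^{2}r^{4}/(1-r^{2})^{2k+2}$, so $\Gamma$ is invariant under the flow. This identity is in fact the raison d'être of the coefficient $h$; finding the right formula is the key algebraic step.

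Picking any initial point $(r_{0}\cos\psi(r_{0}),r_{0}\sin\psi(r_{0}))$ with $r_{0}\in(0,1)$, the corresponding solution $u(t)$ stays on $\Gamma$ for all $t\ge 0$. Since $r'>0$ on $\Gamma$, the function $r(t)$ is increasing and bounded by $1$, hence converges to some limit $r_{\infty}\in(r_{0},1]$; because $r'$ is continuous on $[0,1)$ and vanishes nowhere there, one must have $r_{\infty}=1$. As $r(t)\to 1^{-}$, one has $\theta(t)=\psi(r(t))\to+\infty$, so the trajectory crosses every ray $\{\arg u=\theta_{0}\}$ infinitely often. Combined with $|u(t)|=r(t)\to 1$, this shows that every point of the unit circle lies in $\omega(u(0))$, while $r(t)\to 1$ implies $\omega(u(0))\subset S^{1}$. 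Therefore $\omega(u(0))=S^{1}$, which is homeomorphic to $S^{1}$ and in particular is not reduced to a point.

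The only mildly delicate point is the algebraic verification of the invariance identity (and the corresponding check that the sign of $r'$ on $\Gamma$ is positive); once that is in place, the conclusion follows from completely elementary one-dimensional ODE arguments.
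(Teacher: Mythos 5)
Your proposal is correct and follows essentially the same route as the paper: you construct the same invariant spiral $\theta=\psi(r)=(1-r^2)^{-k}$, and your radial equation $r'=g(r)\psi'(r)\bigl[1-h(r)\bigr]$ is exactly the explicit ODE the paper writes down for $r$. The only difference is that you spell out the "straightforward calculation" (via the identity $h/(1-h)=r^2[\psi'(r)]^2$ and the relation $g'=-\psi' g$) and add the smoothness and globality checks that the paper leaves implicit.
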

\begin{proof}[{\bf Proof.}] For $N=2$, by setting $u=(x,y)$ equation \eqref{SDPremierOrdre} becomes 
\begin{equation} \label{SDPremierOrdreN=2}
\left\{ \begin{array}{ll}
x'+\frac{\partial\varphi}{\partial x}(x,y)=0, &  \\[2mm]
y'+\frac{\partial\varphi}{\partial y}(x,y)=0 , & 
\end{array} \right.
\end{equation}

 The system \eqref{SDPremierOrdreN=2} becomes
\begin{equation} \label{EquCoordPolaire}
\left\{ \begin{array}{ll}
r'+\frac{\partial f}{\partial r}(r,\theta)=0, &  \\[2mm]
\theta'+\frac{1}{r^2}\frac{\partial f}{\partial \theta}(r,\theta)=0 , & 
\end{array} \right.
\end{equation}
We define

Let $r_0\in(0,1)$ and let $r$ be the local  solution of 
\begin{equation*}
\left\{ \begin{array}{ll}
r'-\frac{2kr {(1-r^{2})}^{k+1}}{4k^2r^{4}+{(1-r^{2})}^{2k+2}}
e^{-\frac{1}{(1-r^2)^{k}}}=0.&  \\[2mm]
r(0)=r_0 & 
\end{array} \right.
\end{equation*}
 Clearly, $r$ is global\index{global solution} and  satisfies 
$$\forall t\in(0,+\infty), \ 0<r(t)<1,\quad\hbox{and }\ \displaystyle\lim_{t\to\infty}^{}r(t)=1.$$
 Now if we impose that \begin{equation}\label{CdCourbe}\theta=\frac{1}{(1-r^2)^{k}} \end{equation}
 then a straightforward calculation shows that $(r,\theta )$ is a solution of  \eqref{EquCoordPolaire}. Hence, the solution $(r,\theta)$ verifies
 $$\lim_{t\to\infty}^{}r(t)=1,\quad \lim_{t\to\infty}^{}\theta(t)=\infty.$$
Clearly, the $\omega$-limit \index{$\omega$-limit set} set of the trajectory $u=(r\cos\theta,r\sin\theta)$ of  \eqref{SDPremierOrdreN=2} with $\varphi$ given by \eqref{CoordCartPolaire}
and $(r,\theta)$ satisfying \eqref{CdCourbe} is the entire circle $\{(r,\theta)/\ r=1\}$. \end{proof}

\begin{rem}{\rm We  recall that a function $f\in C^{\infty}(\R^N,\R)$ is in the uniform Gevrey class $G_{1+\delta}(\R^N,\R)$ if there exists a constant $M = M(f) >0$ for which 
$$ \forall m\in \N^N, \quad \Vert D^mf\Vert _{L^\infty} \le M^m \vert m\vert ^{(1+\delta) \vert m\vert}$$ where 
$$\vert m\vert : = \sum_{j=1}^N{m_j} $$ is the length of the differentiation index 
$m$. It is natural to conjecture that, written in cartesian coordinates,   $\varphi\in G_{1+\frac{1}{k}}$ outside any ball centered at 0  and therefore $\rho\varphi\in G_{1+\frac{1}{k}}(\R^2,\R)$ for any $\rho\in G_{1+\frac{1}{k}} (\R^2,\R)$ which vanishes in a small ball around 0 and is equal to 1 ouside the ball of radius $\varepsilon<1$. If the conjecture is valid, this reinforces to the stronger regularity class  $G_{1+\delta}(\R^2,\R) \subset  C^{\infty}(\R^2,\R)$  with $\delta =\frac{1}{k}$  the non-convergence result from J. Palis and W. De Melo \cite {MR0669541} which stated the existence of $\varphi\in C^{\infty}(\R^2,\R)$ for which there is  a bounded  solution $u$ of \eqref{SDPremierOrdre} whose $\omega$-limit \index{$\omega$-limit set} set is homeomorphic to ${ S}^1$. As $\delta$ tends to $0$  the space $G_{1+\delta}(\R^2,\R) $ approaches the space of analytic \index{analytic} functions $G_{1}(\R^2,\R) $ , showing that the next result is optimal if we look for a regularity class in which convergence of bounded trajectories is always true. }\end{rem}
\subsection{The analytic \index{analytic} case}\label{Cvpremierordre-Analytique}
In \cite{Lo65,MR0160856}, S. {\L}ojasiewicz proved the following result which implies that the "bad" situation of Theorem \ref{PalisDeMeloAbsil} cannot happen for analytic \index{analytic} functions.
\begin{thm}\label{ThmInegaliteLojasiewicz} ({\L}ojasiewicz \index{Lojasiewicz}Theorem \cite{Lo65,MR0160856})
 Let  $\varphi: \R^N\longrightarrow {{\R}}$ be an analytic \index{analytic} function. Then for all $a\in
 {\cal S}$, there exists $c_a>0$, $\sigma_a>0$ and $0<\theta_a\leq{1\over 2}$ such that :
\begin{equation}\label{InegaliteLojasiewicz}\Vert \nabla \varphi(u)\Vert\geq c_a\vert \varphi(u)-\varphi(a)\vert^{1-\theta_a}\quad \forall u\in{{\R}}^N\ \Vert u-a\Vert<\sigma_a.\end{equation}

\end{thm}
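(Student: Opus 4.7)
Translating and subtracting a constant, we may assume $a = 0$ and $\varphi(0) = 0$. The standard route, due to Łojasiewicz \cite{Lo65,MR0160856}, is a proof by contradiction using the \emph{curve selection lemma for semi-analytic sets}: if $E \subset \R^N$ is semi-analytic and $0 \in \overline{E\setminus\{0\}}$, there exists a real-analytic curve $\gamma : [0, \varepsilon) \to \R^N$ with $\gamma(0) = 0$ and $\gamma((0,\varepsilon)) \subset E$. Suppose, for contradiction, that the inequality fails for a fixed rational $\theta = p/q \in (0, 1/2]$ and every $c > 0$. A violation must occur at a point where $\varphi(u) \neq 0$ (otherwise the right-hand side is zero), so raising the failing inequality to the $2q$-th power, the set
\begin{equation*}
E_\theta = \bigl\{ u \in B(0,1) :\ \varphi(u) \neq 0,\ \|\nabla \varphi(u)\|^{2q} \leq |\varphi(u)|^{2(q-p)}\bigr\}
\end{equation*}
is semi-analytic and accumulates at $0$.

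\textbf{Analysis along the selected curve.} The curve selection lemma produces a real-analytic $\gamma : [0, \varepsilon) \to E_\theta$ with $\gamma(s) \neq 0$ for $s > 0$. Set $\psi(s) := \varphi(\gamma(s))$; this is analytic and nonzero for small $s > 0$ by construction, so $\psi(s) = \alpha s^n + O(s^{n+1})$ with $\alpha \neq 0$. Since $\nabla \varphi(0) = 0$ and $\gamma$ is smooth at $0$, one has $\psi'(0) = \langle \nabla\varphi(0), \gamma'(0)\rangle = 0$, hence $n \geq 2$. By the chain rule and the defining inequality of $E_\theta$,
\begin{equation*}
|\psi'(s)| \leq \|\nabla \varphi(\gamma(s))\| \cdot \|\gamma'(s)\| \leq \|\gamma'(s)\| \cdot |\psi(s)|^{1-\theta}.
\end{equation*}
Matching leading orders ($|\psi'(s)| \asymp s^{n-1}$, $|\psi(s)|^{1-\theta} \asymp s^{n(1-\theta)}$, with $\|\gamma'(s)\|$ bounded) forces $n - 1 \geq n(1-\theta)$, i.e.\ $\theta \geq 1/n$. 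The contradiction is extracted by choosing $\theta$ small enough: one requires a uniform a priori bound $n \leq N^*$ on the orders of vanishing of $\varphi \circ \gamma$ over all admissible real-analytic curves at $0$, after which any rational $\theta \in (0, 1/N^*)$ yields an impossible inequality. The constraint $\theta \leq 1/2$ asserted in the statement is consistent with $n \geq 2$ and is attained (sharply) at nondegenerate critical points, where $\varphi$ is locally a quadratic form and $n = 2$.

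\textbf{Main obstacle.} The two genuinely deep ingredients — the curve selection lemma and the uniform finiteness of vanishing orders $n$ along admissible curves — are the substance of Łojasiewicz's theorem and rest on the structural theory of semi-analytic sets (local stratification, the preparation theorem) or alternatively on Hironaka's resolution of singularities; none of this is developed in the preceding chapters. In practice, the theorem is invoked by citing \cite{Lo65,MR0160856} directly. A secondary but nontrivial point is the reduction to a single rational exponent $\theta$: one either works with one well-chosen $\theta < 1/N^*$ after establishing the uniform bound, or, more elementarily, observes that failure for all $\theta \in (0, 1/2]$ produces a sequence of curves $\gamma_k$ with multiplicities $n_k \to \infty$, contradicting the aforementioned finiteness statement.
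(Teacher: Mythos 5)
The paper does not prove this theorem: it is stated with the citation \cite{Lo65,MR0160856} and used as a black box in everything that follows, so there is no in-house argument to compare yours against. Taken as a reconstruction of the cited proof, your sketch has the right skeleton up to and including the order count: the normalization $a=0$, $\varphi(0)=0$, the remark that a violation can only occur where $\varphi\neq0$, the semi-analyticity of $E_\theta$ for rational $\theta$, the curve selection lemma, and the computation forcing $n-1\ge n(1-\theta)$, i.e.\ $\theta\ge 1/n$, along a selected curve are all correct.

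The gap is in how you close the contradiction. The lemma you invoke --- a uniform bound $n\le N^*$ on the orders of vanishing of $\varphi\circ\gamma$ over real-analytic curves $\gamma$ through the critical point --- is false as stated: for $\varphi(x,y)=(y-x^2)^2$ and $\gamma_k(s)=(s,\,s^2+s^k)$ one has $\varphi(\gamma_k(s))=s^{2k}$, so these orders are unbounded, even though this $\varphi$ satisfies the gradient inequality with the optimal exponent $\theta=\frac12$ (indeed $\Vert\nabla\varphi\Vert=2\sqrt{1+4x^2}\,\vert\varphi\vert^{1/2}$). Restricting to ``admissible'' curves, i.e.\ curves lying in the sets $E_\theta$, does not rescue the argument, because a uniform bound on vanishing orders along such curves is essentially equivalent to the assertion that $E_\theta$ is eventually empty near the critical point, which is what you are trying to prove. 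The classical way to finish is different: one shows that the one-variable function $m(y)=\inf\{\Vert\nabla\varphi(x)\Vert:\ \varphi(x)=y,\ \Vert x\Vert\le\rho\}$ is semi-analytic in $y$, hence behaves like $c\vert y\vert^{\alpha}$ near $0$, and one excludes $\alpha\ge1$ by the finite length of bounded semi-analytic arcs: along a curve $y\mapsto x(y)$ realizing the infimum (again obtained by curve selection), $1=\frac{d}{dy}\varphi(x(y))\le m(y)\Vert x'(y)\Vert$ gives $\Vert x'(y)\Vert\ge m(y)^{-1}$, which is non-integrable at $y=0$ when $\alpha\ge1$. So the finiteness input that does the work is the finite length of bounded semi-analytic curves, not a bound on multiplicities; you should either supply that argument or simply cite the theorem, as the paper does.
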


\begin{rem}\label{theta}{\rm In the sequel, $\theta_a$ will be called a {\L}ojasiewicz \index{Lojasiewicz} exponent of $\varphi$ at point $a$. Each $\theta'<\theta_a$ is also a {\L}ojasiewicz \index{Lojasiewicz} exponent of $\varphi$ at point $a$, associated to a possibly smaller radius $ \sigma< \sigma_a$. Moreover when considering  $\theta'$ and reducing $ \sigma$ if needed, the constant $c_a$ can be replaced by arbitrarily large constants, in particular by $1.$  This was the choice made by {\L}ojasiewicz in his pioneering paper. On the other hand, in the cases where an optimal (= largest) $\theta$ can be reached for instance by a direct calculation, it  may happen that the choice $c=1$ is irrelevant. For instance if $N=1$ and  $\varphi(u) = \varepsilon u^2$, we have $\Vert \nabla \varphi(u)\Vert = 2\varepsilon |u| $ so that in particular  $$\Vert \nabla \varphi(u)\Vert = 2{\varepsilon}^{\frac{1}{2}} \varphi(u) ^{1- \frac{1}{2}} $$ In this case the optimal value $\theta = \frac{1}{2}$ is associated to a maximal constant $c_0$ which tends to $0$ with the parameter $\varepsilon$. Similar examples can be built with any super-quadratic power function.}\end{rem}

\begin{rem}{\rm If $a\not\in {\cal S}$,   the inequality becomes trivial since $\varphi$ is of class $C^1$.}
\end{rem}
\begin{thm}\label{ThmCVAnalyticSGDimFini} ({\L}ojasiewicz \index{Lojasiewicz} Theorem \cite{Lo65,MR0160856})
Assume that $\varphi$ satisfies \eqref{InegaliteLojasiewicz} at any equilibrium point $a$ and let $u\in L^{\infty}({\R}^+,{\R}^N)$ be a solution of \eqref{SDPremierOrdre}.
Then there exists $a\in {\cal S}$  such that \index{convergence result}
$$\lim_{t\to+\infty}^{}\Vert u(t)-a\Vert= 0.$$
Moreover, let $\theta$ be any {\L}ojasiewicz\index{Lojasiewicz} exponent of  $\varphi$ at point $a$. Then we have
\begin{equation}\label{DecayEstimaFirstOrder}
\| u(t) - a \| = \left\{ \begin{array}{ll}
O(e^{-\delta t} )  & \hbox{ for some }\delta>0 \text{ if } \theta =\frac12, \\[2mm]
O(t^{-\theta/(1-2\theta)} )  & \text{if } 0<\theta <{1\over2}.
\end{array} \right.
\end{equation}
\ In particular if $\varphi$ is analytic \index{analytic}, all bounded solutions of \eqref{SDPremierOrdre} are convergent.
\end{thm}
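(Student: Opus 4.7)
The plan is to combine the gradient-like structure of \eqref{SDPremierOrdre} with the uniform form of the {\L}ojasiewicz inequality provided by the topological reduction lemma \ref{InegLojsABSRAITLemme}, and then to deduce convergence from Lemma \ref{strong}. First, I would note the energy identity $\frac{d}{dt}\varphi(u(t))=-\|\nabla\varphi(u(t))\|^2=-\|u'(t)\|^2$, which makes $\varphi\circ u$ non-increasing. Since $u$ is bounded and $\varphi$ continuous, $\varphi(u(t))$ tends to a limit $\varphi_\infty$ as $t\to\infty$. By Corollary \ref{gradientsyst1}, $\omega(u_0)$ is a non-empty, compact, connected subset of ${\cal S}$, and the monotonicity of $\varphi\circ u$ forces $\varphi\equiv\varphi_\infty$ on $\omega(u_0)$.

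Next, I would apply Lemma \ref{InegLojsABSRAITLemme} with ${\cal G}=\nabla\varphi$, $E=\varphi$ and $\Gamma=\omega(u_0)$ to extract uniform constants $\sigma>0$, $\theta\in(0,1/2]$ and $c>0$ such that $\|\nabla\varphi(u)\|\geq c\,|\varphi(u)-\varphi_\infty|^{1-\theta}$ whenever $\mathrm{dist}(u,\omega(u_0))<\sigma$. Because $\mathrm{dist}(u(t),\omega(u_0))\to 0$, this inequality holds along the trajectory for all $t\geq t_0$. Setting $H(t):=\varphi(u(t))-\varphi_\infty\geq 0$, the degenerate case $H(t_1)=0$ for some $t_1\geq t_0$ is trivial since $H$ is non-increasing and non-negative, forcing $u'\equiv 0$ past $t_1$. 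Otherwise $H>0$ on $[t_0,\infty)$ and, using $u'=-\nabla\varphi(u)$,
\[ -H'(t)=\|\nabla\varphi(u(t))\|^2=\|u'(t)\|\cdot\|\nabla\varphi(u(t))\|\geq c\,\|u'(t)\|\,H(t)^{1-\theta}, \]
which is exactly hypothesis \eqref{strongcond2}. Lemma \ref{strong} then delivers $u'\in L^1((t_0,\infty);\R^N)$ and the convergence of $u(t)$ to a limit $a\in\omega(u_0)\subset{\cal S}$, together with the tail estimate $\|u(t)-a\|\leq \int_t^\infty\|u'(s)\|\,ds\leq\frac{1}{c\theta}H(t)^\theta$ obtained inside the proof of Lemma \ref{strong}.

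For the decay rates \eqref{DecayEstimaFirstOrder}, the plan is to exploit the autonomous scalar inequality $-H'(t)\geq c^2 H(t)^{2(1-\theta)}$, which follows from squaring the {\L}ojasiewicz estimate. When $\theta=1/2$ this gives $-H'\geq c^2 H$, so $H$ decays exponentially, and the tail estimate transfers the bound to $\|u(t)-a\|=O(e^{-\delta t})$ with $\delta=c^2/2$. When $0<\theta<1/2$, the exponent $2(1-\theta)>1$ and a direct integration of the autonomous inequality yields $H(t)=O(t^{-1/(1-2\theta)})$; raising to the power $\theta$ gives $H(t)^\theta=O(t^{-\theta/(1-2\theta)})$, and the tail estimate transfers it into the claimed polynomial bound on $\|u(t)-a\|$.

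The main obstacle in this plan is the passage from the pointwise inequalities \eqref{InegaliteLojasiewicz}, each attached to its own equilibrium $a\in{\cal S}$ with its own radius $\sigma_a$, exponent $\theta_a$ and constant $c_a$, to a single uniform estimate near the whole limit set with one exponent $\theta$ and one constant $c$. This is precisely what Lemma \ref{InegLojsABSRAITLemme} provides, and the crucial input allowing it to apply is the connectedness of $\omega(u_0)$ guaranteed by Theorem \ref{thm 4.1.8.}: without it, $\varphi$ would not admit a common value $\varphi_\infty$ on $\omega(u_0)$ and the reference levels $\varphi(a)$ at different equilibria would not coalesce into the single quantity $\varphi(u)-\varphi_\infty$ driving the differential inequality.
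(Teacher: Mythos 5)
Your proposal is correct, and its first half (the energy identity $\frac{d}{dt}\varphi(u(t))=-\Vert u'(t)\Vert^2$, the normalization of the limiting energy to $0$, the treatment of the degenerate case, and the uniformization of the {\L}ojasiewicz inequality over the compact connected set $\omega(u_0)$ via Lemma \ref{InegLojsABSRAITLemme}) coincides with the paper's proof. Where you diverge is in how the convergence and the decay rates are extracted from the differential inequalities. The paper works with $p(t)=\Vert u'(t)\Vert$ and the dyadic estimates $\int_t^{2t}p^2\,ds=z(t)-z(2t)$, then invokes the Cauchy--Schwarz/summation Lemmas \ref{LemmaZelenyakPol} and \ref{LemmaZelenyakExp} to convert the $L^2$ decay of $u'$ on dyadic intervals into the bound $\int_t^\infty p\,ds=O(t^{-\theta/(1-2\theta)})$ (resp. $O(e^{-\gamma t/2})$). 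You instead feed the inequality $-H'\geq c\,H^{1-\theta}\Vert u'\Vert$ into Lemma \ref{strong}, whose proof integrates $-\frac{d}{dt}H^\theta\geq c\theta\Vert u'\Vert$ and directly yields the pointwise tail bound $\Vert u(t)-a\Vert\leq\frac{1}{c\theta}H(t)^\theta$; combined with the autonomous inequality $-H'\geq c^2H^{2(1-\theta)}$ this gives the stated rates with no need for the dyadic lemmas. Both routes are sound and both are in the paper's toolbox: your version is essentially the argument the authors later use for the abstract Theorem \ref{alain} (with $\beta=1$), specialized back to the gradient system, and it is arguably the more economical of the two; the paper's Zelenyak-lemma route has the advantage of requiring only integrated ($L^2$ on dyadic blocks) information about $\Vert u'\Vert$, which is the form in which the energy decay is naturally available in settings where a clean pointwise inequality like \eqref{strongcond2} is harder to isolate.
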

\begin{proof} [Proof.] 
We define the function $z$ by $z(t)= \varphi (u(t))$. Then
\begin{equation}\label{inegalite energiephi} z'(t) =-\Vert \nabla   \varphi (u(t))\Vert^2,\quad \forall t\geq0.
\end{equation} 
 So $z$  is nonincreasing. Since $u$ is bounded and  $\varphi$ is continuous,  it follows that $K=\displaystyle\lim_{t\to\infty} \varphi (u(t))$ exists.  Replacing $ \varphi$ by $\varphi-K$ we may assume $K = 0$.  If $z(t_0)=0$ for some $t_0\geq0$, then $z(t)=0$ for every $t\geq t_0$, and therefore, $u$ is constant for $t\geq t_0$. In this case, there remains nothing to prove. Then we can assume that $z(t)>0$ for all $t\geq 0$.\\
Define $\Gamma:=\omega(u)$. Theorem \ref{thm 4.1.8.} ii) implies that $\Gamma$ is compact and connected. Let  $a\in \Gamma$, then there exists $t_n\to+\infty$ such that $u(t_n)\longrightarrow a$. Then we get
$$\lim_{n\to+\infty}^{}\varphi(u(t_n))=\varphi(a)=K=0.$$
On the other hand, $\varphi$ satisfies the {\L}ojasiewicz \index{Lojasiewicz} inequality \eqref{InegaliteLojasiewicz} at every point $a\in {\cal S}$.  Applying Lemma \ref{InegLojsABSRAITLemme} with $W=X=\R^N$, $E=\varphi$ and ${\cal G}=\nabla\varphi$ we obtain,
$$ \exists \sigma, c>0,\ \exists \theta\in(0,\frac12]/\ \left[\hbox{dist}(u,\Gamma)\leq \sigma\Longrightarrow\Vert \nabla \varphi(u)\Vert\geq c\vert \varphi(u)\vert^{1-\theta}\right].$$
Now since $\Gamma=\omega(u)$, by Theorem \ref{thm 4.1.8.} iii),  there exists $T>0$ such that $\hbox{dist}(u,\Gamma)\leq \sigma$. Then we get for all $t\geq T$
\begin{equation}\label{ineqLojTraj}
\Vert \nabla \varphi(u)\Vert\geq c\vert \varphi(u)\vert^{1-\theta}.
\end{equation}
By combining \eqref{inegalite energiephi} and \eqref{ineqLojTraj}, we get
\begin{equation}\label{inequationdifferentielll}z'(t) \le -c^2 (z(t))^{2 (1-\theta)  },\quad \forall t\geq T.
\end{equation}
In the case $\theta\in(0,\frac12)$, by integrating \eqref{inequationdifferentielll} over $(T,t)$ we find
$$ z(t)\leq \frac{1}{(z(T)^{2\theta-1}+(1-2\theta)c^2(t-T))^{\frac{1}{1-2\theta}}}\leq C_1 t^{-\frac{1}{1-2\theta}},
\quad \forall t\geq T.$$
Now since $$ \Vert u'(t)\Vert^2 = -z'(t)$$ we have $$\int_t^{2t}\Vert u'(s)\Vert^2 ds
= z(t)- z(2t)\le C_1 t^{-\frac{1}{1-2\theta}}.$$
 Applying Lemma \ref{LemmaZelenyakPol} to $p(t): =  \Vert u'(t)\Vert $, we get
\begin{equation}\label{int de t a 2t}\int_t^{\infty}\Vert u'(s)\Vert ds\leq C_2 t^{-\frac{\theta}{1-2\theta}}.
\end{equation} 
By Cauchy's criterion, $a:=\displaystyle\lim_{t\to+\infty}^{}u(t)$ exists and
$$\forall t\geq T,\quad \Vert u(t)-a\Vert\leq C_{2} t^{-\frac{\theta}{1-2\theta}}.$$
On the other hand, if $\theta=\frac12$, the application of Lemma \ref{LemmaZelenyakExp} to $p(t): =  \Vert u'(t)\Vert$ gives the exponential decay. To conclude the proof, we remark that at the end, the global  {\L}ojasiewicz exponent used to prove convergence can be replaced by any {\it{local }} {\L}ojasiewicz\index{Lojasiewicz} exponent of  $\varphi$ at $a$. \end{proof}
\begin{rem}{\rm Since the {\L}ojasiewicz \index{Lojasiewicz} theorem is actually local, it suffices to assume that $\varphi$ is analytic \index{analytic} in a ball where the solution stays for all $t$.
}\end{rem}

\section{A second order system}
We now consider the gradient-like \index{gradient-like} system
\begin{equation}\label{SDSecondOrdre}
u''+u'+\nabla\Phi(u)=0
\end{equation}
where $\Phi:\R^N\longrightarrow\R$ is assumed to be $C^1$, and we set $${\cal S}=\{a\in\R^N,\ \nabla \Phi(a)=0\}.$$
\subsection{A non convergence result}
The non-convergence result of Curry - Palis - De Melo (cf. Theorem \ref{PalisDeMeloAbsil}) has been extended to \eqref{SDSecondOrdre} by V\'eron \cite{MR0566294} (see also \cite{MR1753136, MR2018329}). More precisely 
\begin{prop}
\label{prop-friction} 
Given any $\varphi\in C^k(\R^2,\R)$, $1\le k\le\infty$, there is a
$\Phi\in C^{k-1}(\R^2,\R)$  such that  each  solution  of \eqref{SDPremierOrdre}
is at the same time a solution of \eqref{SDSecondOrdre}.
\end{prop}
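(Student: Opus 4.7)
The plan is to produce $\Phi$ by an explicit algebraic formula involving $\varphi$ and its first derivatives, obtained by requiring that any integral curve of $-\nabla\varphi$ automatically satisfies the second-order equation.

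First I would compute what the equation $u''+u'+\nabla\Phi(u)=0$ forces on $\Phi$ if one substitutes a solution of \eqref{SDPremierOrdre}. If $u'=-\nabla\varphi(u)$, then differentiating in $t$ gives $u''=-D^{2}\varphi(u)\,u'=D^{2}\varphi(u)\,\nabla\varphi(u)$, so one needs
\begin{equation*}
\nabla\Phi(u)=\nabla\varphi(u)-D^{2}\varphi(u)\,\nabla\varphi(u).
\end{equation*}
The right-hand side is a gradient in closed form, because the chain rule identity $D^{2}\varphi\cdot\nabla\varphi=\tfrac12\nabla(|\nabla\varphi|^{2})$ holds at every point where $\varphi$ is $C^{2}$. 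This suggests the choice
\begin{equation*}
\Phi(u):=\varphi(u)-\tfrac12|\nabla\varphi(u)|^{2}.
\end{equation*}

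Next I would verify the two claims of the proposition with this $\Phi$. Regularity is immediate: each partial derivative $\partial_{i}\varphi$ is $C^{k-1}$, hence $|\nabla\varphi|^{2}$ is $C^{k-1}$, and therefore $\Phi\in C^{k-1}(\R^{2},\R)$. For the dynamical property one simply plugs a solution $u$ of \eqref{SDPremierOrdre} into the left-hand side of \eqref{SDSecondOrdre} and uses the computation above:
\begin{equation*}
u''+u'+\nabla\Phi(u)=D^{2}\varphi(u)\nabla\varphi(u)-\nabla\varphi(u)+\nabla\varphi(u)-D^{2}\varphi(u)\nabla\varphi(u)=0.
\end{equation*}

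The only genuine subtlety concerns the borderline case $k=1$, where $\nabla\varphi$ is merely continuous and $\Phi$ is only $C^{0}$; here $\nabla\Phi$ need not exist classically and the algebraic calculation above requires $\varphi\in C^{2}$. I would therefore state and prove the result for $k\geq 2$, note that this is the meaningful range (since the second-order equation otherwise does not make classical sense), and observe that the formula depends only on first derivatives of $\varphi$ so the regularity loss is exactly one order. No further obstacle is expected: the whole construction reduces to recognising that $D^{2}\varphi\cdot\nabla\varphi$ is a gradient, which is the only non-trivial step.
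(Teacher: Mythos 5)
Your proof is correct and uses exactly the same construction as the paper, which simply states that $\Phi=\varphi-\vert\nabla\varphi\vert^{2}/2$ works; your derivation of this formula via the identity $D^{2}\varphi\cdot\nabla\varphi=\tfrac12\nabla(\vert\nabla\varphi\vert^{2})$ is the computation the paper leaves implicit. Your observation about the borderline case $k=1$ (where $\nabla\Phi$ need not exist classically) is a legitimate caveat that the paper glosses over.
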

 \begin{proof}[{\bf Proof.}] The statement is readily satisfied for
   $\Phi=\varphi
-\vert\nabla\varphi\vert^2/2$.
\end{proof}
\begin{cor} \index{non convergence result}
There exist $\Phi\in C^{\infty}(\R^2,\R)$ and a bounded  solution $u$ of \eqref{SDSecondOrdre} whose $\omega$-limit \index{$\omega$-limit set} set is homeomorphic to ${ S}^1$.
\end{cor}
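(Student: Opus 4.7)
The plan is to chain Theorem \ref{PalisDeMeloAbsil} with the reduction of Proposition \ref{prop-friction}. First, I would argue that the function $\varphi$ constructed in \eqref{CoordCartPolaire} is actually of class $C^\infty(\R^2,\R)$. Inside the open disk $\{r<1\}$ the only possibly singular factor $e^{-1/(1-r^2)^k}$ is smooth, the rational factor in the bracket never vanishes because its denominator $4k^2 r^4 + (1-r^2)^{2k+2}$ is strictly positive, and the argument of the sine is smooth away from $r=1$; on the closed complement $\{r\ge 1\}$, $\varphi\equiv 0$. Across the circle $\{r=1\}$, the exponential weight $e^{-1/(1-r^2)^k}$ dominates, together with all its derivatives, any polynomial blow-up in $(1-r^2)^{-k}$ that comes out of differentiation, so $\varphi$ and all its partial derivatives extend continuously by zero at $r=1$. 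Hence $\varphi\in C^\infty(\R^2,\R)$.

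Next I would apply Proposition \ref{prop-friction} with $k=\infty$: the function
\[
\Phi := \varphi - \tfrac{1}{2}\Vert\nabla\varphi\Vert^2
\]
then belongs to $C^\infty(\R^2,\R)$, and any solution of the first order gradient system \eqref{SDPremierOrdre} associated with $\varphi$ is automatically a solution of the second order system \eqref{SDSecondOrdre} associated with $\Phi$. Let $u$ be the particular bounded solution of \eqref{SDPremierOrdre} produced in the proof of Theorem \ref{PalisDeMeloAbsil} (for which $r(t)\uparrow 1$ and $\theta(t)\uparrow +\infty$ along the invariant curve \eqref{CdCourbe}). Then $u$ is bounded in $\R^2$ (it stays in the closed unit disk) and satisfies \eqref{SDSecondOrdre} with this $\Phi$. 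Finally, $\omega(u)$ is exactly the unit circle $\{r=1\}$ by Theorem \ref{PalisDeMeloAbsil}, which is homeomorphic to $S^1$. The only place where a small verification is needed is the $C^\infty$ regularity of $\varphi$ at $r=1$, which I handle with the standard flat-exponential argument described above; everything else is just assembling two already-stated results.
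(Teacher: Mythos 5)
Your argument is exactly the paper's proof: take the $\varphi$ of Theorem \ref{PalisDeMeloAbsil}, set $\Phi=\varphi-\tfrac12\Vert\nabla\varphi\Vert^2$ via Proposition \ref{prop-friction} with $k=\infty$, and reuse the same bounded trajectory together with its $\omega$-limit set. One caveat on your added regularity check: the flat-exponential argument does handle $r=1$, but $\varphi$ as written is not $C^\infty$ at the origin (the term $r^4\sin(\theta-\cdot)$ produces a factor $(x^2+y^2)^{3/2}$ in Cartesian coordinates), so one should multiply $\varphi$ by a smooth cutoff vanishing near $0$ --- harmless because the trajectory stays in $\{r\ge r_0>0\}$ --- as the remark following Theorem \ref{PalisDeMeloAbsil} already suggests.
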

 \begin{proof}[{\bf Proof.}]
Take  $\varphi$ as in Theorem \ref{PalisDeMeloAbsil}. Then \eqref{SDPremierOrdre} has a  bounded  solution $u$
 whose $\omega$-limit \index{$\omega$-limit set} set is homeomorphic to ${S}^1$.
By  Proposition~\ref{prop-friction}, $u$ is also a solution of \eqref{SDSecondOrdre}
for some smooth $\Phi$, which proves the corollary.
\end{proof}
\subsection{A convergence result}\label{CvAnalytiqueSndordre}
\begin{thm}\label{ThmCVAnalyticSGDimFinideuxOrdre}
Assume that $\Phi$ is analytic \index{analytic} and let $u\in W^{1,\infty}({\R}^+,{\R}^N)$ be a solution of \eqref{SDSecondOrdre}.
Then there exists $a\in {\cal S}$  such that \index{convergence result}
$$\lim_{t\to+\infty}^{}\Vert u'(t)\Vert+\Vert u(t)-a\Vert= 0.$$
Moreover, let $\theta$ be any Lojasiewicz\index{Lojasiewicz} exponent of $\varphi$ at $a$. Then we have for some constant $C>0$
$$\Vert u(t)-a\Vert\leq C t^{{-\theta}\over {1-2\theta}},\quad\hbox{if }\ 0<\theta<{1\over2}$$
$$\Vert u(t)-a\Vert\leq C \exp(-\delta t),\quad\hbox{ for some}\ \delta>0
\hbox{ if
}\theta={1\over2}.$$
\end{thm}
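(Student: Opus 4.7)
The strategy parallels the proof of Theorem \ref{ThmCVAnalyticSGDimFini}, with the essential complication that the dissipation rate $|u'|^2$ appearing in the energy identity is no longer directly comparable to $|\nabla\Phi(u)|^2$, so the first-order argument must be modified through a perturbed Lyapunov function.

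First I would set $E(t) = \tfrac12|u'(t)|^2 + \Phi(u(t))$, giving $E'(t) = -|u'(t)|^2 \le 0$. Since $(u,u')$ is bounded, $E$ is bounded below and nonincreasing, so $E(t)$ decreases to some limit $E_\infty$. By Corollary \ref{gradientsyst2}, the $\omega$-limit set $\Gamma$ of $(u(0),u'(0))$ is a compact connected subset of ${\cal S}\times\{0\}$; let $K$ denote its projection onto $\R^N$. Since $\Phi$ is analytic, Theorem \ref{ThmInegaliteLojasiewicz} furnishes a {\L}ojasiewicz inequality at every point of $K$, and Lemma \ref{InegLojsABSRAITLemme} applied with $E=\Phi$, ${\cal G}=\nabla\Phi$ yields uniform constants $\sigma,c>0$ and $\theta\in(0,1/2]$ for which $\Phi\equiv E_\infty$ on $K$ and
$$\mathrm{dist}(v,K)<\sigma\ \Longrightarrow\ |\nabla\Phi(v)|\ \ge\ c\,|\Phi(v)-E_\infty|^{1-\theta}.$$
Because $(u(t),u'(t))\to\Gamma$, the inequality holds along the trajectory for all $t\ge T_0$ large enough.

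The core step is the construction of a Lyapunov-type functional fitting the framework of Lemma \ref{strong}. The naive choice $(E-E_\infty)^{\theta}$, which worked in the first-order case, fails here because its derivative $-\theta(E-E_\infty)^{\theta-1}|u'|^2$ cannot control $|u'|$ times a power of $E-E_\infty$ when $|u'|$ is small while $|\nabla\Phi(u)|$ is not. I would therefore introduce, for small $\varepsilon>0$, a perturbed functional of the form
$$H(t)\ =\ \bigl(E(t)-E_\infty\bigr)\ +\ \varepsilon\,\langle u'(t),\nabla\Phi(u(t))\rangle.$$
Differentiating and using the equation together with the identity $\tfrac{d}{dt}\langle u',\nabla\Phi(u)\rangle = -\langle u',\nabla\Phi(u)\rangle - |\nabla\Phi(u)|^2 + \langle u',D^2\Phi(u)u'\rangle$, and bounding the Hessian term by the (uniform) boundedness of $u$, one finds that for $\varepsilon$ small enough
$$-H'(t)\ \ge\ c_1\bigl(|u'(t)|^2 + |\nabla\Phi(u(t))|^2\bigr),\qquad \tfrac12(E-E_\infty)\ \le\ H\ \le\ 2(E-E_\infty).$$
Combining this with the uniform {\L}ojasiewicz inequality and with $E-E_\infty \le \tfrac12|u'|^2 + |\Phi(u)-E_\infty|$, one derives a differential inequality of the form $-H'(t)\ge c_2\,H(t)^{2(1-\theta)}$, and (after raising $H$ to the power $\theta$) an inequality of the type $-\tfrac{d}{dt}H^\theta \ge c_3\,H^{1-\eta}|u'(t)|$ required by Lemma \ref{strong}.

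Once Lemma \ref{strong} applies, $u'\in L^1(\R^+,\R^N)$, so Cauchy's criterion gives the existence of $a=\lim_{t\to\infty}u(t)$, which must lie in $\Gamma\subset{\cal S}$, and $|u'(t)|\to 0$ follows from the $L^2$ bound together with uniform continuity implied by the equation (Lemma \ref{L1-conv}). The quantitative decay estimates are then extracted from the differential inequality $-H'\ge c_2\,H^{2(1-\theta)}$: the case $\theta=1/2$ yields exponential decay of $H$ and hence of $|u'|$, and the resulting exponential bound on $\|u(t)-a\|$ is obtained by applying Lemma \ref{LemmaZelenyakExp} to $p=|u'|$; when $\theta\in(0,1/2)$, integration gives $H(t)\le C\,t^{-1/(1-2\theta)}$, and Lemma \ref{LemmaZelenyakPol} applied to $|u'|$ yields the polynomial rate $\|u(t)-a\|\le C\,t^{-\theta/(1-2\theta)}$. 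The main obstacle is the verification that the auxiliary functional $H$ satisfies both the lower bound $H>0$ and the required differential inequality: both $|u'|$ and $|\nabla\Phi(u)|$ must enter the dissipation estimate in a balanced way to compensate for the inertial term $u''$, and the sign and size of the cross term $\varepsilon\langle u',\nabla\Phi(u)\rangle$ must be carefully controlled through Young's inequality.
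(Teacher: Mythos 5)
Your proposal is correct and follows essentially the same route as the paper: the same perturbed functional $H = (E - E_\infty) + \varepsilon\langle u',\nabla\Phi(u)\rangle$, the same dissipation estimate $-H' \ge c(\Vert u'\Vert^2 + \Vert\nabla\Phi(u)\Vert^2)$, the uniform {\L}ojasiewicz inequality via Lemma \ref{InegLojsABSRAITLemme}, the resulting inequality $-H' \ge c\,H^{2(1-\theta)}$, and the Zelenyak-type Lemmas \ref{LemmaZelenyakExp} and \ref{LemmaZelenyakPol} applied to $p = \Vert u'\Vert$ for the decay rates. The only inessential deviation is your claimed two-sided comparability $\tfrac12(E-E_\infty)\le H\le 2(E-E_\infty)$ (the upper bound is not automatic), but it is not needed since $H$ nonincreasing with limit $0$ already gives $H\ge 0$.
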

\begin{proof}[{\bf Proof.}] Let $E(t)={1\over 2}\Vert u'(t)\Vert ^2 +   \Phi (u(t))$. We  have
\begin{eqnarray*} \frac{d}{dt}(E(t))& =&
\langle u'', u'\rangle + \langle \nabla    \Phi(u), u'\rangle \\
&=&\langle  u'' +  \nabla  \Phi(u), u'\rangle = -\Vert u'(t)\Vert^2
\end{eqnarray*} From Theorem \ref{thm 4.1.8.} ii) we know that $\omega(u,u')$ is a non-empty  compact, connected set. 
We also know that $\lim_{t\to+\infty}\Vert u'\Vert=0$ and $\omega(u,u')\subset {\cal S}\times\{0\}$ (see corollary \ref{gradientsyst2}). Let $\Gamma=\{a/\ (a,0)\in\omega(u,u')\}$ and $K=\lim_{t\to\infty}E(t)$.
As in the proof of theorem \ref{ThmCVAnalyticSGDimFini} we may assume
$K = 0 $  and  for all $a \in  \Gamma  ,
\  \Phi (a) = 0 .$ 
Then we introduce
 $$ H(t) = {1\over 2}\Vert u'(t)\Vert ^2 +   \Phi (u(t))+
 \varepsilon  \langle\nabla    \Phi (u(t)), u'(t)\rangle$$ 
  where $ \varepsilon   $ is to be fixed later. Therefore
\begin{eqnarray*} H'(t) &=& -\Vert
u'\Vert^2 +  \varepsilon  \langle \nabla   \Phi (u), u''\rangle +  \varepsilon \langle\nabla^2  \Phi(u)u', u'\rangle \\
 &=&-\Vert u'\Vert^2  + \varepsilon\langle\nabla  \Phi(u), - u' -\nabla   \Phi(u)\rangle + \varepsilon\langle\nabla^2  \Phi(u)\cdot u', u'\rangle\\
&=&-\Vert u'\Vert^2 - \varepsilon\Vert\nabla  \Phi(u)\Vert^2-\varepsilon\langle \nabla  \Phi(u),u'\rangle + \varepsilon\langle\nabla^2  \Phi(u)\cdot u', u'\rangle.
\end{eqnarray*}
Since $u$ is bounded we have
$$\varepsilon\langle \nabla^2  \Phi(u)\cdot u', u'\rangle \leq C_1 \varepsilon \Vert u'\Vert ^2.$$
Thanks to Cauchy-Schwarz and Young inequalities we have
$$\varepsilon\langle \nabla  \Phi(u),u'\rangle\leq \frac{\varepsilon}{2}\Vert\nabla
 \Phi(u)\Vert^2+\frac{\varepsilon}{2}\Vert u'\Vert^2.$$
Therefore
selecting $\varepsilon \leq\varepsilon_0 $ we find
\begin{eqnarray}\nonumber H'(t) &\leq& -(1-C_2\varepsilon)\Vert u'\Vert^2 -\frac{\varepsilon}{2}\Vert\nabla   \Phi(u)\Vert^2\\
\label{InegEnergieSSO1}& \leq& -\frac{\varepsilon}{2}  \big(\Vert u'\Vert^2 +\Vert\nabla   \Phi (u)\Vert^2\big).
\end{eqnarray} Then $H$ is nonincreasing with limit $0$, we have in particular
$H$ is nonnegative. As in the proof of the Theorem \ref{ThmCVAnalyticSGDimFini}   we can assume that $H(t)>0$ for all $t\geq 0$. On the other hand,  since $\Phi$ is analytic \index{analytic} then  by using Lemma \ref{InegLojsABSRAITLemme} once again as in the proof of Theorem \ref{ThmCVAnalyticSGDimFini}, there exist $\theta\in(0,\frac12]$, $T>0$ such that for all $t\geq T$ we get
\begin{eqnarray}\nonumber \Vert u'\Vert^2 +\Vert\nabla   \Phi(u)\Vert^2 &\ge& \Vert
u'\Vert^2 +{1\over 2}\Vert\nabla   \Phi(u)\Vert^2 + \frac{c^2}{2}\vert   \Phi(u)\vert^{2 (1-\theta) } \\
\nonumber&\geq& c_3 \big(\Vert u'\Vert^2 +\Vert\nabla   \Phi(u)\Vert^2 +\vert    \Phi (u)\vert \big)^{2  (1-\theta)  }\\
\label{InegEnergieSSO2}& \geq& c_4 \big(H(t)
\big)^{2  (1-\theta)  }
\end{eqnarray}  Combining the inequalities \eqref{InegEnergieSSO1} and \eqref{InegEnergieSSO2} we find
$$ H'(t) \leq -c_5 (H(t))^{2 (1-\theta)  }.$$
If $\theta\in(0,\frac12),$ intergrating this differential inequality we get
$$ H(t) \le C_6 t^{-\frac{1}{1-2\theta}}.$$
When $\theta=\frac12$, we find that $H$ decays exponentially.\\
Now from \eqref{InegEnergieSSO1}, we get $$\int_t^{2t} \big(\Vert
u'\Vert^2 +\Vert \nabla    \Phi
 (u)\Vert^2\big) ds \leq \frac{2}{ \varepsilon}H(t).$$ The proof
concludes exactly as in Theorem \ref{ThmCVAnalyticSGDimFini}.
\end{proof}
\section{Generalization}
The goal of this section is to give a general framework which covers the results of section \ref{Cvpremierordre-Analytique} and \ref{CvAnalytiqueSndordre} as well as some new examples.
For this end, we consider the differential equation
\begin{equation} \label{non}
\dot u (t) + \cF (u(t)) = 0 , \quad t\geq 0 ,
\end{equation}
where $\cF\in C (\R^N ;\R^N )$ .
\begin{thm} \label{alain}  
Let $u\in C^1 (\R_+ ;\R^N )$ be a bounded solution of the differential equation \eqref{non}. Assume that there exists a function $\cE \in C^1 (\R^N )$, $\beta\geq 1$, $\theta\in(0,1)$ and $c, c_1,T >0$ such that 
\begin{equation} \label{Cdbetatheta}
 \beta(1-\theta)<1,
\end{equation}
\begin{equation} \label{energiepositive}
  \cE(u(t) )  \geq 0 \text{ for every } t\geq T,
\end{equation}
\begin{equation} \label{condAngle}
\langle \nabla\cE(u(t)) , \cF (u(t)) \rangle \geq c \,   \Vert \nabla\cE(u(t))\Vert^\beta \, \Vert \cF (u(t))\Vert\  \text{ for every } t\geq T 
\end{equation}
\begin{equation} \label{cond1}
\Vert \nabla\cE(u(t)) \Vert \geq c_1 \,   \cE(u(t))^{1-\theta}\  \text{ for every } t\geq T 
\end{equation}
\begin{equation} \label{cond1a}
\text{for every } a\in \R^N \text{ one has}: \quad  \nabla\cE(a ) = 0 \Rightarrow \cF(a) = 0 ,
\end{equation}
Then  there exists $a\in\R^N$ such that $\displaystyle\lim_{t\to\infty} u(t)=a$.\\
 If, moreover, $ \cE$ satisfies for some  $c_2>0$
\begin{equation} \label{cond2}
 \Vert \cF (u(t)) \Vert\geq c_2  \,   \cE(u(t))^{1-\theta} \text{ for every }  t\geq T ,
\end{equation}
Then, as $t\to\infty$,\index{convergence result}
\begin{equation} \label{decayest}
\| u(t) - a \| = \left\{ \begin{array}{ll}
O(e^{-\delta t} )  & \hbox{ for some }\delta>0 \text{ if }\beta=\frac{\theta}{1-\theta}, \\[2mm]
O(t^{-\frac{1-\beta(1-\theta)}{\beta(1-\theta)-\theta}} )  & \text{if } \beta>\frac{\theta}{1-\theta} .
\end{array} \right.
\end{equation}
\end{thm}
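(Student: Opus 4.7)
The plan is to reduce everything to Lemma \ref{strong} for the qualitative convergence, then solve an explicit differential inequality to obtain the decay rates.

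First, since $\dot u(t)=-\cF(u(t))$, we compute for $t\geq T$
\[
\frac{d}{dt}\cE(u(t))=\langle\nabla\cE(u(t)),\dot u(t)\rangle=-\langle\nabla\cE(u(t)),\cF(u(t))\rangle\leq -c\,\Vert\nabla\cE(u(t))\Vert^{\beta}\Vert\cF(u(t))\Vert,
\]
by \eqref{condAngle}. In particular $\cE(u(t))$ is non-increasing for $t\geq T$, bounded below by $0$ thanks to \eqref{energiepositive}, hence converges. If $\cE(u(t_0))=0$ for some $t_0\geq T$, then $\cE(u(t))\equiv 0$ afterwards, so the left-hand side above vanishes, forcing $\Vert\nabla\cE(u(t))\Vert^{\beta}\Vert\cF(u(t))\Vert=0$. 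Either $\cF(u(t))=0$ directly, or $\nabla\cE(u(t))=0$ and then \eqref{cond1a} still gives $\cF(u(t))=0$; hence $\dot u\equiv 0$ past $t_0$ and the trajectory is trivially convergent. So from now on we may assume $\cE(u(t))>0$ for all $t\geq T$.

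Inserting the \L ojasiewicz-type bound \eqref{cond1} into the inequality above and recalling $\Vert\cF(u(t))\Vert=\Vert\dot u(t)\Vert$ yields, for $t\geq T$,
\[
-\frac{d}{dt}\cE(u(t))\;\geq\;c\,c_1^{\beta}\,\cE(u(t))^{\beta(1-\theta)}\,\Vert\dot u(t)\Vert.
\]
Set $\eta:=1-\beta(1-\theta)$, which lies in $(0,1)$ by \eqref{Cdbetatheta} and by $\beta(1-\theta)>0$. Then the previous inequality is exactly the hypothesis \eqref{strongcond2} of Lemma \ref{strong} with $H(t)=\cE(u(t))$ and exponent $\eta$ (together with \eqref{energiepositive2} which holds by assumption). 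Lemma \ref{strong} therefore produces some $a\in\R^{N}$ with $u(t)\to a$ as $t\to\infty$. This settles the first part.

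For the decay estimates we bring in \eqref{cond2}: combined with \eqref{condAngle} and \eqref{cond1}, we get
\[
-\frac{d}{dt}\cE(u(t))\geq c\,c_1^{\beta}c_2\,\cE(u(t))^{(\beta+1)(1-\theta)},\qquad t\geq T.
\]
Let $p:=(\beta+1)(1-\theta)$. If $\beta=\theta/(1-\theta)$ then $p=1$, and Gronwall's inequality gives exponential decay $\cE(u(t))\leq C e^{-\delta t}$. If $\beta>\theta/(1-\theta)$ then $p>1$, and solving the scalar ODE $y'\leq -\kappa y^{p}$ yields
\[
\cE(u(t))\leq C\,t^{-\frac{1}{p-1}}=C\,t^{-\frac{1}{\beta(1-\theta)-\theta}}.
\]
To transfer this into an estimate on $u$ itself, observe that from the differential inequality derived from \eqref{condAngle} and \eqref{cond1} we have
\[
-\frac{d}{dt}\bigl(\cE(u(t))^{\eta}\bigr)=-\eta\,\cE(u(t))^{\eta-1}\frac{d}{dt}\cE(u(t))\geq \eta\,c\,c_1^{\beta}\,\Vert\dot u(t)\Vert,
\]
since $\eta+\beta(1-\theta)-1=0$. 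Integrating from $t$ to $+\infty$ and using $\cE(u(t))\to 0$ gives $\int_{t}^{\infty}\Vert\dot u\Vert ds\leq C\,\cE(u(t))^{\eta}$, so $\Vert u(t)-a\Vert\leq C\,\cE(u(t))^{\eta}$. Plugging in the decay of $\cE(u(t))$ produces the two cases of \eqref{decayest}, with exponent $\eta/(p-1)=(1-\beta(1-\theta))/(\beta(1-\theta)-\theta)$ in the polynomial regime.

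The only delicate point is to check that $\eta=1-\beta(1-\theta)$ is a legitimate exponent for Lemma \ref{strong}, i.e. that $\eta\in(0,1)$; the lower bound uses precisely \eqref{Cdbetatheta} and the upper bound uses $\beta\geq 1$ and $\theta<1$. Everything else consists in chaining the three assumptions \eqref{condAngle}, \eqref{cond1}, \eqref{cond2} in a single differential inequality and reading off the outcome from standard comparison arguments.
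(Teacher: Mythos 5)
Your proof is correct and follows essentially the same route as the paper: the same differential inequality $-\frac{d}{dt}\cE(u(t))\ge c\,c_1^{\beta}\,\cE(u(t))^{\beta(1-\theta)}\Vert u'(t)\Vert$ feeding Lemma \ref{strong} with $\eta=1-\beta(1-\theta)$, the same treatment of the degenerate case $\cE(u(t_0))=0$ via \eqref{cond1a}, and the same bound $\Vert u(t)-a\Vert\le C\,\cE(u(t))^{\eta}$ for the decay rates. The only cosmetic difference is that you integrate the scalar inequality for $\cE$ itself and then raise to the power $\eta$, whereas the paper integrates the inequality satisfied by $\cE^{\eta}$; the resulting exponents are identical.
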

 \begin{proof}[{\bf Proof.}] We apply Lemma \ref{strong} with $X=\R^N$ and $H(t)=\cE(u(t))$.
Let $u$ be a solution of \eqref{non} which is continuously differentiable, then, by the chain rule,
\begin{equation*}% \label{classical}
- \frac{d}{dt} \cE (u(t)) = - \langle \nabla \cE(u(t)) ,  u' (t) \rangle = \langle \nabla\cE (u(t)) , \cF (u(t))\rangle.
\end{equation*}
By using \eqref{condAngle}, \eqref{cond1} and equation \eqref{non} we get for all $t\geq T$
\begin{eqnarray}
\nonumber- \frac{d}{dt} \cE (u(t)) &\geq&  c \,   \Vert \nabla\cE(u(t))\Vert^\beta \, \Vert \cF (u(t))\Vert\\
\label{inegalite Etheta}  &\geq& c c_1^\beta \,  \cE(u(t))^{\beta(1-\theta)} \, \| u'(t)\| .
\end{eqnarray}
This is condition \eqref{strongcond2} with $\eta:=1-\beta(1-\theta)$ (thanks to \eqref{Cdbetatheta} $\eta>0$.)\\
It follows that the function $t\longmapsto\cE(u(t))$ is nonincreasing. Now if $\cE(u(t_0))=0$ for some $t_0\geq T$, then $\cE(u(t))=0$ for every $t\geq t_0$, and therefore, by conditions \eqref{condAngle}, \eqref{cond1a} and the equation \eqref{non} the function $u$ is constant for $t\geq t_0$. In this case, there remains nothing to prove. Hence we can assume $\cE(u(t))>0$ for all $t\geq T$. This is condition \eqref{energiepositive2}. By applying Lemma \ref{strong} we deduce the convergence result. Now we will prove the decay estimate \eqref{decayest}. From \eqref{inegalite Etheta} we deduce for all $t\geq T$
\begin{equation}\label{InegalitecEbis}- \frac{d}{dt} [\cE (u(t))]^\eta\geq \eta c c_1^\beta \,   \| u'(t)\|.\end{equation}
By integrating this last inequality we get
 \begin{eqnarray}
\label{e2} \| u(t) - a \| & \leq & \int_{t}^\infty \|  u' (s) \| \; ds \\
\nonumber & \leq & \frac{1}{c \eta c_1^\beta} \cE(u(t))^{\eta}  .
\end{eqnarray}
By using hypothesis \eqref{cond2} and equation \eqref{non}, we get
\begin{equation}\label{InegalitecEbisbis}
[\cE(u(t))^\eta]^{\frac{1-\theta}{\eta}}=\cE(u(t))^{1-\theta}\leq \frac1{c_2}  \Vert \cF (u(t)) \Vert=\frac1{c_2}  \Vert u'(t) \Vert
\end{equation}
Combining \eqref{InegalitecEbis} and \eqref{InegalitecEbisbis}, we obtain
$$ \frac{d}{dt} [\cE (u(t))^\eta]\leq -\eta c c_1^\beta c_2[\cE^\eta]^{\frac{1-\theta}{\eta}}.$$
Solving this differential inequality (we have to distinguish two cases  $\frac{1-\theta}{\eta}=1$ or $\frac{1-\theta}{\eta} >1$), we obtain the estimate 
$$\cE (u(t))^\eta= \left\{ \begin{array}{ll} O(e^{-Ct} )  & \text{if } \beta=\frac{\theta}{1-\theta} , \\[2mm] 
O(t^{-\eta/(1-\eta-\theta)} )  & \text{if } \beta>\frac{\theta}{1-\theta} .
\end{array} \right.
$$
Combining this estimate with \eqref{e2}, the claim follows.
\end{proof}

%%%%%%%%%%%%%%%%%
In the next subsections we discuss several applications of our abstract results.

\subsection{A gradient system in finite dimensions}\index{gradient system}
We start by applying our abstract results to  the gradient system \index{gradient system}
$$ u' (t) + \nabla \varphi (u(t)) = 0 ,$$
where $\varphi \in C^2 (\R^N )$. The system is a special case of \eqref{non} if we take $\cF = \nabla \varphi$.  The function $\varphi$ is nonincreasing along $u$. Now if $u$ is a bounded solution of the above gradient system \index{gradient system} and since $\varphi$ is continuous, it follows that $\varphi_\infty=\displaystyle\lim_{t\to+\infty} \varphi(u(t))$ exists. If we define $\cE$ by
$$\cE(v)= \varphi(v)-\varphi_\infty$$
we see that hypothesis \eqref{energiepositive} is satisfied for all $t\geq0$. If $\varphi$ is real analytic \index{analytic}, then it satisfies {\L}ojasiewicz \index{Lojasiewicz} inequality \eqref{InegaliteLojasiewicz}. Therefore by applying lemma \ref{InegLojsABSRAITLemme} with $W=X=\R^{N}$, $E=\varphi$, ${\cal G}=\nabla\varphi$ and $\Gamma=\omega(u)$ we get
$$\exists T>0,\ \exists c>0, \ \exists \theta\in(0,\frac12]/\quad \Vert \nabla \varphi(u(t))\Vert\geq
c\vert \varphi(u(t))-\varphi_\infty\vert^{1-\theta},\quad \forall t\geq T.$$
Now it easy  to see that all hypotheses of Theorem  \ref{alain} are satisfied (here  $\beta=1$). Then there exists $a\in \R^N$ such that $\displaystyle\lim_{t\to\infty} u(t) = a$ and the estimate
\begin{equation*}
\| u(t) - a \| = \left\{ \begin{array}{ll}
O(e^{-\delta t} )  & \text{if } \theta =\frac12, \\[2mm]
O(t^{-\theta/(1-2\theta)} )  & \text{if } \theta <\frac12 .
\end{array} \right.
\end{equation*}
 We thus recover the result of Section  \ref{Cvpremierordre-Analytique}.

\subsection{A second order ordinary differential system}
Let $\Phi\in C^2 (\R^N)$ and consider the second order ordinary differential system
\begin{equation} \label{second1}
 u'' (t) +  u' (t) + \nabla\Phi (u(t)) = 0 .
\end{equation}
This system is equivalent to the first order system \eqref{non} if we define $\cF :\R^{2N} \to \R^{2N}$ by
$$
\cF (u,v) := \left( \begin{array}{c} -v \\ v + \nabla \Phi(u) \end{array} \right) , \quad u, \, v\in\R^N.
$$
Now let $u\in W^{1,\infty}({\R}^+,{\R}^N)$ be a solution of \eqref{second1}. We define the energy \index{energy} of this system  
$$E(t)=\frac12 \| u'(t)\|^2 + \Phi(u(t)).$$
We know that the function $E$ is nonincreasing and $E_\infty=\lim_{t\to\infty} E(t)$ exists. It is also well known that $\omega(u,u')$ is compact connected subset of $\Phi^{-1}(\{0\})\times\{0\}$ (see  corollary \ref{gradientsyst2}). %Let $(a,0)\in \omega(u,u')$, then $E_\infty=\Phi(a)$.\\
Let $\varepsilon >0$, and define $\cE :\R^{2N} \to \R$ by
$$
\cE (u,v) :=   \frac12 \| v\|^2 + \Phi(u)-E_\infty + \varepsilon \langle\nabla \Phi(u) , v\rangle_{\R^N}  , \quad u , \, v\in\R^N ,
$$
so that
$$\nabla \cE (u,v) =   \left( \begin{array}{c} \nabla \Phi(u)  \\ v \end{array}\right)  +\varepsilon \, \left( \begin{array}{c} \nabla^2 \Phi(u) v \\ \nabla \Phi(u) \end{array} \right).$$
Fix $R\geq 0$, and let $M:= \sup_{\| u\|\leq R+1} \| \nabla^2 \Phi(u)\|$. Choose $\varepsilon \in(0,1)$ small enough so that $(M+\frac12)\varepsilon \leq \frac12$. Then for every $u$, $v\in\R^N$ satisfying $\| u\|\leq R$ we obtain
\begin{eqnarray}
\nonumber& & \langle\nabla \cE(u,v) , \cF (u,v)\rangle_{\R^{2N}} \\[1mm]
\nonumber& = &  \| v\|^2 - \varepsilon \, \langle \nabla^2 \Phi(u) v , v \rangle_{\R^N}   + \varepsilon \, \langle v , \nabla \Phi(u) \rangle_{\R^N} + \varepsilon \, \| \nabla \Phi(u)\|^2  \\[1mm]
\nonumber& \geq & (1-M\varepsilon - \frac{\varepsilon}{2} ) \, \| v\|^2 + \frac{\varepsilon}{2} \, \| \nabla \Phi(u)\|^2 \\[1mm]
\label{E=0implyF=0}& \geq & \alpha' \, (\| v\|^2 + \| \nabla \Phi(u) \|^2 ) .
\end{eqnarray}

Since $\frac{d}{dt}[\cE(u(t),u'(t))]=-\langle\nabla\cE(u,v),\cF (u(t),u'(t)) \rangle\leq0$,  Then  the function $t\longmapsto \cE(u(t),u'(t))$ is nonincreasing. 
Thanks to the fact that $u'\longrightarrow0$ as $t\to\infty$,  it follows that $\displaystyle\lim_{t\to+\infty} \cE(u(t),u'(t))=0$. 
Then $\cE$ satisfy hypothesis \eqref{energiepositive}. Moreover,
\begin{equation}\label{ineg EF}
\| \nabla \cE (u,v) \| + \| \cF (u,v) \| \leq  C (\| v\| + \| \nabla \Phi(u)\| )  .
\end{equation}
By combining  \eqref{E=0implyF=0} and \eqref{ineg EF}, we obtain that
$$\langle\nabla \cE(u,v) , \cF (u,v)\rangle_{\R^{2N}}\geq c'\| \nabla \cE (u,v) \|  \| \cF (u,v) \|.$$ 
This is condition \eqref{condAngle} with $\beta=1$. On the other hand, if $\nabla\cE(a,b)=0$ then by \eqref{E=0implyF=0} we have $b=0$ and $\nabla\Phi(a)=0$,  then $\cF(a,b)=0$, hence \eqref{cond1a}.

Now  if we assume that $\Phi$ is analytic \index{analytic}, then $\cE$ is also analytic \index{analytic} and satisfies {\L}ojasiewicz \index{Lojasiewicz} inequality \eqref{InegaliteLojasiewicz}. Therefore by applying lemma \ref{InegLojsABSRAITLemme} with $W=X=\R^{2N}$, $E=\cE$, ${\cal G}=\nabla\cE$ and $\Gamma=\omega(u,u')$ we obtain 
\begin{equation}\label{inegLoj2}\exists T>0,\ \exists c>0,  \exists \theta\in(0,\frac12]/\quad \Vert \nabla \cE(u(t),u'(t))\Vert \geq c \cE(u(t))^{1-\theta}.
\end{equation}
Then  hypothesis \eqref{cond1} is satisfied. On the other hand,  by using \eqref{ineg EF} we get 
$$\Vert \cF(u,v)\Vert=\Vert v\Vert+\Vert v+\nabla \Phi(u)\Vert\geq \frac12 (\Vert v\Vert+\Vert \nabla \Phi(u)\Vert)\geq \frac1C \| \nabla \cE (u,v) \|.$$
Combining this last inequality with \eqref{inegLoj2} we obtain that hypothesis \eqref{cond2} is satisfied.
Therefore by Theorem \ref{alain}, $\displaystyle\lim_{t\to\infty} (u(t),u'(t)) = (a,0)$ exists. We thus recover the result of Section \ref{CvAnalytiqueSndordre}. 

 In \cite{MR1616968}, also the case of nonlinear damping was considered. The damping, however, should not degenerate in the sense that near $0$ the damping is in principle linear. The case of degenerate damping which is the object of the next  section has been considered by L. Chergui in \cite{MR2429439}.

\subsection{A second order gradient like system with nonlinear dissipation}
Let $\Phi\in C^2 (\R^N,\R)$ and consider the second order ordinary differential system
\begin{equation} \label{EquationChergui}
 u'' (t) +  g(u' (t)) + \nabla \Phi (u(t)) = 0 ,
\end{equation}
where $g\in C (\R^N,\R^N)$ satisfying 
\begin{eqnarray}
\label{hypdamping1}\langle g(v),v\rangle&\geq& c \Vert v\Vert^{\alpha+2} \\
\label{hypdamping2}\Vert g(v)\Vert & \leq &  C\Vert v\Vert^{\alpha+1}
\end{eqnarray}
and $\alpha>0$.
\begin{thm} \label{chergui}   We suppose that
\begin{equation}\label{hypfond}
\exists\theta \in ] 0,\frac{1}{2}],\ \forall a\in S,\,\ \exists
\,\sigma _{a}>0\ /\Vert \nabla \Phi(u)\Vert \geq \vert
\Phi(u)-\Phi(a)\vert^{1-\theta },\ \forall u\in B(a,\sigma _{a}).
\end{equation}
Assume that $\alpha \in [0,\frac{\theta }{1-\theta }) $ and let $u\in
W^{2,\infty }(\R_{+},\R^{N})$ a solution of \eqref{EquationChergui}. Then \index{convergence result}
there exists $a\in S$ such that  $$\displaystyle
\lim_{t\rightarrow +\infty }(\Vert\dot{u}(t)\Vert+\Vert
u(t)-a\Vert) =0.$$
We also have
$$
\Vert u(t) - a \Vert =  O(t^{-\frac{\theta-\alpha(1-\theta)}{1-2\theta+\alpha(1-\theta)}} ) 
$$
\end{thm}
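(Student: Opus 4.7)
The plan is to apply Theorem~\ref{alain} to the first-order system in $\R^{2N}$ associated with \eqref{EquationChergui}: setting $U = (u, u')$ and
\[
\cF(x, y) = (-y,\, g(y) + \nabla\Phi(x)),
\]
equation \eqref{EquationChergui} reads $U' + \cF(U) = 0$. The decisive choice is $\beta = \alpha + 1$: the hypothesis $\alpha < \theta/(1-\theta)$ is precisely $\beta(1-\theta) < 1$ (condition \eqref{Cdbetatheta}), and a direct algebraic computation gives
\[
\frac{1 - \beta(1-\theta)}{\beta(1-\theta) - \theta} \;=\; \frac{\theta - \alpha(1-\theta)}{1 - 2\theta + \alpha(1-\theta)},
\]
exactly the decay exponent claimed in the statement.

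First I would record the standard preliminaries. The energy $E(t) = \tfrac12\|u'(t)\|^2 + \Phi(u(t))$ satisfies $E'(t) = -\langle g(u'), u'\rangle \leq -c\|u'\|^{\alpha+2}\leq 0$ by \eqref{hypdamping1}, so $E(t)$ converges to some $E_\infty$. Boundedness of $u \in W^{2,\infty}$ makes the trajectory of $U$ precompact, and an adaptation of the invariance principle to this nonlinear dissipation shows that the $\omega$-limit set $\Gamma$ is a compact connected subset of $S \times \{0\}$. Applying Lemma~\ref{InegLojsABSRAITLemme} to $\Gamma$ with $E = \Phi$ and $\cG = \nabla\Phi$, hypothesis \eqref{hypfond} provides a common Łojasiewicz exponent $\theta$, a radius $\sigma > 0$, a constant $c_\Gamma > 0$ and a common value $\Phi(a)$ on $\Gamma$ such that $\|\nabla\Phi(x)\| \geq c_\Gamma |\Phi(x) - \Phi(a)|^{1-\theta}$ whenever $\text{dist}((x, y), \Gamma) < \sigma$. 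Replacing $\Phi$ by $\Phi - \Phi(a)$, one may assume $E_\infty = 0$ and $\Phi \geq 0$ near $\Gamma$.

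The heart of the argument is the construction of a $C^1$ function $\cE$ on $\R^{2N}$ verifying the angle condition \eqref{condAngle} with $\beta = \alpha + 1$. The naive candidate
\[
\cE_0(x, y) = \tfrac12\|y\|^2 + \Phi(x) + \varepsilon\langle y, \nabla\Phi(x)\rangle
\]
yields, after expanding $\langle \nabla\cE_0, \cF\rangle$ and using \eqref{hypdamping1}--\eqref{hypdamping2} together with Young's inequality, a lower bound of the form
\[
\langle \nabla\cE_0, \cF\rangle \geq c\|y\|^{\alpha+2} + c\varepsilon\|\nabla\Phi\|^2 - \varepsilon M\|y\|^2,
\]
in which the Hessian-induced term $\varepsilon M\|y\|^2$ is \emph{larger} than $\|y\|^{\alpha+2}$ for small $\|y\|$ when $\alpha > 0$, and so cannot be absorbed. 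The remedy is to weight the coupling with a factor that vanishes at equilibria, e.g.\ to work with
\[
\cE(x, y) = \tfrac12\|y\|^2 + \Phi(x) + \varepsilon\bigl(\tfrac12\|y\|^2 + \Phi(x)\bigr)^{\mu}\langle y, \nabla\Phi(x)\rangle,
\]
for an exponent $\mu = \mu(\alpha, \theta) > 0$ calibrated so that the offending quadratic term is rescaled to order $\|y\|^{\alpha+2}$ while $\|\nabla\Phi\|^2$ is rescaled to $\|\nabla\Phi\|^{\alpha+2}$; one then derives
\[
\langle \nabla\cE, \cF\rangle \geq c\bigl(\|y\|^{\alpha+2} + \|\nabla\Phi(x)\|^{\alpha+2}\bigr) \geq c'\|\nabla\cE\|^{\alpha+1}\|\cF\|.
\]
The Łojasiewicz-type conditions \eqref{cond1} on $\nabla\cE$ and \eqref{cond2} on $\cF$ follow from the uniform Łojasiewicz inequality on $\Phi$ and the structure of $\cE$, while the non-degeneracy \eqref{cond1a} is immediate for $\varepsilon$ small.

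With all hypotheses of Theorem~\ref{alain} in place, it delivers existence of $a \in S$ with $U(t) \to (a, 0)$ and the polynomial decay rate computed at the start. The main obstacle is precisely the calibration of $\cE$ in the previous paragraph: when $\alpha > 0$ the quadratic term generated by the Hessian of $\Phi$ in the coupling's time derivative sits at a \emph{strictly larger} scale than the $(\alpha+2)$-power dissipation near equilibria, and a state-dependent weight $(\tfrac12\|y\|^2 + \Phi)^{\mu}$ (or equivalently a weight of $\|y\|^\alpha$ type) must be inserted to rebalance the competing scales and recover a workable angle estimate with exponent $\beta = \alpha + 1$.
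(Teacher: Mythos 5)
Your overall architecture is exactly the paper's: rewrite \eqref{EquationChergui} as $U'+\cF(U)=0$ with $\cF(x,y)=(-y,\,g(y)+\nabla\Phi(x))$, apply Theorem~\ref{alain} with $\beta=\alpha+1$, and check that $\alpha<\theta/(1-\theta)$ is \eqref{Cdbetatheta} and that the exponent algebra reproduces the stated decay rate (your computation of the exponent is correct). Your diagnosis of the obstruction is also correct: with the unweighted coupling $\varepsilon\langle y,\nabla\Phi(x)\rangle$, the Hessian term produces $O(\varepsilon\Vert y\Vert^2)$, which cannot be absorbed by the damping $\Vert y\Vert^{\alpha+2}$ near $y=0$ when $\alpha>0$.

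However, the weight you propose for the coupling does not work, and this is the one genuinely delicate point of the proof. Write the perturbed functional as $\cE=\frac12\Vert y\Vert^2+\Phi(x)-E_\infty+\varepsilon\, w(x,y)\,\langle \nabla\Phi(x),y\rangle$. The ``good'' term produced by differentiating in $y$ and pairing with the second component of $\cF$ is $\varepsilon\, w\,\Vert\nabla\Phi(x)\Vert^2$, and to reach the angle condition \eqref{condAngle} with $\beta=\alpha+1$ one needs the pointwise lower bound $\langle\nabla\cE,\cF\rangle\geq c(\Vert y\Vert^{\alpha+2}+\Vert\nabla\Phi(x)\Vert^{\alpha+2})$, hence $w\gtrsim\Vert\nabla\Phi(x)\Vert^{\alpha}$. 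Neither of your candidates satisfies this: a weight of ``$\Vert y\Vert^\alpha$ type'' vanishes at points of the trajectory where $u'=0$ but $\nabla\Phi(u)\neq0$ (there $\cF=(0,\nabla\Phi)$, $\Vert\nabla\cE\Vert\sim\Vert\nabla\Phi\Vert$, and the angle condition demands exactly $\varepsilon w\Vert\nabla\Phi\Vert^2\geq c\Vert\nabla\Phi\Vert^{\alpha+2}$, which fails); and the energy-power weight $(\frac12\Vert y\Vert^2+\Phi)^{\mu}$ is controlled \emph{from above} by $C(\Vert y\Vert+\Vert\nabla\Phi\Vert^{1/(1-\theta)})^{\mu}$ via the {\L}ojasiewicz inequality, with no lower bound in terms of $\Vert\nabla\Phi\Vert^{\alpha}$, so the good term degenerates at the same points. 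The correct choice, which is the one used in the paper, is $w=\Vert\nabla\Phi(x)\Vert^{\alpha}$, i.e.
$$\cE(x,y)=\tfrac12\Vert y\Vert^2+\Phi(x)-E_\infty+\varepsilon\Vert\nabla\Phi(x)\Vert^{\alpha}\langle\nabla\Phi(x),y\rangle .$$
With this weight the good term is exactly $\varepsilon\Vert\nabla\Phi\Vert^{\alpha+2}$, while every error term (including the Hessian term $\varepsilon\Vert\nabla\Phi\Vert^{\alpha}\langle\nabla^2\Phi\, y,y\rangle\leq\varepsilon M\Vert\nabla\Phi\Vert^{\alpha}\Vert y\Vert^2$ and the term $\varepsilon\Vert\nabla\Phi\Vert^{\alpha+1}\Vert g(y)\Vert\leq C\varepsilon\Vert\nabla\Phi\Vert^{\alpha+1}\Vert y\Vert^{\alpha+1}$) is a mixed monomial of total degree $\alpha+2$ in $(\Vert y\Vert,\Vert\nabla\Phi\Vert)$; Young's inequality splits each of them into $\delta\Vert\nabla\Phi\Vert^{\alpha+2}+C_\delta\Vert y\Vert^{\alpha+2}$, and both pieces are absorbed by the damping $c\Vert y\Vert^{\alpha+2}$ and by $\varepsilon\Vert\nabla\Phi\Vert^{\alpha+2}$ once $\varepsilon$ is small. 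The remaining verifications of \eqref{cond1}, \eqref{cond1a} and \eqref{cond2} then go through as you indicate.
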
  
\begin{proof}[{\bf Proof.}]First of all, we define the energy \index{energy} of this system 
$$E(t)=\frac12 \| u'(t)\|^2 + \Phi(u(t)).$$
We know that the function $E$ is nonincreasing and $E_\infty=\lim_{t\to\infty} E(t)$ exists.
 It is also well known (see corollary \ref{gradientsyst2Bis}) that $\omega(u,u')$ is compact connected subset of $(\nabla\Phi)^{-1}(\{0\})\times\{0\}.$
 In order to apply Theorem \ref{alain}, we must write equation \eqref{EquationChergui} as a first order
 system \eqref{non}. This is the case if we define $\cF :\R^{2N} \to \R^{2N}$ by
$$
\cF (u,v) := \left( \begin{array}{c} -v \\ g(v) +\nabla \Phi(u) \end{array} \right) , \quad u, \, v\in\R^N.
$$
Let $\varepsilon >0$, and define $\cE :\R^{2N} \to \R$ by
$$\cE(u,v)= \frac12 \| v\|^2 + \Phi(u)- E_\infty+ \varepsilon\Vert \nabla \Phi(u)\Vert^\alpha \langle \nabla \Phi(u) , v\rangle_{\R^N}  , \quad u , \, v\in\R^N $$
so that
$$
\nabla \cE (u,v) =   \left( \begin{array}{c} \nabla \Phi(u)+\varepsilon\Vert \nabla\phi(u)\Vert^\alpha\nabla^2\Phi(u)\cdot v+ 
\varepsilon\alpha \Vert \nabla\phi(u)\Vert^{\alpha-2}\langle\nabla\phi(u),v\rangle\nabla^2\Phi(u)\cdot \nabla\phi(u) \\ v+\varepsilon\Vert \nabla\phi(u)\Vert^\alpha\nabla\Phi(u) \end{array}\right).   
$$
Let $B\subset \R^N\times \R^N$ be a suffiently large closed ball which is a neighbourhood of the range of $(u,u')$, then we have
\begin{equation}\label{Fnulle}\Vert \cF (u,v) \Vert\leq C_1 ( \Vert v\Vert+ \Vert \nabla \Phi(u)\Vert) ;
\end{equation} 
$$\Vert \nabla\cE (u,v) \Vert\leq C_2 ( \Vert v\Vert+ \Vert \nabla \Phi(u)\Vert) .$$
Now choosing $\varepsilon \in(0,1)$ small enough and by using Young inequality together with hypotheses \eqref{hypdamping1} and \eqref{hypdamping2}, we get
\begin{equation} \label{Enulle}\langle\nabla\cE(u,v),\cF (u,v) \rangle\geq c_3 ( \Vert v\Vert^{\alpha+2} + \Vert \nabla \Phi(u)\Vert^{\alpha+2} )\geq c_4 ( \Vert v\Vert + \Vert \nabla \Phi(u)\Vert)^{\alpha+2} .
\end{equation}
Combining these three last inequalities we obtain 
\begin{equation} \label{condAnglebisbis}
\langle \nabla\cE(u,v) , \cF (u,v) \rangle \geq c_5 \,   \Vert \nabla\cE(u,v)\Vert^{\alpha+1} \, \Vert \cF (u,v)\Vert. 
\end{equation}
This is \eqref{condAngle} with $\beta=\alpha+1$. Since $\frac{d}{dt}[\cE(u(t),u'(t))]=-\langle\nabla\cE(u,v),\cF (u(t),u'(t)) \rangle\leq0$,  then  the function $t\longmapsto \cE(u(t),u'(t))$ is nonincreasing. Thanks to the fact that $u'\longrightarrow0$ as $t\to\infty$,  it follows that $\displaystyle\lim_{t\to+\infty} \cE(u(t),u'(t))=0$. Then $\cE$ satisfy hypothesis \eqref{energiepositive}.
Now if $\nabla\cE(a,b)=0$, then by \eqref{Enulle} $b=\nabla\Phi(a)=0$ which imply by \eqref{Fnulle} that $\cF(a,b)=0$. This is hypothesis \eqref{cond1a}.
On the other hand  by using Young inequality we get
$$\cE (u,v)^{1-\theta}\leq C_6 ( \Vert v\Vert+ \Vert \nabla \Phi(u)\Vert+\vert\Phi(u)-E_\infty\vert^{1-\theta}) .$$
We also have
$$\Vert \cF (u,v) \Vert\geq c_7 ( \Vert v\Vert+ \Vert \nabla \Phi(u)\Vert).$$
Combining this two last inequalities together with the {\L}ojasiewicz \index{Lojasiewicz} inequality \eqref{hypfond}, we get
\begin{equation*}\Vert  \cF(u(t),u'(t))\Vert \geq c' \cE(u(t))^{1-\theta}.
\end{equation*}
This is \eqref{cond2}.
Since $\alpha \in [0,\frac{\theta }{1-\theta }[ $ then $\beta(1-\theta)=(\alpha+1)(1-\theta)<1$, then \eqref{Cdbetatheta} is satisfied. Theorem \ref{chergui} is proved.
\end{proof}

\chapter[The infinite dimensional case]{The infinite dimensional case} In \cite{{MR0727703}}, L. Simon completed the fundamental one dimensional result of Zelenyak \cite{MR0223758} and Matano\cite{MR0501842} by showing that the pioneering work of S. {\L}ojasiewicz can be extended to some infinite dimensional context, among which the semi-linear parabolic equations with analytic generating function in any space dimension. The objective  of this chapter is  to clarify  to which extent the  {\L}ojasiewicz method can be generalized to infinite dimensional systems. Throughout this chapter,  we consider two real Hilbert spaces $V, H$  where  $V\subset H$ with continuous and dense imbedding and $H'$, the topological dual of $H$ is identified with $H$, therefore $$ V\subset H = H' \subset  V'$$ with continuous and dense imbeddings.
\begin{defn} \label{loja-Simon}
We say that the function $E\in C^1(V,\R)$ satisfies the   {\L}ojasiewicz \index{Lojasiewicz} gradient inequality \index{Lojasiewicz} near some point $\varphi\in V$, if there exist constants $\theta\in (0,\frac{1}{2}]$, $c\geq 0$ and $\sigma >0$ such that for all $u\in V$ with $\| u-\varphi\|_V\leq \sigma$
\begin{equation} \label{eqloja-Simon} \|DE(u)\|_{V'} \geq c|E(u) - E(\varphi )|^{1-\theta}.
\end{equation}
\end{defn}
\begin{rem}\label{PhiNestpasCritiqueTriviale}{\rm  1) The {\L}ojasiewicz\index{Lojasiewicz} gradient inequality  is trivial if $\varphi$  is not a critical point of $E$.\\
2) The number  $\theta$ will be called a {\L}ojasiewicz\index{Lojasiewicz} exponent (of $E$ at $\varphi$).\\
}
\end{rem}

 \section{Analytic  \index{analytic} functions and the {\L}ojasiewicz \index{Lojasiewicz} gradient inequality}
One might wonder if {\L}ojasiewicz \index{Lojasiewicz} gradient inequality is valid for any analytic \index{analytic}function  on an infinite dimensional Banach space. However, even if $V = H$ it is not the case. Actually, if $(H,\langle\cdot,\cdot\rangle)$ is a Hilbert space and $F$ is defined by $F(u)=\langle Ku,u\rangle$ with $K=K^*\geq 0$ and compact, then $F$ does not satisfy the {\L}ojasiewicz \index{Lojasiewicz} gradient inequality. More precisely
\begin{prop} Let $H = l^2(\N)$ and $F: H \rightarrow \R$ be
the continous quadratic (hence analytic \index{analytic})  functional given by
$$ F(u_0, u_1,... u_n,...) : = \sum_{j=0}^{\infty}\varepsilon_j u_j^2$$
where $(\varepsilon_k)_{k\in\N}$ is a real sequence satisfying  $\varepsilon_k>0$ and $   \displaystyle \lim_{k\to\infty}\varepsilon_k=0$.
Then $F$ satisfies no {\L}ojasiewicz \index{Lojasiewicz} gradient inequality.
\end{prop}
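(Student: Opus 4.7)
The plan is to exhibit a simple family of test vectors along which the {\L}ojasiewicz gradient inequality must fail near the only critical point of $F$. First I would observe that since $DF(u) = 2(\varepsilon_j u_j)_{j\in \N}$ with all $\varepsilon_j > 0$, the unique critical point of $F$ is $\varphi = 0$, at which $F(0) = 0$. By the remark following Definition~\ref{loja-Simon}, the inequality is automatic away from critical points, so the entire claim reduces to showing that \eqref{eqloja-Simon} fails at $\varphi = 0$.

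The natural test family is $u = t\, e_k$, where $(e_k)_{k \in \N}$ denotes the canonical orthonormal basis of $l^2(\N)$. A direct calculation gives
$$ F(t e_k) = \varepsilon_k t^2, \qquad \|DF(te_k)\|_H = 2 \varepsilon_k |t|. $$
Substituting into the hypothetical inequality with $\varphi = 0$, \eqref{eqloja-Simon} would force
$$ 2 \varepsilon_k |t| \;\geq\; c\, \varepsilon_k^{1-\theta} |t|^{2(1-\theta)} \quad \text{for all } k\in \N, \ |t| \leq \sigma, $$
or, equivalently (for $t \neq 0$), $\,2 \varepsilon_k^{\theta} \geq c\, |t|^{1 - 2\theta}$.

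I would then argue by contradiction, fixing an allegedly admissible triple $(\theta, c, \sigma)$ with $\theta \in (0, 1/2]$, $c > 0$, $\sigma > 0$, and producing a violation for $k$ sufficiently large. If $\theta = 1/2$, the inequality collapses to $2\varepsilon_k^{1/2} \geq c$, which fails as soon as $\varepsilon_k < c^2/4$, and such $k$ exist because $\varepsilon_k \to 0$. If $\theta < 1/2$, fix any $\tau \in (0, \sigma]$: the right-hand side $c \tau^{1-2\theta}$ is a strictly positive constant independent of $k$, while $2\varepsilon_k^\theta \to 0$; hence taking $u = \tau e_k$ with $k$ large enough supplies a point of norm $\leq \sigma$ at which the inequality is violated.

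The argument is essentially one computation followed by the observation $\varepsilon_k \to 0$, so I do not expect a genuine technical obstacle. The only point requiring mild care is treating the boundary exponent $\theta = 1/2$ separately from $\theta < 1/2$, since the power of $|t|$ on the right-hand side flips between being zero and being strictly positive, which affects how the free parameter $t$ is chosen in each case.
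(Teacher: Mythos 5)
Your proof is correct and follows essentially the same route as the paper: test vectors $t e_k$, the computation $F(te_k)=\varepsilon_k t^2$, $\Vert \nabla F(te_k)\Vert = 2\varepsilon_k t$, and the observation that the ratio $2\varepsilon_k^{\theta}t^{2\theta-1}$ tends to $0$ as $k\to\infty$ for fixed $t\in(0,\sigma]$. Your explicit case split at $\theta=1/2$ is harmless but not needed, since fixing $t=\tau$ and letting $k\to\infty$ kills both cases at once.
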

\begin{proof}[{\bf Proof.}]  Defining $ (e_i)_j = \delta_{ij}$, an immediate calculation
shows that 
$$ \forall t>0, \quad F(te_k) =t^2\varepsilon_k ; \quad \vert \nabla F(te_k) \vert =2t\varepsilon_k.$$
 In particular for each $\theta>0$ we have
$$\displaystyle  \frac{\vert \nabla F(te_k) \vert }{\vert F(te_k) \vert^{1-\theta}}=2\varepsilon_k^{\theta}  t^{2\theta-1}$$
For any $\theta>0$ small , choosing  $t$ small enough and letting $k$ tend to infinity we can see that the {\L}ojasiewicz \index{Lojasiewicz} gradient inequality fails in the ball of radius $t$.\end{proof} 
More generally, in  \cite{MR2772353}, we  considered a real Hilbert space $(H,\langle\cdot,\cdot\rangle)$, a linear operator $A$ such that
\begin{equation*} A \in L(H); \quad A^* = A
\end{equation*}
and the associated quadratic form
$\Phi:H\longrightarrow\R$ defined by
\begin{equation*} \forall u\in H,\quad \Phi(u)= \frac{1}{2}\langle Au, u \rangle.\end{equation*} In this context a characterization of continous quadratic forms for which the  {\L}ojasiewicz \index{Lojasiewicz} gradient inequality is valid was obtained and expressed  by the following statement
\begin{thm}\label{quad.} The following properties are equivalent\\
i)  $ 0$ is not an accumulation point of sp(A).\\
ii) For some $\rho>0$ we have  $$ \forall u\in \ker(A)^{\perp},  \quad \Vert Au  \Vert_H  \ge \rho \Vert u\Vert_H.$$
iii) $\Phi$ satisfies the {\L}ojasiewicz \index{Lojasiewicz} gradient inequality at the origin for some $\theta>0$.\\
iv) $\Phi$ satisfies the {\L}ojasiewicz \index{Lojasiewicz} gradient inequality at any point for  $\theta = \frac{1}{2}$.\\
\end{thm}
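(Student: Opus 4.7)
My plan is to establish the four equivalences via the circular chain (iv)$\,\Rightarrow\,$(iii)$\,\Rightarrow\,$(i)$\,\Rightarrow\,$(ii)$\,\Rightarrow\,$(iv). The implication (iv)$\,\Rightarrow\,$(iii) is immediate, since the former is the stronger statement (global validity with the specific exponent $\theta = 1/2$).

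For (iii)$\,\Rightarrow\,$(i) I would argue by contraposition. Assuming $0$ is an accumulation point of $\mathrm{sp}(A)$, pick a sequence $\lambda_n \in \mathrm{sp}(A)\setminus\{0\}$ with $\lambda_n \to 0$. By the Weyl characterization of the spectrum of a bounded self-adjoint operator, there exist unit vectors $u_n \in H$ satisfying $\|Au_n - \lambda_n u_n\| \leq |\lambda_n|^2$. If (iii) held at $0$ with Łojasiewicz radius $\sigma > 0$, scaling $v_n = (\sigma/2) u_n$ keeps the vectors inside this ball, and a direct computation gives $\|Av_n\| \sim (\sigma/2)|\lambda_n|$ while $\Phi(v_n) = (\sigma^2/8)\langle Au_n, u_n\rangle \sim (\sigma^2/8)\lambda_n$. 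Hence the ratio $\|Av_n\|/|\Phi(v_n)|^{1-\theta}$ behaves like a constant times $|\lambda_n|^\theta$, which tends to $0$ and contradicts (iii) for every admissible $\theta \in (0, 1/2]$.

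For (i)$\,\Rightarrow\,$(ii), if $0 \notin \mathrm{sp}(A)$ then $A$ is invertible, $\ker(A) = \{0\}$, and (ii) holds with $\rho = 1/\|A^{-1}\|$. Otherwise $0$ is isolated in $\mathrm{sp}(A)$, so $0$ is an eigenvalue of the self-adjoint operator $A$ by the spectral theorem. Self-adjointness yields the orthogonal decomposition $H = \ker(A) \oplus \ker(A)^\perp$ with $\ker(A)^\perp$ invariant under $A$; the restriction $A_1 = A|_{\ker(A)^\perp}$ is self-adjoint with $\mathrm{sp}(A_1) = \mathrm{sp}(A)\setminus\{0\}$, which is bounded below in absolute value by some $\rho > 0$ precisely because $0$ is isolated. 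Hence $\|Au\|_H \geq \rho\|u\|_H$ for all $u \in \ker(A)^\perp$, which is (ii).

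Finally, for (ii)$\,\Rightarrow\,$(iv), fix $a \in H$. If $Aa \neq 0$, then $a$ is not a critical point of $\Phi$ and the inequality is trivial on a small ball by continuity of $D\Phi = A$. If $Aa = 0$, then $a \in \ker(A)$ and $\Phi(a) = 0$; decomposing any $u \in H$ orthogonally as $u = u_0 + u_1$ with $u_0 \in \ker(A)$ and $u_1 \in \ker(A)^\perp$, one has $Au = Au_1$ and $\Phi(u) = \tfrac{1}{2}\langle Au_1, u_1\rangle$, so by (ii)
\begin{equation*}
|\Phi(u) - \Phi(a)| \leq \tfrac{1}{2}\|A\|\,\|u_1\|^2 \leq \tfrac{\|A\|}{2\rho^2}\|Au_1\|^2 = \tfrac{\|A\|}{2\rho^2}\|Au\|_H^2,
\end{equation*}
which is the Łojasiewicz inequality at $a$ with exponent $\theta = 1/2$, valid globally on $H$. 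The delicate step is (iii)$\,\Rightarrow\,$(i): one must track the lower-order correction coming from the approximate eigenvector estimate carefully enough to guarantee that the ratio genuinely vanishes as $n\to\infty$, since for small $|\lambda_n|$ the main term $\lambda_n$ and the error $|\lambda_n|^2$ enter both $\|Av_n\|$ and $\Phi(v_n)$ and must be separated.
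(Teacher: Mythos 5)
Your proof is correct. Note that the text does not actually supply a proof of Theorem \ref{quad.}: the result is stated with a reference to \cite{MR2772353}, so there is no in-text argument to compare against line by line. Your cycle (iv)$\Rightarrow$(iii)$\Rightarrow$(i)$\Rightarrow$(ii)$\Rightarrow$(iv) closes all the equivalences, and each step holds up. In (iii)$\Rightarrow$(i), taking Weyl approximate eigenvectors with residual $|\lambda_n|^2$ is exactly the right quantitative choice: it gives $\|Av_n\|=\frac{\sigma}{2}|\lambda_n|(1+O(|\lambda_n|))$ and $|\Phi(v_n)|=\frac{\sigma^2}{8}|\lambda_n|(1+O(|\lambda_n|))$, so $\Phi(v_n)\neq0$ for $n$ large and the ratio $\|Av_n\|/|\Phi(v_n)|^{1-\theta}=O(|\lambda_n|^{\theta})\to0$, contradicting the inequality for every $\theta>0$; this is precisely the general-spectrum version of the diagonal counterexample $F(u)=\sum\varepsilon_j u_j^2$ that the paper places just before the theorem, where exact eigenvectors are available. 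In (i)$\Rightarrow$(ii) the one fact worth citing explicitly is that an isolated spectral point of a self-adjoint operator is an eigenvalue whose eigenspace is the range of the corresponding spectral projection; with that, $\ker(A)^\perp$ is $A$-invariant, $\mathrm{sp}(A|_{\ker(A)^\perp})\subset \mathrm{sp}(A)\setminus(-\rho,\rho)$, and the functional calculus yields $\|Au\|\ge\rho\|u\|$ on $\ker(A)^\perp$. The step (ii)$\Rightarrow$(iv) via the orthogonal splitting $u=u_0+u_1$ and the identity $\Phi(u)=\frac12\langle Au_1,u_1\rangle$ (which uses self-adjointness to kill the cross terms) is clean; you can even drop the factor $\|A\|$ by estimating $|\langle Au_1,u_1\rangle|\le\|Au_1\|\,\|u_1\|\le\rho^{-1}\|Au_1\|^2$, giving $\|Au\|\ge\sqrt{2\rho}\,|\Phi(u)-\Phi(a)|^{1/2}$ globally on $H$. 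I see no gaps.
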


For a general nonlinear potential $F$, one might wander if the equivalent properties above for $A = D^2 F(a)$ are sufficient to obtain a $\L$ojasiewicz gradient inequality near $a$. The proposition  below shows that it is not the case.

\begin{prop} Let $H = l^2(\N)$ and $F: H \rightarrow \R$ be
the analytic \index{analytic} functional given by
$$ F(u_1, u_2,... u_n,...) : = \sum _{k = 2}^{\infty}\frac{\vert u_k\vert ^{2k+2}}{(2k+2)!}.$$
Then $F$ satisfies no {\L}ojasiewicz \index{Lojasiewicz} gradient inequality.
\end{prop}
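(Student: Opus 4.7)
The unique critical point of $F$ is the origin (since $\nabla F(u) = 0$ forces $u_k = 0$ for every $k \geq 2$, and $u_0, u_1$ play no role), so by Remark \ref{PhiNestpasCritiqueTriviale} this is the point at which one must disprove the \L ojasiewicz inequality. The plan is to probe the inequality along the one-parameter family of test vectors $u(t,k) := t\, e_k$ with $t > 0$ and $k \geq 2$, where $e_k$ denotes the $k$-th canonical basis vector of $\ell^2(\N)$. Since only the $k$-th coordinate is nonzero, everything collapses to a single term and an immediate calculation gives
\[
F(te_k) \;=\; \frac{t^{2k+2}}{(2k+2)!}, \qquad \|\nabla F(te_k)\|_{\ell^2} \;=\; \frac{t^{2k+1}}{(2k+1)!}.
\]

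Suppose now, for contradiction, that constants $\theta \in (0,1/2]$, $c > 0$ and $\sigma > 0$ are such that $\|\nabla F(u)\|_{\ell^2} \geq c\,|F(u)|^{1-\theta}$ for every $u \in \ell^2(\N)$ with $\|u\|_{\ell^2} < \sigma$. Fix $t := \sigma/2$, so that $\|te_k\|_{\ell^2} = t < \sigma$ uniformly in $k \geq 2$. A short simplification of the exponent then yields
\[
\rho_k \;:=\; \frac{\|\nabla F(te_k)\|_{\ell^2}}{|F(te_k)|^{1-\theta}} \;=\; t^{(2k+2)\theta - 1} \cdot (2k+2)^{1-\theta} \cdot \bigl((2k+1)!\bigr)^{-\theta}.
\]

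The heart of the argument is a purely asymptotic estimate in $k$ with $t$ held fixed: the factor $\bigl((2k+1)!\bigr)^{-\theta}$ decays super-exponentially, whereas $(2k+2)^{1-\theta}$ grows only polynomially and $t^{(2k+2)\theta - 1}$ at most exponentially. Hence $\rho_k \to 0$, so for $k$ sufficiently large one obtains $\rho_k < c$, producing a vector $te_k$ inside the open ball of radius $\sigma$ on which the purported \L ojasiewicz inequality is violated --- a contradiction. The only mildly delicate preliminary point is to verify that the termwise formula $\partial_k F(u) = u_k^{2k+1}/(2k+1)!$ genuinely represents the Fr\'echet differential of $F$ on $\ell^2(\N)$, which follows routinely from the extraordinarily fast decay of the coefficients $1/(2k+2)!$ by dominated estimates; no further subtlety is involved.
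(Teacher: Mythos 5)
Your proof is correct and follows essentially the same route as the paper: both probe the inequality on the rays $te_k$ and compute the same ratio, the only difference being that you fix $t=\sigma/2$ and let $k\to\infty$, while the paper fixes $k$ large enough that $(2k+2)\theta>1$ and lets $t\to0$ --- both limits drive $\rho_k$ to $0$ and both are valid. One small inaccuracy in an aside: since $u_1$ does not appear in $F$, the critical set is the whole $u_1$-axis rather than just the origin, but this does not affect your argument, which correctly refutes the {\L}ojasiewicz inequality near $0$.
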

\begin{proof}[{\bf Proof.}] First we note that $D^2 F(0) = 0 $, hence $ sp(D^2 F(0)) = \{0\} $ and in particular $0$ is isolated in $ sp(D^2 F(0)) $. Defining $ (e_i)_j = \delta_{ij}$, an immediate calculation
shows that $$ \forall t>0, \quad F(te_k) =\frac{t ^{2k+2}}{(2k+2)!} ;
\quad \vert \nabla F(te_k)
\vert =\frac{t ^{2k+1}}{(2k+1)!}.$$ In particular for each $\theta>0$ we have
$$ \frac {F(te_k) ^{1-\theta}}{\vert \nabla F(te_k)
\vert }= c(\theta, k) t^{1- (2k+2)\theta}.$$ 
Choosing k large enough gives a contradiction for $t$
small.\end{proof}

In this example, the difficulty comes from the fact that $ \dim \ker(D^2 F(0))= \infty$.  Assuming the equivalent properties  of Theorem \ref {quad.} and $ \dim \ker(D^2 F(0))\not = \infty$ is  equivalent to the semi-Fredholm\index{semi-Fredholm} character of 
$ D^2 F(0)$ (cf. Theorem \ref {PropInegFredholm}) In the next section  \ref{abstract Lojasiewicz} we shall show that this condition is sufficient in a rather general framework, in particular $V$ will not be assumed equal to $H$ in view of applications to semilinear PDE . 

\section{An abstract {\L}ojasiewicz gradient inequality}\label{abstract Lojasiewicz}

\noindent %For the statement of the main result of this section we consider a Banach space $Z$, 

The purpose of this section is to give sufficient conditions on $E$ for  the inequality \eqref{eqloja-Simon} to be satisfied.
Let $E\in C^2(V,\R)$ and $\varphi\in V$ such that $DE(\varphi) = 0.$ Up to the change of variable $u = \varphi+v$ and the change of function $G(v)=E (\varphi+v)-E(\varphi)$, we can assume whithout loss of generality that $\varphi=0$, $E(0)=0$ and $DE(0)=0$.
 Although the formulation of the {\L}ojasiewicz gradient inequality \index{{\L}ojasiewicz gradient inequality} requires only $E\in C^1(V,\R)$, one way of proving it requires $E\in C^2(V,\R)$. In fact the operator $A:=D^2E(0)$ plays an important role.
 \\ 
 \medskip 
 
We start with the following  very simple result
\begin{prop}\label{generalNoyauNul.} Assume that 
$$A \in L(V,V') \; \hbox{ is an isomporphism}.$$
Then the {\L}ojasiewicz gradient inequality \index{{\L}ojasiewicz gradient inequality} is satisfied near 0 with the exponent $\theta=\frac12$ :  there exist two positive constants $\sigma>0$  and $c>0$ such that
$$\Vert u\Vert_V<\sigma\Longrightarrow \Vert D E(u)\Vert_{V '}\geq c\vert E(u)\vert ^{\frac12}.$$
\end{prop}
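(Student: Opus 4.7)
The plan is to exploit the $C^2$ regularity of $E$ around $0$ via Taylor expansions, reducing the inequality to the behaviour of the quadratic model $u\mapsto \frac{1}{2}\langle Au,u\rangle_{V',V}$. Since $A=D^2E(0)\in\mathrm{Isom}(V,V')$, Remark~\ref{IsomorphismFredholm} yields a constant $\rho>0$ such that
\begin{equation*}
\forall v\in V,\quad \|Av\|_{V'}\ge \rho\,\|v\|_V,
\end{equation*}
and this uniform coercivity of $A$ will drive both the lower bound on $\|DE(u)\|_{V'}$ and the quadratic control of $E(u)$.

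First I would write the first-order Taylor formula with integral remainder for $DE$ at $0$, namely $DE(u)=Au+\int_0^1\bigl(D^2E(tu)-A\bigr)u\,dt$. By continuity of $D^2E$ at $0$, for every $\varepsilon>0$ there exists $\sigma_1>0$ with $\|D^2E(tu)-A\|_{L(V,V')}\le\varepsilon$ whenever $\|u\|_V\le\sigma_1$ and $t\in[0,1]$. Choosing $\varepsilon=\rho/2$ and combining with the coercivity of $A$, we obtain
\begin{equation*}
\|DE(u)\|_{V'}\ge \|Au\|_{V'}-\tfrac{\rho}{2}\|u\|_V\ge \tfrac{\rho}{2}\|u\|_V,\qquad \|u\|_V\le\sigma_1.
\end{equation*}

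Second, applying a second-order Taylor expansion of $E$ at $0$ (using $E(0)=0$ and $DE(0)=0$) gives $E(u)=\frac{1}{2}\langle Au,u\rangle_{V',V}+R(u)$ with $|R(u)|\le \varepsilon\|u\|_V^2$ for $\|u\|_V\le\sigma_2$, after a similar continuity argument on $D^2E$. Hence $|E(u)|\le C\|u\|_V^2$ on a small ball around $0$, with $C=\tfrac{1}{2}\|A\|_{L(V,V')}+\varepsilon$. Setting $\sigma=\min(\sigma_1,\sigma_2)$ and combining the two estimates yields
\begin{equation*}
\|DE(u)\|_{V'}\ge \tfrac{\rho}{2}\|u\|_V\ge \tfrac{\rho}{2\sqrt{C}}\,|E(u)|^{1/2},\qquad \|u\|_V<\sigma,
\end{equation*}
which is the desired inequality with $\theta=\tfrac12$ and $c=\rho/(2\sqrt{C})$.

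There is no serious obstacle here: everything follows from Taylor's formula plus the isomorphism hypothesis on $A$. The only mild subtlety is to be careful with the dual norm in the Taylor remainder—one must view the second differential as an element of $L(V,V')$, which is exactly the natural target space since $DE$ takes values in $V'$, so $D^2E(0)=A\in L(V,V')$ as assumed.
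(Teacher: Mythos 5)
Your proof is correct and follows essentially the same route as the paper: a first-order Taylor expansion of $DE$ at $0$ combined with the coercivity $\Vert Au\Vert_{V'}\ge \Vert A^{-1}\Vert^{-1}\Vert u\Vert_V$ to get $\Vert DE(u)\Vert_{V'}\ge c'\Vert u\Vert_V$ on a small ball, together with the quadratic bound $\vert E(u)\vert\le C\Vert u\Vert_V^2$ from the second-order Taylor formula. The only difference is cosmetic — you spell out the integral-remainder estimates that the paper compresses into "$DE(u)=Au+o(u)$".
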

\begin{proof}[{\bf Proof.}] It is easy to see, using Taylor's expansion formula, that for
$\Vert u\Vert_V$ small enough we have
\begin{equation}\label{InegF}\vert E(u ) \vert \leq C \Vert u\Vert_V^2. 
\end{equation}
On the other hand, since $DE(u) = A u + o(u)$, we have
$$u = A^{-1}DE (u) + o(u),$$
and therefore for any given $\varepsilon > 0$ we can find $\delta (\varepsilon
) >0$ such that if $\Vert u\Vert_V \leq
\delta (\varepsilon )$ then
$$\Vert DE(u)\Vert_{V'} \geq  \Vert A^{-1}\Vert^{-1}\Vert u\Vert_V - \epsilon \Vert u\Vert_V.$$
Choosing $\varepsilon := \varepsilon_0:= \Vert A^{-1}\Vert^{-1}/2$, we obtain for
$\Vert u\Vert_V \leq \delta (\varepsilon_0)$
\begin{equation}\label{InegGrad F}\Vert DE(u )\Vert_{V'} \geq \varepsilon_0 \Vert u\Vert_V.
\end{equation}
The result follows by combining \eqref{InegF} and \eqref{InegGrad F}. 
\end{proof}
\begin{rem}{\rm Since $A=D^2E(0)$ is symmetric, then if $A$ is semi-Fredholm\index{semi-Fredholm} and $d=\dim \ker(A)=0$, by corollary \ref{SFredplusProj} $A$ is an isomorphism. Hereinafter we assume that $d>0$. We denote by $\Pi:V\longrightarrow \ker(A)$  the projection in the sense of $H$.
}\end{rem}

\begin{prop} Assume that $A:=D^2E(0)$ is a semi-Fredholm\index{semi-Fredholm} operator and let 
\begin{eqnarray*}{\cal N}:V &\longrightarrow& V' \\
u &\longmapsto& \Pi u+DE(u).
\end{eqnarray*}
Then there exist a neighborhood of $0$, $W_1(0)$ in $V$, a neighborhood of $0,\ W_2(0)$ in $V'$ and a $C^1$ map $ \Psi : W_2(0) \longrightarrow W_1(0) $ which satisfies
$$ {\cal N} (\Psi (f)) =f\qquad \forall f\in W_2(0),$$
$$\Psi ({\cal N} ( u)) =u\qquad\forall u\in W_1(0),$$
\begin{equation}\label{Eq(2.6)} \Vert\Psi (f) - \Psi (g)\Vert_V\leq C_1
\Vert f - g \Vert_{V'}
\quad \forall f,g \in W_2(0),\quad C_1 >0 .
\end{equation}

\end{prop}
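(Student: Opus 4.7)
The plan is to recognize $\mathcal{N}$ as a $C^1$ map whose differential at the origin is an isomorphism, and then invoke the local inversion theorem together with a continuity-of-inverse argument. The heart of the matter is identifying the differential at $0$ and checking that it is a topological isomorphism from $V$ onto $V'$; the Lipschitz estimate \eqref{Eq(2.6)} will then come essentially for free.

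First I would observe that $\mathcal{N} \in C^1(V, V')$ with $D\mathcal{N}(u) = \Pi + D^2 E(u)$, so that $D\mathcal{N}(0) = \Pi + A$. Since $A = D^2 E(0)$ is symmetric (as the Hessian of a real-valued $C^2$ function) and is assumed to lie in $SF(V, V')$, Corollary \ref{SFredplusProj} applies and yields that $A + \Pi : V \to V'$ is a topological isomorphism. Thus $D\mathcal{N}(0)$ is an isomorphism in $\mathcal{L}(V, V')$.

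Next I would apply the inverse function theorem in Banach spaces to $\mathcal{N}$ at the origin: there exist an open neighborhood $W_1(0) \subset V$ of $0$ and an open neighborhood $W_2(0) \subset V'$ of $0 = \mathcal{N}(0)$ such that $\mathcal{N} : W_1(0) \to W_2(0)$ is a $C^1$-diffeomorphism. Denoting its inverse by $\Psi$, the two identities $\mathcal{N}(\Psi(f)) = f$ for $f \in W_2(0)$ and $\Psi(\mathcal{N}(u)) = u$ for $u \in W_1(0)$ are automatic, and $\Psi \in C^1(W_2(0), V)$ with
$$ D\Psi(f) = [D\mathcal{N}(\Psi(f))]^{-1} \in \mathcal{L}(V', V). $$

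For the Lipschitz bound \eqref{Eq(2.6)}, I would use the fact that $T \mapsto T^{-1}$ is continuous on the open set of isomorphisms of $\mathcal{L}(V, V')$. Since $D\mathcal{N}$ is continuous from $V$ to $\mathcal{L}(V, V')$ and $\Psi$ is continuous from $W_2(0)$ to $V$ with $\Psi(0) = 0$, the map $f \mapsto [D\mathcal{N}(\Psi(f))]^{-1}$ is continuous from $W_2(0)$ to $\mathcal{L}(V', V)$. Hence, after possibly shrinking $W_2(0)$ to a convex neighborhood of $0$ on which this norm is bounded by some $C_1 > 0$, the mean value inequality gives
$$ \Vert \Psi(f) - \Psi(g) \Vert_V \leq \sup_{\tau \in [0,1]} \Vert D\Psi(g + \tau(f-g)) \Vert_{\mathcal{L}(V', V)} \, \Vert f - g \Vert_{V'} \leq C_1 \Vert f - g \Vert_{V'}. $$
The only substantive step is verifying that $D\mathcal{N}(0) = A + \Pi$ is an isomorphism, which is the direct content of Corollary \ref{SFredplusProj}; everything else is a standard packaging of the inverse function theorem.
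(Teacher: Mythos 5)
Your proposal is correct and follows essentially the same route as the paper: identify $D\mathcal{N}(0)=\Pi+A$, use Corollary \ref{SFredplusProj} (valid since $A=D^2E(0)$ is symmetric and semi-Fredholm) to see it is an isomorphism from $V$ onto $V'$, and apply the local inversion theorem. Your explicit derivation of the Lipschitz bound \eqref{Eq(2.6)} from boundedness of $f\mapsto [D\mathcal{N}(\Psi(f))]^{-1}$ on a shrunken convex neighborhood is a correct filling-in of a detail the paper leaves implicit.
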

\begin{proof}[{\bf Proof.}]
The function ${\cal N} $ is $C^1$  and $D{\cal N}(0)=\Pi +D^2E(0)$ which by corollary \ref{SFredplusProj} is an isomorphism from $V$ to $V'$. We have just to apply the local inversion theorem.
\end{proof}
Let $(\varphi_1,\varphi_2,...\varphi_d)$ denote an orthonormal basis of $\ker (A)$ relatively to the inner product of $H$. For  $\xi \in {\R}^d$ small enough to achieve
$\displaystyle\sum_{j=1}^d \xi_j \varphi _j \in W_2(0)$, we define the map $\Gamma$ by
\begin{equation}\label{defFtGamma}\Gamma (\xi )= E(\Psi(\displaystyle\sum_{j=1}^d \xi_j \varphi _j)).\end{equation}
Let $\widetilde{W_2}(0)$ be the open neighborhood of 0 in $\R^d$ such that
$$\xi\in \widetilde{W_2}(0)\Longleftrightarrow \sum_{j=1}^d \xi_j\varphi_j\in W_2(0).$$
The function $\Gamma$ is $C^1$ in $\widetilde{W_2}(0)$. Let us define also
$$\widetilde{W_1}(0)=\{u\in W_1(0)/\Pi (u)\in W_2(0)\}.$$

\begin{prop}\label{Inegaliteimportante} Let $u\in \widetilde{W_1}(0)$ and let $\xi\in \widetilde{W_2}(0)$ such that $\Pi ( u ) = \displaystyle\sum_{j=1}^d \xi_j \varphi _j\in  W_2(0)$. Then there are two constants $C,K>0$ such that
\begin{equation}\label{Eq(2.9)} \Vert \nabla \Gamma (\xi ) \Vert _{\R^d}\leq C \Vert{DE}(u) \Vert_{V'},
\end{equation}
\begin{equation}\label{Eq(2.10)}\ \vert  E(u) - \Gamma(\xi)\vert \ \leq K \Vert{DE}(u)\Vert_{V'}^2.
\end{equation}
\end{prop}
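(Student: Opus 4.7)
The plan is to introduce the auxiliary point $v := \Psi(\sum_{j=1}^d \xi_j \varphi_j)$ and to exploit two features of it: first, a Lipschitz comparison between $u$ and $v$ driven by $DE(u)$; second, the crucial fact that $DE(v)$ lies in the finite-dimensional space $\ker A$, on which all norms are equivalent.

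First I would establish the comparison $\|u-v\|_V \le C_1 \|DE(u)\|_{V'}$. Indeed, by construction $\mathcal{N}(v)=\sum_{j}\xi_j\varphi_j=\Pi u$, while $\mathcal{N}(u)=\Pi u+DE(u)$, so $\mathcal{N}(u)-\mathcal{N}(v)=DE(u)$, and inequality \eqref{Eq(2.6)} applied to $\Psi=\mathcal{N}^{-1}$ yields the bound. Combined with $DE\in C^1$ and local boundedness of $D^2E$, this gives $\|DE(v)\|_{V'}\le C'\|DE(u)\|_{V'}$ for $u$ in a (possibly shrunk) neighborhood of $0$. The next key observation is that, rewriting the defining relation as $DE(v)=\sum_j\xi_j\varphi_j-\Pi v$, the right-hand side belongs to $\ker A\subset V$; hence $DE(v)\in\ker A$, and in particular the $V'$, $H$ and $\ker A$-norms of $DE(v)$ are all equivalent.

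For \eqref{Eq(2.9)}, the chain rule gives
\[
\frac{\partial \Gamma}{\partial \xi_i}(\xi)=\bigl\langle DE(v),\, D\Psi\bigl(\textstyle\sum_j\xi_j\varphi_j\bigr)\varphi_i\bigr\rangle_{V',V},
\]
so that, using the bound $\|D\Psi\|_{L(V',V)}\le C_1$ coming from \eqref{Eq(2.6)} and the continuous imbedding $\ker A\subset V'$,
\[
\|\nabla\Gamma(\xi)\|_{\R^d}\le C_1\|DE(v)\|_{V'}\max_i\|\varphi_i\|_{V'}\le C\|DE(u)\|_{V'}.
\]
For \eqref{Eq(2.10)} I would use a first-order Taylor expansion of $E$ at $v$:
\[
E(u)-\Gamma(\xi)=E(u)-E(v)=\langle DE(v),u-v\rangle_{V',V}+R(u,v),
\]
with the remainder satisfying $|R(u,v)|\le M\|u-v\|_V^2\le MC_1^2\|DE(u)\|_{V'}^2$. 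For the linear term, since $DE(v)\in\ker A\subset H$, the duality pairing coincides with the $H$-inner product, and the orthogonal decomposition in $H$ gives $\langle DE(v),u-v\rangle=(DE(v),\Pi(u-v))_H$. But $\Pi(u-v)=\Pi u-\Pi v=DE(v)$ by the very identity $\Pi v+DE(v)=\Pi u$, so
\[
\langle DE(v),u-v\rangle_{V',V}=\|DE(v)\|_H^2.
\]
Finally, the equivalence of norms on the finite-dimensional space $\ker A$ gives $\|DE(v)\|_H\le C''\|DE(v)\|_{V'}\le C'''\|DE(u)\|_{V'}$, whence $|E(u)-\Gamma(\xi)|\le K\|DE(u)\|_{V'}^2$.

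The main conceptual step—and the only one that is not a routine chain rule or Lipschitz estimate—is recognizing that $DE(v)\in\ker A$ and then exploiting this through the identity $\Pi(u-v)=DE(v)$ to turn the \emph{a priori} first-order quantity $\langle DE(v),u-v\rangle$ into the manifestly quadratic quantity $\|DE(v)\|_H^2$. Without this cancellation one would only recover $|E(u)-\Gamma(\xi)|\le K\|DE(u)\|_{V'}$, which is not enough for the subsequent {\L}ojasiewicz argument.
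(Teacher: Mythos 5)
Your proof is correct, and for \eqref{Eq(2.10)} it takes a genuinely different route from the paper's. For \eqref{Eq(2.9)} you essentially follow the paper: chain rule, the Lipschitz bound $\Vert D\Psi\Vert_{L(V',V)}\le C_1$ from \eqref{Eq(2.6)}, and the comparison $\Vert u-\Psi(\Pi u)\Vert_V\le C_1\Vert DE(u)\Vert_{V'}$ (you merely skip the intermediate estimate \eqref{Equati2.5}, which the paper only really needs later for the reverse inequality \eqref{Equati(2.7)}). For \eqref{Eq(2.10)} the paper writes $E(u)-E(\Psi(\Pi u))=\int_0^1\langle DE(u+t(\Psi(\Pi u)-u)),\Psi(\Pi u)-u\rangle\,dt$ and obtains the quadratic bound because \emph{each} factor is $O(\Vert DE(u)\Vert_{V'})$: the increment by the Lipschitz property of $\Psi$, and the integrand uniformly in $t$ because $DE$ is locally Lipschitz, so its value anywhere on the segment differs from $DE(u)$ by $O(\Vert u-\Psi(\Pi u)\Vert_V)=O(\Vert DE(u)\Vert_{V'})$. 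You instead expand to second order around $v=\Psi(\Pi u)$ and identify the linear term exactly as $\Vert DE(v)\Vert_H^2$, via the observation $DE(v)\in\ker A$ and the identity $\Pi(u-v)=DE(v)$, then use equivalence of norms on the finite-dimensional $\ker A$. Both arguments are valid; yours uses the $C^2$ hypothesis with a locally bounded second derivative (assumed in this section), while the paper's only needs $DE$ locally Lipschitz, and yours isolates a clean structural identity that the paper does not exploit. One correction to your closing remark: the cancellation is \emph{not} needed to reach the quadratic bound --- the paper's naive estimate of the first-order term already gives it, precisely because $DE$ along the whole segment joining $u$ to $v$ is itself of size $O(\Vert DE(u)\Vert_{V'})$, so the claim that one would otherwise only get $K\Vert DE(u)\Vert_{V'}$ is inaccurate.
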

\begin{proof}[{\bf Proof.}]
For any $ k \in \{1, \cdots d \} $ we have the formula
\begin{equation}\label{Equati2.4}{\partial{\Gamma}\over {\partial \xi_k}} = {d\over{ds}}
E(\Psi[\displaystyle\sum_{j\neq k}^{}\xi_j \varphi _j +( \xi_k + s)\varphi _k]) 
\vert _{s= 0} = \langle  DE (\Psi(\displaystyle\sum_{j=1}^d \xi_j \varphi_j )), D\Psi(\displaystyle\sum_{j=1}^d \xi_j \varphi_j )\varphi_k \rangle .\end{equation}
  Now we claim that for all $\xi\in\widetilde{W}_2(0)$
\begin{equation}\label{Equati2.5}\Vert\sum_{k=1}^d{{\partial{\Gamma}}\over{\partial
\xi_k}}(\xi)\varphi_k-DE
(\Psi(\displaystyle\sum_{j=1}^d \xi_j \varphi_j
))\Vert_{V'}\leq C_2\vert\xi\vert \Vert DE
(\Psi(\displaystyle\sum_{j=1}^d \xi_j \varphi_j
))\Vert_{V'}.\end{equation}
In fact by using \eqref{Equati2.4}, remarking that $DE (\Psi(\displaystyle\sum_{j=1}^d \xi_j \varphi_j ))\in\ker A$
we obtain
\begin{eqnarray*}&&\Vert\sum_{k=1}^d{{\partial{\Gamma}}\over{\partial \xi_k}}(\xi)\varphi_k-DE
(\Psi(\displaystyle\sum_{j=1}^d \xi_j \varphi_j
))\Vert_{V'}\\
&=&\Vert\sum_{k=1}^d  <DE (\Psi(\displaystyle\sum_{j=1}^d \xi_j \varphi_j )), D\Psi(\displaystyle\sum_{j=1}^d \xi_j \varphi_j ) (\varphi_k)-\varphi_k > \varphi_k \Vert_{V'}.
\end{eqnarray*}
Now by using Cauchy-Schwarz inequality, and the fact that $D\Psi(0)({\cal L}u)=u$, the claim follows. On the other hand, since $E$ is $C^1$, there exists $C_3$ such that
\begin{equation}\label{FtC1}\Vert DE(u)-DE(v) \Vert_{V'} \leq C_3\Vert u-v\Vert_V\quad \forall
(u,v)\in W_1(0).
\end{equation}
 Then by using \eqref{Eq(2.6)},  \eqref{Equati2.5}  and \eqref{FtC1} we obtain
\begin{eqnarray*} \Vert \nabla \Gamma (\xi ) \Vert _{\R^d}&\leq&
C_4 \Vert DE (\Psi(\displaystyle\sum_{j=1}^d \xi_j \varphi_j ))\Vert_{V'} \\
& =& C_4 \Vert DE(\Psi(\Pi (u )))\Vert_{V'} \\
& =& C_4\Vert{DE} (\Psi(\Pi (u ))) - {DE}(u)+{DE}(u)\Vert_{V'}\\
& \leq&  C_4\Vert{DE}(u)\Vert_{V'}+ C_3C_4\Vert\Psi(\Pi (u))-u\Vert_V\\
& = &C_4\Vert{DE}(u)\Vert_{V'} + C_3C_4\Vert\Psi(\Pi(u)) -
\Psi(\Pi u+{DE}(u) )\Vert_V \\
& \leq&  C_4\Vert{DE}(u)\Vert_{V'}+C_5\Vert{DE}(u)\Vert_{V'}
\end{eqnarray*}
hence \eqref{Eq(2.9)}.
 On the other hand
\begin{eqnarray*}  \vert  E(u) - \Gamma(\xi)\vert
&= &\vert E(u)- E(\Psi(\Pi (u ))) \vert \\
&=&\vert \int_0^1 {d\over {dt}}[ E(u+t( \Psi(\Pi
(u ))-u) ]\, dt\,\vert\\
&=&\vert \int_0^1 ({DE} (u+t( \Psi(\Pi(u ))-u)),\Psi(\Pi(u))-u)\,dt\,\vert \\
&\leq&  \Vert\Psi(\Pi(u ))-u \Vert_V  \int_0^1 \Vert{DE} (u+t( \Psi(\Pi(u ))-u)\Vert _{V'}\,dt\\
%&=&\Vert\Psi(\Pi(u ))-u \Vert_V  \int_0^1 \Vert{DE} (u+t( \Psi(\Pi(u ))-u)-DE(u)+DE(u)\Vert _{V'}\,dt\\ 
&\leq& \lbrack\int_0^1 (\Vert{DE}(u)\Vert_{V'}+t\,C_3\Vert\Psi(\Pi(u))-u\Vert_V)\,dt\, \rbrack\,\Vert\Psi(\Pi(u ))-u \Vert_V \\
&\leq& C_6\Vert{DE}(u)\Vert_{V'}\Vert\Psi(\Pi(u ))-\Psi(\Pi(u
)+{DE}(u) ) \Vert_V\\
&\leq& C_1C_6\Vert{DE}(u)\Vert^2_{V'}
\end{eqnarray*}
hence \eqref{Eq(2.10)}.
\end{proof}

\begin{thm}\label{generalEgaliteDimension} Assume that $A:=D^2E(0)$ is a semi-Fredholm\index{semi-Fredholm} operator and let $d=\hbox{\rm dim}\ker A$. Assume moreover that \\
\textbf{(H1)} $d>0$ and there exists $O\subset\R^d$ open, and $h\in C^1(O,V)$ such that $0\in
h(O)\subset (DE)^{-1}(0)$ and $h:O\longrightarrow h(O)$ is a diffeomorphism.\\
Then there  exist two positive constants  $\sigma>0$  and $c>0$ such that
$$\Vert u\Vert_V<\sigma\Longrightarrow \Vert D E(u)\Vert_{V '}\geq c\vert E(u)\vert^{\frac12}.$$
\end{thm}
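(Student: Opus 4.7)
The plan is to reduce the {\L}ojasiewicz gradient inequality to showing that the auxiliary finite-dimensional function $\Gamma$ defined in \eqref{defFtGamma} vanishes identically in a neighborhood of $0\in\R^d$. Indeed, since $\Gamma(0)=E(\Psi(0))=E(0)=0$, if $\Gamma\equiv 0$ near $0$, then for $u\in V$ with $\Vert u\Vert_V$ small enough, the second inequality \eqref{Eq(2.10)} of Proposition \ref{Inegaliteimportante} gives
$$|E(u)|=|E(u)-\Gamma(\xi)|\leq K\Vert DE(u)\Vert_{V'}^2,$$
which is precisely the desired inequality with $\theta=\tfrac12$ and $c=K^{-1/2}$. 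So the whole problem is to establish that $\Gamma$ is identically zero near the origin.

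First I will show that $E$ vanishes on the critical submanifold $h(O)$ near $\xi_0:=h^{-1}(0)$. Differentiating the identity $DE(h(\xi))=0$ at $\xi_0$ yields $A\cdot h'(\xi_0)=0$, so the range of $h'(\xi_0)$ lies inside $\ker A$. Since $h$ is a diffeomorphism onto its image, $h'(\xi_0)$ is injective, and together with $\dim\ker A=d$ this forces $h'(\xi_0):\R^d\to\ker A$ to be an isomorphism. On the other hand, differentiating $E\circ h$ gives $\langle DE(h(\xi)), h'(\xi)\cdot\rangle=0$, so $E\circ h$ is locally constant and equals $E(0)=0$ on the connected component of $\xi_0$ in $O$ (which is all we need, by restricting $O$).

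The heart of the argument is to show that $\Psi(p)\in h(O)$ for every $p\in\ker A$ sufficiently close to $0$. For this I introduce the map $\Phi:O\to\ker A$ defined by $\Phi(\xi):=\Pi(h(\xi))$. Since $\Pi$ is the identity on $\ker A$ and $h'(\xi_0)(\R^d)\subset\ker A$, we have $D\Phi(\xi_0)=\Pi\circ h'(\xi_0)=h'(\xi_0)$, which by the previous paragraph is an isomorphism. The inverse function theorem then provides a neighborhood of $\xi_0$ and a neighborhood of $0$ in $\ker A$ between which $\Phi$ is a $C^1$ diffeomorphism. For any such $p\in\ker A$, taking the unique $\xi$ near $\xi_0$ with $\Pi(h(\xi))=p$, the point $h(\xi)$ is a critical point, so ${\cal N}(h(\xi))=\Pi(h(\xi))+DE(h(\xi))=p$, and therefore $\Psi(p)=h(\xi)\in h(O)$. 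Consequently $\Gamma(\xi')=E(\Psi(\sum_j\xi'_j\varphi_j))=0$ for all $\xi'\in\R^d$ small enough, and the result follows from \eqref{Eq(2.10)} as explained above.

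The main obstacle is the dimension-matching argument establishing that $\Phi$ is a local diffeomorphism: it is here that hypothesis (H1) plays its role in the full form, since the critical manifold $h(O)$ must have \emph{exactly} the same dimension as $\ker A$. If it were strictly smaller, then $\Psi$ applied to a neighborhood of $0$ in $\ker A$ would in general leave the critical set and $\Gamma$ would fail to vanish; if it were larger, the inverse function argument for $\Phi$ would collapse. Everything else in the proof is a routine consequence of the implicit/inverse function theorem and the explicit structure of ${\cal N}$ and $\Psi$.
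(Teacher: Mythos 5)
Your proof is correct and reaches the same target as the paper --- namely that the reduced function $\Gamma$ of \eqref{defFtGamma} vanishes identically near the origin, after which \eqref{Eq(2.10)} of Proposition \ref{Inegaliteimportante} yields the inequality with $\theta=\frac12$ --- but your route to $\Gamma\equiv 0$ is organized differently. The paper first characterizes the local critical set of $E$ as $\Psi(\{\sum_j\xi_j\varphi_j:\ \nabla\Gamma(\xi)=0\})$ (their \eqref{Equati(2.8)}), then sandwiches the $d$-dimensional manifold $h(\widetilde{O})$ between this set and $\Psi(\{\sum_j\xi_j\varphi_j:\ \xi\in\widetilde{W}_2(0)\})$ and asserts that two nested $d$-dimensional open manifolds must coincide locally, which forces $\nabla\Gamma\equiv 0$ and hence, since $\Gamma(0)=0$, $\Gamma\equiv 0$. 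You instead prove directly that $E$ vanishes on $h(O)$ near $0$ (by differentiating $E\circ h$ and $DE\circ h$, which also shows $h'(\xi_0)$ maps $\R^d$ isomorphically onto $\ker A$) and that $\Psi$ carries a neighborhood of $0$ in $\ker A$ into $h(O)$, via the observation that $\Pi\circ h$ is a local diffeomorphism onto a neighborhood of $0$ in $\ker A$ and that ${\cal N}(h(\xi))=\Pi(h(\xi))$ on the critical manifold. The two arguments rest on the same dimension-matching idea, but yours makes the paper's ``they must coincide locally'' step fully explicit through the inverse function theorem applied to $\Pi\circ h$ (the paper's version implicitly needs invariance of domain or an equivalent argument at that point), at the price of bypassing the characterization \eqref{Equati(2.8)}, which the paper records along the way. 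Both proofs use hypothesis \textbf{(H1)} in exactly the same way: the critical set must be a $C^1$ manifold of dimension precisely $d=\dim\ker A$ through the origin.
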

\begin{proof}[{\bf Proof.}] 
We  have by using \eqref{Equati2.5} (choosing a smaller $\widetilde{W}_2(0)$ if
necessary)
\begin{equation}\label{Equati(2.7)}\Vert DE(\Psi(\displaystyle\sum_{j=1}^d \xi_j
\varphi_j))\Vert_{V'}\leq C_{7} \Vert
\nabla \Gamma (\xi)\Vert.\end{equation}
If $u\in
\widetilde{W}_1(0)$ such that
$DE(u)=0$, then ${\cal N}(u)=\Pi(u)$ which implies that $
u=\Psi(\Pi(u)).$
Moreover by using \eqref{Eq(2.9)} we have $\nabla \Gamma(\xi)=0$ where
$\xi\in\widetilde{W}_2(0)$ with $\Pi u=\displaystyle\sum_{j=1}^d
\xi_j \varphi_j$.\\
On the other hand let $\xi\in\widetilde{W}_2(0)$ with $\nabla \Gamma(\xi)=0$. Then $\Psi(\displaystyle\sum_{j=1}^d
\xi_j \varphi_j)\in W_1(0)$ and $DE(\Psi(\displaystyle\sum_{j=1}^d \xi_j \varphi_j))=0$ by using \eqref{Equati(2.7)}.
So $\Pi(\Psi(\displaystyle\sum_{j=1}^d \xi_j \varphi_j))=\displaystyle\sum_{j=1}^d
\xi_j \varphi_j$.
Consequently $\Psi(\displaystyle\sum_{j=1}^d\xi_j \varphi_j)\in \widetilde{W}_1(0)$ and $DE(\Psi(\displaystyle\sum_{j=1}^d \xi_j \varphi_j))=0$.\\
Finally we have: 
\begin{equation}\label{Equati(2.8)}\{u\in\widetilde{W}_1(0),\ DE(u)=0\}=\Psi(\{\displaystyle\sum_{j=1}^d\xi_j \varphi_j,\ \xi\in\widetilde{W}_2(0)\hbox{ and }\nabla \Gamma(\xi)=0\}).\end{equation}
Now we introduce the $d-$dimensional manifold $$ \gamma = h(O)$$ with $O$ and
$h$ as in \textbf{(H1)}. Let $$\widetilde{O}= h^{-1}(\{u\in\widetilde{W}_1(0),\ DE( u)=0\}). $$
Clearly $\widetilde{O}$ is an open subset of $\R^d$ and $ 0\in h(\widetilde{O}) $.\\
We now have $$ \widetilde{\gamma}:= h(\widetilde{O})
\subset\{u\in\widetilde{W}_1(0),\ DE(u)=0\}\subset\Psi(\{\displaystyle\sum_{j=1}^d\xi_j  \varphi_j,\ \xi\in\widetilde{W}_2(0)\}).$$
Since the extreme terms are $d-$dimensional open manifolds, they must coincide locally. Therefore, changing if necessary $\widetilde{W}_1(0)$ and $\widetilde{W}_2(0)$) to smaller open sets, we obtain
\begin{equation}\label{Equati(2.9)}\widetilde{\gamma} =\{u\in\widetilde{W}_1(0),\ DE( u)=0\}=\Psi(\{\displaystyle\sum_{j=1}^d\xi_j \varphi_j,\ \xi\in\widetilde{W}_2(0)\}.
\end{equation}
Now by comparing \eqref{Equati(2.8)} and \eqref{Equati(2.9)}, we get
$$\Gamma(\xi)=0, \qquad \forall\xi\in\widetilde{W}_2(0).$$
The proof of Theorem \ref{generalEgaliteDimension} follows immediately by using this last equality in \eqref{Eq(2.10)}.
 \end{proof}
 In the next theorem, we will prove inequality like \eqref{eqloja-Simon} under hypotheses of analyticity  \index{analytic} of $E$ and $DE$.
We consider a Banach space $Z$ such that $\ker A\subset Z$ and $Z\subset H$ with continuous and dense imbedding.
\begin{prop}\label{L bijectiveWZ} Assume that $A:=D^2E(0)$ is a semi-Fredholm\index{semi-Fredholm} operator. Let  ${\cal L}:=\Pi +A$. Then $W:={\cal L}^{-1}(Z)$ is a Banach space with respect to $\Vert w \Vert_W=\Vert {\cal L}w\Vert_Z$ and ${\cal L}\in L(W,Z)$ is  an isomporphism.
\end{prop}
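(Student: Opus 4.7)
The plan is to reduce everything to Corollary~\ref{SFredplusProj}, which under the hypothesis that $A\in SF(V,V')$ is symmetric already gives that $\mathcal{L}=A+\Pi$ is a topological isomorphism from $V$ onto $V'$. Since by assumption $Z\subset H=H'\subset V'$ with continuous imbeddings, $Z$ sits as a linear subspace of $V'$, so $W:=\mathcal{L}^{-1}(Z)$ is a well-defined linear subspace of $V$ and the restriction $\mathcal{L}_{|W}\colon W\to Z$ is a linear bijection.

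First I would check that $\Vert w\Vert_W:=\Vert\mathcal{L}w\Vert_Z$ is a norm on $W$: homogeneity and the triangle inequality are inherited directly from $\Vert\cdot\Vert_Z$, while separation $\Vert w\Vert_W=0\Rightarrow w=0$ follows from the injectivity of $\mathcal{L}\colon V\to V'$. By the very definition of $\Vert\cdot\Vert_W$, the map $\mathcal{L}_{|W}\colon (W,\Vert\cdot\Vert_W)\to(Z,\Vert\cdot\Vert_Z)$ is then a surjective linear isometry; in particular $\mathcal{L}_{|W}\in L(W,Z)$ with $\Vert\mathcal{L}_{|W}\Vert=1$, and it is bounded below, so it will be a topological isomorphism as soon as we know that $W$ is complete.

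For completeness, I would take a Cauchy sequence $(w_n)\subset W$. By the isometry, $(\mathcal{L}w_n)$ is Cauchy in the Banach space $Z$, hence converges to some $z\in Z$. Surjectivity of $\mathcal{L}_{|W}$ produces $w\in W$ with $\mathcal{L}w=z$, and $\Vert w_n-w\Vert_W=\Vert\mathcal{L}w_n-z\Vert_Z\to 0$, so $(W,\Vert\cdot\Vert_W)$ is Banach. There is no real analytic obstacle here: all the substantive content is already carried by Corollary~\ref{SFredplusProj}, and the present statement is a formal transfer of the Banach structure from $Z$ to $W$ through the isomorphism $\mathcal{L}^{-1}$. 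The only point worth flagging is that the hypothesis $\ker A\subset Z$ ensures $\ker A\subset W$ (indeed, for $w\in\ker A$ one has $\mathcal{L}w=\Pi w=w\in\ker A\subset Z$), so that $W$ genuinely contains the finite-dimensional kernel on which the whole reduction in the next section will be built.
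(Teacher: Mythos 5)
Your proof is correct and follows essentially the same route as the paper: both reduce to Corollary~\ref{SFredplusProj} to obtain that ${\cal L}:V\to V'$ is an isomorphism, observe that ${\cal L}_{|W}$ is by construction an isometric bijection onto $Z$, and transfer completeness from $Z$ to $W$. The only cosmetic difference is in the completeness step, where the paper first shows $(w_n)$ is Cauchy in $V$ via ${\cal L}^{-1}\in L(V',V)$ before identifying the limit, while you invoke the surjectivity of ${\cal L}_{|W}$ directly; both are valid.
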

\begin{proof}[{\bf Proof.}] Using corollary \ref{SFredplusProj}, we know that ${\cal L}:V\longrightarrow V'$ is one to one and onto. Since $W\subset V$ and by the  definition of $W$ we also have ${\cal L}:W\longrightarrow Z$ is one to one   and onto.  
Obviously   we have   ${\cal L} \in L(W, Z)$ because $\Vert {\cal L} u\Vert_Z=\Vert u\Vert_W$ for all $u\in W$.
Now we prove that $W$ is a Banach space. Let $(w_n)$ be a Cauchy sequence in $W$, then $({\cal L}(w_n))$ is a Cauchy sequence in the Banach space $Z$. Denote by $z$ its limit.
$({\cal L}(w_n))$ is also a Cauchy sequence in $V'$, so $(w_n))$ is also a Cauchy sequence in $V$.  Denote by $w$ its limit, since ${\cal L} \in L(V, V')$, then ${\cal L}w=z$. The claim is proved.
Banach's theorem gives the fact that ${\cal L}^{-1}\in L(Z, W) .$
\end{proof}
 \begin{thm}\label{generalAnalytique.} Assume that $A:=D^2E(0)$ is a semi-Fredholm\index{semi-Fredholm} operator and that  $N:=\ker A\subset Z$.  Assume moreover that  :\\
\textbf{(H2)}  $E: U \rightarrow \R$ is analytic \index{analytic} in the sense of  definition \ref{defFtAnalytique} where
$ U\subset  W $  is an open neighborhood of $0$, that $DE(U)\subset Z$ and  $DE:U\longrightarrow  Z$ is analytic\index{analytic}.\\
Then there exists $\theta \in (0, 1/2]$, $\sigma>0$  and $c>0$ such that
$$\Vert u\Vert_V<\sigma\Longrightarrow \Vert D E(u)\Vert_{V '}\geq c\vert E(u)\vert ^{1-\theta}.$$
\end{thm}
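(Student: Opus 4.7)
The plan is to carry out a Lyapunov--Schmidt reduction to a finite dimensional analytic function and then invoke the classical finite dimensional {\L}ojasiewicz gradient inequality (Theorem~\ref{ThmInegaliteLojasiewicz}). As in the $C^2$ case, after a translation and subtraction of a constant we may assume $E(0)=0$ and $DE(0)=0$.

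\textbf{Step 1 (Analytic inversion).} Consider $\mathcal{N}: U\subset W \to Z$ defined by $\mathcal{N}(u) = \Pi u + DE(u)$. Since $\ker A$ is finite dimensional and contained in $Z$, the $H$-orthogonal projector $\Pi: V\to \ker A\subset Z$ is linear and continuous, hence analytic from $W\hookrightarrow V$ into $Z$; by (H2), $DE: U \to Z$ is analytic; therefore $\mathcal{N}$ is analytic. Its differential at $0$ is $D\mathcal{N}(0) = \Pi + A = \mathcal{L}$, which by Proposition~\ref{L bijectiveWZ} is an isomorphism $W\to Z$. The analytic inverse map theorem (Theorem~\ref{InvertAnal}) therefore produces neighborhoods $W_1(0)\subset W$ of $0$ and $W_2(0)\subset Z$ of $0$ together with an analytic local inverse $\Psi: W_2(0)\to W_1(0)$ of $\mathcal{N}$.

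\textbf{Step 2 (Reduction to finite dimension).} Let $(\varphi_1,\dots,\varphi_d)$ be an $H$-orthonormal basis of $\ker A$. Define
$$\Gamma(\xi) := E\Bigl(\Psi\Bigl(\sum_{j=1}^{d}\xi_j\varphi_j\Bigr)\Bigr), \qquad \xi \in \widetilde{W_2}(0)\subset\R^d.$$
By Theorem~\ref{ComposAnal} (composition of analytic maps), $\Gamma$ is real analytic on a neighborhood of $0$ in $\R^d$, with $\Gamma(0) = E(0) = 0$. The classical finite-dimensional {\L}ojasiewicz inequality (Theorem~\ref{ThmInegaliteLojasiewicz}) then furnishes $\theta\in(0,\tfrac12]$, $c_1>0$ and $\sigma_1>0$ such that
$$\|\nabla\Gamma(\xi)\|_{\R^d} \geq c_1\,|\Gamma(\xi)|^{1-\theta} \quad\text{whenever } |\xi| \leq \sigma_1.$$

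\textbf{Step 3 (Comparison with the original functional).} Proposition~\ref{Inegaliteimportante} yields, for $u\in\widetilde{W_1}(0)$ sufficiently small and $\xi\in\R^d$ defined by $\Pi u = \sum_j\xi_j\varphi_j$, the two crucial bounds
$$\|\nabla\Gamma(\xi)\|_{\R^d} \leq C\,\|DE(u)\|_{V'}, \qquad |E(u) - \Gamma(\xi)| \leq K\,\|DE(u)\|_{V'}^{2}.$$
The proof of Proposition~\ref{Inegaliteimportante} applies with no essential change, since it uses only the $C^1$ regularity of $E$ on $V$, the Lipschitz bound \eqref{Eq(2.6)} for $\Psi$ (which now holds with $V$-norm on the image side because of the continuous embedding $W\hookrightarrow V$ coming from Proposition~\ref{L bijectiveWZ}), and the $V',V$ duality pairing.

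\textbf{Step 4 (Combination).} Combining Steps~2 and 3 gives
$$\|DE(u)\|_{V'} \geq \tfrac{c_1}{C}\,|\Gamma(\xi)|^{1-\theta},$$
together with $|E(u)| \leq |\Gamma(\xi)| + K\,\|DE(u)\|_{V'}^{2}$. A short dichotomy, either $K\|DE(u)\|_{V'}^{2}\leq|\Gamma(\xi)|$ (in which case $|E(u)|\leq 2|\Gamma(\xi)|$), or the reverse (in which case $|E(u)|\leq 2K\|DE(u)\|_{V'}^{2}$ and one uses $2(1-\theta)\geq 1$ together with the fact that $\|DE(u)\|_{V'}$ is small), yields
$$|E(u)|^{1-\theta} \leq c^{-1}\,\|DE(u)\|_{V'}$$
for every $u$ in a sufficiently small $V$-neighborhood of $0$, which is the desired {\L}ojasiewicz inequality.

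The main obstacle is bookkeeping the functional setting: the analytic inverse map theorem and the estimates of Proposition~\ref{Inegaliteimportante} must be reworked in the triple $W\hookrightarrow V\hookrightarrow V'$, with $\mathcal{L}:W\to Z$ taking over the role that $\mathcal{L}:V\to V'$ played in the $C^2$ case. One has to check that $\xi\mapsto\sum_j\xi_j\varphi_j$ lands in $Z$ (automatic since $\ker A\subset Z$), that $\Psi$ takes values in $W$ which embeds in $V$ (so that $E\circ\Psi$ and the duality pairing $\langle DE(\Psi(\cdot)),D\Psi(\cdot)\varphi_k\rangle_{V',V}$ make sense), and finally that the neighborhoods constructed in $W$ and $Z$ still contain a $V$-ball, which is immediate from $W\hookrightarrow V$.
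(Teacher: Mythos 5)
Your proposal is correct and follows essentially the same route as the paper: analytic inversion of $\mathcal{N}=\Pi+DE$ between $W$ and $Z$ via Proposition \ref{L bijectiveWZ} and Theorem \ref{InvertAnal}, reduction to the analytic finite-dimensional function $\Gamma$ on $\ker A$, the comparison estimates \eqref{Eq(2.9)}--\eqref{Eq(2.10)} of Proposition \ref{Inegaliteimportante}, and the classical {\L}ojasiewicz inequality. The only cosmetic difference is in the final combination, where the paper uses the subadditivity $\vert a+b\vert^{1-\theta}\le\vert a\vert^{1-\theta}+\vert b\vert^{1-\theta}$ together with $2(1-\theta)\ge 1$ instead of your dichotomy; the two arguments are equivalent.
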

\begin{proof}[{\bf Proof.}] For the proof we need the following result. \begin{lem} Then there exist a neighborhood of $0$, $V_1(0)$ in $W$, a neighborhood of $0,\ V_2(0)$ in $Z$ and an analytic \index{analytic} map $ \Psi_1 : V_2(0) \longrightarrow V_1(0) $ which satisfies
$$ {\cal N} (\Psi_1 (f)) =f\qquad \forall f\in V_2(0),$$
$$\Psi_1 ({\cal N} ( u)) =u\qquad\forall u\in V_1(0),$$
$$\Psi_1=\Psi \qquad \hbox{in }\ V_2(0)\cap W_2(0)$$
\begin{equation}\label{TILVV'}\Vert \Psi(f)-\Psi(g)\Vert_W\leq C_1'\Vert f-g\Vert_{Z}\quad
\forall(f,g)\in V_2(0)\cap W_2(0),
\end{equation}
\end{lem}
\begin{proof}[{\bf Proof.}]  %???(analytic version cf \cite{Zeidler85} corollary 4.37 p. 172), Pourquoi $DF(u)\in Z??$
We first establish that 
 \begin{eqnarray*}{\cal N}:W &\longrightarrow& Z \\
u &\longmapsto& \Pi u+DE(u).
\end{eqnarray*}
 is a $C^1$ diffeomorphism near 0, because $D {\cal N}(0) = \Pi+A={\cal L}\in L(W,Z) $ is an isomorphism (see proposition \ref{L bijectiveWZ}) and the classical local inversion theorem applies. Therefore we can find a neighborhood  $V_1(0)$ of $0$ in $W$ and  a neighborhood $V_2(0)$ of $0$ in $Z$ such that ${\cal N}:V_1(0) \longrightarrow V_2(0)$ is a
$C^1$ diffeomorphism.  Finally it is clear that $\Psi_1 = {\cal N }^{-1} $ in $V_2(0)\cap
W_2(0)$. By Theorem \ref{InvertAnal}  we have $\Psi_1$ is analytic \index{analytic} in $V_2(0)$.
\end{proof}
\noindent {\bf End of proof of Theorem } By using the chain rule (Theorem \ref{ComposAnal}), since $ E:U {\longrightarrow \R}$, $DE:U\longrightarrow  Z$ and $\Psi:V_2(0)\cap W_2(0)\longrightarrow V_1(0)$ are analytic \index{analytic}, the function $\Gamma$ defined in \eqref{defFtGamma} is real analytic \index{analytic} in some neighborhood of $0$ in ${\R}^d$.\\
Applying the classical {\L}ojasiewicz \index{Lojasiewicz} inequality (Theorem \ref{ThmInegaliteLojasiewicz}) to the scalar analytic \index{analytic}  function $ \Gamma $ defined on some neighborhhod of $0$ in $\R^d$ by the formula \eqref{defFtGamma}, we now obtain (since $(1-\theta)\in(0,1))$:
\begin{equation}\label{Eq(2.11)} \vert E(u)\vert^{1-\theta} \leq \vert\Gamma(\xi)\vert^{1-\theta} + \vert\Gamma(\xi)-
E(u)\vert^{1-\theta}\leq \frac{1}{C_0}\Vert \nabla \Gamma (\xi ) \Vert _{\R^d}+ \vert\Gamma(\xi)-E(u)\vert^{1-\theta}.\end{equation}
 By combining \eqref{Eq(2.9)}, \eqref{Eq(2.10)}, \eqref{Eq(2.11)} we obtain
$$\vert E(u)\vert^{1-\theta}\leq \frac{C}{C_0}\Vert{DE}(u)\Vert_{V'}+ K^{1-\theta}\Vert{DE}(u)\Vert_{V'}^{2({1-\theta})}.$$
 Then since $ 2({1-\theta}) \geq 1, $ there exist
$\sigma>0$,
$c>0$ such that
$$\Vert {DE}(u)\Vert_{V'}\geq c \vert E(u)\vert^{1-\theta}\quad\hbox{ for all }  u\in V 
\hbox { such   that } \Vert u\Vert_V<\sigma.$$ Theorem \ref{generalAnalytique.} is proved.
\end{proof}

\section{Two abstract convergence results} This section is exceptionnally devoted to an abstract situation in which a trajectory of some evolution equation is known independently of any well-posedness result for the corresponding initial value problem. In particular there is no underlying continuous semi-group to rely on and we cannot apply directly the simple results of chapters 4 and 6. However, by performing essentially the same kind of calculations as those needed to apply the invariance principle, we end up with a ``gradient-like" property which is the starting point for the {\L}ojasiewicz method to be applicable. Our results contain as special cases the semi-linear examples of section 10.4 (for which the semi-group framework could be applied as an alternative method) but they can also be used for strongly non-linear problems as soon as a solution with the right regularity properties is known, even if the well-posedness is either false or presently out of reach. \\

Let $V$ and $H$ be two Hilbert spaces such that $V$ is a dense subspace of $H$ and  the imbedding of $V$ in $H$ is {\bf compact}. We identify $H$ with its topological dual and we denote by $V'$ the dual of $V$, so that $H \subset V'$ with
continuous imbedding.\\

Let $E\in C^1(V,\R)$. We study the following two abstract evolution equations: the first order equation
\begin{equation} \label{equPremOrdre}
u' (t) + \nabla E  (u(t)) = 0 ,  t\geq 0 \end{equation}
and the second order equation
\begin{equation} \label{secorder}
u'' (t) + u' (t) + \nabla E  (u(t)) =0,  t\geq 0 \end{equation}
 \begin{thm}\label{ThmCvSdOrderAbstr} Let $u\in C^1(\R_+, V)$ be a solution of \eqref{equPremOrdre}, and assume that
\par (i)  $\displaystyle \overline{\cup_{t\geq 1} \{ u(t) \}}$ is compact in $V$;

\par (ii) $E$ satisfies the {\L}ojasiewicz gradient inequality \index{{\L}ojasiewicz gradient inequality}  near every point $\varphi\in {\cal S}:=\{\varphi\in V,\ \nabla E(\varphi)=0\}.$\\
Then there exists $\varphi\in {\cal S}$  such that \index{convergence result}
$$\lim_{t\to+\infty}^{}\Vert u(t)-\varphi\Vert_V= 0.$$
Moreover, let $\theta$ be any {\L}ojasiewicz\index{Lojasiewicz} exponent of $E$ at $\varphi$. Then we have
\begin{equation}\label{DecayEstimaFirstOrderAbst}
\Vert u(t) - \varphi \Vert_H= \left\{ \begin{array}{ll}
O(e^{-\delta t} )  & \hbox{ for some }\delta>0 \text{ if } \theta =\frac12, \\[2mm]
O(t^{-\theta/(1-2\theta)} )  & \text{if } 0<\theta<{1\over2}.
\end{array} \right.
\end{equation}\end{thm}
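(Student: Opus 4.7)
The plan is to mimic the finite-dimensional argument of Theorem \ref{ThmCVAnalyticSGDimFini}, carefully tracking the interaction between the norms $\|\cdot\|_V$, $\|\cdot\|_H$, $\|\cdot\|_{V'}$ through the Gelfand triple $V\subset H\subset V'$. Testing the equation in the $H$-inner product with $u'$ yields the energy identity
\begin{equation*}
\tfrac{d}{dt}E(u(t)) = \langle DE(u), u'\rangle_{V',V} = -\|u'(t)\|_H^2 \le 0,
\end{equation*}
so $E(u(\cdot))$ is nonincreasing; by hypothesis (i), $E$ is bounded on the $V$-precompact trajectory and $E(u(t))\to E_\infty$ as $t\to\infty$, with $u'\in L^2([1,\infty);H)$. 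Hypothesis (i) together with Theorem \ref{thm 4.1.8.} ensures that the $\omega$-limit set $\Gamma$ (in $V$) is nonempty, compact, connected, attracts $u(t)$ in $V$, and carries $E\equiv E_\infty$.

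The first serious step is to show $\Gamma\subset\mathcal S$. For $\varphi\in\Gamma$ and $t_n\to\infty$ with $u(t_n)\to\varphi$ in $V$, Cauchy--Schwarz gives
\begin{equation*}
\|u(t_n+h)-u(t_n)\|_H \le \sqrt{h}\,\Big(\int_{t_n}^{t_n+h}\|u'\|_H^2\,ds\Big)^{1/2} \xrightarrow[n\to\infty]{} 0
\end{equation*}
uniformly for $h\in[0,1]$, so $u(t_n+h)\to\varphi$ in $H$; $V$-precompactness of $\{u(s):s\ge1\}$ boosts this to convergence in $V$, since every $V$-accumulation point agrees with $\varphi$ in $H$ and hence in $V$. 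Passing to the limit in the integrated equation $u(t_n+h)-u(t_n) = -\int_0^h \nabla E(u(t_n+s))\,ds$, using continuity of $\nabla E:V\to V'$, yields $h\,\nabla E(\varphi)=0$ in $V'$ for every $h>0$, so $\varphi\in\mathcal S$.

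With $\Gamma\subset\mathcal S$ established, hypothesis (ii) and Lemma \ref{InegLojsABSRAITLemme} (applied with $W=V$, $X=V'$, and $\mathcal G=\nabla E$) yield $\sigma,c>0$ and $\theta\in(0,\tfrac12]$ such that
\begin{equation*}
\mathrm{dist}_V(v,\Gamma)<\sigma \;\Longrightarrow\; \|\nabla E(v)\|_{V'} \ge c\,|E(v)-E_\infty|^{1-\theta}.
\end{equation*}
Pick $T$ with $\mathrm{dist}_V(u(t),\Gamma)<\sigma$ for $t\ge T$, set $\mathcal H(t):=E(u(t))-E_\infty$, and assume $\mathcal H(t)>0$ (otherwise $u$ is constant and there is nothing to prove). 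Using the continuous embedding $\|\cdot\|_{V'}\le C'\|\cdot\|_H$ together with $\nabla E(u)=-u'$, one deduces for $t\ge T$
\begin{equation*}
-\mathcal H'(t) = \|u'(t)\|_H^2 \ge \tfrac{c}{C'}\,\|u'(t)\|_H\,\mathcal H(t)^{1-\theta},
\end{equation*}
which is precisely condition \eqref{strongcond2} of Lemma \ref{strong} with $\eta=\theta$. Hence $u'\in L^1([T,\infty);H)$, Cauchy's criterion yields a limit $\varphi\in H$, and $V$-precompactness upgrades the convergence to $V$, with $\varphi\in\Gamma\subset\mathcal S$.

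For the decay rates, once $u(t)\to\varphi$ in $V$ we may replace the uniform exponent by any local Łojasiewicz exponent of $E$ at $\varphi$. Combining $\|u'\|_H \ge (c/C')\mathcal H^{1-\theta}$ with the energy identity gives $-\mathcal H'(t) \ge (c/C')^2\,\mathcal H(t)^{2(1-\theta)}$, whose integration yields $\mathcal H(t)=O(e^{-\delta t})$ if $\theta=\tfrac12$ and $\mathcal H(t)=O(t^{-1/(1-2\theta)})$ if $\theta<\tfrac12$. Since $\|u(t)-\varphi\|_H\le\int_t^\infty\|u'\|_H\,ds$ and $\int_t^{2t}\|u'\|_H^2\,ds\le\mathcal H(t)$, Lemma \ref{LemmaZelenyakExp} (case $\theta=\tfrac12$) and Lemma \ref{LemmaZelenyakPol} with $\alpha=\theta/(1-2\theta)$, so that $2\alpha+1=1/(1-2\theta)$ (case $\theta<\tfrac12$), deliver \eqref{DecayEstimaFirstOrderAbst}. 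The main obstacle is the second paragraph: the three-norm structure makes the identification $\Gamma\subset\mathcal S$ less immediate than in finite dimensions, and requires using $V$-precompactness to boost $H$-convergence back to $V$ before invoking continuity of $\nabla E$.
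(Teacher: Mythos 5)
Your proof is correct and follows the same overall strategy as the paper: energy identity giving $u'\in L^2(\R_+;H)$, identification of the $\omega$-limit set with a subset of $\mathcal S$, the uniform {\L}ojasiewicz inequality near $\Gamma$ via Lemma \ref{InegLojsABSRAITLemme} with $W=V$, $X=V'$, the resulting differential inequality for $E(u(t))-E_\infty$, and Lemmas \ref{LemmaZelenyakExp} and \ref{LemmaZelenyakPol} for the decay rates. The one place where you genuinely diverge is the step $\Gamma\subset\mathcal S$: the paper observes that $u'=-\nabla E(u)$ is uniformly continuous with values in $V'$ (by $V$-precompactness of the range and continuity of $\nabla E$) and belongs to $L^1$ of a suitable power, then invokes Lemma \ref{L1-conv} to conclude $u'(t)\to 0$ in $V'$, whence $\nabla E(\varphi)=0$ for every $V$-limit point $\varphi$; you instead pass to the limit in the integrated equation $u(t_n+h)-u(t_n)=-\int_0^h\nabla E(u(t_n+s))\,ds$ after upgrading $H$-convergence to uniform $V$-convergence on $[t_n,t_n+1]$ by precompactness. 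Your route avoids the uniform-continuity lemma at the cost of the (correctly justified) uniform convergence step; both are equally rigorous. Likewise, your use of Lemma \ref{strong} to get $u'\in L^1([T,\infty);H)$ and hence convergence is a clean packaging of what the paper does by hand when it integrates $z'\le -c^2 z^{2(1-\theta)}$ and then applies the Zelenyak lemmas; since you still need those lemmas for the stated rates, nothing is lost or gained except a slightly more modular presentation.
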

\begin{proof} [Proof.]  We define the function $z$ by $z(t):=E (u(t))$ for all $t\geq 0$. Since $u\in C^1(\R_+, V)$ and $E\in C^1(V,\R)$, then by chain rule, $z$ is differentiable and
\begin{equation}\label{inegaliteenergieE} z'(t) =-\Vert u' (t)\Vert_H^2,\quad \forall t\geq0.
\end{equation} 
Integrating this last equation and by using $(i)$, we get $u'\in L^2 (\R_+ ;H)$. Now, since the range of $u$ is precompact in $V$, and $u$ is uniformly Holder  continuous on the half-line with values in $H$, it is also uniformly continuous with values in $V$ and   $u' = -\nabla E  (u(t))$  is uniformly continuous with values in $V'$. Then by applying Lemma \ref{L1-conv} to the numerical function $\Vert u' (t)\Vert_{V'}^2$ , we obtain that $u'(t)$ tends to $0$ in $V'$ as $t$ tends to infinity, hence also in $H$  by compactness. We conclude that $\omega(u_0)\subset {\cal S}$.
Moreover, since the function $z$ is bounded and decreasing, the limit  $K:=\displaystyle\lim_{t\to\infty} E(u(t))$ exists.  Replacing $E$ by $E-K$ we may assume $K = 0$.\\
 
 If $z(t_0)=0$ for some $t_0\geq0$, then $z(t)=0$ for every $t\geq t_0$, and therefore, $u$ is constant for $t\geq t_0$. In this case, there remains nothing to prove. Then we can assume that $z(t)>0$ for all $t\geq 0$.
Define $\Gamma:=\omega(u_0)$. It is clear that $\Gamma$ is compact and connected. Let  $\varphi\in \Gamma$, then there exists $t_n\to+\infty$ such that $\Vert u(t_n)- \varphi\Vert_V\longrightarrow 0$. Then we get
$$\lim_{n\to+\infty}^{}E(u(t_n))=E(\varphi)=K=0.$$
On the other hand, by assumption $(ii)$, $E$ satisfies the {\L}ojasiewicz \index{Lojasiewicz} gradient  inequality \eqref{eqloja-Simon} at every point $\varphi\in {\cal S}$.  Applying Lemma \ref{InegLojsABSRAITLemme} with $W=V$, $X=V'$,  and ${\cal G}=\nabla E$ we obtain,
$$ \exists \sigma, c>0,\ \exists \theta\in(0,\frac12]/\ \left[\hbox{dist}(u,\Gamma)\leq \sigma\Longrightarrow\Vert \nabla E(u)\Vert_{V'}\geq c\vert E(u)\vert^{1-\theta}\right].$$
Now since $\Gamma=\omega(u_0)$, by Theorem \ref{thm 4.1.8.} iii),  there exists $T>0$ such that $\hbox{dist}(u,\Gamma)\leq \sigma$ for all $t\geq T$. Then we get  
\begin{equation}\label{ineqLojSimTraj}
\forall t\geq T\quad\Vert \nabla E(u)\Vert_{V'}\geq c\vert E(u)\vert^{1-\theta}.
\end{equation}
By combining \eqref{inegaliteenergieE} and \eqref{ineqLojSimTraj}, we get
\begin{equation}\label{inequationdifferentiellNew}z'(t) \le -c^2 (z(t))^{2 (1-\theta)  },\quad \forall t\geq T.
\end{equation} The end of the proof is identical to that of Theorem \ref{ThmCVAnalyticSGDimFini}, we obtain the convergence of $u(t)$ in $H$ and the convergence in $V$ follows by compactness \end{proof}

\begin{thm}\label{CvAbstraitSdOrder} Let $u\in C^1(\R_+,V)\cap C^2(\R_+,V')$ be a solution of \eqref{secorder} and assume that
\par (i) $\displaystyle \overline{\cup_{t\geq 1} \{ u(t),u'(t) \}}$ is compact in $V\times H$;
\par (ii)   if $K:V'\to V$ denotes the duality map, then the operator $K\circ E''(v)\in\cL (V)$ extends to a bounded linear operator on $H$ for every $v\in V$, and $K\circ E'' :V \to \cL (H)$ maps bounded sets into bounded sets; 
\par (iii) $E$ satisfies the {\L}ojasiewicz gradient inequality \index{{\L}ojasiewicz gradient inequality}  near every point $\varphi\in {\cal S}:=\{\varphi\in V,\ \nabla E(\varphi)=0\}.$\\
Then there exists $\varphi\in {\cal S}$  such that
$$\lim_{t\to+\infty}^{}\Vert u'\Vert_H+\Vert u(t)-\varphi\Vert_V= 0.$$
Moreover, let $\theta$ be any {\L}ojasiewicz\index{Lojasiewicz} exponent of $E$ at $\varphi$. Then we have \index{convergence result}
\begin{equation}\label{DecayEstimaFirstOrderAbst}
\Vert u(t) - \varphi \Vert_H= \left\{ \begin{array}{ll}
O(e^{-\delta t} )  & \hbox{ for some }\delta>0 \text{ if } \theta =\frac12, \\[2mm]
O(t^{-\theta/(1-2\theta)} )  & \text{if } 0<\theta<{1\over2}.
\end{array} \right.
\end{equation}\end{thm}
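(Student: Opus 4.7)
The plan is to adapt the finite-dimensional argument of Theorem~\ref{ThmCVAnalyticSGDimFinideuxOrdre} to the Hilbert-space setting, handling the duality between $V$ and $V'$ through the canonical isomorphism $K:V'\to V$. First, from the energy identity
$$\frac{d}{dt}\bigl[\tfrac{1}{2}\|u'(t)\|_H^2 + E(u(t))\bigr] = -\|u'(t)\|_H^2,$$
(justified by $u'\in C(\R_+;V)$, $u''\in C(\R_+;V')$ and the standard Lions--Magenes chain rule giving $\frac{d}{dt}\tfrac12\|u'\|_H^2=\langle u'',u'\rangle_{V',V}$), I deduce that $\mathcal{E}(t):=\tfrac12\|u'\|_H^2+E(u)$ decreases to some limit $E_\infty$ and that $u'\in L^2(\R_+;H)$. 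The equation $u''=-u'-\nabla E(u)$ together with hypothesis (i) yields $u''$ bounded in $V'$, so $u'$ is uniformly Lipschitz in $V'$; applying Lemma~\ref{L1-conv} to $t\mapsto\|u'(t)\|_H^2$ then gives $u'(t)\to 0$ in $H$. Combined with (i), this shows $\omega(u_0,u_0')\subset{\cal S}\times\{0\}$, and Lemma~\ref{InegLojsABSRAITLemme} applied to $E$ and $\mathcal{G}=\nabla E$ on the connected compact set $\Gamma:=\{\varphi:(\varphi,0)\in\omega(u_0,u_0')\}$ delivers uniform constants $\theta\in(0,1/2]$, $c>0$, $\sigma>0$ such that $\|\nabla E(u(t))\|_{V'}\geq c|E(u(t))-E_\infty|^{1-\theta}$ for all $t$ large.

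After subtracting $E_\infty$ from $E$ (and dispatching the trivial case $\mathcal{E}\equiv E_\infty$ eventually), the core construction is the perturbed Lyapunov functional
$$H_\varepsilon(t):=\mathcal{E}(t)-E_\infty+\varepsilon\bigl(u'(t),K\nabla E(u(t))\bigr)_H,$$
which is well defined since $K\nabla E(u)\in V\subset H$. Hypothesis~(ii) ensures that $t\mapsto K\nabla E(u(t))$ is differentiable into $H$ with derivative $(K\circ E''(u))u'$ satisfying $\|(K\circ E''(u))u'\|_H\leq C\|u'\|_H$. Applying the product rule, using the equation to rewrite $\langle u''(t),K\nabla E(u(t))\rangle_{V',V}$ as $-(u',K\nabla E(u))_H-\|\nabla E(u)\|_{V'}^2$, invoking $\|K\nabla E(u)\|_H\leq C\|\nabla E(u)\|_{V'}$ (continuous imbedding $V\hookrightarrow H$) and Young's inequality, I expect to obtain for all $\varepsilon$ sufficiently small
$$H_\varepsilon'(t)\leq -\tfrac{1}{2}\|u'(t)\|_H^2-\tfrac{\varepsilon}{2}\|\nabla E(u(t))\|_{V'}^2.$$
Since the cross term is dominated by a small multiple of $\|u'\|_H^2+\|\nabla E(u)\|_{V'}^2$, all of which tend to $0$, $H_\varepsilon$ is nonnegative for $t$ large and tends to $0$.

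Invoking the Łojasiewicz inequality together with boundedness of $\|u'\|_H$ and $2(1-\theta)\geq 1$ yields
$$H_\varepsilon(t)^{1-\theta}\leq C'\bigl(\|u'(t)\|_H+\|\nabla E(u(t))\|_{V'}\bigr),$$
because $|E(u)-E_\infty|^{1-\theta}\leq c^{-1}\|\nabla E(u)\|_{V'}$ and powers larger than $1$ of bounded quantities are dominated by first powers. Combined with the dissipation estimate and the elementary inequality $(a^2+b^2)/(a+b)\geq (a+b)/2$, this gives
$$-\frac{d}{dt}H_\varepsilon(t)^\theta=-\theta H_\varepsilon^{\theta-1}H_\varepsilon'\geq c''\|u'(t)\|_H.$$
Integration then yields $u'\in L^1(\R_+;H)$, so $u(t)$ is Cauchy in $H$, hence convergent in $H$ to some $\varphi$; precompactness in $V$ from (i) upgrades the convergence to $V$, and $\varphi\in{\cal S}$ since $(\varphi,0)\in\omega(u_0,u_0')$. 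The decay rate~\eqref{DecayEstimaFirstOrderAbst} follows by solving the autonomous differential inequality $H_\varepsilon'\leq -c_0 H_\varepsilon^{2(1-\theta)}$ exactly as in the proof of Theorem~\ref{ThmCVAnalyticSGDimFini}.

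The main technical obstacle is the rigorous product-rule computation of $\frac{d}{dt}(u'(t),K\nabla E(u(t)))_H$: the natural formula involves the $V',V$-duality pairing $\langle u''(t),K\nabla E(u(t))\rangle_{V',V}$ (well defined because $K\nabla E(u)\in V$ and $u''\in V'$) together with an $H$-inner product $(u',(K\circ E''(u))u')_H$, and it is precisely hypothesis~(ii) that makes the second term meaningful and uniformly bounded by $C\|u'\|_H^2$. A secondary subtlety is the derivation of $u'(t)\to 0$ in $H$ from the given regularity and compactness hypotheses alone, which must be done without any underlying well-posedness theory for~\eqref{secorder}.
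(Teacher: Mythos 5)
Your strategy coincides with the paper's: the perturbed functional you write, $\cE(t)-E_\infty+\varepsilon\,(u'(t),K\nabla E(u(t)))_H$, is exactly the paper's $Z(t)=\frac12\Vert u'\Vert_H^2+E(u)+\varepsilon\langle\nabla E(u),u'\rangle_{V'}$, since for $u'\in H$ and $K\nabla E(u)\in V$ one has $\langle\nabla E(u),u'\rangle_{V'}=(u',K\nabla E(u))_H$; the use of hypothesis (ii) to control $(u',(K\circ E''(u))u')_H$, the uniformized {\L}ojasiewicz inequality via Lemma \ref{InegLojsABSRAITLemme}, and the final differential inequality $Z'\leq -c_0Z^{2(1-\theta)}$ are all as in the paper. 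Two steps, however, are asserted where they actually require an argument.

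First, you propose to apply Lemma \ref{L1-conv} to $t\mapsto\Vert u'(t)\Vert_H^2$, but the uniform continuity you establish (from $u''$ bounded in $V'$) is only of $u'$ with values in $V'$, not in $H$. The correct route, which is the paper's, is to run the Lipschitz-plus-integrable argument on $h(t)=\Vert u'(t)\Vert_{V'}^2$ to get $u'(t)\to0$ in $V'$, and only then upgrade to convergence in $H$ using the precompactness of $\{u'(t)\}$ in $H$ from hypothesis (i). Second, and more substantially, the inclusion $\omega(u_0,u_0')\subset{\cal S}\times\{0\}$ does not follow merely from $u'(t)\to0$ in $H$ plus (i): with no underlying semigroup you cannot invoke invariance of the $\omega$-limit set, so you must prove directly that any $V$-limit $\varphi$ of $u(t_n)$ satisfies $\nabla E(\varphi)=0$. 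The paper does this by first showing $\sup_{s\in[0,1]}\Vert u(t_n+s)-\varphi\Vert_V\to0$ (a compactness/uniqueness-of-limits argument, using $u'\to0$ in $H$ and precompactness in $V$), whence $\nabla E(u(t_n+s))\to\nabla E(\varphi)$ uniformly in $s$, and then writing $\nabla E(\varphi)=\lim_n\int_0^1\nabla E(u(t_n+s))\,ds=\lim_n\bigl[-u'(t_n+1)+u'(t_n)-u(t_n+1)+u(t_n)\bigr]=0$. This paragraph is genuinely needed before the {\L}ojasiewicz machinery can be started, and your proposal should include it.
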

\begin{proof} [Proof.] Let
$$\cE(t):={1\over2}\Vert u'(t)\Vert_H^2+E(u(t)).$$
By the assumptions on $u$ and $E$, $\cE$ is differentiable everywhere and for all $t>0$
$$\cE'(t)=-\Vert u'(t)\Vert_H^2.$$
Hence $\cE$ is decreasing, and by using $(i)$ it is bounded. By integrating the last equality, we deduce that $u'\in L^2(\R^+,H)$. Since $H\hookrightarrow V'$ we deduce that $h(t):=\Vert u'(t)\Vert_{V'}^2$ is integrable. Moreover by assumption $(i)$ and the equation \eqref{secorder}, for almost every $t>0$ we find
$$\vert h'(t)\vert \leq2 \Vert u'(t)\Vert_{V'}\Vert u''(t)\Vert_{V'}\leq C$$
Hence the function $h$ is Lipschitz continuous and integrable which implies $\displaystyle \lim_{t\to\infty}^{}h(t)=0$. Since $u'$ is compact with values in $H$ we deduce
$$\lim_{t\to\infty}\Vert u'(t)\Vert_H=0 .$$
Let  \index{$\omega$-limit set}$(\varphi,\psi)\in\omega (u,u')$, and let $(t_n)_{n\in\N}\subset\R_+$ be an unbounded increasing sequence such that $\displaystyle\lim_{n\to\infty} (u(t_n),u'(t_n))=(\varphi,\psi)$. Obviously we get  $\psi=0$. On the other hand, since $\Vert u'\Vert_H\longrightarrow0$, it follows  that 
\begin{equation}\label{CvUniformeuH}\displaystyle\lim_{n\to\infty} \sup_{s\in[0,1]}\Vert u(t_n+s)-\varphi\Vert_H =0.\end{equation}
 Actually the same is true with values in $V$. In fact, assuming the contrary, there is $\delta>0$ such that
$$\forall n\in\N,\quad\sup_{s\in[0,1]}\Vert u(t_n+s)-\varphi\Vert_V\geq\delta.$$
Then we can find a sequence $(s_n)\subset[0,1]$ such that
$$\forall n\in\N,\quad \Vert u(t_n+s_n)-\varphi\Vert_V\geq\frac{\delta}{2}.$$
By compactness of $u$ in $V$, we can find $\psi\in V$ and subsequences still denoted $(t_n)$ and $(s_n)$ such that 
$$\Vert u(t_n+s_n)-\psi\Vert_V\longrightarrow 0$$
which imply that $\Vert \psi-\varphi\Vert_V\geq\frac{\delta}{2}.$ Now from \eqref{CvUniformeuH} we deduce that $\varphi=\psi$, a contradiction.\\
Therefore, $\displaystyle\lim_{n\to\infty} \nabla E (u(t_n+s))=\nabla E (\varphi )$ uniformly in $s\in [0,1]$. \\
By equation \eqref{secorder},
\begin{eqnarray*}
 \nabla E (\varphi ) & = & \int_0^1 \nabla E (\varphi ) \; ds \\
& = & \lim_{n\to\infty} \int_0^1 \nabla E (u(t_n+s)) \; ds \\
& = & \lim_{n\to\infty} \int_0^1 (-u''(t_n+s) -u'(t_n+s)) \; ds\\
&=&\lim_{n\to\infty} -u'(t_n+1)+u'(t_n)-u(t_n+1)+u(t_n)\\
&=& 0 .
\end{eqnarray*} Hence $\varphi\in {\cal S}$. Now since $\cE$ %($t\longmapsto \frac12 \Vert u'\Vert_H^2+E(u(t))$???) 
is bounded and decreasing, the limit  $K:=\displaystyle\lim_{t\to\infty}  \cE(t)=\displaystyle\lim_{t\to\infty} E(u(t))$ exists.  Replacing $E$ by $E-K$ we may assume $K = 0$.\\
Now let $\varepsilon$ be a positive real number, and as in \cite{MR1714129} let us define
for all
$t\ge 0$
\begin{equation}\label{DefFtLiapunovSdOrderAbst} Z(t)={1\over 2}\Vert u'\Vert_H^2+ E(u)+\varepsilon\langle \nabla E(u),\, u'\rangle_{V'}\end{equation} where $\langle\cdot,\cdot\rangle_{V'} $ denotes the inner product in $
V'.$ We note that $Z$  makes sense as a consequence of hypothesis $(i)$. We have for almost all $t\geq 0$:
$$Z'(t)=-\Vert u'\Vert_H^2+\varepsilon \{-\langle\nabla E(u),\, u'\rangle_{V'} -
\Vert \nabla E(u)\Vert_{V'}^2   +\langle(\nabla E(u))' ,u'\rangle_{V'}\}.$$
Then, using $(ii)$,  for almost all $t\ge 0$ we obtain for some $P>0$
$$Z'(t)\le (- 1 +P \varepsilon)\Vert
u'\Vert_H^2-\varepsilon  \langle\nabla E(u),u'\rangle_{V'} -\varepsilon
\Vert \nabla E(u)\Vert_{V'}^2.$$
Since we have by Cauchy-Schwarz inequality
$$ \langle \nabla E(u),\, u'\rangle_{V'}\leq {1\over 2}\Vert \nabla E(u)\Vert_{V'}^2 +{1\over 2}\Vert u'\Vert_{V'}^2 , $$
we deduce :
$$Z'(t)\leq (- 1 +P \varepsilon )\Vert u'\Vert_H^2+ {\varepsilon\over 2} \Vert u'\Vert_{V'}^2 - {\varepsilon\over 2}\Vert\nabla E (u)\Vert_{V'}^2.$$
By choosing $\varepsilon$ small enough, we see that there exists $c_1>0 $ such that for almost all $t\geq 0 $
\begin{equation}\label{ineqenergieSdordre}Z'(t)\leq  -  c_1 (\Vert u'\Vert_H^2+\Vert\nabla E(u)\Vert_{V'}^2) .
\end{equation}
Since $Z$ is nonincreasing with limit $0$, we have in particular $Z$ is nonnegative. As in the proof of the Theorem \ref{ThmCvSdOrderAbstr}   we can assume that $Z(t)>0$ for all $t\geq 0$.\\
Let $\Gamma=\{\varphi/\ (\varphi,0)\in \omega(u,u')\}$. Theorem \ref{thm 4.1.8.} ii) implies that $\Gamma$ is compact and connected.
Now by assumption $(iii)$, $E$ satisfies the {\L}ojasiewicz \index{Lojasiewicz} gradient  inequality \eqref{eqloja-Simon} at every point $\varphi\in {\cal S}$.  Applying Lemma \ref{InegLojsABSRAITLemme} with $W=V$, $X=V'$,  and ${\cal G}=\nabla E$ we obtain,
$$ \exists \sigma, c>0,\ \exists \theta\in(0,\frac12]/\ \left[\hbox{dist}(u,\Gamma)\leq \sigma\Longrightarrow\Vert \nabla E(u)\Vert_{V'}\geq c\vert E(u)\vert^{1-\theta}\right].$$
Now by the definition of  $\Gamma$ and using Theorem \ref{thm 4.1.8.} iii), we obtain that  there exists $T>0$ such that $\hbox{dist}(u,\Gamma)\leq \sigma$ for all $t\geq T$. Then we get  
\begin{equation}\label{ineqLojSimTrajSdordre}
\forall t\geq T\quad \Vert \nabla E(u)\Vert_{V'}\geq c\vert E(u)\vert^{1-\theta}.
\end{equation}
Using this last inequality together with Cauchy-Schwarz and Young inequalities, we get for all $t\geq T$
\begin{eqnarray}
\nonumber Z(t)^{2(1-\theta)}&\leq& C_2\{ \Vert u'\Vert_H^2 +\Vert\nabla E(u)\Vert_{V'}^2+\vert E(u)\vert\}^{2(1-\theta)}.\\
\label{ineqdiffsdordreabstra}&\leq&C_3\{ \Vert u'\Vert_H^2 +\Vert\nabla E(u)\Vert_{V'}^2\}
\end{eqnarray}
Combining the inequalities \eqref{ineqenergieSdordre}  and \eqref{ineqdiffsdordreabstra} we find for all $t\geq T$
$$Z'(t)\leq -\frac{c_1}{C_3}Z(t)^{2(1-\theta)}. $$
The conclusion follows easily \end{proof}

\section{Examples}
\subsection{A semilinear heat equation \index{heat equation}}\label{subheat}
 As a first application we study the asymptotic behaviour of the semilinear heat equation \index{heat equation}
\begin{equation} \label{heat2}
\left\{ \begin{array}{ll}
u_t -\Delta u + f(x,u) = 0 , & (t,x)\in\R_+\times\Omega ,\\[2mm]
u(t, \cdot )|_{\partial\Omega} = 0 , & t\in\R_+ ,\\[2mm]
u(0,x) = u_0 (x) , & x\in\Omega .
\end{array} \right.
\end{equation}
In equation \eqref{heat2} \index{heat equation} we assume that $\Omega\subset\R^N$ ($N\geq 1$) is a bounded domain. We assume  that   $f:\overline{ \Omega}\times \R\longrightarrow\R$ is continuously differentiable and if $N\geq 2$, we assume in addition that \index{growth condition}
\begin{equation} \label{HypotheseCroissancef}
\begin{array}{l}
\exists C>0, \alpha \geq0 \mbox{ such that } (N-2)\alpha \leq 2 \\[2mm]
\mbox{ and }\vert {\partial f\over \partial s}(x,s)\vert \leq C (1+\vert s\vert^{\alpha})\ \hbox{ a.e. on  }\ \Omega\times \R
\end{array}\end{equation}
With this condition on $f$, the energy \index{energy} functionnal $E$ given by 
$$\forall u\in H^1_0(\Omega),\  E(u)=\frac12\int_{\Omega}\vert \nabla u\vert^2\, dx+\int_{\Omega}F(u)\, dx, $$
where   $F(x,s):=\int_0^s f(x,r) \; dr$, is well defined.
By using Proposition 1.17.5 page 66 of \cite{MR1276944}, we know that $E$ is of class $C^2$ on  $H^1_0(\Omega)$ and
\begin{eqnarray*}
DE(u)&=& -\Delta u+f(x,u),\quad \forall u\in H^1_0(\Omega),\\
 D^2 E(u)\xi &=&-\Delta \xi+\frac{\partial f}{\partial s}(x,u)\xi,\quad \forall u,\xi\in H^1_0(\Omega).
\end{eqnarray*}
It is well known that $ D^2E(\varphi)$ is a semi-Fredholm\index{semi-Fredholm} operator for all $\varphi\in H^1_0(\Omega)\cap L^\infty(\Omega)$ (see example \ref{ExempleFredholm}). Let $d=\dim \ker DE(\varphi)$.
\begin{prop}\label{Loj-SimonLaplacien}Assume that  hypothesis \eqref{HypotheseCroissancef}  is satisfied.  Let $\varphi\in H^1_0(\Omega)\cap L^\infty(\Omega)$ be a critical point of $E$. Assume also that one of the following hypotheses is satisfied :
\begin{eqnarray}
\label{NoyauTriviale}& & d=0\\
\label{EgaliteDimLap11}& & d>0 \hbox{ and there exists }O\subset\R^d \hbox{ open, and }h\in C^1(O,V)/\\
\nonumber & &\varphi\in h(O)\subset (DE)^{-1}(0) \hbox{ and } h:O\longrightarrow h(O)\hbox{ is a diffeomorphism; }\\
\label{fAnalytique} & &  f\hbox{ is analytic \index{analytic} in } s , \hbox {uniformly with respect to } x \in\Omega 
\end{eqnarray}
Then there exist $\theta\in(0,{1\over2}]$ and $\sigma>0$ such that 
 \begin{equation}\label{InegaliteLojSimonLaplacien}
\forall u \in H^1_0(\Omega), \quad \Vert u -\varphi \Vert_{H^1_0(\Omega)}\ <\sigma\Longrightarrow
 \Vert -\Delta u+f(x,u) \Vert_{H^{-1}(\Omega)}\  \geq \vert E(u)-E(\varphi)\vert ^{1-\theta} .
\end{equation}
\end{prop}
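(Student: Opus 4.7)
The plan is to reduce the three cases of Proposition \ref{Loj-SimonLaplacien} to the three corresponding abstract results proved earlier in the chapter, namely Propositions \ref{generalNoyauNul.}, \ref{generalEgaliteDimension} and \ref{generalAnalytique.}, applied in the Hilbert space framework $V=H^1_0(\Omega)$, $H=L^2(\Omega)$, $V'=H^{-1}(\Omega)$. To set the stage, I would translate by $\varphi$, setting $v=u-\varphi$, $G(v)=E(\varphi+v)-E(\varphi)$, so that $DG(0)=0$; I would then identify $A:=D^2G(0)=-\Delta+\frac{\partial f}{\partial s}(x,\varphi)I\in L(V,V')$. This operator is self-adjoint and, by Example \ref{ExempleFredholm} (since $\frac{\partial f}{\partial s}(x,\varphi)\in L^\infty(\Omega)$ because $\varphi\in L^\infty$ and $f\in C^1$), belongs to $SF(V,V')$, so Corollary \ref{SFredplusProj} applies and $\Pi+A$ is an isomorphism. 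The growth restriction \eqref{HypotheseCroissancef} guarantees that $E\in C^2(V,\R)$ with the gradient and Hessian formulas stated in the text, so all the regularity required by the abstract theorems is in place.

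Case \eqref{NoyauTriviale} is immediate: when $d=\dim\ker A=0$, Corollary \ref{SFredplusProj} shows that $A:V\to V'$ is an isomorphism, and \eqref{InegaliteLojSimonLaplacien} with $\theta=\frac12$ follows directly from Proposition \ref{generalNoyauNul.}. Case \eqref{EgaliteDimLap11} is again immediate: hypothesis \textbf{(H1)} of Theorem \ref{generalEgaliteDimension} is precisely the assumption that a neighbourhood of $\varphi$ in the zero set of $DE$ is a $d$-dimensional $C^1$ manifold, and this yields the conclusion with $\theta=\frac12$ as well.

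Case \eqref{fAnalytique} is the substantial one. I would verify hypothesis \textbf{(H2)} of Theorem \ref{generalAnalytique.} by choosing $Z:=L^p(\Omega)$ with $p$ large enough that $W^{2,p}(\Omega)\cap H^1_0(\Omega)\hookrightarrow L^\infty(\Omega)$ (for instance any $p>N/2$ with $p\ge 2$). First, elliptic regularity applied to the eigenvalue equation $-\Delta w+\frac{\partial f}{\partial s}(x,\varphi)w=0$ with $\frac{\partial f}{\partial s}(x,\varphi)\in L^\infty$ gives $\ker A\subset W^{2,q}\cap H^1_0$ for every $q<\infty$, in particular $\ker A\subset Z$. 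By Proposition \ref{L bijectiveWZ} the space $W={\cal L}^{-1}(Z)$ (where ${\cal L}=\Pi+A$) is a Banach space continuously embedded in $V$; standard $L^p$ elliptic regularity identifies $W$ with $W^{2,p}(\Omega)\cap H^1_0(\Omega)$ up to an equivalent norm, and in particular $W\hookrightarrow L^\infty(\Omega)$. On a small ball $U\subset W$ centred at $0$, the map $v\mapsto f(x,\varphi+v)-f(x,\varphi)$ is then a Nemytskii operator on the Banach algebra $L^\infty(\Omega)$ generated by the real-analytic function $s\mapsto f(x,\varphi(x)+s)-f(x,\varphi(x))$; by Example \ref{Nemytskiianalytic} it is analytic from $U$ to $L^\infty(\Omega)\hookrightarrow Z$. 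Since $-\Delta:W\to Z$ is bounded linear and $DG(v)=-\Delta v+[f(x,\varphi+v)-f(x,\varphi)]$, the map $DG:U\to Z$ is analytic, and $G:U\to\R$ is analytic because it admits $DG$ as derivative and is obtained by integration of an analytic family (or equivalently as a primitive of an analytic map in the sense of Subsection \ref{Def-Ana}). All hypotheses of Theorem \ref{generalAnalytique.} being met, the conclusion follows for some $\theta\in(0,\frac12]$.

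The only genuinely delicate step is the verification of the analyticity of $DE$ as a map into $Z$ in the third case: one must check that the Nemytskii operator associated to the analytic generating function $s\mapsto f(x,\cdot)$ inherits analyticity \emph{uniformly in $x$}, which is where the assumption of analyticity of $f$ in $s$ uniformly in $x$ (translating into a uniform bound $|\partial_s^n f(x,s)|\le MK^n n!$ on bounded $s$-intervals) is used to apply the Banach-algebra framework of Example \ref{Nemytskiianalytic}. Everything else reduces to bookkeeping: the choice of $p$, the $L^p$ elliptic regularity identification of $W$, and the translation $u\mapsto u-\varphi$ that puts $\varphi$ at the origin.
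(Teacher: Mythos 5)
Your proof is correct and follows essentially the same route as the paper: cases \eqref{NoyauTriviale} and \eqref{EgaliteDimLap11} are reduced to Proposition \ref{generalNoyauNul.} and Theorem \ref{generalEgaliteDimension} via Corollary \ref{SFredplusProj}, and case \eqref{fAnalytique} is handled by verifying \textbf{(H2)} of Theorem \ref{generalAnalytique.} with $Z=L^p(\Omega)$, elliptic regularity, and the analyticity of the Nemytskii operator. The only (harmless) cosmetic difference is that you treat all dimensions at once with $p>N/2$, $p\ge 2$, and run the Nemytskii operator through the Banach algebra $L^\infty(\Omega)$, whereas the paper splits into $N\le 3$ (with $Z=L^2$, $W\subset H^2$) and $N\ge 4$ (with $Z=L^p$, $W\subset W^{2,p}$ viewed itself as a Banach algebra).
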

%\begin{rem} {\rm In virtue of remark \ref{PhiNestpasCritiqueTriviale}, if $\varphi$ is not a critical point of $E$, \eqref{InegaliteLojSimonLaplacien} is  juste the consequence of the fact that $E\in C^1(V,V').$ In this case we don't have to assume any assumption.}\end{rem}

\begin{proof}[{\bf Proof.}] Let $A:=D^2E(\varphi)$ and assume that $d=0$. 
Corollary \ref{SFredplusProj}  gives that $A=D^2E(\varphi)$ is an isomorphism from $H^1_0(\Omega)$ into $H^{-1}(\Omega)$. To conclude we have just to apply proposition \ref{generalNoyauNul.}.
Now assume \eqref{EgaliteDimLap11} holds. To apply Theorem \ref{generalEgaliteDimension}, we have just to remark that $A$ is a semi-Fredholm\index{semi-Fredholm} operator. 
For the  proof of \eqref{InegaliteLojSimonLaplacien} under hypothesis \eqref{fAnalytique}, we distinguish two cases :\\

\textbf{Case 1 :} $N\leq 3$. Let $Z=L^2(\Omega)$, by elliptic regularity \cite{MR0125307} we get that $W:=(\Pi+A)^{-1}(Z)\subset H^2(\Omega)$ where $\Pi$ is the orthogonal projection in $L^2(\Omega)$ on $N(A):= \ker A$. The functional $E:H^2(\Omega)\cap H^1_0(\Omega)\longrightarrow \R $ is clearly analytic \index{analytic} since it is the sum of a continuous quadratic functional  and a Nemytskii \index{Nemytskii operator} operator which is analytic \index{analytic} on the Banach algebra $H^2 (\Omega )\subset L^{\infty}(\Omega)$ (see example \ref{Nemytskiianalytic}.) By using Proposition \ref{Df}, we also obtain  that $DE:W\longrightarrow Z$ is analytic\index{analytic}. We can apply Theorem \ref{generalAnalytique.} to obtain  \eqref{InegaliteLojSimonLaplacien}.\\

\textbf{Case 2 :} $N\geq 4$. Let $p> \frac N2$ and $Z=L^p(\Omega)$. By elliptic regularity \cite{MR0125307}, we know that $W:=(\Pi+A)^{-1}(Z)\subset W^{2,p}(\Omega)$ which is a Banach algebra since $p>\frac N2.$ The end is the same as in the first case.
\end{proof}
\begin{rem}{\rm 1) The  result of proposition \ref{Loj-SimonLaplacien} remains true  for the general energy \index{energy} defined by :
\begin{equation} \label{energy3}
E(u) := \frac{1}{2} \sum_{i,j=1}^d \int_\Omega a_{ij} \frac{\partial u}{\partial x_i} \frac{\partial u}{\partial x_j}  + \int_\Omega F(x,u) , \quad u\in H^1_0 (\Omega ) ,
\end{equation}
where  $F(u)=\int_0^uf(s)\,ds$, $f$ satisfies \eqref{HypotheseCroissancef} and $a_{i,j}$ satisfies the following conditions :
\begin{enumerate}
\item \label{a1} $a_{ij} \in  C^1 (\bar{\Omega})$,
\item \label{a2} $a_{ij} = a_{ji}$, and
\item \label{a3} $\displaystyle \sum_{i,j=1}^d a_{ij} (x) \xi_i \xi_j \geq \gamma \| \xi\|^2$ for some $\gamma >0$ and every $\xi\in\R^d$, $t\in\R_+$, $x\in\Omega$,
\end{enumerate}
\noindent 2) A similar result holds true for Neumann boundary conditions}\end{rem}

\bigskip 

The following result is an immediate application of Theorem \ref{ThmCvSdOrderAbstr} using the Proposition \ref{Loj-SimonLaplacien}.  The smoothing effect of the heat \index{heat equation} equation implies (cf.\cite{MR0747194} ) that for each $\varepsilon > 0$ and $\alpha\in[0, 1)$,$$\bigcup_{t\geq
\varepsilon}\{u(t)\}\quad\hbox{is bounded in }\quad C^{1+\alpha}({\overline\Omega})$$ as soon as $u(t)$ is bounded in $L^\infty(\Omega)$ for $t\ge0$. In particular,  $\bigcup_{t\geq 0}\{u(t)\}$ is precompact in $H^1_0(\Omega)$ . 

\begin{thm} \label{heatmain}\index{heat equation}
Let $u\in C^1(\R_+, H^1_0(\Omega))$ be a  bounded solution of equation \eqref{heat2}. Assume that for all $\varphi\in S:=\{\varphi\in H^1_0(\Omega)/\ -\Delta \varphi+f(\varphi)=0\}$ we have $\varphi\in L^\infty(\Omega)$ and  one of the three conditions \eqref{NoyauTriviale},  \eqref{EgaliteDimLap11} or \eqref{fAnalytique} of Proposition \ref{Loj-SimonLaplacien} is satisfied. Then \index{convergence result}
$$\displaystyle\lim_{t\to\infty} \Vert u(t)-\varphi\Vert_{H^1}=0.$$
 Moreover, let $\theta$ be any {\L}ojasiewicz\index{Lojasiewicz} exponent of $E$ at $\varphi$. Then we have
\begin{equation*} 
\Vert u(t) - \varphi \Vert_{L^2}= \left\{ \begin{array}{ll}
O(e^{-\delta t} )  & \hbox{ for some }\delta>0 \text{ if } \theta =\frac12, \\[2mm]
O(t^{-\theta/(1-2\theta)} )  & \text{if } 0<\theta<{1\over2}.
\end{array} \right.
\end{equation*}\end{thm}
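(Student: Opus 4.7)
The plan is to recast \eqref{heat2} as the abstract first order equation \eqref{equPremOrdre} in the Hilbert space framework $V=H^1_0(\Omega)$, $H=L^2(\Omega)$, and then apply Theorem \ref{ThmCvSdOrderAbstr} together with the Łojasiewicz gradient inequality provided by Proposition \ref{Loj-SimonLaplacien}. The energy functional
$$E(u)=\tfrac12\int_\Omega|\nabla u|^2\,dx+\int_\Omega F(x,u)\,dx$$
has $V'$-gradient $\nabla E(u)=-\Delta u+f(x,u)$, so \eqref{heat2} reads exactly $u'(t)+\nabla E(u(t))=0$ in $V'=H^{-1}(\Omega)$.

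First I would verify hypothesis (i) of Theorem \ref{ThmCvSdOrderAbstr}: since $u$ is bounded in $H^1_0(\Omega)$, the standard smoothing effect for the heat equation recalled just before the statement (for any $\varepsilon>0$ and $\alpha\in[0,1)$, the set $\bigcup_{t\ge\varepsilon}\{u(t)\}$ is bounded in $C^{1+\alpha}(\overline\Omega)$) gives compactness of $\overline{\bigcup_{t\ge 1}\{u(t)\}}$ in $H^1_0(\Omega)$ by Arzelà--Ascoli and the compact imbedding $C^{1+\alpha}(\overline\Omega)\hookrightarrow H^1_0(\Omega)$. In particular the $\omega$-limit set $\Gamma=\omega(u_0)$ in $H^1_0(\Omega)$ is nonempty, compact, connected, and consists of critical points of $E$ (by the gradient-flow structure and Corollary \ref{corgradientLp} applied to $u'\in L^2(\R^+,H)$, whose integrability follows from the energy identity $\int_0^T\|u'(s)\|_H^2\,ds=E(u_0)-E(u(T))$).

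Next I would verify hypothesis (ii): the Łojasiewicz gradient inequality near each $\varphi\in\cS\cap L^\infty(\Omega)$. By assumption, every $\varphi\in\cS$ lies in $L^\infty(\Omega)$ and satisfies one of \eqref{NoyauTriviale}, \eqref{EgaliteDimLap11}, or \eqref{fAnalytique}, so Proposition \ref{Loj-SimonLaplacien} yields constants $\theta_\varphi\in(0,\tfrac12]$, $c_\varphi>0$, $\sigma_\varphi>0$ with
$$\|{-}\Delta u+f(x,u)\|_{H^{-1}}\ge c_\varphi|E(u)-E(\varphi)|^{1-\theta_\varphi}\qquad\text{whenever }\|u-\varphi\|_{H^1_0}<\sigma_\varphi.$$
Then the topological reduction lemma (Lemma \ref{InegLojsABSRAITLemme}) applied with $W=V=H^1_0(\Omega)$, $X=V'=H^{-1}(\Omega)$, and $\cG=\nabla E$ upgrades this local estimate to a uniform one in a neighborhood of the connected compact set $\Gamma$, with a single $\theta\in(0,\tfrac12]$.

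All hypotheses of Theorem \ref{ThmCvSdOrderAbstr} being met, it delivers a limit $\varphi\in\cS$ with $\|u(t)-\varphi\|_{L^2}\to 0$ at the claimed rates, and convergence in $H^1_0(\Omega)$ follows from the precompactness of the trajectory in $V$ together with uniqueness of the limit in $H$. The main obstacle is the verification that every element of $\cS$ belongs to $L^\infty(\Omega)$—this is what permits invoking Proposition \ref{Loj-SimonLaplacien}; it is hypothesized here, and in concrete situations it is obtained from elliptic bootstrap regularity using the growth condition \eqref{HypotheseCroissancef}. A secondary technical point is ensuring that the different local Łojasiewicz exponents and constants produced at points of $\Gamma$ can be consolidated into a single pair $(\theta,c)$ valid in a tubular neighborhood of $\Gamma$; this is exactly the content of Lemma \ref{InegLojsABSRAITLemme} and relies on the connectedness of $\Gamma$ established in step one.
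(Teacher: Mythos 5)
Your proposal is correct and follows essentially the same route as the paper: the paper likewise presents Theorem \ref{heatmain} as an immediate application of Theorem \ref{ThmCvSdOrderAbstr} combined with Proposition \ref{Loj-SimonLaplacien}, with precompactness of the trajectory in $H^1_0(\Omega)$ obtained from the parabolic smoothing effect (boundedness of $\bigcup_{t\geq\varepsilon}\{u(t)\}$ in $C^{1+\alpha}(\overline\Omega)$). The uniformization of the local {\L}ojasiewicz exponents via Lemma \ref{InegLojsABSRAITLemme} that you spell out is already carried out inside the proof of Theorem \ref{ThmCvSdOrderAbstr}, so your argument is if anything slightly more explicit than the paper's.
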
 
\begin{rem}{\rm It has been shown in \cite{MR2328934}  that if $d\le 1$, convergence holds without any need of condition \eqref{EgaliteDimLap11} or \eqref{fAnalytique} . However, if $d = 1$ and convergence occurs, in general  the  convergence can be arbitrarily slow. The hypothesis $d\le 1$ provides convergence results in a wide framework, cf. e.g. \cite{MR1149371}, \cite{MR1205867}}.\end{rem}

\subsection{A semilinear wave equation}

As a next application we study the asymptotic behaviour of the semilinear wave equation \index{wave equation}
\begin{equation} \label{wave2}
\left\{ \begin{array}{ll}
u_{tt} + u_t - \Delta u + f(x,u) = 0 , & (t,x)\in\R_+\times \Omega ,\\[2mm]
u(t, \cdot )|_{\partial\Omega} = 0 , & t\in\R_+ , \\[2mm]
u(0,x) = u_0 (x) , u_t (0,x) = u_1 (x), & x\in\Omega .
\end{array} \right.
\end{equation}
We let $\Omega\subset\R^d$, $f\in C^1(\bar{\Omega}\times\R ;\R)$,  the spaces $H:=L^2 (\Omega )$ and $V:=H^1_0 (\Omega )$  as in Subsection \ref{subheat}. If $N\geq 2$, then we replace the growth condition \index{growth condition} \eqref{HypotheseCroissancef} by the following condition :
\begin{equation} \label{HypotheseCroissancef2}
\begin{array}{l}
\exists C>0, \alpha \geq0 \mbox{ such that } (N-2)\alpha < 2 \\[2mm]
\mbox{ and }\vert {\partial f\over \partial s}(x,s)\vert \leq C (1+\vert s\vert^{\alpha})\ \hbox{ a.e. on  }\ \Omega\times \R
\end{array}\end{equation}
 
 \begin{thm} \label{wavemain} Let $u$ be a solution of \eqref{wave2} such that \index{wave equation}
$$ \cup_{t\geq 0}^{}\{u(t,.),u_t(t,.)\}\hbox{ is bounded in } H^1_0(\Omega)\times L^2({\Omega}).$$
 Assume that for all $\varphi\in S:=\{\varphi\in H^1_0(\Omega)/\ -\Delta \varphi+f(\varphi)=0\}$ we have $\varphi\in L^\infty(\Omega)$ and  one of the three conditions \eqref{NoyauTriviale} or \eqref{EgaliteDimLap11} or \eqref{fAnalytique} of Proposition \ref{Loj-SimonLaplacien} is satisfied. 
 Then \index{convergence result}
 $$\displaystyle\lim_{t\to\infty}\Vert u_t\Vert_{L^2} +\Vert u(t)-\varphi\Vert_{H^1}=0.$$
 Moreover, let $\theta$ be any {\L}ojasiewicz\index{Lojasiewicz} exponent of $E$ at $\varphi$. Then we have \index{convergence result}
\begin{equation*} 
\Vert u(t) - \varphi \Vert_{L^2}= \left\{ \begin{array}{ll}
O(e^{-\delta t} )  & \hbox{ for some }\delta>0 \text{ if } \theta =\frac12, \\[2mm]
O(t^{-\theta/(1-2\theta)} )  & \text{if } 0<\theta<{1\over2}.
\end{array} \right.
\end{equation*}
\end{thm}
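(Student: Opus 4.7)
The plan is to reduce the statement to the abstract convergence Theorem \ref{CvAbstraitSdOrder} applied with $V = H^1_0(\Omega)$, $H = L^2(\Omega)$ and $E(u) = \frac12\int_\Omega |\nabla u|^2\,dx + \int_\Omega F(x,u)\,dx$, whose critical points coincide with the set $S$ of stationary solutions. Three things have to be checked: (a) precompactness of the trajectory in $V\times H$; (b) the regularity hypothesis (ii) of Theorem \ref{CvAbstraitSdOrder} for the energy functional $E$; (c) the \L ojasiewicz gradient inequality near every $\varphi\in S$.

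First I would dispose of (c) by a direct appeal to Proposition \ref{Loj-SimonLaplacien}: under any one of \eqref{NoyauTriviale}, \eqref{EgaliteDimLap11} or \eqref{fAnalytique}, together with the boundedness of equilibria assumed in the statement, the inequality $\Vert -\Delta u+f(x,u)\Vert_{H^{-1}}\ge c|E(u)-E(\varphi)|^{1-\theta}$ holds on a $V$-neighbourhood of each $\varphi\in S$, with a common exponent once we fix $\varphi$. For (b), a direct calculation shows that $D^2E(v)\xi = -\Delta\xi + \frac{\partial f}{\partial s}(x,v)\xi$; since the duality map $K:V'\to V$ is $(-\Delta)^{-1}$ (modulo the equivalent norm), we have $K\circ D^2E(v)\xi = \xi + (-\Delta)^{-1}\bigl(\frac{\partial f}{\partial s}(x,v)\xi\bigr)$. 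Using \eqref{HypotheseCroissancef2} and the Sobolev embedding $H^1_0\hookrightarrow L^{2^*}$, the multiplication operator $\xi\mapsto \frac{\partial f}{\partial s}(x,v)\xi$ is continuous from $L^2$ into $H^{-1}$ with norm controlled by $1+\Vert v\Vert_{H^1_0}^\alpha$, hence $K\circ D^2E(v)$ defines a bounded linear operator on $H=L^2$, uniformly bounded on bounded subsets of $V$.

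The part I expect to be the main obstacle is (a), the compactness of $\cup_{t\ge 0}\{(u(t),u_t(t))\}$ in $V\times H$. Here the fact that the inequality in \eqref{HypotheseCroissancef2} is \emph{strict} plays a crucial role, exactly as in the proof of Theorem \ref{Theorem 4.5.1.}. Rewrite the equation in the semigroup form $U(t) = T(t)U(0) + \int_0^t T(t-s)H(s)\,ds$ where $T(t)$ is the contraction semigroup generated by the damped wave operator $A^\gamma$ with $\gamma=1$ of Section \ref{sectionWaveEqua} (which satisfies $\Vert T(t)\Vert\le M e^{-\sigma t}$ by Proposition \ref{Proposition2.3.3.}) and $H(t)=(0,-f(\cdot,u(t)))$. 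The strict subcritical growth $(N-2)\alpha < 2$ implies by Rellich--Kondrachov that the Nemytskii operator $u\mapsto f(\cdot,u)$ sends bounded sets of $H^1_0(\Omega)$ into precompact sets of $L^2(\Omega)$, so that $H(\R^+)$ is precompact in $X=H^1_0\times L^2$. Lemma \ref{Lemma Webb4.5.2.} then yields precompactness of $\{(u(t),u_t(t))\}$ in $X=V\times H$.

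Once (a), (b), (c) are verified, I would close the argument by invoking Theorem \ref{CvAbstraitSdOrder}: the global boundedness of the trajectory together with the equation gives $u\in C^1(\R^+,V)\cap C^2(\R^+,V')$ (the latter by differentiating the equation once in the sense of $V'$ and using the bound on $\frac{\partial f}{\partial s}(x,u)u_t$ coming from \eqref{HypotheseCroissancef2}), so all hypotheses of the abstract theorem are fulfilled. Its conclusion directly provides $\varphi\in S$ such that $\Vert u_t\Vert_{L^2}+\Vert u(t)-\varphi\Vert_{H^1}\to 0$, together with the announced decay rate on $\Vert u(t)-\varphi\Vert_{L^2}$ in terms of the \L ojasiewicz exponent $\theta$ at $\varphi$.
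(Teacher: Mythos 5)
Your proposal is correct and follows essentially the same route as the paper: precompactness of the orbit via Webb's Lemma \ref{Lemma Webb4.5.2.} using the strict subcritical growth \eqref{HypotheseCroissancef2}, verification of hypothesis (ii) of Theorem \ref{CvAbstraitSdOrder} through the duality map $K=(-\Delta)^{-1}$ so that $K\circ E''(v)=I+(-\Delta)^{-1}f'(v)$, the \L{}ojasiewicz inequality from Proposition \ref{Loj-SimonLaplacien}, and then a direct appeal to the abstract Theorem \ref{CvAbstraitSdOrder}. The only difference is that you spell out a few details the paper leaves implicit (the regularity $u\in C^1(\R_+,V)\cap C^2(\R_+,V')$ and the quantitative bound on the multiplication operator), which is fine.
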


\begin{proof}[{\bf Proof.}] First \eqref{HypotheseCroissancef2} implies that the Nemytskii \index{Nemytskii operator} operator associated to $f$ is compact: $
H^1_0(\Omega)\rightarrow L^2({\Omega})$, then by the lemma \ref{Lemma Webb4.5.2.}, the orbit
$\cup_{t\geq 0}^{}\{u(t,.),u_t(t,.)\}$
 is precompact in $
 H^1_0(\Omega)\times L^2({\Omega})$. This is condition $(i)$ of theorem \ref{CvAbstraitSdOrder}. 
 Moreover, the duality mapping $K : H^{-1} (\Omega ) \to H^1_0 (\Omega )$ is given by $Kv = (-\Delta)^{-1} v$, so that $KE''(v) = I +(-\Delta )^{-1} f'(v)$. From this, the growth assumption  on $f$ \eqref{HypotheseCroissancef2}, and the Sobolev embedding theorem, it is not difficult to deduce that the condition $(ii)$ of Theorem \ref{CvAbstraitSdOrder} is satisfied.
\end{proof}

\chapter[Variants and additional results]{Variants and additional results}

In this chapter, we collect, most of the time  without proofs a few additional results which complement, mainly in the infinite dimensional framework and often at the price of additional technicalities,  the simple theory developed in the two previous chapters. For the proofs, the reader is invited to read the corresponding specialized papers
\section{Convergence in natural norms}  In the last chapter, we obtained convergence to \index{equilibrium point} equilibrium for some semi-linear parabolic and hyperbolic equations in the energy \index{energy} space. However the rate of convergence to \index{equilibrium point} equilibrium was  specified in $L^2({\Omega})$. In \cite{MR1829143}, it is shown that the same decay occurs in 
$H^1_0(\Omega)$ for the wave equation and in $L^\infty ({\Omega})$ with an arbitrarily small loss for the heat equation. This loss is most probably artificial but this becomes only important when the {\L}ojasiewicz\index{Lojasiewicz} exponent of $\varphi$ is exactly known, which is possible only in exceptional cases. 
\section{Convergence without growth restriction for the heat equation \index{heat equation}} In \cite {MR1609269}, the second author gave a proof of the Simon convergence theorem (cf. \cite{{MR0727703}} in the framework of Sobolev spaces instead of $C^\alpha$ spaces which were used by L. Simon. His proof is quite similar to that of our main parabolic result, but uses more complicated spaces. The advantage is that no growth restriction is assumed for the nonlinear perturbative term. 
\section{More general applications} 
\subsection{Systems} 
Let $V=(H^1_0(\Omega))^n$, $H=(L^2(\Omega))^n$, $V'=(H^{-1}(\Omega))^n$ and we define the function $E:(H^1_0(\Omega))^n\longrightarrow\R$ by
$$\forall u=(u_1,\cdots, u_n)\in (H^1_0(\Omega))^n,\quad E(u)=\frac12\sum_{i=1}^n\int_{\Omega}\vert \nabla u_i\vert^2\, dx+\int_{\Omega}F(u)\, dx.$$
 When $N\geq 2,$ we assume  that  
\begin{equation}\label{HypotheseCroissancefSyst} 
 \Vert \nabla_s^2 F(x,s)\Vert \leq C (1+\Vert s\Vert^{\alpha})\ \hbox{ a.e. on  }\ \Omega\times \R
\end{equation} 
 for some $C \geq 0$ and $\alpha \geq 0$ such that $ (N-2)\alpha <2$.
By using Proposition 1.17.5 page 66 of \cite{MR1276944}, we know that $E$ is of class $C^2$ on  $H^1_0(\Omega)$ and
\begin{eqnarray*}
DE(u )&=&(-\Delta u_1+f_1(x,u),\cdots, -\Delta u_n+f_n(x,u))   \\
D^2 E(u)(\xi)&=&<-\Delta \xi_1+\frac{\partial f_1}{\partial s_1}(x,u)\xi_1,\cdots,-\Delta \xi_n+\frac{\partial f_1}{\partial s_n}(x,u)\xi_n) \ \forall \xi\in (H^1_0(\Omega))^n.
\end{eqnarray*}
It is well known that $\dim \ker D^2E(\varphi)$ is finite for all $\varphi\in (H^1_0(\Omega))^n\cap(L^\infty(\Omega))^n$. Let $d=\dim \ker DE(\varphi)$.
\begin{prop}\label{Loj-SimonLaplacien-syst}Assume that  hypothesis \eqref{HypotheseCroissancefSyst}   is satisfied.  Let $\varphi\in (H^1_0(\Omega))^n\cap (L^\infty(\Omega))^n$ be a critical point of $E$. Assume also that one of the following hypotheses is satisfied :
\begin{eqnarray*}
 & & d=0\\
  & & d>0 \hbox{ and there exists }O\subset\R^d \hbox{ open, and }h\in C^1(O,V)/\varphi\in h(O)\subset (DE)^{-1}(0)\\
\nonumber & & \hbox{ and } h:O\longrightarrow h(O)\hbox{ is a diffeomorphism; }\\
  & &  f\hbox{ is analytic \index{analytic} in } s , \hbox {uniformly with respect to } x \in\Omega 
\end{eqnarray*}
Then there exist $\theta\in(0,{1\over2}]$ and $\sigma>0$ such that 
 \begin{equation}\label{InegaliteLojSimonLaplaciensystem}
\forall u \in (H^1_0(\Omega))^n, \quad \Vert u -\varphi \Vert_{H^1_0(\Omega)}\ <\sigma\Longrightarrow
 \Vert DE(u) \Vert_{(H^{-1}(\Omega))^n}\  \geq \vert E(u)-E(\varphi)\vert ^{1-\theta} 
\end{equation}
\end{prop}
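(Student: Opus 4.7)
The plan is to reduce each of the three cases exactly to the abstract results already established for scalar semilinear problems, exploiting the fact that the principal part of $DE$ is the diagonal operator $(-\Delta,\ldots,-\Delta)$ acting componentwise on $V=(H^1_0(\Omega))^n$, so that vector-valued elliptic regularity is a trivial consequence of its scalar counterpart. First I would verify that $A:=D^2E(\varphi)\in L(V,V')$ is semi-Fredholm. Writing $A=A_0+G$ with $A_0 \xi = (-\Delta \xi_1,\ldots,-\Delta \xi_n)$ and $G\xi = \bigl(\sum_j \tfrac{\partial f_i}{\partial s_j}(x,\varphi)\xi_j\bigr)_i$, the operator $A_0$ is an isomorphism $V\to V'$ (it is just the scalar Laplacian on each component), while the growth condition \eqref{HypotheseCroissancefSyst} together with $\varphi\in(L^\infty)^n$ and Sobolev embeddings gives that $G:V\to V'$ is compact. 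By Theorem~\ref{FredholmCptFred}, $A\in SF(V,V')$; moreover $A$ is symmetric.

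In the case $d=0$, Corollary~\ref{SFredplusProj} (with $P=0$ since $N(A)=\{0\}$) or rather the observation combined with Remark~\ref{IsomorphismFredholm} yields that $A$ is a topological isomorphism $V\to V'$. Proposition~\ref{generalNoyauNul.} then gives the {\L}ojasiewicz inequality with exponent $\theta=1/2$. In the case where $d>0$ and the diffeomorphism hypothesis holds, I would simply apply Theorem~\ref{generalEgaliteDimension} with $h$ as given, obtaining the inequality with $\theta=1/2$ again.

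The analytic case requires the most work. Following the scalar case, I split according to the dimension. If $N\le 3$, set $Z=(L^2(\Omega))^n$ and let $W=(\Pi+A)^{-1}(Z)$ where $\Pi:V\to N(A)$ is the projection in the sense of $H=(L^2(\Omega))^n$. By the scalar elliptic regularity result of Agmon-Douglis-Nirenberg applied componentwise, $W\subset (H^2(\Omega)\cap H^1_0(\Omega))^n$, which is a Banach algebra by the Sobolev embedding $H^2\hookrightarrow L^\infty$ for $N\le 3$. If $N\ge 4$, choose $p>N/2$ and set $Z=(L^p(\Omega))^n$, so that $W\subset (W^{2,p}(\Omega)\cap H^1_0(\Omega))^n$, which is again a Banach algebra. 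In either case, the quadratic part of $E$ is continuous and analytic on $W$, and the nonlinear part $u\mapsto \int_\Omega F(x,u)\,dx$ together with $DE-A_0$ are analytic as Nemytski\u\i-type operators generated by the analytic (in $s$) function $F$ on the Banach algebra $W$, following Example~\ref{Nemytskiianalytic} applied componentwise; by Proposition~\ref{Df} the derivative $DE:W\to Z$ is then analytic as well. Theorem~\ref{generalAnalytique.} then yields \eqref{InegaliteLojSimonLaplaciensystem} for some $\theta\in(0,1/2]$.

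The main obstacle I foresee is the careful verification that the nonlinearity $u\mapsto F(\cdot,u)$ defines an analytic map on a Banach algebra of vector-valued functions, since the generating function $F$ is of several real variables; however this is routine once one writes the power series expansion of $F$ at $\varphi(x)$ and exploits the algebra structure together with the $L^\infty$ bound, so no genuinely new difficulty arises with respect to the scalar case.
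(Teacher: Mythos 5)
The paper states this proposition without proof (Chapter 11 explicitly collects results ``most of the time without proofs''), but your argument is correct and follows exactly the template the authors use for the scalar analogue (Proposition \ref{Loj-SimonLaplacien}) and for the fourth-order case: semi-Fredholmness of $D^2E(\varphi)$ via a compact perturbation of the diagonal Laplacian, then Proposition \ref{generalNoyauNul.}, Theorem \ref{generalEgaliteDimension}, or Theorem \ref{generalAnalytique.} with the same choices of $Z$ and $W$ according to the dimension. The one genuinely new ingredient relative to the scalar case --- analyticity of the Nemytski\u{\i} operator generated by a function of several real variables on the vector-valued Banach algebra $W$ --- is correctly identified by you and is indeed a routine multivariate extension of Example \ref{Nemytskiianalytic}.
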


\subsection{Fourth order operators} 
Let $V=H^2_0(\Omega)$, $H=L^2(\Omega)$, $V'=H^{-2}(\Omega)$ and we define the function $E:H^2_0(\Omega)\longrightarrow\R$ by
$$\forall u\in H^2_0(\Omega),\quad E(u)=\frac12\int_{\Omega}\vert \Delta u\vert^2\, dx+\int_{\Omega}F(u)\, dx$$
where  $F(u)=\int_0^uf(s)\,ds.$ When $N\geq 4,$ we assume  that $ f(x, 0)\in L^{\infty}(\Omega) $  { and }
\begin{equation}\label{HypotheseCroissancefN4} 
 \vert {\partial f\over \partial s}(x,s)\vert \leq C (1+\vert s\vert^{\alpha})\ \hbox{ a.e. on  }\ \Omega\times \R
\end{equation} 
 for some $C \geq 0$ and $\alpha \geq 0$ such that $ (N-4)(\alpha+1) < N+4$.
By using Proposition 1.17.5 page 66 of \cite{MR1276944}, we know that $E$ is of class $C^2$ on  $H^2_0(\Omega)$ and
\begin{eqnarray*}
<DE(u),\psi>_{H^{-2}\times H^2_0}&=&<\Delta^2 u+f(x,u),\psi>_{H^{-2}\times H^2_0}\forall \psi\in H^2_0(\Omega),\\
<D^2E(u)\xi,\psi>_{H^{-2}\times H^2_0}&=&<\Delta^2 \xi+\frac{\partial f}{\partial s}(x,u)\xi,\psi>_{H^{-2}\times H^2_0}\forall \psi\in H^2_0(\Omega).
\end{eqnarray*}
It is well known that $\dim \ker E'(\varphi)$ is finite for all $\varphi\in H^2_0(\Omega)$. Let $d=\dim \ker E'(\varphi)$.
\begin{prop}Assume that  hypothesis \eqref{HypotheseCroissancefN4}  is satisfied.  Let $\varphi\in H^2_0(\Omega)\cap L^\infty(\Omega)$ be a critical point of $E$. Assume also that one of the following hypotheses is satisfied :
\begin{eqnarray}
\label{NoyauTrivBilap}& & d=0\\
\label{EgaDimLapBil} & & d>0 \hbox{ and there exists }O\subset\R^d \hbox{ open, and }h\in C^1(O,V)/\\
\nonumber & & \varphi\in h(O)\subset (DE)^{-1}(0) \hbox{ and } h:O\longrightarrow h(O)\hbox{ is a diffeomorphism; }\\
\label{fAnalytiqueBiLaplacien} & &  f\hbox{ is analytic \index{analytic} in } s , \hbox {uniformly with respect to } x \in\Omega 
\end{eqnarray}
Then there exist $\theta\in(0,{1\over2}]$ and $\sigma>0$ such that \index{convergence result}
 \begin{equation}\label{InegaliteLojSimonBiLaplacien}
\forall u \in H^2_0(\Omega), \quad \Vert u -\varphi \Vert_{H^2_0(\Omega)}\ <\sigma\Longrightarrow
 \Vert \Delta^2 u+f(x,u) \Vert_{H^{-2}(\Omega)}\  \geq \vert E(u)-E(\varphi)\vert ^{1-\theta} 
\end{equation}
\end{prop}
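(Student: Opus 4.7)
The plan is to follow exactly the same three-step scheme as in the proof of Proposition \ref{Loj-SimonLaplacien}, replacing the Laplacian by the bi-Laplacian and adjusting the functional analytic framework accordingly. Setting $A:=D^2E(\varphi)\in \mathcal{L}(H^2_0(\Omega),H^{-2}(\Omega))$, the first task is to check that $A$ is semi-Fredholm\index{semi-Fredholm}. Since $\Delta^2:H^2_0(\Omega)\to H^{-2}(\Omega)$ is a topological isomorphism, and the perturbation $G:u\mapsto \frac{\partial f}{\partial s}(x,\varphi)\,u$ is compact from $H^2_0(\Omega)$ into $H^{-2}(\Omega)$ (by the growth assumption \eqref{HypotheseCroissancefN4} together with Sobolev embedding, arguing exactly as in Example \ref{ExempleFredholm}), Theorem \ref{FredholmCptFred} yields $A\in SF(H^2_0,H^{-2})$. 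In particular $\dim\ker A=d<\infty$ is well defined.

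Under hypothesis \eqref{NoyauTrivBilap} we have $d=0$, so by Corollary \ref{SFredplusProj} the operator $A$ is an isomorphism $H^2_0(\Omega)\to H^{-2}(\Omega)$, and Proposition \ref{generalNoyauNul.} immediately provides \eqref{InegaliteLojSimonBiLaplacien} with exponent $\theta=\frac12$. Under hypothesis \eqref{EgaDimLapBil} we have a smooth $d$-dimensional manifold of critical points passing through $\varphi$, and Theorem \ref{generalEgaliteDimension} applies (again with $\theta=\frac12$) once the semi-Fredholm\index{semi-Fredholm} character of $A$ is in hand.

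For the analytic\index{analytic} case \eqref{fAnalytiqueBiLaplacien}, I will apply Theorem \ref{generalAnalytique.}. The main point is to select a Banach space $Z\subset H^{-2}(\Omega)$ with $\ker A\subset Z$ such that $W:=(\Pi+A)^{-1}(Z)$ is a Banach \emph{algebra} continuously embedded in $L^\infty(\Omega)$, so that the Nemytskii \index{Nemytskii operator} operator $u\mapsto f(\cdot,u)$ from $W$ into $Z$ inherits analyticity from the analyticity of $f$ in $s$ (cf. Example \ref{Nemytskiianalytic}). A natural choice is $Z=L^p(\Omega)$ with $p>N/4$: by standard elliptic regularity for the biharmonic operator with Dirichlet conditions, $W\hookrightarrow W^{4,p}(\Omega)\cap H^2_0(\Omega)$, and the Sobolev embedding $W^{4,p}(\Omega)\hookrightarrow L^\infty(\Omega)$ together with $W^{4,p}\cdot W^{4,p}\subset W^{4,p}$ for such $p$ shows that $W$ is a Banach algebra embedded in $L^\infty(\Omega)$. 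The splitting two cases (say $N\le 7$ versus $N\ge 8$) that appeared in Proposition \ref{Loj-SimonLaplacien} is only bookkeeping: we simply pick $p=2$ if this already beats $N/4$, and otherwise any $p>N/4$ compatible with the growth exponent $\alpha$ allowed by \eqref{HypotheseCroissancefN4}.

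The quadratic part $u\mapsto \frac12\int_\Omega|\Delta u|^2\,dx$ of $E$ is a continuous quadratic form on $W$, hence analytic\index{analytic}, and $u\mapsto \int_\Omega F(x,u)\,dx$ is analytic as the composition (via Theorem \ref{ComposAnal}) of the analytic Nemytskii operator $u\mapsto F(\cdot,u)\in L^1(\Omega)$ with integration; similarly $DE:W\to Z$, written $u\mapsto \Delta^2 u+f(\cdot,u)$, decomposes as the sum of the bounded linear operator $\Delta^2:W\to Z$ and the analytic Nemytskii operator associated with $f$, so it is analytic from $W$ to $Z$ by Proposition \ref{Df}. Theorem \ref{generalAnalytique.} then furnishes some $\theta\in(0,1/2]$ and $\sigma>0$ for which \eqref{InegaliteLojSimonBiLaplacien} holds. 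The main technical obstacle in this whole argument is the verification that the elliptic-regularity space $W$ is effectively a Banach algebra in $L^\infty(\Omega)$ across the full range of admissible dimensions and growth exponents allowed by \eqref{HypotheseCroissancefN4}; once this is set up correctly, all the remaining verifications are direct transpositions of those in the proof of Proposition \ref{Loj-SimonLaplacien}.
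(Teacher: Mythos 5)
Your proposal is correct and follows essentially the same route as the paper: semi-Fredholm character of $A=D^2E(\varphi)$ via a compact perturbation of the isomorphism $\Delta^2$, then Proposition \ref{generalNoyauNul.}, Theorem \ref{generalEgaliteDimension}, or Theorem \ref{generalAnalytique.} according to the hypothesis, with $Z=L^p(\Omega)$ and $W=(\Pi+A)^{-1}(Z)\subset W^{4,p}(\Omega)$ a Banach algebra in $L^\infty(\Omega)$ for $p>N/4$ (and $p\ge2$ so that $Z\subset H$). The paper's split into $N\le 3$ versus $N\ge 4$ with $p>\max(2,N/4)$ is, as you observe, only bookkeeping equivalent to your choice.
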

\begin{rem} {\rm In virtue of remark \ref{PhiNestpasCritiqueTriviale}, if $\varphi$ is not a critical point of $E$, \eqref{InegaliteLojSimonLaplacien} is  just the consequence of the fact that $E\in C^1(V,V').$ In this case we don't have to assume any assumption.}
\end{rem}

\begin{proof}[{\bf Proof.}] The proof of \eqref{InegaliteLojSimonBiLaplacien} under hypotheses \eqref{NoyauTrivBilap} and\eqref{EgaDimLapBil} is the same as in the proposition \ref{Loj-SimonLaplacien}. Now assume that \eqref{fAnalytiqueBiLaplacien} holds. As in the proof of the proposition \ref{Loj-SimonLaplacien}, we distinguish two cases :\\
\textbf{Case 1 :} $N\leq 3$. Let $Z=L^2(\Omega)$, by elliptic regularity \cite{MR0125307}, we know that $W:=(\Pi+A)^{-1}(Z)\subset H^4(\Omega)$ where $\Pi$ is the orthogonal projection in $L^2(\Omega)$ on $N:= \ker A$. It is also clear that $N\subset Z=L^2(\Omega)$.
The functional $E:  H^2_0(\Omega)\longrightarrow \R $ is clearly analytic \index{analytic} since it is the sum of a continuous quadratic functional  and a Nemytskii \index{Nemytskii operator} operator which is analytic \index{analytic} on the Banach algebra $H^2(\Omega )\subset L^{\infty}(\Omega)$ (see Example \ref{Nemytskiianalytic} .) By using Proposition \ref{Df} ), we also obtain  that $DE:W\longrightarrow Z$ is analytic\index{analytic}. We can apply Theorem \ref{generalAnalytique.} to obtain  \eqref{InegaliteLojSimonLaplacien}.\\
\textbf{Case 2 :} $N\geq 4$. Let $p>\max(2, \frac N4)$ and $Z=L^p(\Omega)$. By elliptic regularity \cite{MR0125307}, we know that $W:=(\Pi+A)^{-1}(Z)\subset W^{4,p}(\Omega)$ which is a Banach algebra since $p>\frac N4.$ The end is the same as in the first case.
\end{proof}

\section{The wave equation with nonlinear damping\index{wave equation}} In \cite{MR2429439}, L. Chergui succeeded to generalize Theorem \ref{chergui} to the semilinear wave equation with nonlinear localized damping 
\begin{equation} \label{wavechergui}
\left\{ \begin{array}{ll}
u_{tt} + |u_t|^{\alpha} u_t - \Delta u + f(x,u) = 0 , & (t,x)\in\R_+\times \Omega ,\\[2mm]
u(t, \cdot )|_{\partial\Omega} = 0 , & t\in\R_+ , \\[2mm]
u(0,x) = u_0 (x) , u_t (0,x) = u_1 (x), & x\in\Omega .
\end{array} \right.
\end{equation}
One of the difficulties to do that is the proof of compactness of the trajectories in the energy \index{energy} space.  His result has been extended, under natural hypotheses,  to possibly nonlocal damping terms in \cite{MR3095203}.
\section{Some explicit decay rates under additional conditions} The {\L}ojasiewicz\index{Lojasiewicz} exponent of an equilibrium \index{equilibrium point} point is generally difficult to find, even for 2-dimensional ODE systems. However in some exceptional case, it turns out, for semilinear problems involving a power non-linearity, to be computable explicitely. This was done in \cite{MR2019030} with application to the exact decay of the solution when the limit is $0$, and in \cite{MR2774060} under a positivity condition of the \index{energy} energy. The last result allows for a continuum of equilibria to exist, but only for Neuman boundary conditions. 
\section{More information about decay rates}  All our convergence results contain an estimate of the difference between the limiting equilibrium \index{equilibrium point} and the solution. The question naturally arises of the optimality of this estimate. Actually, even when the equation has a single \index{equilibrium point} equilibrium playing the role of a universal attractor of all solutions, the situation can be rather complicated. If the decay estimate obtained for instance by Liapunov's \index{Liapunov} direct method or $\L$ojasiewicz method is optimal for all solutions other than the rest point itself, it means that all non-trivial solutions tend to the quilibrium at the same rate, a circumstance which tends to be the exception rather than the general rule. As an illustration, let us consider the simple ODE 

$$ u''+ u'  + u^3 = 0$$  

Apart from the zero solution, it is true (although not completely trivial to prove, cf. e.g.\cite{{MR2145567}} that here are only two possible rates of decay: as $t^{-\frac{1}{2}}$ or as $e^{-t}$. Actually the first case corresponds to solutions behaving as those of $ u'' + u^3 = 0$ and is shared by most solutions, while the ranges of exponentially decaying solutions lie on a separatrix made of two curves symmetric with respect to the origin$(0, 0)$ having the rough shape of spirals. \\

Analogous properties have been established by the first author for the slightly more complicated equation
$ u''+ c \vert u'\vert ^{\alpha}u' + \vert u\vert ^{\beta}u = 0 $  where $c, \alpha, \beta $
are positive constants.  If $\alpha > \alpha_0 : = {\beta\over{\beta +2}}$, all trajectories are oscillatory  up to infinity and tend to $0$ at the same rate. If $\alpha < \alpha_0 $, all trajectories have a finite number of zeroes on $[0, \infty)$ and there are two different rates of decay at infinity . For the details , cf \cite{MR2763360}.\\

In a series of papers, the exact decay rate of solutions have been thoroughly studied for more complicated second order ODE and for  infinite-dimensional abstract problems containing semilinear parabolic and hyperbolic equations. We refer to \cite{MR2900650, MR2983445, GGH1, GGH2, GGH3} for the details.

\section{The asymptotically autonomous \index{autonomous} case}\index{autonomous}  It is natural to ask whether the convergence results are robust under a perturbative source which dies off sufficiently fast for $t$ large. Such results were obtained in \cite{HuTa01},  \cite{MR1978032}, \cite{MR2350247}, \cite{MR2732191} and  \cite {MR2802889}.
\section{Non convergence for heat and wave equations} \index{non convergence result} Non convergence results for parabolic and hyperbolic equations with smooth non-analytic nonlinearities were proved by  \cite {MR1370152}, \cite {MR1942223} and \cite{MR2018329}. Although such negative results may look natural since 2 dimensional ODE systems already produce such bad phenomena, the question is whether or not the fact that the generating function is scalar forces the system to behave like a scalar equation. The answer is negative but the proof is non-trivial. 

%%%%%%%%%%%%%%%%%%%%%%%%%%%%%%%%%%%%%%%%%%
\bibliographystyle{amsplain}
%\begin{thebibliography}{10}
%\bibliography{ralph}

\providecommand{\bysame}{\leavevmode\hbox to3em{\hrulefill}\thinspace}

\printindex
\end{document}